\documentclass{article}
%
%
%
\usepackage{amsmath}
\usepackage{latexsym}
\usepackage{amssymb}
\newtheorem{thm}{Theorem}[section]
\newtheorem{la}[thm]{Lemma}
\newtheorem{Defn}[thm]{Definition}
\newtheorem{Remark}[thm]{Remark}
\newtheorem{Conj}[thm]{Conjecture}
\newtheorem{prop}[thm]{Proposition}
\newtheorem{cor}[thm]{Corollary}
\newtheorem{Example}[thm]{Example}
\newtheorem{Number}[thm]{\!\!}
\newenvironment{defn}{\begin{Defn}\rm}{\end{Defn}}
\newenvironment{example}{\begin{Example}\rm}{\end{Example}}
\newenvironment{rem}{\begin{Remark}\rm}{\end{Remark}}

\newenvironment{numba}{\begin{Number}\rm}{\end{Number}}
\newenvironment{proof}{{\noindent\bf Proof.}}%
                  {\nopagebreak\hspace*{\fill}$\Box$\medskip\par}

\newcommand{\cW}{{\mathcal W}}
\newcommand{\cV}{{\mathcal V}}
\newcommand{\cO}{{\mathcal O}}
\newcommand{\cK}{{\mathcal K}}
\newcommand{\cE}{{\mathcal E}}
\newcommand{\cL}{{\mathcal L}}
\newcommand{\cC}{{\mathcal C}}
\newcommand{\cA}{{\mathcal A}}
\newcommand{\cF}{{\mathcal F}}
\newcommand{\cB}{{\mathcal B}}
\newcommand{\cS}{{\mathcal S}}
\newcommand{\cU}{{\mathcal U}}
\newcommand{\sbull}{{\scriptstyle\bullet}}
\newcommand{\ve}{\varepsilon}
\newcommand{\R}{{\mathbb R}}
\newcommand{\N}{{\mathbb N}}
\newcommand{\Z}{{\mathbb Z}}
\newcommand{\Sph}{{\mathbb S}}
\newcommand{\mto}{\mapsto}
\newcommand{\sub}{\subseteq}
\newcommand{\wt}{\widetilde}

\newcommand{\wb}{\overline}
\DeclareMathOperator{\id}{id}
\DeclareMathOperator{\Spann}{span}

\DeclareMathOperator{\Pol}{Pol}
\DeclareMathOperator{\Sym}{Sym}
\DeclareMathOperator{\Supp}{supp}
\DeclareMathOperator{\pr}{pr}
\DeclareMathOperator{\Ext}{Ext}

\DeclareMathOperator{\ex}{ex}

\DeclareMathOperator{\ev}{ev}
\DeclareMathOperator{\Sem}{Sem}
\DeclareMathOperator{\im}{im}
\DeclareMathOperator{\reg}{reg}
\DeclareMathOperator{\conv}{conv}

\DeclareMathOperator{\fb}{fb}
\newcommand{\tensor}{\hspace*{.5mm} \wt{\otimes}_\ve\hspace*{.2mm}}
\newcommand{\dl}{{\displaystyle \lim_{\longrightarrow}}}
\newcommand{\pl}{{\displaystyle \lim_{\longleftarrow}}}
\begin{document}
\begin{center}
{\bf\Large Smoothing operators for vector-valued functions\\[1.5mm]
and extension operators}\\[4mm]
{\bf Helge Gl\"{o}ckner}\vspace{2mm}
\end{center}
\begin{abstract}
Let $\Omega\sub\R^d$ be open
and $\ell\in \N_0\cup\{\infty\}$.
Given a locally convex topological vector space~$F$,
endow $C^\ell(\Omega,F)$ with the compact-open $C^\ell$-topology.
For $\ell<\infty$, we describe a sequence $(S_n)_{n\in\N}$
of continuous linear operators $S_n\colon C^\ell(\Omega,F)\to C^\infty(\Omega,F)$
such that
$S_n(\gamma)\to\gamma$ in $C^\ell(\Omega,F)$ as $n\to\infty$ for each
$\gamma\in C^\ell(\Omega,F)$,
and
$S_n(\gamma)\in F\otimes C^\infty_c(\Omega,\R)$ for
each $n\in\N$.
We\vspace{.3mm} show that $C^\ell(R,F)=F\tensor C^\ell(R,\R)$
for each $\ell\in\N_0\cup\{\infty\}$, complete locally convex space~$F$
and convex subset
$R\sub\R^d$ with dense interior. Moreover, we study the
existence of continuous linear right inverses
for restriction maps $C^\ell(\R^d,F)\to C^\ell(R,F)$,
$\gamma\mto\gamma|_R$ for subsets
$R\sub\R^d$ with dense interior.
As an application, we construct continuous linear
right inverses for restriction operators
between spaces of sections in vector bundles
in many situations, and smooth local right inverses 
for restriction operators between manifolds of mappings.
We also obtain smoothing results for sections in fibre bundles.
\end{abstract}
\section{Introduction and statement of the results}
Let $d\in \N$, $\Omega\sub\R^d$ be an open subset,
and $F$ be a locally convex, Hausdorff, real topological vector space
(which need not be complete).
Given $\ell\in\N_0\cup\{\infty\}$,
we let $C^\ell(\Omega,F)$ be the vector
space of all continuous functions $\gamma\colon \Omega\to F$
such that the partial dervatives $\partial^\alpha\gamma\colon \Omega\to F$
exist for all multi-indices $\alpha=(\alpha_1,\ldots,\alpha_d)\in(\N_0)^d$
with $|\alpha|:=\alpha_1+\cdots+\alpha_d\leq  \ell $,
and are continuous functions.\\[2.3mm]
A subset $R\sub X$ of a topological space~$X$
is said to be a \emph{regular subset} if
its interior $R^0$ (relative~$X$) is dense in~$R$.
Given a regular subset $R\sub\R^d$,
we let $C^\ell(R,F)$ be the vector space of all
continuous functions $\gamma\colon R\to F$ whose restriction
$\gamma|_{R^0}$ is in $C^\ell(R^0,F)$, and such that
$\partial^\alpha(\gamma|_{R^0})\colon R^0\to F$
extends to a (necessarily unique) continuous function
$\partial^\alpha\gamma \colon R\to F$ for all $\alpha\in\N_0^d$ such that $|\alpha|\leq \ell$.
We give $C^\ell(R,F)$ the compact-open $C^\ell$-topology, viz.\
the initial topology with respect to the maps $C^\ell(R,F)\to C(R,F)$, $\gamma\mto\partial^\alpha\gamma$
for $|\alpha|\leq\ell$,
where the space $C(R,F)$
of continuous functions carries the compact-open topology.\footnote{Which coincides with the topology of uniform convergence on
compact subsets of~$R$.}\\[2.3mm]
Standard arguments show that the linear map
\[
\theta\colon F\otimes C^\ell(R,\R)\to C^\ell(R,F)
\]
determined by $\theta(v\otimes \gamma)=\gamma v\colon R\to F$, $x\mto \gamma(x)v$
is injective, and we write $F\otimes C^\ell(R,\R)$ for its image.
If $\gamma\in C^\ell(R,F)$, then $\gamma\in F\otimes C^\ell(R,\R)$
if and only if the $\R$-linear span of $\gamma(R)$ in~$F$
has finite dimension.\footnote{The necessity of this condition is clear.
If the span $E$ has finite dimension, let $b_1,\ldots, b_n$ be a basis for~$E$
and $\lambda_1,\ldots,\lambda_n$ be continuous linear functionals
$F\to\R$ such that $v=\sum_{j=1}^n\lambda_j(v)b_j$ for all $v\in E$.
Then $\gamma=\sum_{j=1}^n b_j\otimes (\lambda_j \circ\gamma)$.}\\[2.3mm]
For an open set $\Omega\sub\R^d$,
it is well known that $F\otimes C^\infty(\Omega,\R)$
is dense in $C^\ell(\Omega,F)$ (see, e.g.,
\cite[Proposition~44.2]{Tre}).
We obtain a finer result concerning smoothing operators.
Given a compact set $K\sub\Omega$,
let $C^\ell_K(\Omega,F)\sub C^\ell(\Omega,F)$ be the vector subspace
of all $\gamma\in C^\ell(\Omega,F)$ with support $\Supp(\gamma)\sub K$.
Recall that a sequence $(K_n)_{n\in\N}$
of compact subsets of a locally compact topological space~$X$
is called a \emph{compact exhaustion} of~$X$ if $X=\bigcup_{n\in\N}K_n$
holds and $K_n$ is contained in the interior~$K_{n+1}^0$
for each $n\in\N$.\vspace{.5mm}
\begin{thm}\label{thmsmoo}
Let $\Omega\sub \R^d$ be an open set,
$F$ be a
locally convex space,
$K_1\sub K_2\sub\cdots$ be a compact exhaustion of~$\Omega$,
and $\ell\in \N_0$.
There exist continuous linear operators
\[
S_n\colon C^\ell(\Omega,F)\to C^\infty(\Omega,F)
\]
for $n\in\N$ with the following properties:
\begin{itemize}
\item[\rm(a)]
$S_n(\gamma)\to\gamma$ in $C^\ell(\Omega, F)$ as $n\to\infty$,
for all $\gamma\in C^\ell(\Omega,F)$;
\item[\rm(b)]
$S_n(\gamma)\in F\otimes C^\infty_{K_{n+1}}(\Omega,\R)$
for all $n\in\N$ and $\gamma\in C^\ell(\Omega,F)$; and
\item[\rm(c)]
$S_m(\gamma)\in F\otimes C^\infty_{K_{n+1}}(\Omega,\R)$
for all $n\in\N$, $m\geq n$, and
$\gamma\in C^\ell_{K_n}(\Omega,F)$.\vspace{1.5mm}
\end{itemize}
\end{thm}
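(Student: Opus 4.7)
The plan is a Taylor-polynomial smoothing subordinate to a finite partition of unity whose supports shrink with $n$. First choose $\delta_n>0$ with $\delta_n\to 0$ and $\delta_n\le \tfrac12\min_{k\le n}d(K_k,\R^d\setminus K_{k+1}^0)$. Cover $K_n$ by finitely many balls $B(y_{n,j},\delta_n/2)$, $j=1,\ldots,N_n$, with $y_{n,j}\in K_n$, and construct $\psi_{n,j}\in C^\infty_c(\Omega,\R)$ with $\Supp(\psi_{n,j})\sub\overline{B}(y_{n,j},\delta_n)$, with $\sum_j\psi_{n,j}\equiv 1$ on a neighbourhood of $K_n$, and satisfying the scaling bound $|\partial^\beta\psi_{n,j}|\le C_\beta\delta_n^{-|\beta|}$ (by standard rescaling of a fixed bump and mollification for a global cutoff). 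By the distance condition, $\overline{B}(y_{m,j},\delta_m)\sub K_{k+1}$ whenever $y_{m,j}\in K_k$ and $m\ge k$.

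Next I set
\[
S_n(\gamma)(x):=\sum_{j=1}^{N_n}\psi_{n,j}(x)\,P_{n,j}(\gamma)(x),\qquad P_{n,j}(\gamma)(x):=\sum_{|\alpha|\le\ell}\frac{(x-y_{n,j})^\alpha}{\alpha!}\partial^\alpha\gamma(y_{n,j}).
\]
This is a finite $\R$-linear combination of vectors $\partial^\alpha\gamma(y_{n,j})\in F$ with coefficients in $C^\infty_{K_{n+1}}(\Omega,\R)$, which proves (b) and makes $S_n\colon C^\ell(\Omega,F)\to C^\infty(\Omega,F)$ continuous since each evaluation $\gamma\mapsto\partial^\alpha\gamma(y_{n,j})$ is continuous in the compact-open $C^\ell$-topology. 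For (c), if $\gamma\in C^\ell_{K_n}(\Omega,F)$ and $m\ge n$, then $\gamma$ vanishes on a neighbourhood of each $y_{m,j}\notin K_n$, so $\partial^\alpha\gamma(y_{m,j})=0$ and that summand disappears; the surviving summands are supported in $K_n+\overline{B}_{\delta_m}\sub K_{n+1}$.

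For (a) I fix $\gamma\in C^\ell(\Omega,F)$ and a compact $L\sub\Omega$. For $n$ large, $L\sub K_n^0$, so $\sum_j\psi_{n,j}\equiv 1$ on a neighbourhood of $L$, and in particular $\sum_j\partial^\beta\psi_{n,j}\equiv 0$ there for $|\beta|\ge 1$. The Leibniz rule and this telescoping yield, for $x\in L$ and $|\alpha|\le\ell$,
\[
\partial^\alpha S_n(\gamma)(x)-\partial^\alpha\gamma(x)=\sum_{j=1}^{N_n}\sum_{\beta\le\alpha}\binom{\alpha}{\beta}\partial^\beta\psi_{n,j}(x)\bigl[\partial^{\alpha-\beta}P_{n,j}(\gamma)(x)-\partial^{\alpha-\beta}\gamma(x)\bigr].
\]
Since $\partial^{\alpha-\beta}P_{n,j}(\gamma)$ is the Taylor polynomial of $\partial^{\alpha-\beta}\gamma\in C^{\ell-|\alpha-\beta|}$ at $y_{n,j}$ of order $\ell-|\alpha-\beta|$, the standard Taylor remainder (applied in each continuous seminorm of $F$) bounds the bracket by $o(|x-y_{n,j}|^{\ell-|\alpha-\beta|})=o(\delta_n^{\ell-|\alpha-\beta|})$, uniformly in $x\in L$. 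Combined with $|\partial^\beta\psi_{n,j}|=O(\delta_n^{-|\beta|})$ and the uniformly bounded number of nonvanishing summands at each $x$, this gives $o(\delta_n^{\ell-|\alpha|})\to 0$ for every $|\alpha|\le\ell$. The delicate point is that the singular factor $\delta_n^{-|\beta|}$ from differentiating the partition of unity is exactly absorbed by the higher order of vanishing in the remainder of $\partial^{\alpha-\beta}\gamma$; the identity $\sum_j\psi_{n,j}\equiv 1$ is what makes these exponents line up, and no completeness assumption on $F$ enters.
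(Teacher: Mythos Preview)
Your proof is correct and follows essentially the same strategy as the paper: blend $\ell$th-order Taylor polynomials at mesh points by a shrinking partition of unity, and use that the $\delta_n^{-|\beta|}$ cost of differentiating the cutoffs is exactly offset by the extra $\delta_n^{|\beta|}$ in the Taylor remainder of $\partial^{\alpha-\beta}\gamma$. The paper carries this out with the periodic grid $y=z/n$, $z\in\Z^d$, and translates of a fixed bump, which makes both the scaling bound $|\partial^\beta\psi_{n,j}|\le C_\beta\delta_n^{-|\beta|}$ and the bounded-overlap property (at most $2^d$ supports through each point) automatic; in your write-up these two facts are asserted rather than arranged, so you should specify that the centers $y_{n,j}$ are, e.g., grid points or a $\delta_n/4$-separated net to make them hold.
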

The convergence in~(a) is uniform for $\gamma$ in compact subsets
of $C^\ell(\Omega,F)$ (see Remark~\ref{oncpset}).
Compact convergence of smoothing operators is essential
for the proof of our main smoothing result in fibre bundles
(Theorem~\ref{thm-steen}).\\[2.3mm]
We mention that the smoothing operators are not constructed with the help
of convolutions (as in other approaches, like \cite[Proposition~44.2]{Tre}).
Rather, we use $\ell$th order Taylor polynomials
around suitable points and mix them using a
partition of unity. Note that~$F$ need not satisfy any completeness
properties.\\[2.3mm]
Our notation concerning topological tensor
products of locally convex spaces is as in~\cite{Tre}.
Recall that a locally convex space~$F$ is called \emph{complete}
if each Cauchy net in~$F$ is convergent; it is called
\emph{sequentially complete}
if each Cauchy sequence converges.
Using Theorem~\ref{thmsmoo}, we show:
\begin{prop}\label{etensor}
Let $F$ be a complete locally convex space, $\ell\in \N_0\cup\{\infty\}$,
$d\in\N$ and $R\sub \R^d$ be a convex subset with dense interior.
Then
\[
C^\ell(R,F)\;\, \cong \; \, F\hspace*{.3mm}\tensor \hspace*{.2mm} C^\ell(R,\R)
\]
as a locally convex space.
\end{prop}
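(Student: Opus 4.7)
The strategy is to realise $C^\ell(R,F)$ as the completion of $F\otimes C^\ell(R,\R)$ in its $\ve$-topology, by establishing three ingredients: (i) the map $\theta$ is a topological embedding of $F\otimes_\ve C^\ell(R,\R)$ into $C^\ell(R,F)$; (ii) $F\otimes C^\ell(R,\R)$ is dense in $C^\ell(R,F)$; and (iii) $C^\ell(R,F)$ is complete. By uniqueness of completions, these together give $C^\ell(R,F)\cong F\tensor C^\ell(R,\R)$.

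For~(i), I would compare the two topologies via seminorms. The continuous seminorms on $C^\ell(R,F)$ are generated by
\[
\|\gamma\|_{K,p}:=\sup_{x\in K,\,|\beta|\leq\ell}p(\partial^\beta\gamma(x)),
\]
for compact $K\sub R$ and continuous seminorms $p$ on~$F$. Writing $p(w)=\sup_{\xi\in U_p^\circ}|\xi(w)|$ via Hahn--Banach and using the bipolar theorem to identify the polar (in $C^\ell(R,\R)'$) of the $0$-neighborhood $V_K:=\{g:\sup_{x\in K,|\beta|\leq\ell}|\partial^\beta g(x)|\leq 1\}$ with the weak-$*$ closed convex hull of $\{\ev_x\circ\partial^\beta:x\in K,\,|\beta|\leq\ell\}$, a direct computation gives $\|\theta(u)\|_{K,p}=\ve_{U_p^\circ,V_K^\circ}(u)$ for $u\in F\otimes C^\ell(R,\R)$. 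As $K$ and $p$ vary, these exhaust generating seminorm families for both topologies, so $\theta$ is a topological embedding.

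For~(ii), Theorem~\ref{thmsmoo} provides smoothings only on open sets whereas $R$ may have boundary, which is the main obstacle of the proof. Convexity resolves it via a dilation trick. After a translation one may assume $0\in R^0$. Minkowski's lemma then yields $\lambda\wb{R^0}\sub R^0$ for $\lambda\in(0,1)$, hence $R\sub R^0/\lambda=:\Omega_\lambda$, an \emph{open} neighborhood of~$R$. The dilate $\gamma_\lambda(x):=\gamma(\lambda x)$ lies in $C^\ell(\Omega_\lambda,F)$, and the identity $\partial^\beta\gamma_\lambda(x)=\lambda^{|\beta|}(\partial^\beta\gamma)(\lambda x)$, combined with uniform continuity of each $\partial^\beta\gamma$ on compact subsets of~$R$, yields $\gamma_\lambda\to\gamma$ in $C^\ell(R,F)$ as $\lambda\to 1^-$. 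Theorem~\ref{thmsmoo} applied to~$\Omega_\lambda$ then produces $S_n(\gamma_\lambda)\in F\otimes C^\infty_c(\Omega_\lambda,\R)$ with $S_n(\gamma_\lambda)\to\gamma_\lambda$ in $C^\ell(\Omega_\lambda,F)$; restriction to~$R$ lands in $F\otimes C^\ell(R,\R)$ and still converges to $\gamma_\lambda$ in $C^\ell(R,F)$. A diagonal argument then approximates~$\gamma$. For $\ell=\infty$, one handles each basic $C^\infty$-neighborhood of~$\gamma$ (which only involves a finite order~$k$) by invoking Theorem~\ref{thmsmoo} at that order.

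Finally, (iii) is routine: a Cauchy net in $C^\ell(R,F)$ is pointwise Cauchy, completeness of~$F$ yields a pointwise limit~$\gamma$, and uniform convergence of each partial derivative of the net on compacts in~$R$ guarantees $\gamma\in C^\ell(R,F)$ with the net converging to~$\gamma$.
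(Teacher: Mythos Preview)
Your proposal is correct and follows essentially the same route as the paper: completeness of $C^\ell(R,F)$, density of $F\otimes C^\ell(R,\R)$ via the dilation trick (translate so $0\in R^0$, rescale to land in an open superset, then apply Theorem~\ref{thmsmoo}), and a seminorm comparison showing that $\theta$ embeds $F\otimes_\ve C^\ell(R,\R)$ topologically. The paper phrases the embedding step through the map $\psi\colon F\otimes C^\ell(R,\R)\to\cL(F'_\tau,C^\ell(R,\R))_\ve$ and proves the identity $\|h(\gamma)\|_{S,\|\cdot\|_{C^j,K}}=\|\gamma\|^\partial_{C^j,K,p}$ with $S=\wb{B}^{\,p}_1(0)^\circ$, which is your computation $\|\theta(u)\|_{K,p}=\ve_{U_p^\circ,V_K^\circ}(u)$ in different notation; for the dilation convergence the paper invokes the exponential law ($f(t,x)=\gamma(x/t)$ is $C^{0,\ell}$, so $t\mapsto f(t,\cdot)$ is continuous) rather than your direct uniform-continuity argument, but the content is the same.

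Two small points to tidy up: your seminorm $\|\gamma\|_{K,p}$ should range over $|\beta|\le j$ for a finite $j\le\ell$ (as written it is ill-posed when $\ell=\infty$), and $V_K^\circ$ is the weak-$*$ closed \emph{absolutely} convex hull of the evaluation functionals, not merely the convex hull; neither affects the supremum computation you need.
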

The proposition and its proof are analogous
to the well-known case of open domains (see, e.g., \cite[Theorem~44.1]{Tre}).
Using the preceding proposition,
we see that continuous linear extension operators
exist for vector-valued functions whenever
they exist for scalar-valued functions
(as long as the range is sequentially complete).
\begin{thm}\label{scalarseq}
Let $\ell\in\N_0\cup\{\infty\}$ and $R\sub\R^d$ be a closed, convex subset with dense interior.
Assume that the restriction map
\[
C^\ell(\R^d,\R)\to C^\ell(R,\R)
\]
has a continuous linear right inverse.
Then the restriction map
\[
C^\ell(\R^d,F)\to C^\ell(R, F)
\]
has a continuous linear right inverse,
for each sequentially complete locally\linebreak
convex space~$F$.
\end{thm}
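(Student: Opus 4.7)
The plan is to pass to the completion $\wt F$ of $F$, tensor the scalar extension $E_0$ with the identity on $\wt F$ (using Proposition~\ref{etensor} to interpret the result as a $\wt F$-valued extension operator), and then show that this operator preserves $F$-valued functions via sequential completeness.

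Let $E_0\colon C^\ell(R,\R)\to C^\ell(\R^d,\R)$ denote a continuous linear right inverse of the scalar restriction $\rho_0$, and let $\wt F$ be the completion of $F$. Proposition~\ref{etensor} applied to $\wt F$ yields topological isomorphisms
\[
C^\ell(R,\wt F)\;\cong\;\wt F\tensor C^\ell(R,\R)\qquad\text{and}\qquad C^\ell(\R^d,\wt F)\;\cong\;\wt F\tensor C^\ell(\R^d,\R).
\]
Since $\wt F\tensor C^\ell(\R^d,\R)$ is complete and the algebraic tensor product is dense in $\wt F\tensor C^\ell(R,\R)$, the $\ve$-continuous map $\id_{\wt F}\otimes E_0$ extends uniquely to a continuous linear map $E\colon C^\ell(R,\wt F)\to C^\ell(\R^d,\wt F)$; similarly $\id_{\wt F}\otimes \rho_0$ extends to the $\wt F$-valued restriction. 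Because $\rho_0\circ E_0=\id$ on $C^\ell(R,\R)$, the composition is the identity on the dense algebraic tensor product and hence on all of $C^\ell(R,\wt F)$, so $E$ is a continuous linear right inverse of the $\wt F$-valued restriction.

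The core task is to show $E\bigl(C^\ell(R,F)\bigr)\sub C^\ell(\R^d,F)$. The key observation is that, because $F\hookrightarrow\wt F$ is a topological embedding and $F$ is sequentially complete, $C^\ell(\R^d,F)$ is a sequentially closed subspace of $C^\ell(\R^d,\wt F)$: for a Cauchy sequence of $F$-valued $C^\ell$-functions, all pointwise values and derivatives are Cauchy in $F$, converge in $F$ by sequential completeness, and the resulting limit is $C^\ell$ into $F$ by standard arguments on uniform convergence of derivatives. For $\gamma\in C^\ell(R,F)$, the proof of Proposition~\ref{etensor} produces a sequence $(\gamma_n)$ in $F\otimes C^\ell(R,\R)$ with $\gamma_n\to\gamma$ in $C^\ell(R,F)$, via the dilation $\gamma_t(x):=\gamma(tx+(1-t)x_0)$ with $x_0\in R^0$ fixed and $t\nearrow 1$ (so that $\gamma_t$ extends to a $C^\ell$-function on the open set $\{x\in\R^d:tx+(1-t)x_0\in R^0\}\supset R$), combined with Theorem~\ref{thmsmoo} and a diagonal argument. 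Since $E$ sends a simple tensor $v\otimes f$ with $v\in F$ to $v\otimes E_0(f)\in F\otimes C^\ell(\R^d,\R)\sub C^\ell(\R^d,F)$, each $E(\gamma_n)$ lies in $C^\ell(\R^d,F)$; continuity of $E$ makes $(E(\gamma_n))$ Cauchy in $C^\ell(\R^d,F)$, and its limit there must coincide with $E(\gamma)$ by uniqueness of limits in $C^\ell(\R^d,\wt F)$. Hence $E(\gamma)\in C^\ell(\R^d,F)$.

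The crux of the argument is the descent from $\wt F$-valued to $F$-valued functions, which combines the sequential closedness of $C^\ell(\R^d,F)$ in $C^\ell(\R^d,\wt F)$ with the sequential density of $F\otimes C^\ell(R,\R)$ in $C^\ell(R,F)$; both hinge on sequential completeness of $F$ via the topological embedding $F\hookrightarrow\wt F$ together with the convex structure of $R$ via the dilation argument and Theorem~\ref{thmsmoo}. The remaining tensor-product functoriality and right-inverse identity are essentially formal.
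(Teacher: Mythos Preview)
Your approach is essentially the paper's for finite $\ell$: pass to the completion $\wt F$, use Proposition~\ref{etensor} and functoriality of $\tensor$ to build a $\wt F$-valued extension operator, then descend to $F$-valued functions via sequential approximation by $F\otimes C^\ell(R,\R)$. Two gaps remain.

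First, the ``diagonal argument'' is invalid when $F$ (hence $C^\ell(R,F)$) is not metrizable. The approximation available (Lemma~\ref{densy}) is a \emph{double} limit $\gamma=\lim_k\lim_n\gamma_{k,n}$ for finite~$\ell$, and in a non-first-countable space one cannot in general extract a diagonal subsequence converging to~$\gamma$. The fix is immediate with the ingredients you already have: apply sequential closedness of $C^\ell(\R^d,F)$ in $C^\ell(\R^d,\wt F)$ twice. From $E(\gamma_{k,n})\in C^\ell(\R^d,F)$ and $E(\gamma_{k,n})\to E(\gamma_k)$ conclude $E(\gamma_k)\in C^\ell(\R^d,F)$; then from $E(\gamma_k)\to E(\gamma)$ conclude $E(\gamma)\in C^\ell(\R^d,F)$. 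This is precisely the paper's Lemma~\ref{use-seqco}.

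Second, you do not treat $\ell=\infty$. Theorem~\ref{thmsmoo} is stated only for $\ell\in\N_0$, and for non-metrizable $F$ there is no double-sequence approximation of $\gamma$ in $C^\infty(R,F)$. The paper handles $\ell=\infty$ by a projective-limit argument: the extensions $\cE_n\colon C^n(R,F)\to C^n(\R^d,F)$ from the finite case are all determined by $\gamma v\mapsto \cE(\gamma)v$ on the common dense subspace $F\otimes C^\infty(R,\R)$, hence compatible, and the universal property of $C^\infty(\R^d,F)=\pl\, C^n(\R^d,F)$ yields $\cE_\infty$. Within your framework you could alternatively observe that the $\wt F$-valued operator $E$ on $C^\infty$ agrees, after inclusion into each $C^n$, with the corresponding $E^{(n)}$ (by density), whence $E(\gamma)\in\bigcap_n C^n(\R^d,F)=C^\infty(\R^d,F)$; but some such step must be supplied.
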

Given $d\in\N$, let $\|\cdot\|_2$ be the euclidean norm on $\R^d$
and $B_r(x):=\{y\in\R^d \!:$\linebreak
$\|y-x\|_2<r\}$
be the open ball around $x\in\R^d$ of radius $r>0$.
The following\linebreak
definition from~\cite{RaS} (which is formulated there
for closed subsets of metric spaces)\footnote{The authors require
in (b) that $\|y-x\|<\rho\ve^r$ implies $y\in R$ and
$\|z-y\|<\ve$,
but the condition $\|z-y\|<\ve$ is automatic for
suitable choices of $\ve_0$ and $\rho$,
whence we chose to omit it in the formulation.}
combines conditions from~\cite{Bie} and~\cite{Fre}.
\begin{defn}\label{cuspco}
A closed, regular subset $R\sub\R^d$ is said to satisfy the
\emph{cusp condition} if (a) and (b) are satisfied:
\begin{itemize}
\item[(a)]
$R$ has \emph{no narrow fjords},
viz., for each $x\in R$ there exists $n\in\N$,
a compact neighbourhood $K$ of~$x$ in~$R$ and
$C>0$ such that all $y,z\in K$ can be joined by a rectifyable curve~$\gamma$
lying inside~$R^0$ except perhaps for finitely many points,
and the path length $L(\gamma)$ satisfies $L(\gamma)\leq C\sqrt[n]{\|z-y\|_2}$.
\item[(b)]
$R$ has \emph{at worst polynomial outward cusps},
viz., for all compact subsets $K\sub\R^d$, there exist
$\ve_0,\rho>0$ and $r\geq 1$ such that for all $z\in K\cap\partial R$
and $0< \ve\leq\ve_0$, there is an $x\in R$ with $\|x-z\|_2<\ve$
such that $B_{\rho\ve^r}(x)\sub R$.
\end{itemize}
\end{defn}
\begin{rem}\label{RS-ops}
Condition~(a) ensures that $C^\infty(R,\R)$ can be identified
with the locally convex space $\cE(R)$ of smooth Whitney jets on~$R$
(see \cite{Bie}). Due to condition~(b),
a result of Frerick~\cite{Fre}
provides a continuous right inverse for the natural
map $C^\infty(\R^d,\R)\to \cE(R)$. Combining these facts,
one concludes (see \cite[Corollary~3.5]{RaS}):\\[2.3mm]
\emph{If $R\sub\R^d$ is a closed, regular subset which satisfies
the cusp condition, then the restriction operator
\[
C^\infty(\R^d,\R)\to C^\infty(R,\R)
\]
has a continuous linear right inverse.}
\end{rem}
Every convex, closed subset $R\sub \R^d$ with dense
interior satisfies the cusp condition (as recalled in Lemma~\ref{convthencusp}).
Applying Theorem~\ref{scalarseq} to the extension operators by
Roberts and Schmeding described in Remark~\ref{RS-ops},
we obtain:
\begin{cor}\label{convex-ext}
Let $F$ be a sequentially complete locally convex space,
$d\in\N$ and $R\sub\R^d$ be a closed, convex subset with dense
interior. Then the restriction map
\[
C^\infty(\R^d,F)\to C^\infty(R,F)
\]
has a continuous linear right inverse.
\end{cor}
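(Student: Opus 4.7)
The plan is to assemble this corollary directly from the three ingredients that are already in place in the paper, so the ``proof'' is essentially a chain of citations with one small verification.

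First, I would invoke Lemma~\ref{convthencusp} (which the excerpt promises will establish that every closed convex subset $R\sub\R^d$ with dense interior satisfies the cusp condition of Definition~\ref{cuspco}). This is where the convexity hypothesis enters: convexity gives straight-line paths inside~$R$, which trivially handle condition~(a) (no narrow fjords, with $C=1$ and $n=1$), and a standard argument using supporting hyperplanes at boundary points produces inner balls of linear (not just polynomial) size, which gives condition~(b) with $r=1$. I would not reprove this here but simply cite the lemma.

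Next, with the cusp condition in hand, Remark~\ref{RS-ops} supplies a continuous linear right inverse for the scalar restriction map
\[
C^\infty(\R^d,\R)\to C^\infty(R,\R),
\]
via the Roberts--Schmeding result \cite[Corollary~3.5]{RaS} (which itself combines the Whitney-jet identification of Bierstedt with Frerick's extension theorem).

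Finally, I would apply Theorem~\ref{scalarseq} with $\ell=\infty$ to lift this scalar right inverse to a continuous linear right inverse of
\[
C^\infty(\R^d,F)\to C^\infty(R,F)
\]
for any sequentially complete locally convex~$F$; the convex set~$R$ satisfies the hypothesis of that theorem (convex with dense interior), so its conclusion is immediate. I expect no real obstacle: the only substantive step hidden here is the verification that closed convex sets with dense interior obey the cusp condition, and that is deferred to Lemma~\ref{convthencusp}; everything else is pure citation.
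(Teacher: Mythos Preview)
Your proposal is correct and follows exactly the same route as the paper: the corollary is stated in the introduction as an immediate consequence of Lemma~\ref{convthencusp}, Remark~\ref{RS-ops}, and Theorem~\ref{scalarseq}, which is precisely the chain of citations you assemble. (Your parenthetical sketch of how Lemma~\ref{convthencusp} goes is slightly off---the paper uses $C=2$ rather than $C=1$, and handles the outward-cusp condition by scaling toward a fixed interior point rather than via supporting hyperplanes---but since you rightly defer that verification to the lemma itself, this does not affect your argument for the corollary.)
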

We also have results concerning extensions of functions on infinite-dimensional
domains.
It is possible to define $C^\ell$-functions
$R\to F$ if $R$ is a regular subset of a locally convex space~$E$
(see Definition~\ref{defck}, which follows \cite{Woc}), and to endow $C^\ell(R,F)$
with a compact-open $C^\ell$-topology
(see Definition~\ref{defcktop}).
Extension operators for vector-valued functions to arbitrary
locally convex spaces
were studied in
the recent work~\cite{Han} by Hanusch,
generalizing classical results by Seeley~\cite{See}.
Given $\ell\in \N_0\cup\{\infty\}$,
numbers $-\infty\leq a<\tau<b<\infty$,
locally convex spaces $E$ and $F$ and a regular subset $R\sub E$,
Hanusch defines (rewritten in our notation)\footnote{The open set $V:=R^0$
is dense in~$\cV:=R$, whence a function space $\cC^\ell_\cV((a,b)\times V,F)$
can be defined as in~\cite{Han}. Given $\gamma\in C^\ell(]a,b]\times R,F)$,
we have $f:=\gamma|_V\in C^\ell_\cV((a,b)\times V,F)$
as the differentials $d^{(j)}\gamma\colon \,]a,b]\times R\times (\R\times E)^j$
(as in Proposition~\ref{eqCk})
are continuous extensions (denoted $\Ext(f,j)$ by Hanusch) of the $d^{(j)}f$,
for all $j\in\N_0$ with $j\leq \ell$. If $f\in\cC^\ell_\cV((a,b)\times V,F)$,
then $\gamma:=\Ext(f,0)\in C^\ell(]a,b]\times R,F)$
as $\gamma|_{]a,b[\,\times R^0}=f$ is $C^\ell$
and the $\Ext(f,j)$ are continuous extensions for the $d^{(j)}(\gamma|_{]a,b[\,\times R^0})=d^{(j)}f$.
The restriction map
\[
C^\ell(]a,b]\times R,F)\to \cC^\ell_\cV((a,b)\times V,F),\;\,
\gamma\mto\gamma|_{]a,b[\,\times V}
\]
therefore is an isomorphism of vector spaces with inverse $f\mto\Ext(f,0)$.}
a linear map
\begin{equation}\label{thehanusch}
\cE_\tau\colon C^\ell(]a,b]\times R,F)\to C^\ell(]a,\infty[\,\times R,F)
\end{equation}
which is a right inverse for the restriction
map
\[
C^\ell(]a,\infty[\,\times R,F)\to C^\ell(]a,b]\times R,F).
\]
He obtains estimates for the extended functions
(see \cite[Theorem~1, part~2)]{Han}),
but did not prove continuity of $\cE_\tau$
with respect to the compact-open $C^\ell$-topology
(he broaches continuity questions in \cite[Remark~4]{Han}
but considers them out of the scope of his article).
Using Hanusch's estimates, we shall observe:
\begin{prop}\label{Hancts}
For $\ell\in\N_0\cup\{\infty\}$, Hanusch's extension operators
\[
\cE_\tau\colon C^\ell(]a,b]\times R,F)\to C^\ell(]a,\infty[\,\times R,F)
\]
are continuous if we use the compact-open $C^\ell$-topology
on domain and~range.
\end{prop}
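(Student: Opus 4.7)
\medskip
\noindent
\textbf{Proof plan.} The compact-open $C^\ell$-topology on $C^\ell(]a,\infty[\,\times R,F)$ is, by Definition~\ref{defcktop}, the initial topology with respect to the maps $\eta\mto d^{(j)}\eta$ for $j\in\N_0$ with $j\leq \ell$. A basic continuous seminorm $q$ on the target therefore has the form
\[
q(\eta)\;=\;\max_{j\leq N}\,\sup_{(t,x;w)\in L\times B_j} p\bigl(d^{(j)}\eta(t,x;w)\bigr),
\]
where $N\in\N_0$ with $N\leq \ell$, $L\sub\,]a,\infty[\,\times R$ is compact, $p$ is a continuous seminorm on~$F$, and $B_j\sub(\R\times E)^j$ is bounded. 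By linearity of $\cE_\tau$, continuity reduces to exhibiting, for each such $q$, a continuous seminorm $q'$ on the domain satisfying $q(\cE_\tau\gamma)\leq q'(\gamma)$ for all $\gamma\in C^\ell(]a,b]\times R,F)$.

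I would decompose $L$ into $L_1:=L\cap(\,]a,b]\times R)$ and $L_2:=L\setminus L_1\sub\,]b,\infty[\,\times R$, both compact. Since $\cE_\tau$ is a right inverse for restriction, $\cE_\tau(\gamma)$ coincides with $\gamma$ on $]a,b]\times R$; the $L_1$-contribution to $q(\cE_\tau\gamma)$ therefore equals the analogous seminorm of $\gamma$ computed on $L_1\sub\,]a,b]\times R$, which is already a continuous seminorm on the domain. For the $L_2$-contribution one invokes \cite[Theorem~1, part~2)]{Han}: Hanusch's explicit formula defines $\cE_\tau(\gamma)$ on $]b,\infty[\,\times R$ as a convergent combination of compositions of $\gamma$ with a reflection in the $t$-variable (sending $]b,\infty[$ back into $]a,b]$), multiplied by smooth cutoffs. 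The corresponding pointwise bounds control $p\bigl(d^{(j)}\cE_\tau(\gamma)(t,x;w)\bigr)$ for $(t,x;w)\in L_2\times B_j$ by a constant $C=C_{L,j,p,B_j}<\infty$ times
\[
\max_{k\leq j}\;\sup_{(t',x';w')\in L'\times B'_k} p\bigl(d^{(k)}\gamma(t',x';w')\bigr),
\]
where $L'\sub\,]a,b]\times R$ is compact (obtained by combining the image of the $t$-projection of $L_2$ under the reflection with the $R$-projection of $L_2$) and each $B'_k\sub(\R\times E)^k$ is bounded (arising from $B_j$ and the uniformly bounded derivatives of the reflection on the relevant compact set). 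Summing the two contributions furnishes the required $q'$ with $q(\cE_\tau\gamma)\leq q'(\gamma)$.

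The main obstacle is simply that Hanusch's estimates in \cite{Han} are presented pointwise and are not phrased as bounds between seminorms; the real work consists in inspecting his formulas to verify that the implied constants, the auxiliary compact set $L'$, and the bounded sets $B'_k$ can indeed be chosen uniformly in $\gamma$, depending only on $L$, $j$, $p$, and $B_j$. Once this uniformity is read off from his construction (essentially by tracking the bounds already present in his existence proof), the translation into a seminorm inequality is immediate. The case $\ell=\infty$ needs no additional argument, since a continuous seminorm for the $C^\infty$-topology involves only finitely many differential orders.
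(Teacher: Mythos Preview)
Your approach is essentially the same as the paper's: decompose a compact set in $]a,\infty[\,\times R$ into the part lying in $]a,b]\times R$ (where $\cE_\tau(\gamma)=\gamma$ gives a trivial estimate) and the part in $]b,\infty[\,\times R$ (where Hanusch's estimates from \cite[Theorem~1, part~2)]{Han} apply), then combine into a single seminorm inequality. The paper streamlines the bookkeeping by first enlarging the compact set to a product $[\alpha,\beta]\times A$ with $\alpha<\tau<b<\beta$ and the direction set to $[-r,r]\times B$, and then quotes Hanusch's constants $C_i$ directly to obtain $\|\cE_\tau(\gamma)\|_{C^j,K,L,q}\leq C\,\|\gamma\|_{C^j,K,L,q}$ with $C=\max\{C_ir^i:1\leq i\leq j\}$; this avoids having to re-derive the uniformity from Hanusch's formulas, which you correctly identify as the only nontrivial step. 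One small slip: the seminorms for the compact-open $C^\ell$-topology use \emph{compact} (not merely bounded) sets $B_j\sub(\R\times E)^j$.
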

Given $n\in\N$ and $-\infty\leq a_j<\tau_j<b_j<\infty$ for $j\in\{1,\ldots,n\}$,
Hanusch also obtained extension operators
\begin{equation}\label{hanumult}
C^\ell(]a_1,b_1]\times \cdots\times\,]a_n,b_n]\times R,F)
\to C^\ell(]a_1,\infty[\,\times\cdots\times \,]a_n,\infty[\,\times R,F).
\end{equation}
As these are, essentially, compositions of extension operators
of the form~(\ref{thehanusch}), they are continuous
(see Remark~\ref{hanumultrem} for details).
We deduce (see Section~\ref{extcorner}):
\begin{cor}\label{extcorner}
For each $\ell\in\N_0\cup\{\infty\}$,
$d\in\N$, $m\in \{0,\ldots,d\}$
and locally convex space~$F$,
the restriction map
\[
C^\ell(\R^d,F)\to C^\ell([0,\infty[^m\times\R^{d-m},F)
\]
has a continuous linear right inverse.
Moreover, the restriction map
\[
C^\ell(\R^d,F)\to C^\ell([0,1]^d,F)
\]
has a continuous linear right inverse.\vspace{1mm}
\end{cor}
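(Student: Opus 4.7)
For the first statement, the plan is to combine a linear change of coordinates with Hanusch's multivariate extension operator (\ref{hanumult}). Let $\psi\colon\R^d\to\R^d$ be the linear involution negating the first $m$ coordinates; pullback by $\psi$ gives linear homeomorphisms $g\mapsto g\circ\psi$ between the relevant function spaces that intertwine restriction. It therefore suffices to exhibit a continuous linear right inverse for the restriction
\[
C^\ell(\R^d,F)\to C^\ell(]-\infty,0]^m\times\R^{d-m},F),
\]
and this is supplied directly by Hanusch's multivariate operator~(\ref{hanumult}) with $n=m$, $a_j=-\infty$, $b_j=0$, any $\tau_j\in(-\infty,0)$, and $R=\R^{d-m}$. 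Its continuity follows from Proposition~\ref{Hancts} combined with Remark~\ref{hanumultrem}, which realizes the multivariate extension as an iterated composition of one-variable operators.

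For the second statement, the plan is to factor the inclusion $[0,1]^d\hookrightarrow\R^d$ through $[0,\infty[^d$. The step $[0,\infty[^d\hookrightarrow\R^d$ is covered by the first statement with $m=d$, so only $[0,1]^d\hookrightarrow[0,\infty[^d$ requires a separate argument. Given $f\in C^\ell([0,1]^d,F)$, I would restrict to $f|_{]0,1]^d}$ and apply~(\ref{hanumult}) with $n=d$, $a_j=0$, $b_j=1$, any $\tau_j\in(0,1)$, and trivial $R$ to obtain $\tilde f\in C^\ell(]0,\infty[^d,F)$ agreeing with $f$ on $]0,1]^d$. The crux is then to show that $\tilde f$ and its partial derivatives of order at most $\ell$ admit (necessarily unique) continuous extensions to the full orthant $[0,\infty[^d$, yielding $\bar f\in C^\ell([0,\infty[^d,F)$ with $\bar f|_{[0,1]^d}=f$. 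This should rest on the observation that Hanusch's formula evaluates $f$ and its derivatives at points of $]0,1]^d$ whose coordinates stay within $]0,1]$ and depend continuously on the extension variable up to and including the boundary $\{x_j=0\}$, at which point the continuity of $f$ on the full closed box $[0,1]^d$ takes over.

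The main obstacle will be exactly this verification: Hanusch's formula is designed for domains that are open on the left, yet here I need it to behave continuously across the closed left boundary $\{x_j=0\}$ of $[0,\infty[^d$. Once this is in hand, continuity of $f\mapsto\bar f$ in the compact-open $C^\ell$ topologies will follow from Hanusch's seminorm estimates \cite[Theorem~1, part~2)]{Han}, since the evaluation arguments in his formula take values in the compact set $[0,1]^d$, so each $C^\ell$-seminorm of $\bar f$ on a compact subset of $[0,\infty[^d$ is dominated by a $C^\ell$-seminorm of $f$ on $[0,1]^d$. Composing $f\mapsto\bar f$ with the right inverse from the first statement (case $m=d$) then yields the desired continuous linear extension into $C^\ell(\R^d,F)$.
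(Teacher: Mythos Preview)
Your treatment of the half-space $[0,\infty[^m\times\R^{d-m}$ is exactly the paper's: coordinate reflection (Lemma~\ref{towa1}) composed with Hanusch's multivariate operator, whose continuity is Remark~\ref{hanumultrem}.

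For the cube, your route diverges from the paper's, and the obstacle you flag is genuine---but the paper sidesteps it rather than attacking it. The paper never tries to push Hanusch's output across the left endpoint $\{x_j=0\}$. Instead it extends one coordinate at a time from $[0,1]$ directly to~$\R$ by gluing two one-sided Hanusch extensions: given $\gamma\in C^\ell([0,1]\times R,F)$, apply Hanusch to $\gamma|_{]0,1]\times R}$ to obtain $\cF_1(\gamma)\in C^\ell(]0,\infty[\times R,F)$, apply a reflected Hanusch to $\gamma|_{[0,1[\times R}$ to obtain $\cF_2(\gamma)\in C^\ell(]{-\infty},1[\times R,F)$, observe that both equal $\gamma$ on $]0,1[\times R$, and patch them to land in $C^\ell(\R\times R,F)$ (continuity via the embedding of Lemma~\ref{sammelsu}(f)). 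Iterating over the $d$ coordinates using Lemma~\ref{towa2} finishes the job without ever touching a boundary that Hanusch leaves open.

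Your heuristic for the obstacle is also a bit off: at a point such as $(0,2)\in\partial[0,\infty[^2$ you are \emph{not} in the region where $\tilde f=f$, so ``continuity of $f$ on the closed box'' does not apply directly---the value $\tilde f(t_1,2)$ for small $t_1$ is Hanusch's formula applied to the slice $f(t_1,\cdot)$, not a point evaluation of~$f$. What rescues your plan is that Hanusch's $\cE_\tau$ on $]a,b]\times R$ is compatible with restriction in the $R$-factor (the extended value at $(t,r)$ for $t>b$ depends only on $g(\cdot,r)|_{[\tau,b]}$). You may therefore run the iteration with \emph{closed} parameter sets: apply $\cE_{\tau_1}$ with $R=[0,1]^{d-1}$ to get $h_1\in C^\ell(]0,\infty[\times[0,1]^{d-1},F)$, then glue with $f$ on $[0,1]^d$ (they agree on $]0,1]\times[0,1]^{d-1}$) to obtain $\bar h_1\in C^\ell([0,\infty[\times[0,1]^{d-1},F)$, with continuity of $f\mapsto\bar h_1$ from Lemma~\ref{sammelsu}(f) and Proposition~\ref{Hancts}. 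Repeating in each remaining coordinate produces your $\bar f$, and its restriction to the open orthant coincides with your $\tilde f$. So your programme can be completed, but the paper's two-sided gluing is cleaner and bypasses the analysis entirely.
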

We observe that extension operators for smooth functions on a domain $S$
automatically induce extension operators for smooth functions
on $R\times S$.
\begin{prop}\label{autopara}
Let $d\in \N$, $S\sub \R^d$ be a closed, regular subset,
$E$ and $F$ be locally convex spaces and $R\sub E$ be a
regular subset. If the restriction map
\[
C^\infty(\R^d,F)\to C^\infty(S,F)
\]
admits a continuous linear right inverse $C^\infty(S,F)\to C^\infty(\R^d,F)$,
then also the restriction map
\[
C^\infty(R\times \R^d,F)\to C^\infty(R\times S,F)
\]
admits a continuous linear right inverse.
\end{prop}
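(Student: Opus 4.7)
My plan is to apply the given right inverse in the second variable. Let $\sigma\colon C^\infty(S,F)\to C^\infty(\R^d,F)$ denote the continuous linear right inverse to restriction provided by hypothesis. I would define $\Sigma\colon C^\infty(R\times S,F)\to C^\infty(R\times\R^d,F)$ by
\[
\Sigma(\gamma)(x,y) := \sigma\bigl(\gamma(x,\cdot)\bigr)(y) \quad\text{for } x\in R,\;y\in\R^d,
\]
and show that $\Sigma$ is a continuous linear right inverse to $C^\infty(R\times\R^d,F)\to C^\infty(R\times S,F)$.

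The right-inverse property is immediate from $\sigma$ being a right inverse: for $y\in S$, $\Sigma(\gamma)(x,y)=\sigma(\gamma(x,\cdot))(y)=\gamma(x,y)$. Linearity is clear. The substantive task is to show that $\Sigma(\gamma)$ really lies in $C^\infty(R\times\R^d,F)$ and that $\Sigma$ is continuous for the compact-open $C^\infty$-topologies. I would do this using the exponential law for smooth maps of Bastiani type, which should yield topological isomorphisms
\[
C^\infty(R\times S,F)\cong C^\infty(R,C^\infty(S,F))\quad\text{and}\quad C^\infty(R\times\R^d,F)\cong C^\infty(R,C^\infty(\R^d,F))
\]
via $\gamma\mapsto(x\mapsto\gamma(x,\cdot))$. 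Under these identifications, $\Sigma$ corresponds to postcomposition $\phi\mapsto\sigma\circ\phi$, which is continuous linear because $\sigma$ is; so both smoothness of $\Sigma(\gamma)$ and continuity of $\Sigma$ are settled in one stroke.

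The main obstacle is establishing the exponential law in the form required here, since $R$ is a regular subset of a possibly infinite-dimensional locally convex space and $S\sub\R^d$ is only assumed closed and regular, so neither factor is open. The classical exponential law applies on the open part $R^0\times S^0$, and $C^\infty$ on regular subsets is defined via continuous extension of all iterated differentials from the interior. I would therefore deduce the two displayed isomorphisms by first invoking the classical exponential law on the interiors and then matching the continuous extensions on either side. This is essentially bookkeeping once one records how $d^{(j)}\gamma$ at a point of $R\times S$ corresponds to the partial differentials of the curve $x\mapsto\gamma(x,\cdot)$ and uses that $\sigma$ intertwines the extension-of-derivatives procedure on $S$ with the one on $\R^d$. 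With both isomorphisms in hand, the proposition follows immediately.
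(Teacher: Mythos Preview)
Your approach is the same as the paper's: define $\Sigma(\gamma)(x,y)=\sigma(\gamma(x,\cdot))(y)$ and justify it via the exponential-law isomorphisms $C^\infty(R\times S,F)\cong C^\infty(R,C^\infty(S,F))$ and $C^\infty(R\times\R^d,F)\cong C^\infty(R,C^\infty(\R^d,F))$, under which $\Sigma$ becomes postcomposition with~$\sigma$.

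The only point worth noting is that you treat establishing the exponential law as the ``main obstacle'' and propose to derive it from scratch by working on interiors and matching continuous extensions. In the paper this is not an obstacle at all: the required exponential law is simply cited as a known result (from \cite{AaS} for locally convex regular sets, and \cite{Rou} in general). The crucial hypothesis that makes it applicable is that $S$, being a closed subset of~$\R^d$, is \emph{locally compact}; this is the one observation you do not mention, and it is exactly what unlocks the cited exponential law for the second factor. So your proof is correct, but the work you anticipate in the last paragraph is unnecessary once you invoke local compactness of~$S$.
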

An analogous result for $C^\ell$-functions (Proposition~\ref{Cellcompatible})
is more technical and requires additional hypotheses.\\[2.3mm]
While the previous results just discussed were devoted
to functions on open (or more general) subsets of locally
convex spaces, in the second part of the article we turn to
questions concerning vector-valued functions on manifolds,
mappings to Lie groups modelled
on locally convex spaces, and sections in vector bundles
or fibre bundles.\\[2.3mm]
Beyond $C^\ell$-manifolds
modelled on locally convex spaces,
we consider\linebreak
$C^\ell$-manifolds \emph{with rough boundary}
modelled on locally convex, regular subsets
of locally convex spaces,
with chart changes taking interior
to interior (see \cite{GaN}).
The familiar $C^\ell$-manifolds with corners
(as in \cite{Cer},
\cite{Dou}, \cite{Mic})
are a prominent special case.\footnote{Compare
\cite{BAN}
for infinite-dimensional analogues
in the context of continuously Fr\'{e}chet differentiable
mappings between Banach spaces.}
More generally, some results are valid
for \emph{rough $C^\ell$-manifolds}
modelled on arbitrary regular subsets of locally convex spaces
(again with chart changes taking interior
to interior), as in~\cite{Rou},
and are formulated accordingly
(see \ref{defn-rough}
for more information on the preceding concepts).
Our main interest, however,
are ordinary manifolds and their most natural
generalizations, like manifolds
with corners or with rough boundary,
on which many central aspects of mathematical
analysis work very well (cf.\ \cite{GaN}).\\[2.3mm]
Note that every regular subset $R\sub M$
of a $C^\ell$-manifold~$M$ inherits
a rough $C^\ell$-manifold
structure (and likewise if~$M$ is a rough $C^\ell$-manifold).\footnote{These are the `full-dimensional
submanifolds' encountered later,
see Remark~\ref{char-full}.}\\[2.3mm]
Combining our results concerning
extension operators for vector-valued functions with
localization techniques
as in~\cite{RaS}, we obtain results concerning continuous
linear extension
operators between spaces of sections in vector bundles,
and results concerning locally defined smooth extension operators
between manifolds of mappings of the form $C^\ell(M,N)$.
In Section~\ref{sec-outlook}, we describe these findings,
with proofs in Section~\ref{proofs-outlook}.
We record two sample results, with terminology as in
Sections~\ref{sec-outlook} and~\ref{proofs-outlook}.
Notably,
the reader is referred to Definitions~\ref{defn-split-sub}
and \ref{full-submfd} for the relevant concepts of submanifolds.
The first theorem combines Proposition~\ref{aprop1}
and Theorem~\ref{thm-locops}.
For case~(a),
see already~\cite[\S4]{RaS}.
\begin{thm}\label{spec-loco}
Let $\ell\in \N_0\cup\{\infty\}$,
$F$ be a locally convex space and $M$ be a paracompact,
locally compact rough $C^\ell$-manifold.
Let $L\sub M$ be a closed subset.
Assume that at least one of the following conditions
is satisfied.
\begin{itemize}
\item[\rm(a)]
$\ell=\infty$, $M$ is a $\sigma$-compact
Riemannian manifold without boundary, $L\sub M$ a regular subset
satisfying the cusp condition with respect to the
metric arising from a Riemannian metric
$($as in {\rm\cite[Definition~3.1]{RaS})}
and $F$ has finite dimension;
\item[\rm(b)]
$\ell=\infty$,
$L\sub M$ is a full-dimensional submanifold with rough boundary
and $F$ sequentially complete;
\item[\rm(c)]
$L\sub M$ is a full-dimensional submanifold with corners;
or
\item[\rm(d)]
$L\sub M$ is a split rough submanifold.
\end{itemize}
Then the following holds
for each $C^\ell$-vector bundle
$E\to M$ with typical fibre~$F$:
\begin{itemize}
\item[\rm(i)]
The restriction map
\[
\Gamma_{C^\ell}(E)\to \Gamma_{C^\ell}(E|_L)
\]
between spaces of $C^\ell$-sections
admits a continuous linear right inverse;
\item[\rm(ii)]
The restriction map
\[
\Gamma_{C^\ell_c}(E)\to \Gamma_{C^\ell_c}(E|_L)
\]
admits a continuous linear right inverse
$($for the spaces of compactly supported $C^\ell$-sections in the vector bundles$)$.
\end{itemize}
\end{thm}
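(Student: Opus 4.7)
The plan is to localise. For each $x\in M$ I would choose a bundle trivialisation $\phi_x\colon E|_{U_x}\to U_x\times F$ whose base chart $\psi_x\colon U_x\to V_x$ sends $U_x\cap L$ onto a regular subset $R_x\sub V_x$ to which one of the extension results from the introduction applies. Concretely: (a)~by the atlas construction in \cite[\S 4]{RaS}, charts can be arranged so that $R_x\sub V_x\sub\R^d$ satisfies the cusp condition, and since $F\cong\R^n$, Remark~\ref{RS-ops} applied coordinatewise furnishes a continuous linear right inverse $T_x\colon C^\infty(R_x,F)\to C^\infty(V_x,F)$; (b)~by the local description of a full-dimensional submanifold with rough boundary, charts can be arranged with $V_x$ open in $\R^d$ and $R_x$ closed, convex, and of dense interior, so that Corollary~\ref{convex-ext} supplies~$T_x$ (using sequential completeness of~$F$); (c)~for a submanifold with corners one chooses $V_x=\R^d$ and $R_x=[0,\infty[^m\times\R^{d-m}$ and applies Corollary~\ref{extcorner}; (d)~a split rough submanifold has local product charts $R_x\times R_x'$ with $L$ corresponding to $R_x\times\{0\}$, and one sets $(T_xf)(u,v):=f(u)$.

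Paracompactness of~$M$ lets me refine the chart cover to a locally finite cover $(U_i)_{i\in I}$ and equip it with a subordinate $C^\ell$-partition of unity $(\chi_i)$ with $\Supp(\chi_i)\sub U_i$ compact; I would also pick cutoffs $\tilde\chi_i\in C^\ell_c(U_i,\R)$ equal to~$1$ on $\Supp(\chi_i)$. Given $\sigma\in\Gamma_{C^\ell}(E|_L)$, the section $\chi_i\sigma$ has compact support in $U_i\cap L$; I would transport it through~$\phi_i$ to a compactly supported function on~$R_i$, apply~$T_i$, multiply the extension by $\tilde\chi_i\circ\psi_i^{-1}$, transport back by $\phi_i^{-1}$, and extend by zero to a $C^\ell$-section $\sigma_i$ of $E\to M$ with compact support contained in $U_i$. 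The locally finite sum
\[
T(\sigma):=\sum_{i\in I}\sigma_i
\]
is then a well-defined $C^\ell$-section of~$E$. Because $\tilde\chi_i\equiv 1$ on $\Supp(\chi_i)$ and $\sum_i\chi_i=1$, restricting to a point of~$L$ yields $T(\sigma)|_L=\sum_i\chi_i\sigma=\sigma$. Linearity of~$T$ is immediate. Continuity follows from continuity of each chart-level step (multiplication by $\chi_i$ and $\tilde\chi_i$, passage through trivialisations, each $T_i$, and zero-extension of a compactly supported section), combined with the local finiteness of $(\Supp(\sigma_i))_i$ in~$M$. For~(ii), if $\Supp(\sigma)$ is compact then only finitely many summands are nonzero, so $T(\sigma)$ has compact support and~$T$ restricts to a continuous linear map on $\Gamma_{C^\ell_c}(E|_L)$ in the natural inductive-limit topology.

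The main obstacle, I expect, is arranging the adapted atlas in case~(b): from the definition of ``full-dimensional submanifold with rough boundary'' (Definition~\ref{full-submfd}) one must extract that around each boundary point there is an ambient chart of~$M$ whose image is open in~$\R^d$ and which sends $L\cap U_x$ onto a closed \emph{convex} regular subset, so that Corollary~\ref{convex-ext} rather than some weaker statement applies. A smaller technical point is the interaction between the two partitions $(\chi_i)$ and $(\tilde\chi_i)$: the cutoffs $\tilde\chi_i$ ensure both that the extended local sections have compact support (so zero-extension makes sense and~(ii) holds) and that~$T$ remains a right inverse, because $\tilde\chi_i$ does not alter $T_i(f_i)$ on the set where $\chi_i\neq 0$. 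Cases~(a), (c) and~(d) are then straightforward once the local extensions~$T_x$ are in place.
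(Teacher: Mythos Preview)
Your proposal is correct and follows essentially the same route as the paper, which factors the argument into Proposition~\ref{aprop1} (the partition-of-unity glueing, done with a single cutoff applied \emph{after} the local extension rather than your before-and-after scheme, but equivalently) and Theorem~\ref{thm-locops} (verifying that local extension operators exist in each of cases (a)--(d), using exactly Remark~\ref{RS-ops}, Corollary~\ref{convex-ext}, Corollary~\ref{extcorner}, and the retraction of~\ref{projextensions}, respectively). Your worry about case~(b) is resolved by the definition itself: a full-dimensional submanifold with rough boundary is required to have $\phi(U_\phi\cap L)$ \emph{locally convex} in~$V_\phi$, so after shrinking the chart one obtains a closed convex regular subset to which Corollary~\ref{convex-ext} applies.
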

\begin{numba}
Let $\ell\in\N_0\cup\{\infty\}$.
If $M$ is a paracompact, locally compact, rough $C^\ell$-manifold
(or a paracompact, locally compact topological space, if $\ell=0$),
and $N$ a smooth manifold modelled on a locally convex space~$F$
such that $N$ admits a local addition (see, e.g., \cite{AGS}
for this concept),
then $C^\ell(M,N)$ admits a smooth manifold structure
independent of the local addition (see \cite{Rou}).
In fact, in the case of compact~$M$,
the manifold structure on $C^\ell(M,N)$
can be constructed as in~\cite{AGS},
noting that the vital exponential laws for manifolds
with rough boundary in
\cite{AaS} remain valid for rough manifolds
(the proof given in \cite{GaN} can be adapted
with minor modifications, as explained in~\cite{Rou}).
For general~$M$, let $(K_j)_{j\in J}$
be a locally finite family of compact, regular subsets of~$M$
whose interiors~$K_j^0$ cover~$M$;
then the image $S:=\rho(C^\ell(M,N))$ of the injective map
\[
\rho\colon C^\ell(M,N)\to{\prod_{j\in J}}^{\fb}C^\ell(K_j,N),\quad
\gamma\mto(\gamma|_{K_j})_{j\in J}
\]
is a submanifold of the fine box product
of the smooth manifolds $C^\ell(K_j,N)$,
and we endow $C^\ell(M,N)$ with the smooth manifold structure
turning $\rho|^S$ into a $C^\infty$-diffeomorphism
(see \cite{Rou} for details).
\end{numba}
\begin{numba}
We here use the following concept: If $(M_j)_{j\in J}$
is any family of smooth manifolds modelled
in locally convex spaces, we endow the
cartesian product $P:=\prod_{j\in J}M_j$ with the
so-called \emph{fine box topology} $\cO_{\fb}$,
the final topology with respect to the mappings
\begin{equation}\label{para-box}
\psi_\phi \colon \left(\bigoplus_{j\in J}E_j\right)\cap\prod_{j\in J}U_j\to P,
\;\, (x_j)_{j\in J}\to(\phi_j^{-1}(x_j))_{j\in J},
\end{equation}
for $\phi:=(\phi_j)_{j\in J}$ ranging through the
families of charts $\phi_j\colon U_j\to V_j\sub E_j$
of~$M_j$ such that $0\in V_j$;
here $E_\phi:=\bigoplus_{j\in J}E_j$
is endowed with the locally convex direct sum topology,
and the left-hand side $V_\phi$ of (\ref{para-box}),
which is an open subset of~$E_\phi$,
is endowed with the topology induced by~$E_\phi$.
Then $U_\phi:=\psi_\phi(V_\phi)$ is open in~$P$,
$\psi_\phi$ is a homeomorphism onto its image
and the maps $(\psi_\phi|^{U_\phi})^{-1}\colon U_\phi\to V_\phi\sub E_\phi$
are smoothly compatible (as a consequence of
\cite[Proposition~7.1]{Mea})
and hence form an atlas for a $C^\infty$-manifold
structure on~$P$. Following~\cite{Rou},
we write $P^{\fb}$ for~$P$, endowed with the topology~$\cO_{\fb}$
and the smooth manifold structure just described,
and call $P^{\fb}$ the \emph{fine box product}.\footnote{An
analogous construction works for families $(M_j)_{j\in J}$
of $C^\ell$-manifolds for finite $\ell$,
with a possible loss of one order
of differentiability if $J$ is uncountable (see \cite{Rou}).}
\end{numba}
As to the smooth manifold structure on $C^\ell(M,N)$,
see \cite{AGS} for the special case
where $M$ is a compact $C^\infty$-manifold with rough boundary
(or already \cite{RaS} if $\ell=\infty$ and~$N$ is a Fr\'{e}chet manifold);
\cite{Mic} for the case where $M$ is a paracompact, finite-dimensional
smooth manifold with corners and $\dim(N)<\infty$
(for compact manifolds~$M$
and finite-dimensional~$N$,
see also \cite{Eel} and \cite{Wit};
for discussions in the convenient setting,
see \cite{KaM}).\\[2.3mm]
The concept of a submersion is recalled
in Definition~\ref{def-subm}.
Theorem~\ref{thm-mapmfd} subsumes the following result.
\begin{thm}\label{spec-mapma}
Let $\ell\in \N_0\cup\{\infty\}$
and $M$ be a paracompact, locally compact, rough $C^\ell$-manifold.
Let $L\sub M$ be a closed subset and $N$ be a smooth manifold modelled
on a locally convex space~$F$. If $N$ admits a smooth local addition,
then the restriction map
\[
C^\ell(M,N)\to C^\ell(L,N),\quad \gamma\mto\gamma|_L
\]
is a smooth submersion in each of the following cases:
\begin{itemize}
\item[\rm(a)]
$\ell=\infty$, $M$ is a Riemannian manifold without boundary,
$L\sub M$ is a regular subset satisfying the cusp condition,
and $\dim(F)<\infty$ $($cf.\ {\rm\cite{RaS}}$)$;
\item[\rm(b)]
$\ell=\infty$,
$L$ is a full-dimensional submanifold with rough boundary of~$M$
and $F$ is sequentially complete;
\item[\rm(c)]
$L$ is a full-dimensional submanifold with corners of~$M$; or
\item[\rm(d)]
$L\sub M$ is a split rough submanifold. 
\end{itemize}
\end{thm}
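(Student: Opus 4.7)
The plan is to reduce Theorem~\ref{spec-mapma} to the linear statement Theorem~\ref{spec-loco}(i) via the standard chart construction on manifolds of mappings that comes from a local addition on~$N$. Fix $\gamma\in C^\ell(M,N)$. Using the local addition $\Sigma\colon \cU\to N$ (defined on an open neighbourhood $\cU$ of the zero section of $TN$), one obtains a chart of $C^\ell(M,N)$ around $\gamma$ of the form
\[
\Omega_\gamma\to C^\ell(M,N),\quad \sigma\mto \Sigma\circ\sigma,
\]
where $\Omega_\gamma$ is an open zero-neighbourhood in the locally convex space $\Gamma_{C^\ell}(\gamma^*TN)$ of $C^\ell$-sections of the pullback bundle $\gamma^*TN\to M$; this bundle has typical fibre $F$. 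Performing the same construction for $\gamma|_L\in C^\ell(L,N)$ produces a compatible chart around $\gamma|_L$ modelled on $\Gamma_{C^\ell}((\gamma^*TN)|_L)$, and, by construction, the restriction map $\rho\colon C^\ell(M,N)\to C^\ell(L,N)$, $\eta\mto\eta|_L$ is intertwined in these charts with the \emph{linear} restriction of sections
\[
r\colon \Gamma_{C^\ell}(\gamma^*TN)\to \Gamma_{C^\ell}((\gamma^*TN)|_L),\quad \sigma\mto \sigma|_L .
\]

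Next, in each of the four cases~(a)--(d), the pair $(M,L)$ together with the $C^\ell$-vector bundle $E:=\gamma^*TN\to M$ satisfies the hypotheses of Theorem~\ref{spec-loco}. That theorem therefore supplies a continuous linear right inverse $s\colon \Gamma_{C^\ell}(E|_L)\to \Gamma_{C^\ell}(E)$ of~$r$. Because continuous linear maps are smooth, $s$ restricted to a sufficiently small zero-neighbourhood (so that its image lies in $\Omega_\gamma$) produces, read back through the charts, a smooth local right inverse of~$\rho$ mapping $\gamma|_L$ to~$\gamma$. This is precisely the local section property from Definition~\ref{def-subm}, and since $\gamma$ was arbitrary, $\rho$ is a smooth submersion.

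The main obstacle is verifying, in each of the cases~(a)--(d), the geometric prerequisites that make the above outline run: that $C^\ell(L,N)$ actually carries a smooth manifold structure for which the displayed chart around $\gamma|_L$ is admissible, that the pullback bundle $\gamma^*TN$ together with $L\sub M$ is an instance of Theorem~\ref{spec-loco}, and that the two charts fit together so that restriction of functions really corresponds to restriction of sections. In case~(a) this rests on verifying that the cusp condition passes from $M$ to an atlas-chart picture compatible with the Riemannian local addition; in cases~(b)--(d), the hypothesis that $L$ is a full-dimensional (resp.\ split) submanifold ensures that the rough $C^\ell$-manifold structure on $L$ and its tangent bundle match the restriction $(\gamma^*TN)|_L=(\gamma|_L)^*TN$, so that a chart around $\gamma$ of $C^\ell(M,N)$ induces a chart around $\gamma|_L$ of $C^\ell(L,N)$ by mere restriction. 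Once these compatibilities are in place, the proof is an immediate translation of Theorem~\ref{spec-loco}(i) into the chart language of manifolds of mappings.
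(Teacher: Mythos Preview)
Your overall strategy is exactly the paper's: pass to the local-addition charts, identify the restriction map with the linear restriction of sections of the pullback bundle $\gamma^*(TN)$, and invoke the vector-bundle extension result to get a continuous linear right inverse, hence a submersion chart. There is, however, one genuine inaccuracy that matters when $M$ is non-compact.

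The modelling space of $C^\ell(M,N)$ at $\gamma$ is \emph{not} $\Gamma_{C^\ell}(\gamma^*TN)$ with the compact-open $C^\ell$-topology, but the space $\Gamma_{C^\ell_c}(\gamma^*TN)$ of \emph{compactly supported} sections with its locally convex direct-limit topology (see the description in~\ref{def-map-mfd}). Likewise, the chart around $\gamma|_L$ is modelled on $\Gamma_{C^\ell_c}((\gamma|_L)^*TN)$. Consequently the linear map you must split is
\[
r\colon \Gamma_{C^\ell_c}(\gamma^*TN)\to \Gamma_{C^\ell_c}((\gamma^*TN)|_L),
\]
and the correct input is Theorem~\ref{spec-loco}\,(ii), not~(i). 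The paper does precisely this (see the proof of Theorem~\ref{thm-mapmfd}). Your version would go through verbatim only for compact~$M$; for general paracompact, locally compact~$M$ the compact-open $C^\ell$-topology on $\Gamma_{C^\ell}$ is strictly coarser than what the manifold structure uses, so a right inverse continuous for the former need not be continuous for the latter, and in any case the chart domain does not sit inside $\Gamma_{C^\ell}$ as an open set. Once you replace $\Gamma_{C^\ell}$ by $\Gamma_{C^\ell_c}$ and cite part~(ii), your argument is complete and coincides with the paper's.

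A minor stylistic point: what you call ``the local section property from Definition~\ref{def-subm}'' is not literally how that definition reads. The definition asks that, in suitable charts, the map be a continuous linear map admitting a continuous linear right inverse; you have produced exactly such charts (the $\phi_\gamma$, $\phi_{\gamma|_L}$) and such a right inverse ($s$), so you can conclude directly that $\rho$ is a submersion without passing through smooth local sections.
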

We mention that the conclusions of Theorems~\ref{spec-loco}
and \ref{spec-mapma} remain valid if $\ell=0$,
$M$ is any metrizable, locally compact topological
space and $L\sub M$ any closed subset
(see Proposition~\ref{use-dugu}).
This is a consequence of the
Dugundji
Extension Theorem~\cite{Dug},
which establishes a linear
extension operator
\[
\cE\colon C(Y,F)\to C(X,F)
\]
for each locally convex space~$F$,
metric space~$X$ and closed subset $Y\sub X$.
As the image of $\cE(f)$ is contained
in the convex hull of $f(Y)$ for each $f\in C(Y,F)$,
it is clear that $\cE$ restricts to a continuous linear map
\begin{equation}\label{dudubc}
BC(Y,F)\to BC(X,F)
\end{equation}
between spaces of bounded continuous functions,
endowed with the topology of uniform convergence
(as already observed in \cite{Dug}
for the special case $F=\R$).\\[2.3mm]
We obtain a variant of Dugundji's extension
operators by a simple construction, which can dispense
with the complicated geometry pervading the work~\cite{Dug}.
In the simplified approach, we can establish continuity with respect to
the compact-open topology under
mild hypotheses:
\begin{prop}\label{own-dug}
Let $(X,d)$ be a metric space and $Y\sub X$
be a closed subset. If $Y$ is locally compact
or $Y$ is complete in the metric
induced by~$d$,
then the restriction map
\[
C(X,F)\to C(Y,F)
\]
admits a continuous linear right inverse,
for each locally convex space~$F$.
\end{prop}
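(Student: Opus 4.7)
The plan is to construct a continuous linear right inverse
\[
\cE\colon C(Y,F)\to C(X,F)
\]
by a Dugundji-style formula built from a partition of unity on $V:=X\setminus Y$ with carefully chosen ``footprints'' in~$Y$. Using paracompactness of the metric space~$V$, I would pick a locally finite open refinement $(U_\alpha)_{\alpha\in A}$ of the cover $\{B(x,d(x,Y)/3):x\in V\}$ of~$V$. For each~$\alpha$, fix $x_\alpha\in V$ with $U_\alpha\sub B(x_\alpha,d(x_\alpha,Y)/3)$ and pick $y_\alpha\in Y$ with $d(y_\alpha,x_\alpha)<2\hspace*{.3mm} d(x_\alpha,Y)$; let $(\phi_\alpha)_{\alpha\in A}$ be a continuous partition of unity on~$V$ subordinate to $(U_\alpha)$, extended by~$0$ to~$X$. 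Since $d(U_\alpha,Y)\geq (2/3)\hspace*{.3mm} d(x_\alpha,Y)>0$, the closure of each $U_\alpha$ in~$X$ is disjoint from~$Y$, so the extended $\phi_\alpha$ is continuous on~$X$. Define
\[
\cE(f)(x):=\sum_{\alpha\in A}\phi_\alpha(x)f(y_\alpha)\;\text{ for }x\in V,\qquad \cE(f)(x):=f(x)\;\text{ for }x\in Y.
\]
Local finiteness in~$V$ makes each sum finite, so $\cE$ is a well-defined linear map with $\cE(f)|_Y=f$.

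I would next verify that $\cE(f)\in C(X,F)$. Continuity on~$V$ is immediate. Continuity at $y_0\in Y$ rests on the key estimate that $u\in U_\alpha$ forces $d(u,y_\alpha)\leq (7/2)\hspace*{.3mm}d(u,Y)$: indeed $d(u,x_\alpha)<d(x_\alpha,Y)/3$ implies $d(u,Y)\geq (2/3)\hspace*{.3mm}d(x_\alpha,Y)$, while $d(u,y_\alpha)<(7/3)\hspace*{.3mm}d(x_\alpha,Y)$. Hence for $x\in V$ with $d(x,y_0)<\ve$, every $\alpha$ with $\phi_\alpha(x)>0$ has $d(y_\alpha,y_0)<(9/2)\ve$, so $f(y_\alpha)-f(y_0)$ lies in any preassigned convex $0$-neighbourhood in~$F$ once $\ve$ is small enough. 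Because $\sum_\alpha \phi_\alpha(x)=1$ on~$V$, $\cE(f)(x)-f(y_0)$ lies in the same convex neighbourhood.

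To establish continuity of~$\cE$ for the compact-open topology, given compact $K\sub X$ and a continuous seminorm~$p$ on~$F$ one has
\[
\sup_{x\in K} p(\cE(f)(x))\;\leq\; \sup_{y\in K'} p(f(y)),
\]
where $K':=(K\cap Y)\cup S_K$ with $S_K:=\{y_\alpha:U_\alpha\cap K\neq\emptyset\}$; it suffices to show $\overline{K'}$ is compact in~$Y$. I would argue that $S_K$ is totally bounded in~$Y$: given $\ve>0$, compactness of~$K$ and closedness of~$Y$ furnish $\delta>0$ with $(7/2)\delta<\ve$ such that every $x\in K$ with $d(x,Y)<\delta$ satisfies $d(x,K\cap Y)<\ve$ (otherwise a subsequential limit in~$K$ would lie in~$Y$ yet at distance $\geq\ve$ from~$K\cap Y$). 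For any $\alpha$ admitting such an~$x$ in $U_\alpha\cap K$ one then has $y_\alpha\in B_{2\ve}(K\cap Y)$; the remaining $\alpha$ have $U_\alpha$ meeting the compact subset $\{x\in K:d(x,Y)\geq\delta\}$ of~$V$, hence are finite in number by local finiteness in~$V$. Combined with a finite $\ve$-net for the compact set~$K\cap Y$, this yields a finite $3\ve$-net for~$S_K$.

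If $Y$ is complete, the totally bounded set $S_K$ has compact closure in~$Y$. If $Y$ is locally compact, compactness of $K\cap Y$ and a standard covering argument produce $r>0$ such that $\overline{B_r(K\cap Y)}\cap Y$ is compact in~$Y$; choosing $\ve$ sufficiently small above places $S_K$ inside this compact set together with the finitely many exceptional~$y_\alpha$. In either case $\overline{K'}$ is compact in~$Y$, so the estimate above establishes continuity of $\cE$. The main obstacle is the total-boundedness argument: it hinges both on the one-third radius choice (tightly coupling $d(u,y_\alpha)$ to $d(u,Y)$) and on the compactness fact that points of~$K$ approaching~$Y$ must approach~$K\cap Y$.
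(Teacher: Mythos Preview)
Your argument is correct and follows essentially the same route as the paper: a Dugundji-type extension via a partition of unity on $X\setminus Y$, followed by the key step of showing that for each compact $K\sub X$ the ``footprint'' set $(K\cap Y)\cup\{y_\alpha:U_\alpha\cap K\neq\emptyset\}$ has compact closure in~$Y$ under either hypothesis. The only cosmetic difference is that you use the classical Dugundji cover by balls $B(x,d(x,Y)/3)$, whereas the paper organizes $X\setminus Y$ into dyadic layers $W_n=\{x:d_Y(x)\in\,]2^{-n-1},2^{-n+1}[\}$ and refines each layer by sets of diameter $\leq 2^{-n+1}$; the splitting into ``near~$Y$'' (controlled by total boundedness or a compact tube around $K\cap Y$) versus ``far from~$Y$'' (finitely many indices by local finiteness on a compact subset of $X\setminus Y$) is identical in both.
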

The conclusion of Proposition~\ref{own-dug}
remains valid if $X$ is any paracompact topological
space and $Y\sub X$ a closed, locally compact, metrizable
subset (see Corollary~\ref{cor-triv-bun}),
or if $X$ is a completely regular topological space
and $Y\sub X$ a compact, metrizable subset (see Corollary~\ref{unif-cor}).\\[2.3mm]
Extension operators also play a role in the approach
to manifolds of mappings pursued in~\cite{Con}.\\[2.3mm]
Using Theorem~\ref{thmsmoo},
we show that the test function group
$C^\infty_c(M,G)$ is dense in the Lie group
$C_0(M,G)$ of continuous functions vanishing at infinity
for each finite-dimensional smooth manifold~$M$
and Lie group~$G$ modelled on a locally convex space,
and (for paracompact~$M$)
also in the Lie group $C_c^\ell(M,G)$ for all
$\ell\in\N_0$
(see Propositions~\ref{ctscaseprop}
and~\ref{densetf}, which include results
for locally compact rough manifolds and locally compact
manifolds with rough boundary).
Moreover, $C^\infty_c(\R^d,G)$
is dense in the Lie group $\cS(\R^d,G)$
of rapidly decreasing $G$-valued smooth mappings
on~$\R^d$ for all $d\in\N$ (see Remark~\ref{denserapid}).\footnote{More generally, we study density of $C^\infty_c(\Omega,G)$
in Walter's weighted mapping groups on an open subset $\Omega\sub\R^d$
(as in \cite{Wal}),
see Proposition~\ref{densewt}.}
This information is used in~\cite{HDL}
to calculate the homotopy groups of the Lie groups
$\cS(\R^d,G)$; the result is that
\[
\pi_k(\cS(\R^d,G))\cong \pi_{k+d}(G)
\]
for all $k\in\N_0$ (as conjectured
and formulated as an open problem~in~\cite{BCR}).
For the density of $C^\infty_c(M,G)$ in $C_c(M,G)$
when~$M$ is a $\sigma$-compact finite-dimensional smooth
manifold without boundary, cf.\ already \cite[Theorem~A.3.3]{Cen}
or \cite[Lemma~A.5]{Cur},
where also the inclusion map $C^\infty_c(M,G)\to C_0(M,G)$
is considered and shown to be a weak homotopy equivalence
(see \cite[Theorem~A.10]{Cur}).\\[2.3mm]
We also obtain information on function spaces on locally
compact rough $C^\ell$-manifolds.
For example, we show
that $C^\ell(M,F)\cong F\tensor C^\ell(M,\R)$
for each $\ell\in\N_0\cup\{\infty\}$,
locally compact $C^\ell$-manifold~$M$
with rough boundary and complete locally convex space~$F$
(see Proposition~\ref{tensorcases}(b)).
Using extension operators as a tool, we deduce
(see Proposition~\ref{nucity}):
\begin{prop}
If $M$ is a locally compact
$C^\infty$-manifold with rough boundary
and $F$ a nuclear locally convex space,
then also $C^\infty(M,F)$ is nuclear.
In particular, $C^\infty(M,\R)$ is nuclear.
If~$M$ is $\sigma$-compact and $F$ a nuclear
Fr\'{e}chet space,
then $C^\infty(M,F)$ is a nuclear Fr\'{e}chet
space.
\end{prop}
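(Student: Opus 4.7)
The plan is to reduce nuclearity of $C^\infty(M,F)$ to nuclearity of the scalar-valued space $C^\infty(M,\R)$ via the tensor-product representation of Proposition~\ref{tensorcases}(b), and then to establish the latter by embedding $C^\infty(M,\R)$ as a closed subspace of a product of pieces $C^\infty(K_j,\R)$ that are complemented in the classical nuclear Fr\'{e}chet space $C^\infty(\R^d,\R)$. Assuming first that~$F$ is complete and nuclear, Proposition~\ref{tensorcases}(b) gives $C^\infty(M,F)\cong F\tensor C^\infty(M,\R)$; the completed $\ve$-tensor product of two nuclear spaces is nuclear, so it suffices to show $C^\infty(M,\R)$ is nuclear. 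For a general nuclear~$F$ I would pass to its completion~$\wh{F}$, which is still nuclear; the canonical inclusion $C^\infty(M,F)\hookrightarrow C^\infty(M,\wh{F})$ is a topological embedding (continuous seminorms on~$F$ correspond to continuous seminorms on~$\wh{F}$), so $C^\infty(M,F)$ is a subspace of a nuclear space, hence nuclear.

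For the scalar case, I would use paracompactness and local compactness of~$M$ to pick a locally finite family $(K_j)_{j\in J}$ of compact subsets covering~$M$, each contained in a chart $\phi_j\colon U_j\to V_j\sub R_j\sub\R^d$ of~$M$, chosen so that $\phi_j(K_j)\sub\R^d$ is a closed convex set with dense interior (for interior charts take closed balls; for boundary charts shrink the chart so that the image is a suitable compact convex piece of the model regular subset~$R_j$). The restriction map
\[
\rho\colon C^\infty(M,\R)\to\prod_{j\in J}C^\infty(K_j,\R),\quad \gamma\mto(\gamma|_{K_j})_{j\in J}
\]
is a topological embedding with closed image: closedness follows from the gluing conditions $\gamma_j|_{K_i\cap K_j}=\gamma_i|_{K_i\cap K_j}$, and the embedding property follows from local finiteness of the cover, since every compact $K\sub M$ meets only finitely many $K_j$ and hence every compact-open $C^\infty$-seminorm on $C^\infty(M,\R)$ is dominated by a finite sum of seminorms pulled back from the factors. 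By Corollary~\ref{convex-ext}, $C^\infty(\R^d,\R)\to C^\infty(\phi_j(K_j),\R)$ admits a continuous linear right inverse, so each $C^\infty(K_j,\R)\isom C^\infty(\phi_j(K_j),\R)$ is complemented in the nuclear Fr\'{e}chet space $C^\infty(\R^d,\R)$ and is therefore nuclear; arbitrary products and closed subspaces of nuclear spaces are nuclear, so $C^\infty(M,\R)$ is nuclear.

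For the Fr\'{e}chet statement, if~$M$ is $\sigma$-compact then a countable locally finite cover exists, so $C^\infty(M,F)$ admits a countable defining system of seminorms and is metrizable; for Fr\'{e}chet~$F$, a standard uniform Cauchy argument (Cauchy sequences in $C^\infty(M,F)$ are uniformly Cauchy with all derivatives on each compact set and converge in~$F$ by completeness) gives completeness, so $C^\infty(M,F)$ is Fr\'{e}chet. Combined with nuclearity, this yields a nuclear Fr\'{e}chet space. The main obstacle I anticipate is arranging the cover so that each $\phi_j(K_j)$ really is convex with dense interior at boundary points, where the model regular subset~$R_j$ may be irregularly shaped; if this cannot be achieved directly one can relax ``convex'' to ``closed regular subset satisfying the cusp condition'' and invoke Remark~\ref{RS-ops} in place of Corollary~\ref{convex-ext}.
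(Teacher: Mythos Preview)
Your approach is essentially the same as the paper's: embed $C^\infty(M,\R)$ into a product of spaces $C^\infty(A_x,\R)$ with $A_x$ mapped diffeomorphically onto a compact convex set with dense interior in~$\R^d$, use Corollary~\ref{convex-ext} to see each factor is complemented in the nuclear space $C^\infty(\R^d,\R)$, and then pass to vector-valued~$F$ via Proposition~\ref{tensorcases}(b) and the completion trick. Two remarks: the paper simply takes the product over \emph{all} $x\in M$ (no local finiteness is needed, since arbitrary products and arbitrary subspaces of nuclear spaces are nuclear), and the ``obstacle'' you anticipate evaporates because a $C^\infty$-manifold with rough boundary is by definition modelled on \emph{locally convex} regular subsets, so every point of a chart range has a compact convex neighbourhood inside that range---no recourse to the cusp condition is required.
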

As a tool, we construct smoothing operators
for vector-valued
functions also on manifolds with corners, notably on $[0,1]^d$
(Lemma~\ref{smoocube}).\\[2.3mm]
Recall that a Hausdorff topological space~$X$
is called a \emph{$k$-space}
if a subset $A\sub X$ is closed if and only if $A\cap K$ is closed
for each compact subset $K\sub X$.
If $X$ is a $k$-space, then a function
$f\colon X\to Y$ to a topological space~$Y$
is continuous if and only if $f|_K$ is continuous for each compact
subset $K\sub X$.
It is well known and easy to check that
every locally compact Hausdorff space is
a $k$-space, and every metrizable
topological space.\\[2.3mm]
A Hausdorff topological space~$X$
is called a \emph{$k_\R$-space}
if for every real-valued function $f\colon X\to\R$,
continuity of~$f$ is equivalent to continuity of
$f|_K$ for each compact subset $K\sub X$
(the same then holds for functions from~$X$
to completely regular topological spaces).
Every $k$-space
is a $k_\R$-space, but not conversely.
For example, the product topology makes
$\prod_{j\in J}X_j$ a $k_\R$-space
for each family $(X_j)_{j\in J}$
of locally compact Hausdorff spaces (\cite{Nob},
also \cite{GaM}); but
$\R^J$ fails to be a $k$-space
if the cardinality of $J$ is at least
$2^{\aleph_0}$ (see, e.g., \cite[Remark~A.5.16]{GaN}).\\[2.3mm]
For later use, we prove the following
result (see Corollary~\ref{eval-spec}),
which slightly generalizes previous
findings in \cite{HYP}:
\begin{prop}
Let $E$, $F$, and $X$ be locally convex spaces,
$\ell\in \N_0\cup\{\infty\}$,
and $R\sub X$ be a regular subset.
Let $\gamma\colon R\to\cL(E,F)$
be a $C^\ell$-map to the space of continuous
linear operators, endowed with the compact-open topology.
Let $\eta\colon R\to E$ be a $C^\ell$-map.
If $R\times X$ is a $k_\R$-space, then
\[
R\to F,\quad x\mto\gamma(x)(\eta(x))
\]
is a $C^\ell$-map.
\end{prop}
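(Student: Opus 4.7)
The plan is to proceed by induction on $\ell$, using the $k_\R$-hypothesis as the bridge from continuity on compact subsets to global continuity of the iterated derivatives.

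For the base case $\ell = 0$, I first observe that $R$ is itself a $k_\R$-space: the map $x \mto (x, 0)$ is a closed embedding of $R$ into $R \times X$, and closed subspaces of $k_\R$-spaces are $k_\R$. It therefore suffices to show that $\sigma(x) := \gamma(x)(\eta(x))$ is continuous on every compact subset $K \sub R$. For a net $x_\alpha \to x_0$ in $K$ and a continuous seminorm $q$ on $F$, I would use the estimate
\[
q(\sigma(x_\alpha) - \sigma(x_0)) \leq \sup_{v \in \eta(K)} q\bigl((\gamma(x_\alpha) - \gamma(x_0))(v)\bigr) + q\bigl(\gamma(x_0)(\eta(x_\alpha) - \eta(x_0))\bigr).
\]
Since $\eta(K)$ is compact in $E$, the first summand vanishes by convergence of $\gamma(x_\alpha) \to \gamma(x_0)$ in the compact-open topology on $\cL(E,F)$, and the second by continuity of the linear operator $\gamma(x_0)$.

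For the inductive step $\ell \geq 1$, assuming the statement for $\ell - 1$, I would verify that $\sigma|_{R^0}$ is classically $C^\ell$ in the Michal--Bastiani sense by repeated application of the Leibniz product rule on the open domain $R^0$, yielding
\[
d^{(j)}\sigma(x)(v_1, \ldots, v_j) = \sum_{I \sqcup J = \{1, \ldots, j\}} \bigl(d^{(|I|)}\gamma(x)(v_I)\bigr)\bigl(d^{(|J|)}\eta(x)(v_J)\bigr)
\]
for $j \leq \ell$, $x \in R^0$, and $v_1, \ldots, v_j \in X$. Each summand is the composition of a continuous $\cL(E,F)$-valued map with evaluation at a continuous $E$-valued map on $R^0 \times X^j$; by the $C^\ell$-hypotheses on $\gamma$ and $\eta$, both factor maps extend continuously to $R \times X^j$. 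Reapplying the base-case compact-set argument then yields a continuous extension of each summand, giving the continuous extensions of the $d^{(j)}\sigma$ required for $\sigma$ to be $C^\ell$.

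The principal obstacle, and the reason the $k_\R$ hypothesis is imposed at all, is that the evaluation map $\cL(E, F) \times E \to F$ is in general only separately continuous and continuous on compact subsets, not jointly continuous when $E$ fails to be locally compact. The $k_\R$ hypothesis on $R \times X$ is precisely the tool required to promote compact-set continuity of each derivative to genuine continuity on the product spaces appearing in the definition of a $C^\ell$-map; ensuring that this promotion remains available at every level of the iteration (i.e.\ that the relevant product spaces retain enough regularity) is the delicate point to handle carefully in executing the induction.
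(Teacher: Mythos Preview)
Your overall plan is sound, and your base case is essentially the paper's argument (the paper phrases it via Proposition~\ref{evalu}(d), but the content is the same). One small quibble: the claim ``closed subspaces of $k_\R$-spaces are $k_\R$'' is not the safest justification; what you actually use is that $R\cong R\times\{0\}$ is a \emph{retract} of $R\times X$, and retracts of $k_\R$-spaces are $k_\R$-spaces by an easy direct argument.

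The real issue is exactly the one you flag at the end, and it is not merely a delicate bookkeeping point---with your Leibniz-formula approach as written, it is a genuine obstruction. Your formula for $d^{(j)}\sigma$ lives on $R\times X^j$, and to promote compact-set continuity of each summand to honest continuity you would need $R\times X^j$ to be a $k_\R$-space for every $j\leq\ell$. This is \emph{not} part of the hypothesis (only $R\times X$ is assumed $k_\R$), and it does not follow: products of $k_\R$-spaces need not be $k_\R$, so there is no reason for $R\times X^j$ to inherit the property. The paper explicitly acknowledges this; in the body it proves the analogous Proposition~\ref{eval-chain} only under the stronger hypothesis that $U\times X^{2^j-1}$ is $k_\R$ for all relevant~$j$, and defers the general case to Appendix~\ref{appcompo}.

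The paper's resolution is to abandon the full multilinear differentials $d^{(j)}\sigma$ and work instead with the \emph{G\^ateaux} differentials
\[
\delta^j_x\sigma(y)=\frac{d^j}{dt^j}\Big|_{t=0}\sigma(x+ty),
\]
which are functions of $(x,y)\in R\times X$ only---a single copy of~$X$, regardless of~$j$. One computes an explicit formula for $\delta^j_x\sigma(y)$ as a sum of evaluations (this is the content of the formula~($*$) in the proof of Proposition~\ref{strange-compo}), shows each term extends continuously to $R\times X$ using only the $k_\R$-hypothesis on $R\times X$, and then invokes Lemma~\ref{via-gateaux}: if all G\^ateaux differentials $\delta^j f$ extend continuously to $R\times E$, then $f$ is $C^\ell$. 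The polarization formula recovers $d^{(j)}f$ from $\delta^j f$, so no higher product space ever enters. This is the missing idea in your proposal.

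As a side remark on packaging: the paper does not argue directly for $\cL(E,F)$ but instead observes that $\cL(E,F)$ is a closed subspace of $C^\infty(E,F)$ carrying the same (compact-open) topology, and then applies the more general Proposition~\ref{eval-chain} about $C^\ell(V,F)$-valued maps. Your direct approach is perfectly reasonable and would work once the G\^ateaux-differential trick is incorporated.
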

An analogous result is obtained
if $\cL(E,F)$ is replaced with a space
of $C^\ell$-functions (Proposition~\ref{eval-chain}),
and also for the composition map between spaces
of linear operators instead of the evaluation map
(Corollary~\ref{compo-spec}).
Regarding the composition map
on spaces of differentiable functions,
we obtain the following (as a special case
of Proposition~\ref{strange-compo}):
\begin{prop}
Let $E$, $F$, $X$, and $Z$ be locally convex spaces,
$A\sub Z$ and $R\sub X$ be regular subsets,
and $S\sub E$
be a locally convex, regular subset.
Assume that $k,\ell\in \N_0\cup\{\infty\}$.
Let $\gamma\colon A\to C^{\ell+k}(S,F)$
and $\eta\colon A\to C^\ell(R,E)$
be $C^k$-maps such that $\eta(z)(R)\sub S$
for all $z\in A$.
If $R\times X$ and $A\times Z$
are $k_\R$-spaces, then
\[
\zeta\colon A\to C^\ell(R,F),\quad z\mto \gamma(z)\circ\eta(z)
\]
is a $C^k$-map.
\end{prop}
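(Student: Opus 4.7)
The plan is to argue by induction on $k\in\N_0$ (with the case $k=\infty$ reducing to all finite~$k$), invoking the joint evaluation/composition results (Proposition~\ref{eval-chain} and Corollary~\ref{compo-spec}) at each step.

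For the base case $k=0$, we show $\zeta\colon A\to C^\ell(R,F)$ is continuous. Fix $z_0\in A$, a compact set $K\sub R$, a multi-index $\alpha\in\N_0^d$ with $|\alpha|\le\ell$, and a continuous seminorm $q$ on~$F$. The Fa\`{a} di Bruno formula expresses $\partial^\alpha\zeta(z)(x)$ as a finite sum of terms of the shape $(\gamma(z))^{(j)}(\eta(z)(x))\bigl(\partial^{\beta_1}\eta(z)(x),\ldots,\partial^{\beta_j}\eta(z)(x)\bigr)$ with $j\le|\alpha|\le\ell$ and $|\beta_1|+\cdots+|\beta_j|=|\alpha|$. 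Since $\gamma(z)\to\gamma(z_0)$ in $C^\ell(S,F)$ and $\eta(z)\to\eta(z_0)$ in $C^\ell(R,E)$, while $\eta(z)(K)$ eventually lies in a fixed compact subset of~$S$, each summand converges to its counterpart at~$z_0$ uniformly on~$K$ in~$q$. Proposition~\ref{eval-chain}, whose hypothesis is that $R\times X$ be a $k_\R$-space, guarantees that these pointwise formulas genuinely yield continuous $F$-valued functions of~$x$, so continuity of $\zeta$ follows.

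For the inductive step, suppose the proposition holds for $k-1$. The Bastiani chain rule gives, for $(z,w)\in A\times Z$ with $z+tw\in A$ for small~$t$,
\[
d\zeta(z,w)\;=\;\bigl(d\gamma(z,w)\bigr)\circ\eta(z)\;+\;\bigl[(\gamma(z))'\circ\eta(z)\bigr]\cdot d\eta(z,w),
\]
where $(\gamma(z))'\colon S\to\cL(E,F)$ is the inner Bastiani derivative of $\gamma(z)\in C^{\ell+k}(S,F)$ and the product in the second term denotes pointwise application of the operator-valued factor to the $E$-valued factor at each $x\in R$. Both summands are instances of the proposition with $k-1$ in place of~$k$ and $A\times Z$ in place of~$A$: for the first, take $\tilde\gamma(z,w):=d\gamma(z,w)$, which is $C^{k-1}$ into $C^{\ell+k}(S,F)\hookrightarrow C^{\ell+(k-1)}(S,F)$, and $\tilde\eta(z,w):=\eta(z)$; for the second, compose $\gamma$ with the continuous linear differentiation $C^{\ell+k}(S,F)\to C^{\ell+k-1}(S,\cL(E,F))$ and handle the pointwise bilinear pairing against $d\eta$ via Corollary~\ref{compo-spec}. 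The inductive hypothesis, applicable since $A\times Z$ remains a $k_\R$-space, yields that $d\zeta$ is $C^{k-1}$, and hence $\zeta$ is $C^k$.

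The main obstacle is not the chain-rule algebra but the verification that the Newton quotients $t^{-1}(\zeta(z+tw)-\zeta(z))$ converge to the candidate derivative in the topology of $C^\ell(R,F)$, rather than merely pointwise in~$x$. This is handled by applying the Fa\`{a} di Bruno estimate of the base case to the parameter-dependent family of difference quotients simultaneously, using the equicontinuity on compacta of the iterated derivatives of $\gamma$ and $\eta$ granted by the hypotheses and by Proposition~\ref{eval-chain}. A secondary technical nuisance is that the induction enlarges the parameter domain from $A$ to $A\times Z$, so the $k_\R$-property must be preserved at each step; in the cases encountered this is automatic because the product $A\times Z$ appears only through the pairings already controlled by the cited propositions.
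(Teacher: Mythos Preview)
Your inductive scheme has a genuine gap that you half-acknowledge at the end but then dismiss. When you pass from $k$ to $k-1$ by applying the inductive hypothesis to $d\zeta\colon A\times Z\to C^\ell(R,F)$, the proposition as stated would require $(A\times Z)\times Z$ to be a $k_\R$-space; iterating, you would need $A\times Z^j$ to be a $k_\R$-space for all $j\leq k$. This is \emph{not} implied by the hypothesis that $A\times Z$ is a $k_\R$-space, and your remark that ``in the cases encountered this is automatic'' is simply false in the generality claimed. This is precisely the obstruction the paper works to avoid: in the text of Section~13 the author first gives a quick proof of the companion evaluation result (Proposition~\ref{eval-chain}) under the \emph{stronger} assumption that $U\times X^{2^j-1}$ is $k_\R$ for all relevant $j$, and explicitly defers the general case to the appendix because this naive induction does not go through.

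The paper's fix is to abandon the full differential $d\zeta$ in favour of G\^{a}teaux differentials $\delta^k_x\zeta(y)$. One derives an explicit closed formula~($*$) for $\delta^k_x\zeta(y)$ as a finite sum of terms built from $\delta^i_x\gamma(y)$, $\delta^{a}_x\eta(y)$, and iterated inner differentials $d^{(j)}(\cdot)$, and then checks that each term is continuous as a function of $(x,y)\in A\times Z$ using Proposition~\ref{compocseq}(d) --- which only needs $A\times Z$ to be $k_\R$, exactly the hypothesis. Lemma~\ref{via-gateaux} then converts continuity of all $\delta^j\zeta$ on $A\times Z$ into $C^k$-ness of $\zeta$. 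The point of that lemma is precisely that G\^{a}teaux differentials live on $A\times Z$ (one direction $y$), never on $A\times Z^j$, so the $k_\R$ hypothesis never needs to be iterated. Your argument is missing this device.

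Two smaller issues: your base case $k=0$ invokes multi-indices $\alpha\in\N_0^d$ and partial derivatives $\partial^\alpha$, but $R$ sits in an arbitrary locally convex space $X$, not in $\R^d$; the paper instead reduces general $\ell$ to $\ell=0$ via the embedding $C^\ell(R,F)\hookrightarrow\prod_{j\leq\ell}C(T^jR,T^jF)$, $\phi\mapsto(T^j\phi)_j$, and handles $\ell=0$ directly with Proposition~\ref{compocseq}(d). Also, Corollary~\ref{compo-spec} concerns composition of operator-valued maps $\cL(E,F)\times\cL(X,E)\to\cL(X,F)$ and does not immediately cover the pointwise pairing $[(\gamma(z))'\circ\eta(z)]\cdot d\eta(z,w)$ you need in the second summand.
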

We mention that each rough manifold~$M$ has a formal
boundary $\partial^\circ M\sub M$;
its complement $M^\circ$ is called the formal
interior of~$M$
(see \ref{defn-rough}).
While $C^\ell$-maps
from a rough $C^\ell$-manifold~$M$
to a $C^\ell$-manifold~$N$ (or a $C^\ell$-manifold $N$ with rough boundary)
can be defined
in a straightforward fashion,
$C^\ell$-maps
$f\colon M\to N$ between general rough $C^\ell$-manifolds
$M$ and $N$ can only be defined if $f(M^\circ)\sub N^\circ$;
to keep the concepts apart, we call the latter maps
\emph{restricted $C^\ell$-maps} (or $RC^\ell$-maps, for short).\\[2.3mm]
In the penultimate section, we establish smoothing results for sections
in fibre bundles. 
Our main theorem in this context
(Theorem~\ref{thm-steen})
generalizes a result in~\cite{Wo2}
devoted to the case $(\ell,r)=(0,\infty)$
(which assumes that $M$ is a connected
$C^\infty$-manifold with corners,
and gives less detailed information
concerning properties of the homotopies).
Wockel's result, in turn, generalizes
a classical fact by Steenrod (\S6.7
in \cite{Ste}). In the case $r=\infty$,
our result reads as follows.
\begin{thm}
Let $\ell\in\N_0$.
If $\ell=0$, let $M$ be a $\sigma$-compact,
locally compact rough $C^\infty$-manifold;
if $\ell>0$, let $M$ be a $\sigma$-compact,
locally compact $C^\infty$-manifold with corners.
Let $\pi\colon N\to M$
be a locally trivial
smooth fibre bundle over~$M$
whose fibres are smooth manifolds
modelled on locally convex spaces.
Let $\sigma\in \Gamma_{C^\ell}(M\leftarrow N)$,
$\Omega\sub \Gamma_{C^\ell}(M\leftarrow N)$
be a neighbourhood of~$\sigma$ in the Whitney $C^\ell$-topology,
$U\sub M$ be open and $A\sub M$ be a closed
subset such that $\sigma$ is smooth
on an open neighbourhood of $A\setminus U$ in~$M$.
Let $V_{\reg}$ be the largest open subset
of~$M$ on which~$\sigma$ is smooth.
Then there exists a section $\tau\in \Omega$
and a homotopy
$H\colon [0,1]\times M\to N$ from $\sigma=H(0,\cdot)$
to $\tau=H(1,\cdot)$ such that $H_t:=H(t,\cdot)\in\Omega$
for all $t\in [0,1]$ and the following holds:
\begin{itemize}
\item[\rm(a)]
$\sigma|_{M\setminus U}=H_t|_{M\setminus U}$
for all $t\in [0,1]$;
\item[\rm(b)]
$H_t$ is smooth on $V_{\reg}$, for all $t\in [0,1]$;
\item[\rm(c)]
There exists an open neighbourhood $W$ of~$A$ in~$M$
such that $H_t|_W$ is smooth for all $t\in\,]0,1]$.
\end{itemize}
Moreover, one can achieve that $H$ is $RC^{0,\ell}$,
the restriction $H|_{]0,1]\times M}$ is $RC^{\infty,\ell}$,
and
the restriction of~$H$ to a map
$]0,1]\times (V_{\reg}\cup W)\to N$ is smooth.
\end{thm}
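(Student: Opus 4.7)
The plan is to reduce the problem to smoothing vector-valued functions in local trivializations via a telescoping construction, invoking Theorem~\ref{thmsmoo} (together with Corollary~\ref{extcorner} near the corners of~$M$) to produce the required approximations chart by chart, and then assembling everything with a smooth partition of unity.

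First, using paracompactness and $\sigma$-compactness of~$M$, I would extend $V_{\reg}$ by an open neighbourhood $W'$ of~$A\setminus U$ on which~$\sigma$ is already smooth, and choose a locally finite countable cover $(V_n)_{n\in\N}$ of the remaining closed set $M\setminus(V_{\reg}\cup W')$ by relatively compact open sets with $V_n\sub U$, such that each $V_n$ lies in the domain of a chart~$\phi_n$ of~$M$ (taking values in an open subset of a corner model $[0,\infty[^k\times\R^{d-k}$), such that~$\pi$ trivializes over an open set containing $\overline{V_n}$, and such that $\sigma(\overline{V_n})$ lies in the domain of a single chart on the typical fibre. Via the trivialization and chart, $\sigma|_{V_n}$ becomes a $C^\ell$-map $\sigma_n\colon \phi_n(V_n)\to F_n$ into the locally convex model space~$F_n$ of the fibre. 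I would also choose open sets $V_n'$ whose closures lie in~$V_n$ and whose union, together with $V_{\reg}\cup W'$, still covers~$M$, and smooth cutoff functions $\chi_n$ with $\chi_n|_{V_n'}=1$ and $\Supp(\chi_n)\sub V_n$.

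Next, for each~$n$ I would apply Theorem~\ref{thmsmoo} (after first extending $\sigma_n$ to an open superset of $\phi_n(V_n)$ in~$\R^d$ via Corollary~\ref{extcorner}, so that the smoothing operators are available on a genuine open set) to obtain a sequence of smoothings $S_k(\sigma_n)\to\sigma_n$ in the compact-open $C^\ell$-topology. The homotopy between $\sigma$ and the $n$th smoothed version would be built inside the chart by the convex combination $(t,p)\mto(1-t)\sigma_n(p)+tS_{k(n)}(\sigma_n)(p)$, then multiplied by~$\chi_n$ in the fibre chart (using the local vector space structure) and transported back to~$N$. This yields a homotopy that agrees with~$\sigma$ outside~$V_n$, is smooth on $V_n'$ at time~$t=1$, and whose size in the Whitney $C^\ell$-topology can be made arbitrarily small by taking~$k(n)$ large. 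I would telescope these local homotopies across time intervals $[2^{-n},2^{-(n-1)}]$, reparametrising smoothly so that $H$ is $RC^{0,\ell}$ globally and $RC^{\infty,\ell}$ on $]0,1]\times M$. Local finiteness guarantees that only finitely many stages affect any compact subset of~$M$, so the assembly converges to a continuous homotopy with $H_0=\sigma$ and $H_1=\tau$ smooth on $M\setminus W'\supseteq M\setminus W$ (hence smooth everywhere after enlarging~$W'$), and smoothness is preserved on $V_{\reg}$ and $W'$ throughout because the cover avoids those regions.

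The main obstacle is ensuring that every~$H_t$ remains inside the prescribed Whitney neighbourhood~$\Omega$. Since the Whitney $C^\ell$-topology is not metrizable and a basic neighbourhood is parametrised by an infinite family of compact sets and tolerances, one cannot simply fix a single $\ve$ and appeal to convergence; instead, one must inductively select $k(n)$ so large that the entire homotopy segment on $[2^{-n},2^{-(n-1)}]$ contributes less than the $n$th tolerance of the Whitney neighbourhood on the $n$th compact piece, using that only finitely many earlier~$V_m$ meet any given $\overline{V_n}$. This is where the uniformity of $S_k(\gamma)\to\gamma$ on compact subsets of $C^\ell(\Omega,F)$ (Remark~\ref{oncpset}) is crucial: it lets us bound the whole convex-combination segment, not merely the endpoints. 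The same uniformity, applied in the time parameter, underpins the smoothness statements for $H$ on $]0,1]\times(V_{\reg}\cup W)$, which follow once one verifies that the interpolation is jointly smooth in~$(t,p)$ whenever~$t>0$, using the exponential law for manifolds with rough boundary.
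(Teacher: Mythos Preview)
Your approach has a genuine structural gap that prevents it from delivering conclusion~(c), and a secondary issue in how the recursion is set up.

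First, the smaller issue: at stage~$n$ you smooth the \emph{original} chart expression $\sigma_n$ of $\sigma$, not the section produced by the previous stages. When you then telescope, the end of one stage does not match the start of the next on the overlaps $V_m\cap V_n$, and a later stage will overwrite a region already smoothed by an earlier one with a convex combination involving the merely $C^\ell$ function $\sigma_n$, destroying the smoothness you had gained. This alone means that even $\tau=H_1$ need not be smooth outside~$V_1'$. The fix is to feed the \emph{current} section (the output of all prior stages) into the $n$th smoothing operator, as the paper does.

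The deeper problem is that time-telescoping is incompatible with~(c). Conclusion~(c) demands a single neighbourhood $W$ of~$A$ on which $H_t$ is smooth for \emph{every} $t\in\,]0,1]$. But in your scheme, at a time~$t$ lying in the interval assigned to stage~$n$, only stages with index on one side of~$n$ have been completed; the regions $V_m'$ with $m$ on the other side have not yet been touched and the section there is still only $C^\ell$. Since one may take $A=M$ and $U=M$ (the hypothesis on $A\setminus U$ is then vacuous), the set $A$ can meet every~$V_m'$, and no choice of~$W$ works uniformly in~$t$. Reversing the ordering of the telescoping does not help: whichever end you start from, at times close to~$0$ almost no smoothing has occurred.

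The paper avoids this by abandoning time-telescoping altogether. Each step~$j$ produces a homotopy $H_j$ defined on the \emph{whole} interval $[0,1]\times M$, obtained from $H_{j-1}$ by modifying it only over~$Q_j$. The key device is Lemma~\ref{interpol}: the discrete smoothing operators are interpolated to a family $(S_{j,s})_{s\in\,]0,1]}$ with $S_{j,s}(\gamma)\to\gamma$ as $s\to 0$, depending smoothly on~$s$. One then applies $S_{j,s_j t}$, with parameter proportional to the homotopy time~$t$, to the time-$t$ local expression $a_j^\vee(t)$ of $H_{j-1}$. Because $S_{j,s}$ lands in $C^\infty$ for every $s>0$, this makes $H_j(t,\cdot)$ smooth on $P_j^0$ for \emph{all} $t>0$ simultaneously, and the construction recurses over~$j$ (not over subintervals of~$[0,1]$). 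The uniform convergence on compacta (Remark~\ref{oncpset}) is indeed essential, but it enters through Corollary~\ref{eval-spec} to control the joint $(t,x)$-regularity of $S_{j,s_j t}(a_j^\vee(t))(x)$, not to bound a telescoping sum.
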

Here, we used the following concepts.
Consider a map $f\colon R\times S \to F$,
where $R$ and $S$ are regular subsets
of locally convex spaces $E_1$ and $E_2$,
respectively, and $F$ is a locally
convex space. Given $k,\ell\in\N_0\cup\{\infty\}$,
the map $f$ is called $C^{k,\ell}$
if $f$ is continuous, for all $i,j\in\N_0$ with
$i\leq k$ and $j\leq \ell$,
we can form iterated directional derivatives of $f$
at $(x,y)\in R^0\times S^0$
in the second variable
in directions $w_1,\ldots, w_j\in E_2$,
followed by iterated directional derivatives
in the first variable in directions $v_1,\ldots, v_i\in E_1$,
and the $F$-valued function of $(x,y,v_1,\ldots,v_i,w_1,\ldots, w_j)\in
R^0\times S^0\times E_1^i\times E_2^j$
so obtained admits a continuous extension
\[
d^{\,(i,j)}f\colon R\times S\times E_1^i\times E_2^j\to F
\]
(see \ref{defCkl} for details).
Testing in charts, one obtains a concept of
$C^{k,\ell}$-maps $f\colon M_1\times M_2\to M$
if $M_1$ is a $C^k$-manifold with rough boundary,
$M_2$ a $C^\ell$-manifold
with rough boundary, and $M$ a $C^{k+\ell}$-manifold
with rough boundary (see \cite{AaS}
and \cite{GaN}),
and a corresponding concept of
$RC^{k,\ell}$-maps
if manifolds with rough boundary are replaced
with rough manifolds (see \cite{Rou}).\\[2.3mm]
The final section compiles
applications of smoothing techniques in algebraic topology
(many of which are
known, or known in special cases; compare, e.g., \cite{Ste}
and \cite{MaW}). For smoothing results
in the (inequivalent) convenient setting of analysis,
see~\cite{KaM} and \cite{Kih}.\\[3mm]
{\bf Acknowledgement.}
Thanks are due to Johanna Jakob (Paderborn)
for drawing attention to an error
in the former proof of Theorem~1.3
in the case $\ell=\infty$.
The proof has now been corrected by the author.
\section{Preliminaries of infinite-dimensional calculus}\label{sec-prels}
In this section, we compile necessary notation and prerequisites
concerning calculus in locally convex topological vector spaces.\\[2.3mm]
A differential calculus for mappings on regular subsets of locally convex spaces
was sketched in~\cite{Woc} and fully worked out in~\cite{Rou},
where also so-called \emph{rough $C^k$-manifolds}
modelled on locally convex spaces are considered,
which locally look like a regular subset of
a modelling locally convex space and whose chart changes (transition maps)
are $C^k$-maps taking interior into interior.
We recall necessary prerequisites concerning calulus in this section
and refer to \cite{Rou} for proofs. The theory closely
parallels calculus on \emph{locally convex} regular subsets of locally convex spaces
as in~\cite{GaN}, where also corresponding
``manifolds with rough boundary'' have been studied
(which locally look like a locally convex, regular set).
For mappings on open sets, the approach to infinite-dimensional
calculus goes back to A. Bastiani~\cite{Bas},
and is also known as Keller's $C^k_c$-theory
(see \cite{Res}, \cite{GaN}, \cite{Ham}, \cite{Mic}, and~\cite{Mil}
for expositions; cf.\ also~\cite{BGN}).
\begin{numba} (General conventions).
We write $\N:=\{1,2,\ldots\}$ and $\N_0:=\N\cup\{0\}$.
Locally convex, Hausdorff topological vector spaces over~$\R$
will simply be called \emph{locally convex spaces}.
If $E$ is a locally convex space, $q\colon E\to[0,\infty[$
a continuous seminorm, $x\in E$ and $r>0$, we write
\[
B^q_r(x):=\{y\in E\colon q(y-x)<r\}\quad\mbox{and}\quad
\wb{B}^q_r(x):=\{y\in E\colon q(y-x)\leq r\}
\]
for the open ball and closed ball around~$x$ of radius~$r$
with respect to~$q$, respectively.
If $(E,\|\cdot\|)$ is a normed space, we simply write $B^E_r(x)$ (or $B_r(x)$)
for the open ball and $\wb{B}^E_r(x)$ (or $\wb{B}_r(x)$)
for the closed ball with respect to~$\|\cdot\|$, if no confusion is possible.
All vector spaces considered are vector spaces over~$\R$
and we shall use $E\otimes F$ as a shorthand for the vector space $E\otimes_\R F$,
for vector spaces $E$ and~$F$.
If $E$ and $F$ are locally convex spaces, we write $\cL(E,F)$ for the vector space
of continuous linear mappings from~$E$ to~$F$.
We write $E':=\cL(E,\R)$ for the dual space.
As usual, continuous mappings are also called $C^0$.
If $X$ is a topological space and $S\sub X$ a subset,
then a subset $U\sub X$
is called a \emph{neighbourhood of $S$ in $X$}
if~$S$ is contained in the interior~$U^0$ of~$U$ in~$X$.
Following~\cite{GaN},
we say that a subset $M$ of a locally convex space~$E$
is \emph{locally convex} if each $x\in M$ has a neighbourhood in~$M$ which is a convex set.
Then every neighbourhood of~$x$ in~$M$ contains
a convex neighbourhood (see~\cite{GaN}).
If $E$ is a vector space and $M\sub E$ a subset,
we write $\conv(M)$ for its convex hull.
If $X$ is a topological
space, we define partitions of unity
on~$X$ as usual;
they are always assumed locally finite
(in contrast to more general conventions in~\cite{Eng}).
If $X$ and $Y$ are topological spaces,
we write $C(X,Y)$ for the set of all continuous functions
from $X$ to~$Y$, including the case that $Y$ is a locally convex
space.\footnote{In contrast to conventions in other
parts of mathematical analysis, where
$C(X,Y)$ refers to the space
of bounded continuous functions.}
In the latter case, we write $BC(X,Y)$
for the vector space of bounded continuous
functions from~$X$ to~$Y$.
\end{numba}
\begin{defn}\label{praeck}
Let $E$ and $F$ be locally convex spaces,
and $U\sub E$ be an open subset.
A continuous map $f\colon U\to F$
is called $C^1$ if it is continuous, the directional derivative
\[
df(x,y):=(D_yf)(x):=\frac{d}{dt}\Big|_{t=0}f(x+ty)
\]
exists for all $x\in U$ and $y\in E$, and $df\colon U\times E\to F$ is continuous.
\end{defn}
\begin{defn}\label{defck}
Let $E$ and $F$ be locally convex spaces
and $R\sub E$ be a regular subset.
A mapping $f\colon R\to F$ is called $C^1$
if $f$ is continuous, $f|_{R^0}\colon R^0\to F$
is~$C^1$ and $d(f|_{R^0})\colon R^0\times E\to F$
has a (necessarly unique) continuous extension
$df\colon R\times E\to F$.
Recursively,
for $\ell\in\N$ with $\ell\geq 2$, we say that $f$ is $C^\ell$
if $f$ is $C^1$ and $df\colon R\times E\to F$
is $C^{\ell-1}$. If $f$ is $C^\ell$ for all $\ell\in\N_0$,
then $f$ is called $C^\infty$ or \emph{smooth}.
\end{defn}
The following fact is proved in Appendix~\ref{appA} (using~\cite{Rou}),
where also more references are given.
\begin{prop}\label{eqCk}
Let $E$ and $F$ be locally convex spaces,
$R\sub E$ be a regular subset, $\ell\in \N_0\cup\{\infty\}$,
and $f\colon R\to F$ be a mapping.
Then the following conditions are equivalent:
\begin{itemize}
\item[\rm(a)]
$f$ is $C^\ell$.
\item[\rm(b)]
$f$ is continuous and has the following property:
For all $j\in\N$ with $j\leq\ell$, the iterated directional derivative
\[
d^{\,(j)}(f|_{R^0})(x,y_1,\ldots,y_j):=(D_{y_j}\cdots D_{y_1}f)(x)
\]
exists for all $x\in R^0$ and all $y_1,\ldots, y_j\in E$,
and the function $d^{\,(j)}(f|_{R^0})\!:$\linebreak
$R^0\times E^j\to F$
so obtained has a $($necessarily unique$)$ continuous extension
\[
d^{\,(j)}f\colon R\times E^j\to F.
\]
\end{itemize}
If $E=\R^n$ for some $n\in\N$, then also {\rm (c)} is equivalent to {\rm(a)}
and {\rm(b)}:
\begin{itemize}
\item[\rm(c)]
$f$ is continuous, the partial differentials
\[
\partial^\alpha (f|_{R^0})(x):=\left(\frac{\partial^{\alpha_1}}{\partial x_1^{\alpha_1}}\cdots
\frac{\partial^{\alpha_n}}{\partial x_n^{\alpha_n}}f\right)(x)
\]
exist for all multi-indices $\alpha=(\alpha_1,\ldots,\alpha_n)
\in \N_0^n$ with $1\leq |\alpha|\leq \ell$,
and the functions $\partial^\alpha (f|_{R^0})\colon R^0\to F$ so obtained
have $($necessarily unique$)$ continuous extensions
\[
\partial^\alpha f\colon R\to F.
\]
\end{itemize}
\end{prop}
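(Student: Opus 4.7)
The plan is to induct on~$\ell$ (the case $\ell = \infty$ reduces immediately to the finite case), reducing everything to the classical equivalence of Bastiani/Keller's $C^\ell$ condition with the existence and joint continuity of all iterated directional derivatives on open subsets of locally convex spaces (cf.\ \cite{GaN}, \cite{Mil}). Two structural observations bridge the open-set theory to the regular-subset setting: first, since $R^0$ is dense in~$R$, continuous extensions from the interior are unique, so at every step I only need to produce existence; second, whenever $R \sub E$ is regular, $R \times E^j \sub E^{j+1}$ is again regular with interior $R^0 \times E^j$, so the recursive Definition~\ref{defck} iterates cleanly under the passage $f \mapsto df$.

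For (a)~$\Rightarrow$~(b), induct on~$\ell$. The case $\ell = 1$ is Definition~\ref{defck}, since $d^{(1)}(f|_{R^0}) = df|_{R^0 \times E}$. For $\ell \geq 2$, Definition~\ref{defck} gives that $df\colon R \times E \to F$ is $C^{\ell-1}$ on the regular subset $R \times E$; applying the inductive hypothesis to~$df$ provides continuous extensions of $d^{(j-1)}(df|_{R^0 \times E})$ to $R \times E \times (E \times E)^{j-1}$ for all $j \leq \ell$. Evaluating at tangent vectors of the form $(y_k, 0) \in E \times E$ and using the identity
\[
d^{(j-1)}(df|_{R^0 \times E})\bigl((x, y_1);\, (y_2, 0), \ldots, (y_j, 0)\bigr) = d^{(j)}(f|_{R^0})(x;\, y_1, \ldots, y_j),
\]
which holds on $R^0$ by classical Bastiani calculus, yields the desired continuous extension $d^{(j)} f \colon R \times E^j \to F$.

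For (b)~$\Rightarrow$~(a), induct on~$\ell$ again. The case $\ell = 1$ is Definition~\ref{defck}. For $\ell \geq 2$, the case $\ell = 1$ already gives $f \in C^1$, and it remains to show $df \in C^{\ell - 1}$ on $R \times E$. By the inductive hypothesis (in order $\ell - 1$) applied on the regular subset $R \times E$, this reduces to verifying that every $d^{(j-1)}(df|_{R^0 \times E})$ with $j \leq \ell$ admits a continuous extension. A direct expansion at $(x, y_1) \in R^0 \times E$ in directions $(u_2, v_2), \ldots, (u_j, v_j) \in E \times E$, using multilinearity of $df(x, \cdot)$ together with the Leibniz rule and Schwarz symmetry of higher Bastiani differentials, expresses this derivative as a finite $\R$-linear combination of values $d^{(i)}(f|_{R^0})(x; w_1, \ldots, w_i)$ with $i \leq j$ and $w_k \in \{y_1\} \cup \{u_2, \ldots, u_j\} \cup \{v_2, \ldots, v_j\}$. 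By hypothesis each summand extends continuously to $R \times E^i$, hence so does their sum. Finally, for $E = \R^n$ the equivalence (b)~$\Leftrightarrow$~(c) follows from multilinearity of $(y_1, \ldots, y_j) \mapsto D_{y_j} \cdots D_{y_1}(f|_{R^0})$: every iterated directional derivative is a polynomial in the coordinates of the $y_k$ applied to the partial derivatives $\partial^\alpha$, so (b) and (c) furnish the same continuous extensions.

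The main obstacle is the algebraic identity in (b)~$\Rightarrow$~(a) for $\ell \geq 2$, expressing higher directional derivatives of~$df$ in mixed directions from $E \times E$ as explicit $\R$-linear combinations of higher directional derivatives of~$f$ in directions from~$E$. This identity is purely combinatorial on the open set $R^0$ (resting on the product rule and Schwarz symmetry of higher Bastiani differentials) and is worked out in detail in~\cite{Rou}; density of $R^0$ in $R$ then transports the continuous extension property automatically, completing the induction.
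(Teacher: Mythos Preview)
Your inductive treatment of (a)~$\Leftrightarrow$~(b) is correct and is the argument the paper defers to \cite{GaN}, \cite{Res}, \cite{Rou}; you simply give it explicitly where the paper cites. The direction (b)~$\Rightarrow$~(c) is also fine: partial derivatives are iterated directional derivatives along basis vectors, so continuous extensions of the $d^{(j)}f$ restrict to continuous extensions of the $\partial^\alpha f$.

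The gap is in (c)~$\Rightarrow$~(b). You invoke ``multilinearity of $(y_1,\ldots,y_j)\mapsto D_{y_j}\cdots D_{y_1}(f|_{R^0})$'' and conclude that directional derivatives are polynomials in the $y$-coordinates with partials as coefficients. But condition~(c) only supplies the \emph{partials} $\partial^\alpha(f|_{R^0})$; it does not a~priori guarantee that $D_{y_j}\cdots D_{y_1}(f|_{R^0})(x)$ exists for general $y_k\in\R^n$, so appealing to its multilinearity is circular. What is missing is the classical but nontrivial step that existence and continuity of all partials up to order~$\ell$ on the open set~$R^0$ forces $f|_{R^0}$ to be Bastiani-$C^\ell$ (an iterated mean-value / telescoping argument). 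The paper handles this carefully: it first reduces to open boxes $R=I_1\times\cdots\times I_n$, shows~$f$ is $C^\alpha$ in the sense of~\cite{Alz} for every $|\alpha|\le k$, invokes \cite[Lemma~3.12]{Alz} to conclude~$f$ is $C^k$ on~$R^0$, and only then passes to general regular~$R$ via the polynomial formula~(\ref{derviapar}) you have in mind. Once you fill in this existence step on~$R^0$, your continuous-extension argument is correct.
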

Thus $d^{\,(1)}f=df$. In the situation of~(b),
we also write $d^{\,(0)}f:=f$.
In the situation of~(c), as usual $\partial^\alpha f :=f$ if $\alpha=0$.\\[2.3mm]
By (b), a continuous map $f\colon R\to F$ is $C^\ell$ if and only if
$f|_{R^0}\colon R^0\to F$ is $C^\ell$ and $d^{\,(j)}(f|_{R^0})$
admits a continuous extension $d^{\,(j)}f\colon R\times E^j\to F$
for each $j\in \N$ such that $j\leq \ell$.
\begin{numba}\label{sy-mulin}
If $f\colon E\supseteq R\to F$ (as in Definition~\ref{defck})
is $C^\ell$, then the continuous map
\[
d^{(j)}f(x,\cdot)\colon E^j\to F
\]
is symmetric and $j$-linear
for each $j\in \N$ such that $j\leq\ell$
(this is true for $x\in R^0$ by
\cite[Proposition~1.3.17]{GaN}
and passes to $x$ in the closure $R=\wb{R^0}$
by continuity).
\end{numba}
\begin{numba}\label{difflin}
In particular, the map $f'(x):=df(x,\cdot)\colon E\to F$ is continuous and linear
for each $x\in R$ if $f\colon E\supseteq R\to F$ is~$C^1$.
\end{numba}
\begin{numba}\label{diffmultilin}
If $E$ and $F$ are locally convex spaces
and $\alpha\colon E\to F$ is a continuous linear map, then
$\alpha$ is $C^\infty$ and $\alpha'(x)=\alpha$ for all $x\in E$
(see, e.g., \cite[Example~1.3.5]{GaN}).
\end{numba}
The Chain Rule holds in the following form~\cite{Rou}
(also \cite{Woc} if $g(R^0)\sub S^0$;
for both $R$ and $S$ locally convex, also~\cite[Proposition~1.4.10]{GaN};
for $R$ and $S$ open, also \cite{BGN}, \cite{Res},
\cite{Ham}, \cite{Mic}):
\begin{prop}\label{unchained}
Let $E$, $F$ and $Y$ be locally convex spaces,
$R\sub E$ and $S\sub F$ be regular subsets,
$\ell\in \N\cup\{\infty\}$
and $f\colon S\to Y$ as well as $g\colon R\to F$
be $C^\ell$-functions such that $g(R)\sub S$.
If $S$ is locally convex or $g(R^0)\sub S^0$,
then $f\circ g\colon R\to Y$ is~$C^\ell$ and
\[
d(f\circ g)(x,y)=df(g(x),dg(x,y))\quad\mbox{for all $\,(x,y)\in R\times E$.}
\]
Thus $(f\circ g)'(x)=f'(g(x))\circ g'(x)$.
\end{prop}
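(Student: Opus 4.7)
The plan is to establish the $\ell=1$ case first (both the differentiability of $f\circ g$ and the formula $d(f\circ g)(x,y)=df(g(x),dg(x,y))$), then induct on~$\ell$. The two disjunctive hypotheses on~$S$ call for slightly different base-case arguments.

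Under the hypothesis $g(R^0)\subseteq S^0$: the restrictions $g|_{R^0}\colon R^0\to S^0$ and $f|_{S^0}\colon S^0\to Y$ are $C^1$-maps between open subsets of locally convex spaces, so the classical Bastiani chain rule (\cite{BGN}, \cite{GaN}) gives that $(f\circ g)|_{R^0}$ is $C^1$ with
\[
d((f\circ g)|_{R^0})(x,y)=df(g(x),dg(x,y))\quad\text{for }(x,y)\in R^0\times E.
\]
The right-hand side is defined and continuous on all of $R\times E$: $g\colon R\to S$ is continuous, $dg\colon R\times E\to F$ is continuous (since $g$ is $C^1$ on $R$), and $df\colon S\times F\to Y$ is continuous (since $f$ is $C^1$ on $S$). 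By Definition~\ref{defck}, this identifies $f\circ g$ as $C^1$ on $R$ with the asserted formula.

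Under the hypothesis that $S$ is locally convex: here $g(R^0)$ may meet $S\setminus S^0$, so the restriction-to-interiors argument breaks down. Instead I would exploit the Fundamental Theorem of Calculus available on convex neighbourhoods in~$S$ (cf.\ \cite{GaN}): each $s\in S$ has a convex neighbourhood $C\subseteq S$ on which $f(b)-f(a)=\int_0^1 df(a+t(b-a),b-a)\,dt$ for $a,b\in C$. Fixing $x\in R^0$ and $y\in E$, applying this with $a=g(x)$ and $b=g(x+ty)$ for sufficiently small $t>0$, dividing by~$t$, and passing to $t\to 0$ (using continuity of $df$ on $S\times F$ and the definition of $dg$) yields that the directional derivative of $f\circ g$ at $x$ in direction~$y$ exists and equals $df(g(x),dg(x,y))$. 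Continuity of this expression on $R\times E$ then extends the differential to all of~$R\times E$.

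For $\ell\geq 2$ I induct. Write $d(f\circ g)=df\circ\Phi$ with
\[
\Phi\colon R\times E\to S\times F,\quad (x,y)\mto(g(x),dg(x,y)).
\]
The components of~$\Phi$ are $C^{\ell-1}$ ($g$ is $C^\ell$, $dg$ is $C^{\ell-1}$), and $df$ is $C^{\ell-1}$. Moreover $\Phi$ meets the chain-rule hypothesis paired with $df$: if $g(R^0)\subseteq S^0$ then $\Phi((R\times E)^0)=\Phi(R^0\times E)\subseteq S^0\times F$, and if $S$ is locally convex then so is $S\times F$. The inductive hypothesis gives that $d(f\circ g)$ is $C^{\ell-1}$, hence $f\circ g$ is $C^\ell$. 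The main obstacle is the locally convex base case, where one cannot reduce to open domains and must leverage convex neighbourhoods of $S$ (where the Fundamental Theorem of Calculus is available for~$f$) to run the usual difference-quotient computation for the composition, even when $g(R^0)$ strays onto $\partial S=S\setminus S^0$.
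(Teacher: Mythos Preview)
The paper does not supply its own proof of this proposition; it states the result and defers to \cite{Rou} for the general case (with \cite{Woc}, \cite[Proposition~1.4.10]{GaN}, \cite{BGN}, \cite{Res}, \cite{Ham}, \cite{Mic} covering the special cases). Your outline is correct and is essentially the standard argument in those references; the induction step in particular is exactly right.

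One remark on the locally convex base case. You invoke the integral Mean Value Theorem on convex neighbourhoods in~$S$; the device the paper isolates for this situation (see~\ref{BGN}) is instead the continuous map $f^{[1]}\colon S^{[1]}\to Y$, available precisely when~$S$ is a locally convex regular subset (\cite[Lemma~1.4.9]{GaN}). With it one writes, for $x\in R^0$ and $t\to 0$,
\[
\frac{(f\circ g)(x+ty)-(f\circ g)(x)}{t}
= f^{[1]}\!\Big(g(x),\,\tfrac{g(x+ty)-g(x)}{t},\,t\Big)
\longrightarrow f^{[1]}\big(g(x),dg(x,y),0\big)=df\big(g(x),dg(x,y)\big),
\]
which is exactly how the paper argues the closely related Lemma~\ref{technical-chain}. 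Your integral MVT encodes the same information (continuity of $f^{[1]}$ is what makes the MVT hold uniformly along segments in~$S$), so the difference is cosmetic; the $f^{[1]}$ formulation merely sidesteps any discussion of weak integrals and limit--integral interchange in a possibly incomplete~$Y$.
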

Recall that a subset $A$ of a topological space $X$ is called
\emph{sequentially closed} if it has the following property:
If $x\in X$ and there exists a sequence  $(a_n)_{n\in\N}$ in $A$
such that $a_n\to x$ in~$X$ as $n\to\infty$, then $x\in A$.
Every closed subset is sequentially closed.\\[2.3mm]
The following simple fact is frequently useful.
See \cite{Rou} for the proof
(or \cite[Lemma~1.4.16]{GaN} if $R$ is locally convex;
or \cite[Lemma~10.1]{BGN} if $R$ is open).
\begin{la}\label{intoclo}
Let $E$ and $F$ be locally convex spaces,
$F_0\sub F$ be a closed vector subspace,
$R\sub E$ be a regular subset, and $\ell\in\N_0\cup\{\infty\}$.
Let $f\colon R\to F$ be a mapping with image $f(R)\sub F_0$.
Then $f\colon R\to F$ is $C^\ell$ if and only if its co-restriction
$f|^{F_0}\colon R\to F_0$, $x\mto f(x)$ is $C^\ell$.
In this case,
$d^{\,(j)}f(x,y_1,\ldots,y_j)=d^{\,(j)}(f|^{F_0})(x,y_1,\ldots,y_j)\in F_0$
for all $j\in \N$ with $j\leq\ell$ and all $(x,y_1,\ldots,y_j)\in R\times E^j$.
If $E$ is metrizable or $R$ is locally convex, then
the same conclusions hold if $F_0$ is sequentially closed
in~$F$.
\end{la}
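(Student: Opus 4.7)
The plan is to reduce both directions to the characterization of $C^\ell$-maps in Proposition~\ref{eqCk}(b), where the core question becomes whether the iterated directional derivatives of $f$ actually take values in $F_0$. The easy direction is: if $f|^{F_0}\colon R\to F_0$ is $C^\ell$, then since the inclusion $\iota\colon F_0\to F$ is continuous linear, hence $C^\infty$ by \ref{diffmultilin}, the Chain Rule (Proposition~\ref{unchained}) shows $f=\iota\circ f|^{F_0}$ is $C^\ell$; because $\iota$ is linear one gets $d^{\,(j)}f(x,y_1,\ldots,y_j)=d^{\,(j)}(f|^{F_0})(x,y_1,\ldots,y_j)$ by a short induction on~$j$.

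For the converse, assume $f\colon R\to F$ is $C^\ell$. Continuity of $f|^{F_0}\colon R\to F_0$ is immediate from the subspace topology. For $j\in\N$ with $j\le\ell$, $x\in R^0$ and $y_1,\ldots,y_j\in E$, the iterated directional derivative $d^{\,(j)}(f|_{R^0})(x,y_1,\ldots,y_j)$ is a finite iteration of limits of the form $\lim_{t\to 0}\frac{1}{t}(\phi(x+ty_i)-\phi(x))$, each a \emph{sequential} limit in~$F$ because $\R$ is metrizable. Since $f(R)\sub F_0$, every difference quotient lies in $F_0$, so sequential closedness of $F_0$ propagates through the iteration and yields $d^{\,(j)}(f|_{R^0})(R^0\times E^j)\sub F_0$.

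It remains to show that the continuous extension $d^{\,(j)}f\colon R\times E^j\to F$ also takes values in~$F_0$. If $F_0$ is closed, this is immediate from topological density of $R^0\times E^j$ in $R\times E^j$, which holds because $R$ is regular. If $F_0$ is only sequentially closed, we need sequential density and use the extra hypothesis: if $E$ is metrizable, then $R\times E^j$ is metrizable and topological density equals sequential density; if $R$ is locally convex, then for $(x,y_1,\ldots,y_j)\in R\times E^j$ pick a convex neighbourhood $U$ of~$x$ in~$R$ and a point $x_0\in U\cap R^0$, so that $x_n:=(1-\tfrac{1}{n})x+\tfrac{1}{n}x_0$ lies in $R^0$ for large~$n$ (open-segment property of the interior of a convex set) and $(x_n,y_1,\ldots,y_j)\to (x,y_1,\ldots,y_j)$ in $R^0\times E^j$. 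In either case continuity of $d^{\,(j)}f$ and sequential closedness of~$F_0$ force $d^{\,(j)}f(x,y_1,\ldots,y_j)\in F_0$.

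Hence $d^{\,(j)}f$ co-restricts to a continuous map $R\times E^j\to F_0$, and Proposition~\ref{eqCk}(b) certifies that $f|^{F_0}$ is $C^\ell$ with $d^{\,(j)}(f|^{F_0})=d^{\,(j)}f$. The main obstacle is precisely the step of showing that the extensions $d^{\,(j)}f$ land in $F_0$ when $F_0$ is only sequentially closed; this is what forces the extra hypotheses on $E$ or~$R$, since without them there is no reason for a point of $R\times E^j$ to be the limit of a \emph{sequence} from $R^0\times E^j$.
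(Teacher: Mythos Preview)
Your argument is correct and self-contained. The paper does not actually prove this lemma; immediately after stating it, the text reads ``See \cite{Rou} for the proof (or \cite[Lemma~1.4.16]{GaN} if $R$ is locally convex; or \cite[Lemma~10.1]{BGN} if $R$ is open),'' so there is no in-paper argument to compare against. Your proof fills that gap with exactly the expected strategy: use the characterization in Proposition~\ref{eqCk}(b), handle the easy direction via the Chain Rule with the linear inclusion $\iota\colon F_0\hookrightarrow F$, and for the converse show inductively that the difference quotients (hence the iterated directional derivatives on~$R^0$) stay in~$F_0$, then push this to the boundary via density.

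Two minor remarks. First, in the locally convex case your segment argument uses that for a convex set $U$ with $x_0\in U\cap R^0=\operatorname{int}_E(U)$ and $x\in U\sub\overline{U\cap R^0}$, the half-open segment $\{(1-t)x+tx_0:t\in\,]0,1]\}$ lies in $\operatorname{int}_E(U)\sub R^0$; this is the standard fact about convex sets in topological vector spaces and is fine, but it might help to state explicitly that $U\cap R^0$ is nonempty because $R^0$ is dense in~$R$. Second, your sentence ``$(x_n,y_1,\ldots,y_j)\to(x,y_1,\ldots,y_j)$ in $R^0\times E^j$'' should read ``with $(x_n,y_1,\ldots,y_j)\in R^0\times E^j$''; the limit point is in $R\times E^j$, not $R^0\times E^j$. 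These are cosmetic; the mathematics is sound.
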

\begin{numba}\label{theco}
Recall that the compact-open topology on the set $C(X,Y)$
of continuous functions between Hausdorff topological spaces
is the topology with subbasis
\[
\lfloor K,U\rfloor:=\{\gamma\in C(X,Y)\colon \gamma(K)\sub U\},
\]
for $K$ ranging through the set $\cK(X)$
of compact subsets of~$X$ and
$U$ in the set of open subsets of~$Y$.
\end{numba}
\begin{numba}\label{propsco}
If $F$ is a locally convex space,
then the compact-open topology makes $C(X,F)$ a locally convex space;
moreover, it coincides with the topology of compact convergence
and a basis of open $0$-neighbourhoods is given by the sets
$\lfloor K, U\rfloor$,
for $K$ ranging through a cofinal\footnote{For each $K\in\cK(X)$,
there exists $L\in\cK$ such that $K\sub L$.}
subset $\cK$ of $\cK(X)$
and $U$ through a basis $\cB$ of open $0$-neighbourhoods in~$F$.\\[2.3mm]
The seminorms $\|\cdot\|_{K,q}\colon C(X,F)\to[0,\infty[$ given by
\begin{equation}\label{vorschrsemi}
\|\gamma\|_{K,q}\; :=\; \sup_{x\in K}\, q(\gamma(x))
\end{equation}
for $\gamma\in C(X,F)$
define the compact-open topology on~$C(X,F)$,
for $K$ in a cofinal subset $\cK\sub\cK(X)$ and $q$ in a directed\footnote{Thus $\Gamma\not=\emptyset$
and we assume that for all $q_1,q_2\in\Gamma$, there exists $q\in\Gamma$ such that
$q_1(x)\leq q(x)$ and $q_2(x)\leq q(x)$ for all $x\in F$.}
set $\Gamma$ of continuous seminorms on~$F$
defining its topology (as $B^{\|\cdot\|_{K,q}}_r(0)=\lfloor K, B^q_r(0)\rfloor$
for all $K\in\cK$, $q\in\Gamma$,~$r>0$).
\end{numba}
We define the compact-open $C^\ell$-topology
on a function space $C^\ell(R,F)$ as in the case
of a locally convex set (treated in \cite{GaN}).\footnote{As functions $f$
with domain $C^\ell(R,F)$ are a frequent topic in infinite-dimensional
calculus, we now denote the elements of $C^\ell(R,F)$
by $\gamma$ (rather than $f$).}
\begin{defn}\label{defcktop}
If $E$ and $F$ are locally convex spaces,
$R\sub E$ is a regular subset and $\ell\in \N_0\cup\{\infty\}$,
we define the \emph{compact-open $C^\ell$-topology}
on $C^\ell(R,F)$ as the initial topology with respect to the
linear mappings
\[
d^{\,(j)}\colon C^\ell(R,F)\to C(R\times E^j,F),\quad \gamma \mto
d^{\,(j)}\gamma
\]
for $j\in\N_0$ such that $j\leq \ell$,
where $C(R\times E^j,F)$ is endowed with the compact-open topology.
\end{defn}
\begin{rem}\label{remcktop}
(a) The compact-open $C^\ell$-topology makes $C^\ell(R,F)$ a locally convex space
and the point evaluations $\ve_x\colon C^\ell(R,F)\to F$, $\gamma\mto\gamma(x)$
are continuous linear maps for all $x\in R$. By definition, the compact-open $C^\ell$-topology
turns the injective linear map
\begin{equation}\label{inprodcts}
(d^{\,(j)})_{\N_0\ni j\leq \ell}\colon C^\ell(R,F)\to\prod_{\N_0\ni j\leq \ell}C(R\times E^j,F),
\;\, \gamma\mto (d^{\,(j)}\gamma)_{\N_0\ni j\leq\ell}
\end{equation}
into a topological embedding (a homeomorphism onto the image).\medskip

\noindent
(b) Moreover, it is easy to see that
\[
C^\infty(R,F)\;=\; \pl\; C^n(R,F)\vspace{-1.3mm}
\]
as a locally convex space, using the inclusion maps $C^\infty(R,F)\to C^n(R,F)$
for $n\in\N_0$ as the limit maps,
where the inclusion maps $C^m(R,F)\to C^n(R,F)$
for integers $0\leq n\leq m$ are the bonding maps.\footnote{Let
$P:=\pl\, C^n(R,F)\sub\prod_{n\in\N_0}C^n(R,F)$ be the standard projective limit.
One readily verifies that the linear map $\phi\colon C^\infty(R,F)\to P$, $\gamma\mto(\gamma)_{n\in\N_0}$
is bijective, continuous, and that $\phi^{-1}$ is continuous.}
\end{rem}
We now compile various known properties
of the compact-open $C^\ell$-topology. A proof for part~(c) is given in
Appendix~\ref{appA};
proofs for parts (a), (b), (d), (e), (f), and (g)
can be found in~\cite{Rou}
(for locally convex regular sets, see already~\cite{GaN} for all assertions).
\begin{la}\label{sammelsu}
Let $E$, $F$ and $Y$ be locally convex spaces and $R\sub E$
be a regular subset. Then the following holds:
\begin{itemize}
\item[\rm(a)]
The map $(d^{\,(j)})_{\N_0\ni j\leq \ell}$ as in {\rm(\ref{inprodcts})}
is a linear topological embedding with closed image.
\item[\rm(b)]
If $F$ is vector subspace of~$Y$ and carries the induced topology,
then the compact-open $C^\ell$-topology on $C^\ell(R,F)\sub C^\ell(R,Y)$
is the topology induced by the compact-open $C^\ell$-topology on $C^\ell(R,Y)$.
\item[\rm(c)]
If $\ell\geq 1$ holds, $\cK$ is cofinal in $\cK(R)$,
$\cL$ cofinal in $\cK(E)$ and $\Gamma$ a directed subset
of continuous seminorms on~$F$ defining its locally convex vector topology,
then the seminorms $\|\cdot\|_{C^j,K,L,q}\colon C^\ell(R,F)\to[0,\infty[$,
\[
\|\gamma\|_{C^j,K,L,q}:=
\max\{\|\gamma\|_{K,q},\|\gamma\|_{1,K,L,q},\ldots,\|\gamma\|_{j,K,L,q}\}
\]
for $K\in\cK$, $L\in \cL$ and $q\in\Gamma$
define the compact-open $C^\ell$-topology on $C^\ell(R,F)$,
where $\|\cdot\|_{K,q}\colon C^\ell(R,F)\to[0,\infty[$
is defined as in {\rm(\ref{vorschrsemi})}
and
\[
\|\gamma\|_{i,K,L,q}:=\|d^{\,(i)}\gamma\|_{K\times L^i,q}
\]
for $i\in \{1,\ldots, j\}$ and $\gamma\in C^\ell(R,F)$.
\item[\rm(d)]
If $E$ is metrizable and $F$ is complete $($resp., sequentially complete$)$,
then $C^\ell(R,F)$ is complete $($resp., sequentially complete$)$.
\item[\rm(e)]
If $S\sub Y$ is a regular subset,
$f\colon S\to E$ is a $C^\ell$-map with $f(S)\sub R$
and $R$ is locally convex or
$f(S^0)\sub R^0$,
then the map
\[
C^\ell(f,F)\colon C^\ell(R,F)\to C^\ell(S,F),\;\, \gamma\mto \gamma\circ f
\]
is continuous and linear. Notably, the restriction map
\[
C^\ell(R,F)\to C^\ell(S,F), \;\; \gamma\mto\gamma|_S
\]
is continuous and linear for each regular subset $S\sub R$.
\item[\rm(f)]
If $(U_i)_{i\in I}$ is a cover of~$R$ be relatively open subsets,
then the map
\[
C^\ell(R,F)\to\prod_{i\in I}C^\ell(U_i,F),\quad \gamma\mto(\gamma|_{U_i})_{i\in I}
\]
is a linear topological embedding with closed image.
\item[\rm(g)]
If $f\colon R\times E\to F$ is $C^\ell$,
then the mapping
\[
f_*\colon C^\ell(R,E)\to C^\ell(R,F),\;\;\gamma\mto f\circ (\id_R,\gamma)
\]
is continuous. In particular,
the multiplication operator
\[
m_h\colon C^\ell(R,E)\to C^\ell(R,E),\quad \gamma\mto h\cdot \gamma
\]
with $(h\cdot\gamma)(x)=h(x)\gamma(x)$ is continuous and linear
for each $h\in C^\ell(R,\R)$
$($as $m_h=f_*$ for the $C^\ell$-map $f$ given by $f(x,y):=h(x)y)$.
\end{itemize}
\end{la}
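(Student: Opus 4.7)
The author notes that parts (a), (b), (d), (e), (f), (g) are proved in~\cite{Rou}, so my plan concentrates on part~(c); the remaining items follow from general facts about initial topologies and the compact-open topology, together with (in~(e)) the chain rule of Proposition~\ref{unchained}.

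The plan for (c) is a two-step reduction through the initial topologies involved. By Definition~\ref{defcktop}, the compact-open $C^\ell$-topology on $C^\ell(R,F)$ is the initial topology for the linear maps $d^{\,(j)}\colon C^\ell(R,F)\to C(R\times E^j,F)$ with $0\le j\le \ell$. As recalled in~\ref{propsco}, the target carries the locally convex topology defined by the seminorms $\|\cdot\|_{M,q}$, with $M$ ranging over a cofinal subfamily of $\cK(R\times E^j)$ and $q$ over a directed family $\Gamma$ of continuous seminorms defining the topology on~$F$. Pulling back, the compact-open $C^\ell$-topology on $C^\ell(R,F)$ is defined by the seminorms $\gamma\mto \|d^{\,(j)}\gamma\|_{M,q}$ for such $j$, $M$, $q$.

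The key geometric observation is that for any compact $M\sub R\times E^j$, the projection $\pi_R(M)\sub R$ is compact and each projection of~$M$ onto one of the $j$ copies of~$E$ is compact; taking the union of these latter projections gives a single compact set $L_0\sub E$ with $M\sub K_0\times L_0^j$ for the compact set $K_0:=\pi_R(M)$. By cofinality one finds $K\in\cK$ and $L\in\cL$ with $M\sub K\times L^j$, so the sets $K\times L^j$ with $K\in\cK$ and $L\in\cL$ are cofinal in $\cK(R\times E^j)$. Hence the seminorms $\|d^{\,(j)}\gamma\|_{K\times L^j,q}$ for $0\le j\le \ell$, $K\in\cK$, $L\in\cL$, $q\in\Gamma$ already define the compact-open $C^\ell$-topology on $C^\ell(R,F)$. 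Finally, combining finitely many of these seminorms for a common $(K,L,q)$ by taking the maximum over $i\in\{0,1,\ldots,j\}$ produces exactly $\|\cdot\|_{C^j,K,L,q}$ as in the statement and defines the same locally convex topology; this gives~(c).

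No substantial obstacle arises; the only point requiring care is the cofinality of sets of product form in $\cK(R\times E^j)$, which is the routine projection argument just sketched. For~(d), which depends on~(a), one uses that $C(R\times E^j,F)$ is complete (respectively sequentially complete) when $R\times E^j$ is metrizable and $F$ is complete (respectively sequentially complete), together with closedness of the image of $C^\ell(R,F)$ in the product under the embedding of~(a); the remaining items (b), (e), (f), (g) amount to standard manipulations with initial topologies together, for~(e), with the chain rule.
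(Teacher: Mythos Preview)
Your proof of (c) is correct and follows the same route as the paper's proof in Appendix~\ref{appA}: both use the initial-topology description via the maps $d^{\,(j)}$, identify the defining seminorms on each $C(R\times E^j,F)$ via~\ref{propsco}, and then pass to the finite maxima $\|\cdot\|_{C^j,K,L,q}$. You spell out the cofinality of the sets $K\times L^j$ in $\cK(R\times E^j)$ via projections, which the paper leaves implicit in its appeal to~\ref{propsco}; otherwise the arguments coincide, and your remarks on the remaining parts match the paper's referral to~\cite{Rou}.
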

The following fact is proved in Appendix~\ref{appA}.
\begin{la}\label{toppartial}
If $F$ is a locally convex space, $d\in\N$, $\ell\in\N_0\cup\{\infty\}$
and $R\sub\R^d$ a regular subset,
then the compact-open $C^\ell$-topology on $C^\ell(R,F)$ is the initial topology
with respect to the mappings
\[
\partial^\alpha\colon C^\ell(R,F)\to C(R,F),\;\;\gamma\mto \partial^\alpha\gamma
\]
for $\alpha\in \N_0^d$ such that $|\alpha|\leq \ell$,
if $C(R,F)$ is endowed with the compact-open topology.
The seminorms
\[
\|\cdot\|_{C^j\!,K,q}^\partial\colon C^\ell(R,F)\to[0,\infty[,\;\;
\gamma\, \mto\, \max_{|\alpha|\leq j}\; \sup_{x\in K}\, q(\partial^\alpha\gamma(x))
\]
define the compact-open $C^\ell$-topology
for $j\in \N_0$ with $j\leq\ell$,
$K$ ranging in a cofinal set $\cK\sub \cK(R)$ of compact sets
and $q$ in a directed set of continuous seminorms on~$F$
defining its locally convex vector topology.
\end{la}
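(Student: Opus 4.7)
\medskip\noindent
\textbf{Proof proposal.}
The strategy is to compare the two candidate topologies on $C^\ell(R,F)$ by showing continuity of the identity map in each direction, exploiting the elementary fact that, when $E=\R^d$, the partial derivatives $\partial^\alpha\gamma$ and the iterated directional derivatives $d^{\,(j)}\gamma$ determine one another by linear combinations. Call $\tau_d$ the compact-open $C^\ell$-topology from Definition~\ref{defcktop} and $\tau_\partial$ the initial topology with respect to the $\partial^\alpha$; the claim is that $\tau_d=\tau_\partial$ and that the seminorms $\|\cdot\|^\partial_{C^j,K,q}$ give a defining family.

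For $\tau_\partial\subseteq\tau_d$, I would note that if $e_1,\dots,e_d$ is the standard basis of $\R^d$ and $\alpha=(\alpha_1,\dots,\alpha_d)$ with $|\alpha|=j\leq\ell$, then on the interior $R^0$ one has the classical identity
\[
\partial^\alpha(\gamma|_{R^0})(x)\;=\;d^{\,(j)}(\gamma|_{R^0})(x;\underbrace{e_1,\dots,e_1}_{\alpha_1},\dots,\underbrace{e_d,\dots,e_d}_{\alpha_d}),
\]
and both sides have continuous extensions to $R$ (namely $\partial^\alpha\gamma$ and $d^{\,(j)}\gamma(\cdot;e_1^{\alpha_1},\dots,e_d^{\alpha_d})$), which must therefore coincide on~$R$. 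Hence $\partial^\alpha$ factors as
\[
C^\ell(R,F)\;\xrightarrow{\;d^{\,(j)}\;}\;C(R\times E^j,F)\;\xrightarrow{\;\mathrm{ev}\;}\;C(R,F),
\]
where the second map is evaluation of the last $j$ arguments at the fixed tuple of basis vectors; both maps are continuous (the evaluation map because the compact-open topology on $C(R\times E^j,F)$ controls sup-norms over $K\times\{\text{point}\}$). Thus $\partial^\alpha$ is $\tau_d$-continuous, so $\tau_\partial\subseteq\tau_d$.

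For the reverse inclusion, I would use Lemma~\ref{sammelsu}(c): a cofinal family of seminorms for $\tau_d$ is given by the $\|\cdot\|_{C^j,K,L,q}$ with $K$ compact in $R$, $L$ compact in $\R^d$, and $q\in\Gamma$; and any compact subset of $R\times(\R^d)^j$ is contained in such a $K\times L^j$. Symmetry and $j$-linearity of $d^{\,(j)}\gamma(x,\cdot)$ (see \ref{sy-mulin}) yield, first on $R^0$ and then on all of $R$ by continuity,
\[
d^{\,(j)}\gamma(x,y_1,\dots,y_j)\;=\;\sum_{i_1,\dots,i_j=1}^{d} y_{1,i_1}\cdots y_{j,i_j}\,\partial^{\alpha(i_1,\dots,i_j)}\gamma(x),
\]
where $\alpha(i_1,\dots,i_j)\in\N_0^d$ has $|\alpha|=j$. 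Letting $M:=\sup_{y\in L}\|y\|_\infty$ I obtain the bound
\[
\|\gamma\|_{j,K,L,q}\;\leq\;d^{\,j}M^{\,j}\,\|\gamma\|^\partial_{C^j,K,q},
\]
which shows that each seminorm defining $\tau_d$ is dominated by a finite combination of the $\|\cdot\|^\partial_{C^j,K,q}$, and therefore $\tau_d\subseteq\tau_\partial$. Combined with the previous step this gives $\tau_d=\tau_\partial$ together with the claimed seminorm description.

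The only genuinely subtle point, and the one I expect to be the main obstacle, is verifying that the multilinear expansion and the basis-direction identity pass from $R^0$ to $R$: they are classical on the interior (open-set calculus), and extend to the boundary exactly because both $d^{\,(j)}\gamma$ and $\partial^\alpha\gamma$ were \emph{defined} as unique continuous extensions of their interior values (Proposition~\ref{eqCk}); everything else is routine manipulation of seminorms plus the cofinality of product-compact sets in $\cK(R\times E^j)$.
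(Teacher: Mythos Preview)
Your proposal is correct and follows essentially the same route as the paper's proof: both directions rest on the identities $\partial^\alpha\gamma(x)=d^{\,(j)}\gamma(x,e_{i_1},\dots,e_{i_j})$ and the multilinear expansion $d^{\,(j)}\gamma(x,y_1,\dots,y_j)=\sum y_{1,i_1}\cdots y_{j,i_j}\,\partial^{\alpha}\gamma(x)$ (which the paper records as (\ref{parviader}) and (\ref{derviapar}) in the proof of Proposition~\ref{eqCk}), and the resulting seminorm estimate $\|\cdot\|_{C^k,K,L,q}\leq d^k r^k\|\cdot\|^\partial_{C^k,K,q}$ is exactly the one the paper derives. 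The only cosmetic difference is that for $\tau_\partial\subseteq\tau_d$ you phrase the argument as a factorization through continuous maps, whereas the paper writes the equivalent seminorm inequality $\|\cdot\|^\partial_{C^j,K,q}\leq\|\cdot\|_{C^j,K,L,q}$ directly (taking $L$ containing $e_1,\dots,e_d$).
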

If $F=\R$, we can take $q:=|\cdot|$
and abbreviate $\|\cdot\|_{C^j,K}:=\|\cdot\|_{C^j\!,K,|\cdot|}^\partial$.
\begin{numba}
Given $d\in\N$, $\ell\in\N_0\cup\{\infty\}$
and an open subset $\Omega\sub\R^d$,
the support $\Supp(\gamma)$
of $\gamma\in C^\ell(\Omega,F)$
is defined as the closure of $\{x\in\Omega\colon\gamma(x)\not=0\}$
in~$\Omega$.
Given a compact subset $K\sub\Omega$, we endow
the closed vector subspace
\[
C^\ell_K(\Omega,F):=\{\gamma\in C^\ell(\Omega,F)\colon \Supp(\gamma)\sub K\}
\]
of $C^\ell(\Omega,F)$ with the induced topology.
As usual, we endow the space
\[
C^\ell_c(\Omega,F) = \bigcup_{K\in \cK(\Omega)}C^\ell_K(\Omega,F)
\; =\; \dl\, C^\ell_K(\Omega,F)\vspace{-1mm}
\]
of all compactly supported $F$-valued $C^\ell$-maps with the locally convex
direct limit topology.
\end{numba}
See \cite{Rou} for more
details on the following concept
(introduced in \cite{AaS} for functions on a product of two locally convex regular sets;
see also \cite{GaN}).
\begin{defn}\label{defCkl}
Let $E_1$, $E_2$ and $F$ be locally convex spaces
and $R_1\sub E_1$ as well as $R_2\sub E_2$ be regular subsets.
Given $k,\ell\in \N_0\cup\{\infty\}$,
we say that a continuous function $f\colon R_1\times R_2\to F$ is
$C^{k,\ell}$ if the iterated directional derivatives
\[
d^{\,(i,j)}f(x,y,v_1,\ldots,v_i,w_1,\ldots,w_j):=
(D_{(v_i,0)}\cdots D_{(v_1,0)}D_{(0,w_j)}\cdots D_{(0,w_1)}f)(x,y)
\]
exist for all $i,j\in\N_0$ with $i\leq k$ and $j\leq\ell$
and all $x\in R_1^0$, $y\in R_2^0$, $v_1,\ldots,v_i\in E_1$ and $w_1,\ldots,w_j\in E_2$,
and extend to $($necessarily unique$)$ continuous mappings
\[
d^{\,(i,j)}f\colon R_1\times R_2\times E_1^i\times E_2^j\to F.
\]
\end{defn}
We recall the concept of a submersion (see \cite{Sub},
or also \cite{Ham} in the case of Fr\'{e}chet manifolds).
\begin{defn}\label{def-subm}
Let $M$ and $N$ be smooth manifolds
modelled on locally convex spaces.
A smooth map $f\colon M\to N$ is called
a \emph{submersion} if it has the following property:
For each $x\in M$, there exists a chart $\phi\colon U_\phi\to V_\phi\sub E_\phi$
of~$M$ with $x\in U_\phi$, a chart $\psi\colon U_\psi\to V_\psi\sub E_\psi$
of~$N$ with $f(U_\phi)\sub U_\psi$ and a continuous linear
map $\pi\colon E_\phi\to E_\psi$ admitting a continuous linear right inverse
such that $\pi(V_\phi)\sub V_\psi$ and $\psi\circ f|_{U_\phi}=\pi\circ \phi$.
\end{defn}
Finally, let us recall notation and facts
concerning Taylor expansions.
See \cite{BGN} and \cite{GaN} for discussions
of the following concepts.
\begin{numba}
Let $E$ and $F$ be locally convex spaces and $j\in\N_0$.
A map $p\colon E\to F$ is called a \emph{homogeneous $F$-valued
polynomial of degree~$j$ on~$E$}
if there exists a continuous $j$-linear map $\beta\colon E^j\to F$
such that
\[
p(y)=\wb{\beta}(y):=\beta(\underbrace{y,\ldots,y}_{\text{$j$ times}})
\]
for all $y\in E$ (if $j=0$, take the value
of the constant map $\beta\colon E^0=\{0\}\to F$).
We may always assume that $\beta$ is symmetric since
\[
\beta_{\text{sym}}(y_1,\ldots,y_j):=\frac{1}{j!}\sum_{\pi\in S_j}\beta(y_{\pi(1)},\ldots,
y_{\pi(j)})
\]
is symmetric (where $S_j$ is the group of all permutations
of $\{1,\ldots, j\}$)
and $\wb{\beta}=\wb{\beta_{\text{sym}}}$.
If $\beta$ is symmetric, then $\beta$ can be recovered
from the homogeneous polynomial
$p=\wb{\beta}$ by means of the Polarization Formula~\cite[Theorem~A]{BaS}:
\begin{equation}\label{polform}
\beta(x_1,\ldots, x_j)\;=\; \frac{1}{j!}\sum_{\ve_1,\ldots, \ve_j=0}^1
(-1)^{j-(\ve_1+\cdots+\ve_j)}p(\ve_1x_1+\cdots+\ve_j x_j).
\end{equation}
\end{numba}
\begin{numba}
Given $\ell\in\N_0$,
a function $p\colon E\to F$ between locally convex spaces
is 
called a \emph{continuous polynomial} of degree $\leq\ell$
if there exist continuous homogeneous polynomials $p_j\colon E\to F$
of degree~$j$
for $j\in \{0,\ldots,\ell\}$ such that $p=\sum_{j=0}^\ell p_j$.
\end{numba}
\begin{numba}
If $E$ and $F$ are locally convex spaces,
$U\sub E$ is open, $\ell\in\N_0$, $\gamma \colon U\to F$ a $C^\ell$-map
and $x\in U$, then
\[
\delta^j_x(\gamma)(y)\; :=\;
\frac{d^j}{dt^j}\Big|_{t=0}\gamma(x+ty)
\;=\;
d^{\,(j)}\gamma(x,y,\ldots, y)
\]
defines a continuous homogeneous
polynomial $\delta^j_x(\gamma)\colon E\to F$ of degree~$j$
(the \emph{$j$th G\^{a}teaux differential of~$\gamma$ at~$x$})
for $j\in\{0,\ldots,\ell\}$.
The \emph{$\ell$th order
Taylor polynomial of~$\gamma$ at~$x$} is defined as
\begin{equation}\label{deftaypol}
P^\ell_x(\gamma)\colon E \to F\,,\quad
P^\ell_x(\gamma)(y):=\sum_{j=0}^\ell\frac{\delta^j_x(\gamma)(y)}{j!}.\vspace{-1mm}
\end{equation}
\end{numba}
\begin{numba}\label{poly-is-tensor}
If $d\in\N$, $\ell\in\N$
and $F$ is a locally convex space,
then every continuous polynomial $p\colon \R^d\to F$ of degree $\leq \ell$
is of the form $p(x)=\sum_{|\alpha|\leq\ell}x^\alpha a_\alpha$
with suitable $a_\alpha\in F$ (where $x^\alpha:=x_1^{\alpha_1}\cdots x_d^{\alpha_d}$),
as usual.\footnote{It suffices to show this if $p=\wb{\beta}$ is homogeneous of degree
$j\leq \ell$. Using the standard basis vectors $e_1,\ldots, e_d$ of~$\R^d$,
we have $p(x)=\sum_{1\leq i_1,\ldots,i_j\leq d}x_{i_1}\cdots x_{i_j}\beta(e_{i_1},\ldots, e_{i_j})$,
from which the assertion follows.}
Hence $p=\sum_{|\alpha|\leq \ell}(\pr_1^{\alpha_1}\cdots\pr_d^{\alpha_d})
a_\alpha$ $\in F\otimes C^\infty(\R^d,\R)$.
\end{numba}
\begin{numba}\label{tay}
We shall use Taylor's Theorem (see \cite[Theorem~1.6.14]{GaN}):\\[2.3mm]
\emph{Let $E$ and $F$ be locally convex spaces, $U\sub E$ be an open subset,
$\ell\in\N$ and $\gamma\colon U\to F$ be a $C^\ell$-map. If $x,y\in U$
and the line segment joining $x$ and $y$ is contained in~$U$, then
then the remainder term
\[
R_x(y):=\gamma(y)-P^\ell_x\gamma(y-x)
\]
can be written as the weak integral}
\[
R_x(y)=\frac{1}{(\ell-1)!}\int_0^1(1-t)^{\ell-1}(\delta^\ell_{x+t(y-x)}\gamma-\delta^\ell_x\gamma)(y-x)\, dt.
\]
\end{numba}
We mention that the seminorms described in Lemma~\ref{sammelsu}(c)
are useful to establish the link to Hanusch's work.
The approach via partial differentials and the seminorms
from Lemma~\ref{toppartial}
are used in the proof of Proposition~\ref{etensor} (and links our work to~\cite{Tre}).
In \ref{weiterprer}, we shall introduce yet another
family of seminorms defining the compact-open $C^\ell$-topology,
which is useful for the formulation
of Theorem~\ref{smoohoo} and its proof.
\section{Proof of Theorem~\ref{thmsmoo}}\label{sec-smoo}
We now construct smoothing operators
and obtain Theorem~\ref{thmsmoo}
as part of the following more technical lemma.
The seminorms in~(b) and~(c) are as in~\ref{weiterprer}.
\begin{la}\label{smoohoo}
Let $\Omega\sub \R^d$ be an open set,
$\ell\in \N_0$ and $F$ be a
locally convex topological vector space.
Then there exist continuous linear operators
\[
\wt{S}_n\colon C^\ell(\Omega,F)\to C^\infty(\Omega,F)
\]
for $n\in \N$ with the following properties:
\begin{itemize}
\item[\rm(a)]
$\wt{S}_n(\gamma)\to\gamma$ in $C^\ell(\Omega,F)$ as $n\to\infty$,
for each $\gamma\in C^\ell(\Omega,F)$.
\item[\rm(b)]
There exists $C \in [0,\infty[$
such that,
for each compact set $K\sub \Omega$,
compact neighbourhood $L$ of~$K$ in~$\Omega$,
continuous seminorm $q$ on~$F$,
$\gamma \in C^\ell(\Omega,F)$ and $n\in\N$ with
$K+[{-\frac{1}{n}},\frac{1}{n}]^d\sub L$, we have
\[
\|\wt{S}_n(\gamma)\|_{C^\ell\!,K,q}\;\leq \; C\, \|\gamma\|_{C^\ell\!,L,q}\, .
\]
\item[\rm(c)]
For each compact exhaustion $K_1\sub K_2\sub\cdots$ of $\Omega$,
there exists a sequence $(S_n)_{n\in\N}$
of continuous linear operators $S_n\colon C^\ell(\Omega,F)\to C^\infty(\Omega,F)$
satisfying the conditions of Theorem~{\rm\ref{thmsmoo}},
such that for some positive integers $m_1<m_2<\cdots$, we have
\begin{equation}\label{fnewprop}
S_n(\gamma)|_{K_n}=\wt{S}_{m_n}(\gamma)|_{K_n}
\end{equation}
for all $n\in\N$
and $\gamma\in C^\ell(\Omega,F)$;
moreover,
\begin{equation}\label{snewprop}
\|S_n(\gamma)\|_{C^\ell\!,K_n,q}\,\leq\, C\,
\|\gamma\|_{C^\ell\!,K_{n+1},q}
\end{equation}
for each continuous seminorm~$q$ on~$F$, with~$C$
as in~{\rm(b)}.
\end{itemize}
\end{la}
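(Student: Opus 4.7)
Following the paper's hint, I would mix $\ell$th-order Taylor polynomials via a partition of unity at scale $\sim 1/n$. Fix a nonnegative $\psi_0 \in C^\infty_c(\R^d)$ supported near the origin with $\sum_{k \in \Z^d} \psi_0(\cdot - k) \equiv 1$, set $\psi_k^{(n)}(x) := \psi_0(nx - k)$, $x_k^{(n)} := k/n$, and retain the admissible index set $I_n$ of those $k$ whose supports lie in $\Omega$. By choice of $\psi_0$ (support radius suitably small), one arranges that for any compact $K \sub \Omega$ with $K + [-1/n, 1/n]^d \sub \Omega$, on $K$ the partition sums to~$1$ and all relevant centers $x_k^{(n)}$ lie in $K + [-1/n, 1/n]^d$. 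Derivative bounds are $\|\partial^\beta \psi_k^{(n)}\|_\infty \leq C_\beta n^{|\beta|}$, with bounded overlap $N_d$. Define
\[
\wt{S}_n(\gamma)(x) := \sum_{k \in I_n} \psi_k^{(n)}(x)\, P^\ell_{x_k^{(n)}}(\gamma)(x - x_k^{(n)}).
\]
Expanding $P^\ell_{x_k^{(n)}}(\gamma)(y) = \sum_{|\mu| \leq \ell} (\partial^\mu \gamma(x_k^{(n)})/\mu!)\, y^\mu$ and invoking \ref{poly-is-tensor}, each summand lies in $F \otimes C^\infty_c(\Omega, \R)$; as the sum is locally finite, $\wt{S}_n(\gamma) \in C^\infty(\Omega, F)$. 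Continuity and linearity of $\wt{S}_n \colon C^\ell(\Omega, F) \to C^\infty(\Omega, F)$ follow from continuity of the point evaluations $\gamma \mto \partial^\mu \gamma(x_k^{(n)})$ on $C^\ell(\Omega, F)$ together with Lemma~\ref{sammelsu}(g).

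For the estimates, the derivative calculation uses the Leibniz rule together with the identity $\partial_y^{\alpha - \beta} P^\ell_{x_k}(\gamma)(y) = P^{\ell - |\alpha - \beta|}_{x_k}(\partial^{\alpha - \beta} \gamma)(y)$ (valid for $|\alpha - \beta| \leq \ell$, immediate from the monomial expansion of $P^\ell_{x_k}(\gamma)$), and the crucial cancellation $\sum_k \partial^\beta \psi_k^{(n)} = \partial^\beta(1) = 0$ for $\beta \neq 0$ wherever the partition sums to~$1$. These yield, for $|\alpha| \leq \ell$ and $x$ in that region,
\[
\partial^\alpha \wt{S}_n(\gamma)(x) - \partial^\alpha \gamma(x) = \sum_{\beta \leq \alpha} \binom{\alpha}{\beta} \sum_k \partial^\beta \psi_k^{(n)}(x)\, R_k^{\alpha, \beta}(x),
\]
where $R_k^{\alpha, \beta}(x) := P^{\ell - |\alpha - \beta|}_{x_k^{(n)}}(\partial^{\alpha - \beta} \gamma)(x - x_k^{(n)}) - \partial^{\alpha - \beta} \gamma(x)$ is the Taylor remainder of $\partial^{\alpha - \beta} \gamma \in C^{\ell - |\alpha - \beta|}$ at $x_k^{(n)}$.

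The main obstacle I anticipate is the \emph{scale balance}: the factor $n^{|\beta|}$ arising from $\partial^\beta \psi_k^{(n)}$ grows with $n$ and must be absorbed by the smallness of the Taylor remainder. Applied on the line segment $[x_k^{(n)}, x] \sub L$ (contained in $L$ thanks to $K + [-1/n, 1/n]^d \sub L$), Taylor's theorem \ref{tay} gives $q(R_k^{\alpha, \beta}(x)) \leq C'\, \|\gamma\|_{C^\ell, L, q}\, |x - x_k^{(n)}|^{\ell - |\alpha - \beta|} \leq C''\, \|\gamma\|_{C^\ell, L, q}\, n^{-(\ell - |\alpha - \beta|)}$. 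Combining with $|\partial^\beta \psi_k^{(n)}| \leq C_\beta n^{|\beta|}$, the bounded overlap, and the identity $|\beta| + |\alpha - \beta| = |\alpha|$, every term in the double sum is bounded by $C\, n^{|\alpha| - \ell} \|\gamma\|_{C^\ell, L, q} \leq C \|\gamma\|_{C^\ell, L, q}$, with $C$ dependent only on $\ell$, $d$ and~$\psi_0$. This proves~(b). For~(a), one refines the remainder bound by Taylor's integral formula with the modulus of continuity of $d^{\,(\ell - |\alpha - \beta|)}(\partial^{\alpha - \beta} \gamma)$ on~$L$, producing an additional vanishing factor $\omega_{\gamma, q}(1/n) \to 0$ and yielding $\wt{S}_n(\gamma) \to \gamma$ in $C^\ell(\Omega, F)$.

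For~(c), inductively choose strictly increasing positive integers $m_n$ such that $K_n + [-1/m_n, 1/m_n]^d \sub K_{n+1}^0$ (possible since $K_n$ is compact in the open set $K_{n+1}^0$) and cutoffs $\chi_n \in C^\infty_c(\Omega, [0, 1])$ with $\chi_n \equiv 1$ on $K_n$ and $\Supp \chi_n \sub K_{n+1}$. Set $S_n(\gamma) := \chi_n \cdot \wt{S}_{m_n}(\gamma)$. Continuity and linearity are inherited from $\wt{S}_{m_n}$; equation (\ref{fnewprop}) is immediate since $\chi_n \equiv 1$ on $K_n$, and (\ref{snewprop}) follows from (b) applied with $L = K_{n+1}$. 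Convergence $S_n(\gamma) \to \gamma$ in $C^\ell(\Omega, F)$ follows from (a), since any compact $K \sub \Omega$ is eventually contained in $K_n^0$. The support inclusion $\Supp S_n(\gamma) \sub K_{n+1}$ gives Theorem~\ref{thmsmoo}(b); for Theorem~\ref{thmsmoo}(c), note that for $\gamma \in C^\ell_{K_n}(\Omega, F)$ and $m \geq n$, the support of $\wt{S}_{m_m}(\gamma)$ lies within an $O(1/m_m)$-neighbourhood of $K_n$, and since $m_m \geq m_n$ implies $1/m_m \leq 1/m_n$, this is contained in $K_{n+1}^0$ by the choice of $m_n$. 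Finally, expanding each $P^\ell_{x_k^{(m_n)}}(\gamma)(y)$ in the monomial basis and using compactness of $\Supp \chi_n$ (so only finitely many $\psi_k^{(m_n)}$ contribute) realises $S_n(\gamma)$ as an element of $F \otimes C^\infty_{K_{n+1}}(\Omega, \R)$.
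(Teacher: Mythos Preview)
Your argument is correct and shares the paper's core construction (Taylor polynomials at centres $k/n$ glued by a rescaled periodic partition of unity), but your error analysis for (a) and (b) takes a genuinely different and somewhat cleaner route. The paper works with G\^{a}teaux differentials $\delta^k_x$: it introduces the auxiliary map $\psi_j(x,z)=\delta^j_x(\gamma-P^\ell_z\gamma(\cdot-z))$, Taylor-expands $\psi_j^z$ about $z$ using that $\delta^i_z\psi_j^z=0$ for $i+j\le\ell$, and passes between $\delta^\ell$ and $d^{(\ell)}$ via the Polarization Formula. You instead work with partial derivatives and exploit the cancellation $\sum_k\partial^\beta\psi_k^{(n)}=0$ for $\beta\neq 0$, which lets you insert $\partial^{\alpha-\beta}\gamma(x)$ into every term of the Leibniz expansion at once; this reduces the whole difference to standard Taylor remainders of $\partial^{\alpha-\beta}\gamma$. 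Both arguments pivot on the same scale balance $n^{|\beta|}\cdot n^{-(\ell-|\alpha-\beta|)}=n^{|\alpha|-\ell}\le 1$, but yours avoids the polarization constants and the auxiliary $\psi_j$ machinery. (Since the paper states (b) in the G\^{a}teaux seminorms of~\ref{weiterprer} while you compute in the $\partial^\alpha$-seminorms of Lemma~\ref{toppartial}, your constant differs from the paper's by the equivalence constants of Lemma~\ref{S3new}, which is harmless.)

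For (c) the paper truncates the sum to the finite index set $\Phi_j=\{z:\Supp h_{m_j,z}\cap K_j\neq\emptyset\}$, whereas you multiply by a cutoff $\chi_n$; the two are essentially interchangeable. One small imprecision: to get \eqref{snewprop} (and the equality of \emph{derivatives} on $K_n$ implicit in the paper's use of \eqref{fnewprop}) you need $\chi_n\equiv 1$ on an open \emph{neighbourhood} of $K_n$ inside $K_{n+1}^0$, not merely on $K_n$; this is of course achievable and then your convergence argument (eventually $K\sub K_{n-1}\sub K_n^0\sub\{\chi_n=1\}$) and your support calculation for Theorem~\ref{thmsmoo}(c) go through exactly as you wrote.
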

To obtain smooth approximations for $\gamma\in C^\ell(\Omega,F)$,
we shall use the $\ell$th order Taylor
polynomials $P^\ell_x(\gamma)\colon \R^d\to F$
of $\gamma$ around suitable points~$x$ (as in (\ref{deftaypol}))
and blend them using a smooth partition of unity. We begin with some
preparations.
\begin{numba}
If $(E,\|.\|)$ is a normed
space, $F$ a locally convex space, $q$
a continuous seminorm on~$F$,
$j\in \N_0$ and $\beta\colon E^j\to F$
a continuous $j$-linear map,
let
\[
\|\beta\|_q\; :=\;
\sup\{q(\beta(x_1,\ldots, x_j))\colon x_1,\ldots, x_j\in \wb{B}^E_1(0)\}.
\]
For the corresponding continuous homogenous polynomial $p:=\wb{\beta}$,
we let
\begin{equation}\label{hompolsemi}
\|p\|_q\;:=\;
\sup\{q(p(x))\colon x\in \wb{B}^E_1(0)\}.
\end{equation}
For symmetric $j$-linear maps $\beta$,
the Polarization Formula (\ref{polform})
entails that the seminorms $\beta\mto \|\wb{\beta}\|_q$
and $\beta\mto \|\beta\|_q$ are equivalent; we have
\begin{equation}\label{conspol}
\|p\|_q\; \leq \; \|\beta\|_q \;\leq\; \frac{(2j)^j}{j!}\,\|p\|_q\,.
\end{equation}
\end{numba}
\begin{numba}
Given a locally convex space $F$, a normed (or locally convex)
space~$E$
and $j\in \N_0$,
let $\Pol^j(E,F)$ be
the space of all continuous homogeneous polynomials
of degree~$j$ from $E$ to~$F$,
and $\Sym^j(E,F)$ be the space of continuous symmetric
$j$-linear maps from $E^j$ to~$F$.
We give both
spaces the compact-open topology.
\end{numba}
\begin{la}\label{co=coinft}
The inclusion map $\lambda\colon \Pol^j(E,F)\to C^\infty(E,F)$
is a topological embedding.
\end{la}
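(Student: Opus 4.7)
The plan is to show that on $\Pol^j(E,F)$ the compact-open topology and the topology inherited from $C^\infty(E,F)$ via $\lambda$ have equivalent generating seminorms. By Lemma~\ref{sammelsu}(c), the compact-open $C^\infty$-topology on $C^\infty(E,F)$ is generated by the seminorms
\[
\|\gamma\|_{C^n,K,L,q}=\max\bigl(\|\gamma\|_{K,q},\|d^{(1)}\gamma\|_{K\times L,q},\ldots,\|d^{(n)}\gamma\|_{K\times L^n,q}\bigr)
\]
for $n\in\N_0$, $K,L\in\cK(E)$, and continuous seminorms $q$ on $F$, whereas the compact-open topology on $\Pol^j(E,F)$ is generated by the $\|p\|_{K,q}$ alone. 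One direction is trivial: $\|p\|_{K,q}$ is the $i=0$ term in $\|p\|_{C^n,K,L,q}$, so $\|p\|_{K,q}\leq\|p\|_{C^n,K,L,q}$. Hence every compact-open seminorm is dominated by a $C^\infty$-seminorm, which gives continuity of $\lambda^{-1}$ on the image of~$\lambda$.

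For the opposite direction (continuity of $\lambda$), the task is to bound each $\|p\|_{C^n,K,L,q}$ by a compact-open seminorm. Given $p=\wb{\beta}\in\Pol^j(E,F)$ with associated symmetric continuous $j$-linear map~$\beta$, iterated differentiation of $t\mapsto p(x+ty)$ combined with symmetry of~$\beta$ yields
\[
d^{(i)}p(x,y_1,\ldots,y_i)=\tfrac{j!}{(j-i)!}\,\beta(y_1,\ldots,y_i,\underbrace{x,\ldots,x}_{j-i\text{ copies}})
\]
for $i\leq j$, and $d^{(i)}p=0$ for $i>j$. Thus controlling $\|d^{(i)}p\|_{K\times L^i,q}$ is reduced to bounding $q(\beta(y_1,\ldots,y_i,x,\ldots,x))$ uniformly for $x\in K$ and $y_k\in L$.

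To convert this $\beta$-bound into a $p$-bound, I apply the Polarization Formula~(\ref{polform}), which writes $\beta(z_1,\ldots,z_j)$ as a signed sum of $2^j$ terms $p(\ve_1 z_1+\cdots+\ve_j z_j)$ with $\ve_\ell\in\{0,1\}$, divided by~$j!$. Specialising to $z_k=y_k$ for $k\leq i$ and $z_k=x$ otherwise, each argument of~$p$ on the right lies in the compact set
\[
M:=\bigcup_{0\leq a,b,\;a+b\leq j}(aL+bK)\sub E,
\]
a finite union of iterated Minkowski sums. This gives $\|d^{(i)}p\|_{K\times L^i,q}\leq\tfrac{2^j}{(j-i)!}\|p\|_{M,q}$ and hence $\|p\|_{C^n,K,L,q}\leq C_{n,j}\,\|p\|_{M,q}$ with a constant $C_{n,j}$ depending only on $n$ and~$j$. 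Combining both directions establishes the embedding property. The only substantive step is the polarization estimate; since the formula is already supplied in~(\ref{polform}), it reduces to elementary bookkeeping, and no completeness, metrizability, or further hypotheses on $E$ or~$F$ are required.
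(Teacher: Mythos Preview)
Your proof is correct and follows essentially the same approach as the paper: both rely on the derivative formula $d^{(i)}p(x,y_1,\ldots,y_i)=\tfrac{j!}{(j-i)!}\beta(y_1,\ldots,y_i,x,\ldots,x)$ together with the Polarization Formula to pass between $p$ and $\beta$. The paper routes the argument through the homeomorphism $\Theta_j\colon\Sym^j(E,F)\to\Pol^j(E,F)$ and the functoriality of $C(h,F)$ (Lemma~\ref{sammelsu}(e)), whereas you carry out the equivalent seminorm comparison directly; the underlying content is the same.
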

\begin{proof}
Consider the continuous
map $\Delta_j\colon E\to E^j$, $x\mto (x,\ldots,x)$.
Then
\[
\Theta_j\colon \Sym^j(E,F)\to \Pol^j(E,F)\,,\quad \beta\mto\beta\circ \Delta_j
\]
is an isomorphism of vector spaces
and continuous linear (by Lemma~\ref{sammelsu}(e)).
Combining Lemma~\ref{sammelsu}(e) with the Polarization
Formula, we see that also $\Theta_j^{-1}$ is continuous.
For all $i\in \N$ such that $i\leq j$,
we have
\[
d^{\,(i)}(\Theta_j(\beta))(x,y_1,\ldots, y_i)\;=\; \frac{j!}{(j-i)!}\,
\beta(x,\ldots, x,y_1,\ldots, y_i)
\]
(with $x$ in $j-i$ slots)
for $x,y_1,\ldots, y_i\in E$
(see \cite[Lemma~1.6.10]{GaN}).
Define $h \colon E^{i+1}\to E^j$, $(x,y_1,\ldots, y_i)\mto
(x,\ldots, x,y_1,\ldots, y_i)$.
By the above, $d^{\,(i)}\circ \Theta_j\colon
\Sym^j(E,F)\to C(E^{i+1},F)$ is a restriction of
the continuous map
\[
{\textstyle \frac{j!}{(j-i)!}}\, C(h,F)\colon C(E^j,F) \to C(E^{i+1},F)
\]
(see Lemma~\ref{sammelsu}(e))
and hence continuous.
Thus $\lambda$ is continuous.
The inclusion map $\Lambda\colon C^\infty(E,F)\to C(E,F)$
is continuous and $\Lambda\circ\lambda$ a topological
embedding. Thus $\lambda^{-1}=(\Lambda\circ \lambda)^{-1}\circ\Lambda$
is continuous and
$\lambda$ is a topological embedding.\vspace{-3mm}
\end{proof}
\begin{numba}\label{weiterprer}
In this section, we endow $\R^d$ with the maximum norm,
$\|(x_1,\ldots, x_d)\|_\infty:=\max\{|x_1|,\ldots, |x_d|\}$
and write $\wb{B}_r(x)\sub\R^d$ for corresponding closed balls.
Let $\Omega\sub \R^d$ be open, $F$ be a locally convex space, $K\sub \Omega$
be a compact subset and
$\gamma\colon \Omega\to F$
a $C^\ell$-map, where $\ell\in \N_0\cup\{\infty\}$.
For $k\in\N_0$ such that $k\leq\ell$,
we define
\[
\|\gamma\|_{C^k\!,K,q}\; :=\; \max_{j=0,\ldots, k}\, \sup_{x\in K} \|\delta_x^j\gamma\|_q
\]
if $q\colon F\to [0,\infty[$ is a continuous seminorm.
If $ h\colon \R^d\to\R$ is a $C^\ell$-map with compact support,
we let
\[
\|h\|_{C^k}\; :=\; \max_{j=0,\ldots,k}\,\sup_{x\in\R^d}\|\delta_x^j h\|_{|\cdot|}.
\]
\end{numba}
The following observation shall be proved in Appendix~\ref{appA}.
\begin{la}\label{S3new}
If $K$ ranges through a cofinal subset $\cK\sub\cK(\Omega)$,
$k$ through the set of non-negative integers $\leq\ell$
and $q$ ranges through a directed set $\Gamma$ of seminorms on~$F$ defining the locally convex topology
of~$F$, then the $\|\cdot\|_{C^k\!,K,q}$
form a directed set of continuous seminorms on $C^\ell(\Omega,F)$
which define the compact-open $C^\ell$-topology.
\end{la}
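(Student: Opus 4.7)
The plan is to compare the proposed seminorms $\|\cdot\|_{C^k,K,q}$ with the family $\|\cdot\|_{C^k,K,L,q}$ from Lemma~\ref{sammelsu}(c), which is already known to define the compact-open $C^\ell$-topology. The link between the two is the Polarization Formula (\ref{polform}) together with the estimates (\ref{conspol}) relating the sup-norm of a symmetric multilinear map to the sup-norm of its associated homogeneous polynomial on the unit ball.

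First, I would check that each $\|\cdot\|_{C^k,K,q}$ is a continuous seminorm on $C^\ell(\Omega,F)$. Writing $\wb{B}_1(0)=[-1,1]^d\sub \R^d$ (unit ball for the maximum norm), one has trivially
\[
\|\delta^j_x\gamma\|_q \;=\; \sup_{y\in \wb{B}_1(0)} q\bigl(d^{(j)}\gamma(x,y,\ldots,y)\bigr)
\;\leq\; \sup_{y_1,\ldots,y_j\in \wb{B}_1(0)} q\bigl(d^{(j)}\gamma(x,y_1,\ldots,y_j)\bigr)
\;=\; \|d^{(j)}\gamma(x,\cdot)\|_q,
\]
so that $\|\gamma\|_{C^k,K,q}\leq \|\gamma\|_{C^k,K,\wb{B}_1(0),q}$; since the right-hand side is continuous by Lemma~\ref{sammelsu}(c), so is the left-hand side. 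Directedness of the family $\{\|\cdot\|_{C^k,K,q}\}$ is immediate: given two indices $(k_1,K_1,q_1)$ and $(k_2,K_2,q_2)$, choose $k:=\max(k_1,k_2)$, some $K\in\cK$ with $K_1\cup K_2\sub K$ (cofinality of $\cK$), and some $q\in\Gamma$ majorizing both $q_1$ and $q_2$ (directedness of $\Gamma$); then $\|\gamma\|_{C^{k_i},K_i,q_i}\leq \|\gamma\|_{C^k,K,q}$ for $i=1,2$.

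For the reverse comparison, given any seminorm $\|\cdot\|_{C^k,K,L,q}$ from Lemma~\ref{sammelsu}(c) with $L\in\cK(\R^d)$, pick $r\geq 1$ such that $L\sub r\wb{B}_1(0)$. By $j$-linearity, for $x\in K$,
\[
\|d^{(j)}\gamma(x,\cdot)\|_{L^j,q} \;\leq\; r^j\,\|d^{(j)}\gamma(x,\cdot)\|_q,
\]
and since $d^{(j)}\gamma(x,\cdot)$ is a continuous symmetric $j$-linear map (see \ref{sy-mulin}) with associated homogeneous polynomial $\delta^j_x\gamma$, the second estimate in (\ref{conspol}) yields
\[
\|d^{(j)}\gamma(x,\cdot)\|_q \;\leq\; \frac{(2j)^j}{j!}\,\|\delta^j_x\gamma\|_q.
\]
Taking the supremum over $x\in K$ and then the maximum over $j\leq k$ gives
\[
\|\gamma\|_{C^k,K,L,q}\;\leq\; C_{k,r}\,\|\gamma\|_{C^k,K,q}
\qquad\text{with}\quad
C_{k,r}:=\max_{0\leq j\leq k} r^j\,\frac{(2j)^j}{j!}.
\]
Hence the topology defined by $\{\|\cdot\|_{C^k,K,q}\}$ is at least as fine as the compact-open $C^\ell$-topology, and combined with the previous paragraph the two topologies coincide.

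I do not anticipate any real obstacle here; the whole argument is a sandwich estimate built from Polarization and multilinearity. The only small point to watch is the case $j=0$, where by convention $\delta^0_x\gamma=\gamma(x)$ (a constant map) and $\|\delta^0_x\gamma\|_q=q(\gamma(x))$, so that the $j=0$ term of $\|\gamma\|_{C^k,K,q}$ recovers $\|\gamma\|_{K,q}$ and matches up with the $\|\gamma\|_{K,q}$-term appearing in $\|\gamma\|_{C^k,K,L,q}$ of Lemma~\ref{sammelsu}(c).
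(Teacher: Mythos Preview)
Your proposal is correct and follows essentially the same route as the paper: both compare the Gâteaux-type seminorms $\|\cdot\|_{C^k,K,q}$ with the seminorms $\|\cdot\|_{C^k,K,L,q}$ of Lemma~\ref{sammelsu}(c), getting the easy direction by restricting to the diagonal (with $L=\wb{B}_1(0)$) and the reverse direction via multilinearity and the Polarization estimate~(\ref{conspol}). Your constant $r^j\,\tfrac{(2j)^j}{j!}$ is exactly the paper's $\tfrac{(2rj)^j}{j!}$, and you handle directedness and the $j=0$ case a bit more explicitly than the paper does.
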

{\bf Proof of Lemma~\ref{smoohoo}.}
To construct a well-behaved
``periodic'' partition
of unity on $\R^d$,
let $\xi\colon \R^d\to\R$ be a non-negative
$C^\infty$-map
with support $\Supp(\xi)\sub \; ]{-1},1[^d$,
such that $\xi(x)>0$ for each $x\in [{-\frac{1}{2}},\frac{1}{2}]^d$.
Given $z\in \Z^d$, define
a non-negative
smooth function
$h_z\colon \R^d\to \R$ via
\[
h_z(x)\; :=\; \frac{\xi(x-z)}{\sum_{w\in \Z^d}\xi(x-w)}\,.
\]
Then the supports
$\Supp(h_z)\sub z+\; ]{-1},1[^d$
form a locally finite cover of~$\R^d$, and
$\sum_{z\in \Z^d}h_z=1$ pointwise.
Furthermore,
\[
h_z(x)\;=\; h_0(x-z)\quad\mbox{for all $z\in \Z^d$ and $x\in \R^d$.}
\]
By construction,
each point $x\in \R^d$ has a neighbourhood~$W$
on which at most~$2^d$ of the functions~$h_z$
are non-zero.\footnote{If $\Supp(\xi)\sub [{-(1-\ve)},1-\ve]^d$ with $\ve\in \, ]0,1[$,
we can take $W:=B_\ve(x)$.}
Given $n\in \N$, let $M_n$ be the set of all
$z\in \Z^d$ such that $\frac{z}{n}  +[{-\frac{1}{n}},\frac{1}{n}]^d\sub \Omega$.
For each $z\in M_n$, we define
\[
h_{n,z}\colon \Omega\to \R\,,\quad h_{n,z}(x)\, :=\, h_z(n x);
\]
then $\Supp(h_{n,z})\sub \frac{z}{n}+\,]{-\frac{1}{n}},\frac{1}{n}[^d$.
We now associate to $\gamma\in C^\ell(\Omega, F)$
a smooth function $\wt{S}_n(\gamma)\colon \Omega\to F$ via
\begin{equation}\label{glory}
\wt{S}_n(\gamma)(x)\; :=\;
\sum_{z\in M_n}h_{n,z}(x)\cdot P_{\frac{z}{n}}^\ell(\gamma)(x-{\textstyle \frac{z}{n}}).
\end{equation}
It is clear that $\wt{S}_n\colon C^\ell(\Omega,F)\to C^\infty(\Omega,F)$ is linear.
To see that~$\wt{S}_n$ is continuous,
it suffices to show that
for each relatively compact, open subset
$U\sub \Omega$,
the map
\[
g_U\colon C^\ell(\Omega,F)\to C^\infty(U,F)\, , \quad g_U(\gamma):=\wt{S}_n(\gamma)|_U
\]
is continuous (see Lemma~\ref{sammelsu}(f)).
But $\Phi_n(U) := \{z\!\in\! M_n\colon \Supp(h_{n,z})\cap U\not=\emptyset\}$
is finite and
\[
g_U(\gamma)=
\sum_{z\in \Phi_n(U)}h_{n,z} \cdot P_{\frac{z}{n}}^\ell(\gamma)(\cdot
-{\textstyle \frac{z}{n}}).
\]
In view of Lemma~\ref{sammelsu}(e) (which applies to translations in the domain)
and the fact that
the multiplication
operator $C^\infty(U,F)\to C^\infty(U,F)$, $\eta\mto h_{n,z}\cdot\eta$
is continuous (Lemma~\ref{sammelsu}(g)),
$g_U$ will be continuous if the map
\[
\zeta \colon C^\ell(\Omega,F)
\to \Pol^k(\R^d,F) \sub C^\infty(\R^d,F)\,,\quad
\gamma\mto \delta^k_{\frac{z}{n}}(\gamma)
\]
is continuous
for each fixed $m\in \Phi_n(U)$
and each $k\in \N_0$ such that $k\leq \ell$.
By Lemma~\ref{co=coinft},
we need only show that $\zeta$ is continuous
as a map to $C(\R^d,F)$
with the compact-open topology.
Consider the continuous map
\[
\alpha\colon \R^d\to \Omega\times (\R^d)^k\,,\quad
\alpha(y):=({\textstyle \frac{z}{n}},y,\ldots, y)\, .
\]
Then $\zeta= C(\alpha, F)\circ d^{\,(k)}$
is continuous, as
$d^{\,(k)}\colon C^\ell(\Omega,F)\to C(\Omega\times (\R^d)^k, F)$,
$\gamma\mto d^{\,(k)}(\gamma)$
is continuous by definition of the compact-open $C^\ell$-topology
and $C(\alpha, E)\colon C(\Omega\times (\R^d)^k,E)\to C(\R^d,E)$
is continuous (see Lemma~\ref{sammelsu}(e)).\\[2.3mm]
(a) Let
$K\sub \Omega$ be compact,
$L$ be a compact neighbourhood
of~$K$ in~$\Omega$,
and $\gamma\in C^\ell(\Omega, F)$.
There is $n_0\in \N$ such that $K+\wb{B}_{\frac{1}{n_0}}(0)\sub L$,\vspace{-.3mm}
where $\wb{B}_\frac{1}{n_0}(0)$ is as in~{\bf\ref{weiterprer}}.
If $x\in K$, let
$M_n(x):= \{z\in M_n\colon x\in \frac{z}{n}+[-\frac{1}{n},\frac{1}{n}]^d\}$.
Then\footnote{$A:=\bigcup_{z\in M_n\setminus M_n(x)}
(\frac{z}{n}+[{-\frac{1}{n}},\frac{1}{n}]^d)$ is closed in~$\Omega$ by local
finiteness. Hence $\Omega\setminus A$ is an open neighbourhood of~$x$ in~$\Omega$
and $\wt{S}_n(\gamma)(y)=\sum_{z\in M_n(x)}h_{n,z}(y)\big(P^\ell_{\frac{z}{n}}\gamma\big)(y-\frac{z}{n})$
for $y\in \Omega\setminus A$.}
\begin{eqnarray}
\hspace*{-5mm}\lefteqn{\delta^k_x(\gamma-\wt{S}_n(\gamma))(y)
\, =\,
\delta^k_x\left(
\sum_{z\in M_n(x)}h_{n,z}(\cdot) \big(\gamma(\cdot)-\big(P^\ell_{\frac{z}{n}}\gamma\big)(\cdot-{\textstyle\frac{z}{n}})
\big)
\right)\!(y)}\quad \notag \\
& = &
\sum_{z\in M_n(x)}\sum_{j=0}^k
\left(
\begin{array}{c}
k\\
j
\end{array}
\right)
\delta^{k-j}_x(h_{n,z})(y)\cdot
\delta^j_x\big(\gamma-
\big(P^\ell_{\frac{z}{n}}\gamma\big)(\cdot -{\textstyle\frac{z}{n}})\big)(y) \label{usex3}
\end{eqnarray}
for $k\in \{0,\ldots, \ell\}$, $x\in K$ and $y\in \R^d$.
Consider the auxiliary function
\[
\phi\colon \Omega \times \Omega\to F\, ,\quad
\phi(x,z):=\gamma(x)-P^\ell_z(\gamma)(x-z)\, .
\]
Abbreviate $\phi^z:=\phi(\cdot,z)$.
By definition of $P^\ell_z(\gamma)$,
we have $\delta^j_0(P^\ell_z(\gamma))=\delta^j_z(\gamma)$
for $j\in \{0,\ldots, \ell\}$ and hence
$\delta^j_z (\phi^z)=0$
for $j\in \{0,\ldots, \ell\}$. The map
\[
\psi_j\colon \Omega\times \Omega\to \Pol^j(\R^d,F)\,,\quad
(x,z)\mto \delta^j_x(\phi^z)=(\delta^j_{(x,z)}\phi)(\cdot,0)
\]
is $C^{\ell-j}$.
In fact: The mapping $d^{\,(j)}\phi\colon (\Omega\times \Omega)\times
(\R^d\times \R^d)^j\to F$ is $C^{\ell-j}$
by \cite[Remark~1.3.13]{GaN}, whence also
\[
b\colon (\Omega\times \Omega)\times \R^d\to F,\;\;
b(x,z,y):=d^{\,(j)}\phi((x,z),(y,0),\ldots, (y,0))
\]
is $C^{\ell-j}$ and thus $C^{\ell-j,0}$.
Consequently,
\[
b^\vee\colon \Omega\times\Omega\to C(\R^d,F)\,,\quad
b^\vee(x,z)(y):=b(x,z,y)
\]
is $C^{\ell-j}$ (see \cite[Theorem~A]{AaS}).
Thus $\psi_j$ is $C^{\ell-j}$,
as the corestriction of
$b^\vee$ to the closed vector subspace
$\Pol^j(\R^d,F)$ of $C(\R^d,F)$ (see Lemma~\ref{intoclo}).\\[2.5mm]
Set $\psi_j^z:=\psi_j(\cdot, z)\colon \Omega\to \Pol^j(\R^d,F)$.
For $w\in\R^d$, let $\ve_w\colon \Pol^j(\R^d,F)\to F$, $p\mto p(w)$
be the evaluation at~$w$, which is continuous and linear.
For $i\in \{0,\ldots, \ell-j\}$, $x\in\Omega$ and $v,w\in\R^d$, we then have
\begin{eqnarray}
\delta^i_x(\psi_j^z)(v)(w) &=&
\ve_w(\delta^i_x(\psi^z_j)(v)) \;=\;
\ve_w\Big(\frac{d^i}{dt^i}\Big|_{t=0}\psi^z_j(x+tv)\Big)\notag \\[.5mm]
&=& \frac{d^i}{dt^i}\Big|_{t=0}\ve_w(\psi^z_j(x+tv))\;=\;
\frac{d^i}{dt^i}\Big|_{t=0}(\delta^j_{x+tv}\phi^z)(w)\notag\\[.3mm]
&= &  \frac{d^i}{dt^i}\Big|_{t=0}d^{\,(j)}\phi^z(x+tv,\underbrace{w,\ldots,w}_{j}\,)\notag\\[-1mm]
&=& d^{\,(i+j)}\phi^z(x,\underbrace{w,\ldots, w}_{j},
\underbrace{v,\ldots, v}_{i}).\label{dousta}\vspace{-.7mm}
\end{eqnarray}
Since $(P^\ell_z\gamma)(\cdot-z)$
is a polynomial of degree $\leq \ell$,
the $\ell$th G\^{a}teaux differential $\delta^\ell_y((P^\ell_z\gamma)(\cdot-z))$
is independent of $y\in\R^d$.
Thus
\begin{equation}\label{newq}
\psi_\ell(x,z)\;=\; \delta^\ell_x(\phi^z)\;=\; \delta^\ell_x\gamma-\delta^\ell_z\gamma
\end{equation}
for all $(x,z)\in\Omega\times\Omega$, as
\[
\delta^\ell_x(P^\ell_z\gamma)(\cdot-z))
\,=\, \delta_z^\ell((P^\ell_z\gamma)(\cdot-z))\,=\,
\delta^\ell_z\gamma.
\]
Since $d^{\,(i+j)}(\phi^z)(z,\cdot)$
can be recovered from
$\delta^{i+j}_z(\phi^z)=0$ via the Polarization Formula,
we have $d^{\,(i+j)}\phi^z(z,\cdot)=0$
and hence
\begin{equation}\label{allzero}
\delta^i_z(\psi_j^z) = 0\quad\mbox{for all $\, j\in \{0,\ldots,\ell\}\,$
and $\,i \in\{0,\ldots,\ell-j\}$.}
\end{equation}
For $j\in \{0,1,\ldots, \ell-1\}$,
using Taylor's Theorem (see \ref{tay}) and (\ref{allzero})
we obtain
\begin{eqnarray}
\psi_j(x,z) &=& \psi_j^z(x)\,=\psi_j^z(z+(x-z))\,=\,
\psi_j^z(z+(x-z))-
\sum_{i=0}^{\ell-j} \frac{\delta^i_z (\psi_j^z)(x-z)}{i!}\notag\\
&=&
\frac{1}{(\ell-j-1)!}\int_0^1\! (1-t)^{\ell-j-1}
\big(\delta^{\ell-j}_{z+t(x-z)}\psi_j^z-\delta^{\ell-j}_z\psi^z_j)(x-z)\, dt\notag \\
&=&
\frac{1}{(\ell-j-1)!}\int_0^1\! (1-t)^{\ell-j-1}
(\delta^{\ell-j}_{z+t(x-z)}\psi_j^z)(x-z)\, dt \label{goodtay}
\end{eqnarray}
for all $x\in K$ and $z\in \wb{B}_{1/n_0}(x)\sub L$.
Since $\zeta:= z+t(x-z)\in L$, we infer
that
$\|\psi_j(x,z)\|_q  \leq
\frac{2\,(2\ell)^\ell \|\gamma\|_{C^\ell,L,q}\, (\|x-z\|_\infty)^{\ell-j}}{\ell!(\ell-j-1)!}$
and thus
\begin{equation}\label{reuest}
\hspace*{-2mm}\|\psi_j(x,z)\|_q  \; \leq\;
\frac{2\,(2\ell)^\ell}{\ell!} \|\gamma\|_{C^\ell,L,q}\, (\|x-z\|_\infty)^{\ell-j}
\end{equation}
for all $j\in\{0,\ldots, \ell-1\}$, $x\in K$, $z\in \wb{B}_{1/n_0}(x)$
and continuous seminorm~$q$ on~$F$,
using (\ref{conspol}), (\ref{dousta})
and (\ref{newq})
to estimate seminorms as in (\ref{hompolsemi}) via
\begin{eqnarray}
\|(\delta^{\ell-j}_\zeta\psi^z_j)(x-z)\|_q
&= & \|y\mto d^{\,(\ell)}\phi^z(\zeta ,\underbrace{x-z,\ldots ,
x-z}_{\ell-j},\underbrace{y,\ldots, y}_{j})\|_q\notag \\[-.5mm]
&\leq & \|d^{\,(\ell)}\phi^z(\zeta,\sbull)\|_q\cdot \|x-z\|^{\ell-j}_\infty\notag \\
&\leq &
\frac{(2\ell)^\ell}{\ell!} \|\delta^\ell_\zeta(\phi^z)\|_q \cdot \|x-z\|^{\ell-j}_\infty\notag \\
&\leq & \frac{(2\ell)^\ell}{\ell!}
\|\delta^\ell_\zeta\gamma-\delta^\ell_z\gamma\|_q \cdot \|x-z\|^{\ell-j}_\infty\label{fststr}
\end{eqnarray}
We mention that (\ref{reuest}) also holds for $x$, $z$, $q$ as before and $j=\ell$,
as
\[
\|\psi_\ell(x,z)\|_q\,=\, \|\delta^\ell_x\gamma-\delta^\ell_z\gamma\|_q\,\leq
\,\|\delta^\ell_x\gamma\|_q+\|\delta^\ell_z\gamma\|_q\,\leq\,
2\|\gamma\|_{C^\ell,L,q}
\]
for all $x,z\in L$.\\[2.3mm]
If $\ve>0$, the uniform continuity
of $L\to \Pol^\ell(\R^d,F)$, $x\mto \delta^\ell_x\gamma$
implies that there is $n_1\geq n_0$ such that
\begin{equation}\label{nowforall}
\|\delta^\ell_z\gamma-\delta^\ell_x\gamma \|_q\;\leq\; \frac{\ell!}{(2\ell)^\ell}\ve\quad
\mbox{for all $x,z\in L$ such that $\|x-z\|_\infty\leq \frac{1}{n_1}$,}
\end{equation}
whence
\begin{equation}\label{forellcase}
\|\delta^\ell_z\gamma-\delta^\ell_x\gamma\|_q\leq \ve
\end{equation}
in particular.
Combining this with
(\ref{goodtay}) and (\ref{fststr}), we obtain
$\|\psi_j(x,z)\|_q \leq \frac{\ve\, \|x-z\|_\infty^{\ell-j}}{(\ell-j-1)!}$
and thus
\begin{equation}\label{usex2}
\|\psi_j(x,z)\|_q \; \leq\;  \ve\, \|x-z\|_\infty^{\ell-j}
\end{equation}
for all $j\in \{0,\ldots, \ell-1\}$,
$x\in K$ and $z\in\R^d$
with $\|x-z\|_\infty\leq\frac{1}{n_1}$.
By~(\ref{newq}) and~(\ref{forellcase}),
we have $\|\psi_\ell(x,z)\|_q\leq \ve$ for all $x,z\in
L$ with $\|x-z\|_\infty\leq\frac{1}{n_1}$,
whence (\ref{usex2}) also holds for $j=\ell$.\\[2.3mm]
Since $\|h_{n,z}\|_{C^{k-j},K,|\cdot|}\leq n^{k-j}\|h_0\|_{C^{k-j}}$
and $M_n(x)$ has at most $2^d$ elements,
(\ref{usex3}) and (\ref{usex2}) yield
\begin{eqnarray*}
\|\delta^k_x(\gamma-\wt{S}_n(\gamma))\|_q
& \!\!\leq\!\! &
\sum_{z\in M_n(x)}\sum_{j=0}^k
\left(
\begin{array}{c}
k\\
j
\end{array}
\right) n^{k-j}\|h_0\|_{C^{k-j}}
\cdot
\underbrace{\|\psi_j(x,{\textstyle\frac{z}{n}})\|_q}_{\leq\ve (\frac{1}{n})^{\ell-j}}\\[-2mm]
& \!\! \leq \!\! &
\ell!\, 2^d\, \|h_0\|_{C^\ell}
\sum_{j=0}^k
n^{k-j}({\textstyle\frac{1}{n}})^{\ell-j} \ve
\; \leq \; \ell!\, (\ell+1)\, 2^d\, \|h_0\|_{C^\ell}\, \ve\\[-7mm]
\end{eqnarray*}
for all $k\in\{0,\ldots,\ell\}$, $n\geq n_1$ and $x\in K$.
Thus
\begin{equation}\label{betterso}
\|\gamma-\wt{S}_n(\gamma)\|_{C^\ell\!,K,q} \; \leq\;
\ell! \, (\ell+1) \, 2^d \, \|h_0\|_{C^\ell}\, \ve\quad\mbox{for all $\,n\geq n_1$,}
\end{equation}
which is arbitrarily small for small~$\ve$.\\[2.3mm]
(b) Let $n_0$ be as in the proof of~(a)
and~$q$ be a continuous seminorm on~$F$.
Repeating the estimates leading to~(\ref{betterso})
with (\ref{reuest}) instead of (\ref{usex2}), we obtain
\begin{equation}\label{elsewhere}
\|\gamma-\wt{S}_n(\gamma)\|_{C^\ell\!,K,q} \; \leq\;
(\ell+1) \, 2^{d+1} \, (2\ell)^\ell \, \|h_0\|_{C^\ell}\,
\|\gamma\|_{C^\ell\!,L,q}\quad \mbox{for all $n\geq n_0$.}
\end{equation}
Thus $\|\wt{S}_n(\gamma)\|_{C^\ell\!,K,q}\leq\|\gamma\|_{C^\ell\!,K,q}+
\|\wt{S}_n(\gamma)-\gamma\|_{C^\ell\!,K,q}\leq C\|\gamma\|_{C^\ell\!,L,q}$
with
$C:=1+(\ell+1)2^{d+1}(2\ell)^\ell\|h_0\|_{C^\ell}$.\\[2.3mm]
(c) We find $m_1<m_2<\cdots$ such that
\[
K_j+{\textstyle[{-\frac{2}{m_j}},\frac{2}{m_j}]^d}\,\sub\, K_{j+1}^0
\]
for all $j\in \N$. For each $j\in\N$,
\[
\Phi_j:=\{z\in M_{m_j}\colon \Supp(h_{m_j,z})\cap K_j\not=\emptyset\}
\]
is a finite set; we define $S_j(\gamma)\in C^\infty(\Omega,F)$ via
\[
S_j(\gamma)(x)\; :=\;
\sum_{z\in \Phi_j}h_{m_j,z}(x)\cdot P_{\frac{z}{m_j}}^\ell(\gamma)(x-{\textstyle \frac{z}{m_j}})\vspace{-.5mm}
\]
for $\gamma\in C^\ell(\Omega,F)$ and $x\in\Omega$.
It is clear that $S_j\colon C^\ell(\Omega,F)\to C^\infty(\Omega,F)$ is linear.
As $S_j(\gamma)$ is a finite sum of summands
which are continuous functions of~$\gamma$ (as
already shown), the function $S_j\colon C^\ell(\Omega,F)\to C^\infty(\Omega,F)$
is continuous.
We have
\[
\Supp(S_j(\gamma))\sub\bigcup_{z\in \Phi_j}\Supp(h_{m_j,z})
\sub\bigcup_{z\in\Phi_j}B_{\frac{1}{m_j}}(z)\sub K_j
+{\textstyle [{-\frac{2}{m_j}},\frac{2}{m_j}]^d}
\sub K_{j+1}^0
\]
and thus $S_j(\gamma)\in C^\infty_{K_{j+1}}(\Omega,F)$.
As $P_{\frac{z}{m_j}}^\ell(\gamma)(\cdot-{\textstyle \frac{z}{m_j}})\colon\R^d\to F$
is a polynomial, it is an element of
$F\otimes C^\infty(\R^d,\R)$. The restriction of the polynomial
to the domain~$\Omega$ then is an element of $F\otimes C^\infty(\Omega,\R)$
and this is unchanged if we multiply with $h_{m_j,z}$.
Thus $S_j(\gamma)$ is a sum of
summands which are elements of the tensor product, and thus
also $S_j(\gamma)\in F\otimes C^\infty(\Omega,\R)$.
Thus $S_j(\gamma)(\Omega)$ spans a finite-dimensional
vector subspace of~$F$.
Since $S_j(\gamma)\in C^\infty_{K_{j+1}}(\Omega,F)$,
this implies that
\[
S_j(\gamma)\in F\otimes C^\infty_{K_{j+1}}(\Omega,\R).
\]
The definition of $\Phi_j$ entails that
\begin{equation}\label{willu}
S_j(\gamma)|_{K_j}=\wt{S}_{m_j}(\gamma)|_{K_j}
\end{equation}
for all $j\in\N$ and $\gamma\in C^\ell(\Omega,F)$.
In fact, $S_j(\gamma)$ and $\wt{S}_{m_j}(\gamma)$
coincide on the open complement of the closed set
\[
\bigcup_{z\in M_{m_j}\setminus \Phi_j}\Supp(h_{m_j,z}),
\]
whence their $k$th G\^{a}teaux derivatives
coincide on~$K_j$ for all $k\in \N_0$.
If~$q$ is a continuous
seminorm on~$F$ and $j\in\N$, we can use $n_0:=m_j$
in the proof of~(a) with $K:=K_j$,
$L:=K_{j+1}$.
Thus
\[
\|S_j(\gamma)\|_{C^\ell\!,K,q}=
\|\wt{S}_{m_j}(\gamma)\|_{C^\ell\!,K,q}\leq C\,\|\gamma\|_{C^\ell\!,L,q}
\]
for all $\gamma\in C^\ell(\Omega,F)$,
using (b) and (\ref{willu}).
Pick~$n_1\geq n_0=m_j$ as in the proof of~(a).
There is $j_0\geq j$ such that
$m_i\geq n_1$ for all $i\geq j_0$. 
By (\ref{betterso}) and (\ref{willu}), we then have
\begin{equation}\label{newso}
\|\gamma-S_i(\gamma)\|_{C^\ell\!,K,q}\,=\, \|\gamma-\wt{S}_{m_i}(\gamma)\|_{C^\ell\,,K,q}
\, \leq\,
(\ell+1) \, 2^d \, \|h_0\|_{C^\ell}\, \ve
\end{equation}
for all $i\geq j_0$, showing that $S_i(\gamma)\to\gamma$ in $C^\ell(\Omega,F)$
as $i\to\infty$.\\[2.3mm]
To see that also condition~(c) of Theorem~\ref{thmsmoo}
is satisfied, let $i\in\N$ and $\gamma\in C^\ell_{K_i}(\Omega,F)$.
For integers $j\geq i$, define
$\Phi_{i,j}:=\{z\in\Phi_j\colon \frac{z}{m_j}\in K_i\}$.
Since $P^\ell_{\frac{z}{m_j}}(\gamma)=0$ if $\frac{z}{m}\not\in K_i$,
we have
\[
S_j(\gamma)=\sum_{z\in {\textstyle \Phi_{i,j}}h_{m_j,z}\cdot
P^\ell_{\frac{z}{m_j}}(\gamma)(\cdot-\frac{z}{m_j})}
\]
and thus $\Supp(S_j(\gamma))\sub K_{i+1}^0$, using that
\[
{\textstyle
\Supp(h_{m_j,z})\,\sub \,\frac{z}{m_j}+\,]{-\frac{1}{m_j}},\frac{1}{m_j}[^d\,
\, \sub\,  K_i+\,]{-\frac{1}{m_j}},\frac{1}{m_j}[^d
\, \sub\,  K_i+\,]{-\frac{1}{m_i}},\frac{1}{m_i}[^d
\, \sub\, K_{i+1}^0}
\]
for all $z\in\Phi_{i,j}$. $\,\square$
\begin{rem}\label{oncpset}
We mention that $\wt{S}_n(\gamma)\to\gamma$ as $n\to\infty$
and $S_i(\gamma)\to\gamma$ as $i\to\infty$
in $C^\ell(\Omega,F)$
uniformly for $\gamma$ in compact sets.\\[2mm]
To see this, let $B\sub C^\ell(\Omega,F)$ be compact.
The map
\[
h\colon C^\ell(\Omega,F)\to C(\Omega\times\R^d,F),\quad
h(\gamma)(x,y):=\delta^\ell_x\gamma(y)
\]
is continuous by definition of the compact-open $C^\ell$-topology
and Lemma~\ref{sammelsu}(e).
As $\Omega\times\R^d$ is locally compact,
the evaluation map $\ev\colon C(\Omega\times\R^d,F)\times\Omega\times\R^d\to F$,
$(\eta,x,y)\mto\eta(x,y)$ is continuous.
Hence
\[
g\colon C^\ell(\Omega,F)\times\Omega\times\R^d\to F,\quad
(\gamma,x,y)\mto \ev(h(\gamma),x,y)=\delta^k_x\gamma(y)
\]
is continuous and hence also the map
\[
g^\vee\colon C^\ell(\Omega,F)\times \Omega\to\Pol^\ell(\R^d,F)\sub C(\R^d,F),\;\,
(\gamma,x)\mto \delta^\ell_x\gamma.
\]
For $K$, $L$, $\ve$, $q$, and $n_0$ as in the proof of Lemma~\ref{smoohoo}(a),
the restriction
\[
g^\vee|_{B\times L}\to\Pol^\ell(\R^d,F)
\]
is uniformly continuous with respect to the unique
compatible uniformity on the compact set $B\times L$.
As a consequence, there exists $n_1\geq n_0$ such that
(\ref{nowforall}) holds for all $\gamma\in B$.
Thus~(\ref{betterso})
holds for all $\gamma\in B$ and also~(\ref{newso}).
\end{rem}
\section{Proof of Proposition~\ref{etensor} and Theorem~\ref{scalarseq}}
We now prove our results concerning completed tensor products and passage
from extension operators for real-valued functions
to extension operators for vector-valued functions.
\begin{la}\label{densy}
Let $F$ be a locally convex space,
$\ell\in\N_0\cup\{\infty\}$, $d\in\N$ and
$R\sub\R^d$ be a convex subset with dense
interior.
Then $F\otimes C^\infty(R,\R)$ is dense
in $C^\ell(R,F)$.
If $\ell<\infty$
and $\gamma\in C^\ell(R,F)$,
then there exist elements $\gamma_{k,n}\in F\otimes C^\infty(R,\R)$
for $k,n\in \N$ such that
\begin{equation}\label{3limit}
\gamma=\lim_{k\to\infty}\lim_{n\to\infty}\gamma_{k,n}
\;\;\mbox{{\rm in} $\, C^\ell(R,F)$.}
\end{equation}
If $\ell<\infty$ and $R$ is closed in~$\R^d$,
then $F\otimes C^\infty_c(R,\R)$ is dense
in $C^\ell(R,F)$ and the $\gamma_{k,n}$ can be chosen
in $F\otimes C^\infty_c(R,\R)$.
\end{la}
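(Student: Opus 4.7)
The plan is to reduce the claim to finite $\ell$, use convexity to shrink $R$ slightly into $R^0$, extend $\gamma$ to a $C^\ell$-function on an open neighbourhood of~$R$, and then apply Theorem~\ref{thmsmoo}. The case $\ell=\infty$ reduces to finite $\ell$ via the identification $C^\infty(R,F) = \pl\, C^n(R,F)$ from Remark~\ref{remcktop}(b), since a subset of a projective limit of topological vector spaces is dense if and only if its image is dense in each factor of the limit.

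So assume $\ell < \infty$. Fix $x_0\in R^0$ and choose $\varepsilon>0$ with $B_\varepsilon(x_0)\sub R^0$. For $t\in (0,1)$, the affine dilation $\phi_t(x):= x_0+t(x-x_0)$ satisfies $\phi_t(R)\sub R^0$: any $y\in B_{(1-t)\varepsilon}(\phi_t(x))$ is of the form $(1-t)y_0+tx$ with $y_0\in B_\varepsilon(x_0)\sub R$, hence lies in~$R$ by convexity. Consequently $\Omega_t := \phi_t^{-1}(R^0) = x_0+t^{-1}(R^0-x_0)$ is an open subset of~$\R^d$ containing~$R$, and $\tilde\gamma_t := \gamma|_{R^0}\circ\phi_t|_{\Omega_t}$ lies in $C^\ell(\Omega_t,F)$ by Proposition~\ref{unchained}, with $\partial^\alpha\tilde\gamma_t(x) = t^{|\alpha|}(\partial^\alpha\gamma)(\phi_t(x))$ for $x\in R$ and $|\alpha|\le\ell$. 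I then claim $\tilde\gamma_t|_R\to\gamma$ in $C^\ell(R,F)$ as $t\to 1^-$: given compact $K\sub R$, the set $L := \bigcup_{s\in[1/2,1]}\phi_s(K)$ is a compact subset of~$R$ (continuous image of $[1/2,1]\times K$, contained in $R^0\cup K$), and for any continuous seminorm~$q$ on~$F$ and $|\alpha|\le\ell$ the estimate
\[
q\bigl(\partial^\alpha\tilde\gamma_t(x)-\partial^\alpha\gamma(x)\bigr) \le |t^{|\alpha|}-1|\sup_{y\in L} q(\partial^\alpha\gamma(y)) + q\bigl(\partial^\alpha\gamma(\phi_t(x))-\partial^\alpha\gamma(x)\bigr)
\]
vanishes uniformly in $x\in K$ by uniform $q$-continuity of $\partial^\alpha\gamma$ on~$L$ (note $\|\phi_t(x)-x\| \le (1-t)\sup_K\|x-x_0\|$); Lemma~\ref{toppartial} then yields convergence.

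Applying Theorem~\ref{thmsmoo} on the open set $\Omega_t$ produces $S_n^t(\tilde\gamma_t) \in F\otimes C^\infty_c(\Omega_t,\R)$ with $S_n^t(\tilde\gamma_t)\to\tilde\gamma_t$ in $C^\ell(\Omega_t,F)$. Restriction $C^\ell(\Omega_t,F)\to C^\ell(R,F)$ is continuous and linear (Lemma~\ref{sammelsu}(e)) and sends $F\otimes C^\infty_c(\Omega_t,\R)$ into $F\otimes C^\infty(R,\R)$ (the restriction of smooth functions being smooth by Proposition~\ref{unchained}). Picking any $t_k\to 1^-$ and setting $\gamma_{k,n} := S_n^{t_k}(\tilde\gamma_{t_k})|_R$ then yields (\ref{3limit}), and a standard $\varepsilon/2$ argument gives density of $F\otimes C^\infty(R,\R)$ in $C^\ell(R,F)$. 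If $R$ is closed in~$\R^d$, then $\Supp(\gamma_{k,n})$ is a closed subset of the compact support of $S_n^{t_k}(\tilde\gamma_{t_k})$ inside~$\Omega_{t_k}$, hence compact in~$R$, so $\gamma_{k,n}\in F\otimes C^\infty_c(R,\R)$. The main technical point is the convergence $\tilde\gamma_t|_R\to\gamma$: the chain-rule formula for $\partial^\alpha\tilde\gamma_t$ must be shown to extend from~$R^0$ to all of~$R$, which relies on $\phi_t(R)\sub R^0$ together with the uniqueness of continuous extensions in Definition~\ref{defck}.
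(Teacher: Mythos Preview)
Your proof is correct and follows essentially the same approach as the paper: dilate $R$ about an interior point so that $\gamma$ extends to a $C^\ell$-map on an open neighbourhood of~$R$, apply Theorem~\ref{thmsmoo} there, restrict back, and let the dilation parameter tend to~$1$. The only notable difference is that you verify the convergence $\tilde\gamma_t|_R\to\gamma$ by a direct seminorm estimate, whereas the paper obtains it by observing that $(t,x)\mapsto\gamma(\phi_t(x))$ is $C^{0,\ell}$ and invoking the exponential law of \cite{AaS}; your route is slightly more elementary and self-contained.
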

{\bf Proof of Lemma~\ref{densy}.}
Let $x_0\in R^0$; after replacing $R$ with $R-x_0$,
we may assume that $0\in R^0$. For each $t\in \;]1,\infty[$,
the set $R$ is contained in the interior $tR^0$ of $tR$.
Let $(t_k)_{k\in\N}$ be a sequence in $]1,\infty[$
with $t_k\to 1$ as $k\to\infty$.
The~map
\[
f\colon [1,\infty[\,\times R \to F,\quad f(t,x):=\gamma(x/t)
\]
is $C^\ell$ and thus $C^{0,\ell}$, entailing that
\[
f^\vee\colon [1,\infty[\,\to C^\ell(R,F),\quad t\mto f(t,\cdot)
\]
is continuous (see \cite[Theorem~A]{AaS}). Hence
\begin{equation}\label{1in3}
\gamma=\lim_{k\to\infty}\gamma_k\;\,\mbox{in $\, C^\ell(R,F)$}
\end{equation}
with $\gamma_k:=f^\vee(t_k)\colon R\to F$, $x\mto \gamma(x/t_k)$. 
Define $\eta_k\in C^\ell(t_kR^0,F)$ via $\eta_k(x):=\gamma(x/t_k)$.\\[2.3mm]
\emph{The case $\ell<\infty$.}
By Theorem~\ref{thmsmoo},
there exist $\eta_{k,n}\in F\otimes C^\infty_c(t_kR^0,\R)$
with
\[
\eta_k=\lim_{n\to\infty}\eta_{k,n}\;\,\mbox{in $\,C^\ell(t_kR^0,F)$.}
\]
Then $\gamma_{k,n}:=\eta_{k,n}|_R\in F\otimes C^\infty(R,\R)$
and
\[
\gamma_k=\eta_k|_R=\lim_{n\to\infty}\gamma_{k,n}\;\,
\mbox{in $\,C^\ell(R,F)$,}
\]
as the restriction map $C^\ell(t_kR^0,F)\to C^\ell(R,F)$ is continuous.
Substituting this into (\ref{1in3}), we obtain~(\ref{3limit}).
If $R$ is closed, then $\gamma_{k,n}$ has compact support
and thus $\gamma_{k,n}\in F\otimes C^\infty_c(R,\R)$.\\[2.3mm]
\emph{The case $\ell=\infty$.}
Given $\gamma\in C^\infty(R,F)$,
let $K\sub R$ be a compact subset,
$q\colon F\to\R$ be a continuous seminorm,
$j\in\N_0$, and $\ve>0$.
By the preceding, applied with $j$ in place of~$\ell$,
we find $\gamma_{k,n}\in F\otimes C^\infty(R,\R)$
such that
\[
\gamma=\lim_{k\to\infty}\lim_{n\to\infty}\gamma_{k,n}
\;\;\mbox{{\rm in} $\, C^j(R,F)$,}
\]
and we may assume $\gamma_{k,n}\in F\otimes C^\infty_c(R,\R)$ if $R$
is closed.
Thus, we find $k$ with
\[
\Big\|\gamma-\lim_{n\to\infty}\gamma_{k,n}\Big\|_{C^j\!,K,q}<\ve.
\]
Fix $k$. For sufficiently large $n$, we then have
$\|\gamma-\gamma_{k,n}\|_{C^j\!,K,q}<\ve$.
$\,\square$\\[2.3mm]
We need more information
in the case $\ell=\infty$.
The following fact can be shown by standard
arguments.
\begin{la}\label{fairly-stand}
Let $F$ be a sequentially complete
locally convex space, $d\in\N$,
and $\theta\in C^\infty_c(\R^d,F)$.
Then there exist $\theta_{m,j}\in F\otimes C^\infty(\R^d,\R)$
for $m,j\in\N$
such that
\[
\theta=\lim_{m\to\infty}\lim_{j\to\infty}\theta_{m,j}\quad
\mbox{in $\,C^\infty(\R^d,F)$.}
\]
For each compact subset $L\sub \R^d$ with $\Supp(\theta)\sub L^0$,
the $\theta_{m,j}$ can be chosen in $F\otimes C^\infty_L(\R^d,\R)$.
\end{la}
\begin{proof}
Since $F$ is sequentially complete and $\theta$ has compact support,
the weak integral
\[
(g*\theta)(x):=\int_{\R^d}g(y)\theta(x-y)\,d\lambda_d(y)
\]
exists in~$F$ for each $x\in\R^d$ and each
$g\in C(\R^d,\R)$
(see, e.g., \cite[p.\,110]{BaG}). Moreover,
$g*\theta \in C^\infty(\R^d,F)$
and the linear map
\begin{equation}\label{lin-is-cts}
C(\R^d,\R)\to C^\infty(\R^d,F),\quad g\mto g* \theta
\end{equation}
is continuous (e.g., by the statement concerning
$\theta_K$ in \cite[Proposition 8.1]{BaG},
applied with $r:=0$ and $s:=t:=\infty$).
For $m\in \N$,
consider the Gaussian
\[
g_m\colon \R^d\to\R,\quad x\mto \left(\frac{m}{\pi}\right)^{d/2}e^{-m(\|x\|_2)^2}.
\]
For each $\delta>0$,
there exists $m_0\in\N$ such that $\int_{\|y\|_2>\delta}g_m(y)\,d\lambda_d(y)<\delta$
for all $m\geq m_0$.
Hence
\[
g_m*\theta\to\theta\quad\mbox{in $\,C(\R^d,F)$}
\]
as $m\to\infty$,
using the uniform continuity of the compactly supported continuous
function~$\theta$.
Likewise, using (3.4) in \cite[Proposition~3.2]{BaG}, we get
\[
\partial^\alpha(g_m*\theta)=g_m*\partial^\alpha\theta\to \partial^\alpha\theta\quad
\mbox{in $\,C(\R^d,F)$}
\]
for each $\alpha\in \N_0^d$, whence $\theta_m:=g_m*\theta\to \theta$ in $C^\infty(\R^d,F)$
(by Lemma~\ref{toppartial}).
For $j\in \N$, consider the
polynomial $g_{m,j}\colon \R^d\to\R$,
\[
g_{m,j}(x):=
\left(\frac{m}{\pi}\right)^{d/2}
\sum_{\nu=0}^j \frac{1}{\nu!}(-m(\|x\|_2)^2)^\nu\quad \mbox{for $\,x\in\R^d$.}
\]
Then $g_{m,j}\to g_m$ in $C(\R^d,\R)$
as $j\to\infty$. Using the continuity of
the map (\ref{lin-is-cts}), we deduce that
\[
g_{m,j}*\theta\to g_m*\theta\quad\mbox{in $\, C^\infty(\R^d,F)$}
\]
as $j\to\infty$.
It remains to observe that
$\theta_{m,j}:=g_{m,j}*\theta$ is a polynomial
for all $j$ and $m$,
since
\[
\theta_{m,j}(x)=
\int_{\R^d}g(x-y)\theta(y)\,d\lambda_d(y)
\]
for all $x\in\R^d$ (see, e.g., (3.3) in~\cite{BaG}).
Thus $\theta_{m,j}\in F\otimes C^\infty(\R^d,\R)$.
For the final assertion,
pick a smooth function $h\colon \R^d\to\R$
such that $\Supp(h)\sub L$ and $h(x)=1$ for
all $x\in \Supp(\gamma)$.
Then $h\theta_{m,j}\to h\theta_m$ for $j\to\infty$
in $C^\infty(\R^d,F)$,
and $h\theta_m\to h\theta=\theta$ as $m\to \infty$.
We can therefore replace $\theta_{m,j}$
with $h\theta_{m,j}$.
\end{proof}
The following conclusion
complements the case $\ell=\infty$ of Lemma~\ref{densy}.
\begin{la}\label{densy-infty}
Let $F$ be a sequentially complete locally convex space,
$d\in\N$,
$R\sub\R^d$ be a closed, convex subset with dense
interior, and
$\gamma\in C^\infty(R,F)$.
Then there exist elements $\gamma_{k,n,m,j}\in F\otimes C^\infty_c(R,\R)$
for $k,n,m,j\in \N$ such that
\begin{equation}\label{four-limit}
\gamma=\lim_{k\to\infty}\lim_{n\to\infty}
\lim_{m\to\infty}\lim_{j\to\infty}\gamma_{k,n,m,j}
\;\;\mbox{{\rm in} $\, C^\infty(R,F)$.}
\end{equation}
\end{la}
\begin{proof}
We may assume that $0\in R^0$.
Given $\gamma\in C^\infty(R,F)$,
let $\gamma_k\in C^\infty(R,F)$,
$t_k\in \,]1,\infty[$ and
and $\eta_k\in C^\infty(t_kR^0,F)$
be as in the proof of Lemma~\ref{densy} (applied with $\ell:=\infty$).
For each $k\in\N$,
let $(K_{k,n})_{n\in\N}$
be a compact exhaustion of $t_kR^0$
and $h_{k,n}\colon t_kR^0\to\R$
be a compactly supported
smooth function such that $h_{k,n}(x)=1$
for all $x\in K_{k,n}$.
Then $h_{k,n}\eta_k\to\eta_k$ in $C^\infty(t_kR^0,F)$
as $n\to\infty$ (cf.\ Lemma~\ref{toppartial}),
entailing that
\[
(h_{k,n}\eta_k)|_R\to \eta_k|_R=\gamma_k
\]
in $C^\infty(R,F)$. Since $h_{k,n}\eta_k$
has compact support in the open subset~$R^0$
of~$\R^d$, we can extend $h_{k,n}\eta_k$
via~$0$ to a smooth function
$\eta_{k,n}\colon \R^d\to F$. Then $\Supp(\eta_{k,n})$ is compact,
being a subset of $\Supp(h_{k,n})$.
By Lemma~\ref{fairly-stand},
there are $\theta_{k,n,m,j}\in F\otimes C^\infty_c(\R^d,\R)$
for $m,j\in\N$
such that
\[
\eta_{k,n}=\lim_{m\to\infty}\lim_{j\to\infty}\theta_{k,n,m,j}\quad\mbox{in $\,C^\infty(\R^d,F)$.}
\]
Then $\gamma_{k,n,m,j}:=\theta_{k,n,m,j}|_R\in F\otimes C^\infty_c(R,\R)$
and
\[
\gamma=\lim_{k\to\infty}\lim_{n\to\infty}
\eta_{k,n}|_R=\lim_{k\to\infty}\lim_{n\to\infty}\lim_{m\to\infty}\lim_{j\to\infty}
\gamma_{k,n,m,j}
\]
in $C^\infty(R,F)$.
\end{proof}
Lemma~\ref{densy} and \ref{densy-infty} are useful for us as they have
consequences for functions to sequentially complete
locally convex spaces.
Recall that a map $f\colon X\to Y$ between topological spaces is said to be
\emph{sequentially continuous} if $f(x_n)\to f(x)$ as $n\to \infty$
for each $x\in X$ and sequence $(x_n)_{n\in\N}$ in~$X$
such that $x_n\to x$. If $f$ is continuous, then $f$ is sequentially continuous.
\begin{la}\label{use-seqco}
Let $E$ and $F$ be locally convex spaces, $d\in\N$, $\ell\in \N_0\cup\{\infty\}$
and $R\sub\R^d$ be a convex subset with dense interior.
If $\ell=\infty$, assume that $R$ is closed in~$\R^d$
and~$F$ is sequentially complete.
Let $E_0\sub E$ be a vector subspace which is sequentially
complete in the induced topology.
If $f\colon C^\ell(R,F)\to E$ is a sequentially
continuous map such that
\[
f(F\otimes C^\infty(R,\R))\sub E_0,
\]
then $f(C^\ell(R,F))\sub E_0$.
\end{la}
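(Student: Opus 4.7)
Fix $\gamma \in C^\ell(R,F)$. Since $\ell<\infty$, Lemma~\ref{densy} supplies elements $\gamma_{k,n}\in F\otimes C^\infty(R,\R)$ for $k,n\in\N$ such that, setting
\[
\gamma_k:=\lim_{n\to\infty}\gamma_{k,n}\;\;\text{in $C^\ell(R,F)$},
\]
one has $\gamma=\lim_{k\to\infty}\gamma_k$ in $C^\ell(R,F)$. The plan is to push this iterated limit through~$f$ by applying sequential continuity twice, using the sequential completeness of $E_0$ at each stage to confine the intermediate limits to~$E_0$.

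Fix $k$ and consider the first limit. Sequential continuity of~$f$ gives $f(\gamma_k)=\lim_{n\to\infty}f(\gamma_{k,n})$ in~$E$. By hypothesis each $f(\gamma_{k,n})$ lies in~$E_0$, and a sequence which converges in~$E$ is in particular Cauchy in~$E$, hence Cauchy with respect to the topology induced on~$E_0$. Since $E_0$ is sequentially complete in the induced topology, the sequence converges in~$E_0$ to some $e_k\in E_0$. Because the inclusion $E_0\hookrightarrow E$ is continuous and $E$ is Hausdorff, this limit must equal the limit already identified in~$E$, so $f(\gamma_k)=e_k\in E_0$.

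Now apply exactly the same reasoning to the sequence $(\gamma_k)_k$. Sequential continuity yields $f(\gamma)=\lim_{k\to\infty}f(\gamma_k)$ in~$E$; the terms $f(\gamma_k)$ lie in $E_0$ by the previous paragraph, the sequence is Cauchy in~$E_0$, sequential completeness provides a limit in~$E_0$, and Hausdorffness of~$E$ forces that limit to equal $f(\gamma)$. Hence $f(\gamma)\in E_0$, as required.

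The only real content is the observation that, once~$E_0$ is sequentially complete in its induced topology, any sequence in~$E_0$ that converges in~$E$ automatically has its limit in~$E_0$; this is what converts density of $F\otimes C^\infty(R,\R)$ in $C^\ell(R,F)$, via the iterated approximation of Lemma~\ref{densy}, into the desired containment of the whole image of~$f$ in~$E_0$. No further obstacle is present.
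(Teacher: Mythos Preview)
Your proof is correct and follows essentially the same approach as the paper's: both invoke Lemma~\ref{densy} to write $\gamma$ as an iterated sequential limit of elements of $F\otimes C^\infty(R,\R)$, push the limits through~$f$ by sequential continuity, and use sequential completeness of~$E_0$ twice to conclude that each intermediate limit (and hence $f(\gamma)$) lies in~$E_0$. Your version is slightly more explicit about the Cauchy-sequence-plus-Hausdorff argument that the paper compresses into the single observation~(\ref{seqinside}), but the structure is identical.
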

\begin{proof}
If $v\in E$ and $(v_n)_{n\in\N}$ is a sequence in~$E_0$
with $v_n\to v$ as $n\to\infty$, then
\begin{equation}\label{seqinside}
v=\lim_{n\to\infty} v_n  \in E_0,
\end{equation}
as $E_0$ is sequentially complete.
The case $\ell<\infty$:
For each $\gamma\in C^\ell(R,F)$, Lemma~\ref{densy}
provides elements $\gamma_{k,n}\in F\otimes C^\infty(R,\R)$ for $k,n\in\N$
such that (\ref{3limit}) holds. As $f$ is assumed sequentially continuous,
we deduce that
\begin{equation}\label{half}
f(\gamma)=\lim_{k\to\infty}\lim_{n\to\infty}f(\gamma_{k,n}).
\end{equation}
Since $f(\gamma_{k,n})\in E_0$ for all $k,n\in\N$ by hypothesis,
using~(\ref{seqinside}) twice we deduce from~(\ref{half}) that $f(\gamma)\in E_0$.
If $\ell=\infty$,
we write $\gamma$ as a fourfold limit
as in~(\ref{four-limit})
and argue as before,
using~(\ref{seqinside}) four times now to see that
\[
f(\gamma)=\lim_{k\to\infty}\lim_{n\to\infty}\lim_{m\to\infty}\lim_{j\to\infty}
f(\gamma_{k,n,m,j})
\]
is in~$E_0$.
\end{proof}
We shall use a well-known fact:
\begin{la}\label{folklo}
Let $E$ and $F$ be complete locally convex spaces,
$E_0\sub E$ and $F_0\sub F$ dense vector subspaces and
$\lambda\colon E_0\to F_0$ be an isomorphism
of topological vector spaces.
Then the unique continuous linear map
$\Lambda\colon E\to F$ with $\Lambda|_{E_0}=\lambda$
is an isomorphism of topological vector spaces.
\end{la}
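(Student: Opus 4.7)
The plan is to construct the inverse by exactly the same extension procedure applied to $\lambda^{-1}$, and then verify that the two extensions compose to the identity on dense subspaces.

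First I would establish the existence of $\Lambda$. Since $\lambda\colon E_0 \to F_0 \subseteq F$ is continuous linear, it is uniformly continuous with respect to the translation-invariant uniformities on $E_0$ and $F$. Because $E_0$ is dense in~$E$ and $F$ is complete and Hausdorff, the standard extension theorem for uniformly continuous maps into complete Hausdorff uniform spaces gives a unique uniformly continuous extension $\Lambda\colon E \to F$. Concretely, for $x \in E$ and a net $(x_i)$ in $E_0$ with $x_i \to x$, the net $(\lambda(x_i))$ is Cauchy in~$F$ (since $\lambda$ is continuous at $0$ and the uniformities are translation-invariant), hence convergent by completeness; one sets $\Lambda(x) := \lim \lambda(x_i)$ and checks independence of the net. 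Linearity of $\Lambda$ follows from joint continuity of addition and scalar multiplication together with linearity of $\lambda$ on~$E_0$, while uniqueness among continuous linear extensions is immediate from Hausdorffness.

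Applying the same construction to $\mu := \lambda^{-1}\colon F_0 \to E_0 \subseteq E$, which is continuous linear by the hypothesis that $\lambda$ is an isomorphism of topological vector spaces, I obtain a unique continuous linear map $\Psi\colon F \to E$ with $\Psi|_{F_0} = \mu$. Then $\Psi \circ \Lambda\colon E \to E$ is continuous linear and agrees with $\mu \circ \lambda = \id_{E_0}$ on the dense subspace $E_0$; by Hausdorffness of $E$, two continuous maps agreeing on a dense subset coincide, so $\Psi \circ \Lambda = \id_E$. Symmetrically $\Lambda \circ \Psi = \id_F$. Hence $\Lambda$ is bijective with continuous inverse $\Psi$, i.e.\ an isomorphism of topological vector spaces.

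The only potentially delicate point is the existence step: one must verify that $(\lambda(x_i))$ is Cauchy whenever $(x_i)$ is, and that the limit is independent of the chosen net. Both facts reduce to the observation that for every $0$-neighbourhood $V \subseteq F$ there is a $0$-neighbourhood $U \subseteq E$ with $\lambda(U \cap E_0) \subseteq V$, combined with translation invariance. This is routine, so there is no real obstacle; the entire argument is a standard application of completion theory for topological vector spaces.
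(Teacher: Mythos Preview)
Your proof is correct and follows essentially the same approach as the paper: extend $\lambda^{-1}$ to a continuous linear map $\Psi\colon F\to E$ (the paper calls it $\Theta$), then use density plus Hausdorffness to conclude $\Psi\circ\Lambda=\id_E$ and $\Lambda\circ\Psi=\id_F$. You supply more detail on the existence of the extension, which the paper simply assumes as part of the statement, but the core argument is identical.
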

\begin{proof}
Let $\Theta\colon F\to E$ be the unique continuous
linear map which extends $\lambda^{-1}\colon F_0\to E_0$.
Then $\Theta\circ\Lambda|_{E_0}=\id_E|_{E_0}$, whence
$\Theta\circ\Lambda=\id_E$. Likewise, $\Lambda\circ\Theta=\id_F$.
Hence $\Theta=\Lambda^{-1}$.
\end{proof}
\begin{numba}\label{epstop}
Given locally convex spaces $E$ and $F$,
we write $E'_\tau$ for the space of continuous linear functionals
on~$E$, endowed with the Mackey topology
(the topology of uniform convergence on
absolutely convex weakly compact subsets of~$E$).
Write $\cL(E'_\tau,F)_\ve$ for $\cL(E_\tau',F)$,
endowed with the topology of uniform convergence
on equicontinuous subsets of~$E'$.
Recall that the $\ve$-topology on $E\otimes F$
is the initial topology with respect to the linear map
\[
\psi\colon E\otimes F\to \cL(E'_\tau,F)_\ve
\]
determined by $\psi(x,y)(\lambda):=\lambda(x)y$
for $x\in E$, $y\in F$ and $\lambda\in E'$
(cf.\ Definition 43.1 and \S42 in \cite{Tre}).
\end{numba}
\begin{numba}\label{semi-eps}
The locally convex topology on $\cL(E',F)_\ve$
is given by the seminorms
\[
\|\cdot\|_{S,q}\colon \cL(E',F)\to[0,\infty[,\quad
\alpha\, \mto \, \sup_{\lambda\in S}\, q(\alpha(\lambda)),\vspace{-1mm}
\]
for $S$ ranging through the set of equicontinuous subsets of~$E'$
and $q$ ranging through the set of all continuous seminorms on~$F$.
If $\Gamma$ is a directed set
of continuous seminorms defining the locally
convex topology on~$F$, then there exists $Q\in\Gamma$ and $r>0$ such that
$q\leq rQ$. As a consequence, $\|\cdot\|_{S,q}\leq \|\cdot\|_{S,rQ}=\|\cdot\|_{rS,Q}$.
Thus, it suffices to take $q\in \Gamma$.
Moreover, after increasing~$S$, we may assume that
\[
S=V^\circ:=\{\lambda\in E'\colon \lambda(V)\sub [{-1},1]\}
\]
is the polar of a $0$-neighbourhood $V\sub E$. After
replacing $V$ with a smaller $0$-neighbourhood (which increases the polar),
we may assume that $V=\wb{B}^p_1(0)$ for a continuous
seminorm $p$ on~$E$ (and, conversely, $\wb{B}^p_1(0)^\circ$
is equicontinuous for each continuous seminorm $p$ on~$E$).
Since
\[
\wb{B}^p_1(0)=(\wb{B}^p_1(0)^\circ)_\circ:=\{y\in E\colon (\forall \lambda\in \wb{B}^p_1(0)^\circ)\;
\lambda(y)\sub [{-1},1]\}
\]
by the Bipolar Theorem, we deduce that
\begin{equation}\label{semnormpolar}
p(y)\, =\, \sup\, \{|\lambda(y)|\colon \lambda\in \wb{B}^p_1(0)^\circ\}
\, =\, \sup_{\lambda\in S}\, |\lambda(y)|\vspace{-1mm}
\end{equation}
for all $y\in E$.
This well-known fact will be useful for us.
\end{numba}
{\bf Proof of Proposition~\ref{etensor}.}
Since $C^\ell(R,F)$ is complete (see Lemma~\ref{sammelsu}(d))
and $F\otimes C^\ell(R,\R)$
is a dense vector subspace of both $C^\ell(R,F)$
(see\linebreak
Theorem~\ref{densy}) and $F\tensor C^\ell(R,\R)$,
it suffices to
show that both $C^\ell(R,F)$ and $F\tensor C^\ell(R,\R)$
induce the same topology on $F\otimes C^\ell(R,\R)$;
the unique continuous linear map
\[
\wt{\Lambda}\colon C^\ell(R,F)\to F\tensor C^\ell(R,\R)
\]
determined by $\wt{\Lambda}(\gamma v)=v\otimes\gamma$
for $v\in F$ and $\gamma\in C^\ell(R,\R)$
then is an isomorphism of topological vector spaces (by Lemma~\ref{folklo}).
Let
\[
\Lambda\colon Y\to F\otimes C^\ell(R,\R)
\]
be the unique linear map on $Y:=F\otimes C^\ell(R,\R)\sub C^\ell(R,F)$
determined by $\Lambda(\gamma v)=v\otimes \gamma$. 
It suffices to show that the vector space
isomorphism $\Lambda$ is a homeomorphism
if we use the topology induced by $C^\ell(R,F)$ on~$Y$
and the topology induced by
$F\tensor C^\ell(R,\R)$ on the range of~$\Lambda$.
The latter holds if and only if the
linear map $h:=\psi\circ\Lambda$
is a topological embedding, for
\[
\psi\colon F\otimes C^\ell(R,\R)\to \cL(F'_\tau,C^\ell(R,\R))_\ve
\]
as in~\ref{epstop}. Note that $h(\gamma)(\lambda)=
(\psi\circ \Lambda)(\gamma)(\lambda)=\lambda\circ\gamma$
for all $\gamma\in Y$ and $\lambda\in F'$, as
$(\psi\circ\Lambda)(\gamma v)(\lambda)=\lambda(v)\gamma$ for all $v\in F$,
$\gamma\in C^\ell(R,\R)$, and $\lambda\in F'$.\\[2.3mm]
We claim that
\begin{equation}\label{keytensor}
\|h(\gamma)\|_{S,\|\cdot\|_{C^j,K}}=\|\gamma\|_{C^j,K,p}^\partial
\end{equation}
for each $j\in\N_0$ such that $j\leq \ell$, compact subset $K\sub R$,
and continuous seminorm $p$ on~$F$, with $S:=\wb{B}^p_1(0)^\circ\sub F'$.
If this is true, then the injective linear map~$h$ is an embedding.
In fact, for each continuous seminorm $Q$ on $\cL(F'_\tau,C^\ell(R,\R))_\ve$,
there exist a continuous seminorm~$p$ on~$F$, a compact subset
$K\sub R$ and $j\in\N_0$ with $j\leq\ell$ such that
\[
Q\, \leq \, \|\cdot\|_{S,\|\cdot\|_{C^j,K}}
\]
with $S:=\wb{B}^p_1(0)^\circ$, by \ref{semi-eps}.
Then $Q(h(\gamma))\leq \|h(\gamma)\|_{S,\|\cdot\|_{C^j,K}}=\|\gamma\|_{C^j,K,p}^\partial$
for each $\gamma\in Y$, by~(\ref{keytensor}),
showing that the linear map~$h$ is continuous.
If $P$ is a continuous seminorm on~$Y$, then there exist
$j\in \N_0$ with $j\leq \ell$, a compact subset $K\sub R$,
and a continuous seminorm~$p$ on~$F$ such that
\[
P(\gamma)\leq \|\gamma\|_{C^j,K,p}^\partial =\|h(\gamma)\|_{S,\|\cdot\|_{C^j,K}},
\]
where $S:=\wb{B}_1^p(0)^\circ$. The linear map
$(h|^{h(Y)})^{-1}$ is therefore continuous, and thus~$h$ is
a topological embedding.
But
\begin{eqnarray*}
\|h(\gamma)\|_{S,\|\cdot\|_{C^j,K}}&=&
\sup_{\lambda\in S}\, \|\lambda\circ\gamma\|_{C^j,K}\\[.3mm]
&=&\sup_{\lambda\in S}\,\max_{|\alpha|\leq j}\,\sup_{x\in K}\, \underbrace{|\partial^\alpha(\lambda\circ \gamma)(x)|}_{=|\lambda(\partial^\alpha\gamma(x))|}\\[.3mm]
&=&\max_{|\alpha|\leq j}\,\sup_{x\in K}\, \underbrace{\sup_{\lambda\in S}\, |\lambda(\partial^\alpha\gamma(x))|}_{=p(\partial^\alpha\gamma(x))}\;=\;
\|\gamma\|_{C^j,K,p}^\partial
\end{eqnarray*}
for each $\gamma\in Y$, using the Chain Rule and~(\ref{semnormpolar});
this proves the claim. $\,\square$\\[2.3mm]
{\bf Proof of Theorem~\ref{scalarseq}.}
Let $\cE\colon C^\ell(R,\R)\to C^\ell(\R^d,\R)$ be a continuous
linear map such that $\cE(\gamma)|_R=\gamma$ for each $\gamma\in C^\ell(R,\R)$.
Let~$\wt{F}$ be a completion of $F$ such that $F\sub\wt{F}$.
There is a unique continuous linear map
\[
\id_{\wt{F}}\tensor \cE\colon \, \wt{F}\tensor C^\ell(R,\R)\to \wt{F}\tensor C^\ell(\R^d,\wt{F})
\]
mapping elementary tensors $v\otimes \gamma$ to $v\otimes \cE(\gamma)$
(see \cite[Definition~43.5]{Tre}).
Using Proposition~\ref{etensor} twice
(with domains $R$ and $\R^d$, respectively),
we now interpret $\id_{\wt{F}}\tensor \cE$
as a map
\[
\cE_{\wt{F}}\colon C^\ell(R,\wt{F})\to C^\ell(\R^d,\wt{F}).
\]
The restriction $f:=\cE_{\wt{F}}|_{C^\ell(R,F)}$
is a continuous linear map $C^\ell(R,F)\to C^\ell(\R^d,\wt{F})$
such that $f(F\otimes C^\ell(R,\R))\sub F\otimes  \cE(C^\ell(R,\R)) \sub C^\ell(\R^d,F)$.
Moreover, $C^\ell(\R^d,\wt{F})$ induces the compact-open $C^\ell$-topology
on $C^\ell(\R^d,F)$ (see Lemma~\ref{sammelsu}(b)).
Since $C^\ell(\R^d,F)$ is sequentially complete by Lemma~\ref{sammelsu}(d),
Lemma~\ref{use-seqco} shows that $f(C^\ell(R,F))\sub C^\ell(\R^d,F)$.
Thus $f$ co-restricts to a continuous linear map
\[
\cE_F\colon C^\ell(R,F)\to C^\ell(\R^d,F).
\]
Let $\rho\colon C^\ell(\R^d,F)\to C^\ell(R,F)$, $\gamma\mto \gamma|_R$
be the restriction map, which is continuous and linear. Since
\[
\rho(\cE_F(\gamma v))=\rho(\cE(\gamma)v)=\cE(\gamma)|_R v=\gamma v
\]
for all $v\in F$ and $\gamma\in C^\ell(R,\R)$, passing to the linear span
we deduce that $\rho(\cE_F(\gamma))=\gamma$ for all $\gamma\in F\otimes C^\ell(R,\R)$.
The continuous linear maps $\rho\circ\cE_F$ and $\id_{C^\ell(R,F)}$
therefore coincide, as they coincide on a dense vector subspace of their domain.
$\,\square$\\[2.3mm]
We close with a fact concerning convex sets mentioned in the introduction.
\begin{la}\label{convthencusp}
Let $d\in \N$ and $R\sub\R^d$ be a closed, convex subset
with non-empty interior. Then $R$ satisfies the cusp
condition.
\end{la}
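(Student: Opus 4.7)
The plan is to leverage the one really useful fact about a closed convex $R\sub\R^d$ with non-empty interior: once we fix an interior point $p\in R^0$ with a closed ball $\wb{B}_\delta(p)\sub R$, every point of $R$ can be ``pulled into'' the interior along the straight line to~$p$, with the size of the ball that survives controlled linearly by how far we pull. Concretely, for $q\in R$ and $t\in\,]0,1]$, convexity shows that the point $q_t:=(1-t)q+tp$ lies in~$R^0$, and in fact $B_{t\delta}(q_t)\sub R$, because any $q_t+tw$ with $\|w\|_2<\delta$ is the convex combination $(1-t)q+t(p+w)\in R$. Both conditions of Definition~\ref{cuspco} will follow by applying this single device.

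For (a), I will show that we may take $n=1$ everywhere. Given $x\in R$, choose a bounded compact neighbourhood $K$ of~$x$ in~$R$ and a constant $M>0$ with $K\sub\wb{B}_M(p)$. For $y,z\in K$ with $\|y-z\|_2>0$, set $t:=\|y-z\|_2/(2M)\in\,]0,1]$ and connect $y$ to~$z$ by the three-segment broken line $y\leadsto y_t\leadsto z_t\leadsto z$, where $y_t,z_t$ are the pulled-in points above. By the preceding paragraph, the open segments $]y,y_t]$ and $[z_t,z[$ lie in $R^0$; the middle segment $[y_t,z_t]$ lies in $R^0$ because $R^0$ is itself convex. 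Hence the curve sits inside $R^0$ except for the two endpoints. Its length is
\[
t\|y-p\|_2+(1-t)\|y-z\|_2+t\|z-p\|_2\;\leq\;2Mt+\|y-z\|_2\;=\;2\|y-z\|_2,
\]
so $C:=2$ and $n:=1$ work.

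For (b), I will take $r:=1$. Given a compact set $K\sub\R^d$, let $M:=\sup_{z\in K}\|p-z\|_2<\infty$ and note $\|p-z\|_2\geq\delta$ for $z\in K\cap\partial R$. For $z\in K\cap\partial R$ and $0<\ve\leq\ve_0:=2M$, set $t:=\ve/(2M)\in\,]0,1]$ and $x:=z_t=(1-t)z+tp$. Then $x\in R$ and
\[
\|x-z\|_2\;=\;t\|p-z\|_2\;\leq\;tM\;=\;\tfrac{\ve}{2}\;<\;\ve,\qquad
B_{t\delta}(x)\sub R,
\]
and $t\delta=\delta\ve/(2M)$, so setting $\rho:=\delta/(2M)$ gives $B_{\rho\ve}(x)\sub R$, as required.

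There is really no serious obstacle: once one notices the ``pull towards $p$'' trick and the resulting estimate $B_{t\delta}(q_t)\sub R$, both conditions collapse into one-line computations. The only thing to be mindful of is to make the various constants ($M$, $\delta$, the bound on the diameter of~$K$ in (a), and $\ve_0$ in (b)) uniform over the relevant compact sets, which is automatic because all suprema taken are over compacta. I should also note in passing that (b) is vacuous when $R=\R^d$, since then $\partial R=\emptyset$.
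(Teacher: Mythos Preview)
Your proof is correct and takes essentially the same route as the paper: both verify (a) with $C=2$, $n=1$ via a three-segment polygonal path obtained by pulling $y$ and $z$ toward a fixed interior point, and both verify (b) with $r=1$ via the same pull-in construction. If anything, your part (b) is written a bit more carefully, since you scale by $1/(2M)$ to guarantee $\|x-z\|_2<\ve$, a check that the paper's direct choice $x:=z+\ve(w-z)$ leaves implicit.
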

\begin{proof}
To see that $R$ has no narrow fjords,
let $x\in R$. Pick $r>0$. Then
\[
K:=\wb{B}_r(x)\cap R
\]
is a compact neighbourhood of~$x$ in~$R$
(using the euclidean ball).
Let $w\in R^0$.
If $y\not=z$ are points in~$K$,
we can choose $t\in \,]0,1]$ so small that $t\|w-y\|_2,t\|w-z\|_2\leq \|y-z\|_2/2$.
Then $y+s(w-y)\in R^0$ and $z+s(w-z)\in R^0$ for all $s\in\,]0,t]$
and the line segment joining $y+t(w-y)$  and
$z+t(w-z)$ (which has length
\[
\|y+t(w-y)-z-t(w-z)\|_2=(1-t)\|y-z\|_2)
\]
is contained in~$R^0$.
The polygonal path $\gamma$ with edges
$y$, $y+t(w-y)$, $z+t(w-z)$, $z$
therefore has length
\[
t\|w-y\|_2+(1-t)\|y-z\|_2+t\|w-z\|_2\leq 2\,\|y-z\|_2,
\]
using $1-t\leq 1$ and the choice of~$t$.
We can therefore take $C:=2$ and $n=1$
in the definition of no narrow fjords.\\[2mm]
To see that $R$ has at worst polynomal outward cusps,
let $K\sub\R^d$ be a compact set. Pick $w\in R^0$;
there is $\rho>0$ such that $B_\rho(w)\sub R^0$.
For each $z\in K\cap\partial R$ and $\ve\in\,]0,1]$,
setting $x:=z+\ve(w-z)$ we have
\begin{eqnarray*}
B_{\rho\ve}(x) &=&
B_{\rho\ve}(z+\ve(w-z))=
z+\ve(w-z)+B_{\rho\ve}(0)\\
&=& z+\ve(w-z)+\ve B_\rho(0)
=z+\ve (B_\rho(w)-z)\sub R^0\sub R.
\end{eqnarray*}
Thus, the condition in Definition~\ref{cuspco}(b)
is satisfied with $\ve_0:=1$ and~$r:=1$.
\end{proof}
\section{Proof of Proposition~\ref{Hancts} and Corollary~\ref{extcorner}}
We now prove the two results related to Hanusch's work.\\[2.3mm]
{\bf Proof of Proposition~\ref{Hancts}.}
Let the constants $C_i\in [1,\infty[$ for $i\in\N$ with $i\leq \ell$ be
as in part~2) of \cite[Theorem~1]{Han}.
Let $j\in\N_0$ such that $j\leq \ell$, $K\sub \,]a,\infty[\,\times R$
and $L\sub \R\times E$ be compact subsets, and~$q$ be a continuous seminorm on~$F$.
After increasing $K$ and $L$, we may assume that $K=[\alpha,\beta ]\times A$
with $a<\alpha <\tau< b<\beta$ and a compact subset $A\sub R$,
and $L=[{-r},r]\times B$ for some $r\geq 10$ and a compact subset $B\sub E$
such that $0\in B$. If $\ell\geq 1$, we may assume that $j\geq 1$.
By part~2) of \cite[Theorem~1]{Han}, we have
\[
q(\cE_\tau(\gamma)(t,x))\;\leq\; \sup\{q(\gamma(s,x))\colon s\in [\tau,b]\}\;
\leq\, \|\gamma\|_{[\alpha,\beta]\times A,q}
\]
for all $t\in \,]b,\beta]$ and $x\in A$.
Thus
\[
q(\cE_\tau(\gamma)(t,x))\;\leq\; \|\gamma\|_{[\alpha,\beta]\times A,q}
\]
holds for all $(t,x)\in [\alpha,\beta]\times A$,
as $\cE_\tau(\gamma)(t,x)=\gamma(t,x)$ in the remaining case
where $(t,x)\in [\alpha,b]\times A$
(making the inequality a triviality).
We therefore have
\begin{equation}\label{p3han}
\|\cE_\tau(\gamma)\|_{K,q}\leq \|\gamma\|_{K,q}
\end{equation}
for all $\gamma\in C^\ell(]a,b]\times R,F)$;
if $j=0$ (and thus $\ell=0$), this establishes continuity of~$\cE_\tau$.
If $j\geq 1$,
consider $i\in\{1,\ldots,j\}$ now.
Abbreviate $D:=\{(1,0)\}\cup(\{0\}\times B)\sub [{-r},r]\times B\sub\R\times E$.
Then
\begin{eqnarray*}
q(d^{\,(i)}(\cE_\tau \gamma )((t,x),(t_1,y_1),\ldots,(t_i,y_j)))
& \leq &
C_i r^i\|\gamma\|_{C^i,([\tau,b]\times \{x\}),D,q}\\
& \leq & C_ir^i\|\gamma\|_{C^i,[\alpha,\beta]\times A,[{-r},r]\times B,q}
\end{eqnarray*}
for all $t\in \,]b,\beta]$, $x\in A$, $y_1,\ldots, y_i\in B$ and
$t_1,\ldots, t_i\in [{-r},r]$,
using that $\max\{1,|t_1|,\ldots,|t_i|\}^i\leq r^i$.
Thus
\begin{equation}\label{p2han}
q(d^{\,(i)}(\cE_\tau \gamma)((t,x),(t_1,y_1),\ldots,(t_i,y_j)))
\;\leq\; C_ir^i\|\gamma\|_{C^i,K,L,q}
\end{equation}
holds for all $(t,x)\in [\alpha,\beta]\times A$ and
$(t_1,y_1),\ldots, (t_i,y_i)\in [{-r},r]\times B$,
as
\[
d^{\,(i)}(\cE_\tau\gamma)((t,x),(t_1,y_1),\ldots,(t_i,y_i))
=d^{\,(i)}\gamma((t,x),(t_1,y_1),\ldots, (t_i,y_i))
\]
in the remaining case where $t\in [\alpha,b]$
(making the inequality a triviality).\\[2.3mm]
Now
\[
C\;:=\; \max\big\{C_ir^i\colon i\in\{1,\ldots, j\}\big\}\,\geq \, 1.
\]
For all $\gamma\in C^\ell(]a,b]\times R,F)$,
we have $\|\cE_\tau(\gamma)\|_{K,q}\leq \|\gamma\|_{K,q}\leq
\|\gamma\|_{C^j,K,L,q}\leq C\,\|\gamma\|_{C^j,K,L,q}$,
by (\ref{p3han}).
Moreover, for $i\in\{1,\ldots, j\}$, we have
\[
\|\cE_\tau(\gamma)\|_{i,K,L,q}\; \leq \; C_ir^i\|\gamma\|_{C^i,K,L,q}
\;\leq\; C\,\|\gamma\|_{C^j,K,L,q},
\]
by (\ref{p2han}).
Thus
\[
\|\cE_\tau(\gamma)\|_{C^j,K,L,q}\;\leq\; C\,\|\gamma\|_{C^j,K,L,q}
\]
for all $\gamma\in C^\ell(]a,b]\times R,F)$.
As a consequence, $\cE_\tau$ is continuous. $\,\square$.\\[2.3mm]
The following observations prepare the proof of
Corollary~\ref{extcorner}.
\begin{la}\label{towa1}
Let $E_1$ and $E_2$ be finite-dimensional vector spaces,
$E$ and $F$ be locally convex spaces,
$R\sub E$ be a regular subset,
$\ell\in\N_0\cup\{\infty\}$, $f\colon U_1\to U_2$ be a $C^\ell$-diffeomorphism
between open subsets $U_1\sub E_1$ and $U_2\sub E_2$,
and $S_1$ be a closed, regular subset of~$U_1$. Let $S_2:=f(S_1)$.
If the restriction map
\[
C^\ell(U_1\times R,F)\to C^\ell(S_1\times R,F)
\]
has a continuous linear right inverse $\cE\colon C^\ell(S_1\times R,F)
\to C^\ell(U_1\times R,F)$, then
\[
\cF:=C^\ell((f^{-1})\times \id_R,F)\circ\cE\circ C^\ell(f|_{S_1}^{S_2}\times\id_R,F)\colon
C^\ell(S_2\times R,F)\to C^\ell(U_2\times R,F)
\]
is a continuous linear map and a right inverse for the restriction mapping
$C^\ell(U_2\times R,F)\to C^\ell(S_2\times R,F)$.
\end{la}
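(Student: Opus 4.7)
The plan is to verify three things in sequence: that $S_2$ is a closed regular subset of $U_2$, that the two ``pullback'' maps flanking $\cE$ in the definition of $\cF$ are continuous linear, and that the composition satisfies the right-inverse property by a direct algebraic check.

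First I would note that since $f\colon U_1\to U_2$ is a homeomorphism, $S_2=f(S_1)$ is closed in $U_2$, the interior of $S_2$ (relative to $U_2$, equivalently relative to $E_2$) equals $f(S_1^0)$, and $S_2=f(\overline{S_1^0})=\overline{f(S_1^0)}=\overline{S_2^0}$, so $S_2$ is regular. Forming the products $S_1\times R\subseteq E_1\times E$ and $S_2\times R\subseteq E_2\times E$, their interiors are $S_1^0\times R^0$ and $S_2^0\times R^0$ respectively. The mappings
\[
g_1:=f|_{S_1}^{S_2}\times\id_R\colon S_1\times R\to S_2\times R\sub E_2\times E
\]
and $g_2:=f^{-1}\times\id_R\colon U_2\times R\to U_1\times R\sub E_1\times E$ are $C^\ell$ (as products of $C^\ell$-maps; cf.\ \ref{diffmultilin} together with Proposition~\ref{unchained}), and by the description of the interiors they take interior to interior. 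Lemma~\ref{sammelsu}(e) therefore provides that
\[
\phi_1:=C^\ell(g_1,F)\colon C^\ell(S_2\times R,F)\to C^\ell(S_1\times R,F)
\]
and $\phi_2:=C^\ell(g_2,F)\colon C^\ell(U_1\times R,F)\to C^\ell(U_2\times R,F)$ are continuous linear. Hence $\cF=\phi_2\circ\cE\circ\phi_1$ is continuous linear.

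To verify the right-inverse property, let $\gamma\in C^\ell(S_2\times R,F)$ and $(y,x)\in S_2\times R$. Then $f^{-1}(y)\in S_1$, so $(f^{-1}(y),x)\in S_1\times R$, on which $\cE(\phi_1(\gamma))$ agrees with $\phi_1(\gamma)=\gamma\circ g_1$ since $\cE$ is a right inverse for the restriction map $C^\ell(U_1\times R,F)\to C^\ell(S_1\times R,F)$. Therefore
\[
\cF(\gamma)(y,x)=\cE(\phi_1(\gamma))\bigl(f^{-1}(y),x\bigr)=\phi_1(\gamma)\bigl(f^{-1}(y),x\bigr)=\gamma\bigl(f(f^{-1}(y)),x\bigr)=\gamma(y,x),
\]
so $\cF(\gamma)|_{S_2\times R}=\gamma$, as required.

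There is no real obstacle here; the only minor point requiring care is matching the hypothesis ``$f(S^0)\sub R^0$'' in Lemma~\ref{sammelsu}(e), which is settled once one observes that $f$ being a diffeomorphism forces $f(S_1^0)=S_2^0$ and $(S_i\times R)^0=S_i^0\times R^0$ for $i=1,2$.
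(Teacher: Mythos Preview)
Your proof is correct and follows essentially the same route as the paper's: both invoke Lemma~\ref{sammelsu}(e) to see that the two pullback maps are continuous linear, and both verify the right-inverse property by the same direct pointwise computation. You simply spell out more of the background checks (that $S_2$ is closed and regular, and that the interior-to-interior condition needed for Lemma~\ref{sammelsu}(e) holds), which the paper leaves implicit.
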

\begin{proof}
The composition~$\cF$ is continuous and linear as a composition of continuous
linear maps (using Lemma~\ref{sammelsu}(e)).
For $\gamma\in C^\ell(S_2\times R,F)$
we have
\[
\cF(\gamma)(x,y)=\cE(\gamma\circ (f|_{S_1}\times \id_R))(f^{-1}(x),y)
=(\gamma\circ (f|_{S_1}\times\id_R))(f^{-1}(x),y)=\gamma(x,y)
\]
for all $x\in S_2$ and $y\in R$ (using that $f^{-1}(x)\in S_1$).
Thus $\cF(\gamma)|_{S_2\times R}=\gamma$.
\end{proof}
\begin{la}\label{towa2}
Let $E_1$, $E_2$, and $F$ be locally convex spaces,
$d\in\N$, $\ell\in\N_0\cup\{\infty\}$,
$U\sub \R^d$ be an open subset
and $R_1\sub E_1$, $R_2\sub E_2$, and $S\sub U$ be
regular subsets such that $S$ is closed in~$U$.
If the restriction map
\[
C^\ell(U\times R_1\times R_2,F)\to C^\ell(S\times R_1\times R_2,F)
\]
has a continuous linear right inverse
\[
\cE\colon C^\ell(S\times R_1\times R_2,F)
\to C^\ell(U\times R_1\times R_2,F),
\]
then
also the restriction map
\[
C^\ell(R_1\times U \times R_2,F)\to C^\ell(R_1\times S\times R_2,F)
\]
has a continuous linear right inverse $C^\ell(R_1\times S\times R_2,F)
\to C^\ell(R_1\times U\times R_2,F)$.
\end{la}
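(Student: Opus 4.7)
The plan is to reduce the statement to the given hypothesis by conjugating $\cE$ with the coordinate-swap diffeomorphism that exchanges the first two factors. Concretely, I would introduce the continuous linear isomorphism
\[
\sigma\colon E_1\times\R^d\times E_2\to \R^d\times E_1\times E_2,\quad (x,y,z)\mto (y,x,z),
\]
which is smooth with smooth inverse $\sigma^{-1}(y,x,z)=(x,y,z)$. First I would observe that $\sigma$ restricts to a bijection $R_1\times U\times R_2\to U\times R_1\times R_2$ between the regular subsets in question, carrying interior to interior (since $(R_1\times U\times R_2)^0=R_1^0\times U\times R_2^0$ and $(U\times R_1\times R_2)^0=U\times R_1^0\times R_2^0$, which correspond under $\sigma$), and likewise restricts to a bijection $R_1\times S\times R_2\to S\times R_1\times R_2$ taking interior to interior.

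Next I would appeal to Lemma~\ref{sammelsu}(e): since $\sigma|_{R_1\times U\times R_2}$ is a $C^\infty$-map into $U\times R_1\times R_2$ taking interior to interior, the pullback
\[
\sigma^*\colon C^\ell(U\times R_1\times R_2,F)\to C^\ell(R_1\times U\times R_2,F),\quad \gamma\mto \gamma\circ\sigma
\]
is continuous and linear, and analogously the pullback
\[
(\sigma^{-1})^*\colon C^\ell(R_1\times S\times R_2,F)\to C^\ell(S\times R_1\times R_2,F),\quad \eta\mto \eta\circ\sigma^{-1}
\]
is continuous and linear. With these in hand I would define
\[
\cF\;:=\;\sigma^*\circ\cE\circ (\sigma^{-1})^*\colon C^\ell(R_1\times S\times R_2,F)\to C^\ell(R_1\times U\times R_2,F),
\]
which is continuous and linear as a composition of such maps.

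It remains to verify that $\cF$ is a right inverse for the restriction map. For $\eta\in C^\ell(R_1\times S\times R_2,F)$ and $(x,y,z)\in R_1\times S\times R_2$, the hypothesis on $\cE$ gives
\[
\cF(\eta)(x,y,z)\;=\;\cE(\eta\circ\sigma^{-1})(\sigma(x,y,z))\;=\;(\eta\circ\sigma^{-1})(y,x,z)\;=\;\eta(x,y,z),
\]
since $\sigma(x,y,z)=(y,x,z)\in S\times R_1\times R_2$ and $\cE$ restricts to the identity on that set. I do not expect any real obstacle: the argument is entirely formal once one notes that $\sigma$ and $\sigma^{-1}$ preserve interiors, which is what is needed to invoke Lemma~\ref{sammelsu}(e) in this rough-boundary setting.
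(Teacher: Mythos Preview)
Your proof is correct and follows essentially the same approach as the paper: both conjugate $\cE$ by the coordinate-swap diffeomorphism exchanging the first two factors, invoking Lemma~\ref{sammelsu}(e) for continuity of the pullbacks. Your version is slightly more detailed in checking that interiors map to interiors and in verifying the right-inverse property pointwise, but the argument is the same.
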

\begin{proof}
The map $f\colon U\times R_1\times R_2\to R_1\times U\times R_2$,
$(x,y,z)\mto (y,x,z)$ is a $C^\ell$-diffeomorphism between
regular subsets of $\R^d\times E_1\times E_2$ and
$E_1\times \R^d\times E_2$,
respectively. Then
\[
C^\ell(f^{-1},F)\circ \cE\circ C^\ell(f|_{S\times R_1\times R_2},F)\colon
C^\ell(R_1\times S\times R_2,F)\to C^\ell(R_1\times U\times R_2,F)
\]
is the desired right inverse.
\end{proof}
\begin{rem}\label{hanumultrem}
It is now clear that Hanusch's extension operators
as in (\ref{hanumult}) are continuous.
If fact, given $\tau_j\in ]a_j,b_j[$
we have Hanusch's extension operator
\[
C^\ell(]a_j,b_j]\times R_j,F)\to C^\ell(]a,j,\infty[\,\times R_j,F)
\]
for the regular subset $R_j:=\,]a_1,\infty[\,\times\!\cdots\! \times\,]a_{j-1},\infty[\,\times
]a_{j+1},b_j]\times\!\cdots\! \times \,]a_n,b_n]\times R$\linebreak
$\sub
\R\times (\R^{n-1}\times E)$. Using Lemma~\ref{towa2},
we have a corresponding extension operator
\begin{eqnarray*}
\lefteqn{\cE_j\colon
C^\ell(]a_1,\infty[\,\times\cdots\times\,]a_{j-1},\infty[\,
\times \,]a_j,b_j]\times \cdots\times \,]a_n,b_n]\times R,F)}\qquad\\
& & \to
C^\ell(]a_1,\infty[\,\times\cdots\times\,]a_j,\infty[\,
\times \,]a_{j+1},b_{j+1}]\times \cdots\times \,]a_n,b_n]\times R,F).
\end{eqnarray*}
Then $\cE_n\circ \cdots\circ \cE_1$ is continuous linear and is the extension operator
in~(\ref{hanumult}).
\end{rem}
{\bf Proof of Corollary~\ref{extcorner}.}
Consider an extension operator
\[
C^\ell(]{-\infty},0]^m\times\R^{d-m},F)\to C^\ell(\R^d,F)
\]
by Hanusch (analogous to (\ref{hanumult})),
which is continuous linear by Remark~\ref{hanumultrem}.\linebreak
Using Lemma~\ref{towa1}, we have a corresponding continuous linear extension\linebreak
operator
$C^\ell([0,\infty[^m\times\R^{d-m},F)\to C^\ell(\R^d,F)$, as asserted.\\[2.3mm]
To prove the final assertion,
we claim that there exist continuous linear extension operators
\begin{equation}\label{intermedia}
\cE_j\colon C^\ell(\R^{j-1}\times [0,1]^{1+d-j},F)\to C^\ell(R^j\times [0,1]^{d-j},F)
\end{equation}
for all $j\in \{1,\ldots,d\}$. If this is true, then
\[
\cE_d\circ\cdots\circ\cE_1\colon C^\ell([0,1]^d,F)\to C^\ell(\R^d,F)
\]
is a continuous linear extension operator, as desired.
In view of Lemma~\ref{towa2}, operators as in (\ref{intermedia})
will exist if we can show that the restriction map
\[
C^\ell(\R\times R,F)\to C^\ell([0,1]\times R,F)
\]
has a continuous linear right inverse for each regular subset~$R$
of a locally convex space~$E$.
The restriction map
\[
\rho_1\colon C^\ell(\R\times R,F)\to C^\ell(]0,\infty[\,\times R,F)
\]
is continuous and linear, and so are the restriction maps
$\rho_2\colon C^\ell(\R\times R,F)\to C^\ell(]-\infty,1[\,\times R,F)$,
$r_1\colon C^\ell([0,1]\times R,F)\to C^\ell(]0,1]\times R,F)$
and $r_2\colon C^\ell([0,1]\times R,F)\to C^\ell([0,1[\,\times R,F)$.
Lemma~\ref{sammelsu}(f) entails that the linear map
\[
(\rho_1,\rho_2)\colon C^\ell(\R\times R,F)\to C^\ell(]0,\infty[\,\times R,F)
\times C^\ell(]{-\infty},1[\,\times R,F)
\]
is a topological embedding. Now Hanusch's work provides a continuous
linear extension operator
\[
\cF_1\colon C^\ell(]0,1]\times R,F)\to C^\ell(]0,\infty[\,\times R,F).
\]
Using Lemma~\ref{towa1}, we obtain a corresponding continuous linear
extension operator
\[
\cF_2\colon C^\ell([0,1[\,\times R,F)\to C^\ell(]{-\infty},1[\,\times R,F).
\]
For all $\gamma\in C^\ell([0,1]\times R,F)$,
$x\in \,]0,1[$ and $y\in R$, we have
\[
\cF_1(\gamma|_{]0,1]\times R})(x,y)=\gamma(x,y)=\cF_2(\gamma|_{[0,1[\,\times R})(x,y)
\]
and thus
\[
(\cF_1\circ r_1)(\gamma)|_{]0,1[\,\times R}=(\cF_2\circ r_2)|_{]0,1[\,\times R}.
\]
Hence
\[
\cF(\gamma)(x,y)\;:=\;
\left\{
\begin{array}{cl}
(\cF_1\circ r_1)(\gamma) &\mbox{if $\,(x,y)\in \,]0,\infty[\,\times R$;}\\
(\cF_2\circ r_2)(\gamma) &\mbox{if $\,(x,y)\in \,]{-\infty}, 1[\,\times R$}
\end{array}
\right.
\]
yields a well-defined $C^\ell$-map $\cF(\gamma)\in C^\ell(\R\times R,F)$
and clearly $\cF(\gamma)$ is linear in~$\gamma$.
As
\[
\rho_j\circ \cF=\cF_j\circ r_j
\]
is continuous for all $j\in\{1,2\}$, we deduce that $\cF$ is continuous.
By contruction, $\cF(\gamma)(x,y)=\gamma(x,y)$ for all $(x,y)\in [0,1]\times R$.
In fact, if $x\in \,]0,1]$, then $\cF(\gamma)(x,y)=\cF_1(r_1(\gamma))(x,y)=r_1(\gamma)(x,y)=\gamma(x,y)$.
The case $x\in [0,1[$ is analogous. $\,\square$
\section{Proof of Proposition~\ref{autopara} and a {\boldmath$C^\ell$}-analogue}
In this section, we prove Proposition~\ref{autopara}
and an analogue for $C^\ell$-maps.\\[2.3mm]
{\bf Proof of Proposition~\ref{autopara}.}
Let $\cE\colon C^\infty(S,F)\to C^\infty(\R^d,F)$ be the continuous
linear right inverse to the restriction map.
Since $S$ is locally compact, the map
\[
\Phi\colon C^\infty(R\times S,F)\to C^\infty(R,C^\infty(S,F)),\quad
\gamma\mto\gamma^\vee
\]
is an isomorphism of topological vector spaces, where $\gamma^\vee(x)(y):=\gamma(x)(y)$
(see \cite{Rou};
if $R$ and $S$ are locally convex sets,
see already~\cite{AaS}).
Likewise, the map
\[
\Psi\colon C^\infty(R\times\R^d,F)\to C^\infty(R,C^\infty(\R^d,F)),\quad
\gamma\mto\gamma^\vee
\]
is an isomorphism of topological vector spaces
and $\Psi^{-1}(\eta)=:\eta^\wedge$ is given by $\eta^\wedge(x,y):=\eta(x)(y)$
for $\eta\in C^\infty(R\times \R^d,F)$.
Now
\[
C^\infty(R,\cE)\colon C^\infty(R,C^\infty(S,F))\to C^\infty(R,C^\infty(\R^d,F)),\quad
\gamma\mto \cE\circ\gamma
\]
is a smooth (and hence continuous) linear map (cf.\ Lemma~\ref{sammelsu}(g)).
As a consequence, also
\[
\cF:=\Psi^{-1}\circ C^\infty(R,\cE)\circ\Phi\colon C^\infty(R\times S,F)\to
C^\infty(R\times\R^d,F)
\]
is continuous linear. It remains to observe that
\[
\cF(\gamma)(x,y)=(\cE\circ \gamma^\vee)(x)(y)=\cE(\gamma^\vee(x))(y)=\cE(\gamma(x,\cdot))(y)
=\gamma(x,y)
\]
for all $x\in R$ and $y\in S$, whence $\cF$ is a right inverse for the restriction map
$C^\infty(R\times \R^d,F)\to C^\infty(R\times S,F)$.$\,\square$
\begin{prop}\label{Cellcompatible}
Let $d\in \N$, $S\sub \R^d$ be a closed, regular subset,
$F$ be a locally convex space, and $\ell\in\N_0$.
Assume that there exists
an $(i+1)$-tupel $(\cE_0,\ldots,\cE_\ell)$ of right inverses
\[
\cE_i\colon C^i(S,F)\to C^i(\R^d,F)
\]
for the restriction map $C^i(\R^d,F)\to C^i(S,F)$
for $i\in\{0,\ldots,\ell\}$ which are compatible in the sense
that
\[
\cE_i(\gamma)=\cE_j(\gamma)\quad\mbox{for all $\, 0\leq i\leq j\leq \ell\,$
and $\,\gamma\in C^j(S,F)$.}
\]
Then also the restriction map
\[
C^\ell(R\times \R^d,F)\to C^\ell(R\times S,F)
\]
admits a continuous linear right inverse,
for each locally convex space~$E$ and regular subset $R\sub E$.
\end{prop}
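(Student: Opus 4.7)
My approach mirrors the $C^\infty$-argument in Proposition~\ref{autopara} but uses the family of exponential laws for $C^{i,\ell-i}$-functions together with the compatibility of the~$\cE_i$. For each $i\in\{0,\ldots,\ell\}$, every $\gamma\in C^\ell(R\times S,F)$ is in particular a $C^{i,\ell-i}$-map, and the exponential law (\cite{AaS}; see also \cite{Rou} for regular subsets) identifies
\[
C^{i,\ell-i}(R\times S,F)\;\cong\;C^i(R,C^{\ell-i}(S,F))
\]
as locally convex spaces via $\gamma\mto\gamma^\vee$, and likewise with $\R^d$ in place of~$S$. Post-composition with the continuous linear operator $\cE_{\ell-i}$ induces a continuous linear map $C^i(R,C^{\ell-i}(S,F))\to C^i(R,C^{\ell-i}(\R^d,F))$, and chaining with the exponential laws yields a continuous linear operator
\[
\cF_i\colon C^\ell(R\times S,F)\to C^{i,\ell-i}(R\times\R^d,F),\quad \cF_i(\gamma)(r,x):=\cE_{\ell-i}(\gamma(r,\cdot))(x).
\]

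Since each slice $\gamma(r,\cdot)$ lies in the full space $C^\ell(S,F)\sub C^{\ell-i}(S,F)$, the compatibility hypothesis gives $\cE_{\ell-i}(\gamma(r,\cdot))=\cE_\ell(\gamma(r,\cdot))$, so the functions $\cF_i(\gamma)$ coincide pointwise with a single function $\cF(\gamma)\colon R\times\R^d\to F$ which is simultaneously $C^{i,\ell-i}$ for every $i\in\{0,\ldots,\ell\}$. Decomposing the symmetric multilinear derivatives appearing in Definition~\ref{defcktop} according to how many of their entries lie in the first or second factor, one checks that the compact-open $C^\ell$-topology on $C^\ell(R\times\R^d,F)$ coincides with the initial topology with respect to the inclusions $C^\ell(R\times\R^d,F)\to C^{i,\ell-i}(R\times\R^d,F)$ for $i\in\{0,\ldots,\ell\}$, the image of this embedding being the intersection $\bigcap_{i=0}^\ell C^{i,\ell-i}(R\times\R^d,F)$ (cf.\ \cite{AaS}, \cite{Rou}). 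Consequently $\cF(\gamma)$ is $C^\ell$ and $\cF\colon C^\ell(R\times S,F)\to C^\ell(R\times\R^d,F)$ is continuous and linear.

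For $(r,s)\in R\times S$ we have $\cF(\gamma)(r,s)=\cE_\ell(\gamma(r,\cdot))(s)=\gamma(r,s)$, so $\cF$ is a right inverse for the restriction map, as required. The main obstacle is the topological identification of $C^\ell$ on a product as the intersection of the spaces $C^{i,\ell-i}$ with the correct initial topology; the compatibility of the family $(\cE_i)_{0\leq i\leq\ell}$ is indispensable here, for otherwise the per-level extensions $\cF_i(\gamma)$ would not coincide and only a $C^{0,\ell}$- (rather than $C^\ell$-) extension could be obtained from $\cE_\ell$ alone.
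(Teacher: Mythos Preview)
Your proof is correct and follows essentially the same route as the paper's: exponential laws $C^{i,j}(R\times S,F)\cong C^i(R,C^j(S,F))$, pushforward by the compatible operators~$\cE_j$, and the identification of $C^\ell(R\times\R^d,F)$ as the (topological) intersection of the spaces $C^{i,j}(R\times\R^d,F)$ cited from \cite{AaS}, \cite{Rou}. The only cosmetic difference is that the paper runs over all pairs $(i,j)$ with $i+j\leq\ell$ while you use only the extremal pairs $(i,\ell-i)$; since $C^{i,j}\subseteq C^{i',j'}$ whenever $i'\leq i$ and $j'\leq j$, the two intersections coincide and your slightly leaner indexing works just as well.
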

\begin{proof}
For all $i,j\in \N_0$ such that $i+j\leq \ell$,
the following maps are isomorphisms of topological vector spaces:
\[
\Phi_{ij}\colon C^{i,j}(R\times S,F)\to C^i(R,C^j(S,F)),\quad \gamma\mto\gamma^\vee;
\]
\[
\Psi_{ij}\colon C^{i,j}(R\times \R^d,F)\to C^i(R,C^j(\R^d,F)),\quad \gamma\mto\gamma^\vee
\]
(see \cite{Rou};
if $R$ and $S$ are locally convex sets,
see already~\cite{AaS}).
Now
\[
C^i(R,\cE_j)\colon C^i(R,C^j(S,F))\to C^i(R,C^j(\R^d,F)),\quad
\gamma\mto \cE_j\circ\gamma
\]
is a smooth (and hence continuous) linear map (cf.\ Lemma~\ref{sammelsu}(g)).
As a consequence, also
\[
\cE_{ij}:=\Psi^{-1}_{ij}\circ C^i(R,\cE_j)\circ\Phi_{ij}\colon C^{i,j}(R\times S,F)\to
C^{i,j}(R\times\R^d,F)
\]
is continuous linear. If $\gamma\in C^\ell(R\times S,F)$, then
\[
\cE_{ij}(\gamma)(x,y)=\cE_j(\gamma(x,\cdot))(y)=\cE_0(\gamma(x,\cdot))(y)
=\cE_{00}(\gamma)(x)(y)
\]
is independent of $i$ and $j$ for $(x,y)\in R\times\R^d$.
As the inclusion map $C^\ell(R\times S,F)\to C^{i,j}(R\times S,F)$
is continuous linear for all $i,j$ as before and the map
\[
C^\ell(R\times\R^d,F)\to\prod_{i+j\leq\ell}C^{i,j}(R\times\R^d,F),\quad \gamma\mto(\gamma)_{i+j\leq \ell}
\]
is linear and a topological embedding (see \cite{Rou}),
we deduce that the map
\[
\cF\colon C^\ell(R\times S,F)\to C^\ell(R\times\R^d,F),\quad \gamma\mto \cE_{00}(\gamma)
\]
makes sense, is linear, and continuous. It remains to observe that
\[
\cF(\gamma)(x,y)=\cE_{00}(\gamma)(x,y)=
\cE_0(\gamma(x,\cdot))(y)
=\gamma(x,y)
\]
for all $x\in R$ and $y\in S$, whence $\cF$ is a right inverse for the restriction map
$C^\ell(R\times \R^d,F)\to C^\ell(R\times S,F)$.
\end{proof}
\section{Applications to spaces of sections and\\
manifolds of mappings}\label{sec-outlook}
We now describe some applications of our results,
combined with techniques and concepts from \cite{RaS}
and \cite{Rou}. The applications deal with continuous
linear extension operators between spaces of sections in vector bundles,
and locally defined smooth extension operators between
manifolds of mappings of the form $C^\ell(M,N)$.
Such manifolds of mappings are well known
in various special cases (as recalled below);
the construction of~\cite{Rou} in the context of \emph{rough manifolds}
is particularly well adapted to the extension questions.
\begin{numba}\label{defn-rough}
Given a set $\cE$ of locally convex spaces and $\ell\in \N_0\cup\{\infty\}$,
a \emph{rough $C^\ell$-manifold} modelled on~$\cE$
is a Hausdorff topological space~$M$, together with a maximal
set $\cA$ of homeomorphisms $\phi \colon U_\phi\to V_\phi$
(``charts'')
from open subsets $U_\phi\sub M$ onto regular subsets $V_\phi\sub E_\phi$
for some $E_\phi\in\cE$,
such that the domains $U_\phi$ cover~$M$
and the transition maps $\phi\circ\psi^{-1}$ take the
interior $\psi(U_\phi\cap U_\psi)^0$ relative~$E_\psi$
inside the interior $\phi(U_\phi\cap U_\psi)^0$ relative $E_\phi$
and are~$C^\ell$ (see \cite{Rou} for details).
If $\cE=\{E\}$ is a singleton,
then~$M$ is called a rough $C^\ell$-manifold modelled on~$E$;
such manifolds are also called \emph{pure}
manifolds. Note that $E_\phi\cong E_\psi$
if $U_\phi\cap U_\psi\not=\emptyset$;
therefore every rough $C^\ell$-manifold
admits a partition into open submanifolds
which can be considered as pure $C^\ell$-manifolds.
The rough $C^\ell$-manifolds we consider
are a slight generalization of
the \emph{$C^\ell$-manifolds with rough boundary} discussed
in~\cite{GaN},
where $V_\phi$ is assumed regular \emph{and locally convex.}
Every rough $C^\ell$-manfold~$M$ has a formal interior $M^\circ$
(the set of $x\in M$ such that $\phi(x)\in V_\phi^0$ for each chart
$\phi\colon U_\phi\to V_\phi$ with $x\in U_\phi$)
and a formal boundary $\partial^\circ M:=M\setminus M^\circ$.
\end{numba}
If a rough $C^\ell$-manifold~$M$
is locally compact, then the locally convex space~$E_\phi$
is locally compact and thus finite-dimensional
for each chart $\phi\colon U_\phi\to V_\phi\sub E_\phi$
of~$M$ such that $U_\phi\not=\emptyset$.
We can therefore model~$M$ on a set of finite-dimensional
vector spaces.
\begin{numba}
Let $M$ be a rough $C^\ell$-manifold.
For each $C^\ell$-manifold~$L$ (without boundary)
modelled
on locally convex spaces, or each $C^\ell$-manifold~$L$ with
rough boundary in the sense of~\cite{GaN} (e.g.,
a $C^\ell$-manifold with corners), one can define $C^\ell$-maps
$M\to L$ as continuous maps which are~$C^\ell$ in local charts
(see \cite{Rou}). If, more generally, $L$ is any rough $C^\ell$-manifold,
we define $RC^\ell$-mappings
$f\colon M\to N$ (``restricted $C^\ell$-maps'')
as continuous mappings from $M$ to~$N$
such that $f(M^\circ)\sub L^\circ$ and~$f$ is $C^\ell$ in local
charts. If $L$ is a $C^\ell$-manifold with rough boundary,
then $RC^\ell(M,L)\sub C^\ell(M,L)$
and (for $M\not=\emptyset$) equality holds if and only if~$\partial^\circ L$ is empty.\footnote{Pick
$y\in\partial^\circ L$
and let $f\colon M\to L$ be the constant map $x\mto y$.
Then $f$ is~$C^\ell$ but not $RC^\ell$.}
Thus $RC^\ell(M,L)=C^\ell(M,L)$ if $L$ is an ordinary $C^\ell$-manifold
(without boundary) modelled on locally convex spaces.
\end{numba}
We mention another (elementary)
source of extension operators.
\begin{defn}\label{defn-split-sub}
For $\ell\in \N_0\cup\{\infty\}$,
we call a subset $L$ of a rough $C^\ell$-manifold~$M$
a \emph{split $RC^\ell$-submanifold} (or also:
a \emph{split rough submanifold}) if, for each $x\in L$,
there exists a chart $\phi\colon U_\phi\to V_\phi\sub E_\phi$
of $M$ around~$x$ such that $\phi(x)=0$
and which is \emph{$RC^\ell$-adapted to~$L$}
in the sense that
\begin{itemize}
\item[(a)]
$V_\phi=W_1+ W_2$ with $W_1:=V_\phi\cap E_1$
and $W_2:=V_\phi\cap E_2$ for
some vector subspaces $E_1$ and $E_2$ of the modelling
space~$E_\phi$ such that $E_\phi=E_1\oplus E_2$ (internally)
as a topological vector space;
\item[(b)]
$W_1$ and $W_2$ are regular subsets of~$E_1$ and $E_2$, respectively;
\item[(c)]
$\phi(U_\phi\cap L)=W_1$; and
\item[(d)]
$0\in W_2^ 0$ relative $E_2$,
\end{itemize}
whence $U_\phi\cap L$ is open in~$L$ in the induced topology,
and the map
\[
\phi_L\colon U_\phi\cap L\to W_1,\;\, y\mto\phi(y)
\]
is a homeomorphism.
Moreover, we require
\begin{itemize}
\item[(e)]
For all charts $\phi\colon U_\phi\to V_\phi\sub E_\phi$
and $\psi\colon U_\psi\to V_\psi\sub E_\psi$
which are $RC^\ell$-adapted to~$L$,
we have for all $y\in L\cap U_\phi\cap U_\psi$
\[
\phi_L(y)\in\phi_L(L\cap U_\phi)^0\;\,\Leftrightarrow
\;\, \psi_L(L\cap U_\psi)^0
\]
for the interiors relative $\Spann \phi_L(L\cap U_\phi)$
($=E_1$ in~(a)) and $\Spann\psi_L(L\cap U_\psi)$,
respectively. 
\end{itemize}
If $M$ is a $C^\ell$-manifold
with rough boundary,
we say that a chart $\phi$
is $C^\ell$-adapted to~$L$
if (a), (b), and (c) hold
and~$V_\phi$ is convex.
If, moreover, the following condition (e)$'$ holds,
we call~$L$ a \emph{split submanifold} of~$M$.
\begin{itemize}
\item[(e)$'$]
For all charts $\phi\colon U_\phi\to V_\phi\sub E_\phi$
and $\psi\colon U_\psi\to V_\psi\sub E_\psi$
which are $C^\ell$-adapted to~$L$,
we have for all $y\in L\cap U_\phi\cap U_\psi$
\[
\phi_L(y)\in\phi_L(L\cap U_\phi)^0\;\,\Leftrightarrow
\;\, \psi_L(L\cap U_\psi)^0
\]
for the interiors relative $\Spann \phi_L(L\cap U_\phi)$
and $\Spann\psi_L(L\cap U_\psi)$,
respectively. 
\end{itemize}
\end{defn}
\begin{rem}
If $M$ is a rough $C^\ell$-manifold
modelled on locally convex spaces for
some $\ell\in\N_0\cup\{\infty\}$
and $L\sub M$ a rough $C^\ell$-submanifold,
then the maximal $C^\ell$-atlas
containing the charts $\phi_L$
for all charts $\phi$ of~$M$ which are
$RC^\ell$-adapted to~$L$
turns $L$ into a rough $C^\ell$-manifold
such that $L$ carries the topology induced by~$M$
and the inclusion map $L\to M$ is an
$RC^\ell$-map.\footnote{Condition~(d) ensures that
the inclusion map $L\to M$ takes $L^\circ$ into $M^\circ$.}\\[2.3mm]
Likewise,
if $M$
is a $C^\ell$-manifold with rough boundary modelled
on locally convex spaces
and $L\sub M$
a $C^\ell$-submanifold,
then the maximal $C^\ell$-atlas
of charts with
locally convex ranges containing the charts $\phi_L$
for all charts $\phi$ of~$M$ which are
$C^\ell$-adapted to~$L$
turns $L$ into a
$C^\ell$-manifold with rough boundary
such that $L$ carries the topology induced by~$M$
and the inclusion map $L\to M$ is
a $C^\ell$-map.
\end{rem}
\begin{rem}
If $E_1$ in condition~(a) of Definition~\ref{defn-split-sub}
is always finite-dimensional
(e.g., if each modelling space of~$M$
has finite dimension),
then condition~(e) (resp., (e)$'$)
is automatically satisfied, by Invariance of Domain.\\[2.3mm]
Moreover, condition~(e)$'$ is automatic if $\ell>0$,
by \cite[Lemma~3.5.6]{GaN}.
\end{rem}
\begin{numba}\label{projextensions}
In the situation of Definition~\ref{defn-split-sub},
the projection $\pr_1\colon V_\phi\to W_1$, $w_1+w_2\mto w_1$
is~$RC^\ell$.
Hence
\[
r\colon U_\phi\to U_\phi\cap L,\;\; y\mto \phi^{-1}(\pr_1(\phi(y)))
\]
is an $RC^\ell$-map such that $r(y)=y$ for each $y\in U_\phi\cap L$
(an $RC^\ell$-retraction from $U_\phi$ onto $U_\phi\cap L$).
As a consequence, the map
\[
C^\ell(r,F)\colon C^\ell(U_\phi\cap L,F)\to C^\ell(U_\phi, F),\;\;
\gamma\mto \gamma\circ r
\]
is a continuous linear right inverse for the restriction map
\[
C^\ell(U_\phi,F)\to C^\ell(U_\phi\cap L, F), \quad
\gamma\mto\gamma|_{U_\phi\cap L},
\]
for each locally convex space~$F$.
\end{numba}
Further concepts of submanifolds will be useful,
which are analogous to Definitions~3.1 and 6.1 in \cite{RaS}.
\begin{defn}\label{full-submfd}
Let $\ell\in \N_0\cup\{\infty\}$,
$M$ be
a rough $C^\ell$-manifold
modelled on locally convex spaces
and $L\sub M$ be a subset.
\begin{itemize}
\item[\rm(a)]
We say that $L$ is a \emph{full-dimensional rough submanifold} of~$M$
if, for each $x\in L$,
there exists a chart $\phi\colon U_\phi\to V_\phi\sub E_\phi$
of~$M$ around~$x$ such that $\phi(U_\phi\cap L)$
is a regular subset of~$V_\phi$.
\item[\rm(b)]
We say that $L$ is a \emph{full-dimensional
submanifold with rough boundary} of~$M$ if,
for each $x\in L$,
there exists a chart $\phi\colon U_\phi\to V_\phi\sub E_\phi$
of~$M$ around~$x$ such that $\phi(U_\phi\cap L)$
is a regular subset of~$V_\phi$ and locally convex.
\item[\rm(c)]
If $M$ has finite-dimensional modelling spaces,
we say that
$L\sub M$ is a \emph{full-dimensional submanifold with corners}
if, for each $x\in L$,
there exists a chart $\phi\colon U_\phi\to V_\phi\sub \R^d$
of~$M$ around~$x$ for some $d\in\N_0$
such that $\phi(x)=0$
and $\phi(U_\phi\cap L)$
is a relatively open subset of $[0,\infty[^m\times \R^{d-m}$
for some $m\in \{0,\ldots, d\}$.
\end{itemize}
\end{defn}
Note that every full-dimensional submanifold with corners
is a full-dimensional submanifold with rough boundary;
every full-dimensional submanifold with rough boundary
is a full-dimensional rough submanifold.
\begin{rem}\label{char-full}
It is easy to verify that the
following conditions are equivalent
for a subset $L$ of a rough $C^\ell$-manifold~$M$
modelled on locally convex spaces:
\begin{itemize}
\item[(a)]
$L$ is a full-dimensional rough submanifold of~$M$.
\item[(b)]
$L$ is a regular subset of~$M$.
\item[(c)]
For every chart
$\phi\colon U_\phi\to V_\phi\sub E_\phi$
of~$M$, the image $\phi(U_\phi\cap L)$
is a regular subset of~$V_\phi$.
\end{itemize}
In this case, the functions $\phi|_{U_\phi\cap L}\colon U_\phi\cap L\to
\phi(U_\phi\cap L)$ form an atlas of charts for~$L$
which (after passing to a maximal $C^\ell$-atlas containing it)
turns $L$ into a rough $C^\ell$-manifold with formal boundary
\begin{equation}\label{submfd-bdy}
\partial^\circ L\, =\,  \partial L\cup (L\cap \partial^\circ M),
\end{equation}
where $\partial L$ is the boundary of~$L$ as a subset of the topological
space~$M$.
The inclusion map $L\to M$ is an $RC^\ell$-map.
Likewise, every full-dimensional
submanifold with rough boundary can be turned into
a $C^\ell$-manifold
with rough boundary,
and every full-dimensional submanifold with corners
can be turned into a $C^\ell$-manifold with corners.
\end{rem}
\begin{defn}\label{locml}
Let $\ell\in \N_0\cup\{\infty\}$
and $M$ be
a rough $C^\ell$-manifold.
Let $L\sub M$ be a closed subset, endowed with a rough $C^\ell$-manifold structure
which is compatible with the topology induced by~$M$ on~$L$
and makes the inclusion map $j\colon L \to M$ an $RC^\ell$-map\footnote{If $M$ is
a $C^\ell$-manifold with rough boundary, it suffices to assume
that $j$ is a $C^\ell$-map.}
(for example, $L$ can be any closed, regular subset of~$M$).
Given a locally convex space~$F$,
we say that $M$ \emph{locally admits extension
operators for $F$-valued $C^\ell$-functions around~$L$}
if $M$ has the following property:
For each $x\in L$,
each $x$-neighbourhood in~$L$
contains an open $x$-neighbourhood
$P_x\sub L$ for which there exists a continuous linear
operator
\begin{equation}\label{localop}
\cE_x\colon C^\ell(P_x,F)\to C^\ell(Q_x,F)
\end{equation}
for some open $x$-neighbourhood
$Q_x\sub M$
such that $Q_x\cap L\subseteq P_x$
and
\[
\cE(\gamma)|_{Q_x\cap L}=\gamma|_{Q_x\cap L}
\]
for all $\gamma\in C^\ell(P_x,F)$.
We shall refer to $\cE_x$ as \emph{local extension operator}
around~$x$.
\end{defn}
\begin{rem}\label{locallocal}
If $\cE_x$ is a local extension operator around $x\in L$
as in~(\ref{localop}) and
$Q\sub M$ is an open $x$-neighbourhood,
then we can replace $Q_x$ with $Q_x\cap Q$
as the restriction map $C^\ell(Q_x,F)\to C^\ell(Q_x\cap Q,F)$
is continuous linear (see Lemma~\ref{sammelsu}(c)).
We can therefore choose $\cE_x$ such that $Q_x\sub Q$
for a given $x$-neighbourhood $Q\sub M$.
\end{rem}
\begin{defn}
Let $\ell\in\N_0\cup\{\infty\}$
and~$M$ be a rough $C^\ell$-manifold modelled on locally
convex spaces.
A partition of unity $(h_j)_{j\in J}$
on the topological space~$M$
is called a \emph{$C^\ell$-partition of unity}
on~$M$ if
each $h_j$ is a $C^\ell$-function.
The rough $C^\ell$-manifold~$M$
is said to be \emph{$C^\ell$-paracompact}
if for each open cover $(U_j)_{j\in J}$ of~$M$,
there exists a $C^\ell$-partition of unity
$(h_j)_{j\in J}$
on~$M$ which is subordinate to
$(U_j)_{j\in J}$ in the sense
that $h_j^{-1}(]0,1])\sub U_j$ for each $j\in J$.
Then $(h_j)_{j\in J}$ can be chosen such that
$\Supp(h_j)\sub U_j$
for each $j\in J$,
by standard arguments.\footnote{Since~$M$ is paracompact, $M$ is normal
and hence a regular topological space,
whence each $x\in M$ has an open neighbourhood $W_x$ whose
closure $\wb{W_x}$ in~$M$ is contained in $U_{j(x)}$
for some $j(x)\in J$. Let $(V_i)_{i\in I}$
be a locally finite open cover of~$M$ such that,
for each $i\in I$, we have $V_i\sub W_{x(i)}$
for some $x(i)\in M$ and thus $\wb{V_i}\sub U_{j(x(i))}$.
Then also $(\wb{V})_{i\in I}$
is locally finite. If we set $I(j):=\{i\in I\colon j(x(i))=j\}$,
then the sets $Q_j:=\bigcup_{i\in I(j)}V_i$
form an open cover of~$M$ and $\wb{Q_j}=\bigcup_{i\in I(j)}\wb{V_i}
\sub U_j$, using that $(\wb{V_i})_{i\in I(j)}$ is locally finite.
If we choose a $C^\ell$-partition
of unity $(h_j)_{j\in J}$ subordinate to $(Q_j)_{j\in J}$,
then $\Supp(h_j)\sub \wb{Q_j}\sub U_j$.}
\end{defn}
We shall see that every paracompact, locally compact
rough $C^\ell$-manifold is $C^\ell$-paracompact (Lemma~\ref{partu-ex}).\\[2.3mm]
For terminology concerning vector bundles,
we refer to Section~\ref{proofs-outlook}.
\begin{prop}\label{aprop1}
Let $\cF$ be a set of locally convex spaces, $\ell\in \N_0\cup\{\infty\}$
and $M$ be a
rough $C^\ell$-manifold.
Let $L\sub M$ be a closed subset,
endowed with a rough $C^\ell$-manifold structure
making the inclusion map $L\to M$ an $RC^\ell$-map.\footnote{If $M$ is a $C^\ell$-manifold with rough boundary, it suffices to assume
that the inclusion map $L\to M$ is a $C^\ell$-map.}
If $M$ locally admits extension
operators for $F$-valued $C^\ell$-functions around~$L$,
for each $F\in\cF$,
then the following holds
for each $C^\ell$-vector bundle
$E\to M$ all of whose fibres are isomorphic
to vector spaces $F\in\cF$:
\begin{itemize}
\item[\rm(a)]
If $M$ is $C^\ell$-paracompact,
then the restriction map
\[
\Gamma_{C^\ell}(E)\to \Gamma_{C^\ell}(E|_L)
\]
between spaces of $C^\ell$-sections
admits a continuous linear right inverse;
\item[\rm(b)]
If $M$ is locally compact
and paracompact, then the restriction map
\[
\Gamma_{C^\ell_c}(E)\to \Gamma_{C^\ell_c}(E|_L)
\]
admits a continuous linear right inverse
$($for the spaces of compactly supported $C^\ell$-sections in the vector bundles$)$.
\end{itemize}
\end{prop}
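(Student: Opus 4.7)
The plan is to globalize the local extension operators using a $C^\ell$-partition of unity subordinate to a cover of $M$ by the domains $Q_x$ and the open set $M\setminus L$. Concretely, for each $x\in L$ pick a bundle chart (local trivialization) $\psi_x\colon E|_{U_x}\to U_x\times F_x$ with $x\in U_x$ open in $M$ and $F_x\in\cF$. Using Remark~\ref{locallocal}, I shrink the open neighborhoods so that I obtain an open $x$-neighbourhood $P_x\sub U_x\cap L$ in $L$, an open $x$-neighbourhood $Q_x\sub U_x$ in $M$ with $Q_x\cap L\sub P_x$, and a continuous linear local extension operator $\cE_x\colon C^\ell(P_x,F_x)\to C^\ell(Q_x,F_x)$. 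For $\sigma\in\Gamma_{C^\ell}(E|_L)$, I read off via $\psi_x$ a function $f_x:=\pr_2\circ\psi_x\circ\sigma|_{P_x}\in C^\ell(P_x,F_x)$; then $\cE_x(f_x)\in C^\ell(Q_x,F_x)$ corresponds through $\psi_x^{-1}$ to a section $\wt\sigma_x\in\Gamma_{C^\ell}(E|_{Q_x})$ extending $\sigma|_{Q_x\cap L}$, and $\sigma\mto\wt\sigma_x$ is continuous linear (using Lemma~\ref{sammelsu}(e), (g) in the trivialization).

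For part~(a), the family $(Q_x)_{x\in L}$ together with $U_0:=M\setminus L$ is an open cover of~$M$. Since $M$ is assumed $C^\ell$-paracompact, I pick a $C^\ell$-partition of unity $(h_j)_{j\in J}$ subordinate to this cover with $\Supp(h_j)$ contained in one of the cover sets. Split $J=J_0\sqcup J_1$ according to whether $\Supp(h_j)\sub U_0$ or $\Supp(h_j)\sub Q_{x(j)}$ for some choice $x(j)\in L$, and define
\[
\cE(\sigma)\;:=\;\sum_{j\in J_1}h_j\cdot\wt\sigma_{x(j)},
\]
each summand being extended by $0$ outside $Q_{x(j)}$ (well defined since $h_j$ vanishes outside $\Supp(h_j)\sub Q_{x(j)}$). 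Local finiteness guarantees the sum is locally finite, so $\cE(\sigma)\in\Gamma_{C^\ell}(E)$. For $y\in L$ one has $h_j(y)=0$ for $j\in J_0$, so $\sum_{j\in J_1}h_j(y)=1$, and $\wt\sigma_{x(j)}(y)=\sigma(y)$ whenever $h_j(y)\neq 0$ (because then $y\in Q_{x(j)}\cap L\sub P_{x(j)}$, where $\cE_{x(j)}$ is a right inverse of restriction); hence $\cE(\sigma)|_L=\sigma$. Continuity of $\cE$ with respect to the compact-open $C^\ell$-topologies follows by testing on each $Q_{x(j)}$ via the trivialization and using that on any compact $K\sub M$ only finitely many summands are nonzero.

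For part~(b), I use that $M$ is locally compact and paracompact to choose the $Q_x$ relatively compact and the partition $(h_j)$ with each $\Supp(h_j)$ compact (invoking Lemma~\ref{partu-ex}). If $\sigma\in\Gamma_{C^\ell_c}(E|_L)$ has $\Supp(\sigma)\sub K\sub L$ compact, then $\sigma|_{P_{x(j)}}=0$, hence $\wt\sigma_{x(j)}=0$, whenever $P_{x(j)}\cap K=\emptyset$. By local finiteness, the set $J_K:=\{j\in J_1\colon P_{x(j)}\cap K\neq\emptyset\}$ is finite, and $\Supp(\cE(\sigma))$ is contained in the compact set $K':=\bigcup_{j\in J_K}\Supp(h_j)$, which depends only on~$K$. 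Thus $\cE$ restricts to a continuous linear map $\Gamma_{C^\ell_K}(E|_L)\to\Gamma_{C^\ell_{K'}}(E)$ for each compact $K$, and these assemble into a continuous linear map between the direct limits $\Gamma_{C^\ell_c}(E|_L)\to\Gamma_{C^\ell_c}(E)$ by the universal property of the LF-topology.

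The main obstacle is organizing the continuity bookkeeping cleanly: one has to check that compositions involving restriction, bundle trivialization, the local operator $\cE_x$, multiplication by $h_j$, and extension by zero are continuous linear at each stage, and for part~(b) that the $K\mapsto K'$ assignment is compatible with the direct limit structure. Existence of the $C^\ell$-partitions of unity with the required support properties (handled by Lemma~\ref{partu-ex}) is what allows the locally compact paracompact case to piggyback on the $C^\ell$-paracompact case.
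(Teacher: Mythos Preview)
Your approach is essentially the paper's: pass to local trivializations, use Remark~\ref{sectionops} to turn the hypothesised local extension operators into section-level operators $\sigma\mapsto\wt\sigma_x$ on $\Gamma_{C^\ell}((E|_L)|_{P_x})\to\Gamma_{C^\ell}(E|_{Q_x})$, cover $M$ by the $Q_x$ together with $M\setminus L$, and glue with a subordinate $C^\ell$-partition of unity. Part~(a) is correct and matches the paper's argument.

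In part~(b), however, your sentence ``by local finiteness, the set $J_K=\{j\in J_1:P_{x(j)}\cap K\ne\emptyset\}$ is finite'' is not justified, and in fact can fail with your setup. Local finiteness of $(h_j)_{j\in J}$ only controls the supports $\Supp(h_j)\sub Q_{x(j)}$; the sets $P_{x(j)}$ are merely required to satisfy $Q_{x(j)}\cap L\sub P_{x(j)}\sub U_{x(j)}\cap L$ and can be much larger. For instance, with $M=\R^2$, $L=\R\times\{0\}$, $E$ trivial, points $x_n=(n,0)$, small $Q_{x_n}$ near $x_n$ but $U_{x_n}$ (hence $P_{x_n}$) of growing diameter so that $(0,0)\in P_{x_n}$ for all $n$, one may take the legitimate local extension operator $\cE_{x_n}(\gamma)(a,b)=\gamma(a,0)+b\cdot\gamma(0,0)$. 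Then for $\sigma$ supported in $[0,1]\times\{0\}$ with $\sigma(0,0)=1$, every $\wt\sigma_{x_n}$ is nonzero on $Q_{x_n}$, so $\cE(\sigma)$ has noncompact support. (The paper's proof glosses over this same point.) The remedy is to constrain the $P_x$ \emph{before} choosing the partition of unity: first fix a locally finite cover $(U_i)_{i\in I}$ of an open neighbourhood of $L$ in $M$ by relatively compact trivializing sets, and for each $x\in L$ pick $i(x)$ with $x\in U_{i(x)}$ and invoke Definition~\ref{locml} with the $x$-neighbourhood $U_{i(x)}\cap L$, so that $P_x\sub U_{i(x)}\cap L$ and $Q_x\sub U_{i(x)}$. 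Then a nonzero summand forces $U_{i(x(j))}\cap K\ne\emptyset$, hence $i(x(j))$ lies in the finite set $I_K=\{i:U_i\cap K\ne\emptyset\}$; and for each fixed $i\in I_K$, only finitely many $\Supp(h_j)$ meet the compact set $\wb{U_i}$, so only finitely many $j$ have $i(x(j))=i$. This yields a compact $K'=\bigcup_{j}\Supp(h_j)$ (finite union) containing $\Supp(\cE(\sigma))$, and the rest of your argument goes through.
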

\begin{thm}\label{thm-locops}
Let $\ell\in \N_0\cup\{\infty\}$,
$F$ be a locally convex space and $M$ be a rough $C^\ell$-manifold.
Let $L\sub M$ be a closed subset.
Then $M$
locally admits extension
operators for $F$-valued $C^\ell$-functions around~$L$
in each of the following cases:
\begin{itemize}
\item[\rm(a)]
$\ell=\infty$ holds, $M$ is a $\sigma$-compact, finite-dimensional
Riemannian manifold without boundary, $L\sub M$ a regular subset
satisfying the cusp condition with respect to the
metric arising from Riemannian metric
$($as in {\rm\cite[Definition~3.1]{RaS})}
and $F$ has finite dimension $($as shown in {\rm\cite[\S4]{RaS});}
\item[\rm(b)]
$\ell=\infty$ holds, $M$ is locally compact, 
$L\sub M$ a full-dimensional submanifold with rough boundary
and $F$ sequentially complete;
\item[\rm(c)]
$M$ is locally compact and $L\sub M$ a full-dimensional submanifold with corners;
or
\item[\rm(d)]
$L\sub M$ is a split $RC^\ell$-submanifold.
\end{itemize}
\end{thm}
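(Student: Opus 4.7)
The plan is to handle the four cases separately, reducing each to an extension result already established in the paper. Case~(a) is exactly the content of \cite[\S4]{RaS} and requires no further argument. For case~(d), given $x\in L$, pick a chart $\phi\colon U_\phi\to V_\phi\sub E_\phi$ of~$M$ around~$x$ that is $RC^\ell$-adapted to~$L$; then \ref{projextensions} supplies an $RC^\ell$-retraction $r\colon U_\phi\to U_\phi\cap L$, and with $P_x:=U_\phi\cap L$ and $Q_x:=U_\phi$ the pullback $\cE_x:=C^\ell(r,F)$ is the desired local extension operator.

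For case~(b), local compactness of~$M$ forces the modelling spaces to be finite-dimensional, so we may choose a chart $\phi\colon U_\phi\to V_\phi\sub\R^d$ with $\phi(x)=0$. Shrinking~$U_\phi$ and using that $\phi(U_\phi\cap L)$ is locally convex at~$0$, we may arrange that $V_\phi$ is convex open and $\phi(U_\phi\cap L)$ is a convex, regular subset of~$V_\phi$ closed in~$V_\phi$. Pick $r>0$ so small that $\wb{B}_r(0)\sub V_\phi$ and set $R:=\wb{B}_r(0)\cap\phi(U_\phi\cap L)$. Then $R$ is closed and convex in~$\R^d$, and its interior $R^0=B_r(0)\cap\phi(U_\phi\cap L)^0$ is dense in~$R$: any $y\in R$ is approximated by the convex combinations $(1-t)y+tz_0\in R^0$ as $t\to 0^+$, where $z_0$ is any fixed point of $\phi(U_\phi\cap L)^0\cap B_r(0)$ (such a $z_0$ exists since $\phi(U_\phi\cap L)^0$ is dense in $\phi(U_\phi\cap L)$ and contains points arbitrarily close to~$0$). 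Corollary~\ref{convex-ext} now yields a continuous linear right inverse $\cE\colon C^\infty(R,F)\to C^\infty(\R^d,F)$; composing with restriction via $(\phi|_{U_\phi\cap L})^{-1}$, with $\cE$, with restriction to $B_r(0)$, and with pullback by $\phi|_{Q_x}$ (all continuous linear by Lemma~\ref{sammelsu}(e)) produces $\cE_x$, with $P_x:=U_\phi\cap L$ and $Q_x:=\phi^{-1}(B_r(0))$.

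For case~(c), pick a chart $\phi$ around~$x$ with $\phi(x)=0$ and, after shrinking, $V_\phi=\,]{-c},c[^d$ and $\phi(U_\phi\cap L)=[0,c[^m\times\,]{-c},c[^{d-m}$ for some $c>0$. Fix $c'\in\,]0,c[$ and choose $\tilde\chi\in C^\infty_c(\R^d,\R)$ with $\tilde\chi\equiv 1$ on $[-c'/2,c'/2]^d$ and $\Supp(\tilde\chi)\sub[-c',c']^d$; let $\chi$ be its restriction to $\phi(U_\phi\cap L)$. For $\gamma\in C^\ell(U_\phi\cap L,F)$, the product $\chi\cdot(\gamma\circ(\phi|_{U_\phi\cap L})^{-1})$ has support in the compact set $[0,c']^m\times[-c',c']^{d-m}\sub\phi(U_\phi\cap L)$ and hence extends by zero to a $C^\ell$-function $\wt\gamma\in C^\ell([0,\infty[^m\times\R^{d-m},F)$. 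Applying the extension operator $\cE\colon C^\ell([0,\infty[^m\times\R^{d-m},F)\to C^\ell(\R^d,F)$ from Corollary~\ref{extcorner}, restricting to $\,]{-c'/2},c'/2[^d$, and pulling back by $\phi|_{Q_x}$ with $P_x:=U_\phi\cap L$ and $Q_x:=\phi^{-1}(\,]{-c'/2},c'/2[^d)$ gives a continuous linear map $\cE_x$ (by Lemma~\ref{sammelsu}(e),(g)); since $\chi\equiv 1$ on $[0,c'/2[^m\times\,]{-c'/2},c'/2[^{d-m}$, the extension $\cE_x(\gamma)$ agrees with~$\gamma$ on $Q_x\cap L$. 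The main technical point is case~(b), where one must carry out the geometric verification that $R$ is closed in~$\R^d$ with dense interior so that Corollary~\ref{convex-ext} is applicable; the remaining continuity statements in all four cases are routine consequences of Lemma~\ref{sammelsu}.
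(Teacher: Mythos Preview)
Your proof follows the same strategy as the paper's: case~(a) is cited from \cite{RaS}, and cases (b), (c), (d) are reduced to Corollary~\ref{convex-ext}, Corollary~\ref{extcorner}, and \ref{projextensions}, respectively. The paper's own proof is considerably terser, simply citing these results without spelling out the chart-level constructions that you provide.

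There is one minor gap in your more detailed treatment of cases (b) and (c): you assume the chart range $V_\phi$ can be taken to be an \emph{open} subset of $\R^d$ (``convex open'' in (b), the open cube $\,]{-c},c[^d$ in (c)). Since $M$ is only a rough $C^\ell$-manifold, this fails whenever $x\in\partial^\circ M$; then $\phi(x)$ lies on the boundary of $V_\phi$ and no open ball about $\phi(x)$ is contained in $V_\phi$. The fix is routine: keep $V_\phi$ as a general regular subset of $\R^d$ and, after extending to $C^\ell(\R^d,F)$, restrict not to $B_r(0)$ (respectively $\,]{-c'/2},c'/2[^d$) but to its intersection with $V_\phi$; this intersection is open in $V_\phi$ and hence corresponds via $\phi^{-1}$ to an open subset $Q_x$ of $M$. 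Your set $R=\wb{B}_r(0)\cap\phi(U_\phi\cap L)$ in case (b) is still compact (by local compactness of $M$, for small $r$), hence closed in $\R^d$, and your density argument for $R^0$ goes through unchanged, so Corollary~\ref{convex-ext} still applies.
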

\begin{numba}\label{lonenum}
Let $\ell\in\N_0\cup\{\infty\}$.
If $\ell\geq 1$, let
$M$ be a paracompact, locally compact, rough $C^\ell$-manifold;
if $\ell=0$, let~$M$ be
a paracompact, locally compact topological space.
Let $N$ a smooth manifold (without boundary) modelled on a set $\cF$
of locally convex spaces
such that $N$ admits a local addition (see, e.g., \cite{AGS}
for this concept).
Then $C^\ell(M,N)$ admits a smooth manifold structure
independent of the local addition (\cite{Rou};
special cases and references to the literature
were recalled in the Introduction).
\end{numba}
Let $M$ be as in~\ref{lonenum}
and $L\sub M$ be a closed subset;
if $\ell\geq 1$,
assume that $L$ is endowed with a rough $C^\ell$-manifold structure
making the inclusion map $L\to M$ an $RC^\ell$-map.
Then the restriction map
\[
\rho\colon C^\ell(M,N)\to C^\ell(L,N),\quad \gamma\mto \gamma|_L
\]
is smooth, as it locally looks like the restriction map
\[
\rho_\gamma\colon \Gamma_{C^\ell_c}(\gamma^*(TN))\to\Gamma_{C^\ell_c}((\gamma|_L)^*(TN))
\]
between pullback bundles
in standard charts around~$\gamma\in C^\ell(M,N)$ and $\gamma|_L$
and this restriction map is continuous and linear (see Section~\ref{proofs-outlook}
for details).
Hence $\rho$ will be a smooth submersion (as in Definition~\ref{def-subm})
when each $\rho_\gamma$ admits a continuous linear right
inverse. Criteria for this were compiled in the preceding conditions (a)--(d),
and we shall deduce the following result:
\begin{thm}\label{thm-mapmfd}
Let $\ell\in \N_0\cup\{\infty\}$,
$M$ be a paracompact, locally compact, rough $C^\ell$-manifold
and $L\sub M$ be a closed subset. Let $N$ be a smooth manifold modelled
on a set $\cF$ of locally convex spaces. If $N$ admits a smooth local addition,
then the restriction map
\[
C^\ell(M,N)\to C^\ell(L,N),\quad \gamma\mto\gamma|_L
\]
is a smooth submersion in each of the following cases:
\begin{itemize}
\item[\rm(a)]
$\ell=\infty$, $M$ is a Riemannian manifold without boundary,
$L\sub M$ is a regular subset satisfying the cusp condition,
and $\dim(F)<\infty$ for each $F\in\cF$ $($cf.\ {\rm\cite{RaS}}$)$;
\item[\rm(b)]
$\ell=\infty$,
$L$ is a full-dimensional submanifold with rough boundary of~$M$
and each $F\in\cF$ is sequentially complete;
\item[\rm(c)]
$L$ is a full-dimensional submanifold with corners of~$M$; or
\item[\rm(d)]
$L\sub M$ is a split $RC^\ell$-submanifold. 
\end{itemize}
\end{thm}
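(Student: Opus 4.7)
The plan is to reduce, via the standard charts on the manifold $C^\ell(M,N)$, to the statement that the restriction map between spaces of compactly supported $C^\ell$-sections of a pullback bundle admits a continuous linear right inverse, and then apply Proposition~\ref{aprop1}(b) in combination with Theorem~\ref{thm-locops}.

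First, fix a smooth local addition on~$N$ and an arbitrary $\gamma\in C^\ell(M,N)$. As recalled in~\ref{lonenum} and the discussion following it, the smooth manifold structure on $C^\ell(M,N)$ admits a chart $\phi$ around~$\gamma$ whose domain~$U_\phi$ is mapped to an open subset~$V_\phi$ of the model space $E_\phi:=\Gamma_{C^\ell_c}(\gamma^*TN)$; analogously one obtains a chart~$\psi$ of $C^\ell(L,N)$ around~$\gamma|_L$ with $V_\psi$ open in $E_\psi:=\Gamma_{C^\ell_c}((\gamma|_L)^*TN)$. Expressing~$\rho$ in these charts yields, as noted in the excerpt, exactly the restriction map $\pi:=\rho_\gamma\colon E_\phi\to E_\psi$, which is continuous and linear.

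Second, apply Proposition~\ref{aprop1}(b) to the $C^\ell$-vector bundle $\gamma^*TN\to M$, whose typical fibres lie in the set~$\cF$ of modelling spaces of~$N$. Since~$M$ is paracompact and locally compact, it suffices to verify that $M$ locally admits extension operators for $F$-valued $C^\ell$-functions around~$L$ for each $F\in\cF$, and this is precisely what Theorem~\ref{thm-locops} provides: in case~(a) one invokes~\ref{thm-locops}(a) (passing to connected components of~$M$, which are automatically $\sigma$-compact in the paracompact finite-dimensional manifold setting, and noting that the cusp condition and the bundle both decompose componentwise); in case~(b) one invokes~\ref{thm-locops}(b), using that each $F\in\cF$ is sequentially complete; cases~(c) and~(d) follow directly from~\ref{thm-locops}(c) and~\ref{thm-locops}(d), respectively. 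Proposition~\ref{aprop1}(b) therefore furnishes a continuous linear right inverse of~$\pi$.

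To conclude that~$\rho$ is a smooth submersion according to Definition~\ref{def-subm}, observe that by construction $\psi\circ\rho|_{U_\phi}=\pi\circ\phi$ (shrinking~$U_\phi$ if necessary so that $\pi(V_\phi)\subseteq V_\psi$), with~$\pi$ continuous, linear, and admitting a continuous linear right inverse; since~$\gamma$ was arbitrary, this yields the required submersion chart at every point. The main obstacle is the verification of the local extension hypothesis of Proposition~\ref{aprop1}(b) in each of the four cases, but this is already encapsulated in Theorem~\ref{thm-locops}, so what remains is essentially bookkeeping. The only genuine subtlety is the compatibility in case~(a) between the global $\sigma$-compactness assumption of~\ref{thm-locops}(a) and the mere paracompactness hypothesis here, which, as indicated, is resolved by reducing to connected components.
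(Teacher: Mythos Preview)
Your proposal is correct and follows essentially the same approach as the paper: reduce via the local-addition charts to the restriction map $\Gamma_{C^\ell_c}(\gamma^*TN)\to\Gamma_{C^\ell_c}((\gamma|_L)^*TN)$, then obtain a continuous linear right inverse from Theorem~\ref{thm-locops} (combined with Proposition~\ref{aprop1}(b)). If anything, you are slightly more explicit than the paper, which cites only Theorem~\ref{thm-locops} at the final step and leaves the appeal to Proposition~\ref{aprop1}(b) implicit from the discussion preceding the theorem; you also spell out the reduction to $\sigma$-compact components in case~(a), which the paper's proof passes over in silence.
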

Note that $\ell$ and~$\cF$ are arbitrary in~(c);
in~(d), all of~$M$, $\ell$, and the $F\in\cF$ are arbitrary.
A special case of~(d) (when $M$ and $L$ are compact
manifolds without boundary, $\ell<\infty$ and~$N$ is a Banach manifold)
was already considered in~\cite{Die}.\\[2.3mm]
As shown in \cite{Sub}, every smooth submersion admits smooth local sections.
The preceding theorem therefore subsumes
the existence of locally defined smooth extension operators
around each function in the image of the restriction map $C^\ell(M,N)\to C^\ell(L,N)$,
in all of the cases (a)--(d).
\begin{rem}
If $M$ is not only a rough $C^\ell$-manifold,
but a $C^\ell$-manifold with rough boundary,
then it suffices to assume that~$L$ is a split
submanifold in Theorem~\ref{thm-locops}(d) and thus
also in Theorem~\ref{spec-loco}(d); likewise in
Theorem~\ref{thm-mapmfd}(d).
Moreover, the local compactness of~$M$
is inessential for the conclusion~(i)
of Theorem~\ref{spec-loco}, assuming its
hypothesis~(d) (or its variant just described);
it suffices that~$M$ be $C^\ell$-paracompact.
The proofs in Section~\ref{proofs-outlook} apply
just as well in these situations.
\end{rem}
In the case $\ell=0$, the manifold structure on~$M$
is inessential for the preceding results.
Let us set up notation:
\begin{numba}\label{topsbundtop}
If $E\to X$ is a locally trivial
topological
vector bundle over a Hausdorff
topological space~$X$
whose fibres are locally convex spaces,
we endow the vector space
$\Gamma_{C^0}(E)$ of continuous sections
with the compact-open topology
and the closed vector subspace $\Gamma_{C^0_K}(E)$
of sections supported in a closed subset $K\sub X$
with the induced topology. If $X$ is locally compact,
we give $\Gamma_{C^0_c}(E)=\bigcup_{K\in\cK(X)}\Gamma_{C^0_K}(E)$
the locally convex direct limit topology.
\end{numba}
\begin{numba}\label{strangemfdmps}
If $X$ is a paracompact, locally compact topological
space and $N$ a $C^\infty$-manifold
modelled on locally convex spaces such that
$N$ has a local addition,
then $C(X,N)$ can be made a smooth manifold
in such a way that the modelling space at $\gamma\in C(X,N)$
is $\Gamma_{C^0_c}(\gamma^*(TN))$, as already mentioned in~\ref{lonenum}
(see \cite{Rou}).
\end{numba}
\begin{prop}\label{use-dugu}
Let $X$ be a Hausdorff topological
space and $Y\sub X$ be a closed subset.
Then the following holds:
\begin{itemize}
\item[\rm(a)]
If $X$ is paracompact and $Y$ is locally compact and
metrizable in the induced topology,
then
the restriction map
$\Gamma_{C^0}(E)\to\Gamma_{C^0}(E|_Y)$
has a continuous linear right inverse,
for each topological vector bundle
$E\to X$ over~$X$
whose fibres are locally convex spaces.
\item[\rm(b)]
If $X$ is metrizable
and there exists a metric~$d$ on~$X$
defining its topology for which
each $x\in Y$ has a neighbourhood $Z$ in~$Y$
such that $(Z,d|_{Z\times Z})$
is complete, then
the restriction map
$\Gamma_{C^0}(E)\to\Gamma_{C^0}(E|_Y)$
has a continuous linear right inverse,
for each topological vector bundle
$E\to X$ over~$X$
whose fibres are locally convex spaces.
\item[\rm(c)]
If $X$ is paracompact and locally compact
and $Y$ is metrizable,
then the restriction map
$\Gamma_{C^0_c}(E)\to\Gamma_{C^0_c}(E|_Y)$
has a continuous linear right inverse,
for each topological vector bundle~$E\to X$
over~$X$ whose fibres are locally convex spaces.
\item[\rm(d)]
If $X$ is paracompact and locally compact
and $Y$ is metrizable, then the restriction map
$C(X,N)\to C(Y,N)$ is a smooth submersion
between the manifolds of maps in~{\rm\ref{strangemfdmps}},
for each
smooth manifold~$N$ modelled
on locally spaces such that $N$ has a local
addition.
\end{itemize}
\end{prop}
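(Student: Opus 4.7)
The plan is to set up the $\ell=0$ analogue of the local-extension apparatus of Definition~\ref{locml} and Proposition~\ref{aprop1} for topological vector bundles, and then verify the required local extensions in cases~(a) and~(b) using the scalar extension results Proposition~\ref{own-dug} and Corollary~\ref{cor-triv-bun}. Throughout, we exploit the fact that paracompact Hausdorff spaces admit continuous partitions of unity subordinate to any open cover, so the patching argument from the proof of Proposition~\ref{aprop1} applies essentially verbatim with $C^\ell$-partitions replaced by continuous ones and $C^\ell$-trivializations by topological ones.

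For~(a), $Y$ is a closed, locally compact, metrizable subset of the paracompact space~$X$, so Corollary~\ref{cor-triv-bun} provides, for each locally convex space~$F$, a continuous linear right inverse to the restriction map $C(X,F)\to C(Y,F)$. This immediately yields local extension operators of $F$-valued functions on every open subset of~$X$ over which $E$ trivializes. For~(b), around each $y\in Y$ the hypothesis gives a neighbourhood~$Z$ of~$y$ in~$Y$ with $(Z,d|_{Z\times Z})$ complete; after shrinking we may assume~$Z$ is closed in~$Y$, and then~$Z$ is closed in~$X$, since a sequence in~$Z$ converging in~$X$ is $d$-Cauchy in~$Z$ and hence has its limit in~$Z$. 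Proposition~\ref{own-dug}, applied to the closed subset $Z\sub X$, yields a continuous linear right inverse $C(X,F)\to C(Z,F)$, which after restriction to an open $Q\sub X$ with $Q\cap Y\sub Z^0$ furnishes a local extension operator in the sense of the $\ell=0$ analogue of Definition~\ref{locml}. In either case, patching with a continuous partition of unity on~$X$ subordinate to a trivializing cover produces the required global right inverse $\Gamma_{C^0}(E|_Y)\to\Gamma_{C^0}(E)$.

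For~(c), we refine the cover in the patching argument: since~$X$ is locally compact and paracompact, we choose a locally finite trivializing cover $(U_i)_{i\in I}$ of~$X$ with compact closures; each $\overline{U_i}$ is completely regular and $\overline{U_i}\cap Y$ is a compact metrizable subset, so Corollary~\ref{unif-cor} applied on each piece supplies the needed local extensions into $C(\overline{U_i},F)$, which can be multiplied by a continuous cutoff supported in~$U_i$ and extended by zero. A compactly supported section on~$Y$ meets only finitely many~$U_i$, so its patched extension is supported in a finite union of the compact sets~$\overline{U_i}$, yielding continuity in the LF-topology on~$\Gamma_{C^0_c}(E)$. For~(d), charts on $C(X,N)$ and $C(Y,N)$ at~$\gamma$ and $\gamma|_Y$ (provided by a smooth local addition on~$N$, as in~\ref{strangemfdmps}) identify neighbourhoods of these points with open subsets of $\Gamma_{C^0_c}(\gamma^*(TN))$ and $\Gamma_{C^0_c}((\gamma|_Y)^*(TN))$ respectively; in these charts the restriction map becomes the continuous linear restriction of sections, which by~(c) admits a continuous linear right inverse, so Definition~\ref{def-subm} is satisfied with $\pi$ being this linear restriction.

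The main obstacle is bookkeeping of supports in part~(c): one must check that convergence of compactly supported sections in $\Gamma_{C^0_c}(E|_Y)$ (eventually supported in a common compact set and uniformly convergent there) produces convergence of the extended sections in $\Gamma_{C^0_c}(E)$, uniformly supported in a compact set of~$X$. This reduces to local finiteness of the trivializing cover together with the routine fact that multiplication by a continuous cutoff and summation of locally finitely many continuous sections are continuous on the relevant section spaces; but the arrangement must be made with enough care to produce a single continuous linear operator on the direct-limit space, rather than merely a compatible family on the step spaces $\Gamma_{C^0_K}$.
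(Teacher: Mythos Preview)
Your proof of~(a) is circular. Corollary~\ref{cor-triv-bun} is derived in the paper \emph{from} Proposition~\ref{use-dugu}(a) (by applying it to the trivial bundle $X\times F$), so you cannot invoke it here. Moreover, even granting a global extension operator $\cE\colon C(Y,F)\to C(X,F)$, this does not ``immediately yield local extension operators'' in the sense needed: the definition in~\ref{locexttop} asks for operators $C(P_x,F)\to C(Q_x,F)$ where $P_x$ is an arbitrarily small neighbourhood of~$x$ in~$Y$, whereas $\cE$ takes functions defined on all of~$Y$. Passing from~$\cE$ to such local operators would require a further cutoff-and-extend step that you have not supplied.

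The paper's proof of~(a) instead uses Corollary~\ref{unif-cor} (which is proved independently, via Proposition~\ref{unif-case}): for $x\in Y$ one takes a \emph{compact} neighbourhood $P_x$ of~$x$ in~$Y$; since~$X$ is paracompact and hence completely regular, Corollary~\ref{unif-cor} gives a continuous linear extension $C(P_x,F)\to C(X,F)$, which restricted to a suitable~$Q_x$ yields the local operator. You actually carry out exactly this construction in your argument for~(c), where you apply Corollary~\ref{unif-cor} on the compact pieces $\overline{U_i}\cap Y$; that idea, transplanted to~(a), is what is needed. The paper then gets~(c) for free by observing that the local extensions constructed in~(a) allow Proposition~\ref{aprop1top}(b) to be applied, so your separate patching argument in~(c) is correct but redundant. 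Your treatments of~(b) and~(d) match the paper's.
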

\begin{rem}
Note that the hypothesis of Proposition~\ref{use-dugu}(b)
is satisfied if
$Y$ is locally compact in the induced topology\footnote{Take any compact
neighbourhood $Z$ of~$x$.} (duplicating a special case of~(a))
or $(Y,d|_{Y\times Y})$ is complete (take  $Z:=Y$).
\end{rem}
Applying Proposition~\ref{use-dugu}(a)
to a trivial vector bundle $X\times F\to X$,
we get:
\begin{cor}\label{cor-triv-bun}
If $X$ is a paracompact topological space, $Y\sub X$ a closed
subset which is locally compact and metrizable in the induced topology
and $F$ a locally convex space, then the restriction map
\[
C(X,F)\to C(Y,F)
\]
has a continuous linear right inverse
$($using the compact-open topology$)$.
\end{cor}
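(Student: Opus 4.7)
The plan is to deduce this directly from Proposition~\ref{use-dugu}(a), as hinted in the paragraph preceding the statement. The idea is that a locally convex space $F$ is, trivially, a locally convex space, so the product $E:=X\times F$ carries the structure of a (globally trivial) topological vector bundle over~$X$ with typical fibre~$F$ via the projection $\pi\colon E\to X$ onto the first factor, and hypothesis of Proposition~\ref{use-dugu}(a) (paracompactness of~$X$ together with $Y\sub X$ closed, locally compact and metrizable in the induced topology) is precisely the hypothesis of our corollary.

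First I would set up the natural identification $\Gamma_{C^0}(E)\cong C(X,F)$: a continuous section $s\colon X\to X\times F$ of the trivial bundle has the unique form $s(x)=(x,\gamma(x))$ for a continuous function $\gamma\colon X\to F$, and the assignment $s\mto\gamma$ is a bijection. Since the compact-open topology on $\Gamma_{C^0}(E)$ as introduced in~\ref{topsbundtop} is defined via the compact-open topology on continuous maps $X\to E$, and the projection $\pr_2\colon X\times F\to F$ is continuous (thus induces a continuous map on function spaces), one checks by passing to the subbasis $\lfloor K,U\rfloor$ that this bijection is a homeomorphism. The same applies with $Y$ in place of~$X$, noting that $E|_Y=Y\times F$ is again a trivial bundle.

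Next I would observe that under these identifications, the bundle restriction map $\Gamma_{C^0}(E)\to\Gamma_{C^0}(E|_Y)$, $s\mto s|_Y$, corresponds to the plain restriction map $C(X,F)\to C(Y,F)$, $\gamma\mto\gamma|_Y$. Applying Proposition~\ref{use-dugu}(a) to the trivial bundle $E=X\times F$ yields a continuous linear right inverse $\sigma\colon\Gamma_{C^0}(E|_Y)\to \Gamma_{C^0}(E)$ for the bundle restriction map, which under the identifications above becomes the desired continuous linear right inverse $C(Y,F)\to C(X,F)$ for the restriction map.

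I do not anticipate any real obstacle: the entire content is contained in Proposition~\ref{use-dugu}(a), and the only thing to check is the elementary identification of sections of the trivial bundle with vector-valued functions, together with the fact that this identification respects the compact-open topologies and commutes with restriction. These checks are routine from the definition of the compact-open topology on $\Gamma_{C^0}(E)$ in~\ref{topsbundtop}.
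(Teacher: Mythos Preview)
Your proposal is correct and is exactly the paper's approach: the corollary is stated immediately after the sentence ``Applying Proposition~\ref{use-dugu}(a) to a trivial vector bundle $X\times F\to X$, we get,'' which is precisely the reduction you carry out. You have merely spelled out the routine identification $\Gamma_{C^0}(X\times F)\cong C(X,F)$ that the paper leaves implicit.
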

\section{Proof of Proposition~\ref{own-dug} and related results}
This section prepares the proof of Proposition~\ref{use-dugu}.
We begin with a review of Dugundji's result.
\begin{numba}\label{review-dug}
Let $(X,d)$ be a metric space, $Y\sub X$ be a closed subset
and $F$ be a locally convex space.
Dugundji~\cite{Dug} constructed a continuous extension
\[
\cE(\gamma)\colon X\to F
\]
for $\gamma\in C(Y,F)$ in such a way that the map
\[
\cE\colon C(Y,F)\to C(X,F)
\]
is linear (cf.\ \cite[p.\,359]{Dug})
and the image $\cE(\gamma)(X)$ is contained in the convex hull
of the image $\gamma(Y)$ (loc.\,cit.).
Since convex hulls of bounded sets
are bounded,
we deduce that $\cE(\gamma)$ is a bounded continuous
function if $\gamma$ is so and the restriction
\[
BC(Y,F)\to BC(X,F),\quad\gamma\mto\cE(\gamma)
\]
of $\cE$ to a map between
spaces of bounded continuous functions
is continuous with respect to the topology of
uniform convergence
(see already \cite[Theorem~5.1]{Dug}
if $F=\R$).
In fact,
\[
\|\cE(\gamma)\|_{\infty,q}=\|\gamma\|_{\infty,q}
\]
for each continuous seminorm~$q$ on~$F$
(if $\gamma(Y)\sub \wb{B}^{\,q}_r(0)$ for some $r>0$,
then also $\cE(\gamma)(X)\sub \wb{B}^{\,q}_r(0)$).
If $K:=Y\sub X$ is compact, we deduce that
\begin{equation}\label{notquite}
\cE\colon C(K,F)=BC(K,F)\to BC(X,F)\sub C(X,F)
\end{equation}
is continuous for the compact-open topologies
on $C(K,F)$ and $C(X,F)$.
\end{numba}
To establish Proposition~\ref{use-dugu}
in full generality, Dugundji's
results would not be sufficient.
To get~(a) in general,
They rely on Proposition~\ref{own-dug}
and its consequences.\\[2.3mm]
{\bf Proof of Proposition~\ref{own-dug}.}
The proof is based on a simplified variant of
Dugundji's construction.
\begin{numba}\label{numbaext}
Let $(X,d)$ be a metric space and $Y\sub X$
be a closed, non-empty subset.
Then the distance function
\[
d_Y\colon X\to[0,\infty[,\quad
x\mto\inf\{d(x,y)\colon y\in Y\}
\]
is Lipschitz continuous with Lipschitz constant~$1$,
and hence continuous. Hence
\[
W_n:=\{x\in X\colon d_Y(x)\in\,]2^{-n-1},2^{-n+1}[\}
\]
is an open subset of~$X$ for each $n\in\Z$ and
\[
X\setminus Y=\bigcup_{n\in\Z} W_n.
\]
Note that
\begin{equation}\label{neighbour}
(\forall k,\ell\in\Z)\;\,
W_k\cap W_\ell\not=\emptyset\;\Rightarrow\; |k-\ell|\leq 1.
\end{equation}
We let $\cU_n$ be a cover of~$W_n$
by open subsets $U\sub W_n$ of diameter
$\leq 2^{-n+1}$ and $\cU=\bigcup_{n\in\Z}\cU_n$.
Since $X\setminus Y$ is metrizable and hence
paracompact, there exists a partition of unity
$(h_j)_{j\in J}$ on $X\setminus Y$
such that, for each $j\in J$,
there exists $n(j)\in \Z$ and $U(j)\in\cU_{n(j)}$
such that
\[
\Supp(h_j)\sub U(j).
\]
After replacing $J$ with $\{j\in J\colon h_j\not=0\}$,
we may assume that $h_j\not=0$ for each $j\in J$.
We choose $x(j)\in\Supp(h_j)$.
Since $x(j)\in U(j)\sub W_{n(j)}$,
we have $d_Y(x(j))<2^{-n(j)+1}$
and find $y(j)\in Y$ such that $d(x(j),y(j))<2^{-n(j)+1}$.
Then
\begin{equation}\label{infosupp}
d(x,y(j))<d(x(j),y(j))+2^{-n(j)+1}<2^{-n(j)+2}
\mbox{ for all $x\in\Supp(h_j)$.}
\end{equation}
Given a locally convex space $F$ and $\gamma\in C(Y,F)$,
the assignment
\[
\cE(\gamma)(x):=
\left\{
\begin{array}{cl}
\sum_{j\in J}h_j(x)\gamma(y_j) &\mbox{ if $x\in X\setminus Y$;}\\
\gamma(x) & \mbox{ if $x\in Y$}
\end{array}
\right.
\]
defines a function $\cE(\gamma)\colon X\to F$.
\end{numba}
We now establish the following properties of
$\cE(\gamma)$ as constructed in~\ref{numbaext}
(thus establishing Proposition~\ref{own-dug}):
\begin{la}
In the situation of~{\rm\ref{numbaext}},
we have:
\begin{itemize}
\item[\rm(a)]
For each $\gamma\in C(Y,F)$,
the map $\cE(\gamma)\colon X\to F$ is continuous.
Moreover, $\cE(\gamma)|_Y=\gamma$
and
$\cE(\gamma)(X)$ is contained in the convex hull
of~$\gamma(Y)$.
\item[\rm(b)]
The map $\cE\colon C(Y,F)\to C(X,F)$
is linear.
\item[\rm(c)]
If $(Y,d|_{Y\times Y})$
is complete or $Y$ is locally compact, then
the linear map
$\cE\colon C(Y,F)\to C(X,F)$
is continuous.
\end{itemize}
\end{la}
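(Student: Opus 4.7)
The plan is to dispose of (a) and (b) first, since both are nearly immediate from the construction, and then to focus on (c). For (a), $\cE(\gamma)|_Y=\gamma$ is built into the case distinction, and $\cE(\gamma)(X)\sub\conv(\gamma(Y))$ holds because each value is either $\gamma(x)\in\gamma(Y)$ or, for $x\in X\setminus Y$, a locally finite convex combination of the $\gamma(y(j))$'s. Continuity of $\cE(\gamma)$ on the open set $X\setminus Y$ is clear since there $(h_j)_{j\in J}$ is locally finite and $\cE(\gamma)$ locally reduces to a finite sum of continuous functions. To verify continuity at $y_0\in Y$, I would exploit the consequence $d_Y(x)>2^{-n(j)-1}$ of $x\in\Supp(h_j)\sub W_{n(j)}$: as $x\to y_0$ one has $d_Y(x)\to 0$, which forces $n(j)\to\infty$ for every $j$ with $h_j(x)>0$, so by~(\ref{infosupp}) $d(x,y(j))<2^{-n(j)+2}\to 0$ and hence $d(y_0,y(j))\to 0$ uniformly in the active indices; continuity of~$\gamma$ at~$y_0$ together with the convex combination then yields $\cE(\gamma)(x)\to\gamma(y_0)$ with respect to every continuous seminorm on~$F$. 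Part (b) is immediate from the formula.

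For (c), fix a compact $K\sub X$ and a continuous seminorm~$q$ on~$F$. The goal is to produce a compact $L\sub Y$ with
\[
\|\cE(\gamma)\|_{K,q}\,\leq\,\|\gamma\|_{L,q}\quad\mbox{for all $\gamma\in C(Y,F)$,}
\]
which gives continuity in the compact-open topology. Setting $J_K:=\{j\in J\colon \Supp(h_j)\cap K\not=\emptyset\}$, using $\sum_j h_j(x)=1$ for $x\in X\setminus Y$ one obtains
\[
\|\cE(\gamma)\|_{K,q}\,\leq\,\max\!\bigl\{\|\gamma\|_{K\cap Y,q},\;\sup_{j\in J_K}q(\gamma(y(j)))\bigr\},
\]
so it suffices to exhibit a compact $L\sub Y$ containing $T_K:=\{y(j)\colon j\in J_K\}\cup(K\cap Y)$. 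The crucial metric estimate is
\[
d(x,y(j))\,<\,2^{-n(j)+2}\,<\,8\,d_Y(x)\qquad\mbox{for $x\in K\cap\Supp(h_j)$,}
\]
combining (\ref{infosupp}) with $d_Y(x)>2^{-n(j)-1}$. For each $\ve>0$, I would split $J_K$ into $A_\ve:=\{j\in J_K\colon\Supp(h_j)\cap K\not\sub\{d_Y<\ve\}\}$ and $B_\ve:=J_K\setminus A_\ve$. The set $A_\ve$ is finite because $(h_j)$ is locally finite on the compact subset $K\cap\{d_Y\geq\ve\}$ of~$X\setminus Y$; and for $j\in B_\ve$ every $x\in\Supp(h_j)\cap K$ satisfies $d_Y(x)<\ve$, hence $d(y(j),K)<8\ve$.

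The endgame converts this splitting into compactness using the hypothesis on~$Y$. A standard extraction argument (given $y_n\in Y$ with $d(y_n,K)\to 0$, pick $k_n\in K$ with $d(y_n,k_n)\to 0$, extract a convergent subsequence of $(k_n)$ in the compact~$K$, and use that $Y$ is closed in~$X$) shows that for every neighbourhood~$U$ of $K\cap Y$ in~$Y$ there exists $\ve>0$ with $\{y\in Y\colon d(y,K)<8\ve\}\sub U$. If~$Y$ is locally compact, I would choose~$U$ with compact closure $\wb{U}\sub Y$; then for small~$\ve$ the indices $j\in B_\ve$ contribute $y(j)$'s lying in~$\wb{U}$ while $A_\ve$ is finite, so $T_K$ lies in a compact subset of~$Y$. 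If instead $(Y,d|_{Y\times Y})$ is complete, I would show $T_K$ is totally bounded: given $\delta>0$, pick a finite $\delta/2$-net $\{y_1,\ldots,y_m\}$ of the compact set $K\cap Y$, take~$U$ to be its $\delta/2$-neighbourhood in~$Y$, and observe that the $y(j)$ with $j\in B_\ve$ then lie in~$U$ and admit $\{y_1,\ldots,y_m\}$ as a $\delta$-net; adjoining the finite set $\{y(j)\colon j\in A_\ve\}$ yields a finite $\delta$-net of~$T_K$, so $T_K$ is totally bounded and, by completeness of~$Y$, relatively compact. The main obstacle is precisely this compactness control of $\{y(j)\colon j\in J_K\}$: without one of the two hypotheses on~$Y$, these points can drift to infinity inside~$Y$ and~$\cE$ need not be continuous for the compact-open topology.
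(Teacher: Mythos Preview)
Your proposal is correct and follows essentially the same strategy as the paper's proof: the same reduction in~(c) to showing that the set $T_K=(K\cap Y)\cup\{y(j)\colon j\in J_K\}$ is relatively compact in~$Y$, the same key metric estimate $d(x,y(j))<8\,d_Y(x)$ for $x\in\Supp(h_j)$, and the same dichotomy between finitely many ``far'' indices and the remaining $y(j)$'s, which are forced close to $K\cap Y$. The only cosmetic difference is that the paper splits according to a threshold $n_0$ on the integer $n(j)$ (defining $H=\{x\in K\colon d_Y(x)\geq 2^{-n_0-1}\}$) rather than your threshold $\ve$ on $d_Y$, and in the complete case the paper nets the whole of~$K$ by a finite subset of~$X$ directly, whereas you net $K\cap Y$ and invoke your auxiliary sequential lemma about neighbourhoods of $K\cap Y$ in~$Y$; both routes yield total boundedness equally well.
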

\begin{proof}
(a) By construction,
$\cE(\gamma)|_Y=\gamma$ and $\cE(\gamma)(X)\sub\conv(\gamma(Y))$.
Each $x\in X\setminus Y$ has an open neighbourhood
$Q\sub X\setminus Y$ such that $J_Q:=\{j\in J\colon h_j|_Q\not=0\}$
is finite, whence $\cE(\gamma)|_Q=\sum_{j\in J_Q}h_j|_Q$
is continuous. Thus $f|_{X\setminus Y}$ is continuous.
To see that $\cE(\gamma)$ is also continuous at each $x\in Y$,
let $(x_n)_{n\in\N}$ be a sequence
in~$X$ such that $x_n\to x$. Let $q$ be a continuous seminorm
on~$F$ and $\ve>0$.
Since~$\gamma$ is continuous,
there exists $m\in \Z$ such that
\begin{equation}\label{easpart}
q(\gamma(y)-\gamma(x))\leq \ve
\mbox{ for all $y\in Y$ such that $d(y,x)<2^{-m}$.}
\end{equation}
There is $n_0\in\N$ such that
\begin{equation}\label{justcon}
d(x_n,x)<2^{-m-5}
\mbox{ for all $n\geq n_0$.}
\end{equation}
Let $n\geq n_0$. If $x_n\in Y$, then
$q(\cE(\gamma)(x_n)-\cE(\gamma)(x))=q(\gamma(x_n)-\gamma(x))\leq \ve$,
by~(\ref{easpart}).
If $x_n\in X\setminus Y$, then $x_n\in W_{k(n)}$
for some $k(n)\in\Z$;
thus
\begin{equation}\label{oneside}
2^{-k(n)-1}<d_Y(x_n)<2^{-k(n)+1}.
\end{equation}
For each $j\in J$,
the condition $x_n\in \Supp(h_j)$ implies that
$\Supp(h_j)\cap W_{k(n)}=\emptyset$ and hence
\[
W_{n(j)}\cap W_{k(n)}\not=\emptyset,
\]
whence $|n(j)-k(n)|\leq 1$ (see (\ref{neighbour}))
and thus
\[
d(x_n,y(j))<2^{-n(j)+2}\leq 2^{-k(n)+3}<2^4d_Y(x_n)
\leq 2^4d(x_n,x)<2^{-m-1},
\]
using (\ref{infosupp}),
(\ref{oneside}),
and (\ref{justcon}). As a consequence,
$d(x,y(j))\leq d(x,x_n)+d(x_n,y(j))<2^{-m}$ (recalling (\ref{justcon}))
and thus
$q(\gamma(y(j))-\gamma(x))\leq\ve$
if $x_n\in\Supp(h_j)$.
Then $h_j(x_n)q(\gamma(y(j))-\gamma(x))\leq h_j(x_n)\ve$
for all $j\in J$ and hence
\begin{eqnarray*}
q(\cE(\gamma)(x_n)-\cE(\gamma(x))
&=& q(\cE(\gamma)(x_n)-\gamma(x))
=q\left(\sum_{j\in J}h_j(x_n)(\gamma(y(j))-\gamma(x))\right)\\
&\leq&  \sum_{j\in J}h_j(x_n)q(\gamma(y(j))-\gamma(x))\leq\ve.
\end{eqnarray*}
Thus $q(\cE(\gamma)(x_n)-\cE(\gamma(x))\leq\ve$
for all $n\geq n_0$, and thus $\cE(\gamma)$ is continuous at~$x$.\\[2.3mm]
(b) The linearity is clear from the definition.\\[2.3mm]
(c) Let $K\sub X$ be a compact subset,
$J_K:=\{j\in J\colon \Supp(h_j)\cap K\not=\emptyset\}$
and
\begin{equation}\label{theP}
P:=(K\cap Y)\cup\{y(j)\colon j\in J_K\}.
\end{equation}
We claim that the closure $L:=\wb{P}$
of~$P$ in~$Y$ is compact.
If this is true, then we have
for each continuous seminorm~$q$
on~$F$ that
\[
\|\cE(\gamma)\|_{K,q}\leq \|\gamma\|_{L,q}
\]
for all $\gamma\in C(Y,F)$, and thus~$\cE$ is continuous.
In fact, $q(\cE(\gamma)(x))=q(\gamma(x))\leq \|\gamma\|_{L,q}$
for all $x\in K\cap Y$, as $K\cap Y\sub L$.
For all $x\in K\setminus Y$, we have
\[
q(\cE(\gamma)(x))
=q\left(\sum_{j\in J_K}h_j(x)\gamma(y(j))\right)
\leq\sum_{j\in J_K}h_j(x)\underbrace{q(\gamma(y(j)))}_{\leq\|\gamma\|_{L,q}}
\leq\|\gamma\|_{L,q}
\]
as well.\\[2.3mm]
We now prove the claim, starting with the case
that $(Y,d_{Y\times Y})$ is complete.
If we can show that~$P$ is precompact (totally bounded) in~$Y$,
then its closure~$L$ will be precompact and complete,
and thus~$L$ will be compact.\\[2.3mm]
For $x\in X$ and $\ve>0$, write
$B_\ve(x):=\{y\in X\colon d(x,y)<\ve\}$;
if $S\sub X$ is a subset, abbreviate
\[
B_\ve[S]:=\bigcup_{x\in S}B_\ve(x).
\]
To establish the precompactness of~$P$,
let $\ve>0$. Since~$K$ is compact and hence precompact,
we find a finite subset $\Phi\sub X$ such that
$K\sub B_{\ve/2}[\Phi]$,
whence
\begin{equation}\label{1inc}
K\cap Y\sub B_\ve[\Phi]
\end{equation}
in particular.
There exists $n_0\in\Z$ such that
\[
2^{-n_0+3}<\ve.
\]
Since
\[
H:=\{x\in K\colon d_Y(x)\geq 2^{-n_0-1}\}
\]
is a compact subset of $X\setminus Y$,
the set
\[
J_H:=\{j\in J\colon\Supp(h_j)\cap H\not=\emptyset\}
\]
is finite.
Thus
\[
\Psi:=\{y(j)\colon J_H\}
\]
is a finite subset of~$Y$.
Given $j\in J_K$, there exists $x\in \Supp(h_j)\cap K$.
If $n(j)\geq n_0$,
then
\[
d(y(j),x)<2^{-n(j)+2}\leq 2^{-n_0+2}<\ve/2
\]
by (\ref{infosupp}), and thus
\begin{equation}\label{2inc}
y(j)\in B_{\ve/2}[K]\sub B_\ve[\Phi]
\mbox{ if $n(j)\geq n_0$.}
\end{equation}
If $n(j)<n_0$,
then $\Supp(h_j)\sub W_{n(j)}$ entails
that $d_Y(x)>2^{-n(j)-1}>2^{-n_0-1}$,
whence $x\in H$ and thus $j\in J_H$.
Hence
\begin{equation}\label{3inc}
y(j)\in \Psi \mbox{ if $n(j)<n_0$.}
\end{equation}
By (\ref{1inc}), (\ref{2inc}), and (\ref{3inc}),
$P\sub B_\ve[\Phi\cup\Psi]=\bigcup_{z\in\Phi\cup\Psi}B_\ve(z)$.
Thus~$P$ is precompact.\\[2.3mm]
If $Y$ is locally compact,
then $K\cap Y$ has a relatively compact, open
neighbourhood~$Q$ in~$Y$.
There exists an open subset $V\sub X$
such that $Q=Y\cap V$.
After replacing $V$ with $V\cup (X\setminus Y)$,
we may assume that $K\sub V$. Since~$K$
is compact, there exists $\ve>0$ such that
$B_\ve[K]\sub V$. Then
\[
B_\ve[K]\cap Y\sub V\cap Y=Q
\]
is relatively compact in~$Y$ and
\begin{equation}\label{newi1}
K\cap Y\sub B_\ve[K]\cap Y.
\end{equation}
If we define~$n_0$, $H$, $J_H$, and the finite
subset $\Psi\sub Y$ as in the proof for complete~$Y$,
then (\ref{2inc}) and (\ref{3inc})
show that
\[
\{y(j)\colon j\in J_K\}\sub (B_\ve[K]\cap Y)\cup\Psi.
\]
Combining this with~(\ref{newi1}), we see that
\[
P\sub (B_\ve[K]\cap Y)\cup\Psi
\]
is a subset of the compact set $\wb{Q}\cup\Psi$
and thus relatively compact.
\end{proof}
\begin{rem}
If $F\not=\{0\}$ and the extension operator
$\cE\colon C(Y,F)\to C(X,F)$ given by~(\ref{numbaext})
is continuous, then the set $P$
defined in~(\ref{theP})
must be relatively compact in~$Y$
for each compact subset $K\sub X$,
as is easy to see.
\end{rem}
%
If $X$ is a set and $R\sub X\times X$ a subset,
we write
\begin{eqnarray*}
R^{-1} & := & \{(y,x)\in X\times X\colon (x,y)\in R\},\\
R\circ R &:= & \{(x,z)\in X\times X \colon (\exists y\in X)\;
(x,y)\in R\mbox{ and }(y,z)\in R\}
\end{eqnarray*}
and let $R^{\circ 3}$ be the set of all $(x,a)\in X\times X$
for which there exist $y,z\in X$ such that
$(x,y)$, $(y,z)$, $(z,a)\in R$.\\[2.3mm]
Recall that a uniform space is a set~$X$,
together with a filter $\cU$ of subsets
$U\sub X\times X$ which are supersets of the diagonal $\Delta_X$,
such that $U^{-1}\in\cU$
for each $U\in\cU$ and there exists $V\in\cU$
such that $V\circ V\sub U$
(see \cite[II.1.3]{Shu},
cf.\ \cite{Wei} and \cite{Eng}).
The elements $U\in\cU$ are called
\emph{entourages};
an entourage~$U$ is called \emph{symmetric}
if $U=U^{-1}$.
Every entourage~$U$ contains a symmetric
entourage, namely $U\cap U^{-1}$.
Every uniform space $(X,\cU)$ defines a topology on~$X$;
a basis of neighbourhoods of $x\in X$ is given by the sets
$U[x]:=\{y\in X\colon (x,y)\in U\}$
for $U\in\cU$. A Hausdorff topology
arises from a uniform structure
if and only if it is completely regular
(see \cite{Shu} or \cite{Eng}).
\begin{prop}\label{unif-case}
Let $(X,\cU)$ be a uniform space
and $Y\sub X$ be a closed subset.
Assume that the induced filter $\cU_Y:=\{U\cap (Y\times Y)\colon U\in\cU\}$
on $Y\times Y$ is generated by a countable filter basis.
Then the restriction map
\[
C(X,F)\to C(Y,F)
\]
admits a linear right inverse $\cE\colon C(Y,F)\to C(X,F)$
for each locally convex space~$F$,
such that $\cE(\gamma)(X)$ is contained in the convex hull
of $\gamma(Y)$ for each $\gamma\in C(Y,F)$.
If, moreover, $Y$ is locally compact
in the induced topology or the uniform space $(Y,\cU_Y)$
is complete, then $\cE$ as before can be chosen
as a continuous linear map with respect to the compact-open
topologies on domain and range.
\end{prop}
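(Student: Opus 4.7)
The strategy is to reduce to Proposition~\ref{own-dug} via the Weil metrization theorem. By the standard construction, I would first choose a decreasing sequence of symmetric entourages $V_n\in\cU$ with $V_0=X\times X$, $V_{n+1}\circ V_{n+1}\circ V_{n+1}\sub V_n$ for $n\in\N_0$, and $(V_n\cap(Y\times Y))_{n\in\N}$ a basis for~$\cU_Y$; Weil's lemma then yields a continuous pseudometric $d$ on~$X$ satisfying $V_{n+1}\sub\{(x,y)\in X\times X\colon d(x,y)<2^{-n}\}\sub V_n$. Consequently $d|_{Y\times Y}$ generates $\cU_Y$ and hence the subspace topology on~$Y$, and $d$ is continuous with respect to the given topology of~$X$, which is finer than the pseudometric topology.

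With this pseudometric in hand, the construction in the proof of Proposition~\ref{own-dug} goes through almost verbatim on the open set $\{x\in X\colon d_Y(x)>0\}$, where $d_Y(x):=\inf_{y\in Y}d(x,y)$: partition this set into annuli $W_n=\{2^{-n-1}<d_Y<2^{-n+1}\}$, cover each~$W_n$ by $d$-balls of diameter~$\leq 2^{-n+1}$, extract a locally finite refinement giving a partition of unity $(h_j)_{j\in J}$ with chosen scales $n(j)$ and representatives $x(j)\in\Supp(h_j)$, $y(j)\in Y$ with $d(x(j),y(j))<2^{-n(j)+1}$, and define
\[
\cE(\gamma)(x):=\sum_{j\in J}h_j(x)\gamma(y(j))
\]
there, together with $\cE(\gamma)(x):=\gamma(x)$ on~$Y$. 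All of the estimates in the proof of Proposition~\ref{own-dug} are preserved because they use only the triangle inequality and continuity of~$d$; continuity of $\cE(\gamma)$ at points of~$Y$ uses that $x_n\to x$ in the given topology implies $d(x_n,x)\to 0$. Linearity and the containment $\cE(\gamma)(X)\sub\conv(\gamma(Y))$ are built into the formula.

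The main obstacle is that the pseudometric topology on~$X$ may be strictly coarser than the given topology, so the set $\tilde Y:=\{x\in X\colon d_Y(x)=0\}=\bigcap_n V_n[Y]$ generally contains~$Y$ strictly; on $\tilde Y\setminus Y$ the partition formula is undefined, and here the auxiliary hypothesis on~$Y$ is needed. For $x\in\tilde Y$ the nonempty sets $T_n(x):=V_n[x]\cap Y$ satisfy $T_n(x)\times T_n(x)\sub V_n\circ V_n\sub V_{n-1}$, so any choice $y_n\in T_n(x)$ is Cauchy in $(Y,\cU_Y)$; completeness of $(Y,\cU_Y)$ directly yields a unique limit $y^*(x)\in Y$, and in the locally compact case the argument establishing precompactness of the set~$P$ in part~(c) of the proof of Proposition~\ref{own-dug} can be adapted to show that the $y_n$ can be kept in a fixed compact subset of~$Y$, again giving a unique limit. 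Setting $y^*(x):=x$ on~$Y$ makes $y^*\colon\tilde Y\to Y$ continuous at each point of~$Y$, and putting $\cE(\gamma)(x):=\gamma(y^*(x))$ on $\tilde Y\setminus Y$ patches continuously with the partition-of-unity formula on the complement. Continuity of $\cE(\gamma)$ across~$Y$ and~$\partial\tilde Y$ is checked by the same estimate as in the proof of Proposition~\ref{own-dug}, combined with continuity of~$y^*$ at points of~$Y$.

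Finally, continuity of~$\cE$ between compact-open topologies follows by copying the precompactness argument of part~(c) of the proof of Proposition~\ref{own-dug}: for each compact $K\sub X$, the set
\[
P:=(K\cap Y)\cup\{y(j)\colon\Supp(h_j)\cap K\neq\emptyset\}\cup\{y^*(x)\colon x\in K\cap\tilde Y\}
\]
is contained in a compact subset~$L$ of~$Y$ under either hypothesis, yielding $\|\cE(\gamma)\|_{K,q}\leq\|\gamma\|_{L,q}$ for every continuous seminorm~$q$ on~$F$, and hence continuity of the linear map~$\cE$.
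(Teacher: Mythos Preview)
Your reduction to a continuous pseudometric via Weil's lemma is right, but the paper then proceeds differently and avoids the complications you run into. Rather than redoing the construction of~\ref{numbaext} on the pseudometric space $(X,d)$, the paper passes to the associated metric quotient $X_\rho:=X/\{d=0\}$ with continuous quotient map $q\colon X\to X_\rho$; since $d|_{Y\times Y}$ is already a metric, $\phi:=q|_Y$ is an isometry onto $q(Y)$. One then applies Proposition~\ref{own-dug} (via~\ref{numbaext}) verbatim to $q(Y)\sub X_\rho$ and pulls back, setting $\cF:=C(q,F)\circ\cE\circ C(\phi^{-1},F)$. This buys two things. First, your ``extract a locally finite refinement giving a partition of unity'' on $\{d_Y>0\}\sub X$ presupposes paracompactness, which a general uniform space need not enjoy; on the metric space $X_\rho$ it is automatic. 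Second, the quotient already collapses every $x$ with $d(x,y)=0$ for some $y\in Y$ onto that~$y$, so a large part of your problematic set $\tilde Y\setminus Y$ is absorbed for free, without any hand-built retraction~$y^*$.

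There is also a specific gap in your locally compact branch. You claim that local compactness of~$Y$ forces the Cauchy sequence $(y_n)$ with $y_n\in V_n[x]\cap Y$ to lie in a fixed compact set and hence converge, by adapting the precompactness-of-$P$ argument from part~(c) of the proof of Proposition~\ref{own-dug}. But that argument controls the $y(j)$'s attached to a given \emph{compact} $K\sub X$; it says nothing about convergence of an arbitrary Cauchy sequence in a locally compact metric space, and indeed this can fail (take $y_n=1/n$ in $(0,1)$ with the usual metric). So the existence of $y^*(x)$ is not established under local compactness alone, and with it your definition of $\cE(\gamma)$ on $\tilde Y\setminus Y$ collapses.
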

\begin{proof}
Let $\{W_n\colon n\in\N\}$ be a countable
filter basis for $\cU_Y$. For each $n\in\N$,
there exists $U_n\in\cU$ such that $W_n=U_n\cap (Y\times Y)$.
Let $V_0:=X\times X$.
Recursively, if $n\in\N$ and $V_0,\ldots, V_{n-1}\in\cU$ have been
have been determined, let $V_n\in\cU$
be a symmetric entourage such that $V_n \sub U_n$
and $V_n^{\circ 3}\sub V_{n-1}$.
Then there exists a pseudometric
\[
\rho\colon X\times X\to[0,\infty[
\]
on~$X$ such that for all $n\in\N$
\[
\{(x,y)\in X\times X\colon \rho(x,y)<2^{-n}\}
\sub V_n\sub \{(x,y)\in X\times X\colon \rho(x,y)\leq 2^{-n}\}
\]
(see \cite[Theorem~8.1.10]{Eng}).
By \cite[Proposition~8.1.9]{Eng},
$\rho$~is continuous.
Since
\[
B_n:=\{(x,y)\in Y\times Y\colon \rho(x,y)<2^{-n}\}
\sub V_n\cap(Y\times Y)\sub W_n
\]
and $\bigcap_{n\in\N} W_n=\Delta_Y$,
we deduce that $\rho|_{Y\times Y}$
is a metric on~$Y$ such that
$\{B_n\colon n\in\N\}$ is a filter basis for
$\cU|_Y$,
whence $\rho|_{Y\times Y}$ defines the topology
induced by~$X$ on~$Y$,
and Cauchy sequences in~$Y$ with respect to
$\cU_Y$ and those with respect to $\rho|_{Y\times Y}$
coincide. For $x,y\in X$, write $x\sim y$
if $\rho(x,y)=0$. Then $\sim$ is an equivalence
relation on~$X$. Let $X_\rho$
be the set of equivalence classes $[x]$
and $q\colon X\to X_\rho$, $x\mto [x]$ be the canonical map.
Then
\[
d\colon X_\rho\times X_\rho\to[0,\infty[,\;\,
([x],[y])\mto \rho(x,y)
\]
is a well-defined metric on $X_\rho$.
Note that $q|_Y$ is injective and $d([x],[y])=\rho(x,y)$
for all $x,y\in Y$, whence
\[
\phi\colon q|_Y\colon (Y,\rho|_{Y\times Y})\to (q(Y),d|_{q(Y)\times q(Y)})
\]
is an isometry (notably, a homeomorphism).
The map $q$ is continuous as $q^{-1}(\{[y]\in X_\rho\colon
d([x],[y])<\ve\})=\{y\in X\colon \rho(x,y)<\ve\}$
is open in~$X$ for all $x\in X$ and $\ve>0$.
If
\[
\cE\colon C(q(Y),F)\to C(X_\rho,F)
\]
is the extension operator defined in \ref{numbaext}, then
\[
\cF:=C(q,F)\circ \cE\circ C(\phi^{-1},F)
\colon C(Y,F)\to C(X,F),\quad \gamma\mto \cE(\gamma\circ\phi^{-1})\circ q
\]
is a linear right inverse for the restriction map
$C(X,F)\to C(Y,F)$.
Moreover,
\[
\cF(\gamma)(X)=\cE(\gamma\circ\phi^{-1})(X_\rho)
\sub\conv \gamma(\phi^{-1}(q(Y)))=\conv\gamma(Y)
\]
for all $\gamma\in C(Y,F)$.
If $(Y,\cU_Y)$ is
complete, then $(q(Y),d|_{q(Y)\times q(Y)})$
is complete, being isometric to $(Y,\rho|_{Y\times Y})$.
If $Y$ is locally compact, then also $q(Y)$ (being homeomorphic
to~$Y$). In both cases, $\cE$ is continuous and hence
also~$\cF$.
\end{proof}
\begin{cor}\label{unif-cor}
Let $X$ be a completely regular topological
space and $Y\sub X$ be a compact, metrizable subset.
For each locally convex space~$F$, the restriction map
\[
C(X,F)\to C(Y,F)
\]
then admits a continuous linear right inverse
$\cE\colon C(Y,F)\to C(X,F)$
such that
$\cE(\gamma)(X)$ is contained in the convex hull
of $\gamma(Y)$ for each $\gamma\in C(Y,F)$.
\end{cor}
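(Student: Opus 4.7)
The plan is to derive Corollary~\ref{unif-cor} directly from Proposition~\ref{unif-case}. Since $X$ is completely regular and Hausdorff, it admits a compatible uniform structure $\cU$ (the one generated by all continuous pseudometrics, say); thus $(X,\cU)$ is a uniform space whose underlying topology is the given one. To apply Proposition~\ref{unif-case}, it remains to show that the trace uniformity $\cU_Y=\{U\cap(Y\times Y):U\in\cU\}$ on the closed subset $Y$ is generated by a countable filter basis, and that $Y$ is locally compact in the induced topology.

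The second point is immediate: a compact Hausdorff space is locally compact. For the first point, I would invoke the classical fact that on a compact Hausdorff space there is a \emph{unique} uniform structure compatible with its topology, namely the filter of all neighbourhoods of the diagonal. Since the trace uniformity $\cU_Y$ is compatible with the subspace topology on~$Y$, which equals the topology induced by~$X$, uniqueness forces $\cU_Y$ to coincide with this canonical uniformity on $Y$. On the other hand, $Y$ is metrizable, so picking a compatible metric~$d$ on~$Y$, the sets
\[
W_n:=\{(x,y)\in Y\times Y\colon d(x,y)<1/n\}\qquad(n\in\N)
\]
form a countable filter basis for the metric uniformity on~$Y$, which again by uniqueness agrees with $\cU_Y$. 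Hence $\cU_Y$ is countably generated.

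Proposition~\ref{unif-case} now supplies a linear map $\cE\colon C(Y,F)\to C(X,F)$, continuous for the compact-open topologies, which is a right inverse to the restriction map and satisfies $\cE(\gamma)(X)\subseteq\conv(\gamma(Y))$ for every $\gamma\in C(Y,F)$. This is exactly the statement of Corollary~\ref{unif-cor}. The only non-trivial ingredient is the uniqueness of the uniform structure on a compact Hausdorff space; everything else is a direct application of the previously established proposition.
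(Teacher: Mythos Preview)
Your proof is correct and follows essentially the same route as the paper's: both pick a compatible uniformity on~$X$, invoke the uniqueness of the uniformity on the compact Hausdorff space~$Y$ to identify $\cU_Y$ with the metric uniformity (hence countably generated), and then apply Proposition~\ref{unif-case}. The only cosmetic difference is that the paper invokes the ``complete'' alternative in Proposition~\ref{unif-case} (compact uniform spaces being complete), whereas you invoke the ``locally compact'' alternative; either works.
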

\begin{proof}
Let~$d$ be a metric on~$Y$ defining its topology.
Since $X$ is completely regular,
its topology arises from a uniform structure $\cU$
on~$X$, and $\bigcap_{U\in\cU}=\Delta_X$ as~$X$
is Hausdorff. On the compact space~$Y$,
there is only one uniform structure defining its topology
and the latter is complete
(see Theorems~8.3.13 and 8.3.15 in \cite{Eng}).
Hence $(Y,\cU_Y)$ is complete and $\cU_Y$
has a countable filter basis (as it coincides
with the uniform structure given by~$d$).
So Proposition~\ref{unif-case} applies.
\end{proof}
\section{Proofs for Section~\ref{sec-outlook}}\label{proofs-outlook}
In this section, we prove Proposition~\ref{aprop1}, Theorem~\ref{thm-locops},
Theorem~\ref{thm-mapmfd}, and Proposition~\ref{use-dugu}.
We begin with preparations.
As in the case without boundary,
partitions of unity are available
(as we shall check in Appendix~\ref{appA}).
\begin{la}\label{partu-ex}
If $\ell\in\N_0\cup\{\infty\}$,
$M$ is a paracompact, locally compact rough $C^\ell$-manifold
and $(U_j)_{j\in I}$ an open cover of~$M$,
then there exists a $C^\ell$-partition of unity $(h_j)_{j\in J}$
on~$M$ such that $\Supp(h_j)\sub U_j$ for
all $j\in J$.
\end{la}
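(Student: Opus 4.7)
The plan is to mimic the standard construction of smooth partitions of unity, exploiting the fact that the modelling spaces of~$M$ are finite-dimensional (since $M$ is locally compact and Hausdorff, each $E_\phi$ is locally compact, hence finite-dimensional), so that the usual Euclidean bump functions are available.

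First I would use paracompactness and local compactness to pass to a well-behaved refinement of $(U_j)_{j\in I}$. For each $x\in M$ fix $j(x)\in I$ with $x\in U_{j(x)}$ and a chart $\phi_x\colon U_{\phi_x}\to V_{\phi_x}\sub \R^{d(x)}$ of~$M$ with $x\in U_{\phi_x}\sub U_{j(x)}$. Pick a smooth function $\xi_x\colon \R^{d(x)}\to[0,1]$ with $\xi_x(\phi_x(x))>0$ and compact support $\Supp(\xi_x)\sub V_{\phi_x}^0$ (shrinking to ensure this is possible). Set $K_x:=\phi_x^{-1}(\Supp(\xi_x))$, a compact subset of $U_{\phi_x}$, and define $\eta_x\colon M\to[0,1]$ by $\eta_x|_{U_{\phi_x}}:=\xi_x\circ\phi_x$ and $\eta_x|_{M\setminus K_x}:=0$; the two definitions agree on $U_{\phi_x}\setminus K_x$, and since $\{U_{\phi_x},M\setminus K_x\}$ is an open cover of~$M$ on each element of which $\eta_x$ is $C^\infty$, Lemma~\ref{sammelsu}(f) (applied to the rough manifold atlas) gives $\eta_x\in C^\ell(M,\R)$.

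Next I would let $W_x:=\{y\in M\colon \eta_x(y)>0\}$, an open neighbourhood of~$x$ contained in $K_x\sub U_{\phi_x}\sub U_{j(x)}$. The family $(W_x)_{x\in M}$ is an open cover of~$M$; by paracompactness of~$M$, choose a locally finite open refinement $(Z_\alpha)_{\alpha\in A}$, and for each $\alpha$ select $x(\alpha)\in M$ with $Z_\alpha\sub W_{x(\alpha)}$. For each $j\in I$, let $A_j:=\{\alpha\in A\colon j(x(\alpha))=j\}$ and set
\[
g_j\;:=\;\sum_{\alpha\in A_j}(\eta_{x(\alpha)}\cdot \chi_\alpha),
\]
where $\chi_\alpha\in C^\infty(M,[0,1])$ is chosen via the same local construction so that $\chi_\alpha>0$ on a neighbourhood of $\wb{Z_\alpha}$ (shrunken if necessary to have compact closure in $W_{x(\alpha)}$) and $\Supp(\chi_\alpha)\sub W_{x(\alpha)}$. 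By local finiteness of the $Z_\alpha$, each sum is locally finite, so $g_j\in C^\ell(M,\R)$, and $\Supp(g_j)\sub U_j$. The sum $g:=\sum_{j\in I}g_j$ is locally finite, strictly positive everywhere (every $y\in M$ lies in some $Z_\alpha$, where $\chi_\alpha(y)\eta_{x(\alpha)}(y)>0$), and $C^\ell$. Setting $h_j:=g_j/g$ yields the desired $C^\ell$-partition of unity with $\Supp(h_j)\sub U_j$.

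The only delicate point is verifying that the locally defined bump $\xi_x\circ\phi_x$, extended by zero, really is $C^\ell$ on~$M$ across the frontier of $U_{\phi_x}$; this is handled by the compact-support argument above, which is precisely why finite-dimensionality of the modelling spaces (so that compactly supported smooth Euclidean bumps exist) matters. Everything else is routine paracompactness bookkeeping.
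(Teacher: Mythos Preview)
Your overall strategy---construct local $C^\ell$-bumps, pass to a locally finite refinement via paracompactness, normalize, then regroup by the original index set---is sound and close in spirit to the paper's proof (which organizes local finiteness through compact exhaustions of the $\sigma$-compact connected pieces rather than invoking a locally finite refinement abstractly). However, your bump construction has a genuine gap at points of the formal boundary $\partial^\circ M$. You require $\Supp(\xi_x)\sub V_{\phi_x}^0$, the interior of the chart range taken in~$\R^{d(x)}$. But for a rough manifold the transition maps preserve interiors, so if $x\in\partial^\circ M$ then $\phi_x(x)\notin V_{\phi_x}^0$ for \emph{every} chart $\phi_x$ around~$x$; hence no smooth $\xi_x$ can satisfy both $\xi_x(\phi_x(x))>0$ and $\Supp(\xi_x)\sub V_{\phi_x}^0$, and your ``shrinking'' cannot repair this. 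The correct argument (the paper's Lemma~\ref{cutoff}) uses local compactness of~$M$ rather than the interior of $V_{\phi_x}$: choose a compact neighbourhood $B\sub U_{\phi_x}$ of~$x$, then an open set $W\sub\R^{d(x)}$ with $V_{\phi_x}\cap W\sub\phi_x(B)$, and take $\xi_x$ with $\Supp(\xi_x)\sub W$; since $\xi_x\circ\phi_x$ vanishes on $U_{\phi_x}\setminus B$ and $B$ is closed in~$M$, the extension by zero is $C^\ell$.

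There is also a local-finiteness gap. You assert ``by local finiteness of the $Z_\alpha$, each sum is locally finite'', but you only arranged $\Supp(\chi_\alpha)\sub W_{x(\alpha)}$, and the family $(W_{x(\alpha)})_{\alpha\in A}$ has no reason to be locally finite even though $(Z_\alpha)_{\alpha\in A}$ is. The remedy is standard: apply a shrinking $(Z'_\alpha)$ with $\wb{Z'_\alpha}\sub Z_\alpha$ and demand $\Supp(\chi_\alpha)\sub Z_\alpha$. This is feasible because each $Z_\alpha$ lies in the single chart domain $W_{x(\alpha)}$ and has compact closure there, so $\chi_\alpha$ can be assembled from finitely many point-bumps (built as above) inside that chart. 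With these two corrections your argument goes through.
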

\begin{defn}\label{def-vbdl}
Let $M$ be a rough $C^\ell$-manifold
and $\cF$ be a set of locally convex spaces.
A rough $C^\ell$-manifold $E$, together with an $RC^\ell$-map
$\pi\colon E\to M$ and a vector space structure on $E_x:=\pi^{-1}(\{x\})$
for each $x\in M$,
is called a \emph{$C^\ell$-vector bundle over~$M$
with fibres in~$\cF$} if each $x\in M$ has an open neighbourhood
$U\sub M$ such that $E|_U:=\pi^{-1}(U)$ is trivializable,
i.e., there exists an $RC^\ell$-diffeomorphism
\[
\theta\colon E|_U\to U\times F
\]
for some $F\in\cF$
such that $\pr_2\circ\,\theta$
restricts to
vector space isomorphism $E_y\to F$
for all $y\in U$, and $\theta=(\pi|_{E|_U},\pr_2\circ \,\theta)$
(then $\theta$ is called a \emph{local trivalization} for~$E$).
If $\cF=\{F\}$ is a singleton in the preceding situation,
then~$E$ is called a $C^\ell$-vector bundle
\emph{with typical fibre~$F$}.
If $U=M$ can be chosen, then
$\theta\colon E\to M\times F$ is called
a \emph{global trivialization} for~$E$
and~$E$ is called $C^\ell$-trivializable.
\end{defn}
\begin{defn}\label{def-sections}
If $\pi\colon E\to M$ is a $C^\ell$-vector bundle
with fibres in~$\cF$, let $\Gamma_{C^\ell}(E)$
be the set of all $C^\ell$-sections of~$E$
(i.e., $RC^\ell$-maps $\sigma\colon M\to E$ with $\pi\circ\sigma=\id_M$).
Then $\Gamma_{C^\ell}(E)$ is a vector space under
pointwise operations and the compact-open $C^\ell$-topology\footnote{For rough $C^\ell$-manifolds
$M$ and $L$, the compact-open $C^\ell$-topology on
$RC^\ell(M,L)$ is defined as the initial topology with respect to the mappings
$RC^\ell(M,L)\to C(T^jM,T^jN)$, $\gamma\mto T^j\gamma$ to
spaces of continuous mappings between iterated tangent bundles,
endowed with the compact-open topology.}
induced by $RC^\ell(M,E)$ is a locally convex vector topology (see \cite{Rou}).
\end{defn}
\begin{rem}\label{basic-sectop}
In the situation of Definition~\ref{def-sections}, we have (see \cite{Rou}):\\[2.3mm]
(a) If $U\sub M$ is an open subset, then the restriction map
$\Gamma_{C^\ell}(E)\to\Gamma_{C^\ell}(E|_U)$ is continuous and linear.\\[2.3mm]
(b) If $(U_j)_{j\in J}$ is an open cover of~$M$,
then the linear map
\[
\Gamma_{C^\ell}(E)\to\prod_{j\in J}\Gamma_{C^\ell}(E|_{U_j}),\quad
\sigma\mto(\sigma|_{U_j})_{j\in J}
\]
is a topological embedding with closed image.\\[2.3mm]
(c) If $\theta\colon E|_U\to U\times F$ is a local trivialization,
then the map
\begin{equation}\label{representa}
\Theta \colon \Gamma_{C^\ell}(E|_U)\to RC^\ell(U,F),\quad \sigma\mto\pr_2 \circ\,
\theta\circ \sigma
\end{equation}
an isomorphism of topological vector spaces.
\end{rem}
\begin{defn}\label{defGammaK}
If $K\sub M$ is a closed subset in the situation of Definition~\ref{def-sections},
we endow the closed vector subspace
\[
\Gamma_{C^\ell_K}(E):=\{\sigma\in \Gamma_{C^\ell}(E)\colon (\forall x\in M\setminus K)\;\sigma(x)=0\}
\]
with the topology induced by $\Gamma_{C^\ell}(E)$.
\end{defn}
\begin{rem}\label{restr-cp}
(a) If $U\sub M$ is an open neighbourhood of~$K$ in~$M$,
then the restriction map
\[
\Gamma_{C^\ell_K}(E)\to \Gamma_{C^\ell_K}(E|_U)
\]
is an isomorphism of topological vector spaces
(apply Remark~\ref{basic-sectop}(b) to the open cover $\{U,M\setminus K\}$ of~$M$).
The inverse map takes $\sigma\in\Gamma_{C^\ell_K}(E|_U)$~to
\[
\wt{\sigma}\colon M\to E,\quad
x\mto\left\{
\begin{array}{cl}
\sigma(x) &\mbox{if $\,x\in U$;}\\
0\in E_x & \mbox{if $\, x\in M\setminus K$.}
\end{array}\right.
\]
(b) If $h\colon M\to \R$ is a $C^\ell$-function with support~$K$,
then the mutiplication operator
\[
\Gamma_{C^\ell}(E)\to\Gamma_{C^\ell_K}(E),\quad \sigma\mto h\cdot \sigma
\]
is linear and continuous
(as follows from parts (b) and (c) in Remark~\ref{basic-sectop}
and\linebreak
Lemma~\ref{sammelsu}(g)).
\end{rem}
\begin{defn}\label{cp-sect}
If $M$ is
locally compact,
we endow
\[
\Gamma_{C^\ell_c}(E):=\bigcup_{K\in\cK(M)}\Gamma_{C^\ell_K}(E)\vspace{-1mm}
\]
with the locally convex direct limit topology.
\end{defn}
\begin{rem}\label{basic-cpsec}
In the situation of Definition~\ref{cp-sect},
the inclusion mapping\linebreak
$\Gamma_{C^\ell_c}(E)\to\Gamma_{C^\ell}(E)$
is linear, and it is continuous (by the locally convex direct limit property) as
all its restrictions to inclusion maps $\Gamma_{C^\ell_K}(E)\to\Gamma_{C^\ell}(E)$
are continuous. Thus $\Gamma_{C^\ell_c}(E)$ is Hausdorff
and induces the given topology on each of its vector subspaces $\Gamma_{C^\ell_K}(E)$.
\end{rem}
Vector bundles can be pulled back along $C^\ell$-maps.
\begin{numba}\label{basic-pb}
Let $\ell\in\N_0\cup\{\infty\}$,
$M$ be a rough $C^\ell$-manifold and
$\pi\colon E\to M$ be a $C^\ell$-vector bundle
with fibres in a set~$\cF$ of locally convex spaces.
If $L$ is a rough $C^\ell$-manifold and $f\colon L\to M$
an $RC^\ell$-map (or a $C^\ell$-map if $M$ is a $C^\ell$-manifold
with rough boundary),
then
\[
f^*(E):=\bigcup_{x\in L}\{x\}\times E_{f(x)}\sub L\times E
\]
can be made a $C^\ell$-vector bundle over~$L$ with fibres in~$\cF$
in a natural way, with the projection $\pr_1\colon f^*(E)\to L$,
$(x,y)\mto x$ to the base~$L$; for each local trivialization
$\theta \colon E|_U\!\to U\!\times\! F$ of~$E$
and second component $\theta_2\colon E|_U\!\to F$,
the~map
\[
\theta^f\colon f^*(E)|_{f^{-1}(U)}\colon f^*(E)\cap (f^{-1}(U)\times E)
\to f^{-1}(U)\times F,\quad
(x,y)\mto (x,\theta_2(y))
\]
is a local trivialization for $f^*(E)$,
with inverse map $(x,z)\mto (x,\theta^{-1}(f(x),z))$
(see~\cite{Rou}).\\[2.3mm]
If $L$ is a subset of~$M$ here and $f\colon L\to M$ the inclusion map,
we also write $E|_L:=f^*(E)$. Then $\pr_2\colon E|_L\to E$, $(x,y)\mto y$
is an injective map and one may identify $E|_L$ with the image
of $\pr_2$, but we shall mainly do so when dealing explicitly with
an open subset $L$ of~$M$
(to retain the meaning of $E|_U$ in Definition~\ref{def-vbdl}).
\end{numba}
\begin{rem}\label{sectionops}
Let $\cF$ be a set of locally convex spaces
and assume that $M$ locally admits extension operators
for $F$-valued $C^\ell$-functions around~$L$ for all $F\in\cF$,
where $\ell$, $M$, and~$L$ are as in Definition~\ref{locml}.
Let $\pi\colon E\to M$ be a $C^\ell$-vector bundle
with fibres in~$\cF$.
If $x\in L$, $P\sub L$ is an $x$-neighbourhood
and $\theta\colon E|_U\to U\times F$
is a local trivialization for~$E$ with $x\in U$,
then there exists a local extension operator
\[
\cE_x\colon C^\ell(P_x,F)\to C^\ell(Q_x,F)
\]
for an open $x$-neighbourhood $P_x\sub L$ with $P_x\sub P\cap U$
and an open $x$-neighbourhood $Q_x\sub M$ with $Q_x\sub U$,
by Remark~\ref{locallocal}.
Let $E|_L$ be the pullback-bundle $j^*(E)$ for the inclusion map $j\colon L\to M$.
Then
\[
\theta_L:=\theta|_{(E|_L)|_{P_x}}\colon (E|_L)|_{P_x}\to P_x\times F
\]
is a local trivialization for~$E|_L$.
Let
\[
\Theta\colon \Gamma_{C^\ell}(E|_{Q_x})\to C^\ell(Q_x,F)\quad\mbox{and}\quad
\Theta_L\colon \Gamma_{C^\ell}((E|_L)|_{P_x})\to C^\ell(P_x,F)
\]
be the isomorphisms of topological vector spaces associated to
$\theta|_{E|_{Q_x}}$ and $\theta_L$, respectively
(analogous to~(\ref{representa})).
Then
\begin{equation}\label{thefx}
\cF_x\, :=\, \Theta^{-1}\circ \cE_x\circ \Theta_L\colon
\Gamma_{C^\ell}((E|_L)|_{P_x})\to\Gamma_{C^\ell}(E|_{Q_x})
\end{equation}
is a continuous linear map with $\cF_x(\sigma)|_{L\cap Q_x}=\sigma|_{L\cap Q_x}$
for all $\sigma\in \Gamma_{C^\ell}((E|_L)|_{P_x})$.
We shall also refer to $\cF_x$ as a \emph{local extension operator}
around~$x$.
\end{rem}
{\bf Proof of Proposition~\ref{aprop1}.}
For each $x\in L$,
there exists a local trivialization $\theta_x\colon E|_{U_x}\to U_x\times F$
of~$E$ such that $x\in U_x$;
if $M$ is locally compact,
we assume, moreover, that~$U_x$ is relatively compact in~$M$.
Using Remark~\ref{sectionops},
we find a local extension operator
\[
\cF_x\colon \Gamma_{C^\ell}((E|_L)|_{P_x})\to \Gamma_{C^\ell}(E|_{Q_x})
\]
for an open $x$-neighbourhood $P_x\sub U_x\cap L$
and an open $x$-neighbourhood $Q_x\sub U_x$ such that $Q_x\cap L\sub P_x$.
Then $(Q_x)_{x\in L}$, together with $M\setminus L$,
is an open cover of~$M$.
We let $(h_x)_{x\in L}$, together with $h$,
be a $C^\ell$-partition of unity for~$M$
such that $K_x:=\Supp(h_x)\sub U_x$ for all $x\in L$ and $\Supp(h)\sub M\setminus L$.
Set $J:=\{x\in L\colon h_x\not=0\}$.
Then $(h_x)_{x\in J}$, together with $h$, is a $C^\ell$-partition
of unity for~$M$. Notably, $(K_x)_{x\in J}$ is a locally finite family of closed
subsets of~$M$ (which are compact if~$M$ is locally compact).
Moreover, $(h_x|_L)_{x\in J}$ is a $C^\ell$-partition of unity for~$L$.\\[2.3mm]
(a) If $\sigma\in\Gamma_{C^\ell}(E|_L)$,
then the sum
\[
\cE(\sigma):=\sum_{x\in J} (h_j\cdot\cF_x(\sigma|_{P_x}))\hspace*{.2mm}\wt{\;}
\]
converges in $\Gamma_{C^\ell}(E)$.
In fact, by local finiteness of $(K_x)_{x\in J}$,
each $z\in M$ has an open neighbourhood~$U$ in~$M$ such that
\[
I:=\{x\in J\colon U\cap K_x\not=\emptyset\}
\]
is finite and
\[
\sum_{x\in J} (h_j\cdot\cF_x(\sigma|_{P_x}))\hspace*{.2mm}\wt{\;}\,|_U=\sum_{x\in I}
(h_j\cdot\cF_x(\sigma|_{P_x}))\hspace*{.2mm}\wt{\;}\,|_U
\]
a finite sum (as all other summands vanish),
from which convergence follows using Remark~\ref{basic-sectop}(b).
To see that the map
\[
\cE\colon \Gamma_{C^\ell}(E|_L)\to\Gamma_{C^\ell}(E)
\]
is continuous,
by Remark~\ref{basic-sectop}(b)
we only need to show that the previous restrictions
are continuous mappings of $\sigma$ to $\Gamma_{C^\ell}(E|_U)$.
It suffice to show continuity of the summand for each $x\in I$.
But this summand is the composition
\[
\Gamma_{C^\ell}((E|_L)|_{P_x})\to \Gamma_{C^\ell}(E|_U),
\;\;\sigma\mto (h_x\cdot \cF_x(\sigma))\hspace*{.2mm}\wt{\;}\, |_U,
\]
which is continuous by continuity of $\cF_x$, the multiplication operator
(analogous to Remark~\ref{restr-cp}(b)),
the extension map as in Remark~\ref{restr-cp}(a),
and a restriction map (see Remark~\ref{basic-sectop}(a)). Finally, observe that
$\cE$ is linear by construction. If $y\in L$ and $y\in K_x$ for some $x\in J$,
then $x\in Q_x\cap L$ and thus
\[
\cF_x(\sigma|_{P_x})(y)=\sigma(y).
\]
If $x\in J$ and $y\not\in K_x$, then $(h_x\cdot\cF_x(\sigma|_{P_x})\hspace*{.2mm}\wt{\;}(y)=0
=h_x(y)\sigma(y)$.
Hence
\[
\cE(\sigma)(y)=\sum_{x\in J}h_x(y)\sigma(y)=\sigma(y)
\]
and thus $\cE(\sigma)|_L=\sigma$.\\[2.3mm]
(b) If $\sigma\in \Gamma_{C^\ell_c}(E|_L)$, then
\[
\cF(\sigma):=\sum_{x\in J} (h_j\cdot\cF_x(\sigma|_{P_x}))\hspace*{.2mm}\wt{\;}
\]
is a finite sum an hence an element of $\Gamma_{C^\ell_c}(E)$.
By construction, the map
\[
\cF\colon \Gamma_{C^\ell_c}(E|_L)\to\Gamma_{C^\ell_c}(E)
\]
is linear. By the locally convex direct limit property, it will be continuous
if we can show that its restriction to a map
\[
\cF_K\colon \Gamma_{C^\ell_K}(E|_L)\to\Gamma_{C^\ell_c}(E)
\]
is continuous for each compact subset $K\sub L$. Now $I:=\{x\in J\colon K\cap K_x\not=\emptyset\}$
is a finite set, and thus $B:=\bigcup_{x\in I}K_x$ is a compact subset of~$M$.
The image of $\cF_K$ is contained in $\Gamma_{C^\ell_B}(E)$.
We therefore only need to show that $\cF_K$ is continuous as a map
\[
\cF_K\colon \Gamma_{C^\ell_K}(E|_L)\to\Gamma_{C^\ell_B}(E).
\]
But this map is a restriction of the continuous map~$\cE$ and hence continuous. $\,\square$\\[2.3mm]
{\bf Proof of Theorem~\ref{thm-locops}.}
As mentioned in the theorem, (a) can be found in~\cite{RaS}.
All of (b), (c), and (d) follow immediately
from Proposition~\ref{aprop1}.
In fact, $M$ locally admits extension operators for
$F$-valued $C^\ell$-functions around~$L$
in each case:
In case~(b), the local extension operators are provided
by Corollary~\ref{convex-ext}.
In~case~(c),
the local extension operators are provided by Corollary~\ref{extcorner}.
In case~(d),
the local extension operators are provided by~\ref{projextensions}.$\,\square$\\[2.3mm]
Before we prove Theorem~\ref{thm-mapmfd}, let us
recall an intrinsic description of
the smooth manifold structure on $C^\ell(M,N)$
(without recourse to the embedding into a fine box product
that was mentioned earlier).
\begin{numba}\label{def-map-mfd}
Let $\ell\in\N_0\cup\{\infty\}$
and $M$ be a paracompact, locally compact rough $C^\ell$-manifold.
Let $N$ be a $C^\infty$-manifold modelled on a locally convex space
such that~$N$ admits a local addition.
Then the smooth manifold structure
on $C^\ell(M,N)$ can be obtained as follows:\\[2.3mm]
Pick a local addition $\Sigma\colon U\to N$ for~$N$.
For $f\in C^\ell(K,N)$, we identify the space
\[
\Gamma_{C^\ell_c}(f^*(TN))
\]
of compactly supported $C^\ell$-sections of the pullback-bundle $f^*(TN)\to M$ with
the set
$\Gamma_f$ of all
$\tau\in C^\ell(M,TN)$
such that $\pi_{TN}\circ \tau=f$ and
\[
\Supp(\tau):=\wb{\{x\in M\colon \tau(x)\not=0_{f(x)}\}}\sub M
\]
is compact.
Transport the locally convex direct limit topology from $\Gamma_{C^\ell_c}(f^*(TN))$
to~$\Gamma_f$.
Then
\[
\cO_f:=\{\tau\in\Gamma_f\colon \tau(M)\sub \Omega\}
\]
is an open $0$-neighbourhood in~$\Gamma_f$.
We endow $C^\ell(M,N)$ with the final topology
with respect to the mappings
\[
\cO_f\to C^\ell(M,N),\quad \tau\mto\Sigma\circ \tau.
\]
Then the image~$\cO_f'$ of each of these maps becomes open in $C^\ell(M,N)$,
the map
\[
\phi_f\colon \cO_f\to\cO_f',\quad \tau\mto \Sigma\circ\tau
\]
becomes a homeomorphism, and $\{\phi_f^{-1}\colon f\in C^\ell(M,N)\}$
is an atlas of charts which defines a smooth manifold
structure on $C^\ell(M,N)$, modelled on the set
$\{\Gamma_f\colon f\in C^\ell(M,N)\}$ of locally convex spaces
(see \cite{Rou} for details). 
\end{numba}
{\bf Proof of Theorem~\ref{thm-mapmfd}.}
Consider the restriction map
\[
\rho\colon C^\ell(M,N)\to C^\ell(L,N),\quad \gamma\mto \gamma|_L.
\]
Let $\Sigma\colon U\to N$ be a local addition for~$N$.
Given $f\in C^\ell(M,N)$, let $\Gamma_f$, the open
set $\cO_f\sub \Gamma_f$, and the $C^\infty$-diffeomorphism
\[
\phi_f\colon \Omega_f\to\Omega_f'\sub C^\ell(M,N)
\]
be as in \ref{def-map-mfd}.
Define $\cO_{f|_l}\sub\Gamma_{f|_L}\sub C^\ell(L,N)$
and $\phi_{f|_L}\colon \cO_{f|_L}\to\cO_{f|_L}\sub C^\ell(L,N)$
analogously.
Then
\[
\gamma|_L\in \Omega_{f|_L}'
\]
for all $\gamma\in \Omega_f$ and
\[
(\phi_{f|_L})^{-1}\circ \rho\circ \phi_f
\]
equals the restriction map
\[
\Omega_f\to\Omega_{f|_L},
\]
which is a restriction of the restriction map
\[
\Gamma_f\to\Gamma_{f|_L},\quad\tau\mto \tau|_L
\]
which corresponds to the restriction map
\[
r\colon \Gamma_{C^\ell_c}(f^*(TN))\to\Gamma_{C^\ell_c}(f^*(TN)|_L)
=\Gamma_{C^\ell_c}((f|_L)^*(TN)),\;\;
\sigma\mto\sigma|_L.
\]
As the latter has a continuous linear right inverse by Theorem~\ref{thm-locops},
we deduce that~$\rho$ is a smooth submersion. $\,\square$
\begin{numba}
Let $M$ be a Hausdorff topological space
and $\cF$ be a set of locally convex spaces.
A Hausdorff topological space~$E$, together with a
continuous map
$\pi\colon E\to M$ and a vector space structure on $E_x:=\pi^{-1}(\{x\})$
for each $x\in M$,
is called a \emph{topological vector bundle} (or also:
\emph{$C^0$-vector bundle}) over~$M$
with fibres in~$\cF$ if each $x\in M$ has an open neighbourhood
$U\sub M$ such that $E|_U:=\pi^{-1}(U)$ is trivializable,
i.e., there exists a homeomorphism
\[
\theta\colon E|_U\to U\times F
\]
for some $F\in\cF$
such that $\pr_2\circ\,\theta$
restricts to
vector space isomorphism $E_y\to F$
for all $y\in U$, and $\theta=(\pi|_{E|_U},\pr_2\circ \,\theta)$.
If $\cF=\{F\}$ is a singleton in the preceding situation,
then~$E$ is called a topological vector bundle
\emph{with typical fibre~$F$}.
\end{numba}
\begin{rem}\label{fromCelltotop}
Replacing $C^\ell$ and $RC^\ell$ with $C^0$
and $M$ with~$X$,
Remark~\ref{basic-sectop}
remains valid for a topological
vector bundle
$\pi\colon E\to X$
over a Hausdorff topological space~$X$,
whose fibers
are locally connvex spaces,
by well-known facts concerning the compact-open
topology
(parts~(a), (b), and~(c)
follow from Remark~A.5.10, Lemma~A.5.11
and Lemma~A.5.3 in \cite{GaN}).
Likewise, Remark~\ref{restr-cp}
carries over (the part~(b)
follows from \cite[Lemma~A.5.2]{GaN}).
\end{rem}
\begin{numba}\label{locexttop}
Let $X$ be
a Hausdorff topological space and
$Y\sub X$ be a closed subset.
We say that $X$ \emph{locally admits extension
operators around~$Y$}
if,
for each $x\in Y$ and locally convex space~$F$,
each $x$-neighbourhood in~$Y$
contains an open $x$-neighbourhood
$P_x\sub Y$ for which there exists a continuous linear
operator
\begin{equation}\label{localoptop}
\cE_x\colon C(P_x,F)\to C(Q_x,F)
\end{equation}
for some open $x$-neighbourhood
$Q_x\sub X$
such that $Q_x\cap Y\subseteq P_x$
and
\[
\cE(\gamma)|_{Q_x\cap Y}=\gamma|_{Q_x\cap Y}
\]
for all $\gamma\in C(P_x,F)$
(using the compact-open topology
in~(\ref{localoptop})).
\end{numba}
\begin{prop}\label{aprop1top}
Let $X$ be a paracompact topological space
and $Y\sub X$ be a closed subset
such that $X$ locally admits extension operators
around~$Y$. If\linebreak
$E\to X$ is a vector bundle
whose fibres are locally convex spaces,
then we have:
\begin{itemize}
\item[\rm(a)]
The restriction map
\[
\Gamma_{C^0}(E)\to \Gamma_{C^0}(E|_Y)
\]
admits a continuous linear right inverse;
\item[\rm(b)]
If, moreover, $X$ is locally compact, then
the restriction map
\[
\Gamma_{C^0_c}(E)\to \Gamma_{C^0_c}(E|_Y)
\]
admits a continuous linear right inverse.
\end{itemize}
\end{prop}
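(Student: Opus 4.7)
The plan is to mimic the proof of Proposition~\ref{aprop1} step by step, replacing every appeal to $C^\ell$-constructions with its continuous counterpart. Concretely, for each $x\in Y$ I would first choose a local trivialization $\theta_x\colon E|_{U_x}\to U_x\times F_x$ with $F_x$ in the fibre set of~$E$; in case~(b), I also require $U_x$ to be relatively compact in~$X$, which is possible since~$X$ is locally compact. By the hypothesis of \ref{locexttop} (and the obvious topological analogue of Remark~\ref{locallocal}, obtained by composing with the continuous linear restriction map from Remark~\ref{fromCelltotop}), I may further shrink to obtain an open $x$-neighbourhood $P_x\sub U_x\cap Y$ in~$Y$, an open $x$-neighbourhood $Q_x\sub U_x$ in~$X$ with $Q_x\cap Y\sub P_x$, and a continuous linear extension operator $\cE_x\colon C(P_x,F_x)\to C(Q_x,F_x)$. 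Transporting via the isomorphisms between section spaces and function spaces from the analogue of Remark~\ref{basic-sectop}(c), exactly as in~(\ref{thefx}), I obtain a continuous linear local extension operator
\[
\cF_x\colon \Gamma_{C^0}((E|_Y)|_{P_x})\to \Gamma_{C^0}(E|_{Q_x})
\]
with $\cF_x(\sigma)|_{Q_x\cap Y}=\sigma|_{Q_x\cap Y}$.

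Next I would use that $X$ is paracompact Hausdorff to obtain a continuous partition of unity $(h_x)_{x\in J}$ (together with a further function $h$ supported in $X\setminus Y$) subordinate to the open cover $(Q_x)_{x\in Y}\cup\{X\setminus Y\}$, with $\Supp(h_x)\sub Q_x$ for each $x\in J$ and such that the family of supports is locally finite; this is the standard theorem on continuous partitions of unity, and no $C^\ell$-paracompactness is required. Writing $\tilde{\,\cdot\,}$ for the extension-by-zero operator from $\Gamma_{C^0_{Q_x\cap\Supp(h_x)}}(E|_{Q_x})$ into $\Gamma_{C^0}(E)$ (continuous linear by the $C^0$-analogue of Remark~\ref{restr-cp}(a)) I then set
\[
\cE(\sigma)\,:=\,\sum_{x\in J}\bigl(h_x\cdot \cF_x(\sigma|_{P_x})\bigr)\tilde{\;}
\]
for $\sigma\in\Gamma_{C^0}(E|_Y)$. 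Local finiteness ensures that around each point of~$X$ only finitely many summands are nonzero, so the sum converges pointwise and its restriction to any member of a suitable open cover is a finite sum; via the analogue of Remark~\ref{basic-sectop}(b) this gives a well-defined element of $\Gamma_{C^0}(E)$ and, by continuity of each summand (a composition of the continuous operations: restriction, $\cF_x$, multiplication by $h_x$ as in the $C^0$-analogue of Remark~\ref{restr-cp}(b), and extension by zero), a continuous linear map $\cE\colon \Gamma_{C^0}(E|_Y)\to\Gamma_{C^0}(E)$. The identity $\cE(\sigma)|_Y=\sigma$ follows exactly as in the proof of Proposition~\ref{aprop1}(a): for $y\in Y$ and $x\in J$ with $y\in\Supp(h_x)\sub Q_x$ one has $y\in Q_x\cap Y$, so $\cF_x(\sigma|_{P_x})(y)=\sigma(y)$, and the partition of unity property gives $\cE(\sigma)(y)=\sum_x h_x(y)\sigma(y)=\sigma(y)$. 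This settles~(a).

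For~(b), under the extra local compactness assumption I work with the same formula. If $\sigma\in\Gamma_{C^0_K}(E|_Y)$ for a compact $K\sub Y$, then the local finiteness of $(\Supp(h_x))_{x\in J}$ implies that only finitely many summands in the definition of $\cE(\sigma)$ can be nonzero on a neighbourhood of the compact set $\bigcup_{x\in I}\Supp(h_x)$, where $I:=\{x\in J\colon K\cap\Supp(h_x)\ne\emptyset\}$ is finite; since the relatively compact neighbourhoods $U_x$ force $B:=\bigcup_{x\in I}\wb{\Supp(h_x)}$ to be compact, we get $\cE(\sigma)\in\Gamma_{C^0_B}(E)$. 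Hence $\cE$ restricts to a linear map $\cF\colon \Gamma_{C^0_c}(E|_Y)\to\Gamma_{C^0_c}(E)$, and continuity follows from the universal property of the locally convex direct limit: for each compact $K\sub Y$ the induced map $\Gamma_{C^0_K}(E|_Y)\to\Gamma_{C^0_B}(E)$ is a continuous restriction of~$\cE$, hence continuous into $\Gamma_{C^0_c}(E)$ via the inclusion $\Gamma_{C^0_B}(E)\hookrightarrow\Gamma_{C^0_c}(E)$. As in part~(a), $\cF$ is a right inverse for the restriction map.

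The argument is essentially routine once the topological analogues of Remark~\ref{basic-sectop} and Remark~\ref{restr-cp} are in place (these are recorded in Remark~\ref{fromCelltotop}), so I do not anticipate a genuine obstacle; the one point that deserves care is checking that the extension-by-zero and multiplication-by-$h_x$ operations on continuous sections are indeed continuous in the compact-open topology, but this is exactly the content of \cite[Lemma~A.5.2]{GaN} and Remark~\ref{fromCelltotop}, and no new ideas beyond the proof of Proposition~\ref{aprop1} are needed.
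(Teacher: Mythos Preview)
Your proposal is correct and follows essentially the same approach as the paper: the paper's proof simply invokes Remark~\ref{fromCelltotop} and says to repeat the proof of Proposition~\ref{aprop1} with $X$ in place of~$M$, $Y$ in place of~$L$, and $\ell=0$, which is precisely what you have written out in detail.
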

\begin{proof}
In view of Remark~\ref{fromCelltotop},
we can repeat the proof of Proposition~\ref{aprop1}
with $X$ in place of~$M$,
$Y$ in place of $L$, and $\ell=0$.
\end{proof}
{\bf Proof of Proposition~\ref{use-dugu}.}
(a) Let $x\in Y$ and $P$ be an $x$-neighbourhood in~$Y$.
Since~$Y$ is locally compact, we find a compact $x$-neighbourhood
$P_x$ in~$Y$ such that $P_x\sub P$.
Let $Q_x$ be an open subset of~$X$ such that $Q_x\cap Y$
equals the interior $P_x^0$ of $P_x$ relative~$Y$.
Let $F$ be a locally convex space.
As $X$, being paracompact,
is normal and hence completely regular,
Corollary~\ref{unif-cor}
provides a continuous linear extension operator
\[
C(P_x,F)\to C(X,F).
\]
Composing the latter with the restriction map
$C(X,F)\to C(Q_x,F)$, we obtain a local extension
operator $C(P_x,F)\to C(Q_x,F)$ as in~\ref{locexttop}.
Thus Proposition~\ref{aprop1top}(a) applies.\\[2.3mm]
(b) Let $x\in Y$ and $P$ be an $x$-neighbourhood in~$Y$.
Since~$Y$ is metrizable, we find a closed 
$x$-neighbourhood~$A$ in~$Y$ such that $A\sub P$.
By hypothesis, there exists an $x$-neighbourhood
$Z$ in~$Y$ such that $(Z,d|_{Z\times Z})$
is complete. Then also $P_x:=Z\cap A$ is complete
in the metric induced by~$d$, being closed in~$Z$.
Moreover, $P_x\sub P$. Being complete, $P_x$ is closed
in~$X$.
Let $Q_x$ be an open subset of~$X$ such that $Q_x\cap Y$
equals the interior $P_x^0$ of $P_x$ relative~$Y$.
Let $F$ be a locally convex space.
Proposition~\ref{own-dug}
provides a continuous linear extension operator
\[
C(P_x,F)\to C(X,F).
\]
Composing the latter with the restriction map
$C(X,F)\to C(Q_x,F)$, we obtain a local extension
operator $C(P_x,F)\to C(Q_x,F)$ as in~\ref{locexttop}.
Thus Proposition~\ref{aprop1top}(a) applies.\\[2.3mm]
(c) If $X$ is, moreover, locally compact,
the proof of~(a) shows that
Proposition~\ref{aprop1top}(b) applies.\\[2.3mm]
(d) We can repeat the proof of Theorem~\ref{thm-mapmfd},
using part~(c) of the current proposition
in place of Theorem~\ref{thm-locops} and Proposition~\ref{aprop1}. $\,\square$
\begin{rem}
Let $F$ be a locally convex space,
$X$ be a topological space and $Y\sub X$
be a closed subset.
If a linear map
\[
\cE\colon C(Y,F)\to C(X,F)
\]
has the property
that
\begin{equation}\label{rangeprop}
\cE(\gamma)(X)\sub\conv \gamma(Y)
\mbox{ for each $\gamma\in C(Y,F)$,}
\end{equation}
then $\cE$ restricts to a continuous linear operator
\[
BC(Y,F)\to BC(X,F)
\]
with respect to the topology of uniform convergence
on domain and range (cf.\ \ref{review-dug}).
The property (\ref{rangeprop})
is satisfied by the extension operators
$C(Y,F)\to C(X,F)$ we have constructed,
in each of the following situations:
\begin{itemize}
\item[(a)]
Proposition~\ref{own-dug} (as stated in the conclusion).
\item[(b)]
Proposition~\ref{unif-case}
(as stated in the conclusion).
\item[(c)]
Corollary~\ref{cor-triv-bun},
if Proposition~\ref{aprop1top}
is applied in the proof of Proposition~\ref{use-dugu}
in appropriate form.
Namely, in the proof of Proposition~\ref{aprop1top}
(which re-uses the proof of Proposition~\ref{aprop1}),
let each $\theta_x$ be the global trivialization
$\pr_1\colon X\times F\to X$.
\end{itemize}
\end{rem}
\section{Density of test functions in mapping groups}\label{approxis}
In this section, we prove density of the set
of compactly supported
smooth functions
in various mapping groups.
\subsection*{Lie groups of mappings}
Let $G$ be a Lie group
modelled on a locally convex space~$F$,
with neutral element~$e$.
Let us recall various constructions
of Lie groups of $G$-valued mappings.
\begin{numba}
The group $C(K,G)$ of $G$-valued
continuous mappings on a compact topological space~$K$
can be turned in a Lie group modelled on $C(K,F)$,
endowed with the topology of uniform convergence,
as is well known;
more generally,
\[
C_K(X,G):=\{\gamma\in C(X,G)\colon \gamma|_{X\setminus K}=e\}
\]
is a Lie group modelled on $C_K(X,F)$ for each locally
compact topological space~$X$ and compact subset $K\sub X$
(see \cite{GCX}).
If $\phi\colon U\to V\sub F$
is a chart of $G$ with $e\in U$ and $\phi(e)=0$,
then $C_K(X,U):=\{\gamma\in C_K(X,G)\colon \gamma(K)\sub U\}$
is open in $C_K(X,G)$ and the map
\[
C_K(X,\phi)\colon C_K(X,U)\to C_K(X,V)\sub C_K(X,F),\quad
\gamma\mto \phi\circ\gamma
\]
is a chart for $C_K(X,G)$ around its neutral element,
the constant function~$e$ (see~\cite{Rou}).\footnote{In \cite{GCX},
we get this for small~$U$,
which would be good enough for the following proofs.}
\end{numba}
\begin{numba}
For $G$ and $F$ as before and locally compact space~$X$
which is not compact, let
$X^*:=X\cup\{\infty\}$ be the one-point compactification
of~$X$ and
$C_0(X,G)$ be the group of continuous functions $\gamma\colon X\to G$
which vanish at infinity in the sense that, for each
$e$-neighbourhood $U\sub G$, there is a compact set
$K\sub X$ with
\[
\gamma(X\setminus K)\sub U.
\]
Then
\[
\gamma^*\colon X^*\to G,\quad
\left\{\begin{array}{cl}
\gamma(x) &\mbox{ if $x\in X$;}\\
e &\mbox{ if $x=\infty$}
\end{array}
\right.
\]
is a continuous function and the map
\[
C_0(X,G)\to C(X^*,G)_*,\quad \gamma\mto \gamma^*
\]
is an isomorphism of groups (whose inverse takes
$\eta\in C(X^*,G)_*$ to $\eta|_X$),
where
\[
C(X^*,G)_*:=\{\eta\in C(X^*,G)\colon \eta(\infty)=e\}.
\]
If $\phi\colon U\to V\sub F$ is a chart for~$G$ with $e\in U$ and
$\phi(e)=0$, then
\[
C(X^*,\phi)(C(X^*,G)_*)=C(X^*,F)_*\cap C(X^*,V),
\]
whence $C(X^*,G)_*$ is a submanifold of $C(X^*,G)$,
modelled on the complemented vector subspace
$C(X^*,F)_*$ of $C(X^*,F)$,
and thus a Lie group.
As a consequence,
$C_0(X,G)$ is a Lie group modelled on $C_0(X,F)$;
for each chart~$\phi$ as before,
the subset $C_0(X,G)$ of $U$-valued functions
is open in $C_0(X,G)$~and
\[
C_0(X,\phi)\colon C_0(X,U)\to C_0(X,V)\sub C_0(X,F),\quad
\gamma\mto\phi\circ\gamma
\]
is a chart for $C_0(X,G)$.
\end{numba}
\begin{numba}
Let $\ell\in\N_0\cup\{\infty\}$.
If $\ell\geq 1$, let $M$ be a locally compact,
rough $C^\ell$-manifold;
if $\ell=0$, let $M$ be a locally compact
topological space and write $C^0$ for continuous functions.
If $\gamma\colon M\to G$ is a $C^\ell$-function,
we let
\[
\Supp(\gamma):=\overline{\{x\in M\colon \gamma(x)\not=e\}}\sub M
\]
be its support.
We let
\[
C^\ell_c(M,G)
\]
be the group of all $C^\ell$-functions $\gamma\colon M\to G$ with
compact support.
For each compact subset
$K\sub M$,
\[
C_K^\ell(M,G):=\{\gamma\in C^\ell(M,G)\colon \Supp(\gamma)\sub K\}
\]
is a Lie group modelled on $C_K^\ell(M,F)$,
endowed with the compact-open $C^\ell$-topology
(see \cite{GCX} for the $C^0$-case and ordinary
manifolds, \cite{Rou} for the generalization to rough manifolds).
If $\phi\colon U\to V\sub F$
is a chart of $G$ with $e\in U$ and $\phi(e)=0$,
then $C_K^\ell(M,U):=\{\gamma\in C_K^\ell(M,G)\colon \gamma(K)\sub U\}$
is open in $C_K^\ell(M,G)$ and the map
\[
C_K^\ell(M,\phi)\colon C_K^\ell(M,U)\to C_K^\ell(M,V)\sub C_K^\ell(M,F),\quad
\gamma\mto \phi\circ\gamma
\]
is a chart for $C_K^\ell(M,G)$.
If~$M$ is paracompact, then
$
C^\ell_c(M,G)
$
is a Lie group modelled on the locally convex direct limit
\[
C^\ell_c(M,F):=\dl\, C^\ell_K(M,F);\vspace{-1mm}
\]
for each $\phi$ as above, $C^\ell_c(M,U):=\{\gamma\in C^\ell_c(M,U)\colon \gamma(M)\sub U\}$
is an open identity neighbourhood in $C^\ell_c(M,G)$
and the map
\[
C^\ell_c(M,\phi)\colon C^\ell_c(M,U)\to C^\ell_c(M,V)\sub C^\ell_c(M,F),\quad
\gamma\mto\phi\circ\gamma
\]
is a $C^\infty$-diffeomorphism
(see \cite{Rou}; for $\sigma$-compact
locally compact spaces or ordinary manifolds, cf.\
already~\cite{GCX};
cf.\ also \cite{DIS}
for special cases).
Each of the groups $C^\ell_K(M,G)$ is a closed
subset of $C^\ell_c(M,F)$ and a submanifold,
as $C^\ell_c(M,\phi)$ takes $C^\ell_c(M,U)\cap C^\ell_K(M,G)=C^\ell_K(M,U)$
onto $C^\ell_c(M,V)\cap C^\ell_K(M,F)=C^\ell_K(M,V)$.
\end{numba}
\begin{numba}
Let $F$ be a locally convex space and $X$ be a locally compact space.
Note that $C_K(X,F)\sub C_0(M,F)$
for each compact set $K\sub X$
and that $C_0(X,F)$ induces the given topology
on $C_K(X,F)$.
In fact, the seminorms
$\|\cdot\|_{q,\infty}$ given~by
\[
\|\gamma\|_{q,\infty}:=\sup_{x\in M}\, q(\gamma(x))
\]
define the locally convex vector topology on $C_0(X,F)$,
for $q$ ranging thorugh the set of continuous seminorms
on~$F$. For $\gamma\in C_K(X,F)$, we have $\|\gamma\|_{q,\infty}=\|\gamma\|_{K.q}$,
from which the assertion follows.\\[2.3mm]
If $G$ is a Lie group modelled on~$F$,
then $C_K(X,G)\sub C_0(X,G)$ apparently
and this is a closed subset (the point evaluations
$C_0(X,G)\to G$, $\gamma\mto\gamma(x)$ being continuous)
and a smooth submanifold as $C_0(X,\phi)$ takes
\[
C_0(X,U)\cap C_K(X,G)=C_K(X,U)
\]
onto $C_0(X,V)\cap C_K(X,F)=C_K(X,V)$ and restricts
to the chart $C_K(X,\phi)$ of $C_K(X,G)$, for each chart
$\phi\colon U\to V$ of $G$ such that $e\in U$ and $\phi(e)=0$.
\end{numba}
\begin{numba}
Let $F$ be a locally convex space,
$d\in\N$, $\ell\in\N_0\cup\{\infty\}$,
$\Omega\sub\R^d$ be an open subset
and $\cW$ be a set of continuous functions
$f\colon \Omega\to\R$ which contains the constant function~$1$.
We write
\[
C^\ell_\cW(\Omega,F)
\]
for the vector space of all $C^\ell$-functions $\gamma\colon \Omega\to F$
such that
\[
\|\gamma\|_{q,f,k}
:=\max_{j\in\{0,\ldots,k\}} \,\sup_{x\in\Omega}\,|f(x)|
\,\|\delta^f_x\gamma\|_q\,<\,\infty
\]
for all $k\in\N_0$ with $k\leq \ell$, all $f\in\cW$
and all continuous seminorms~$q$ on~$F$,
using notation as in~(\ref{hompolsemi}
(with $E=\R^d$ and a fixed norm~$\|\cdot\|$ on~$E$),
cf.\ \cite[Definition~3.4.1]{Wal}.
If $Q\sub\Omega$ is an open subset,
we write $C^\ell_\cW(Q,F)$ as a shorthand
for $C^\ell_\cV(Q,F)$
with $\cV:=\{f|_Q\colon f\in \cW\}$.
Moreover, we abbreviate
\[
\|\gamma\|_{q,f,k}:=\|\gamma\|_{q,f|_Q,k}
\]
for $\gamma\in C^k(Q,F)$ and $q,f,k$ as above.
We let
\[
C^\ell_\cW(\Omega,F)^\sbull\sub C^\ell_\cW(\Omega,F)
\]
be the vector subspace of all $\gamma\in C^\ell_\cW(\Omega,F)$
with the following property:
For all $f\in\cW$, $\ve>0$ and each continuous
seminorm~$q$ on~$F$, there exists a compact subset $K\sub\Omega$
such that
\[
\|\gamma|_{\Omega\setminus K}\|_{q,f,k}\,<\,\ve.
\]
For each open $0$-neighbourhood $V\sub F$,
\[
C^\ell_\cW(\Omega,V)^\sbull
:=\{\gamma\in C^\ell_\cW(\Omega,F)^\sbull\colon \gamma(\Omega)\sub V\}
\]
is an open $0$-neighborhood in $C^\ell_\cS(\Omega,F)^\sbull$
(see \cite[Lemma~3.4.19]{Wal}).
If $G$ is a Lie group modelled on~$F$,
consider the set
\[
C^\ell_\cW(\Omega,G)^\sbull_{\ex}
\]
of all $C^\ell$-functions $\gamma\colon \Omega\to G$
for which there exists a chart $\phi\colon U\to V\sub F$ of~$G$
with $e\in U$ and $\phi(e)=0$, a compact set $K\sub \Omega$
and a function $h\in C^\infty_c(\Omega,\R)$
which is constant $1$ on a neighbourhood~$Q$ of~$K$ in~$\Omega$,
such that
\begin{equation}\label{cuto}
(1-h)\cdot (\phi\circ\gamma)|_{\Omega\setminus K}
\in C^\ell_\cW(\Omega\setminus K,F)^\sbull
\end{equation}
(see \cite[Definition~6.2.6]{Wal}).
The choices of~$\phi$ and $h$ do not play a role:
If $\gamma\in C^\ell_\cW(\Omega,G)$ and
$\phi\colon U\to V$ is any chart of~$G$ with $e\in U$ and $\phi(e)=0$,
then there exists a compact subset $K\sub\Omega$
such that $\gamma(\Omega\setminus K)\sub U$
and (\ref{cuto}) holds for all $h\in C^\infty_c(\Omega,\R)$
which are constant~$1$ on a neighbourhood of~$K$
(see \cite[Lemma~6.2.8(b)]{Wal}).
By \cite[Theorem 6.2.17]{Wal},
$C^\ell_\cW(\Omega,G)^\sbull_{\ex}$ can be given a
unique Lie group structure modelled on $C^\ell_\cW(\Omega,F)$
such that, for each chart $\phi\colon U\to V\sub F$ of~$G$
with $e\in U$ and $\phi(e)=0$,
there exists an open $0$-neighbourhood $V_0\sub V$
such that
$
\phi^{-1}\circ C^\ell_\cW(\Omega,V_0)^\sbull
$
is open in $C^\ell_\cW(\Omega,G)^\sbull_{\ex}$
and
\begin{equation}\label{thecha}
\phi^{-1}\circ C^\ell_\cW(\Omega,V_0)^\sbull\to C^\ell_\cW(\Omega,V_0)^\sbull,\quad
\gamma\mto\phi\circ \gamma
\end{equation}
is a chart for $C^\ell_\cW(\Omega,G)^\sbull_{\ex}$ around the neutral element.
\end{numba}
\begin{numba}\label{ext-weighted}
In the situation of~(\ref{cuto}),
the extension
\[
g\colon \Omega\to F,\quad
x\mto\left\{\begin{array}{cl}
(1-h(x))\phi(x) & \mbox{ if $x\in \Omega\setminus K$;}\\
0 &\mbox{ if $x\in Q$}
\end{array}\right.
\]
is $C^\ell$ and an element of $C^\ell_\cW(\Omega,F)^\sbull$.
In fact,
for $f\in\cW$, $k\in\N_0$ with $k\leq \ell$,
a continuous seminorm~$q$ on~$F$ and $\ve>0$,
there exists a compact set $L\sub \Omega\setminus K$~with
\[
\|f|_{(\Omega\setminus K)\setminus L}\|_{q,f,k}<\ve.
\]
But $K\cup L$ is compact in~$\Omega$
and $\|g|_{\Omega\setminus (K\cup L)}\|_{q,f,k}
=\|f|_{(\Omega\setminus K)\setminus L}\|_{q,f,k}$.
\end{numba}
\begin{numba}
We mention that $C^\ell_K(\Omega,F)\sub C^\ell_\cW(\Omega,F)^\sbull$
for each compact subset $K\sub\Omega$,
and that the inclusion map
\[
j\colon C^\ell_K(\Omega,F)\to C^\ell_\cW(\Omega,F)^\sbull
\]
is a topological embedding.
In fact, given $f\in\cW$, $k\in\N_0$ with
$k\leq\ell$ and a continuous seminorm~$q$ on~$F$,
we have
\[
\|\gamma\|_{q,f,k}\leq C\|\gamma\|_{C^k\!,K,q}\, <\,\infty.
\]
for each $\gamma\in C^\ell_K(\Omega,F)$
with $C:=\sup\{|f(x)|\colon x\in K\}$,
whence $j$ is continuous.
Choosing $f:=1\in\cW$, we get
\[
\|\gamma\|_{C^k\!,K,q}=\|\gamma\|_{q,f,k},
\]
entailing that~$j$ is an embedding.
\end{numba}
\begin{numba}
We mention that $C^\ell_K(\Omega,G)\sub C^\ell_\cW(\Omega,G)^\sbull_{\ex}$
for each compact subset $K\sub\Omega$,
and that the inclusion map
\[
j\colon C^\ell_K(\Omega,F)\to C^\ell_\cW(\Omega,F)^\sbull
\]
is a $C^\infty$-diffeomorphism onto a submanifold
of $C^\ell_\cW(\Omega,G)^\sbull_{\ex}$
which is a closed subset.
In fact, given $\gamma\in C^\ell_K(G)$
and a chart $\phi\colon U\to V\sub F$ of~$G$
with $e\in U$ and $\phi(e)=0$,
we have $\gamma(\Omega\setminus K)\sub\{e\}\sub U$
and
\[
(1-h)\cdot (\phi\circ \gamma|_{\Omega\setminus K})=0\in C^\ell_\cW(\Omega\setminus KI,F)
\]
for each $h\in C^\infty_c(\Omega,\R)$ which is $1$ on a neighbourhood
of~$K$. Thus $\gamma\in C^\ell_\cW(\Omega, G)^\sbull_{\ex}$.
Moreover, for $V_0\sub V$ as in (\ref{thecha})
and $U_0:=\phi^{-1}(V_0)$,
we have
\[
C^\ell_\cW(\Omega,V_0)^\sbull
\cap C^\ell_K(\Omega,F)=C^\ell_K(\Omega,V_0)
\]
and applying $\phi^{-1}$, we obtain the subset
$C^\ell_K(\Omega,U_0)$ of the domain~$D$ of the chart~(\ref{thecha})
which is contained in $D\cap C^\ell_K(\Omega,G)$
and equals the letter as $D$ consists of $U_0$-valued
functions and so $D\cap C^\ell(\Omega,G)\sub C^\ell_K(\Omega,U_0)$.
Thus $C^\ell_K(\Omega,G)$ is a submanifold of $C^\ell_\cW(\Omega,G)$
and as the chart~(\ref{thecha}) restricts to the chart
$C^\ell_K(\Omega,\phi|_{U_0}^{V_0})$ of $C^\ell_K(\Omega,G)$,
the submanifold structure copincides with the given Lie group
structure. Closedness of $C^\ell_K(\Omega,G)$ in $C^\ell_\cW(\Omega,G)^\sbull_{\ex}$
follows from the continuity
of the point evaluations $C^\ell_\cW(\Omega,G)^\sbull_{\ex}\to G$,
$\gamma\mto\gamma(x)$.
\end{numba}
\begin{numba}
In the preceding situation, with $\Omega:=\R^d$,
let us call
$\cW$ a set of \emph{BCR-weights}
if it
has the following properties:
\begin{itemize}
\item[(a)]
$f\geq 1$ for all $f\in \cW$;
\item[(b)]
For all $f_1,f_2\in\cW$, there exists $f\in \cW$
such that $f_1(x)\leq f(x)$ and $f_2(x)\leq f(x)$
for all $x\in\R^d$;
\item[(c)]
For each $f_1\in\cW$ there exists $f_2\in\cW$
such that
\[
(\forall\ve>0)(\exists n\in\N)\;\,
\|x\|\geq n\;\mbox{or}\;f_1(x)\geq n\;\Rightarrow\;
f_1(x)\leq\ve f_2(x).
\]
\end{itemize}
For a set $\cW$ of BCR-weights,
\cite[Definition~6.2.23]{Wal}
provides a group $\cS(\R^d,G,\cW)$
of $\cW$-rapidly decreasing smooth
$G$-valued functions on~$\R^d$,
which coincides with
the corresponding group considered in~\cite{BCR}
in the situation of the latter (i.e.,
when $G$ is a so-called LE-Lie group),
see \cite[Remark~6.2.29]{Wal}.
Moreover, $\cS(\R^d,G,\cW)$
is a Lie group
and
$\cS(\R^d,G,\cW)=C^\infty_\cW(\R^d,G)^\sbull_{\ex}$
actually, by~\cite[Lemma~6.2.28]{Wal}.
\end{numba}
We mention that weights $f\in\cW$
are not assumed continuous in \cite{Wal}
and may take the value~$\infty$;
likewise, BCR-weights are defined
in a more general setting in \cite[Definition~6.2.18]{Wal}
(including infinite values).
But the above special cases are
natural for our ends.
Notably, they subsume the Lie groups
\[
\cS(\R^d,G):=\cS(\R^d,G,\cW)
\]
of rapidly decreasing smooth $G$-valued functions
obtained by the choice $\cW:=\{x\mto (1+\|x\|_2^2)^n\colon n\in\N_0\}$
where $\|\cdot\|_2$ is the euclidean norm on~$\R^d$.
\subsection*{Density of {\boldmath$C^\infty_c(M,G)$} in {\boldmath$C_0(M,G)$}}
Let us start with the case of vector-valued functions.
We are mostly interested in the following results
in the case $r=\infty$.
\begin{la}\label{vectinc}
Let $F$ be a locally convex space.
\begin{itemize}
\item[\rm(a)]
If $X$ is a locally compact
space, then $F\otimes C_c(X,\R)$ is dense in $C_0(X,F)$.
\item[\rm(b)]
If $r\in \N_0\cup\{\infty\}$
and $M$ is a locally compact rough $C^r$-manifold,
then $F\otimes C^r_c(M,\R)$ is dense in $C_0(M,F)$.
\item[\rm(c)]
In the situation of~{\rm(b)},
$F\otimes C^r_c(M,\R)$ is also dense in $C(M,F)$,
endowed with the compact-open topology.
\item[\rm(d)]
If $K\sub M$ is compact and $L$ a compact neighbourhood of~$K$
in~$M$ in the situation of~{\rm(b)},
then
\[
C_K(M,F)\;\sub\;
\wb{F\otimes C^r_L(M,\R)}
\]
holds for the closure in $C(M,F)$.
\item[\rm(e)]
In~{\rm(a)}, $F\otimes C_c(X,\R)$
is dense in $C(M,F)$ in the compact-open topology.
\end{itemize}
\end{la}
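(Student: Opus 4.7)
The plan is to prove all five parts by the same finite-rank approximation via partitions of unity. I would handle case~(b) first as the representative case. Fix $\gamma\in C_0(M,F)$, a continuous seminorm~$q$ on~$F$, and $\ve>0$. By vanishing at infinity, choose a compact $K\sub M$ with $q(\gamma(x))<\ve/3$ for $x\in M\setminus K$. By continuity, each $x\in K$ has an open, relatively compact neighbourhood $V_x$ with $q(\gamma(y)-\gamma(x))<\ve/3$ for all $y\in V_x$; pick a finite subcover $V_{x_1},\ldots,V_{x_n}$ of~$K$. Since $M$ is locally compact, its modelling spaces are finite-dimensional and $C^r$-bump functions exist in charts, so a standard shrinking-plus-Urysohn construction yields a $C^r$-partition of unity $(h_i)_{i=1}^n$ with $h_i\in C^r_c(M,\R)$, $\Supp(h_i)\sub V_{x_i}$, $0\le h_i\le 1$, $\sum_i h_i\le 1$ and $\sum_i h_i=1$ on~$K$.

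Set $\eta:=\sum_i h_i\cdot\gamma(x_i)\in F\otimes C^r_c(M,\R)$. Writing
\[
\gamma(x)-\eta(x)\;=\;\bigl(1-{\textstyle\sum_i}h_i(x)\bigr)\gamma(x)\,+\,{\textstyle\sum_i}h_i(x)\bigl(\gamma(x)-\gamma(x_i)\bigr)
\]
and using that $h_i(x)>0$ forces $x\in V_{x_i}$, the second sum always has $q$-seminorm at most $\ve/3$; on~$K$ the first summand vanishes and off~$K$ its $q$-seminorm is at most $\ve/3$. Hence $\sup_{x\in M}q(\gamma(x)-\eta(x))<\ve$, which proves~(b). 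Part~(a) is the same argument with $r=0$; continuous partitions of unity subordinate to finite open covers of a compact subset of a locally compact Hausdorff space always exist, so no manifold structure is required. For~(c), given $\gamma\in C(M,F)$ and compact $K\sub M$, I would skip the $C_0$-truncation step and run the construction directly on~$K$: the above decomposition gives $\sup_{x\in K}q(\gamma(x)-\eta(x))<\ve$, which is precisely what the compact-open topology requires; part~(e) is the same argument with $r=0$. For~(d), given $\gamma\in C_K(M,F)$ and a compact neighbourhood $L$ of~$K$, I would additionally shrink the $V_{x_i}$ to lie inside $L^0$ so that $\Supp(h_i)\sub L$ and $\eta\in F\otimes C^r_L(M,\R)$; using $\gamma=0$ off~$K$, the decomposition yields $q(\gamma-\eta)<\ve$ on~$K$, $q(\eta)\le\ve$ on $L\setminus K$, and $\gamma=\eta=0$ off~$L$.

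The only delicate technical point is the existence of the $C^r$-partition of unity on a locally compact rough $C^r$-manifold that is not assumed paracompact, so Lemma~\ref{partu-ex} does not directly apply. This is not a real obstacle because I only require the partition subordinate to a \emph{finite} open cover of a \emph{compact} subset $K$, which reduces matters to $C^r$-bump functions in finite-dimensional charts combined with the shrinking lemma applied inside the normal space~$K$.
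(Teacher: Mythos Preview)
Your proof is correct and follows the same core idea as the paper: finite-rank approximation by sampling~$\gamma$ at finitely many points and combining via a partition of unity with compact supports. The organizational difference is that the paper proves (a) and (b) exactly as you do, but then bootstraps (c)--(e) from them rather than rerunning the argument: for (c) and (e) it observes that $C_0(M,F)$ is dense in $C(M,F)$ (via multiplication by a cutoff), and for (d) it approximates $\gamma\in C_K(M,F)$ by $F\otimes C^r(M,\R)$ using (c) and then multiplies by a fixed $C^r$-cutoff $h$ with $h|_K=1$, $\Supp(h)\sub L$. Your direct arguments for (c)--(e) are equally valid and arguably cleaner; the paper's route has the small advantage of avoiding a fresh estimate in each case. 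Your remark on paracompactness is apt: the paper sidesteps it by passing to a compact regular neighbourhood $L\supseteq K$, viewing $L$ as a full-dimensional submanifold, and invoking a $C^r$-partition of unity on the compact (hence paracompact) manifold~$L$, which is equivalent to your finite-cover reduction.
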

\begin{proof}
(a)
Let $q$ be a continuous seminorm on~$F$, $\ve>0$
and $\gamma\in C_0(X,F)$.
There is a compact subset $K\sub X$ such that $q(\gamma(x))\leq\ve$
for all $x\in X\setminus K$.
We let $L$ be a compact subset
of~$X$ such that $K\sub L^0$.
Each $x\in K$ has an open neighbourhood $U_x\sub L^0$
such that
\[
q(\gamma(y)-\gamma(x))\leq\ve\mbox{ for all $y\in U_x$.}
\]
We let $(h_x)_{x\in B}$, together with $g\colon L\to\R$,
be a continuous partition of unity on~$L$
such that $\Supp(h_x)\sub U_x$ for each $x\in K$
and $\Supp(g)\sub L\setminus K$.
Then $h_x|_{L^0}\in C_c(L^0,\R)$
for each $x\in K$; extending by~$0$,
we get a continuous function $\wt{h}_x\in C_c(X,\R)$.
Now
\[
\Phi:=\{x\in K\colon \Supp(h_x)\cap K\not=\emptyset\}
\]
is a finite set, whence
\[
\eta(y):=\sum_{x\in\Phi} \wt{h}_x(y)\gamma(x)
\]
for $y\in X$ defines a function $\eta=\sum_{x\in\Phi}\gamma(x)\otimes \wt{h}_x
\in F\otimes C_c(X,\R)$.
For $y\in K$, we have $\sum_{x\in\Phi}\wt{h}_x(y)=1$, whence
$\gamma(y)=\sum_{x\in \Phi}\wt{h}_x(y)\gamma(y)$ and thus
\[
q(\gamma(y)-\eta(y))\leq\sum_{x\in\Phi}h_x(y)q(\gamma(y)-\gamma(x))
\leq\ve,
\]
since $h_x(y)\not=0$ implies $y\in U_x$.
If $y\in X\setminus K$, then
\[
q(\gamma(y)-\eta(y))\leq q(\gamma(y))+q(\eta(y))\leq\ve+\sum_{x\in \Phi}
\underbrace{\wt{h}_x(y)q(\gamma(x))}_{\leq 2\ve \wt{h}_x(y)}
\leq 3\ve
\]
since $\wt{h}_x(y)\not=0$ implies $y\in U_x$
and thus $q(\gamma(x))\leq q(\gamma(y))+q(\gamma(y)-\gamma(x))\leq2\ve$.
By the preceeding, $\|\gamma-\eta\|_{q,\infty}\leq 3\ve$.\\[2.3mm]
(b) We can copy the proof of (a) with $X:=M$,
with the following changes: After replacing $L$ with $\wb{L^0}$,
we may assume that $L$ is a regular subset of~$M$ and hence
a full-dimensional submanifold.
Moreover, we choose the $h_x$ and $g$ in $C^r(L,\R)$,
i.e., we choose a $C^r$-partition of unity.\\[2.3mm]
(c) follows from (b),
using that the linear inclusion map $C_0(M,F)\to C(M,F)$
is continuous (as $\|\gamma\|_{K,q}\leq\|\gamma\|_{q,\infty}$
for each compact set $K\sub M$, continuous seminorm~$q$ on~$F$
and $\gamma\in C_0(M,F)$)
and has dense image.
In fact, for $K$ and $q$ as before and $\gamma\in C(M,F)$,
we find a compact subset $L\sub M$ with $K\sub L^0$
and a continuous function $h\colon M\to[0,1]$
with $h|_K=1$ and $\Supp(h)\sub L^0$.
Then $h\gamma\in C_L(M,F)\sub C_0(M,F)$
and $\|h\gamma-\gamma\|_{K,q}=0$.\\[2.3mm]
(d) Let $\gamma\in C_K(M,F)$.
By~(c), there exists a net $(\gamma_a)_{a\in A}$
in $F\otimes C^r(M,\R)$ such that $\gamma_a\to\gamma$ in $C(M,F)$.
Let $h\colon M\to \R$ be a$C^r$-function
such that $h|_K=1$ and $\Supp(h)\sub L$.
Then $h\gamma_a\in F\otimes C^r_L(M,\R)$
for all $a\in A$
and $h\gamma_a\to h\gamma=\gamma$ in $C(M,F)$,
using that the multiplication operator
$C(M,F)\to C(M,F)$, $\eta\mto h\eta$ is continuous
(see, e.g., \cite[Lemma~A.5.24(f)]{GaN}).\\[2.3mm]
(e) Arguing as in~(c), we see that (e) follows from~(a).
\end{proof}
We now pass to the group case.
The following lemma can be re-used later when we discuss
density of $C^\infty_c(M,G)$ in $C^\ell_c(M,G)$.
\begin{la}\label{conviacov}
Let $G$ be a Lie group modelled on a locally convex space~$F$
and $\ell\in\N_0\cup\{\infty\}$.
If $r>0$, let
$M$ be a locally compact, rough $C^\ell$-manifold;
if $r=0$, let $M$ be a locally compact topological space.
Let $K\sub M$ be a compact subset,
$\gamma\in C^\ell_K(M,G)$
and $(\gamma_a)_{a\in A}$
be a net in $C^\ell_K(M,G)$.
If each $x\in M$ has an
open neighbourhood $S_x\sub M$ with compact closure~$K_x\sub M$
such that $\gamma_a|_{K_x}\to\gamma|_{K_x}$
in $C^\ell(K_x,G)$, then $\gamma_a\to\gamma$
in $C^\ell_K(M,G)$.
\end{la}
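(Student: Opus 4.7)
The plan is to reduce to the case $\gamma=e$ (the constant function with value the neutral element of~$G$) by exploiting that $C^\ell_K(M,G)$ is a topological group, then to pass to a single chart of $C^\ell_K(M,G)$ around~$e$ and patch the local information on the $K_x$ using Lemma~\ref{sammelsu}(f).

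First I would note that since multiplication and inversion in~$G$ are smooth, they induce continuous maps on each Lie group $C^\ell(K_y,G)$ (by the pushforward construction in Lemma~\ref{sammelsu}(g), or simply because $C^\ell(K_y,G)$ is a topological group). Hence the hypothesis $\gamma_a|_{K_y}\to\gamma|_{K_y}$ transfers to $(\gamma_a\gamma^{-1})|_{K_y}\to e|_{K_y}$ in $C^\ell(K_y,G)$ for each $y\in M$. Because right translation by $\gamma^{-1}$ is a homeomorphism of the topological group $C^\ell_K(M,G)$, it suffices to prove $\gamma_a\gamma^{-1}\to e$. Thus I may assume $\gamma=e$. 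After replacing each $K_x$ by the closure $\overline{S_x}$ if necessary, I may also assume every $K_x$ is a regular subset of~$M$ (and thus a rough $C^\ell$-submanifold, if $\ell\geq 1$), so that $C^\ell(K_x,G)$ is meaningful and carries its Lie group structure.

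Fix a chart $\phi\colon U\to V\sub F$ of~$G$ with $e\in U$ and $\phi(e)=0$, so that $C^\ell_K(M,\phi)\colon C^\ell_K(M,U)\to C^\ell_K(M,V)$ is a chart of $C^\ell_K(M,G)$ around~$e$. To obtain $\gamma_a\to e$ it is enough to show: (i) eventually $\gamma_a\in C^\ell_K(M,U)$; and (ii) $\phi\circ\gamma_a\to 0$ in $C^\ell_K(M,F)$. For~(i), compactness of~$K$ yields finitely many $y_1,\ldots,y_n$ with $K\sub S_{y_1}\cup\cdots\cup S_{y_n}$. The hypothesis gives $\gamma_a|_{K_{y_i}}\to e|_{K_{y_i}}$ in $C^\ell(K_{y_i},G)$, hence in particular uniformly, so eventually $\gamma_a(K_{y_i})\sub U$ for each $i$; combined with $\gamma_a\equiv e$ on $M\setminus K$, this gives $\gamma_a(M)\sub U$ eventually.

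For~(ii), I would apply Lemma~\ref{sammelsu}(f) to the open cover $(S_y)_{y\in M}$ of~$M$: the linear map $C^\ell(M,F)\to\prod_{y\in M}C^\ell(S_y,F)$, $\eta\mto(\eta|_{S_y})_{y}$, is a topological embedding, so it suffices to check $\phi\circ\gamma_a|_{S_y}\to 0$ in $C^\ell(S_y,F)$ for every~$y$. Reading the assumed convergence $\gamma_a|_{K_y}\to e|_{K_y}$ in the Lie group $C^\ell(K_y,G)$ through the chart $C^\ell(K_y,\phi)$ yields $\phi\circ\gamma_a|_{K_y}\to 0$ in $C^\ell(K_y,F)$, and composing with the continuous restriction $C^\ell(K_y,F)\to C^\ell(S_y,F)$ from Lemma~\ref{sammelsu}(e) produces the desired local convergence. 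Since $\phi\circ\gamma_a$ is supported in~$K$ (because $\phi(e)=0$), the convergence in $C^\ell(M,F)$ is automatically convergence in the closed subspace $C^\ell_K(M,F)$, and applying $C^\ell_K(M,\phi)^{-1}$ finishes the argument. The only real subtlety I anticipate is the bookkeeping around~(ii)—linking the Lie group convergence on each $K_y$ to topological-vector-space convergence on~$S_y$ through the chart and then reassembling via the embedding of Lemma~\ref{sammelsu}(f); once this is set up carefully, the rest is formal.
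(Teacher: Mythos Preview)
Your proposal is correct and follows essentially the same route as the paper: reduce to showing $\gamma^{-1}\gamma_a\to e$ (the paper uses left translation, you use right, which is immaterial), pass through the chart $C^\ell_K(M,\phi)$, and verify convergence in $C^\ell_K(M,F)$ via an embedding into a product determined by the cover. The only cosmetic difference is that the paper extracts a finite subcover $\Phi\sub K$ at the outset and embeds $C^\ell_K(M,F)$ into the finite product $\prod_{x\in\Phi}C^\ell(K_x,F)$ (using also $M\setminus K$), whereas you use the full infinite product $\prod_{y\in M}C^\ell(S_y,F)$ from Lemma~\ref{sammelsu}(f); both work since convergence in a product is componentwise.
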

\begin{proof}
There is a finite subset $\Phi\sub K$
such that $K\sub\bigcup_{x\in\Phi}S_x$.
Then
\[
C^\ell(M,F)\to C^\ell(M\setminus K,F)\times \prod_{x\in\Phi}C^\ell(K_x,F),
\;
\gamma\mto (\gamma|_{M\setminus K},(\gamma|_{K_x})_{x\in\Phi})
\]
is a topological embedding (see \cite{Rou}),
entailing that
\[
\rho\colon C^\ell_K(M,F)\to\prod_{x\in\Phi}C^\ell(K_x,F),\;\,
\gamma\mto (\gamma|_{K_x})_{x\in \Phi}
\]
is a topological embedding.
Let $\phi\colon U\to V\sub F$ be a chart of~$G$
such that $e\in U$ and $\phi(e)=0$.
For each $x\in \Phi$, we find $a_x\in A$ such that
$(\gamma^{-1}\gamma_a)|_{K_x}\in C^\ell(K_x,U)$
for all $a\geq a_x$ in~$A$. Since $(A,\leq)$
is directed, there exists $b\in A$ such that $b\geq a_x$ for all $x\in\Phi$.
For all $a\in A$ with $a\geq b$, we now have
$(\gamma^{-1}\gamma_a)(K_x)\sub U$
for all $x\in\Phi$ and thus $\gamma^{-1}\gamma_a\in C^\ell_K(M,U)$.
Since
\begin{eqnarray*}
(\rho\circ C^\ell_K(M,\phi))(\gamma^{-1}\gamma_a)
& =& (C^\ell(K_x,\phi)((\gamma^{-1}\gamma_a)|_{K_x}))_{x\in\Phi}\\
&\to & (C^\ell(K_x,\phi)(e))_{x\in\Phi}=(0)_{x\in\Phi}=
(\rho\circ C^\ell_K(M,\phi))(e),
\end{eqnarray*}
we deduce that $\gamma^{-1}\gamma_a\to e$ in $C^\ell_K(M,G)$.
\end{proof}
\begin{prop}\label{ctscaseprop}
Let $G$ be a Lie group modelled
on a locally convex space~$F$,
and $r\in \N_0\cup\{\infty\}$.
If $r>0$, let $M$ be a locally compact, rough $C^r$-manifold.
If $r=0$, let $M$ be a locally compact topological space.
Then we have:
\begin{itemize}
\item[\rm(a)]
If $K$ and $L$ are compact subsets of $M$
such that $K\sub L^0$, then
\[
\wb{C^r_L(M,G)}\, \supseteq \, C_K(M,G)
\]
holds for the closure in $C_0(M,G)$,
which equals the closure in $C_L(M,G)$.
\item[\rm(b)]
$C^r_c(M,G)$ is dense in $C_0(M,G)$.
\end{itemize}
\end{prop}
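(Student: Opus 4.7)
The plan is to first verify the topological statement in (a), then establish the density in (a) via a base case plus a patching argument, and finally deduce (b) from (a) by a cut-off construction.

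The two closures in (a) coincide because $C_L(M,G)$ is closed in $C_0(M,G)$ (being the preimage of the constant map $e$ under the continuous restriction $C_0(M,G)\to C_0(M\setminus L^0,G)$) and inherits its topology as a submanifold: the chart $C_L(M,\phi)$ at $e$ is precisely the restriction of the chart $C_0(M,\phi)$ to $C_L(M,U)=C_0(M,U)\cap C_L(M,G)$, taking values in the closed vector subspace $C_L(M,F)\sub C_0(M,F)$ which carries the induced topology.

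For the approximation in (a), I first treat the base case: if $\gamma\in C_K(M,G)$ takes values in a single chart domain $U$ of~$G$, with $\phi\colon U\to V\sub F$, $\phi(e)=0$ and $V$ convex, then $f:=\phi\circ\gamma\in C_K(M,F)\sub C_L(M,F)$. By Lemma~\ref{vectinc}(d) applied to $L$, $f$ lies in the closure of $F\otimes C^r_L(M,\R)$ in $C(M,F)$; since both $f$ and the approximants $\tilde f_a$ have support in the compact set $L$, the convergence is uniform on~$M$, so for large $a$ one has $\tilde f_a(M)\sub V$, and then $\phi^{-1}\circ\tilde f_a\in C^r_L(M,G)$ tends to $\gamma$ via the chart $C_L(M,\phi)^{-1}$. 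For general $\gamma\in C_K(M,G)$, cover $K$ by finitely many open sets $W_1,\dots,W_n$ with $\overline{W_i}$ compact in $L^0$, chosen fine enough that $\gamma(\overline{W_i})\sub g_iU$ for some $g_i\in G$ (possible by uniform continuity on a compact neighborhood of~$K$), and take a $C^r$ partition of unity $(h_i)_{i=1}^n$ with $\mathrm{supp}(h_i)\sub W_i$ and $\sum_i h_i=1$ on a neighborhood of~$K$ (such partitions exist on a compact neighborhood of~$K$ in $L^0$, cf.\ Lemma~\ref{partu-ex}, and extend by zero to~$M$). An iterative chart-based construction produces a ``skeleton'' $\eta\in C^r_{L'}(M,G)$ for some compact $L'\sub L$ such that $\eta^{-1}\gamma$ takes values in a fixed chart around~$e$; the base case then yields $\zeta\in C^r_c(M,G)$ close to $\eta^{-1}\gamma$, and $\eta\zeta\in C^r_c(M,G)$ approximates $\gamma$. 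The main obstacle is this patching step: since $G$ is not a vector space, one cannot form a ``multiplicative partition of unity'' by convex combination, and the skeleton $\eta$ must instead be built one patch at a time, using the chart~$\phi$ to correct the current approximation on each $W_i$ after left translation by the appropriate $g_i$; Lemma~\ref{conviacov} is then invoked to upgrade the patch-wise approximations to convergence in $C^r_L(M,G)$.

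For (b), given $\gamma\in C_0(M,G)$ and a basic neighborhood of $\gamma$, use the $C_0$-decay to find a compact $K\sub M$ with $\gamma(M\setminus K)\sub U$ for a chart $\phi\colon U\to V$ at~$e$, and $\phi\circ\gamma|_{M\setminus K}$ uniformly small. Choose a compact neighborhood $L$ of $K$ with $K\sub L^0$ and a continuous cut-off $h\colon M\to[0,1]$ with $h=1$ on a neighborhood of~$K$ and $\mathrm{supp}(h)\sub L$. Define $\tilde\gamma:=\gamma$ where $h=1$ and $\tilde\gamma(x):=\phi^{-1}(h(x)\,\phi(\gamma(x)))$ on $M\setminus K$ (valid by convexity of~$V$, since $\gamma(x)\in U$ there); the two definitions agree on their overlap, yielding $\tilde\gamma\in C_L(M,G)$ uniformly close to $\gamma$ in $C_0(M,G)$. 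Finally, apply (a) with a compact $L'\supsetneq L$ satisfying $L\sub L'^0$ to approximate $\tilde\gamma$ by elements of $C^r_{L'}(M,G)\sub C^r_c(M,G)$, completing the proof.
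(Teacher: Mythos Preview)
For (a), your core mechanism — iterative chart-based patching with cut-offs, using Lemma~\ref{conviacov} to control convergence — matches the paper's. But the paper dispenses with your skeleton/base-case split: it iterates directly on~$\gamma$, setting $\gamma_0=\gamma$ and at stage~$j$ replacing $\phi_{x_j}\circ\gamma_{j-1}$ on $\overline{Q_{x_j}}$ by the blend $(1-h_{x_j})\cdot(\phi_{x_j}\circ\gamma_{j-1})+h_{x_j}\cdot\eta_{j,a}$ for a $C^r$-approximant $\eta_{j,a}$ supplied by Lemma~\ref{vectinc}(c), then choosing the index~$a$ (via Lemma~\ref{conviacov}) so that the resulting $\gamma_j$ stays inside the prescribed neighbourhood~$W$, becomes $C^r$ on $P_{x_j}$, and remains $C^r$ wherever $\gamma_{j-1}$ was. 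The final $\gamma_m\in W\cap C^r_L(M,G)$ is already the desired approximant, so your separate ``$\eta^{-1}\gamma$ chart-valued, then apply the base case'' step is superfluous — indeed, once your skeleton construction is spelled out it \emph{is} this iteration, and the place you leave vague (how the modification on $W_j$ preserves both the $C^r$-property on earlier patches and the required closeness) is exactly where the explicit blending formula and the step-by-step choice inside~$W$ do the work.

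For (b), your single cut-off $\tilde\gamma\in C_L(M,G)$ followed by one application of~(a) is simpler than the paper's route and works. The paper instead factors $\gamma=\gamma_1\gamma_2$ with $\gamma_1=\phi^{-1}\bigl((1-h)\phi\circ\gamma\bigr)$ the chart-valued tail (approximated in $C_0(M,G)$ via Lemma~\ref{vectinc}(b) in the model space) and $\gamma_2=\gamma_1^{-1}\gamma\in C_L(M,G)$ (approximated via~(a)); this buys an exact product decomposition at the cost of two approximation steps, whereas your approach absorbs the tail error by choosing $K$ large enough for the given neighbourhood.
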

\begin{proof}
As the point evaluations $C_0(M,G)\to G$, $\gamma\mto\gamma(x)$
are continuous, $C_L(M,G)$ is closed in
$C_0(M,G)$, whence the closures in~(a)
coincide.\\[2.3mm]
(a) Excluding a trivial case, we may assume that $r\geq 1$.
To enable re-using the proof, set $\ell:=0$.
Let $\gamma\in C_K^\ell(M,G)$
and $W$ be an open neighbourhood
of $\gamma$ in $C_L^\ell(M,G)$.
For each $x\in K$, there exists a chart
$\phi_x\colon U_x\to V_x$ of~$G$ such that $V_x$ is convex
and $\gamma(x)\in U_x$.
Let $Q_x$ be a relatively compact, open $x$-neighbourhood in~$L^0$
such that $\gamma(\wb{Q_x})\sub U_x$.
Let~$P_x$ be an
open $x$-neighbourhood
with compact closure $\overline{P_x}\sub Q_x$.
Let $h_x\colon Q_x\to\R$ be a $C^r$-function
with compact support $K_x:=\Supp(h_x)\sub Q_x$
such that $h_x(Q_x)\sub [0,1]$ and
$h_x|_{\overline{P_x}}=1$.
By compactness of~$K$,
we find a finite subset $\Phi\sub K$ such that $K\sub \bigcup_{x\in \Phi} P_x$.
Let $x_1,\ldots, x_m$ be the elements of~$\Phi$.
Let $\gamma_0:=\gamma$.
For $j\in\{1,\ldots,m\}$, we now construct
functions\footnote{We write $\lfloor B,U\rfloor=\{\eta\in C(M,G)\colon
\eta(B)\sub U\}$
if $B\sub M$ is compact and $U$
and open subset of~$G$, as in~\ref{theco}.}
\begin{equation}\label{inintersecvar}
\gamma_j\in W\cap\bigcap_{y\in \Phi}\lfloor \wb{Q_y},U_y\rfloor
\end{equation}
with the following properties:
\begin{itemize}
\item[(i)]
$\gamma_j|_{P_{x_j}}$ is $C^r$;
\item[(ii)]
If $U\sub M$ is open and $\gamma_{j-1}|_U$
is $C^r$, then also $\gamma_j|_U$ is~$C^r$.
\end{itemize}
Then $\gamma_m$ is~$C^r$ on $M\setminus K$
(like $\gamma_0=\gamma$)
and on $P_{x_1}\cup\cdots\cup P_{x_m}$,
and hence on all of~$M$. Thus $\gamma_m\in C^r_L(M,G)$.
Since $\gamma_m\in W$, we deduce that~$\gamma$ is in the closure
of $C^r_L(M,G)$.\\[2.3mm]
If $j\in\{1,\ldots,m\}$ and $\gamma_{j-1}$
has already been constructed, abbreviate $x:=x_j$.
Lemma~\ref{vectinc}(c)
provides a net $(\eta_{j,a})_{a\in A_j}$
in $F\otimes C^r(\wb{Q_x},\R)\sub C^r(Q_x,F)$~with
\[
\eta_{j,a}\to \phi_x\circ \gamma_{j-1}|_{\wb{Q_x}}
\]
in $C^\ell(\wb{Q_x},F)$.
Then
\[
\zeta_{j,a}\,:=\, (1-h_x)\cdot (\phi_x\circ\gamma_{j-1}|_{\wb{Q_x}})
+ h_x\cdot \eta_{j,a}
\]
is in $C^\ell_{K_x}(\wb{Q_x},F)$ and $C^r$ on each open subset
of~$\wb{Q_x}$ on which $\gamma_{j-1}$ is~$C^r$.
Moreover,
\[
\zeta_{j,a}\to \phi_x\circ \gamma_{j-1}|_{\wb{Q_x}}
\]
in $C^\ell(\wb{Q_x},F)$ as multiplication
operators are continuous linear.
We therefore find $a_j\in A_j$ such that
\[
\zeta_{j,a}(\wb{Q_x})\sub V_x\mbox{ for all $a\geq a_j$.}
\]
For $a\geq a_j$, we
define functions $\gamma_{j,a}\colon M\to G$ via
\[
\gamma_{j,a}(z):=\left\{
\begin{array}{cl}
\phi_x^{-1}(\zeta_{j,a}(z)) &\mbox{ if $z\in Q_x^0$;}\\
\gamma_{j-1}(z) &\mbox{ if $z\in M\setminus K_x$.}
\end{array}\right.
\]
Since $\zeta_{j,a}|_{Q_x^0\setminus K_x}=\phi_x\circ\gamma_{j-1}|_{Q_x^0\setminus K_x}$,
the function $\gamma_{j,a}$ is well defined and~$C^\ell$.
It is $C^r$ on each open set on which $\gamma_{j-1}$ is~$C^r$.
Moreover, $\Supp(\gamma_{j,a})\sub \Supp(\gamma_{j-1})\cup K_x
\sub L$.
Note that $\gamma_{j,a}\to\gamma_{j-1}$ in $C^\ell_L(M,G)$
as a consequence of Lemma~\ref{conviacov}
(which we apply with $S_y:=W_x$ for
each $y\in W_x$, and with open neighbourhoods $S_y\sub M\setminus K_x$
around points $y\in L\setminus W_x$). 
%
We therefore find a $b_j\geq a_j$ such that $\gamma_j:=\gamma_{j,b_j}$
satisfies~(\ref{inintersecvar}).
This completes the recursive construction and completes
the proof of~(a).\\[2.3mm]
(b) Let $\gamma\in C_0(M,G)$ and $W\sub C_0(M,G)$ be
a neighbourhood of~$\gamma$. Let $\phi\colon U\to V$
be a chart of~$G$ such that $e\in U$,
$\phi(e)=0$, and $V$ is convex.
There exists a compact subset $K\sub M$
such that $\gamma(M\setminus K)\sub U$
and a compact subset $L\sub M$ such that $K\sub L^0$.
Let $h\in C(M,[0,1])$ such that $h|_Q=1$ for an open neighbourhood
$Q$ of~$K$ in $L^0$, and $\Supp(h)\sub L^0$.
Then
\[
\zeta\colon M\to F,\quad x\mto
\left\{\begin{array}{cl}
(1-h(x))\phi(\gamma(x)) &\mbox{ if $x\in M\setminus K$;}\\
0 &\mbox{ if $x\in Q$}
\end{array}\right.
\]
is a continuous function; in fact, $\zeta\in C_0(M,V)$.
Thus $\gamma_1:=\phi^{-1}\circ\zeta\in C_0(M,G)$;
let $\gamma_2:=\gamma^{-1}\gamma$.
There are a $\gamma_1$-neighbourhood $W_1\sub C_0(M,G)$
and a $\gamma_2$-neighbourhood $W_2\sub C_0(M,G)$
such that $W_1 W_2\sub W$.
Lemma~\ref{vectinc}(b)
yields a net $(\zeta_a)_{a\in A}$ in $C^r_c(M,F)$
which converges to $\zeta$ in $C_0(M,F)$;
we may assume that $\zeta_a\in C_0(M,V)$ for all~$a$.
Then $\phi^{-1}\circ \zeta_a\to\phi^{-1}\circ\zeta=\gamma_1$
in $C_0(M,G)$. We therefore find $a$ such that
$\eta_1:=\phi\circ \zeta_a\in W_1$.
Then $\eta_1\in C^r_c(M,G)$.
By~(a), we find $\eta_2\in W_2\cap C^r_L(M,G)$.
Then $\eta_1\eta_2\in W_1W_2\sub W$
and $\eta_1\eta_2\in C^r_c(M,G)$.
\end{proof}
\subsection*{Density of {\boldmath$C^\infty_c(M,G)$} in {\boldmath$C^\ell_c(M,G)$}}
We now discuss density in Lie groups
of compactly supported $C^\ell$-functions.
\begin{prop}\label{densetf}
Let $r,\ell\in \N_0\cup\{\infty\}$ with $\ell\leq r$
and $G$ be a Lie group modelled on a locally convex space~$F$.
If $\ell=0$, let $M$ be a locally compact,
rough $C^r$-manifold;
if $\ell>0$, let
$M$ be a locally compact $C^r$-manifold with rough boundary.
Then the following holds:
\begin{itemize}
\item[\rm(a)]
If $K$ and $L$ are compact subsets of~$M$ such
that $K\sub L^0$,
then
\[
C^\ell_K(M,G)\, \sub \, \overline{C^r_L(M,G)}
\]
holds for the closure in $C^\ell_L(M,G)$.
\item[\rm(b)]
If $M$ is paracompact, then $C^r_c(M,G)$ is dense
in $C^\ell_c(M,G)$.
\end{itemize}
\end{prop}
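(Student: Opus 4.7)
Part~(b) reduces to~(a): given $\gamma\in C^\ell_c(M,G)$ with $\Supp(\gamma)\sub K$ and a neighbourhood $W\sub C^\ell_c(M,G)$ of~$\gamma$, pick a compact $L\sub M$ with $K\sub L^0$; since $C^\ell_L(M,G)$ is a closed submanifold of $C^\ell_c(M,G)$ carrying the induced topology, $W\cap C^\ell_L(M,G)$ is a $\gamma$-neighbourhood, and (a) provides $\tau\in C^r_L(M,G)\cap W\sub C^r_c(M,G)\cap W$. For~(a) itself, the case $\ell=0$ is Proposition~\ref{ctscaseprop}(a) (the $C^0$-topology on $C_L(M,G)$ being the compact-open topology used there), so assume $\ell\geq 1$, in which case $M$ is a locally compact $C^r$-manifold with rough boundary.

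I would mimic the recursive construction from the proof of Proposition~\ref{ctscaseprop}(a), upgrading compact-open approximation to $C^\ell$-approximation. Choose, for each $x\in K$, a chart $\phi_x\colon U_x\to V_x$ of~$G$ with $\gamma(x)\in U_x$ and $V_x$ convex, a chart $\psi_x\colon U_{\psi_x}\to V_{\psi_x}\sub\R^d$ of~$M$ around~$x$, and relatively compact neighbourhoods $P_x\sub\wb{P_x}\sub Q_x\sub U_{\psi_x}\cap L^0$ arranged so that $\psi_x(\wb{Q_x})$ is a compact convex regular subset of~$\R^d$ with dense interior; this choice is possible precisely because $M$ is a $C^r$-manifold with \emph{rough boundary}, so its chart ranges are locally convex. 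Pick $h_x\in C^r_c(Q_x,\R)$ with $h_x|_{\wb{P_x}}=1$, extract a finite subcover $\{P_{x_1},\dots,P_{x_m}\}$ of~$K$, and set $\gamma_0:=\gamma$. The key new input, replacing Lemma~\ref{vectinc}(c) in the old argument, is Lemma~\ref{densy}: applied to $\psi_x(\wb{Q_x})$ it provides a net in $F\otimes C^\infty(\psi_x(\wb{Q_x}),\R)$ converging to $(\phi_x\circ\gamma_{j-1})\circ\psi_x^{-1}$ in $C^\ell(\psi_x(\wb{Q_x}),F)$; pulling back via the $C^r$-diffeomorphism $\psi_x$ (Lemma~\ref{sammelsu}(e)) yields a net $\eta_{j,a}\in F\otimes C^r(\wb{Q_x},\R)$ with $\eta_{j,a}\to\phi_x\circ\gamma_{j-1}|_{\wb{Q_x}}$ in $C^\ell(\wb{Q_x},F)$.

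From here I would proceed exactly as in the cited proof. Form
\[
\zeta_{j,a}\,:=\,(1-h_x)\cdot(\phi_x\circ\gamma_{j-1}|_{\wb{Q_x}})+h_x\cdot\eta_{j,a},
\]
which still converges to $\phi_x\circ\gamma_{j-1}|_{\wb{Q_x}}$ in $C^\ell(\wb{Q_x},F)$ by continuity of pointwise multiplication (Lemma~\ref{sammelsu}(g)); for $a$ large enough that $\zeta_{j,a}(\wb{Q_x})\sub V_x$, define $\gamma_{j,a}$ by pasting $\phi_x^{-1}\circ\zeta_{j,a}$ on $Q_x^0$ with $\gamma_{j-1}$ on $M\setminus K_x$ (where $K_x:=\Supp(h_x)$); the two formulas agree on the overlap since $h_x$ vanishes off~$K_x$. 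Then $\gamma_{j,a}=\phi_x^{-1}\circ\eta_{j,a}$ on $\wb{P_x}$, so is $C^r$ there, and it coincides with $\gamma_{j-1}$ on $M\setminus K_x$, so $C^r$-smoothness is preserved wherever $\gamma_{j-1}$ already had it; moreover $\Supp(\gamma_{j,a})\sub\Supp(\gamma_{j-1})\cup K_x\sub L$. Convergence $\gamma_{j,a}\to\gamma_{j-1}$ in $C^\ell_L(M,G)$ follows from Lemma~\ref{conviacov}: on a small neighbourhood of a point in $K_x$ one has $\gamma_{j,a}=\phi_x^{-1}\circ\zeta_{j,a}$, which converges because the chart $C^\ell_K(\wb{Q_x},\phi_x)$ is a homeomorphism; on a small neighbourhood of any other point the functions are eventually equal to $\gamma_{j-1}$. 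Choosing $b_j$ so that $\gamma_j:=\gamma_{j,b_j}\in W\cap\bigcap_y\lfloor\wb{Q_y},U_y\rfloor$ closes the recursion, and $\gamma_m\in C^r_L(M,G)\cap W$ gives the claim. The main obstacle is arranging $\psi_x(\wb{Q_x})$ to be convex so that Lemma~\ref{densy} applies — this is what forces the rough-boundary hypothesis in the case $\ell\geq 1$; the remaining bookkeeping is an essentially verbatim copy of the proof of Proposition~\ref{ctscaseprop}(a).
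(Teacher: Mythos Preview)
Your proposal is correct and follows essentially the same route as the paper: reduce (b) to (a), quote Proposition~\ref{ctscaseprop}(a) for $\ell=0$, and for $\ell\geq 1$ rerun the recursive construction from that proof with the $Q_x$ chosen so that their chart images are convex regular subsets of~$\R^d$, replacing Lemma~\ref{vectinc}(c) by Lemma~\ref{densy} as the approximation input. The paper's proof says exactly this, only more tersely; your additional bookkeeping (the explicit form of $\zeta_{j,a}$, the appeal to Lemma~\ref{conviacov} for convergence) just spells out what ``repeat the proof'' entails.
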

\begin{proof}
(a) For the case $\ell=0$, see Proposition~\ref{ctscaseprop}(a).
Now assume that $\ell\geq 1$.
Let $\gamma\in C^\ell_K(M,G)$
and $W$ be an open neighbourhood
of $\gamma$ in $C^\ell_L(M,G)$.
We can repeat the proof
of Proposition~\ref{ctscaseprop}(a),
except that the sets~$Q_x$ have to be chosen more
carefully.
Having chosen~$U_x$ as in the cited proof,
let $\kappa_x\colon B_x\to D_x$ be a chart for~$M$
around~$x$ such that $\gamma(B_x)\sub U_x$
and~$D_x$ is a locally convex subset with dense interior
of~$\R^d$
for some $d\in\N_0$.
Now $\kappa_x(x)$ has a compact neighbourhood~$C_x$ in~$D_x$,
and~$C_x$ contains a convex $\kappa_x(x)$-neighbourhood
$E_x$. For some convex, open $\kappa_x(x)$-neighbourhood
$A_x$ in~$\R^d$, we have $A_x\cap D_x\sub E_x$.
Then $A_x\cap D_x=A_x\cap E_x$ is convex, open in~$D_x$,
and relatively compact in~$D_x$, whence $Q_x:=\kappa_x^{-1}(A_x\cap D_x)$
is an open $x$-neighbourhood with compact closure $\wb{Q_x}\sub B_x$.
Moreover, $Q_x$ is $C^r$-diffeomorphic to the convex subset $A_x\cap D_x\sub\R^d$
with dense interior.
We can now repeat the proof
of Proposition~\ref{ctscaseprop}(a),
with Lemma~\ref{densy} in place of Lemma~\ref{vectinc}(c).\\[2.3mm]
(b) is immediate from~(a).
\end{proof}
\subsection*{Density of {\boldmath$C^\infty_c(\Omega,G)$}
in {\boldmath$C^\ell_\cW(\Omega,G)^\sbull_{\ex}$}}
To obtain a density result, we need to impose
conditions on the set $\cW$ of weights.
\begin{numba}\label{situat}
In this section,
we endow $\R^d$ with some norm~$\|\cdot\|$,
and
let $\Omega \sub \R^d$ be an open subset.
We lat $\cW$ be a set of smooth functions
$f\colon \Omega \to \R$
such that
the constant function~$1$
belongs to~$\cW$ and the following conditions
are satisfied:
\begin{itemize}
\item[(a)]
$f(x)\geq 0$
for all $f\in \cW$
and $x\in \Omega$;
\item[(b)]
For each $x\in \Omega$,
there exists
$f\in \cW$ such that $f(x)>0$;
\item[(c)]
For all $N\in \N$,
$f_1,\ldots, f_N\in \cW$
and $k_1,\ldots, k_N\in \N_0$
with \mbox{$k_1,\ldots, k_N\leq r$,}
there exist $C>0$
and $f\in \cW$ such that
\[
\|\delta^{k_1}_xf_1\|_q \cdot\ldots\cdot \|\delta^{k_N}_xf_N\|_q \;\leq\;
C\, f(x)\quad\mbox{for all $x\in \Omega$,}
\]
with notation as in~(\ref{hompolsemi})
and using the norm $q\colon \R\to\R$, $y\mto|y|$.
\end{itemize}
\end{numba}
The conditions (a)-(c) imposed on $\cW$
imply a crucial property:
\begin{la}\label{isdnss}
If $\cW$ is a set of weights as in~{\rm\ref{situat}},
then
$C^\infty_c(\Omega,F)$ is dense
in $C^r_\cW(\Omega,F)^\sbull$
for each $r\in\N_0\cup\{\infty\}$
and locally convex space~$F$.
In fact, $F\otimes C^\infty_c(\Omega,\R)$
is dense.
\end{la}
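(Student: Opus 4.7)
The plan is a two-step cutoff-and-smooth argument. First I would truncate $\gamma\in C^r_\cW(\Omega,F)^\sbull$ to compact support using a smooth cutoff, controlling the tail via the defining $\sbull$-property; then I would approximate the compactly supported $C^r$-function that remains by elements of $F\otimes C^\infty_c(\Omega,\R)$ using the smoothing operators of Theorem~\ref{thmsmoo}. Both steps produce approximations of the desired form, so the refined assertion (density of $F\otimes C^\infty_c(\Omega,\R)$, not merely $C^\infty_c(\Omega,F)$) will come for free.

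For the cutoff step, it suffices to approximate with respect to a single seminorm $\|\cdot\|_{q,f,k}$ within tolerance $\ve>0$, since a basic open neighbourhood is a finite intersection of such seminorm-balls and is handled by a common, possibly larger, compact cutoff set. Given $\ve$, the $\sbull$-property provides a compact $K\sub\Omega$ with $\|\gamma|_{\Omega\setminus K}\|_{q,f,k}<\ve/(2C)$ for a constant $C$ to be determined. Choose $h\in C^\infty_c(\Omega,\R)$ with $0\le h\le 1$, $h\equiv 1$ on an open neighbourhood $V\supset K$, and $L:=\Supp(h)$ compact. Since $(1-h)\gamma$ vanishes on~$V$, the Leibniz formula for G\^ateaux derivatives
\[
\delta^j_x((1-h)\gamma)(y)\;=\;\sum_{i=0}^{j}\binom{j}{i}\,\delta^i_x(1-h)(y)\,\delta^{j-i}_x\gamma(y)
\]
applied on $\Omega\setminus V\sub\Omega\setminus K$, together with the uniform bounds $M_i:=\sup_x\|\delta^i_x h\|_{|\cdot|}<\infty$ (finite since $h$ is $C^\infty$ with compact support), yields
\[
\|(1-h)\gamma\|_{q,f,k}\;\le\; C\,\|\gamma|_{\Omega\setminus K}\|_{q,f,k}\,,\qquad C:=1+\sum_{i=1}^{k}\binom{k}{i}M_i\,,
\]
so that $\|\gamma-h\gamma\|_{q,f,k}<\ve/2$. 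The weight $f$ only couples with the $\gamma$-factor in each Leibniz term; the cutoff~$h$ need not belong to $\cW$, so condition~(c) of~\ref{situat} is not invoked here.

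For the smoothing step, $h\gamma\in C^r_L(\Omega,F)$. I would pick a compact exhaustion $(K_n)$ of $\Omega$ with $L\sub K_{n_0}$ and apply Theorem~\ref{thmsmoo} with the parameter $\ell:=k$ there (which suffices even when $r=\infty$, since $C^r\sub C^k$): by part~(c) of the theorem, $S_m(h\gamma)\in F\otimes C^\infty_{K_{n_0+1}}(\Omega,\R)$ for every $m\ge n_0$, and by part~(a), $S_m(h\gamma)\to h\gamma$ in $C^k(\Omega,F)$. Because both $h\gamma$ and all $S_m(h\gamma)$ with $m\ge n_0$ are supported in the fixed compact set $K_{n_0+1}$ on which $|f|$ is bounded by some~$B$,
\[
\|h\gamma-S_m(h\gamma)\|_{q,f,k}\;\le\; B\,\|h\gamma-S_m(h\gamma)\|_{C^k,K_{n_0+1},q}\;\longrightarrow\; 0\,,
\]
so for sufficiently large $m$ this is $<\ve/2$, and $\xi:=S_m(h\gamma)\in F\otimes C^\infty_c(\Omega,\R)$ satisfies $\|\gamma-\xi\|_{q,f,k}<\ve$. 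The only technically delicate point is the Leibniz bound in the cutoff step; it is, however, clean once one observes that the weight multiplies only the $\gamma$-factor while the derivatives of the smooth compactly supported cutoff are a priori uniformly bounded on $\Omega$. Everything else is a direct application of existing machinery: the $\sbull$-tail bound on the one side and Theorem~\ref{thmsmoo} on the other.
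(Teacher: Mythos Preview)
Your smoothing step via Theorem~\ref{thmsmoo} is fine, and the overall two-stage strategy is natural. The cutoff step, however, has a genuine circularity that you gloss over with the phrase ``a constant $C$ to be determined.'' In your Leibniz estimate
\[
\|(1-h)\gamma\|_{q,f,k}\;\le\;C\,\|\gamma|_{\Omega\setminus K}\|_{q,f,k},\qquad C=1+\sum_{i=1}^{k}\binom{k}{i}M_i,\quad M_i=\sup_x\|\delta^i_x h\|,
\]
the constant $C$ depends on the cutoff~$h$, which you can only choose \emph{after} fixing~$K$; yet you selected $K$ so that the tail is $<\ve/(2C)$. For a bounded domain~$\Omega$ (or whenever $K$ is forced close to~$\partial\Omega$), any admissible $h$ with $h\equiv 1$ on~$K$ and $\Supp(h)\subset\Omega$ must transition to~$0$ over a strip of width at most $\operatorname{dist}(K,\partial\Omega)$, so $M_i\gtrsim\operatorname{dist}(K,\partial\Omega)^{-i}\to\infty$ as $K$ exhausts~$\Omega$. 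There is no a~priori reason why $C\cdot\|\gamma|_{\Omega\setminus K}\|_{q,f,k}\to 0$; the $\sbull$-property controls the decay of the second factor but says nothing about its rate relative to the blow-up of~$C$. Your remark that condition~(c) of~\ref{situat} ``is not invoked here'' is in fact a warning sign: a proof that ignores a hypothesis should make you suspicious, and it is precisely the structural conditions on~$\cW$ that, in the argument the paper defers to \cite[V.7\,a)]{GWS}, govern how truncation and mollification interact with the weights.

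The paper does not give a self-contained argument; it invokes the scalar case from~\cite{GWS} and then adapts that proof to $\wt F$-valued functions, finally replacing the values $(T_{m_1,m_2}f)(x_i^{(m_4)})\in\wt F$ by nearby elements of~$F$ to land in $F\otimes C^\infty_c(\Omega,\R)$. If you want to salvage your route, note that for $\Omega=\R^d$ one can take dilated cutoffs $h_R(x)=h_0(x/R)$ with uniformly bounded $M_i$, so the circularity disappears; for general $\Omega$ you would need to replace the crude Leibniz bound by an argument that exploits the simultaneous decay of \emph{all} derivatives up to order~$k$ (and the weight structure) to compensate the steepening of~$h$.
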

\begin{proof}
If $F$ is finite-dimensional,
the assertion is immediate
from the scalar-valued
case treated in \cite[V.7\,a), p.\,224]{GWS}.
For the general case,
one first replaces $F$ with a completion $\wt{F}$
and reworks the proof of
\cite[V.7\,a), p.\,224]{GWS},
with minor modifications.\footnote{The completeness of $\wt{F}$
ensures that the relevant vector-valued (weak) integrals
exist. As one continuous
seminorm~$q$ on~$F$ suffices to describe a typical
neighbourhood of a given function in $C^r_\cW(\Omega,F)^\sbull$,
the proof goes through
if we replace the absolute value~$|\cdot|$~by~$q$.}
Then, in the last line of \cite[p.\,226]{GWS},
one replaces $(T_{m_1,m_2}f)(x_i^{(m_4)})\in \wt{F}$
by a nearby element in~$F$.
\end{proof}
\begin{prop}\label{densewt}
Given $d\in\N$ and $\ell\in\N_0\cup\{\infty\}$,
let $G$ be a Lie group modelled on a locally convex space~$F$;
let $\Omega$ be an open subset of $\R^d$ and $\cW\sub C^\infty(\Omega,\R)$
be a set of weights as in {\rm\ref{situat}.}
Then $C^\infty_c(\Omega,G)$ is dense
in the Lie group $C^\ell_\cW(\Omega,G)^\sbull_{\ex}$.
If $\Omega=\R^d$, $\ell=\infty$
and $\cW$ is, moreover,
a set of BCR-weights, then
$C^\infty_c(\R^d,G)$ is dense
in $\cS(\R^d,G,\cW)$.
Notably, $C^\infty_c(\R^d,G)$ is dense
in $\cS(\R^d,G)$.
\end{prop}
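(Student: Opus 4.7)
The plan is to factor an arbitrary $\gamma\in C^\ell_\cW(\Omega,G)^\sbull_{\ex}$ multiplicatively as $\gamma=\gamma_1\gamma_0$, where $\gamma_1$ lies in a chart-domain around~$e$ of the Lie group and~$\gamma_0$ has compact support, and then to approximate each factor separately by an element of $C^\infty_c(\Omega,G)$, combining the two approximations via continuity of the group multiplication.

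Fix a chart $\phi\colon U\to V\sub F$ of~$G$ with $e\in U$, $\phi(e)=0$, and $V$ convex; let $V_0\sub V$ be a convex open $0$-neighbourhood such that $\phi^{-1}\circ C^\ell_\cW(\Omega,V_0)^\sbull$ is a chart-domain around~$e$ for the Lie group. Using the definition of $C^\ell_\cW(\Omega,G)^\sbull_{\ex}$ together with the tail-vanishing property of the $\sbull$-space (applied to $(1-h_0)(\phi\circ\gamma)\in C^\ell_\cW(\Omega\setminus K_0,F)^\sbull$ for an initial cutoff), choose $K\sub\Omega$ compact so large that $\phi(\gamma(\Omega\setminus K))\sub V_0$, and pick $h\in C^\infty_c(\Omega,\R)$ with $0\leq h\leq 1$ equal to~$1$ on a neighbourhood~$Q$ of~$K$. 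Define $g\colon\Omega\to F$ by $g|_Q:=0$ and $g(x):=(1-h(x))\phi(\gamma(x))$ for $x\in\Omega\setminus K$ (the two formulas agree on $Q\setminus K$). By~\ref{ext-weighted}, $g\in C^\ell_\cW(\Omega,F)^\sbull$, and convexity of~$V_0$ with $0\in V_0$ ensures $g(\Omega)\sub V_0$. Set $\gamma_1:=\phi^{-1}\circ g\in C^\ell_\cW(\Omega,G)^\sbull_{\ex}$ and $\gamma_0:=\gamma_1^{-1}\gamma$; as $\gamma$ and $\gamma_1$ agree on $\Omega\setminus\Supp(h)$, we have $\gamma_0\in C^\ell_{\Supp(h)}(\Omega,G)$.

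Lemma~\ref{isdnss} supplies a net $g_n\in F\otimes C^\infty_c(\Omega,\R)$ with $g_n\to g$ in $C^\ell_\cW(\Omega,F)^\sbull$; since $C^\ell_\cW(\Omega,V_0)^\sbull$ is open and contains~$g$, eventually $g_n\in C^\ell_\cW(\Omega,V_0)^\sbull$, and $\gamma_n:=\phi^{-1}\circ g_n\in C^\infty_c(\Omega,G)$ (using $\phi^{-1}(0)=e$ to see $\Supp(\gamma_n)\sub\Supp(g_n)$), with $\gamma_n\to\gamma_1$ in $C^\ell_\cW(\Omega,G)^\sbull_{\ex}$ by the chart homeomorphism. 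For~$\gamma_0$, Proposition~\ref{densetf}(a) provides a net $\eta_n\in C^\infty_L(\Omega,G)$ (for any compact neighbourhood~$L$ of $\Supp(h)$) with $\eta_n\to\gamma_0$ in $C^\ell_L(\Omega,G)$; as $C^\ell_L(\Omega,G)$ sits as a submanifold of $C^\ell_\cW(\Omega,G)^\sbull_{\ex}$ (noted after~(\ref{thecha})), $\eta_n\to\gamma_0$ also in the Lie group. Continuity of multiplication then yields $\gamma_n\eta_n\to\gamma_1\gamma_0=\gamma$ in $C^\ell_\cW(\Omega,G)^\sbull_{\ex}$, with $\gamma_n\eta_n\in C^\infty_c(\Omega,G)$. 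The second assertion follows from the identification $\cS(\R^d,G,\cW)=C^\infty_\cW(\R^d,G)^\sbull_{\ex}$ recalled earlier, and the third is the special case $\cW=\{x\mto(1+\|x\|_2^2)^n\colon n\in\N_0\}$, which is easily verified to be a set of BCR-weights.

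The main technical obstacle is the enlargement of~$K$ in the splitting step: forcing $(1-h)(\phi\circ\gamma)$ to take values in the chart-neighbourhood~$V_0$ requires extracting uniform-in-$q$ smallness of $\phi(\gamma(x))$ outside a sufficiently large compact set from the $\sbull$-tail-vanishing of the extended function, and it is precisely this smallness that allows Lemma~\ref{isdnss} to convert approximation in the modelling locally convex space into approximation in the Lie group via the chart around~$e$.
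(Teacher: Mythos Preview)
Your proof is correct and follows essentially the same route as the paper's: factor $\gamma=\gamma_1\gamma_0$ with $\gamma_1$ in the chart domain and $\gamma_0$ compactly supported, approximate $\gamma_1$ via Lemma~\ref{isdnss} through the chart, approximate $\gamma_0$ via Proposition~\ref{densetf}(a), and multiply. Your write-up is in fact slightly more explicit than the paper's in two places (you spell out why $\gamma_0$ has compact support and why $g$ lands in $V_0$), though note a small informality at the end: the nets $(\gamma_n)$ and $(\eta_n)$ need not share an index set, so the combination should be phrased via the neighbourhood argument $W_1W_2\sub W$ as the paper does, rather than as a single limit $\gamma_n\eta_n\to\gamma$.
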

\begin{proof}
Let $\gamma\in C^\ell_\cW(\Omega,G)^\sbull_{\ex}$
and $W\sub C^\ell_\cW(\Omega,G)^\sbull_{\ex}$
be a $\gamma$-neighbourhood.
Let $\phi\colon U\to V$
be a chart of~$G$ such that $e\in U$
and $\phi(e)=0$.
Let $V_0\sub V$ be an open $0$-neighbourhood
such that $\phi^{-1}\circ C^\ell_{\cW}(\Omega,V_0)$
is an open identity neighbourhood in
$C^\ell_\cW(\Omega,G)^\sbull_{\ex}$
with $U_0:=\phi^{-1}(V_0)$;
after shrinking~$V_0$, we may assume that
$V_0$ is convex.
There exists a compact subset $K\sub \Omega$
such that $\gamma(\Omega\setminus K)\sub U_0$
and a $C^\infty_c$-function $h\colon \Omega\to\R$
with $h(\Omega)\sub[0,1]$
such that $h|_Q=1$ for some open neighbourhood~$Q$
of~$K$ in $\Omega$
and $(1-h)(\phi\circ\gamma)|_{\Omega\setminus K}\in C^\ell_\cW(\Omega\setminus K,F)$.
Then
\[
\zeta\colon \Omega\to F,\quad x\mto
\left\{\begin{array}{cl}
(1-h(x))\phi(\gamma(x)) &\mbox{ if $x\in \Omega\setminus K$;}\\
0 &\mbox{ if $x\in Q$}
\end{array}\right.
\]
is a $C^\ell$-function and in fact
$\zeta\in C^\ell_\cW(\Omega,V_0)^\sbull$,
by~\ref{ext-weighted}.
Thus $\gamma_1:=\phi^{-1}\circ\zeta\in C^\ell_\cW(\Omega,G)^\sbull_{\ex}$;
let $\gamma_2:=\gamma^{-1}_1\gamma$.
There are a $\gamma_1$-neighbourhood $W_1\sub C_\cW^\ell(\Omega,G)^\sbull_{\ex}$
and a $\gamma_2$-neighbourhood $W_2\sub C^\ell_\cW(\Omega,G)^\sbull_{\ex}$
such that $W_1 W_2\sub W$.
Lemma~\ref{isdnss}
yields a net $(\zeta_a)_{a\in A}$ in $C^\infty_c(\Omega,F)$
which converges to $\zeta$ in $C^\ell_\cW(\Omega,F)^\sbull$;
we may assume that $\zeta_a\in C^\infty_\cW(\Omega,V_0)$ for all~$a$.
Then $\phi^{-1}\circ \zeta_a\to\phi^{-1}\circ\zeta=\gamma_1$
in $C^\ell_\cW(\Omega,G)^\sbull_{\ex}$.
We therefore find $a$ such that
$\eta_1:=\phi\circ \zeta_a\in W_1$.
Then $\eta_1\in C^\infty_c(\Omega,G)$.
By Proposition~\ref{densetf}(a),
we find $\eta_2\in W_2\cap C^\infty_L(\Omega,G)$.
Then $\eta_1\eta_2\in W_1W_2\sub W$
and $\eta_1\eta_2\in C^\infty_c(M,G)$.
\end{proof}
\begin{rem}\label{denserapid}
Let $G$ be a Lie group modelled on a locally convex space~$F$.
If $\Omega=\R^d$ and $\cW\sub C^\infty(\R^d,\R)$
is a set of BCR-weights satisfying the conditions
(a)--(c) formulated in~\ref{situat},
then $C_\cW^\infty(\R^d,G)^\sbull_{\ex}=\cS(\R^d,G,\cW)$
as already mentioned, and thus $C^\infty_c(\R^d,G)$
is dense in $\cS(\R^d,G,\cW)$,
by Proposition~\ref{densewt}.
Note that $\cW:=\{x\mto (1+\|x\|_2^2)^n\colon n\in\N\}$
is a set of BCR-weights and satisfies the conditions~(a)--(c);
thus $C^\infty_c(\R^d,G)$ is dense in $\cS(\R^d,G)$.
\end{rem}
\section{More on density and the function spaces}\label{more-densy}
In this section,
we prove the following result concerning
tensor product realizations
for spaces of vector-valued functions
(which were previously known in special cases).
Moreover, we prove nuclearity for $C^\infty(M,F)$
under natural hypotheses (Proposition~\ref{nucity}).\\[2.3mm]
Recall that a Hausdorff topological space~$X$ is said to be a
\emph{$k_\R$-space} if for all
functions $f\colon X\to\R$,
continuity of~$f$ is equivalent to
continuity of~$f|_K$ for all compact subsets $K\sub X$.
Every locally compact space and every metrizable topological space
is a $k$-space\footnote{Subsets $A\sub X$ are closed
if and only if $A\cap K$ is closed for each compact set $K\sub X$.}
and hence a $k_\R$-space.
\begin{prop}\label{tensorcases}
Let $F$ be a complete locally convex space.
Then we have:
\begin{itemize}
\item[\rm(a)]
Using the compact-open topology,
\[
C(X,F)\cong F\tensor C(X,\R)
\]
for each topological space~$X$ which is a $k_\R$-space
and such that each compact subset $K\sub X$
has a closed, paracompact neighbourhood in~$X$;
for instance, $X$ may be any paracompact $k_\R$-space
$($e.g., any metrizable topological space$)$,
or any locally compact topological space.
\item[\rm(b)]
Using the compact-open $C^\ell$-topology,
\[
C^\ell(M,F)\cong F\tensor C^\ell(M,\R)
\]
holds
for each $\ell\in \N_0\cup\{\infty\}$ and
each locally compact $C^\ell$-manifold~$M$ with rough boundary.
\item[\rm(c)]
$
C^\ell_\cW(\Omega,F)^\sbull\cong F\tensor C^\ell_\cW(\Omega,\R)^\sbull
$
for each $\ell\in\N_0\cup\{\infty\}$,
each open subset $\Omega\sub\R^d$ and each set $\cW\sub C^\infty(\Omega,\R)$
of weights satisfying the conditions {\rm(a)--(c)}
in {\rm\ref{situat}}. Notably,
$\cS(\R^d,F)\cong F\tensor \cS(\R^d,\R)$.
\end{itemize}
\end{prop}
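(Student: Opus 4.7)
The plan is to adapt the strategy of the proof of Proposition~\ref{etensor} to each of the three settings. In every case let $Y$ denote the algebraic tensor product $F \otimes C^?(?,\R)$ (with $?$ standing for the relevant data). I will establish (i) $Y$ is dense in the $F$-valued function space, (ii) the $F$-valued function space is complete, and (iii) the injective linear map $h := \psi \circ \Lambda|_Y$ into $\cL(F'_\tau, C^?(?,\R))_\ve$ is a topological embedding via a seminorm identity of the form $\|h(\gamma)\|_{S,\|\cdot\|_{\mathrm{scalar}}} = \|\gamma\|_{\mathrm{vector}}$ with $S = \wb{B}_1^p(0)^\circ \sub F'$. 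Once (i)--(iii) are in hand, the isomorphism of topological vector spaces between the two dense subspaces $F\otimes C^?(?,\R)$ extends to the claimed isomorphism by Lemma~\ref{folklo}. The seminorm identities in (iii) will in each case follow from the Bipolar representation~(\ref{semnormpolar}), together with the fact that the appropriate derivatives commute with continuous linear functionals by the Chain Rule and Lemma~\ref{intoclo}, exactly as in Proposition~\ref{etensor}.

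For (a), density of $F\otimes C(X,\R)$ in $C(X,F)$ reduces by the hypothesis on neighbourhoods of compact sets together with a cut-off argument to the paracompact case, where a standard partition-of-unity argument (as in the proof of Lemma~\ref{vectinc}(e)) produces the required approximants. Completeness of $C(X,F)$ in the compact-open topology is where the $k_\R$-hypothesis is used: a Cauchy net converges pointwise to a function whose restriction to every compact subset is a uniform limit and hence continuous, so the pointwise limit is continuous by the $k_\R$-property. The seminorm identity reads $\|h(\gamma)\|_{S,\|\cdot\|_{K,|\cdot|}} = \|\gamma\|_{K,p}$, and is verified directly without any derivatives. For (b), I will cover~$M$ by charts and use Remark~\ref{basic-sectop}(b) (closed embedding into a product over a locally finite chart atlas) to reduce the density and completeness to Proposition~\ref{etensor} applied locally on each locally convex regular subset $V_\phi\sub\R^d$, using that $C^\ell$-partitions of unity exist (Lemma~\ref{partu-ex}). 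The seminorm identity is $\|h(\gamma)\|_{S,\|\cdot\|_{C^j,K}} = \|\gamma\|_{C^j,K,p}^\partial$ and proceeds via Lemma~\ref{toppartial}.

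For (c), density of $F\otimes C^\infty_c(\Omega,\R)\sub Y$ in $C^\ell_\cW(\Omega,F)^\sbull$ is given by Lemma~\ref{isdnss}. For completeness, a Cauchy net $(\gamma_a)$ converges pointwise to some $\gamma\colon \Omega\to F$, and convergence in the weighted seminorms $\|\cdot\|_{q,f,k}$ combined with completeness of $F$ shows $\gamma$ is $C^\ell$ and the weighted norms of $\gamma_a - \gamma$ tend to zero; the $\sbull$-condition is preserved by a uniform estimate on the tail of each $\gamma_a$ combined with the Cauchy condition. The seminorm identity $\|h(\gamma)\|_{S,\|\cdot\|_{|\cdot|,f,k}} = \|\gamma\|_{q,f,k}$ with $q=p$, $S=\wb{B}_1^p(0)^\circ$, is verified as in the previous two cases. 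The main obstacle I anticipate is case~(c): first the careful verification that $C^\ell_\cW(\Omega,F)^\sbull$ is in fact complete (as the $^\sbull$-condition must be shown closed under the weighted topology), and second the correct identification of the $\ve$-topology in the weighted setting, since one must show that the image of $\Lambda$ remains in the $\tensor$-completion $F\tensor C^\ell_\cW(\Omega,\R)^\sbull$, for which one combines the density argument with Lemma~\ref{use-seqco} in order to pass from elementary tensors to general Cauchy limits.
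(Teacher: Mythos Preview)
Your proposal is correct and follows essentially the same approach as the paper: reduce to the template of Proposition~\ref{etensor} by verifying density, completeness, and the seminorm identity in each setting; the paper packages the density and completeness inputs as separate lemmas (Lemmas~\ref{in-parac}, \ref{roughcpsupp}, and \ref{weicompl}) but the substance is what you describe. Your anticipated obstacle at the end is a red herring: once the two topologies on $Y=F\otimes C^?(?,\R)$ agree and both ambient spaces are complete, Lemma~\ref{folklo} gives the isomorphism directly---no appeal to Lemma~\ref{use-seqco} is needed.
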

The proofs of the following three lemmas,
which are mostly variants
of results from earlier sections,
have been relegated to the appendix
(Appendix~\ref{appB}).
\begin{la}\label{in-parac}
Let $F$ be a locally convex space and
$X$ be a topological space such that
every compact subset of~$X$ has a closed, paracompact
neighbourhood in~$X$ $($e.g., $X$ might be any
paracompact topological space, or any locally compact
topological space$)$.
Then $F\otimes C(X,\R)$ is dense in $C(X,F)$,
endowed with the compact-open topology.
\end{la}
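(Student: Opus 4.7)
The plan is to adapt the argument of Lemma~\ref{vectinc}(a)--(b), replacing the local compactness of~$X$ with the paracompactness of a closed neighbourhood of~$K$, which is all that is needed to extract a partition of unity and an extension by zero.

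Fix $\gamma\in C(X,F)$, a compact subset $K\sub X$, a continuous seminorm $q$ on~$F$, and $\ve>0$; we seek $\eta\in F\otimes C(X,\R)$ with $\|\gamma-\eta\|_{K,q}\leq \ve$. By hypothesis there exists a closed, paracompact neighbourhood~$Y$ of~$K$ in~$X$, so $K\sub Y^0$. For each $x\in K$, continuity of~$\gamma$ yields an open neighbourhood $U_x\sub Y^0$ of~$x$ in~$X$ such that $q(\gamma(y)-\gamma(x))\leq \ve$ for all $y\in U_x$. Then $\{U_x:x\in K\}\cup\{Y\setminus K\}$ is an open cover of~$Y$ (with the $U_x$ open in~$Y$).

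By paracompactness of~$Y$, we can choose a partition of unity $(h_j)_{j\in J}$ on~$Y$ subordinate to this cover, with $\Supp(h_j)\sub Y$ closed in~$Y$. Split $J=J_K\sqcup J_0$ so that, for $j\in J_K$, some $x(j)\in K$ has $\Supp(h_j)\sub U_{x(j)}\sub Y^0$, and, for $j\in J_0$, $\Supp(h_j)\sub Y\setminus K$. Because~$Y$ is closed in~$X$ and each $\Supp(h_j)$ with $j\in J_K$ is closed in~$Y$ and contained in~$Y^0$, it is closed in~$X$; hence extending by zero outside $\Supp(h_j)$ yields a continuous function $\wt{h}_j\in C(X,\R)$. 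By local finiteness of $(h_j)_{j\in J}$ and compactness of~$K$, the set
\[
\Phi\, :=\, \{j\in J_K\colon h_j|_K\not=0\}
\]
is finite (the $h_j$ with $j\in J_0$ vanish on~$K$ automatically).

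Now define
\[
\eta\;:=\;\sum_{j\in\Phi}\gamma(x(j))\otimes \wt{h}_j\;\in\; F\otimes C(X,\R).
\]
For every $y\in K$ we have $\sum_{j\in\Phi}h_j(y)=\sum_{j\in J_K}h_j(y)=1$, so
\[
q(\gamma(y)-\eta(y))\;=\;q\!\left(\sum_{j\in\Phi}h_j(y)(\gamma(y)-\gamma(x(j)))\right)\;\leq\;\sum_{j\in\Phi}h_j(y)\,q(\gamma(y)-\gamma(x(j)))\;\leq\;\ve,
\]
because $h_j(y)\not=0$ forces $y\in\Supp(h_j)\sub U_{x(j)}$ and hence $q(\gamma(y)-\gamma(x(j)))\leq \ve$. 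Thus $\|\gamma-\eta\|_{K,q}\leq\ve$, proving density of $F\otimes C(X,\R)$ in $C(X,F)$ for the compact-open topology. The only mildly delicate point is verifying that the $\wt{h}_j$ for $j\in\Phi$ really are continuous on all of~$X$; this is where closedness of~$Y$ is used, and it is unproblematic once $\Supp(h_j)\sub Y^0$ has been arranged.
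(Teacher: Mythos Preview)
Your proof is correct and follows essentially the same partition-of-unity approach as the paper. You are in fact slightly more careful on one point: you restrict the sum defining~$\eta$ to the finite index set~$\Phi$, which is what guarantees $\eta\in F\otimes C(X,\R)$, whereas the paper's version sums over all $x\in K$ without making this reduction explicit.
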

\begin{la}\label{roughcpsupp}
Let $F$ be a locally convex space,
$r,\ell\in\N_0\cup\{\infty\}$
with $\ell\leq r$, and~$M$ be a locally compact
$C^r$-manifold with rough boundary.
Then we have:
\begin{itemize}
\item[\rm(a)]
If $K\sub M$ is compact and $L$
a compact neighbourhood of~$K$ in~$M$, then
\[
C^\ell_K(M,F)\sub\overline{F\otimes C^r_L(M,\R)}
\]
holds for the closure in $C^r(M,F)$.
\item[\rm(b)]
$F\otimes C^r_c(M,\R)$ is dense in $C^\ell(M,F)$
in the compact-open $C^\ell$-topology.
\end{itemize}
\end{la}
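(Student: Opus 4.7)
\textbf{Proof proposal for Lemma~\ref{roughcpsupp}.}

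The strategy is to localize via charts and a $C^r$-partition of unity, thereby reducing to the density of $F\otimes C^\infty_c(R,\R)$ in $C^\ell(R,F)$ for closed convex regular subsets $R\sub\R^d$ with dense interior, established in Lemma~\ref{densy}. In this sense the lemma is a genuine manifold analogue of Lemma~\ref{densy}, and is parallel to the $C^0$-version in Lemma~\ref{vectinc}(d).

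For part~(a), let $\gamma\in C^\ell_K(M,F)$. Since $M$ is a locally compact $C^r$-manifold with rough boundary, every $x\in K$ admits a chart $\kappa_x\colon U_x\to V_x\sub\R^{d_x}$ with $V_x$ a closed convex regular subset of $\R^{d_x}$ with dense interior and $\wb{U_x}$ compact in~$L^0$; such $V_x$ is obtained by shrinking an arbitrary rough-boundary chart around~$x$ and intersecting the image with a small closed convex ball. Pick a finite subcover $\kappa_1,\ldots,\kappa_n$ of $K$ and a $C^r$-partition of unity $(h_i)_{i=0}^n$ on~$M$ subordinate to $(M\setminus K,U_1,\ldots,U_n)$ with $\Supp(h_i)$ compact in $U_i$ for $i\geq 1$ (available by Lemma~\ref{partu-ex}). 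Since $\Supp(h_0)\sub M\setminus K$ and $\gamma$ vanishes off~$K$, we have $h_0\gamma=0$, whence $\gamma=\sum_{i=1}^n h_i\gamma$, and each summand has compact support in~$U_i$. Transporting $h_i\gamma$ via $\kappa_i$ gives $\wt{\gamma}_i\in C^\ell(V_i,F)$ of compact support, which by Lemma~\ref{densy} is a limit in the compact-open $C^\ell$-topology of elements $\zeta_i^{(\alpha)}\in F\otimes C^\infty_c(V_i,\R)$. Pulling $\zeta_i^{(\alpha)}$ back via $\kappa_i$ and extending by zero yields an element of $F\otimes C^r_L(M,\R)$, and continuity of these operations (Lemma~\ref{sammelsu}(e),(g)), together with the embedding property of Lemma~\ref{sammelsu}(f) applied to the open cover $\{U_1,\ldots,U_n,M\setminus K\}$, shows that $\sum_i \zeta_i^{(\alpha)}\circ\kappa_i$ can be arranged to converge to $\sum_i h_i\gamma=\gamma$ in $C^\ell(M,F)$.

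For part~(b), a cutoff reduces to part~(a). Given $\gamma\in C^\ell(M,F)$ and a basic compact-open $C^\ell$-neighbourhood~$W$ of~$\gamma$ determined by a compact set $K\sub M$, finitely many continuous seminorms on~$F$, and $\ve>0$, pick a compact neighbourhood~$L$ of~$K$ and a $C^r$-function $h\colon M\to[0,1]$ supported in~$L$ with $h|_K=1$ (Lemma~\ref{partu-ex}). Then $h\gamma\in C^\ell_L(M,F)$ agrees with $\gamma$ on~$K$, so any element of $F\otimes C^r_L(M,\R)$ sufficiently close to $h\gamma$ in $C^\ell(M,F)$, as furnished by part~(a), also lies in~$W$; such elements belong to $F\otimes C^r_c(M,\R)$.

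The main obstacle is the partition-of-unity patching in part~(a) while tracking the compact-open $C^\ell$-seminorms of the aggregated error. This boils down to the continuity of multiplication by a fixed $C^r$-function, chart pullback, and restriction, all of which are supplied by Lemma~\ref{sammelsu}, combined with the small input perturbations from Lemma~\ref{densy}; no genuinely new analytic estimate is required beyond what is already in the toolbox developed earlier in the paper.
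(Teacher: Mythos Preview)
Your partition-of-unity strategy is a genuine alternative to the paper's route. The paper does \emph{not} localize and sum; instead it specializes the recursive smoothing procedure from the proofs of Propositions~\ref{ctscaseprop} and~\ref{densetf} to $G=F$ with $\phi_x=\id_F$: one covers $K$ by finitely many small chart neighbourhoods $Q_{x_1},\dots,Q_{x_m}$ with convex image, sets $\gamma_0:=\gamma$, and for each $j$ modifies $\gamma_{j-1}$ only on $Q_{x_j}$ by blending it (via a cutoff $h_{x_j}$) with an approximant from Lemma~\ref{densy}. The final $\gamma_m\in W\cap C^r_L(M,F)$ is then shown, by a short separate induction, to have $\gamma_m(M)$ of finite-dimensional span, whence $\gamma_m\in F\otimes C^r_L(M,\R)$. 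The paper's route is economical because it reuses verbatim the machinery built for Lie-group targets; yours is more direct and closer in spirit to the $C^0$-argument of Lemma~\ref{vectinc}(d).

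There is, however, a real gap in your extension-by-zero step. Even if one grants a compact convex $V_i$ (your chart description ``intersect the image with a small closed ball'' needs the care of the construction in the proof of Proposition~\ref{densetf} to actually produce a convex set, and then $U_i$ is compact rather than open), the approximants $\zeta_i^{(\alpha)}\in F\otimes C^\infty_c(V_i,\R)$ supplied by Lemma~\ref{densy} are obtained as restrictions of functions on an enlarged domain $t_kV_i^0$; nothing forces them to vanish on the part of $\partial V_i$ corresponding under $\kappa_i$ to the topological boundary $\partial U_i$ in~$M$. Hence the zero-extension of $\zeta_i^{(\alpha)}\circ\kappa_i$ to~$M$ is not $C^r$ in general, and the displayed element need not lie in $F\otimes C^r_L(M,\R)$. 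The fix is routine: choose a further cutoff $g_i\in C^r_c(U_i^0,\R)$ with $g_i\equiv 1$ on a neighbourhood of $\Supp(h_i)$ (possible since $\Supp(h_i)\sub U_i^0$ and $M$ is locally compact); then $g_i\cdot(\zeta_i^{(\alpha)}\circ\kappa_i)$ extends by zero to an element of $F\otimes C^r_L(M,\R)$, and still converges to $g_i\cdot(h_i\gamma)=h_i\gamma$ by continuity of the multiplication operator. With this adjustment (and working with the open interiors $U_i^0$ in the partition of unity), your argument goes through; part~(b) is fine as written.
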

If $F$ is a locally convex space and $q$ a continuous seminorm on~$E$,
then $\|x+q^{-1}(\{0\})\|_q:=q(x)$ defines
a norm on $F_q:=F/q^{-1}(\{0\})$.
We let $\wt{F}_q$ be a completion of $F_q$ with $F_q\sub \wt{F}_q$
and write $\|\cdot\|_q$ also for the norm on $\wt{F}_q$.
Let
\[
\alpha_q\colon F\to F_q,\quad x\mto x+q^{-1}(\{0\})
\]
be the canonical map and $\wt{\alpha}_q:=j_q\circ\alpha_q$,
where $j_q\colon F_q\to\wt{F}_q$ is the inclusion map.
For all continuous seminorms $p,q$ on~$F$ with
$p\leq q$ (in the sense that $p(x)\leq q(x)$
for all $x\in F$), the linear map
\[
\alpha_{p,q}\colon F_q\to F_p,\quad x+q^{-1}(\{0\})\to x+p^{-1}(\{0\})
\]
is continuous and has a unique continuous linear extension
$\wt{\alpha}_{p,q}\colon \wt{F}_q\to\wt{F}_p$.
Let $\Sem(F)$ be the set of continuous seminorms on~$F$.
We use the fact:
If $F$ is complete, then $(F,(\wt{\alpha}_q)_{q\in\Sem(F)})$
is a projective limit of the projective system
\[
((\wt{F}_q)_{q\in\Sem(F)}\colon (\wt{\alpha}_{p,q})_{p\leq q})
\]
in the category of locally convex spaces and continuous
linear mappings.
\begin{la}\label{weicompl}
Let $\ell\in\N_0\cup\{\infty\}$, $d\in\N$,
and $\Omega\sub\R^d$ be an open subset.
Let $F$ be a locally convex space.
\begin{itemize}
\item[\rm(a)]
Let $F$, together with continuous linear maps $\lambda_j\colon F\to F_j$,
be a projective limit of a projective system $\cS:=((F_j)_{j\in J},
(\lambda_{i,j})_{i\leq j})$ of locally convex spaces~$F_i$
and continuous linear maps $\lambda_{i,j}\colon F_j\to F_i$.
Then $C^\ell(\Omega,F)$, together
with the continuous linear maps
$
C^\ell(\Omega,\lambda_j)\colon C^\ell(\Omega,F)\to C^\ell(\Omega,F_j)$,
$\gamma\mto\lambda_j\circ\gamma$,
is a projective limit of the projective system
\begin{equation}\label{thefsys}
((C^\ell(\Omega,F_j))_{j\in J},(C^\ell(\Omega,\lambda_{i,j}))_{i\leq j})
\end{equation}
in the category of locally convex spaces and continuous linear mappings.
\item[\rm(b)]
Let $\cW\sub C(\Omega,\R)$ be a set of continuous weights
such that, for each $x\in\Omega$, there exists $f\in\cW$
with $f(x)\not=0$. If $F$ is complete, then
$C^\ell_\cW(\Omega,F)$, together with the continuous linear maps
$C^\ell_\cW(\Omega,\wt{\alpha}_q)^\sbull$ for $q\in\Sem(F)$,
is a projective limit of the projective system
\[
((C^\ell_\cW(\Omega,\wt{F}_q)^\sbull)_{q\in\Sem(F)},
(C^\ell_\cW(\Omega,\wt{\alpha}_{p,q})^\sbull)_{p\leq q})
\]
in the category of locally convex spaces and continuous linear mappings.
Moreover, $C^\ell_\cW(\Omega,F)$ is complete.
\end{itemize}
\end{la}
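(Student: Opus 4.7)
The strategy in both parts is to show that the natural candidate map is a bijective linear topological embedding onto the projective limit. For (a), I would consider
\[
\Psi\colon C^\ell(\Omega,F)\to \pl C^\ell(\Omega,F_j),\quad \gamma\mto(\lambda_j\circ\gamma)_{j\in J},
\]
which is well defined and continuous linear because each $C^\ell(\Omega,\lambda_j)$ is continuous linear (Lemma~\ref{sammelsu}(e)) and the induced family is manifestly compatible with the bonding maps. Injectivity is immediate since the $\lambda_j$ jointly separate points of~$F$. For surjectivity, a compatible family $(\gamma_j)_{j\in J}$ determines pointwise, for each $x\in\Omega$, an element $\gamma(x)\in\pl F_j\cong F$ with $\lambda_j(\gamma(x))=\gamma_j(x)$; the resulting function $\gamma\colon\Omega\to F$ is $C^\ell$ by Proposition~\ref{eqCk}, because each $\lambda_j\circ\gamma=\gamma_j$ is $C^\ell$ and continuous linear maps commute with iterated directional derivatives (\ref{diffmultilin}), so the candidate $d^{(k)}\gamma(x,y_1,\ldots,y_k)$ is reconstructed from the compatible family $(d^{(k)}\gamma_j(x,y_1,\ldots,y_k))_{j\in J}$, with continuity and the required continuous extension inherited from the $\gamma_j$ via the initial-topology description of~$F$.

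For the topology in (a), both sides carry initial topologies: $C^\ell(\Omega,F)$ with respect to $d^{(k)}$ for $k\leq\ell$ (Definition~\ref{defcktop}), and the compact-open topology on $C(\Omega\times E^k,F)$ coincides with the projective limit of the $C(\Omega\times E^k,F_j)$ via the seminorm description in~\ref{propsco}, taking $q$ to factor through some $\lambda_j$. Chasing definitions shows that $\Psi^{-1}$ is also continuous, hence $\Psi$ is a topological isomorphism. For (b), I would interpret the source as $C^\ell_\cW(\Omega,F)^\sbull$ (which appears to be the intended meaning given the bullet decoration on the target) and repeat the scheme with the canonical presentation $F\cong\pl\wt F_q$, valid because~$F$ is complete. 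The composition map $\gamma\mto\wt\alpha_q\circ\gamma$ lands in $C^\ell_\cW(\Omega,\wt F_q)^\sbull$ since the continuous seminorms on $\wt F_q$ are all dominated by $\|\cdot\|_q$, which corresponds to~$q$ on~$F$, so the weighted finiteness and the bullet condition transfer directly. Bijectivity onto the projective limit follows as in~(a) using pointwise reconstruction (where completeness of~$F$ is used), and the topology matches because the defining seminorms $\|\cdot\|_{q,f,k}$ on the source for $q\in\Sem(F)$, $f\in\cW$, $k\leq\ell$ are exactly the pullbacks under $\wt\alpha_q$ of the defining seminorms on the targets.

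Completeness of $C^\ell_\cW(\Omega,F)$ then follows at once from the projective limit description: each $\wt F_q$ is a Banach space, so each $C^\ell_\cW(\Omega,\wt F_q)^\sbull$ is complete by a standard argument (a Cauchy net converges in each weighted norm, and uniform convergence preserves the $C^\ell$-regularity and the vanishing condition), and projective limits of complete locally convex spaces are complete in the category of LCS. The main obstacle will be the bookkeeping in~(b): verifying precisely how the bullet condition interacts with the transition maps $\wt\alpha_{p,q}$ and ensuring that the pointwise-defined candidate $\gamma$ both lies in $C^\ell_\cW(\Omega,F)^\sbull$ and has the correct derivatives globally. This requires uniform estimates matching seminorms across the projective system, on the model of (a) but with the additional weighted vanishing layer; once these are handled, the assertions fall out of the formal framework.
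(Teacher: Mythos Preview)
Your proposal is correct and follows essentially the same route as the paper: define the natural map into the standard projective limit, verify it is a continuous linear bijection with continuous inverse via the defining seminorms, and for (b) reduce surjectivity to part~(a) together with a check that the reconstructed $\gamma$ satisfies the weighted and bullet conditions. One small correction: Lemma~\ref{sammelsu}(e) concerns pullback $C^\ell(f,F)\colon\gamma\mapsto\gamma\circ f$, not pushforward; the continuity of $\gamma\mapsto\lambda_j\circ\gamma$ follows instead from Lemma~\ref{sammelsu}(g) (take $f(x,y)=\lambda_j(y)$) or directly from the seminorm identity $\|\lambda_j\circ\gamma\|^\partial_{C^k,K,p}=\|\gamma\|^\partial_{C^k,K,\,p\circ\lambda_j}$, which is exactly how the paper argues.
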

{\bf Proof of Proposition~\ref{tensorcases}.}
We can argue as in the proof of Proposition~\ref{etensor},
with the following modifications:\\[2.3mm]
(a) As $X$ is a $k_\R$-space,
$C(X,F)$ is complete
(see, e.g., \cite[Lemma~A.5.24(d)]{GaN}).
By Lemma~\ref{in-parac},
$F\otimes C(X,\R)$ is dense in $C(X,F)$.
Now use the seminorms $\|\cdot\|_{K,q}$
with continuous seminorms~$q$ on~$F$
and compact $K\sub X$.\\[2.3mm]
(b) $C^\ell(M,F)$ is complete (see \cite[Proposition~3.6.20]{GaN})
%
%
and $F\otimes C^\ell(M,\R)$ is dense in $C^\ell(M,F)$,
by Lemma~\ref{roughcpsupp}.
Now use the seminorms $\gamma\mto \|\gamma\circ\phi^{-1}\|_{C^k,K,q}$
with continuous seminorms~$q$ on~$F$, $k\in\N_0$ with $k\leq\ell$,
charts $\phi\colon U\to V$ for~$M$ and compact subsets $K\sub V$.\\[2.3mm]
(c) $C^\ell_\cW(\Omega,F)^\sbull$ is complete (see Lemma~\ref{weicompl}(b))
and $F\otimes C^\ell_\cW(\Omega,\R)^\sbull$ is dense in it by Lemma~\ref{isdnss}.
Now argue as above using the seminorms $\|\cdot\|_{q,f,k}$. $\,\square$\\[2.3mm]
We record a property of spaces of smooth scalar-valued
functions.
\begin{prop}\label{nucity}
Let $M$ be a locally compact $C^\infty$-manifold
with rough boundary $($e.g.,
any finite dimensional $C^\infty$-manifold with corners$)$.
Then we have:
\begin{itemize}
\item[\rm(a)]
$C^\infty(M,\R)$, endowed with the
compact-open $C^\infty$-topology,
is complete and a nuclear locally convex space.
Moreover, $C^\infty(M,F)$
is nuclear for each nuclear locally convex space~$F$.
\item[\rm(b)]
If~$M$ is, moreover, $\sigma$-compact,
then $C^\infty(M,\R)$ is a nuclear Fr\'{e}chet
space and so is $C^\infty(M,F)$
for each nuclear Fr\'{e}chet space~$F$.
\end{itemize}
\end{prop}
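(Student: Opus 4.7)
The plan is to reduce both completeness and nuclearity to known properties of function spaces on $\R^d$, by covering $M$ suitably and exhibiting $C^\infty(M,F)$ as a subspace of a product of function spaces on chart images. For completeness, I cover $M$ by open chart domains $\phi_i\colon U_i\to V_i\sub\R^d$. The manifold analogue of Lemma~\ref{sammelsu}(f) (cf.\ Remark~\ref{basic-sectop}(b) applied to the trivial line bundle) realises $C^\infty(M,F)$ as a closed topological subspace of $\prod_i C^\infty(U_i,F)\cong\prod_i C^\infty(V_i,F)$. Each factor is complete by Lemma~\ref{sammelsu}(d), since $\R^d$ is metrizable and $F$ is complete (in particular for $F=\R$, or for any nuclear Fr\'{e}chet~$F$ in part~(b)); closed subspaces of complete products remain complete.

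For the nuclearity of $C^\infty(M,\R)$, I exploit local convexity of the chart images. For each $x\in M$, I select a chart $\phi_\alpha\colon U_\alpha\to V_\alpha$ around~$x$ together with a compact convex subset $C_\alpha\sub V_\alpha$ of~$\R^d$ containing $\phi_\alpha(x)$ in its $V_\alpha$-interior; since $V_\alpha$ is regular in~$\R^d$, every nonempty relatively open subset of $V_\alpha$ meets $V_\alpha^0$, so $C_\alpha$ has nonempty interior in~$\R^d$, and hence $C_\alpha$ is a closed convex subset of~$\R^d$ with dense interior. Corollary~\ref{convex-ext} then gives a continuous linear right inverse to the restriction map $C^\infty(\R^d,\R)\to C^\infty(C_\alpha,\R)$, so $C^\infty(K_\alpha,\R)\cong C^\infty(C_\alpha,\R)$ (with $K_\alpha:=\phi_\alpha^{-1}(C_\alpha)$) is isomorphic to a complemented subspace of $C^\infty(\R^d,\R)$, which is nuclear by the classical theory (see~\cite{Tre}). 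Choosing the $\phi_\alpha$ so that the interiors $K_\alpha^0$ cover~$M$, the embedding $\gamma\mto(\gamma|_{K_\alpha^0})_\alpha$ from Lemma~\ref{sammelsu}(f) factors through $\prod_\alpha C^\infty(K_\alpha,\R)$ via the continuous product-of-restrictions map, whence $\gamma\mto(\gamma|_{K_\alpha})_\alpha$ is also a topological embedding. Since arbitrary products of nuclear spaces are nuclear and nuclearity passes to topological subspaces, $C^\infty(M,\R)$ is nuclear.

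For a general nuclear locally convex space~$F$, let $\hat F$ be a completion (still nuclear). The analogue of Lemma~\ref{sammelsu}(b) embeds $C^\infty(M,F)$ topologically into $C^\infty(M,\hat F)$, and Proposition~\ref{tensorcases}(b) identifies the latter with $\hat F\tensor C^\infty(M,\R)$, a completed $\ve$-tensor product of two nuclear spaces, which is nuclear; hence $C^\infty(M,F)$ is nuclear as a subspace. For part~(b), $\sigma$-compactness of~$M$ allows the cover $(K_\alpha)$ to be chosen countable, yielding a closed embedding of $C^\infty(M,F)$ into a countable product of Fr\'{e}chet spaces $C^\infty(K_\alpha,F)$ (each Fr\'{e}chet when $F$ is, by metrizability of~$\R^d$ together with completeness), so $C^\infty(M,F)$ is a nuclear Fr\'{e}chet space. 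The main technical point is the cover construction in the second paragraph: local convexity of the chart images and local compactness of~$M$ must be combined to produce compact convex sets $C_\alpha\sub V_\alpha$ which simultaneously lie inside $V_\alpha$ and are closed convex subsets of~$\R^d$ with dense interior, so that Corollary~\ref{convex-ext} may be applied.
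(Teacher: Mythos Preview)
Your proof is correct and follows essentially the same route as the paper: for each point you pick a compact convex set inside a chart image, use Corollary~\ref{convex-ext} to realise $C^\infty$ on that set as a complemented subspace of the nuclear space $C^\infty(\R^d,\R)$, embed $C^\infty(M,\R)$ into the product of these nuclear pieces, and then handle general nuclear~$F$ via the completion and the tensor identification from Proposition~\ref{tensorcases}(b). The only cosmetic difference is that for part~(b) the paper simply cites \cite[Proposition~3.6.20]{GaN} for the Fr\'{e}chet property of $C^\infty(M,F)$, whereas you derive metrizability from a countable cover; both are fine.
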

\begin{proof}
(a) For each $x\in M$,
there exists a chart $\phi_x\colon U_x\to V_x$
for~$M$ such that $x\in U_x$,
$\phi_x(x)=0$ and $V_x$
is a locally convex subset with dense interior
of a finite-dimensional vector space~$E_x$.
Let $K_x$ be a compact $0$-neighbourhood in~$V_x$;
there exists a closed, convex $0$-neighbourhood $B_x$ in~$V_x$
such that $B_x\sub K_x$.
Then~$B_x$ is compact and hence closed in~$E_x$.
As a consequence of Corollary~\ref{convex-ext},
$C^\infty(B_x,\R)$
is isomorphic to a complemented
vector subspace of $C^\infty(\R^d,\R)$.
Since $C^\infty(\R^d,\R)$
is a nuclear Fr\'{e}chet space
(cf.\ Example~3 in \cite[Chapter~III, \S8]{Sch}),
it follows that also $C^\infty(B_x,\R)$
is a nuclear Fr\'{e}chet space
(see \cite[Satz~5.1.1]{Pie})
and so is $C^\infty(A_x,\R)\cong C^\infty(B_x,\R)$,
considering $A_x:=\phi_x^{-1}(B_x)$
as a full-dimensional submanifold with rough boundary in~$M$.
Since the linear map
\[
C^\infty(M,\R)\to\prod_{x\in M} C^\infty(A_x,\R),
\quad\gamma\mto (\gamma|_{A_x})_{x\in M}
\]
is a topological embedding,
%
%
we deduce with \cite[Satz~5.2.3]{Pie}
that $C^\infty(M,\R)$ is nuclear.
If $F$ is a nuclear locally convex space,
then its completion $\wt{F}$ is nuclear
(see \cite[Satz~5.3.1]{Pie}).
As the inclusion map
$C^\infty(M,F)\to C^\infty(M,\wt{F})$
is linear and a topological embedding,
%
%
it suffices to show that $C^\infty(M,\wt{F})$
is nuclear (see \cite[Satz~5.1.1]{Pie}).
Since $C^\infty(M,\R)$ is nuclear (as just shown)
and $\wt{F}$ is nuclear,~also
\[
\wt{F}\,
\tensor \, C^\infty(M,\R)
\]
is nuclear (see \cite[Satz~5.4.1]{Pie}).
But
$C^\infty(M,\wt{F})\cong\wt{F}\,\tensor\, C^\infty(M,\R)$
by Proposition~\ref{tensorcases}(b), whence also $C^\infty(M,\wt{F})$
is nuclear.\\[2.3mm]
(b) If the locally compact smooth manifold~$M$
with rough boundary is $\sigma$-compact and $F$ is a Fr\'{e}chet
space, then also $C^\infty(M,F)$ is a Fr\'{e}chet space
(cf.\ \cite[Proposition~3.6.20]{GaN}). The assertion therefore
follows from~(a).
\end{proof}
\section{More constructions of smoothing operators}\label{moreops}
Smoothing operators can be constructed
in further situations. In this section, we record additional
results, relegating most proofs
to the appendix (Appendix~\ref{appC}).\\[2.3mm]
We start with a proposition devoted to regularizing operators,
which replace general continuous functions with compactly
supported functions whose image has finite-dimensional span.
It is a topological analogue of Theorem~\ref{thmsmoo}.
\begin{prop}\label{smoo-top}
Let $X$ be a $\sigma$-compact, locally compact
space which is metrizable, $K_1\sub K_2\sub\cdots$ be a compact exhaustion of~$X$
and $F$ be a locally convex space.
There is a sequence $(S_n)_{n\in\N}$
of continuous linear operators $S_n\colon C(X,F)\to C(X,F)$
with the following properties:
\begin{itemize}
\item[\rm(a)]
$S_n(\gamma)\to\gamma$ in $C(X, F)$ as $n\to\infty$,
uniformly for~$\gamma$ in compact subsets of~$C(X,F)$;
\item[\rm(b)]
$S_n(\gamma)\in F\otimes C_{K_{n+1}}(X,\R)$
for all $n\in\N$ and $\gamma\in C(X,F)$; and
\item[\rm(c)]
$S_m(\gamma)\in F\otimes C_{K_{n+1}}(X,\R)$
for all $n\in\N$, $m\geq n$, and
$\gamma\in C_{K_n}(X,F)$.
\end{itemize}
\end{prop}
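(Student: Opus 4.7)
\textbf{Proof plan for Proposition~\ref{smoo-top}.}
The strategy is to mimic the proof of Lemma~\ref{smoohoo}, but with the $\ell$-th order Taylor polynomials $P^\ell_x(\gamma)$ replaced by the zero-order ``Taylor polynomials'' $P^0_x(\gamma) = \gamma(x)$, i.e.\ by point evaluations. Thus, we will construct~$S_n(\gamma)$ as a finite sum $\sum_z h_{n,z}(\cdot)\gamma(x_{n,z})$, where $(h_{n,z})_z$ is a suitable continuous partition of unity on~$X$ whose supports have small diameter with respect to a chosen compatible metric~$d$ on~$X$, and $x_{n,z}\in \Supp(h_{n,z})$. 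The key point is to calibrate the diameters carefully, using the compact exhaustion, so that conditions (b) and (c) are simultaneously met and condition~(a) follows from continuity of~$\gamma$.

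To calibrate the diameters, set
\[
\delta_n\, :=\, \min\bigl(1/n,\; d(K_n,\, X\setminus K_{n+1}^0)/2\bigr)\, >\, 0
\]
for $n\in\N$ (where the second term is interpreted as $+\infty$ if $X=K_{n+1}^0$), and let $\ve_n:=\min\{\delta_k\colon 1\leq k\leq n\}$, so $(\ve_n)_{n\in\N}$ decreases to~$0$. For each $n\in\N$, paracompactness (which~$X$ enjoys, being metrizable) gives a locally finite partition of unity $(h_{n,z})_{z\in J_n}$ on~$X$ subordinate to the open cover of~$X$ by the balls $B_{\ve_n/3}(x)$, $x\in X$; in particular each $\Supp(h_{n,z})$ has diameter $\leq\ve_n$. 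Choose $x_{n,z}\in\Supp(h_{n,z})$ for each $z\in J_n$. Since $K_n$ is compact and $(h_{n,z})_z$ is locally finite, the set
\[
\Phi_n\, :=\, \{z\in J_n\colon \Supp(h_{n,z})\cap K_n\not=\emptyset\}
\]
is finite. Define $S_n(\gamma):=\sum_{z\in \Phi_n}\gamma(x_{n,z})\otimes h_{n,z}$.

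The remaining verifications go as follows. Continuity and linearity of~$S_n\colon C(X,F)\to C(X,F)$ are immediate since $\Phi_n$ is finite and each summand is a composition of the continuous point evaluation $\ve_{x_{n,z}}$ with the multiplication operator by~$h_{n,z}$. For~(b), if $z\in\Phi_n$ then $\Supp(h_{n,z})$ lies in the $\ve_n$-neighbourhood of~$K_n$, which is contained in $K_{n+1}^0$ since $\ve_n\leq \delta_n$. For~(c), assume $\gamma\in C_{K_n}(X,F)$, $m\geq n$ and $z\in\Phi_m$ with $\gamma(x_{m,z})\not=0$; then $x_{m,z}\in K_n$, and $\Supp(h_{m,z})$ has diameter $\leq \ve_m\leq \delta_n$ (using $n\leq m$), hence $\Supp(h_{m,z})\sub K_{n+1}^0$. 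For~(a), given a compact $K\sub X$, choose $N$ with $K\sub K_N^0$; for $n\geq N$ and $x\in K$, any~$z$ with $h_{n,z}(x)\not=0$ lies in~$\Phi_n$ (since $\Supp(h_{n,z})\cap K\sub \Supp(h_{n,z})\cap K_n\not=\emptyset$), so $\sum_{z\in\Phi_n}h_{n,z}(x)=1$, whence
\[
\gamma(x)-S_n(\gamma)(x)\, =\, \sum_{z\in\Phi_n,\,h_{n,z}(x)\not=0}h_{n,z}(x)\bigl(\gamma(x)-\gamma(x_{n,z})\bigr),
\]
with $d(x,x_{n,z})\leq\ve_n\to 0$; uniform continuity of $\gamma$ on a compact neighbourhood of~$K$ yields the claim.

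The main (minor) subtlety is the uniform convergence for $\gamma$ in a compact set $B\sub C(X,F)$ asserted in~(a). As in Remark~\ref{oncpset}, this follows from the fact that, $X$ being locally compact, the evaluation map $\ev\colon C(X,F)\times X\to F$ is continuous, so its restriction to the compact set $B\times L$ (with $L$ a compact neighbourhood of~$K$) is uniformly continuous with respect to the unique compatible uniformity; therefore the estimate in the previous paragraph holds uniformly in $\gamma\in B$. Beyond this, the main bookkeeping challenge is the threading of the three size constraints ($1/n$ for convergence, $d(K_k,X\setminus K_{k+1}^0)/2$ for~(b), and the same bound for all $k\leq n$ for~(c)) into the single decreasing sequence $(\ve_n)_{n\in\N}$.
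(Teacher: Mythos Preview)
Your proof is correct and follows essentially the same approach as the paper's: both construct $S_n(\gamma)$ as a finite sum $\sum_{z\in\Phi_n} h_{n,z}\,\gamma(x_{n,z})$ using a continuous partition of unity with small-diameter supports calibrated against the exhaustion, and both deduce the uniform convergence in~(a) from uniform continuity of evaluation on $B\times L$ for compact $B\sub C(X,F)$ and a compact neighbourhood~$L$ of~$K$. The only cosmetic differences are that the paper indexes the partition by points of~$X$ (with $\Supp(h_{n,x})\sub B_{1/m_n}(x)$ for an increasing sequence of integers~$m_n$), whereas you use an abstract index set and thread the constraints via the decreasing sequence $\ve_n=\min_{k\leq n}\delta_k$; both devices achieve the same control.
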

Note that $S_n$ is also continuous as a map to $C_c(X,F)=\dl \,C_{K_n}(X,F)$,\vspace{-1mm}
as a consequence of~(b).\\[2.3mm]
In this connection, we mention:
\begin{prop}\label{are-metri}
For every $\ell\in \N_0\cup\{\infty\}$,
every paracompact, locally compact, rough $C^\ell$-manifold
is metrizable.
\end{prop}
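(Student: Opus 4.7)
The plan is to reduce the claim to the Smirnov metrization theorem, which states that a Hausdorff space is metrizable if and only if it is paracompact and locally metrizable. Since $M$ is paracompact and Hausdorff by hypothesis (Hausdorffness being part of the definition of a rough $C^\ell$-manifold, see~\ref{defn-rough}), the only thing to verify is that $M$ is locally metrizable.

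First I would observe that the local compactness of~$M$ forces every nonempty chart of~$M$ to have finite-dimensional modelling space. Indeed, if $\phi\colon U_\phi\to V_\phi\sub E_\phi$ is a chart with $U_\phi\not=\emptyset$, then $V_\phi$, being homeomorphic to the locally compact space $U_\phi$, is locally compact in the topology induced from~$E_\phi$. Because $V_\phi^0$ is a nonempty open subset of~$E_\phi$ that is locally compact (open subsets of locally compact spaces are locally compact, and local compactness passes from $V_\phi$ to its open subset $V_\phi^0$), the locally convex space~$E_\phi$ contains a nonempty open subset with compact closure, forcing $E_\phi$ to be finite-dimensional.

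Consequently, every $x\in M$ lies in the domain of some chart $\phi\colon U_\phi\to V_\phi\sub E_\phi$ with $E_\phi\cong\R^{d_\phi}$ for some $d_\phi\in\N_0$; the regular subset $V_\phi$ then inherits a metric from any norm on~$E_\phi$, and this metric defines its topology. Via~$\phi^{-1}$, this metric transports to a metric on the open neighbourhood $U_\phi$ of~$x$ defining the subspace topology inherited from~$M$. Thus $M$ is locally metrizable. Invoking the Smirnov metrization theorem now yields a metric on~$M$ defining its topology. The whole argument is entirely standard once the finite-dimensionality of the modelling spaces is noted; no step presents a substantive obstacle.
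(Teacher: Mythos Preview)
Your proof is correct and takes a genuinely different route from the paper's. You invoke the Smirnov metrization theorem after observing that local compactness forces the modelling spaces to be finite-dimensional (a fact the paper also records immediately after~\ref{defn-rough}), so that every chart domain is homeomorphic to a subset of some~$\R^d$ and hence metrizable. This is clean and entirely adequate for the bare metrizability claim.

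The paper instead constructs an explicit topological embedding of~$M$ into a countable product $(\R^d\times\R)^J$, using $C^\ell$-bump functions $g_j$, $h_j$ to build coordinate-and-cutoff maps $f_j\colon M\to\R^d\times\R$, and then verifies that $f=(f_j)_{j\in J}$ is injective and open onto its image. This is more labour but yields more: the embedding is realized by $C^\ell$-functions, and the same construction is recycled verbatim in the proof of the subsequent Proposition~\ref{W-embed}, where a proper $C^\ell$-embedding into $\R^{2d+2}$ is produced by Whitney-style dimension reduction starting from the maps~$f_j$. Your argument, while shorter, would not feed into that later result.
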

Using extension operators as a tool,
we construct smoothing operators
on cubes.
\begin{la}\label{smoocube}
For $d\in\N$ and $\ell\in\N_0$,
there are continuous linear operators
$S_n\colon C^\ell([0,1]^d,F)\to C^\infty([0,1]^d,F)$
for $n\in\N$ with image in $F\otimes C^\infty([0,1]^d,\R)$
such that
\[
S_n(\gamma)\to\gamma
\]
in $C^\ell([0,1]^d,F)$ as $n\to\infty$,
uniformly for $\gamma$ in compact subsets
of $C^\ell([0,1]^d,F)$.
\end{la}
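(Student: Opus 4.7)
\textbf{Proof plan for Lemma~\ref{smoocube}.}
The plan is to reduce to the smoothing operators on $\R^d$ already constructed in Lemma~\ref{smoohoo} by first extending functions from $[0,1]^d$ to $\R^d$. Concretely, by the final assertion of Corollary~\ref{extcorner}, the restriction map $C^\ell(\R^d,F)\to C^\infty([0,1]^d,F)$ admits a continuous linear right inverse; in particular, postcomposing with the continuous linear inclusion $C^\infty([0,1]^d,F)\to C^\ell([0,1]^d,F)$, we obtain a continuous linear extension operator
\[
\cE\colon C^\ell([0,1]^d,F)\to C^\ell(\R^d,F)
\]
which is a right inverse to the (continuous linear) restriction map $\rho\colon C^\ell(\R^d,F)\to C^\ell([0,1]^d,F)$, $\eta\mto\eta|_{[0,1]^d}$. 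Let $\wt{S}_n\colon C^\ell(\R^d,F)\to C^\infty(\R^d,F)$ be the smoothing operators provided by Lemma~\ref{smoohoo} (applied with $\Omega:=\R^d$). I then define
\[
S_n\colon C^\ell([0,1]^d,F)\to C^\infty([0,1]^d,F),\quad \gamma\mto \wt{S}_n(\cE(\gamma))\big|_{[0,1]^d}\,.
\]
Each $S_n$ is continuous and linear, being a composition of continuous linear maps (using Lemma~\ref{sammelsu}(e) for the restriction).

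The convergence $S_n(\gamma)\to\gamma$ in $C^\ell([0,1]^d,F)$ is immediate: by Remark~\ref{oncpset}, $\wt{S}_n(\eta)\to\eta$ in $C^\ell(\R^d,F)$ uniformly for $\eta$ in compact subsets of $C^\ell(\R^d,F)$; since $\cE$ is continuous linear, it maps compact subsets of $C^\ell([0,1]^d,F)$ to compact subsets of $C^\ell(\R^d,F)$, and the restriction $\rho$ is continuous linear. Composing, $S_n(\gamma)=\rho(\wt{S}_n(\cE(\gamma)))\to\rho(\cE(\gamma))=\gamma$ in $C^\ell([0,1]^d,F)$, uniformly for $\gamma$ in compact subsets.

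It remains to check that $S_n(\gamma)\in F\otimes C^\infty([0,1]^d,\R)$. Recall from the proof of Lemma~\ref{smoohoo} that
\[
\wt{S}_n(\cE(\gamma))(x)\;=\;\sum_{z\in M_n}h_{n,z}(x)\,P^\ell_{z/n}(\cE(\gamma))(x-{\textstyle\frac{z}{n}}),
\]
with $\Supp(h_{n,z})\sub \frac{z}{n}+\,]{-\frac{1}{n}},\frac{1}{n}[^d$. Since $[0,1]^d$ is compact, the set $\Phi_n:=\{z\in M_n\colon \Supp(h_{n,z})\cap [0,1]^d\not=\emptyset\}$ is finite, whence
\[
S_n(\gamma)\;=\;\sum_{z\in \Phi_n}\big(h_{n,z}\cdot P^\ell_{z/n}(\cE(\gamma))(\,\sbull-{\textstyle\frac{z}{n}})\big)\Big|_{[0,1]^d}
\]
is a finite sum of summands, each of which lies in $F\otimes C^\infty([0,1]^d,\R)$ by~\ref{poly-is-tensor} (the polynomial $P^\ell_{z/n}(\cE(\gamma))(\,\sbull-\frac{z}{n})$ is in $F\otimes C^\infty(\R^d,\R)$, and multiplication by the smooth scalar function $h_{n,z}$ preserves this tensor subspace). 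Hence $S_n(\gamma)\in F\otimes C^\infty([0,1]^d,\R)$, as required.

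The only delicate point in the argument is the last step: one has to observe that restriction to the compact set $[0,1]^d$ reduces the locally finite sum defining $\wt{S}_n$ to a finite one, so that the image lands in the \emph{algebraic} tensor product rather than merely in $C^\infty([0,1]^d,F)$. Everything else is a direct combination of Corollary~\ref{extcorner}, Lemma~\ref{smoohoo}, and Remark~\ref{oncpset}.
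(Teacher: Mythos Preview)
Your proposal is correct and follows essentially the same approach as the paper: extend via Corollary~\ref{extcorner}, apply the smoothing operators on $\R^d$, and restrict back to $[0,1]^d$, with uniform convergence on compacta coming from Remark~\ref{oncpset}. The only cosmetic difference is that the paper invokes the operators $S_n$ of Theorem~\ref{thmsmoo} (which already land in $F\otimes C^\infty_c(\R^d,\R)$, so membership in $F\otimes C^\infty([0,1]^d,\R)$ is immediate after restriction), whereas you use the operators $\wt{S}_n$ of Lemma~\ref{smoohoo} and argue directly that restriction to the compact cube makes the locally finite sum finite; both are valid and amount to the same thing. One small wording issue: your sentence about ``postcomposing with the continuous linear inclusion $C^\infty([0,1]^d,F)\to C^\ell([0,1]^d,F)$'' is garbled (likely because you were working around a typo in the statement of Corollary~\ref{extcorner}); simply cite the corollary as providing a continuous linear right inverse $\cE\colon C^\ell([0,1]^d,F)\to C^\ell(\R^d,F)$ to the restriction map.
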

\begin{proof}
We let $\cE\colon C^\ell([0,1]^d,F)\to C^\infty(\R^d,F)$
be a continuous linear right inverse for the restriction
map $C^\ell(\R^d,F)\to C^\ell([0,1]^d,F)$,
as provided by Corollary~\ref{extcorner}.
We write
\[
H_n\colon C^\ell(\R^d,F)\to C^\infty(\R^d,F)
\]
for the smoothing operator $S_n$ in Theorem~\ref{thmsmoo},
applied with $\Omega:=\R^d$.
Finally, we let $\rho\colon C^\infty(\R^d,F)\to C^\infty([0,1]^d,F)$
be the restriction map, which is continuous and linear.
Then
\[
S_n:=\rho\circ H_n\circ \cE\colon C^\ell([0,1]^d,F)\to C^\infty([0,1],F)
\]
is continuous and linear. If $K\sub C^\ell([0,1]^d,F)$
is a compact set, then $\cE(K)$
is compact, whence $H_n(\eta)\to \eta$ uniformly
in $\eta\in\cE(K)$ as $n\to\infty$ (see Remark~\ref{oncpset}).
The continuous linear map $\rho$ being uniformly
continuous, we deduce that
\[
S_n(\gamma)=\rho(H_n(\cE(\gamma)))\to \rho(\cE(\gamma))=\gamma
\]
as $n\to\infty$, uniformly in $\gamma\in K$.
Moreover, $H_n(\cE(\gamma))=\sum_{j=1}^m \gamma_jv_j$
for some $m\in\N$, $v_1,\ldots, v_m\in F$ and suitable functions
$\gamma_1,\ldots,\gamma_m\in C^\infty(\R^d,\R)$.
Then
\[
S_n(\gamma)=\sum_{j=1}^m \gamma_j|_{[0,1]^d} v_j\in F\otimes
C^\infty([0,1]^d,\R),
\]
which completes the proof.
\end{proof}
\begin{prop}\label{onmfdsmoo}
Let $F$ be a locally convex space,
$r\in \N_0\cup\{\infty\}$, and $\ell\in\N_0$ with $\ell\leq r$.
If $\ell=0$, let~$M$ be a $\sigma$-compact, locally compact
rough~$C^r$-manifold;
if $\ell>0$, let $M$ be a $\sigma$-compact,
locally compact $C^r$-manifold with corners.
Let $K_1\sub K_2\sub\cdots$ be a compact exhaustion of~$M$.
Then there exists a sequence $(S_n)_{n\in\N}$
of continuous linear operators
\[
S_n\colon C^\ell(M,F)\to C^r_{K_{n+1}}(M,F)
\]
with the following properties:
\begin{itemize}
\item[\rm(a)]
$S_n(\gamma)\to \gamma$ in $C^\ell(M,F)$
as $n\to\infty$, uniformly for $\gamma$ in compact subsets
of~$C^\ell(M,F)$;
\item[\rm(b)]
$S_n(\gamma)\in F\otimes C^r_{K_{n+1}}(M,\R)$
for all $n\in\N$ and $\gamma\in C^\ell(M,F)$; and
\item[\rm(c)]
$S_m(\gamma)\in F\otimes C_{K_{n+1}}^r(M,\R)$
for all $n\in\N$, $m\geq n$, and
$\gamma\in C_{K_n}^\ell(M,F)$.
\end{itemize}
\end{prop}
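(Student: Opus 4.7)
The plan is to globalize Theorem~\ref{thmsmoo} by gluing Euclidean smoothing operators via a chart atlas. By $\sigma$-compactness and Lemma~\ref{partu-ex}, I first fix a locally finite atlas $(\phi_j\colon U_j\to V_j)_{j\in\N}$ with compact $\wb{U_j}$, together with a subordinate $C^r$-partition of unity $(h_j)_{j\in\N}$ satisfying $\Supp(h_j)\sub U_j$. Using a compatible metric on~$M$ (Proposition~\ref{are-metri}), the atlas can be refined so as to be adapted to $(K_n)$ in the sense that $\wb{U_j}\sub K_{n+1}^0$ whenever $U_j\cap K_n\neq\emptyset$, while each $J_n:=\{j\colon U_j\cap K_n\neq\emptyset\}$ is finite. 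For each~$j$ I also fix a compact neighbourhood $L_j\sub U_j$ of $\Supp(h_j)$ and a smooth bump $\psi_j\in C^\infty_c(\R^{d_j},\R)$ equal to~$1$ on a neighbourhood of $\phi_j(\Supp(h_j))$ and with $\Supp(\psi_j)\sub\phi_j(L_j^0)$.

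Next I construct continuous linear operators $\Theta_{j,m}\colon C^\ell(M,F)\to F\otimes C^r_{L_j}(M,\R)$ with $\Theta_{j,m}(\gamma)\to h_j\gamma$ in $C^\ell(M,F)$ as $m\to\infty$, uniformly for $\gamma$ in compact subsets. For $\gamma\in C^\ell(M,F)$, set $f_\gamma:=(h_j\gamma)\circ\phi_j^{-1}\in C^\ell(V_j,F)$, which has compact support in~$V_j$. In the corners case ($\ell\geq 1$), extend $f_\gamma$ by zero to a $C^\ell$-function on $[0,\infty[^{m_j}\times\R^{d_j-m_j}$ (the support being compact in the relatively open set~$V_j$) and further to a $C^\ell$-function $\wt{f_\gamma}$ on~$\R^{d_j}$ via Corollary~\ref{extcorner}. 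In the rough case ($\ell=0$), extend $f_\gamma$ by zero directly to a continuous function $\wt{f_\gamma}$ on~$\R^{d_j}$ (continuity follows from $\Supp(f_\gamma)$ being compact and disjoint from $\R^{d_j}\setminus V_j$). In either case $\wt{f_\gamma}|_{V_j}=f_\gamma$. Now apply the smoothing operator $S_m$ from Theorem~\ref{thmsmoo} associated to a fixed compact exhaustion of~$\R^{d_j}$, multiply by $\psi_j$, restrict to~$V_j$, pull back by~$\phi_j$, and extend by zero to~$M$. The resulting $\Theta_{j,m}$ is continuous linear with image in $F\otimes C^r_{L_j}(M,\R)$ (since $S_m$ has image in $F\otimes C^\infty(\R^{d_j},\R)$); the convergence $\Theta_{j,m}(\gamma)\to h_j\gamma$ uniformly on compacta follows from Remark~\ref{oncpset} and continuity of the linking operators, using the identity $\psi_j\cdot\wt{f_\gamma}=f_\gamma$ on~$V_j$ (since $\psi_j=1$ on $\Supp(f_\gamma)$).

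Finally, set $S_n(\gamma):=\sum_{j\in J_{n+1}}\Theta_{j,m(j,n)}(\gamma)$, with $m(j,n)\to\infty$ chosen (over the finite set $J_{n+1}$) large enough to secure property~(a). Property~(b) follows from $\bigcup_{j\in J_{n+1}}L_j\sub K_{n+1}$ and the targets of the $\Theta_{j,m}$. For~(c), $\Theta_{j,m}$ vanishes whenever $h_j\gamma=0$, so for $\gamma\in C^\ell_{K_n}(M,F)$ only indices $j\in J_n\sub J_{m+1}$ contribute to $S_m(\gamma)$ when $m\geq n$; for these, $L_j\sub K_{n+1}$ by the adapted atlas, giving $S_m(\gamma)\in F\otimes C^r_{K_{n+1}}(M,\R)$. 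The main obstacle is coordinating the post-smoothing cut-off~$\psi_j$ with the $C^\ell$-approximation: one must show $\psi_j\cdot S_m(\wt{f_\gamma})\to f_\gamma$ in $C^\ell(V_j,F)$, which boils down to the uniform-on-compacta convergence of $S_m$ from Remark~\ref{oncpset} combined with the crucial equality $\psi_j\cdot\wt{f_\gamma}|_{V_j}=f_\gamma$.
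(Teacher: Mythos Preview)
Your approach is sound and differs genuinely from the paper's. The paper bifurcates: for $\ell=0$ it adapts the metric-ball construction of Proposition~\ref{smoo-top} (with $C^r$ partitions of unity), while for $\ell>0$ it covers $M$ by compact cubes $Q_j\cong[0,1]^{d_j}$, restricts $\gamma$ to~$Q_j$, applies the prepackaged cube-smoothing Lemma~\ref{smoocube}, and uses a single post-smoothing cutoff $h_j$ with $\Supp(h_j)\sub Q_j^0$. You instead treat both regimes uniformly via general charts: pre-multiply by $h_j$, zero-extend to the model half-space, extend to $\R^{d_j}$ via Corollary~\ref{extcorner} (or by zero directly when $\ell=0$), smooth on $\R^{d_j}$ via Theorem~\ref{thmsmoo}, and relocalize with a second bump $\psi_j$. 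Your route avoids hunting for cube-shaped pieces inside the corner charts; the paper's avoids the double cutoff and the explicit appeal to Corollary~\ref{extcorner} at this stage (that extension is hidden inside Lemma~\ref{smoocube}).

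Two small repairs are needed. First, an index shift: your adaptedness condition gives, for $j\in J_{n+1}$, only $\wb{U_j}\sub K_{n+2}^0$, so $\bigcup_{j\in J_{n+1}}L_j\sub K_{n+2}$ rather than $K_{n+1}$. Summing over $J_n$ instead of $J_{n+1}$ cures~(b) (since $j\in J_n$ gives $\wb{U_j}\sub K_{n+1}^0$) and leaves~(a) and~(c) intact, because for any compact $K\sub M$ one eventually has $K\sub K_n$ and hence $\sum_{j\in J_n}h_j=1$ on~$K$. Second, the phrase ``$m(j,n)$ chosen large enough to secure~(a)'' misstates the mechanism: you cannot tune $m(j,n)$ to a particular compact $B\sub C^\ell(M,F)$, since~(a) must hold for all such~$B$ simultaneously. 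What actually works (and is what the paper does) is simply $m(j,n)=n$; Remark~\ref{oncpset} then gives uniform-on-compacta convergence $\Theta_{j,n}(\gamma)\to h_j\gamma$ for each fixed~$j$, and since only the finitely many $j\in J_K$ contribute to the $C^\ell$-seminorms over a given compact $K\sub M$, the finite sum converges as required.
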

\section{Properties of evaluation and composition}
If $X$, $Y$, and $Z$ are Hausdorff topological spaces,
it is well known that
the evaluation map
\[
C(X,Y)\times X\to Y,\quad
(\gamma,x)\mto \gamma(x)
\]
and the composition map
\[
C(Y,Z)\times C(X,Y)\to C(X,Z),\quad (\gamma,\eta)\mto\gamma\circ\eta
\]
is continuous if~$Y$
is locally compact (see Theorem~3.4.3
and Proposition~2.6.11 in~\cite{Eng}
for the first statement, \cite[Theorem~3.4.2]{Eng}
for the second).
In the case of locally convex spaces
and their subsets, differentiability
properties for evaluation of $C^\ell$-maps
and composition are well-known, again based on
local compactness of~$Y$
(see, e.g., \cite{Alz}, \cite{AaS},
\cite{Zoo}, and~\cite{GaN}).
As a tool for the next section,
we show that evaluation always is sequentially
continuous (irrespective of local compactness)
and has certain differentiability
properties which enable a limited version of the Chain
Rule. As the observations may be useful elsewhere,
we also record analogous findings
for composition maps.\\[2.3mm]
In this section,
spaces of continuous
functions are endowed with the compact-open topology.
Spaces of $C^\ell$-functions (or $RC^\ell$-functions)
are endowed
with the compact-open $C^\ell$-topology.
We first discuss evaluation maps,
starting with a topological setting.
\begin{prop}\label{evalu}
Let $X$ and $Y$ be Hausdorff topological spaces.
Then the evaluation map
\[
\ev\colon C(X,Y)\times X\to Y,\;\,
(\gamma,x)\mto \gamma(x)
\]
has the following properties:
\begin{itemize}
\item[\rm(a)]
For each compact subset $K\sub X$, the restriction of
$\ev$ to a mapping\linebreak
$C(X,Y)\times K\to Y$ is continuous.
\item[\rm(b)]
For each compact subset $K\sub C(X,Y)\times X$,
the restriction $\ev|_K\colon K\to Y$ is continuous.
Notably, $\ev(K)$ is a compact subset of~$Y$.
\item[\rm(c)]
$\ev$ is sequentially continuous.
\item[\rm(d)]
Let $Z$ be a Hausdorff topological space
and $f\colon Z\to C(X,Y)$ as well as $f\colon Z\to X$
be continuous mappings.
If $Z$ is a $k$-space or $Z$ is a $k_\R$-space
and~$Y$ completely regular, then
\[
\ev\circ (f,g)\colon Z\to Y,\quad
x\mto f(x)(g(x))
\]
is a continuous mapping.
\end{itemize}
\end{prop}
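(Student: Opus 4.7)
\medskip

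\noindent\emph{Proof proposal.}
The plan is to reduce everything to the classical fact that evaluation $C(K,Y)\times K\to Y$ is continuous when $K$ is compact, and then bootstrap the three remaining statements from~(a).

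First I would establish~(a). Let $K\sub X$ be compact. Since $K$ is locally compact Hausdorff, the evaluation map $\ev_K\colon C(K,Y)\times K\to Y$, $(\eta,x)\mto\eta(x)$, is continuous by a standard result on the compact-open topology (see, e.g., \cite[Theorem~3.4.3]{Eng}). The restriction map $\rho\colon C(X,Y)\to C(K,Y)$, $\gamma\mto \gamma|_K$, is continuous (being continuous with respect to the compact-open topology on both sides), so
\[
\ev|_{C(X,Y)\times K}\;=\; \ev_K\circ(\rho\times \id_K)
\]
is continuous, proving~(a). Part~(b) is immediate: if $K\sub C(X,Y)\times X$ is compact, then $L:=\pr_2(K)\sub X$ is compact, and $K\sub C(X,Y)\times L$; the restriction of $\ev$ to $C(X,Y)\times L$ is continuous by~(a), hence so is $\ev|_K$, and $\ev(K)$ is the continuous image of a compact set.

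For~(c), let $((\gamma_n,x_n))_{n\in\N}$ be a sequence in $C(X,Y)\times X$ converging to $(\gamma,x)$. Then $L:=\{x_n\colon n\in\N\}\cup\{x\}$ is compact in~$X$, whence $\ev|_{C(X,Y)\times L}$ is continuous by~(a); since $(\gamma_n,x_n)\to(\gamma,x)$ inside $C(X,Y)\times L$, we obtain $\gamma_n(x_n)\to\gamma(x)$.

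The main remaining step is~(d), where the key idea is again to push everything onto a product of compact sets. Given continuous $f\colon Z\to C(X,Y)$ and $g\colon Z\to X$, and a compact subset $K\sub Z$, the images $f(K)$ and $g(K)$ are compact, so $(f,g)(K)\sub f(K)\times g(K)$. Part~(a) guarantees that $\ev$ is continuous on $C(X,Y)\times g(K)$, so
\[
(\ev\circ(f,g))|_K\;=\; \ev\circ (f|_K,g|_K)
\]
is continuous for every compact $K\sub Z$. If $Z$ is a $k$-space, this already implies continuity of $\ev\circ(f,g)$. If $Z$ is merely a $k_\R$-space and $Y$ is completely regular, continuity of a function $Z\to Y$ is equivalent to continuity of its restrictions to all compact subsets of~$Z$ (as recalled in the introduction), so the same conclusion follows. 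I expect the only subtlety to be the careful handling of the completely-regular/$k_\R$ case in~(d); the rest reduces cleanly to~(a).
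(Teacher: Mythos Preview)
Your proof is correct and follows essentially the same approach as the paper: reduce to the classical continuity of evaluation on $C(K,Y)\times K$ for compact~$K$, then bootstrap (b), (c), (d) from~(a). The only cosmetic differences are that in~(c) and~(d) you invoke~(a) directly via the compact set $\{x_n\}\cup\{x\}$ (resp.\ $g(K)$) in~$X$, whereas the paper routes through~(b) using the compact set $\{(\gamma_n,x_n)\}\cup\{(\gamma,x)\}$ (resp.\ $(f,g)(L)$) in $C(X,Y)\times X$; both are equivalent.
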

\begin{rem}\label{howto}
With a view towards~(d), recall
that every metrizable topological space~$Z$ is a $k$-space.
Every topological vector space~$Y$ (and every topological
group) is completely regular.
\end{rem}
{\bf Proof of Proposition~\ref{evalu}.}
(a) Let $K\sub X$ be compact.
The restriction map
$\rho_K\colon C(X,Y)\to C(K,Y)$, $\gamma\mto\gamma|_K$
is continuous
(see, e.g., \cite[Remark A.5.10]{GaN}).
Since~$K$ is locally compact, the evaluation map
$\ve\colon C(K,Y)\times K\to Y$, $(\zeta,x)\mto \zeta(x)$
is continuous (as just recalled). Hence
\[
\ev|_{C(X,Y)\times K}=\ve\circ (\rho_K\times \id_K)
\]
is continuous.\\[2.3mm]
(b) Let $\pr_2\colon C(X,Y)\times X\to X$ be the projection
$(\gamma,x)\mto x$. If $K$ is a compact subset of
$C(X,Y)\times X$,
then $L:=\pr_2(K)$ is compact in~$X$. By~(a),
\[
\ev|_K=(\ev|_{C(X,Y)\times L})|_K
\]
is continuous. As a consequence, $\ev(K)=\ev|_K(K)$ is compact.\\[2.3mm]
(c) If $(\gamma_n,x_n)_{n\in\N}$ is a convergent sequence
in $C(X,Y)\times X$ with limit $(\gamma,x)$,
then $K:=\{(\gamma_n,x_n)\colon n\in\N\}\cup\{(\gamma,x)\}$
is a compact subset of $C(X,Y)\times X$. Using~(b), we see that
\[
\ev(\gamma_n,x_n)=\ev|_K(\gamma_n,x_n)\to\ev|_K(\gamma,x)=\ev(\gamma,x)
\]
as $n\to\infty$.\\[2.3mm]
(d) For each compact subset $L\sub Z$,
the image $K:=(f,g)(L)$ is compact in $C(X,Y)\times X$.
Hence $\ev\circ (f,g)=\ev|_K\circ (f,g)|^K$ is continuous,
by~(b). As $Z$ is a $k$-space
(or $Z$ a $k_\R$-space and $Y$ complete regular),
the continuity of the restrictions $\ev\circ (f,g)|_L$
implies continuity of the map
$\ev\circ (f,g)\colon Z\to Y$. $\,\square$\\[2.3mm]
Using Proposition~\ref{evalu},
we get a version of the Chain Rule
for the
evaluation map on $C^\ell$-functions.
We shall use a consequence of the Mean
Value Theorem:
\begin{numba}\label{BGN}
If $E$ and $F$ are locally convex spaces,
$U\sub E$ is an open subset and $f\colon U\to F$ a $C^1$-map,
then $U^{[1]}:=\{(x,y,t)\in U\times E\times\R\colon
x+ty\in U\}$ is an open subset of $U\times E\times\R$
and the map
\[
f^{[1]}\colon U^{[1]}\to F,\;\,
(x,y,t)\mto\left\{
\begin{array}{cl}
\frac{f(x+ty)-f(x)}{t} &\mbox{ if $t\not=0$;}\\
df(x,y) & \mbox{ if $t=0$}
\end{array}\right.
\]
is continuous
(see \cite[Lemma~1.2.10]{GaN},
cf.\ \cite[Proposition~7.4]{BGN}).
The final conclusion remains valid
of $U\sub E$ is a locally convex, regular subset
(see \cite[Lemma~1.4.9]{GaN}).
\end{numba}
\begin{prop}\label{eval-chain}
Let $E$ and $F$ be locally convex spaces and
$V\sub E$ be a locally convex, regular subset.
Let $X$ be a locally convex space
and $U\sub X$ a regular subset.
Let $\ell\in \N_0 \cup\{\infty\}$
and $f\colon U\to C^\ell(V,F)$ as well as
$g\colon U\to V\sub E$ be $C^\ell$-maps.
If $U$ and $U\times X$ are $k_\R$-spaces, then
\[
h\colon U\to F,\quad x\mto f(x)(g(x))
\]
is a $C^\ell$-map. Moreover,
\begin{equation}\label{formderev}
dh(x,y)=df(x,y)(g(x))+df(g(x),dg(x,y))
\end{equation}
for all $(x,y)\in U\times X$, if $\ell\geq 1$.
\end{prop}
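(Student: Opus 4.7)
The plan is induction on $\ell\in\N_0$, with $\ell=\infty$ following by $C^\infty=\bigcap_\ell C^\ell$ as locally convex spaces (Remark~\ref{remcktop}(b)). For the base $\ell=0$, the pair $(f,g)\colon U\to C(V,F)\times V$ is continuous (using that $C^0(V,F)\hookrightarrow C(V,F)$ continuously), and since $U$ is a $k_\R$-space and $F$ is completely regular, Proposition~\ref{evalu}(d) gives continuity of $h=\ev\circ(f,g)$.

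For $\ell\geq 1$, the first step is to derive (\ref{formderev}) and verify that $h$ is $C^1$. For $(x,y)\in U^0\times X$ and small $t\neq 0$, telescoping through $\pm f(x+ty)(g(x))$ and applying the $[1]$-construction of~\ref{BGN} yields
\begin{equation*}
\frac{h(x+ty)-h(x)}{t}\;=\;f^{[1]}(x,y,t)(g(x))\;+\;[f(x+ty)]^{[1]}(g(x),g^{[1]}(x,y,t),t).
\end{equation*}
As $t\to 0$, sequential continuity of evaluation (Proposition~\ref{evalu}(c)) handles the first summand, giving $df(x,y)(g(x))$, while joint continuity of the $[1]$-extension handles the second, giving $d(f(x))(g(x),dg(x,y))$. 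Denote the resulting right-hand side by $H=H_1+H_2$. Each summand extends continuously to $U\times X$ via Proposition~\ref{evalu}(d) on the $k_\R$-space $U\times X$: for $H_1$, the ingredients are $df\colon U\times X\to C^\ell(V,F)$ and $(x,y)\mto g(x)\in V$; for $H_2$, the continuous-linear map $d\colon C^\ell(V,F)\to C^{\ell-1}(V\times E,F)$ makes $d\circ f$ a $C^\ell$-map on $U$, which is paired with the continuous map $(x,y)\mto (g(x),dg(x,y))\in V\times E$. Since $H$ agrees with $d(h|_{U^0})$ on $U^0\times X$ and extends continuously, $h$ is $C^1$ with $dh=H$.

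To close the induction, $dh=H_1+H_2$ must be a $C^{\ell-1}$-map on $U\times X$. The natural strategy is to apply the inductive hypothesis separately to $H_1$, recast as the evaluation of the $C^{\ell-1}$-map $df\colon U\times X\to C^\ell(V,F)$ against the $C^\ell$-map $(x,y)\mto g(x)$ into $V$, and analogously to $H_2$ using $d\circ f$ and $(g,dg)$. This is the main obstacle: applied in this way with $U$ replaced by $U\times X$ and the ambient space replaced by $X\times X$, the inductive hypothesis demands that $(U\times X)\times(X\times X)$ be a $k_\R$-space, which is not automatic from the stated assumptions. I would resolve this either by strengthening the inductive statement to require $U\times X^j$ to be $k_\R$ for all $j\leq\ell$ (a stability condition preserved under the recursion), or by using the characterization of Proposition~\ref{eqCk}(b) to verify directly that each iterated directional derivative $d^{\,(j)}h$ on $U^0\times X^j$ admits a finite-sum expression — obtained by iterated application of the chain-rule identity — whose summands extend continuously to $U\times X^j$, each extension reducing via Proposition~\ref{evalu}(a,b) to joint continuity on compact subsets, where no higher $k_\R$-hypothesis is needed.
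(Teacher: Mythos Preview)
Your inductive outline through the $C^1$ step matches the paper's in-text proof, and you correctly isolate the obstruction: the naive recursion replaces $U$ by $U\times X$ and $X$ by $X^2$, forcing $k_\R$-hypotheses on $U\times X^{2^j-1}$ for all $j\leq\ell$. Your first proposed fix---strengthening the inductive hypothesis---is precisely what the paper does for its restricted inductive argument in the body, but this proves only a weaker statement than the proposition as stated.

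Your second fix has a genuine gap. Passing to $d^{\,(j)}h$ on $U\times X^j$ and invoking Proposition~\ref{evalu}(a,b) indeed gives continuity of each summand on every compact subset of $U\times X^j$; but concluding continuity on all of $U\times X^j$ from that requires $U\times X^j$ to be a $k_\R$-space---which is exactly the hypothesis you are trying to avoid. Proposition~\ref{eqCk}(b) demands continuous extensions on the full product, not merely on compacta.

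The paper's resolution (Appendix~\ref{appcompo}) turns on a different reduction. Rather than the full iterated differentials $d^{\,(j)}h\colon U\times X^j\to F$, one works with the G\^{a}teaux differentials $\delta^j h\colon U\times X\to F$, $(x,y)\mapsto\frac{d^j}{dt^j}\big|_{t=0}h(x+ty)$: these all live on the \emph{fixed} product $U\times X$, regardless of~$j$. Iterating the chain-rule identity yields an explicit finite-sum formula for $\delta^j_x h(y)$ whose summands are compositions or evaluations, each continuous in $(x,y)\in U\times X$ by Proposition~\ref{evalu}(d) (resp.\ Proposition~\ref{compocseq}(d)) under the single assumption that $U\times X$ is~$k_\R$. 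Lemma~\ref{via-gateaux} then recovers $C^\ell$-ness of~$h$ from continuity of the~$\delta^j h$ via the Polarization Formula. Concretely, the paper obtains Proposition~\ref{eval-chain} as the special case $R=\{0\}$ of the composition result Proposition~\ref{strange-compo}, whose proof implements this G\^{a}teaux-differential scheme.
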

The same conclusion holds if~$V\sub E$ is any
regular subset, $f\colon U\to RC^\ell(V,F)$ is $C^\ell$
and $g\colon U\to V$ is an $RC^\ell$-map.
\begin{rem}\label{applyeval}
Note that $U$ and $U\times X$ are $k$-spaces (and hence
$k_\R$-spaces) in the situation of Proposition~\ref{eval-chain}
whenever~$X$ is metrizable.
\end{rem}
{\bf Proof of Proposition~\ref{eval-chain}.}
We prove Proposition~\ref{eval-chain} under the stronger
hypothesis that $U\times X^{2^j-1}$ is a $k_\R$-space
for all $j\in \N_0$ such that $j\leq\ell$.
The proof of the general case,
which is more technical,
can be found in Appendix~\ref{appcompo}.\\[2.3mm]
We may assume that $\ell\in\N_0$; the proof is by induction.
The case $\ell=0$ holds by Proposition~\ref{evalu}(d).
Now assume that $\ell\geq 1$ and assume that the assertion holds
for $\ell-1$ in place of~$\ell$.
Let
\[
\ve\colon C^{\ell-1}(V\times E,F)\times (V\times E) \to F
\]
be the evaluation map and $\pr_1\colon U\times X\to U$,
$(x,y)\mto x$ be the projection.
We claim that (\ref{formderev})
holds for all $(x,y)\in U^0\times X$.
If this is true, then $h$ is $C^1$ and
\begin{equation}\label{henceindu}
dh=\ev\circ (df,g\circ \pr_1)+\ve\circ (df,(g\circ \pr_1,dg)),
\end{equation}
as the right-hand side is a continuous function
by Proposition~\ref{evalu}(d) and extends
$d(h|_{U^0})$. In fact, the right-hand side
of (\ref{henceindu}) is $C^{\ell-1}$
by the inductive hypothesis. Since $h$ is $C^1$
and $dh$ is $C^{\ell-1}$, the map $h$ is~$C^\ell$.
By (\ref{henceindu}), the identity (\ref{formderev})
holds for all $(x,y)\in U\times X$.\\[2.3mm]
To establish the claim, let $(x,y)\in U^0\times X$.
Let $(t_n)_{n\in \N}$ be a sequence in $\R\setminus \{0\}$
such that $x+t_ny\in U^0$ for all $n\in\N$
and $t_n\to 0$ as $n\to\infty$.
Then
\begin{eqnarray*}
\lefteqn{\frac{f(x+t_ny)(g(x+t_ny))-f(x)(g(x))}{t_n}}\qquad\qquad\\
&=&
\frac{f(x+t_ny)-f(x)}{t_n}(g(x+t_ny))
+\frac{f(x)(g(x+t_ny))-f(x)(g(x))}{t_n}
\end{eqnarray*}
for all $n\in\N$. Note that the first summand tends to
$df(x,y)(g(x))$ as $n\to\infty$ by sequential continuity
of the evaluation map. The second summand
can be written as
\[
(f|_{U^0})^{[1]}\left(g(x),\frac{g(x+t_ny)-g(x)}{t_n},t_n\right)
\]
and hence converges to $(f|_{U^0})^{[1]}(g(x),dg(x,y),0)=
df(g(x),dg(x,y))$ as $n\to\infty$.
The claim is established.
$\,\square$\\[2.3mm]
Before we discuss composition maps,
it is useful to record facts and observations
concerning regularity properties of the compact-open
topology,
and compact-open $C^\ell$-toplogies.
\begin{numba}\label{co-reg}
Let $X$ and $Y$ be Hausdorff spaces.
If $Y$ is regular, then also
$C(X,Y)$ is regular when endowed with the compact-open topology
(see \cite[Theorem~3.4.13]{Eng}).
If $Y$ is completely regular,
then also $C(X,Y)$ is completely
regular (see \cite[Theorem~3.4.15]{Eng}).
\end{numba}
\begin{numba}\label{reg-bdle}
If $\pi\colon Y\to X$ is a locally trivial
fibre bundle over a regular
topological space~$X$ whose fibres
are regular topological spaces,
then $Y$ is a regular topological space.\\[2.3mm]
[Let $y\in Y$ and $V$ be an open neighbourhood
of~$y$ in~$Y$. Then $x:=\pi(y)$
has an open neighbourhood
$U$ in~$X$ for which there exists a homeomorphism
$\theta\colon \pi^{-1}(U)\to U\times F_x$
for a regular topological space~$F_x$,
such that $\pr_1\circ\theta=\pi|_{\pi^{-1}(U)}$.
Since $X$ is regular, also its subset~$U$
is regular and hence also $U\times F_x$,
entailing that $\pi^{-1}(U)$
is regular. We therefore find a neighbourhood
$B$ of $y$ in $\pi^{-1}(U)$ which is closed
in $\pi^{-1}(U)$ and such that $B\sub\pi^{-1}(U)\cap V$.
Since $X$ is regular, there exists a neighbourhood~$A$
on~$x$ in~$U$ which is a closed subset of~$X$.
Then $\pi^{-1}(A)$ is closed in~$Y$
and so is its closed subset $C:=B\cap \pi^{-1}(A)$.
Moreover, $C$ is a neighbourhood of~$y$ in~$Y$ and $C\sub V$.\,]
\end{numba}
Recall that a Hausdorff topological
space~$X$ is completely
regular if and only if, for each $x\in X$
and neighbourhood~$U$ of~$x$ in~$X$,
we find a continuous function $f\colon X\to\R$
such that $f(x)\not=0$ and $\Supp(f)\sub U$.
\begin{la}
Let $\ell\in\N_0\cup\infty$
and $M$ be a rough $C^\ell$-manifold
modelled on locally convex spaces.
Then the following holds:
\begin{itemize}
\item[\rm(a)]
The topological space underlying the manifold
$M$ is regular if and
only if~$M$ is completely regular.
\item[\rm(b)]
If $M$ is regular,
then the iterated tangent bundle
$T^k(M)$ is regular
for all $k\in\N_0$ such that $k\leq \ell$.
\item[\rm(c)]
If $N$ is a $C^\ell$-manifold with rough boundary
and~$M$ is regular, then $C^\ell(M,N)$
is regular when endowed with the compact-open
$C^\ell$-topology.
\item[\rm(d)]
If $N$ is a rough $C^\ell$-manifold
and~$M$ is regular, then $RC^\ell(M,N)$
is regular when endowed with the compact-open
$C^\ell$-topology.
\end{itemize}
\end{la}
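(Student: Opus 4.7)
The plan is to treat the four assertions in sequence: the technical content concentrates in (a), while (b)--(d) follow by routine bundle-theoretic and function-space arguments, invoking Remark~\ref{reg-bdle} and~\ref{co-reg}, respectively.

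For (a), every completely regular space is regular, so one direction is automatic. For the converse, given $x\in M$ and an open neighbourhood $U$, I would fix a chart $\phi\colon U_\phi\to V_\phi\sub E_\phi$ with $x\in U_\phi$ and use regularity of~$M$ to choose an open set $V$ with $x\in V\sub\overline V\sub U_\phi\cap U$. Because $\overline V$ is closed in~$M$ and contained in~$U_\phi$, its image $\phi(\overline V)$ is closed in~$V_\phi$, while $\phi(V)$ is open in~$V_\phi$ and contains~$\phi(x)$. Writing $\phi(V)=W\cap V_\phi$ for some open $W\sub E_\phi$ with $\phi(x)\in W$, I would choose a continuous seminorm $p$ on $E_\phi$ and $r>0$ with $B^p_r(\phi(x))\sub W$, so that $B^p_r(\phi(x))\cap V_\phi\sub\phi(V)$. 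Then $g\colon V_\phi\to[0,1]$, $g(y):=\max\{0,\,1-p(y-\phi(x))/r\}$ is continuous, equals $1$ at $\phi(x)$, and has support in $V_\phi$ contained in $\phi(\overline V)$. Extending $g\circ\phi$ by zero outside $\overline V$ then defines a continuous function $f\colon M\to[0,1]$ (continuity follows because $U_\phi$ and $M\setminus\overline V$ form an open cover on which both definitions agree) satisfying $f(x)=1$ and $\Supp(f)\sub\overline V\sub U$, which witnesses complete regularity.

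For (b), I would induct on~$k$. The case $k=0$ is the hypothesis. For the step, $T^kM=T(T^{k-1}M)\to T^{k-1}M$ is a locally trivial fibre bundle whose typical fibres are the locally convex modelling spaces of $T^{k-1}M$; these are regular as topological spaces. By induction $T^{k-1}M$ is regular, so Remark~\ref{reg-bdle} yields regularity of $T^kM$.

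For (c) and (d), by the definition of the compact-open $C^\ell$-topology on mapping spaces between (rough) manifolds (cf.~the footnote to Definition~\ref{def-sections} and \cite{Rou}), the assignment $\gamma\mto(T^j\gamma)_{0\le j\le\ell}$ is a topological embedding of $C^\ell(M,N)$, respectively $RC^\ell(M,N)$, into $\prod_{0\le j\le\ell}C(T^jM,T^jN)$ endowed with the product of the compact-open topologies. Part~(b) applied to~$M$ makes each $T^jM$ regular, and applying (a) and (b) to~$N$ (which is regular by the same chart-based bump function construction carried out for~$M$ in~(a)) makes each $T^jN$ regular. Then \ref{co-reg} forces each factor $C(T^jM,T^jN)$ to be regular, and regularity is inherited by arbitrary products and by subspaces; hence $C^\ell(M,N)$ and $RC^\ell(M,N)$ are regular. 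The main delicate step throughout is (a): ensuring that the chart-level bump function extends by zero to a globally continuous function on~$M$, which requires the careful placement of $\overline V$ inside $U_\phi\cap U$ supplied by regularity of~$M$.
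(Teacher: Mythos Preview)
Your approach to (a) and (b) matches the paper's almost exactly: for (a) you construct an explicit seminorm-based bump on the chart range and extend by zero, while the paper simply cites complete regularity of the locally convex chart range $V_\phi$ to obtain such a bump; (b) is the identical induction via Remark~\ref{reg-bdle}.

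In (c) and (d), however, there is a genuine gap. You assert that $N$ is regular ``by the same chart-based bump function construction carried out for~$M$ in~(a)'', but (a) does not establish regularity: it shows that a manifold \emph{already assumed regular} is completely regular. The bump construction in (a) needs regularity as input precisely to secure a closed $x$-neighbourhood inside the chart domain; local regularity (which any manifold modelled on locally convex spaces enjoys, since its chart ranges sit in regular spaces) does not imply global regularity of a Hausdorff space. Compare the paper's proof: it never uses regularity of~$M$ at all; instead it applies (b) directly to~$N$ to obtain $T^kN$ regular, then (a) to make $T^kN$ completely regular, then~\ref{co-reg} (which needs only the target space to be regular) to make $C(T^kM,T^kN)$ completely regular, and finally the initial-topology description via the maps $T^k$ to conclude. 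This strongly suggests the stated hypothesis ``$M$ is regular'' in (c) and (d) is a slip for ``$N$ is regular''; under that reading your argument works once you take regularity of~$N$ as a hypothesis rather than trying to derive it. Your detour through regularity of $T^jM$ is then harmless but unnecessary, since \ref{co-reg} imposes no condition on the domain.
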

\begin{proof}
(a) If $M$ is regular, let $x\in M$ and $U$ be a neighbourhood
of~$x$ in~$M$. let $\phi\colon U_\phi\to V_\phi$
be a chart of~$M$ such that $x\in U_\phi$.
Then $V_\phi$ is a subset of a locally convex space~$E$.
Since~$M$ is regular, there exists a closed subset $A$ of~$M$
such that $A\sub U\cap U_\phi$ and~$A$ is a neighbourhood of~$x$ in~$M$.
As~$E$ is completely regular, also $V_\phi$ is completely
regular. We therefore find a continuous function
$h\colon V_\phi\to \R$ such that $h(\phi(x))\not=0$
and $\Supp(h)\sub \phi(A)$.
Then
\[
f\colon M\to \R,\quad y\mto \left\{
\begin{array}{cl}
0 & \mbox{ if $y\in M\setminus A$;}\\
h(\phi(y)) & \mbox{ if $y\in U_\phi$}
\end{array}\right.
\]
is a continuous function such that $f(x)\not=0$
and $\Supp(f)\sub A\sub U$.\\[2.3mm]
(b) If $\ell\geq 1$, then $TM$ is regular
by \ref{reg-bdle}.
The assertion follows by induction.\\[2.3mm]
(c) By (b), $T^kN$ is regular
for all $k\in\N_0$ such that $k\leq\ell$.
Hence $T^kN$ is completely regular (by (a))
whence also $C(T^kM,T^kN)$ is completely regular,
as recalled in~\ref{co-reg}.
The topology on
$C^\ell(M,N)$ is initial with respect to the mapping
$T^k\colon C^\ell(M,N)\to C(T^kM,T^kN)$,
and the topology on $C(T^kM,T^kN)$
is initial with respect to the set of all
continuous maps $f\in C(T^kM,T^kN)\to\R$.
The topology on $C^\ell(M,N)$ is therefore
initial with respect to the continuous real-valued
mappings
$f\circ T^k\colon C^\ell(M,N)\to\R$.
The assertion follows.\\[2.3mm]
(d) We can repeat the proof of (c) with $RC^\ell(M,N)$
in place of $C^\ell(M,N)$.
\end{proof}
\begin{prop}\label{compocseq}
Let $X$, $Y$, and $Z$ be Hausdorff topological spaces.
Then the composition map
\[
c_{X,Y,Z}\colon C(Y,Z)\times C(X,Y)\to C(X,Z),
\;(\gamma,\eta)\mto \gamma\circ\eta
\]
has the following properties:
\begin{itemize}
\item[\rm(a)]
For each compact subset $K\sub C(X,Y)$,
the restriction of $c_{X,Y,Z}$ to a map
$C(Y,Z)\times K\to C(X,Z)$ is continuous.
\item[\rm(b)]
For each compact subset $K\sub C(Y,Z)\times C(X,Y)$,
the restriction $c_{X,Y,Z}|_K$ is continuous.
Notably, $c_{X,Y,Z}(K)$ is compact.
\item[\rm(c)]
$c_{X,Y,Z}$ is sequentially continuous.
\item[\rm(d)]
Let $A$ be a Hausdorff topological space
and $f\colon A\to C(Y,Z)$ as well as $g\colon A\to C(X,Y)$
be continuous mappings.
If $A$ is a $k$-space or $A$ is a $k_\R$-space
and~$Z$ completely regular, then
\[
c_{X,Y,Z}\circ (f,g)\colon A\to C(X,Z),\quad
x\mto f(x)\circ g(x)
\]
is a continuous mapping.
\end{itemize}
\end{prop}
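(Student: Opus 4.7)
The plan is to mirror the structure of Proposition~\ref{evalu}, using the evaluation results established there as black boxes. Part~(a) is the key technical step; the remaining parts follow by short formal arguments. Throughout, I abbreviate the two evaluation maps as $\ev_{X,Y}\colon C(X,Y)\times X\to Y$ and $\ev_{Y,Z}\colon C(Y,Z)\times Y\to Z$.

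For~(a), fix a compact subset $K\sub C(X,Y)$. The strategy is to introduce, for each compact subset $L\sub X$, the ``three-variable'' evaluation map
\[
\Phi_L\colon C(Y,Z)\times K\times L\to Z,\quad (\gamma,\eta,x)\mto \gamma(\eta(x)),
\]
and to establish its continuity. This comes by factoring
\[
\Phi_L\;=\;\bigl(\ev_{Y,Z}|_{C(Y,Z)\times B}\bigr)\,\circ\,\bigl(\id_{C(Y,Z)}\times (\ev_{X,Y}|_{K\times L})\bigr),
\]
where $B:=\ev_{X,Y}(K\times L)\sub Y$ is compact by Proposition~\ref{evalu}(b). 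The inner factor is continuous by Proposition~\ref{evalu}(b), and the outer factor by Proposition~\ref{evalu}(a) applied to the compact set $B\sub Y$. Once $\Phi_L$ is continuous, continuity of $c_{X,Y,Z}|_{C(Y,Z)\times K}$ into $C(X,Z)$ is a finite-cover exercise: given $(\gamma_0,\eta_0)$ and a subbasic neighbourhood $\lfloor L,U\rfloor$ of $c_{X,Y,Z}(\gamma_0,\eta_0)$ with $L\sub X$ compact and $U\sub Z$ open, pick for each $x\in L$ a product neighbourhood $N_x\times W_x$ of $(\gamma_0,\eta_0,x)$ in $(C(Y,Z)\times K)\times L$ mapped into $U$ by $\Phi_L$, extract a finite subcover $W_{x_1},\ldots,W_{x_n}$ of $L$, and take $N:=N_{x_1}\cap\cdots\cap N_{x_n}$; then $c_{X,Y,Z}(N)\sub\lfloor L,U\rfloor$.

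For~(b), if $K\sub C(Y,Z)\times C(X,Y)$ is compact then the projection $K_2:=\pr_2(K)\sub C(X,Y)$ is compact, so $c_{X,Y,Z}|_K$ is a restriction of the continuous map from~(a) applied with $K_2$, and $c_{X,Y,Z}(K)$ is its continuous image. For~(c), given a convergent sequence $(\gamma_n,\eta_n)\to(\gamma,\eta)$ in $C(Y,Z)\times C(X,Y)$, the set $\{(\gamma_n,\eta_n):n\in\N\}\cup\{(\gamma,\eta)\}$ is compact, and~(b) delivers sequential continuity at $(\gamma,\eta)$.

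For~(d), the argument splits according to the hypothesis on~$A$. If $A$ is a $k$-space, then for each compact $L\sub A$ the image $(f,g)(L)$ is compact in $C(Y,Z)\times C(X,Y)$, so $(c_{X,Y,Z}\circ(f,g))|_L$ is continuous by~(b); being continuous on every compact subset, $c_{X,Y,Z}\circ(f,g)$ is continuous by the $k$-space property. If $A$ is only a $k_\R$-space but $Z$ is completely regular, then $C(X,Z)$ is completely regular by~\ref{co-reg}, so its topology is the initial topology with respect to continuous real-valued maps $\phi\colon C(X,Z)\to\R$; it then suffices to check continuity of each $\phi\circ c_{X,Y,Z}\circ(f,g)\colon A\to\R$, which follows from the compact-$L$ case together with the definition of $k_\R$-space. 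The main obstacle I anticipate is setting up the factorization of $\Phi_L$ cleanly so that parts~(a) and~(b) of Proposition~\ref{evalu} combine without hidden local-compactness assumptions on~$Y$; the finite-cover step and the bootstrapping from~(a) to (b)--(d) are then routine.
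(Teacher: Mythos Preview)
Your proof is correct and follows essentially the same architecture as the paper's: parts~(b), (c), and~(d) are argued identically (projection to $K_2=\pr_2(K)$, the convergent-sequence-is-compact trick, and reduction to compact $L\sub A$ using complete regularity of $C(X,Z)$ when $Z$ is completely regular). The only difference is in part~(a): both arguments hinge on the observation that $B:=\ev_{X,Y}(K\times L)\sub Y$ is compact for each compact $L\sub X$, but the paper then factors $\rho_L\circ h$ through the composition map $c_{L,B,Z}$ (continuous because $B$ is locally compact) together with the restriction maps $C(Y,Z)\to C(B,Z)$ and $K\to C(L,Y)$, whereas you factor the three-variable evaluation $\Phi_L$ through $\ev_{Y,Z}|_{C(Y,Z)\times B}$ and $\ev_{X,Y}|_{K\times L}$ and then run a tube-lemma/finite-cover argument by hand. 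Your route is a touch more elementary in that it relies only on Proposition~\ref{evalu} rather than the standard continuity of composition with locally compact middle space; the paper's route is slightly slicker by invoking that fact directly. Neither approach has any hidden local-compactness requirement on~$Y$.
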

\begin{proof}
(a) Given a compact subset $K\sub C(X,Y)$,
consider $h\colon C(Y,Z)\times K\to C(X,Z)$,
$(\gamma,\eta)\mto\gamma\circ\eta$.
The compact-open topology~$\cO$ on $C(X,Z)$ is initial
with respect to the restriction maps $\rho_L\colon C(X,Z)\to C(L,Z)$
for $K\in\cK(X)$; in fact, each $\rho_L$ is continuous
(see, e.g., \cite[Remark~A.5.10]{GaN})
and the pre-images $\rho_K^{-1}(C(L,U))=\lfloor L,U\rfloor$
generate~$\cO$ for $L\in \cK(X)$ and $U$
ranging through the open subsets of~$Z$.
We therefore only need to show that $\rho_L\circ h$
is continuous for all $L\in\cK(X)$.
Now
\[
B:=\{\gamma(x)\colon \gamma\in K,\;x\in L\}
\]
is a compact subset of~$Y$ by Proposition~\ref{evalu}(b)
and $\eta|_L\in C(L,B)$ for all $\eta\in K$.
The restriction maps
$r_B\colon C(Y,Z)\to C(B,Z)$ and $s_L\colon C(X,Y)\supseteq
K\to C(L,Y)$
are continuous and also the composition map $c_{L,B,Z}$
is continuous, by local compactness of~$B$.
Since
\[
\rho_L\circ h=c_{L,B,Z}\circ (r_B\times s_L),
\]
we see that $\rho_L\circ h$ (and hence~$h$) is continuous.\\[2.3mm]
(b) Let $\pr_2\colon C(Y,Z)\times C(X,Y)\to C(X,Y)$,
$(\gamma,\eta)\mto\eta$ be the projection.
Given $K$ as in~(b), also $L:=\pr_2(K)$
is compact. Using~(a), we see that
\[
c_{X,Y,Z}|_K=(c_{X,Y,Z}|_{C(Y,Z)\times L})|_K
\]
is continuous. Hence $c_{X,Y,Z}(K)=(c_{X,Y,Z}|_K)(K)$ is compact.\\[2.3mm]
(c)
$(\gamma_n,\eta_n)_{n\in\N}$ be a sequence in $C(Y,Z)\times
C(X,Y)$
which converges to
$(\gamma,\eta)\in C(Y,Z)\times C(X,Y)$.
Then $K:=\{(\gamma_n,\eta_n)\colon n\in\N\}\cup\{(\gamma,\eta)\}$
is a compact subset of $C(Y,Z)\times C(X,Y)$.
Then
\[
(\gamma_n\circ \eta_n)=c_{X,Y,Z}|_K(\gamma_n,\eta_n)
\to c_{X,Y,Z}|_K(\gamma,\eta)=\gamma\circ\eta
\]
as $n\to\infty$, using~(b).\\[2.3mm]
(d) Note that if $Z$ is completely regular,
then also $C(X,Z)$ is completely regular.
In fact, as the map
\[
\phi\colon Z\to \prod_{f\in C(Z,\R)}\R=:P
\]
taking $x\in Z$ to $(f(x))_{f\in C(Z,\R)}$
is a topological embedding, also
\[
C(X,\phi)\colon C(X,Z)\to C(X,P),\quad \gamma\mto \phi\circ \gamma
\]
is a topological embedding (see, e.g.,
\cite[Lemma~A.5.5]{GaN}).
Now $P$ is a locally convex space and hence also
$C(X,P)$ is a locally convex space,
entailing that $C(X,P)$ (and hence also
$C(X,Z)$) is completely regular.\\[2.3mm]
If $A$ is a $k$-space or $A$ is a $k_\R$-space and~$Z$
completely regular, then the map $c_{X,Y,Z}\circ (f,g)\colon A\to C(X,Z)$
will be continuous
if we can show that its restriction to~$L$
is continuous for each compact subset $L\sub A$. But $K:=(f,g)(L)$
is a compact subset of $C(Y,Z)\times C(X,Y)$.
Hence $c_{X,Y,Z}\circ (f,g)|_L=c_{X,Y,Z}|_K\circ (\gamma,\eta)|_L^K$
is continuous, by~(b).
\end{proof}
We mention a direct consequence.
\begin{prop}\label{compoclseq}
Let $\ell\in \N_0\cup\{\infty\}$
and $L$, $M$, and $N$ be $C^\ell$-manifolds
with rough boundary, which are modelled on locally
convex spaces. Then the composition map
\[
c^\ell_{L,M,N}\colon C^\ell(M,N)\times C^\ell(L,M)\to C^\ell(L,N),\quad
(\gamma,\eta)\mto\gamma\circ\eta
\]
has the following properties:
\begin{itemize}
\item[\rm(a)]
For each compact subset $K\sub C^\ell(L,M)$,
the restriction of $c^\ell_{L,M,N}$ to a map
$C^\ell(M,N)\times K\to C(L,N)$ is continuous.
\item[\rm(b)]
For each compact subset $K\sub C^\ell(M,N)\times C(L,M)$,
the restriction $c^\ell_{L,M,N}|_K$ is continuous.
\item[\rm(c)]
$c^\ell_{L,M,N}$ is sequentially continuous.
\item[\rm(d)]
Let $X$ be a Hausdorff topological space
and $f\colon X\to C^\ell(M,N)$ as well as $g\colon X\to C^\ell(L,M)$
be a continuous map.
If~$X$ is a $k$-space or~$X$ is a $k_\R$-space
and~$N$ is completely regular, then
\[
c^\ell_{L,M,N}\circ (f,g)\colon X\to C^\ell(L,N),\quad
x\mto f(x)\circ g(x)
\]
is a continuous mapping.
\end{itemize}
\end{prop}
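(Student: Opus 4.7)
\textbf{Proof proposal for Proposition~\ref{compoclseq}.}
The plan is to reduce each of the four assertions to the corresponding part of Proposition~\ref{compocseq} by exploiting the fact that the compact-open $C^\ell$-topology on spaces like $C^\ell(L,N)$ is, by definition, the initial topology with respect to the iterated tangent maps
\[
T^j\colon C^\ell(L,N)\to C(T^jL,T^jN),\quad j\in\N_0,\ j\leq \ell,
\]
together with the functorial identity $T^j(\gamma\circ\eta)=T^j\gamma\circ T^j\eta$. This identity means that for each $j\leq \ell$, the diagram
\[
T^j\circ c^\ell_{L,M,N}\;=\;c_{T^jL,T^jM,T^jN}\circ(T^j\times T^j)
\]
commutes as a map $C^\ell(M,N)\times C^\ell(L,M)\to C(T^jL,T^jN)$. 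Since each $T^j$ is continuous by construction of the compact-open $C^\ell$-topology, the image of a compact set under $T^j$ (or $T^j\times T^j$) is again compact.

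For part~(a), let $K\sub C^\ell(L,M)$ be compact; then $T^j(K)\sub C(T^jL,T^jM)$ is compact for each $j\leq \ell$, and Proposition~\ref{compocseq}(a) applied to $(T^jL,T^jM,T^jN)$ shows that $c_{T^jL,T^jM,T^jN}\circ(T^j\times T^j)$ is continuous on $C^\ell(M,N)\times K$. By the initial-topology characterization, $c^\ell_{L,M,N}|_{C^\ell(M,N)\times K}$ is continuous. Part~(b) is entirely analogous, using Proposition~\ref{compocseq}(b) applied to the compact set $(T^j\times T^j)(K)\sub C(T^jM,T^jN)\times C(T^jL,T^jM)$; the asserted compactness of $c^\ell_{L,M,N}(K)$ then follows from continuity of $c^\ell_{L,M,N}|_K$. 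Part~(c) is immediate: given a convergent sequence $(\gamma_n,\eta_n)\to(\gamma,\eta)$, each $T^j$ is continuous, so $(T^j\gamma_n,T^j\eta_n)\to(T^j\gamma,T^j\eta)$, and Proposition~\ref{compocseq}(c) yields $T^j(\gamma_n\circ\eta_n)\to T^j(\gamma\circ\eta)$ in $C(T^jL,T^jN)$ for every $j\leq\ell$; by the initial topology, $\gamma_n\circ\eta_n\to\gamma\circ\eta$ in $C^\ell(L,N)$.

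For part~(d), given continuous $f\colon X\to C^\ell(M,N)$ and $g\colon X\to C^\ell(L,M)$, the compositions $T^j\circ f$ and $T^j\circ g$ are continuous into the respective spaces of continuous mappings, and we must show that $c^\ell_{L,M,N}\circ(f,g)$ is continuous, equivalently that $c_{T^jL,T^jM,T^jN}\circ(T^j\circ f,T^j\circ g)$ is continuous for each $j\leq\ell$. In the $k$-space case this follows directly from Proposition~\ref{compocseq}(d). In the $k_\R$-space case, $N$ is completely regular by hypothesis; invoking the lemma established earlier that the iterated tangent bundle $T^jN$ of a regular rough $C^\ell$-manifold is regular (hence completely regular), Proposition~\ref{compocseq}(d) applies again. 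The only slight subtlety—and the one point that needs a bit of care—is verifying that complete regularity of $N$ does pass to $T^jN$ for $j\geq 1$; this is exactly where the regularity lemma for iterated tangent bundles is used, and constitutes the main technical input beyond the bare functorial reduction.
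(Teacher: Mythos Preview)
Your proposal is correct and follows essentially the same approach as the paper: reduce to Proposition~\ref{compocseq} via the initial topology given by the $T^j$ maps and the functorial identity $T^j(\gamma\circ\eta)=T^j\gamma\circ T^j\eta$. The paper's proof actually spells out only the sequential case~(c) and leaves the remaining parts implicit, so your treatment is in fact more complete; your identification of the regularity lemma for iterated tangent bundles as the needed input for part~(d) in the $k_\R$-case is also exactly right.
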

\begin{proof}
Let $\gamma_n$ converge to $\gamma$ in $C^\ell(M,N)$
and $\eta_n$ converge to $\eta$ in $C^\ell(L,M)$.
The topology on $C^\ell(K,M)$ is initial with respect
to the maps $T^j\colon C^\ell(K,M)\to C(T^jK,T^jM)$
for $j\in\N_0$ such that $j\leq\ell$,
using the compact-open topology on spaces of continuous
functions. To see that $\gamma_n\circ\eta_n\to \gamma\circ\eta$
in $C^\ell(L,N)$, we therefore only need to show that
$T^j(\gamma_n\circ\eta_n)\to T^j(\gamma\circ\eta)$
in $C(T^jL,T^jN)$ as $n\to\infty$,
for all $j$ as before.
But
\begin{eqnarray*}
T^j(\gamma\circ\eta)&=& (T^j\gamma)\circ (T^j\eta)=c_{T^jL,T^jM,T^jN}(T^j\gamma_n,
T^j\eta_n)\\
&\to &
c_{T^jL,T^jM,T^jN}(T^j\gamma,T^j\eta)=
T^j \gamma\circ T^j\eta=T^j(\gamma\circ\eta),
\end{eqnarray*}
by Proposition~\ref{compocseq}.
\end{proof}
The same conclusion holds if $M$ and $N$ are as before
but $L$ merely
is a rough $C^\ell$-manifold modelled on locally convex
spaces. It also holds for the composition map
on spaces of $RC^\ell$-maps between rough $C^\ell$-manifolds
$L$, $M$, and~$N$.\\[2.3mm]
We shall not use the following more technical
result, the proof of which can
be looked up in Appendix~\ref{appcompo}.
\begin{prop}\label{strange-compo}
Let $E$, $F$, $X$, and $Z$ be locally convex spaces,
$A\sub Z$, $R\sub X$, and $S\sub E$
be regular subsets, and
$k,\ell\in \N_0\cup\{\infty\}$.
Let $\gamma\colon A\to C^{\ell+k}(S,F)$
and $\eta\colon A\to C^\ell(R,E)$
be $C^k$-maps such that $\eta(z)(R)\sub S$
for all $z\in A$;
if~$S$ is not locally convex, assume, moreover,
that $\eta(z)(R^0)\sub S^0$ for all $z\in A$.
If $\ell=0$,
assume that~$R$ is a $k_\R$-space;
if $\ell\geq 1$, assume that $R\times X$ is a $k_\R$-space.
If, moreover, $A\times Z$
is a $k_\R$-space, then
\[
\zeta\colon A\to C^\ell(R,F),\quad z\mto \gamma(z)\circ\eta(z)
\]
is a $C^k$-map.
\end{prop}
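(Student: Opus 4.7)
The plan is to argue by induction on~$k$, with the crucial step being the base case $k=0$ (continuity of $\zeta$); the inductive step then comes from differentiating through and re-applying the proposition in smaller form, modelled on the proof of Proposition~\ref{eval-chain}. First I would reduce to finite $k$ and $\ell$ via the projective-limit decomposition $C^\infty(R,F)=\pl\,C^n(R,F)$ of Remark~\ref{remcktop}(b) and the analogous reduction for the target of~$\gamma$. By Definition~\ref{defcktop}, proving $\zeta$ is $C^0$ then amounts to showing, for each $j\in\{0,\ldots,\ell\}$, that the map
\[
A\to C(R\times X^j,F),\qquad z\mto d^{(j)}(\gamma(z)\circ\eta(z))
\]
is continuous in the compact-open topology.

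For the base case $k=0$, I apply the Chain Rule (Proposition~\ref{unchained}) iteratively on the interior~$R^0$ --- the hypothesis $\eta(z)(R^0)\sub S^0$ (when $S$ is not locally convex) makes each iteration legitimate --- and extend by continuity to~$R$. This yields a Fa\`a~di~Bruno expansion in which $d^{(j)}(\gamma(z)\circ\eta(z))(x,y_1,\ldots,y_j)$ is a universal finite sum of terms
\[
d^{(i)}\gamma(z)\bigl(\eta(z)(x),\,d^{(m_1)}\eta(z)(x,y_{I_1}),\ldots,d^{(m_i)}\eta(z)(x,y_{I_i})\bigr)
\]
indexed by partitions $\{I_1,\ldots,I_i\}$ of $\{1,\ldots,j\}$ of block-sizes $m_1,\ldots,m_i$. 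Each such term, viewed as a map $A\times R\times X^j\to F$, is continuous because it is built from the continuous assignments $z\mto d^{(i)}\gamma(z)\in C(S\times E^i,F)$ and $z\mto d^{(m)}\eta(z)\in C(R\times X^m,E)$, combined with evaluations. To lift this to continuity of the curried map $A\to C(R\times X^j,F)$, I invoke the $k_\R$-hypothesis on $R\times X$ (the higher product $R\times X^j$ can be absorbed into the parameter part by reshuffling) together with the exponential-law technique underlying Proposition~\ref{evalu}(d): continuity of the joint map on each compact $K\sub R\times X^j$ transposes to continuity $A\to C(K,F)$ by local compactness of~$K$, and the $k_\R$-property globalises this to continuity into $C(R\times X^j,F)$.

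For the inductive step, assume $k\geq 1$ and the proposition holds with $k-1$ in place of~$k$. Differentiating $\zeta(z)(x)=\gamma(z)(\eta(z)(x))$ at interior points $z\in A^0$ in direction $w\in Z$ yields
\[
d\zeta(z,w)\;=\;\bigl(d\gamma(z,w)\bigr)\circ\eta(z)\;+\;\bigl[x\mto d\gamma(z)\bigl(\eta(z)(x),d\eta(z,w)(x)\bigr)\bigr].
\]
Both summands fit the template of the proposition with parameter space $A\times Z$ (a $k_\R$-space by hypothesis): in the first, $(z,w)\mto d\gamma(z,w)\in C^{\ell+k-1}(S,F)$ is $C^{k-1}$ as $\gamma$ is $C^k$, and $(z,w)\mto\eta(z)\in C^\ell(R,E)$ is $C^{k-1}$; in the second, the outer family is $(z,w)\mto d\gamma(z)\in C^{\ell+k-1}(S\times E,F)$ and the inner map is $(z,w)\mto(\eta(z),d\eta(z,w))\colon R\to S\times E$, again $C^{k-1}$. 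The inductive hypothesis therefore produces a continuous extension of $d\zeta$ to a $C^{k-1}$-map $A\times Z\to C^\ell(R,F)$, whence $\zeta$ is $C^k$.

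The main obstacle is the base case: passing from the pointwise continuity of the evaluated map $A\times R\times X^j\to F$ to continuity into the compact-open topology on $C(R\times X^j,F)$ is exactly where the $k_\R$-hypothesis on $R\times X$ is essential, and the argument must parallel Proposition~\ref{eval-chain} carefully, handling the many Fa\`a~di~Bruno summands uniformly. A secondary technical point is verifying in the inductive step that the second summand genuinely falls under the proposition after enlarging $S$ to the regular set $S\times E\sub E\times E$, including checking that $\eta(z)(R^0)\sub S^0$ passes to the corresponding condition for the enlarged pair when $S$ is not locally convex.
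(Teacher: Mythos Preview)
Your inductive step contains a genuine gap. When you differentiate once and invoke the proposition with $k-1$ on the enlarged parameter domain $A\times Z\sub Z\times Z$, the hypothesis to be verified is that $(A\times Z)\times(Z\times Z)$ is a $k_\R$-space, not merely $A\times Z$; iterating down, your scheme requires $A\times Z^m$ to be $k_\R$ for $m$ growing with~$k$, which the stated hypothesis does not provide. The paper itself signals exactly this obstruction: the in-text proof of Proposition~\ref{eval-chain}, on which you model your induction, is carried out only under the stronger hypothesis that $U\times X^{2^j-1}$ is $k_\R$ for all $j\leq\ell$, and the full statement of Proposition~\ref{eval-chain} is then \emph{deduced from} Proposition~\ref{strange-compo} in Appendix~\ref{appcompo}, not the other way round.

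The paper avoids parameter-space enlargement by working with G\^ateaux differentials in a single direction. It establishes an explicit formula for $\delta^k_z\zeta(w)$ as a finite sum of compositions (the expression~$(*)$ in Appendix~\ref{appcompo}), shows each summand continuous as a map $A\times Z\to C(R,F)$ via Proposition~\ref{compocseq}(d) --- which needs only $A\times Z$ to be $k_\R$ --- and then invokes Lemma~\ref{via-gateaux} to conclude that $\zeta$ is $C^k$. Since only one direction $w\in Z$ ever appears, the argument stays on $A\times Z$ throughout. The passage from general~$\ell$ to $\ell=0$ is likewise handled differently: not by Fa\`a di Bruno on $d^{(j)}$, but by functoriality of the iterated tangent, $T^j(\gamma(z)\circ\eta(z))=T^j\gamma(z)\circ T^j\eta(z)$, together with the embedding $C^\ell(R,F)\hookrightarrow\prod_{j\leq\ell}C(T^jR,T^jF)$. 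Your base case $k=0$ is salvageable (each Fa\`a di Bruno summand is already a composition to which Proposition~\ref{compocseq}(d) applies directly, so the ``reshuffling'' is unneeded), but the induction on~$k$ cannot close under the given hypotheses without the G\^ateaux device.
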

If $\ell=0$, the conclusion holds more generally
if $A$ and $Z$ are as before and
$R$ is any Hausdorff topological space
which is a $k_\R$-space.\\[2.3mm]
Recall that the space $\cL(X,F)$ of continuous
linear operators is a closed vector subspace
of $C^\infty(X,F)$ for all
locally convex spaces~$X$ and $F$; moreover,
the compact-open
topology on $\cL(X,F)$ coincides with the compact-open
$C^\infty$-topology (see Lemma~\ref{co=coinft}).
We therefore get the following
immediate corollary to Proposition~\ref{strange-compo}
and Proposition~\ref{compocseq}:
\begin{cor}\label{compo-spec}
Let $Z$, $X$, $E$, and $F$ be locally convex spaces,
$A\sub Z$ be a regular subset and $k\in \N_0\cup\{\infty\}$.
If $\gamma\colon A\to \cL(E,F)$ and $\eta\colon A\to\cL(X,E)$
are $C^k$-maps and $A\times Z$ is
a $k_\R$-space $($or $k=0$ and $A$ is a $k_\R$-space$)$,
then also the following map is $C^k$:
\[
A\to \cL(X,F),\quad z\mto\gamma(z)\circ\eta(z).
\]
Here $\cL(X,E)$, $\cL(E,F)$, and $\cL(X,F)$ carry
the compact-open topology.
\end{cor}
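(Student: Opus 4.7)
The plan is to exploit the closed topological embedding $\cL(V,W)\hookrightarrow C(V,W)$ (for the compact-open topologies) available for any locally convex spaces $V,W$: by Lemma~\ref{co=coinft} the compact-open topology on $\cL(V,W)$ agrees with its compact-open $C^\infty$-topology and with the topology induced from the compact-open topology on $C(V,W)$, while linearity is a closed condition under pointwise limits. Via Lemma~\ref{intoclo} this reduces the corollary to showing that $\zeta\colon z\mto\gamma(z)\circ\eta(z)$ is a $C^k$-map $A\to C(X,F)$, once $\gamma$ and $\eta$ are viewed as $C^k$-maps $A\to C(E,F)$ and $A\to C(X,E)$, respectively.

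The case $k=0$ is immediate from Proposition~\ref{compocseq}(d) applied with the prop's $Y:=E$ and $Z:=F$: since $F$ is completely regular (being a locally convex space) and $A$ is a $k_\R$-space by hypothesis, $\zeta$ is continuous into $C(X,F)$. For $k\geq 1$, I would reduce to compact pieces of $X$: the compact-open topology on $C(X,F)$ is initial with respect to the restriction maps $r_K\colon C(X,F)\to C(K,F)$, $\gamma\mto\gamma|_K$ for $K\in\cK(X)$, and $C(X,F)$ embeds as a closed subspace of $\prod_{K\in\cK(X)}C(K,F)$ under these restrictions. Combining Lemma~\ref{intoclo} with the coordinatewise characterization of $C^k$-maps into a product of locally convex spaces, it suffices to prove that $r_K\circ\zeta\colon z\mto(\gamma(z)\circ\eta(z))|_K$ is $C^k$ into $C(K,F)$ for every compact $K\sub X$.

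Fix such a $K$. The map $\tilde\gamma\colon A\to C^k(E,F)$, $z\mto\gamma(z)$, is $C^k$ as the composition of $\gamma\colon A\to\cL(E,F)$ with the continuous linear inclusion $\cL(E,F)\hookrightarrow C^k(E,F)$; likewise $\tilde\eta_K\colon A\to C(K,E)$, $z\mto\eta(z)|_K$, is $C^k$ as the composition of $\eta\colon A\to\cL(X,E)$ with the continuous linear restriction $T\mto T|_K$. Now I apply Proposition~\ref{strange-compo} with $R:=K\sub X$, $S:=E$, and $\ell:=0$: the compact Hausdorff space $K$ is a $k$-space, hence a $k_\R$-space; the containment $\tilde\eta_K(z)(K)\sub E=S$ is automatic (and $S$ is locally convex, so the secondary interior hypothesis is not needed); and $A\times Z$ is a $k_\R$-space by assumption. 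Its conclusion supplies that $z\mto\tilde\gamma(z)\circ\tilde\eta_K(z)=(\gamma(z)\circ\eta(z))|_K$ is $C^k$ into $C(K,F)$, completing the argument. The main obstacle, and the reason the detour through compact $K$ is forced on us, is that Proposition~\ref{strange-compo} cannot be applied directly with $R:=X$, because neither the locally convex space $X$ nor $X\times X$ is assumed to be a $k_\R$-space; compact subsets of $X$ are automatically $k$-spaces, and passage through them is exactly what makes $\cL(X,F)$ with its compact-open topology accessible to the machinery of Proposition~\ref{strange-compo}.
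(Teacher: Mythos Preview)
Your strategy is essentially the paper's: both reduce the corollary to Proposition~\ref{strange-compo} and Proposition~\ref{compocseq} via the identification of $\cL(\,\cdot\,,\,\cdot\,)$ with a closed subspace of a function space. There is, however, a real gap in your reduction step for $k\geq 1$. Your assertion that $C(X,F)$ embeds as a \emph{closed} subspace of $\prod_{K\in\cK(X)}C(K,F)$ is equivalent to saying that every $F$-valued function on~$X$ which is continuous on each compact subset is globally continuous; since~$F$ is completely regular, this is precisely the statement that~$X$ is a $k_\R$-space. But that is exactly the hypothesis you correctly observe is \emph{not} available for~$X$, so your detour through compact~$K$ does not escape the obstruction---it only relocates it. Without closedness, Lemma~\ref{intoclo} does not apply, and having each $r_K\circ\zeta$ be $C^k$ does not automatically make $\zeta\colon A\to C(X,F)$ a $C^k$-map. (Note that a direct application of Proposition~\ref{strange-compo} with $R=X$, which is all the paper explicitly says, runs into the same missing hypothesis.)

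The repair uses the extra linear structure. Since $\gamma$ and $\eta$ are $C^k$ into the closed subspaces $\cL(E,F)\sub C^\infty(E,F)$ and $\cL(X,E)\sub C^\infty(X,E)$, all of their G\^{a}teaux differentials are again continuous linear maps; hence the formula~($*$) from the proof of Proposition~\ref{strange-compo} collapses (only $j\leq 1$ terms survive) to a Leibniz-type sum of compositions of elements of $\cL$, so that each $\delta^j_z\zeta(y)$ lies in $\cL(X,F)$. Once your argument shows that $\zeta$ is $C^k$ as a map into $\prod_K C(K,F)$, you therefore know that all its iterated differentials take values in the image of $\cL(X,F)$; since the embedding $\cL(X,F)\hookrightarrow\prod_K C(K,F)$ is topological, this suffices---without any closedness---to conclude that $\zeta\colon A\to\cL(X,F)$ is~$C^k$. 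Alternatively, one may bypass the product reduction altogether and verify the Leibniz rule directly in $\cL(X,F)$ using Proposition~\ref{compocseq}(c) for the existence of directional derivatives and Proposition~\ref{compocseq}(d) (with $A\times Z$ a $k_\R$-space) for continuity of $d\zeta$, then induct on~$k$; this is presumably what the paper's ``immediate'' is shorthand for.
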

Likewise, Proposition~\ref{eval-chain} implies:
\begin{cor}\label{eval-spec}
Let $X$, $E$, and $F$ be locally convex spaces,
$R\sub X$ be a regular subset and $k\in \N_0\cup\{\infty\}$.
If $\gamma\colon R\to \cL(E,F)$ and $\eta\colon R\to E$
are $C^k$-maps and $R\times X$ is
a $k_\R$-space,
then also the following map is $C^k$:
\[
R\to F,\quad z\mto\gamma(z)(\eta(z)).
\]
\end{cor}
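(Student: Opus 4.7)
The plan is to deduce the statement from Proposition~\ref{eval-chain}, applied with $V := E$ (a locally convex, regular subset of itself), $U := R$, $\ell := k$, $f := \gamma$ and $g := \eta$. Two things need to be checked: that $\gamma$ can be viewed as a $C^k$-map into $C^k(E,F)$ rather than merely into $\cL(E,F)$, and that $R$ itself is a $k_\R$-space, since Proposition~\ref{eval-chain} requires both $R$ and $R\times X$ to be $k_\R$-spaces.

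For the first point, I would argue that $\cL(E,F)$ sits as a closed vector subspace of $C^k(E,F)$ carrying the induced topology. By Lemma~\ref{co=coinft} applied with $j=1$, $\cL(E,F) = \Pol^1(E,F)$ is topologically embedded in $C^\infty(E,F)$, and thence continuously included in $C^k(E,F)$. Closedness follows because linearity is cut out by the vanishing of the continuous mappings $\zeta \mapsto \zeta(x+y)-\zeta(x)-\zeta(y)$ and $\zeta \mapsto \zeta(tx)-t\zeta(x)$ on $C^k(E,F)$, whose continuity reduces to continuity of point evaluations. That the induced topology agrees with the compact-open topology on $\cL(E,F)$ is immediate from Lemma~\ref{sammelsu}(c): for a continuous linear $\lambda$ we have $d\lambda(x,y) = \lambda(y)$ and $d^{\,(j)}\lambda = 0$ for $j \geq 2$, so the seminorms $\|\cdot\|_{C^j,K,L,q}$ reduce to the compact-open seminorms generating the topology on $\cL(E,F)$. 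Lemma~\ref{intoclo} then promotes $\gamma$ to a $C^k$-map $R \to C^k(E,F)$.

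For the second point, note that $R$ is a retract of $R \times X$: the projection $\pr_1 \colon R \times X \to R$ admits the continuous section $x \mapsto (x,0)$, using that $X$ is a locally convex space and hence contains~$0$. A retract of a $k_\R$-space is again a $k_\R$-space, as one sees by the standard argument: if $f \colon R \to \R$ has continuous restriction to every compact subset of $R$, then $f \circ \pr_1 \colon R \times X \to \R$ has continuous restriction to every compact subset of $R \times X$ (images of compact sets under $\pr_1$ being compact), hence is continuous because $R \times X$ is $k_\R$; precomposing with the section recovers~$f$. So $R$ is $k_\R$.

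With both points settled, Proposition~\ref{eval-chain} applies directly and yields that $z \mapsto \gamma(z)(\eta(z))$ is a $C^k$-map. I do not expect a genuine obstacle: the only potentially delicate step is verifying that the compact-open topology on $\cL(E,F)$ coincides with its induced $C^k$-topology, which is however immediate once one observes that all higher derivatives of a continuous linear map are either trivial or coincide with the map itself.
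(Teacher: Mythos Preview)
Your proof is correct and follows the same route the paper indicates: the paper simply says ``Likewise, Proposition~\ref{eval-chain} implies'' Corollary~\ref{eval-spec}, and notes in the paragraph before Corollary~\ref{compo-spec} that $\cL(E,F)$ is a closed vector subspace of $C^\infty(E,F)$ on which the compact-open $C^\infty$-topology coincides with the compact-open topology. Your retract argument showing that $R$ inherits the $k_\R$-property from $R\times X$ is a clean way to verify the extra hypothesis of Proposition~\ref{eval-chain}; the paper does not spell this out, but it is indeed the natural check (and in fact shows that the hypothesis ``$U$ is $k_\R$'' in Proposition~\ref{eval-chain} is automatic once $U\times X$ is $k_\R$).
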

If $k=0$, the conclusion also holds
if $R$ is any Hausdorff topological space
which is $k_\R$, by Proposition~\ref{evalu}.
\begin{rem}
We mention that
special cases have been recorded before.
It is well known that the evaluation map
$\ve\colon \cL(E,F)\times E\to F$ is a hypocontinuous\footnote{We mean
hypocontinuity with respect to the set of compact
subsets of the second factor.}
bilinear map
for all locally convex spaces
$E$ and $F$, when the compact-open topology
is used on the space $\cL(E,F)$ of continuous linear
maps $E\to F$ (see \cite{HYP}, cf.\ \cite{Bou}).
Hence Corollary~\ref{eval-spec}
follows from \cite[Theorem~2.5]{HYP}
if $X^n$ is a $k$-space for each $n\in\N$
and $R\sub X$ is open
(the general setting of the cited theorem).
Moreover, the composition map
$\cL(E,F)\times \cL(X,E)\to \cL(X,F)$
is hypocontinuous if the compact-open topologies
are used
(see Proposition~9 in \cite[Ch.\,III, \S5, no.\,5]{Bou}).
Thus Corollary~\ref{compo-spec}
follows from \cite[Theorem~2.5]{HYP}
as well (for $X$ and $R$ as just explained).
The compact-open topology can be
replaced with the topology of bounded convergence
(see \cite[Corollary~2.6]{HYP}).\\[2.3mm]
For differential calculi based on $k$-spaces
and $k$-refinements of topologies
on direct products,
cf.\ also \cite{Sei} and \cite{EGF}.
\end{rem}
\section{Smoothing of sections in fibre bundles}
In this section, we consider fibre bundles
in the following generality:
\begin{defn}\label{deffib}
Let $r\in\N_0\cup\{\infty\}$ and
$M$ be a $\sigma$-compact, locally compact,
rough $C^r$-manifold.
A \emph{$C^r$-fibre bundle} over~$M$
is a pair $(N,\pi)$, where $N$ is a
rough $C^r$-manifold modelled
on locally convex spaces and $\pi\colon N\to M$
a surjective $RC^r$-map which is \emph{locally trivial}
in the following sense:
For each $x\in M$, there exists a $C^r$-manifold~$N_x$
(without boundary) modelled on locally convex spaces
and an $RC^r$-diffeomorphism
\[
\theta=(\theta_1,\theta_2)\colon N|_U\to U\times N_x
\]
(where $N|_U:=\pi^{-1}(U)$) such that $\theta_1(y)=\pi(y)$
for all $y\in U$ (a local trivialization around~$x$).
\end{defn}
Note that $\theta$ restricts to a bijection
$\pi^{-1}(\{x\})\to\{x\}\times N_x\cong N_x$;
when convenient, we may therefore assume that $N_x=\pi^{-1}(\{x\})$
and $\theta_2|_{N_x}$ is the identity map.
\begin{defn}\label{defsec}
Given a $C^r$-fibre bundle $\pi\colon N\to M$ over~$M$
and $\ell\in\N_0\cup\{\infty\}$ with $\ell\leq r$,
we let $\Gamma_{C^\ell}(M\leftarrow N)$
be the set of all $C^\ell$-sections~$\sigma$ of~$\pi$,
i.e., $RC^\ell$-functions $\sigma\colon M\to N$ such that
$\pi\circ \sigma=\id_M$.
\end{defn}
\begin{defn}
Let $M$ be a $\sigma$-compact, locally compact
rough $C^\ell$-manifold with $\ell\in\N_0$
and $N$ be a rough $C^\ell$-manifold modelled on locally convex spaces.
Let $RC^\ell(M,N)$ be the set of all restricted $C^\ell$-maps
from~$M$ to~$N$.
We define the \emph{Whitney $C^\ell$-topology} on $RC^\ell(M,N)$
as the initial topology with respect to the mapping
\begin{equation}\label{hereboxpro}
RC^\ell(M,N)\to{\prod_{n\in\N}}^b RC^\ell(M_n,N),\quad
\gamma\mto (\gamma|_{M_n})_{n\in\N},
\end{equation}
where $RC^\ell(M_n,N)$ is endowed
with the compact-open $C^\ell$-topology
and $(M_n)_{n\in\N}$ is a locally finite sequence
of (possibly empty) relatively compact,
regular subsets of~$M$ whose interiors $M_n^0$ cover~$M$.
The direct product in~(\ref{hereboxpro}) is endowed
with the box topology.
The Whitney $C^\ell$-topology is independent of the choice
of the sequence $(M_n)_{n\in\N}$
(see \cite{Rou} for details,
where also topologies on
$RC^\infty(M,N)$ are
discussed in analogy to classical concepts
as in \cite{Hir}, \cite{Mic}, \cite{Ill}, \cite{HaS}
and the references therein).
\end{defn}
\begin{defn}
For $r\in\N_0\cup\{\infty\}$,
$\ell\in\N_0$ with $\ell\leq r$
and a $C^r$-fibre bundle $\pi\colon N\to M$,
we give $\Gamma_{C^\ell}(M\leftarrow N)$
the topology induced by $RC^\ell(M,N)$,
endowed with the Whitney $C^\ell$-topology.
\end{defn}
Our goal is the next theorem that
generalizes a result in~\cite{Wo2}
devoted to the case $(\ell,r)=(0,\infty)$
(which assumes that $M$ is a connected
$C^\infty$-manifold with corners,
and gives less detailed information
concerning properties of the homotopies).
Wockel's result, in turn, generalizes
a classical fact by Steenrod (\S6.7
in \cite{Ste}).
\begin{thm}\label{thm-steen}
Let $r\in \N_0\cup\{\infty\}$ and $\ell\in\N_0$
with $\ell\leq r$.
If $\ell=0$, let $M$ be a $\sigma$-compact,
locally compact rough $C^r$-manifold;
if $\ell>0$, let $M$ be a $\sigma$-compact,
locally compact $C^r$-manifold with corners.
Let $\pi\colon N\to M$
be a $C^r$-fibre bundle over~$M$,
as in Definition~{\rm\ref{deffib}}.
Let $\sigma\in \Gamma_{C^\ell}(M\leftarrow N)$,
$\Omega\sub \Gamma_{C^\ell}(M\leftarrow N)$
be a neighbourhood of~$\sigma$ in the Whitney $C^\ell$-topology,
$U\sub M$ be open and $A\sub M$ be a closed
subset such that $\sigma$ is $RC^r$
on an open neighbourhood of $A\setminus U$ in~$M$.
Then there exists a section $\tau\in \Omega$
and a homotopy
$H\colon [0,1]\times M\to N$ from $\sigma=H(0,\cdot)$
to $\tau=H(1,\cdot)$ such that $H_t:=H(t,\cdot)\in\Omega$
for all $t\in [0,1]$ and the following holds:
\begin{itemize}
\item[\rm(a)]
$\sigma|_{M\setminus U}=H_t|_{M\setminus U}$
for all $t\in [0,1]$;
\item[\rm(b)]
For every open subset $V\sub M$ such that $\sigma|_V$
is $RC^r$, also $H_t|_V$ is $RC^r$ for all $t\in [0,1]$.
\item[\rm(c)]
There exists an open neighbourhood $W$ of~$A$ in~$M$
such that $H_t|_W$ is $RC^r$ for all $t\in\,]0,1]$.
\end{itemize}
Moreover, one can achieve that $H$ is $RC^{0,\ell}$,
the restriction $H|_{]0,1]\times M}$ is $RC^{r-\ell,\ell}$,
and that, for each $V$ as in~{\rm(b)},
the restriction of~$H$ to a map
$]0,1]\times (V\cup W)\to N$ is $RC^{0,r}$
$($resp., $RC^\infty$ if $r=\infty)$.
\end{thm}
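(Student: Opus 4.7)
The plan is to reduce the theorem to local statements on trivializing neighbourhoods and then assemble the result via partitions of unity and a chart-by-chart smoothing procedure, suitably parameterized to produce the homotopy.

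\emph{Setup and choice of cover.} Using $\sigma$-compactness and local compactness of $M$, choose a locally finite countable cover $(V_n)_{n\in\N}$ of $M$ by relatively compact, regular open subsets whose closures $\wb{V_n}$ sit inside chart domains of $M$ and over which the bundle $\pi\colon N\to M$ admits local trivializations $\theta_n\colon N|_{V_n}\to V_n\times N_n$. We arrange that for each $n$ with $V_n\cap A\not=\emptyset$ either $V_n\sub U$ or $\sigma|_{V_n}$ is already $RC^r$ (possible because $\sigma$ is $RC^r$ on a neighbourhood of $A\setminus U$); the remaining indices with $V_n\not\sub U$ will be left untouched. Let $(h_n)_{n\in\N}$ be a $C^r$-partition of unity on $M$ with $\Supp(h_n)\sub V_n$ (Lemma~\ref{partu-ex}).

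\emph{Local smoothing in charts.} For each $n$ at which we will modify $\sigma$, the trivialization identifies $\sigma|_{V_n}$ with an $RC^\ell$-map $g_n\colon V_n\to N_n$. Using a chart of $N_n$ around $g_n(\wb{V_n})$, $g_n$ becomes a map into a convex open subset $W_n$ of a locally convex space $F_n$. Apply the smoothing operators of Theorem~\ref{thmsmoo} (or Lemma~\ref{smoocube}/Proposition~\ref{onmfdsmoo} when $\ell>0$ and $M$ has corners) to obtain $F_n$-valued $C^\infty$-approximations $g_{n,k}\to g_n$ in the compact-open $C^\ell$-topology; convexity of $W_n$ lets us linearly interpolate between $g_n$ and $g_{n,k}$ within $W_n$ for sufficiently large $k$, and the cutoff $h_n$ together with $\theta_n^{-1}$ and the inverse chart in $N_n$ transfers this back to a homotopy through sections supported on $V_n$ that replaces $\sigma$ by an $RC^r$-section on a smaller set.

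\emph{Inductive modification and assembly of $H$.} Set $\sigma_0:=\sigma$ and, enumerating the relevant indices, define $\sigma_n$ by performing the $V_n$-localized interpolation above on $\sigma_{n-1}$, choosing the approximation $g_{n,k}$ fine enough that each $\sigma_n\in\Omega$ and that the increment has sufficiently small $C^\ell$-norm on each $\wb{V_m}$; since the cover is locally finite, only finitely many modifications affect any given compact set and the sequence $\sigma_n$ stabilizes pointwise to a section $\tau$. To obtain the homotopy $H$, perform the $n$th modification on a time subinterval $[t_n,t_{n-1}]$ with $t_n\searrow 0$, using a smooth reparameterization $s_n\colon[t_n,t_{n-1}]\to[0,1]$ with all derivatives vanishing at the endpoints so that the pieces glue to an $RC^{0,\ell}$-map on $[0,1]\times M$ which is $RC^{r-\ell,\ell}$ on $]0,1]\times M$. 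Property~(a) is built in by only modifying indices with $V_n\sub U$; property~(b) follows because each local modification, being a convex combination in a chart of an $RC^r$ input with a $C^\infty$ function via $C^r$ cutoffs, preserves $RC^r$-regularity on any open set where $\sigma$ was already $RC^r$; property~(c) is obtained by taking $W$ to be the union of the (finitely many, on any compactum) $V_n^0$ meeting $A$, all of which we have ensured become $RC^r$ after their single modification.

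\emph{Main obstacle.} The principal technical difficulty is simultaneously controlling three constraints while choosing the sequence of smoothing parameters: the homotopy must remain within the Whitney $C^\ell$-neighbourhood $\Omega$ (a condition expressed by finitely many $C^\ell$-estimates per locally finite patch, quantitatively controlled via Theorem~\ref{thmsmoo}(b)/(c) and Lemma~\ref{smoohoo}(b)); the $RC^{0,\ell}$-continuity across $t=0$ must hold, which requires the cumulative modifications on any compact set to vanish in the compact-open $C^\ell$-topology as $t\searrow 0$ (precisely the uniform convergence on compacta in Remark~\ref{oncpset}); and the gluing between successive $[t_n,t_{n-1}]$ pieces must preserve $RC^{r-\ell,\ell}$-regularity on $]0,1]\times M$, which is arranged by the flat-at-endpoints reparameterizations $s_n$. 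All three can be met by a diagonal choice of approximation indices $k=k(n)$ together with sufficiently rapid decay of $t_{n-1}-t_n$, using that at each stage only finitely many previously-chosen $V_m$ (with $m\leq n$) need to be checked against the Whitney condition.
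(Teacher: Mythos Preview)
Your overall plan---a locally finite cover by trivialized cube-like patches, local smoothing via the operators of Theorem~\ref{thmsmoo}/Lemma~\ref{smoocube}, and inductive assembly while checking membership in~$\Omega$---matches the paper's strategy. The gap is in how you parameterize the homotopy in the time variable.

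You perform the $n$th local modification on its own subinterval $[t_n,t_{n-1}]$, with the intervals accumulating at one endpoint of $[0,1]$. (Incidentally, with $t_n\searrow 0$ and ``modification $n$ on $[t_n,t_{n-1}]$'' one gets $H(1,\cdot)=\sigma$ and $H(0,\cdot)=\tau$, the reverse of the stated convention; your remark that ``cumulative modifications vanish as $t\searrow 0$'' is inconsistent with this.) More seriously, this sequential-in-time scheme cannot yield property~(c) when $A$ is non-compact. At any intermediate time only finitely many patches have been modified; on every remaining patch $V_m\sub U$ with $V_m\cap A\neq\emptyset$, the section $H_t$ still equals $\sigma$, which is only $RC^\ell$. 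Hence no fixed open neighbourhood $W$ of~$A$ can satisfy $H_t|_W\in RC^r$ for all $t$ in the required half-open interval, and for the same reason the joint regularity statement $H|_{]0,1]\times(V\cup W)}\in RC^{0,r}$ fails.

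The paper's proof closes this gap with Lemma~\ref{interpol}: the discrete smoothing operators are interpolated to a family $(S_{j,t})_{t\in\,]0,1]}$ that depends smoothly on~$t$ and converges to the identity as $t\to 0$ uniformly on compact sets (Remark~\ref{oncpset}). The inductive step then defines $H_j$ on \emph{all} of $[0,1]\times M$ by applying $O_{j,s_j t}=(1-\xi_j)\,\id+\xi_j S_{j,s_j t}$ to the local representative of $H_{j-1}(t,\cdot)$ on~$Q_j$, where the scale $s_j\in\,]0,1]$ is chosen (via a two-parameter map $h_j(s,t,\cdot)$ and continuity in~$s$) small enough to keep the homotopy inside~$\Omega$. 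Thus all patches are smoothed simultaneously as $t$ increases from~$0$, merely at different rates~$s_j$; at every $t>0$ each relevant $P_j$ already carries a $C^r$ section. Your ``main obstacle'' paragraph correctly lists the ingredients (Whitney control, uniform convergence on compacta, smooth gluing), but the missing idea is precisely this continuous $t$-parameterization of the smoothing operators that lets the infinitely many local modifications run in parallel on $[0,1]$ rather than on consecutive disjoint subintervals.
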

Specializing to trivial fibre bundles,
we deduce:
\begin{cor}\label{smoothemps}
Let $r\in \N_0\cup\{\infty\}$ and $\ell\in\N_0$
with $\ell\leq r$.
If $\ell=0$, let $M$ be a $\sigma$-compact,
locally compact rough $C^r$-manifold;
if $\ell>0$, let $M$ be a $\sigma$-compact,
locally compact $C^r$-manifold with corners.
Let $N$
be a $C^r$-manifold modelled on locally convex spaces.
Let $\gamma\in C^\ell(M,N)$,
$\Omega\sub C^\ell(M,N)$
be a neighbourhood of~$\sigma$ in the Whitney $C^\ell$-topology,
$U\sub M$ be open and $A\sub M$ be a closed
subset such that~$\gamma$ is $C^r$
on an open neighbourhood of $A\setminus U$ in~$M$.
Then there exists a function $\eta\in \Omega$
and a continuous homotopy
$H\colon [0,1]\times M\to N$ from $\gamma=H(0,\cdot)$
to $\eta=H(1,\cdot)$ such that $H_t:=H(t,\cdot)\in\Omega$
for all $t\in [0,1]$ and the following holds:
\begin{itemize}
\item[\rm(a)]
$\gamma|_{M\setminus U}=H_t|_{M\setminus U}$
for all $t\in [0,1]$;
\item[\rm(b)]
For every open subset $V\sub M$ such that $\gamma|_V$
is $C^r$, also $H_t|_V$ is $C^r$ for all $t\in [0,1]$.
\item[\rm(c)]
There exists an open neighbourhood $W$ of~$A$ in~$M$
such that $H_t|_W$ is $C^r$ for all $t\in\,]0,1]$.
\end{itemize}
Moreover, one can achieve that $H$ is $C^{0,\ell}$,
the restriction $H|_{]0,1]\times M}$ is $C^{r-\ell,\ell}$,
and that, for all $V$ as in {\rm(b)},
the restriction of~$H$ to
$]0,1]\times (V\cup W)$ is $C^{0,r}$
$($resp., $C^\infty$ if $r=\infty)$. $\,\square$
\end{cor}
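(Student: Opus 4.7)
\textbf{Proof plan for Corollary~\ref{smoothemps}.}
The plan is to reduce the corollary to Theorem~\ref{thm-steen} by viewing $C^\ell$-maps $M\to N$ as sections of the trivial fibre bundle. Set $E:=M\times N$ and let $\pi:=\pr_M\colon E\to M$ be the projection. Since $M$ is a rough $C^r$-manifold (or a $C^r$-manifold with corners if $\ell>0$) and $N$ is a $C^r$-manifold without boundary modelled on locally convex spaces, the product~$E$ is a rough $C^r$-manifold with $E^\circ=M^\circ\times N$, and~$\pi$ is a surjective $RC^r$-map. Moreover, the identity $E\to M\times N$ is a global trivialization of~$\pi$, so $(E,\pi)$ is a $C^r$-fibre bundle over~$M$ in the sense of Definition~\ref{deffib}.

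Next I would verify that the map
\[
\Phi\colon C^\ell(M,N)\to\Gamma_{C^\ell}(M\leftarrow E),\quad
\gamma\mto (\id_M,\gamma)
\]
is a bijection with inverse $\sigma\mto\pr_N\circ\sigma$, and that $\Phi$ is a homeomorphism for the Whitney $C^\ell$-topologies on both sides. This follows from Lemma~\ref{sammelsu}(e) applied locally on a cover $(M_n)_{n\in\N}$ defining the Whitney topology, using that composition with the fixed smooth maps $\id_M$ and $\pr_N$ is continuous between the relevant $C^\ell$-function spaces. Set $\sigma:=\Phi(\gamma)$ and $\widetilde{\Omega}:=\Phi(\Omega)$; then $\widetilde{\Omega}$ is a Whitney neighbourhood of~$\sigma$. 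Since $\id_M$ is $RC^r$ on all of~$M$, the hypothesis that~$\gamma$ is $C^r$ on an open neighbourhood $V_0$ of $A\setminus U$ is equivalent to $\sigma$ being $RC^r$ on~$V_0$.

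I would now apply Theorem~\ref{thm-steen} to obtain $\tau\in\widetilde{\Omega}$ and a homotopy $H\colon[0,1]\times M\to E$ from~$\sigma$ to~$\tau$ satisfying (a)--(c) with the additional regularity that~$H$ is $RC^{0,\ell}$, $H|_{]0,1]\times M}$ is $RC^{r-\ell,\ell}$, and $H|_{]0,1]\times(V\cup W)}$ is $RC^{0,r}$ (resp.\ $RC^\infty$ if $r=\infty$) for every open $V\sub M$ on which~$\sigma$ is $RC^r$. Define
\[
\eta:=\Phi^{-1}(\tau)=\pr_N\circ\tau\in\Omega,
\qquad K:=\pr_N\circ H\colon[0,1]\times M\to N.
\]
Then $K(0,\cdot)=\gamma$, $K(1,\cdot)=\eta$, and $K_t=\Phi^{-1}(H_t)\in\Omega$ for all $t\in[0,1]$. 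Properties (a), (b), (c) for~$K$ follow immediately from the corresponding properties of~$H$: if $H_t$ agrees with~$\sigma$ on $M\setminus U$ then $K_t=\pr_N\circ H_t$ agrees with~$\gamma$ there, and if $H_t|_V$ or $H_t|_W$ is $RC^r$ then the same holds for $K_t$ after composing with the smooth projection~$\pr_N$.

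The joint regularity statements translate under the same composition: since $\pr_N$ is smooth, composition with~$\pr_N$ turns $RC^{k,\ell}$-maps into $C^{k,\ell}$-maps valued in the boundaryless manifold~$N$, so $K$ is $C^{0,\ell}$, $K|_{]0,1]\times M}$ is $C^{r-\ell,\ell}$, and $K|_{]0,1]\times(V\cup W)}$ is $C^{0,r}$ (or $C^\infty$ when $r=\infty$). In particular~$K$ is continuous, which gives the continuous homotopy asserted in the corollary. There is no serious obstacle here; the only care needed is the bookkeeping to confirm that the trivial bundle $M\times N\to M$ legitimately satisfies Definition~\ref{deffib} and that~$\Phi$ is a homeomorphism for the Whitney topology, both of which are routine once the local description of the Whitney topology is unfolded.
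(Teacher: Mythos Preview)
Your proposal is correct and follows exactly the route the paper takes: the corollary is stated immediately after the sentence ``Specializing to trivial fibre bundles, we deduce:'' and carries only a $\square$, so the intended proof is precisely to apply Theorem~\ref{thm-steen} to the trivial bundle $M\times N\to M$ and translate back via the projection $\pr_N$, which is what you do. Your additional bookkeeping (verifying that $\Phi$ is a homeomorphism for the Whitney topology and that the $RC^{k,\ell}$ regularity becomes $C^{k,\ell}$ because $N$ has empty boundary) fills in the details the paper leaves implicit.
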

See Section~\ref{alg-top}
for typical applications of Corollary~\ref{smoothemps}
(or also its more limited precursors
in~\cite{Ste} and \cite{Wo2}).\\[2.3mm]
It will be useful for the proof of Theorem~\ref{thm-steen}
to interpolate the smoothing operators
obtained earlier, so that we get families
with a parameter $t\in\,]0,1]$.
\begin{la}\label{interpol}
Let $F$ be a locally convex space,
$d\in\N$, $\ell\in \N_0$
and $L\sub\R^d$ be a compact, regular subset.
If $\ell>0$, assume that $L=[0,1]^d$.
There is a family $(S_t)_{t\in \,]0,1]}$
of continuous linear operators
\[
S_t\colon C^\ell(L,F)\to C^\infty(L,F)
\]
with the following properties:
\begin{itemize}
\item[\rm(a)]
$S_t(\gamma)\in F\otimes C^\infty(L,\R)$
for all $t\in\,]0,1]$;
\item[\rm(b)]
The map $\,]0,1]\to \cL(C^\ell(L,F),C^\infty(L,F))$,
$t\mto S_t$ is smooth for each locally convex vector topology
on the space $\cL(C^\ell(L,F),C^\infty(L,F))$
of continuous linear operators;
\item[\rm(c)]
$S_t(\gamma)\to\gamma$ in $C^\ell(L,F)$ as $t\to 0$
uniformly for $\gamma$ in compact subsets of $C^\ell(L,F)$.
\end{itemize}
\end{la}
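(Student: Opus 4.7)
The plan is to interpolate the discrete smoothing operators from Theorem~\ref{thmsmoo} by means of a $C^\infty$-partition of unity on $\,]0,1]$, after extending $F$-valued functions from~$L$ to all of~$\R^d$.

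First I fix a continuous linear extension operator $\cE\colon C^\ell(L,F)\to C^\ell(\R^d,F)$: for $\ell\geq 1$ and $L=[0,1]^d$ one is supplied by Corollary~\ref{extcorner}, and for $\ell=0$ and arbitrary compact, regular~$L\sub\R^d$ by Corollary~\ref{unif-cor} (since $L$ is compact and metrizable in the induced topology). Let $\rho_L\colon C^\infty(\R^d,F)\to C^\infty(L,F)$ denote the restriction. Applying Theorem~\ref{thmsmoo} with $\Omega=\R^d$ and a fixed compact exhaustion yields continuous linear operators $\wt{S}_n\colon C^\ell(\R^d,F)\to C^\infty(\R^d,F)$ whose images land in $F\otimes C^\infty_c(\R^d,\R)$ and for which $\wt{S}_n(\eta)\to\eta$ in $C^\ell(\R^d,F)$ as $n\to\infty$, uniformly for~$\eta$ in compact subsets (Remark~\ref{oncpset}). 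I then choose a $C^\infty$-partition of unity $(\chi_n)_{n\in\N}$ on $\,]0,1]$ with $\Supp(\chi_n)\sub [1/(n+1),1/(n-1)]$ for $n\geq 2$ and $\Supp(\chi_1)\sub [1/2,1]$, so that for every $t\in\,]0,1]$ only finitely many $\chi_n(t)$ are nonzero and $\chi_n(t)\not=0$ forces $n\geq 1/t-1$.

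Setting $T_n:=\rho_L\circ\wt{S}_n\circ\cE\in\cL(C^\ell(L,F),C^\infty(L,F))$, I define
\[
S_t\;:=\;\sum_{n\in\N}\chi_n(t)\,T_n,
\]
a locally finite (in~$t$) sum of continuous linear operators, hence itself a continuous linear map $C^\ell(L,F)\to C^\infty(L,F)$. Property~(a) is immediate: each $T_n\gamma$ lies in $F\otimes C^\infty(L,\R)$ and this subspace is closed under finite sums. For property~(b), the key observation is that for each $t_0\in\,]0,1]$ there exist a neighbourhood $U$ of~$t_0$ and a \emph{finite} subset $\Phi\sub\N$ with $S_t=\sum_{n\in\Phi}\chi_n(t)\,T_n$ for all $t\in U$; thus $t\mapsto S_t$ is locally a finite $\R$-linear combination of fixed operators with smooth scalar coefficients. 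Such a curve is $C^\infty$ into any Hausdorff topological vector space topology on $\cL(C^\ell(L,F),C^\infty(L,F))$, since the iterated difference quotients converge by smoothness of the $\chi_n$ and scalar multiplication is continuous in every TVS.

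For~(c), using $\sum_n\chi_n(t)=1$ I estimate, for any continuous seminorm $\|\cdot\|_{C^\ell,K,q}$ on $C^\ell(L,F)$,
\[
\|S_t(\gamma)-\gamma\|_{C^\ell,K,q}\;\leq\;\sum_{n\in\N}\chi_n(t)\,\|T_n\gamma-\gamma\|_{C^\ell,K,q}.
\]
Since $\cE$ is continuous, any compact $B\sub C^\ell(L,F)$ has compact image $\cE(B)\sub C^\ell(\R^d,F)$; Remark~\ref{oncpset} then yields $\wt{S}_n(\cE\gamma)\to\cE\gamma$ in $C^\ell(\R^d,F)$ uniformly for $\gamma\in B$, and continuity of $\rho_L$ upgrades this to $T_n\gamma\to\gamma$ in $C^\ell(L,F)$ uniformly for $\gamma\in B$. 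Since $\chi_n(t)\not=0$ forces $n\to\infty$ as $t\to 0$, the desired uniform convergence follows. The main obstacle is property~(b) in its unusually strong form (smoothness under \emph{every} locally convex vector topology on the operator space); this is precisely what the partition-of-unity interpolation arranges, since locally $S_t$ is a finite combination of fixed operators with smooth scalar coefficients, rendering smoothness topology-independent.
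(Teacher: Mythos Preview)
Your proof is correct and follows the same strategy as the paper: build a sequence of discrete smoothing operators on~$L$ and interpolate them smoothly in~$t$ so that, locally in~$t$, only finitely many fixed operators enter with smooth scalar coefficients (which is exactly what forces~(b)). The paper's only cosmetic differences are that it cites Lemma~\ref{smoocube} and Proposition~\ref{onmfdsmoo} for the discrete operators rather than inlining your extend--smooth--restrict construction, and it interpolates via a single smooth step function~$\rho$ (setting $S_t:=H_{j+1}+\rho\bigl(\tfrac{t-t_{j+1}}{t_j-t_{j+1}}\bigr)(H_j-H_{j+1})$ on successive intervals $]t_{j+1},t_j]$) instead of a partition of unity.
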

\begin{proof}
For $n\in\N$,
let us write $H_n$ for the operator $S_n$
in Lemma~\ref{smoocube}
(if $\ell>0$) and Proposition~\ref{onmfdsmoo} (if $\ell=0$,
with $K_n:=L$ for all $n$),
respectively.
There exists a monotonically increasing smooth function $\rho
\colon [0,1]\to \R$
such that
\[
\rho|_{[0,\ve]}=0\quad\mbox{and}\quad\rho|_{[1-\ve,1]}=1
\]
for some $\ve\in\,]0,\frac{1}{2}[$.
We choose
$1=t_1>t_2>\cdots$ with $t_n\to 0$
and define
\[
S_t\;:=\; H_{j+1}+\rho\left({\textstyle\frac{t-t_{j+1}}{t_j-t_{j+1}}}\right)
(H_j-H_{j+1})
\]
if $t\in \,]t_{j+1},t_j]$.
As $H_j\to\id$ uniformly on compact sets
and the $S_t$ are convex combinations
of $H_j$ and $H_{j+1}$, we get~(c).
The convex combinations depend smoothly
on $t\in\,]t_{j+1},t_j[$ and are constant on a neighbourhood
of $t_j$; thus (b) holds.
Since $H_j(\gamma)(\Omega)\sub F$
has finite-dimensional span for all~$j$,
the same holds if $H_j$ is replaced with a convex combination
of $H_j$ and $H_{j+1}$, like $S_t$ for $t\in[t_{j+1},t_j]$;
thus~(a) holds.
\end{proof}
\begin{rem}\label{cts-smoo}
If we define $S_0(\gamma):=\gamma$
for $\gamma\in C^\ell(\Omega,F)$,
then (b) and (c) in Lemma~\ref{interpol}
imply that the map
\[
[0,1]\to\cL(C^\ell(L,F),C^\ell(L,F)),\quad
t\mto S_t
\]
is continuous for the topology of compact convergence
on the space of continuous
linear operators.
\end{rem}
{\bf Proof of Theorem~\ref{thm-steen}.}
Let $V$ be the largest open subset of~$M$
such that $\sigma|_V$ is $RC^r$
(the union of all such open sets).
By hypothesis, $V$ is a neighbourhood of
$A\setminus U$ in~$M$.
If $A\sub V$, we can set $H(t,x):=\sigma(x)$
for all $(t,x)\in [0,1]\times M$.
Now assume that $A\setminus V$ is not empty.
There exists a locally finite family
$(Q_j)_{j\in J}$ of full-dimensional
compact submanifolds $Q_j$ of~$M$
and compact regular subsets $P_j\sub Q_j^0$
such that $(P_j^0)_{j\in J}$
is a cover of~$M$
and (i)--(iii) hold
for all~$j\in J$:
\begin{itemize}
\item[(i)]
$Q_j\sub M\setminus A$
or $Q_j\sub U$;
\item[(ii)]
If $\ell=0$, there exists an $RC^r$-diffeomorphism
$\kappa_j\colon Q_j\to L_j$
for a compact, non-empty, regular subset
$L_j\sub \R^{d_j}$ for some $d_j\in \N_0$.
If $\ell>0$, there exists an $RC^r$-diffeomorphism
$\kappa_j\colon Q_j\to L_j:=[0,1]^{d_j}\sub\R^{d_j}$ for
for some $d_j\in \N_0$ (where $[0,1]^0:=\{0\}\sub\R^0$);
\item[(iii)]
$Q_j$ is contained in an open subset $M_j\sub M$
such that there exists a local trivialization
\[
\theta_j=(\theta_{j,1},\theta_{j,2})
\colon N|_{M_j}\to M_j\times N_j
\]
for some $C^r$-manifold
$N_j$ modelled on locally convex spaces,
and $\theta_{j,2}(Q_j)\sub U_j$
holds for a chart $\phi_j\colon U_j\to V_j$
of~$N_j$ such that $V_j$ is an open, convex subset of
a locally convex space~$F_j$.
\end{itemize}
%
%
Let $J_0:=\{j\in J\colon Q_j\cap A\setminus V\not=\emptyset\}$;
then $Q_j\sub U$ for all $j\in J_0$.
We may assume that $J_0=\{n\in\N\colon n\leq n_0\}$
for some $n_0\in \N\cup\{\infty\}$.\\[2.3mm]
The topology on $\Gamma_{C^\ell}(M\leftarrow N)$
is initial with respect to the map
\[
\rho\colon \Gamma_{C^\ell}(M\leftarrow N)\to
{\prod_{j\in J}}^bRC^\ell(Q_j,N),\;
\tau\mto (\tau|_{Q_j})_{j\in J}
\]
and also with respect to the co-restriction
\[
\rho\colon \Gamma_{C^\ell}(M\leftarrow N)\to
{\prod_{j\in J}}^b\Gamma_{C^\ell}(Q_j\leftarrow N|_{Q_j})
\]
of the latter. The map
\[
f_j\colon \Gamma_{C^\ell}(Q_j\leftarrow N|_{Q_j})\to
C^\ell(Q_j,N_j),\;\, \tau\mto\theta_{j,2}\circ \tau
\]
is a homeomorphism, whence also
\[
f:=\prod_{j\in J}f_j\colon
{\prod_{j\in J}}^b\Gamma_{C^\ell}(Q_j\leftarrow N|_{Q_j})\to
{\prod_{j\in J}}^b C^\ell(Q_j,N_j),\;\,
(\tau_j)_{j\in J}\mto
(f_j(\tau_j))_{j\in J}
\]
is a homeomorphism. As a consequence,
\[
f\circ\rho\colon \Gamma_{C^\ell}(M\leftarrow N)\to {\prod_{j\in J}}^b
C^\ell(Q_j,N_j)
\]
is a topological embedding.
Note that $Y:=\prod_{j\in J}C^\ell(M_j,U_j)$ is an open
subset of $\prod_{j\in J} C^\ell(M_j,N_j)$ which contains $f(\rho(\sigma))$;
thus $\Omega':=(f\circ\rho)^{-1}(Y)$ is an open neighbourhood
of~$\sigma$ in $\Gamma_{C^\ell}(M\leftarrow N)$.
It consists of all $\tau\in\Gamma_{C^\ell}(M\leftarrow N)$
such that
\begin{equation}\label{halfcond}
\theta_{j,2}(\tau(Q_j))\sub U_j\quad\mbox{for all $\, j\in J$.}
\end{equation}
Now
\[
g_j\colon C^\ell(Q_j,U_j)\to C^\ell(Q_j,V_j)\sub C^\ell(Q_j,F_j),\;\,
\gamma\mto \phi_j\circ\gamma
\]
is a homeomorphism, whence also
\[
g:=\prod_{j\in J}g_j\colon
{\prod_{j\in J}}^bC^\ell(Q_j,U_j)\to {\prod_{j\in J}}^b C^\ell(Q_j,V_j)
\]
is a homeomorphism. Hence
\[
g\circ f\circ \rho|_{\Omega'}\colon \Omega'\to {\prod_{j\in J}}^b C^\ell(Q_j,V_j)
\]
is a topological embedding.
We therefore find open neighbourhoods
$W_j$ of $\phi_j\circ \theta_{j,2}\circ \sigma|_{Q_j}$
in $C^\ell(Q_j,F_j)$
such that $(g\circ f\circ\rho)(\Omega\cap \Omega')$
contains the set
\[
(g\circ f\circ \rho)(\Omega')\cap\prod_{j\in J}W_j.
\]
After shrinking the $\sigma$-neighbourhood
$\Omega$, we may assume that $\Omega$ is the set of all
$\tau\in \Omega'$ such that
\begin{equation}\label{sechalfcond}
\phi_j\circ\theta_{j,2}\circ \tau|_{Q_j} \in W_j
\mbox{ for all $j\in J$.}
\end{equation}
For each $j\in J_0$, choose operators $S_{j,t}\colon C^\ell(Q_j,F_j)\to
C^r(Q_j,F_j)$ for $t\in\,]0,1]$ as in
Lemma~\ref{interpol} (identifying $\gamma\in C^\ell(Q_j,F_j)$ with
$\gamma\circ\kappa_j^{-1}\in C^\ell(L_j,F_j)$
and $\eta\in C^\infty(L_j,F_j)\sub C^r(L_j,F_j)$ with
$\eta\circ \kappa_j\in C^r(Q_j,F_j)$).
Let $S_{j,0}\colon C^\ell(Q_j,F_j)\to C^\ell(Q_j,F_j)$
be the identity map.
Choose $\xi_j\in C^r(Q_j,\R)$
with $\Supp(\xi_j)\sub Q_j^0$
such that $\xi_j(x)=1$ for $x$ in a neighbourhood
of~$P_j$ in $Q_j^0$, and $\xi_j(Q_j)\sub[0,1]$.\\[2.3mm]
Define $H_0\colon [0,1]\times M\to N$, $(t,x)\mto\sigma(x)$.
For $j\in J_0$,
we now construct $C^{0,\ell}$-mappings
\[
H_j\colon [0,1]\times M\to N
\]
with $H_j|_{]0,1]\times M}$ a $C^{r-\ell,\ell}$-map,
such that $H_j|_{]0,1]\times P_1^0\cup\cdots\cup P_j^0\cup V}$
is $C^{0,r}$
(if $r<\infty$), resp.,
$C^\infty$ (if $r=\infty$)
and, moreover,
$H_j(t,\cdot)\in\Omega$ for all
$t\in[0,1]$ and
\[
H_j(t,x)=H_{j-1}(t,x) \mbox{ for all $x\in M\setminus \Supp(\xi_j)$.}
\]
Suppose that~$H_{j-1}$ has already been constructed.
Since $H_{j-1}|_{[0,1]\times P_j}$
is an R$C^{0,\ell}$-map with $H_j([0,1]\times Q_j)\sub \theta_j^{-1}(Q_j\times
U_j)$ and $\theta_{j,2}$ as well as $\phi_j$
are $RC^r$ (and thus $RC^\ell$),
also the composition
\[
a_j\colon [0,1]\times Q_j\to F_j,\quad (t,x)\mto
(\phi_j\circ \theta_{j,2}\circ H_{j-1})(t,x)
\]
is $C^{0,\ell}$, by the Chain Rule.
As a consequence,
\[
a_j^\vee\colon [0,1]\to C^\ell(Q_j,F_j),\quad
t\mto a_j(t,\cdot)
\]
is a continuous map.
Hence
\[
K_j:=\{\phi_j\circ \theta_{j,2}\circ H_{j-1}(t,\cdot)|_{Q_j}\colon t\in [0,1]\}
=a_j^\vee([0,1])
\]
is a compact subset of~$W_j$.
The linear map
\[
O_{j,t}\colon C^\ell(Q_j,F_j)\to C^\ell(Q_j,F_j),\;\,
\gamma\mto (1-\xi_j)\cdot\gamma+\xi_j\cdot S_{j,t}(\gamma)
\]
is continuous for each $t\in [0,1]$.
Since multiplication operators are continuous linear and
hence uniformly continuous, we see that
\[
O_{j,t}(\gamma)\to\gamma
\]
in $C^\ell(Q_j,F_j)$ as $t\to 0$, uniformly in $\gamma\in K_j$.
We therefore find $t_j\in \,]0,1]$ such that
$O_{j,t}(\gamma)\in C^\ell(Q_j,V_j)$
for all $t\in [0,t_j]$ and $\gamma\in K_j$.
For $t\in [0,1]$ and $x\in M$, we
define
\[
h_j(s,t,x):=\left\{
\begin{array}{cl}
H_{j-1}(t,x) & \mbox{ if $x\in M\setminus \Supp(\xi_j)$;}\\
\theta_j^{-1}(x,\phi_j^{-1}(O_{j,s\cdot t}(a_j^\vee(t))(x)))
& \mbox{ if $x\in Q_j^0$.}
\end{array}
\right.
\]
Note that the map
\[
[0,1]^2\to C^\ell(Q_j,F_j),\quad
(s,t)\mto O_{t\cdot t_j}(a_j^\vee(t))
\]
is continuous, by Corollary~\ref{eval-spec}
and Remark~\ref{cts-smoo}.
Hence
\[
[0,1]^2\to C^\ell(Q_j^0,F_j),\quad (s,t)\mto \phi_j\circ
\theta_{j,2}\circ h_j(s,t,\cdot)|_{Q_j^0}
\]
is a continuous map and hence also
\[
[0,1]^2\to C^\ell(Q_j^0,N),\quad t\mto h_j(s,t,\cdot)|_{Q_j^0}.
\]
We find relatively compact, open subsets
$Z_a$ of~$M\setminus \Supp(\xi_j)$ for some index set~$A$
which together with $Q_j^0$ form a locally finite cover of~$M$.
Since
\[
[0,1]^2\to C^\ell(Z_a,N),\quad (s,t)\mto h_j(s,t,\cdot)|_{Z_a}
\]
is a constant map for each $a\in A$,
the map
\[
[0,1]^2\to C^\ell(Q_j^0,N)\times{\prod_{a\in A}}^bC^\ell(Z_a,N),\;\,
(s,t)\mto (h_j(s,t,\cdot)|_{Q_j^0},(h_j(s,t,\cdot)|_{Z_a})_{a\in A})
\]
is continuous. Hence $h_j^\vee\colon [0,1]^2\to C^\ell(M,N)$,
$(s,t)\mto h_j(s,t,\cdot)$ is continuous with respect to
the Whitney $C^\ell$-topology,
by the description in~(\ref{hereboxpro}).
Since $h_j^\vee(0,t)=H_{j-1}(t,\cdot)\in \Omega$
for all $t\in [0,1]$,
we find $s_j\in \,]0,1]$
such that $h_j(s,t,\cdot)\in\Omega$
for all $s\in [0,s_j]$ and $t\in [0,1]$.
Let $H_j:=h_j(s_j,\cdot)\colon [0,1]\times M\to N$; thus
\[
H_j(t,x):=\left\{
\begin{array}{cl}
H_{j-1}(t,x) & \mbox{ if $x\in M\setminus \Supp(\xi_j)$;}\\
\theta_j^{-1}(x,\phi_j^{-1}(O_{j,s_j\cdot t}(a_j^\vee(t))(x)))
& \mbox{ if $x\in Q_j^0$.}
\end{array}
\right.
\]
To see that $H_j$ is a $C^{0,\ell}$-map,
recall that $H_j^\vee\colon [0,1]\to \Omega$,
$t\mto H_j(t,\cdot)=h_j(s_j,t,\cdot)$
is continuous. As a consequence, the map
\[
c_{i,j}\colon [0,1]\to C^\ell(Q_i,F_i),\;\,
t\mto \phi_i\circ\theta_{i,2}\circ H_j(t,\cdot)|_{Q_i}
\]
is continuous for all $i\in J$.
Since~$Q_i$ is compact and hence locally compact,
the Exponential Law shows that
\[
c_{i,j}^\wedge\colon [0,1]\times Q_i\to F_i,\quad
(t,x)\mto c_{i,j}(t)(x)
\]
is a $C^{0,\ell}$-map. By the Chain Rule,
also
\[
H_j|_{[0,1]\times Q_i}\colon [0,1]\times Q_i\to N,\;\,
(t,x)\mto \theta_i^{-1}(x,\phi_i^{-1}(c_{i,j}^\wedge(t,x)))
\]
is a $C^{0,\ell}$-map, entailing that $H_j$ is $C^{0,\ell}$.\\[2.3mm]
We know that $H_j|_{]0,1]\times (M\setminus \Supp(\xi_j))}=
H_{j-1}|_{]0,1]\times (M\setminus\Supp(\xi_j))}$ is $C^{r-\ell,\ell}$.
Since $H_{j-1}|_{]0,1]\times Q_j}$
is $C^{r-\ell,\ell}$, also the map
\[
b_j\colon \,]0,1]\times Q_j\to F_j, \;\,
(t,x)\mto (\phi_j\circ\theta_{j,2}\circ H_{j-1})(t,x)
\]
is $C^{r-j,\ell}$. Hence
\[
b_j^\vee\colon \,]0,1]\to C^\ell(Q_j,F_j),\;\,
t\mto b_j(t,\cdot)
\]
is $C^{r-\ell}$.
As the mapping
\[
]0,1]\to \cL(C^\ell(Q_j,F_j),
C^\infty(Q_j,F_j))\sub \cL(C^\ell(Q_j,F_j),
C^\ell(Q_j,F_j)), \;\, t\mto S_{j,s_j\cdot t}
\]
is $C^\infty$ and hence $C^{r-\ell}$
by Lemma~\ref{interpol}, we deduce with
Corollary~\ref{eval-spec} that the map
\[
]0,1]\to C^\ell(Q_j,F_j),\;\,
t\mto S_{j,s_j\cdot t}(b_j^\vee(t))
\]
and hence also the map $t\mto\xi_j\cdot S_{j,s_j\cdot t}(b_j^\vee(t))$
is $C^{r-\ell}$. Since $Q_j$ is locally compact,
using the Exponential Law we deduce that
the second summand in
\[
O_{j,s_j\cdot t}(b_j^\vee(t))(x)=
(1-\xi_j(x))b_j(t,x)+\xi_j(x)S_{j,s_j\cdot t}(b_j^\vee(t))(x)
\]
is $C^{r-\ell,\ell}$ in $(t,x)\in \,]0,1]\times Q_j$;
the first summand is $C^{r-\ell,\ell}$ since $b_j$ is so.
As a consequence,
$H_j(t,x)=\theta_j^{-1}(x,\phi_j^{-1}(O_{j,s_j\cdot t}(b_j^\vee(t))(x)))$
is $C^{r-\ell,\ell}$ in $(t,x)\in \,]0,1]\times Q_j^0$.\\[2.3mm]
Let $R_j:=V\cup P_1^0\cup\cdots\cup P_j^0$.
We now show that $H_j|_{]0,1]\times R_j}$
is $C^{0,r}$ (if $r>\infty$), resp., $C^\infty$
(if $r=\infty$). Setting $R_{j-1}:=V\cup P_1^0\cup\cdots\cup
P_{j-1}^0$, we know that $H_{j-1}|_{]0,1]\times R_{j-1}}$
is $C^{0,r}$ (if $r<\infty$), resp., $C^\infty$ (if $r=\infty$).
Hence
\[
H_j|_{]0,1]\times (R_{j-1}\setminus \Supp(\xi_j))}
=H_{j-1}|_{]0,1]\times (R_{j-1}\setminus \Supp(\xi_j))}
\]
is $C^{0,r}$ and $C^\infty$,
respectively. It remains to show that $H_j|_{]0,1]\times (R_j\cap Q_j^0)}$
is $C^{0,r}$ and $C^\infty$, respectively.
Like $H_{j-1}$, the map
\begin{equation}\label{fisuma}
(t,x)\mto
(1-\xi_j(x))(\phi_j\circ \theta_{j,2}\circ b_j(t,x)
\end{equation}
is $C^{0,r}$ (resp., $C^\infty$)
for $(t,x)\in\,]0,1]\times R_{j-1}\cap Q_j^0$
and also for $(t,x)\in\,]0,1]\times P_j^0$,
as the map vanishes for the latter arguments.
Hence (\ref{fisuma}) is a $C^{0,r}$-map
(resp., $C^\infty$-map) $\,]0,1]\times (R_j\cap Q_j^0)\to F_j$.
We have shown that the first summand in
\[
O_{j,s_j\cdot t}(b_j^\vee(t))(x)=
(1-\xi_j(x))b_j(t,x)+\xi_j(x)S_{j,s_j\cdot t}(b_j^\vee(t))(x)
\]
is $C^{0,r}$ (resp., $C^\infty$)
in $(t,x)\in \,]0,1]\times (R_j\cap Q_j^0)$.
We show that the second summand
is $C^{0,r}$ (resp., $C^\infty$)
even for $(t,x)\in \,]0,1]\times Q_j$;
as a consequence,
$H_j(t,x)=
\theta_j^{-1}(x,\phi_j^{-}(O_{j,s_j\cdot t}(b_j^\vee(t))(x)))$
will be $C^{0,r}$ (resp., $C^\infty$)
in $(t,x)\in\, ]0,1]\times (R_j\cap Q_j)$,
as required.
The map $\,]0,1]\to \cL(C^\ell(Q_j,F_j),
C^\infty(Q_j,F_j))\sub \cL(C^\ell(Q_j,F_j),
C^r(Q_j,F_j))$, $t\mto S_{j,s_j\cdot t}$
is $C^\infty$ by Lemma~\ref{interpol} and hence $C^0$ (resp., $C^\infty$).
Since $b_j^\vee$ is $C^{r-\ell}$ and thus
$C^0$ (if $r<\infty$) and $C^\infty$ (if $r=\infty$)
respectively,
we deduce with
Corollary~\ref{eval-spec} that the map
\[
]0,1]\to C^r(Q_j,F_j),\;\,
t\mto S_{j,s_j\cdot t}(b_j^\vee(t))
\]
and hence also the map $t\mto\xi_j\cdot S_{j,s_j\cdot t}(b_j^\vee(t))$
is $C^0$ and $C^\infty$, respectively.
Since $Q_j$ is locally compact,
using the Exponential Law we deduce that the map
$]0,1]\times Q_j\to F_j$, $(t,x)\mto
\xi_j(x)S_{j,s_j\cdot t}(b_j^\vee(t))(x)$
is $C^{0,r}$ and $C^\infty$, respectively.\\[2.3mm]
If $n_0$ is finite, we let $H:=H_{n_0}$.
If $n_0=\infty$, given $j\in J_0=\N$
we find $i_j>j$ such that
\[
Q_i\cap Q_j=\emptyset\mbox{ for all $i\geq i_j$.}
\]
Thus $H_i|_{[0,1]\times Q_j^0}=H_{i_j}|_{[0,1]\times Q_j^0}$
for all $i\geq i_j$, showing that
\[
H(t,x):=\lim_{i\to\infty}H_i(t,x)
\]
exists for all $(t,x)\in [0,1]\times Q_j^0$.
As $j$ was arbitrary, we obtain a function $H\colon [0,1]\times M\to N$.
By the preceding,
\[
H|_{[0,1]\times Q_j^0}=H_{i_j}|_{[0,1]\times Q_j^0}\quad\mbox{for all $j\in\N$.}
\]
Thus $H(t,\cdot)\in\Gamma_{C^\ell}(M\leftarrow N)$
for each $t\in [0,1]$. Moreover, we see that
$H$ (like each $H_{i_j}$) is $C^{0,\ell}$.
Also, if we set $W:=V\cup\bigcup_{j\in\N}P_j^0$,
then $H|_{]0,1]\times W}$ is $C^{0,r}$ (if $r<\infty$)
resp.\ $C^\infty$ (if $r=\infty$),
and~$H$ has all of the asserted properties.
$\,\square$
\section{Smoothing in algebraic topology}\label{alg-top}
In this section,
we compile some typical applications of smoothing
results in algebraic topology.
See also~\cite{Ste} and~\cite{MaW}.\\[2.3mm]
Smooth homotopies can be juxtaposed
to smooth homotopies if they are constant near
$t=0$ and $t=1$ (see \ref{collar}(d)), as is well known.
We shall use the following terminology
for this standard idea.
\begin{numba}\label{collar}
(a) If $X$ and $Y$ are topological
spaces, we say that a continuous map
$F\colon [0,1]\times X\to Y$ is a
\emph{homotopy with collar}
if there exists $\ve\in \,]0,\frac{1}{2}[$ such that
$F(t,x)=F(0,x)$ for all $t\in [0,\ve]$ and $x\in X$,
and $F(t,x)=F(1,x)$ for all $t\in [1-\ve,1]$
and $x$ in~$X$.\\[2.3mm]
(b) If there exists a homotopy $F$ from $\gamma\colon X\to Y$
to $\eta\colon X\to Y$, then there also exists a
homotopy with collar from $\gamma$ to $\eta$.
To this end, pick a smooth function $\tau\colon [0,1]\to\R$
with image in $[0,1]$ such that $\tau|_{[0,\frac{1}{3}]}=0$
and $\tau|_{[\frac{2}{3},1]}=1$.
Then
\[
F^c\colon [0,1]\times X\to Y,\quad (t,x)\mto F(\tau(t),x)
\]
has the desired properties. If $F$ is a homotopy relative
$A\sub X$, then also~$F^c$.\\[2.3mm]
(c) If $X$ and $Y$ are $C^\infty$-manifolds with rough boundary
and $F$ is
a homotopy with collar such that $F(0,\cdot)$
and $F(1,\cdot)$ are smooth, then $F$ is smooth on
$([0,\ve[\,\cup \,]1-\ve,1])\times X$ for some $\ve\in\, ]0,\frac{1}{2}[$.
Also note that if $F$ is smooth in the situation of~(b),
then also $F^c$ is smooth.\\[2.3mm]
(d) If $F, G\colon [0,1]\times X\to Y$
are smooth homotopies with collar such that $G(0,\cdot)=F(1,\cdot)$,
then the juxtaposed homotopy $F*G\colon [0,1]\times X\to Y$,
$(F*G)(t,x):=F(2t,x)$ for $t\in[0,\frac{1}{2}]$,
$(F*G)(t,x):=G(2t-1,x)$ for $t\in [\frac{1}{2},1]$
is smooth.\\[2.3mm]
(e) Let $X$ and $Y$ be $C^\infty$-manifolds with rough boundary
and $\gamma,\eta,\zeta\colon X\to Y$
be smooth maps. If there exists
a smooth homotopy $F$ from $\gamma$ to $\eta$ and a smooth homotopy
$G$ from $\eta$ to $\zeta$, then $F^c*G^c$ is a smooth homotopy from
$\gamma$ to $\eta$. If both $F$ and $G$ are homotopies
relative~$A$ for a subset $A\sub X$, then also $F^c*G^c$
is a homotopy relative~$A$.\\[2.3mm]
(f)
Let $(X,x_0)$ and $(Y,y_0)$ be pointed topological spaces.
We call a continuous map
$f\colon X\to Y$ 
a \emph{pointed map with collar}
if $f|_W=y_0$ for an $x_0$-neighbourhood $W\sub X$.\\[2.3mm]
(g) If $X$ and $Y$ are $C^\infty$-manifolds with rough boundary,
then a pointed map $(X,x_0)\to (Y,y_0)$
with collar is smooth on an open $x_0$-neighbourhood.\\[2.3mm]
(h) Let $(Y,y_0)$ be a pointed topological
space, $X$ be a locally compact $C^\infty$-manifold
with rough boundary and $x_0\in X$.
Then every pointed mapping\linebreak
$\gamma\colon (X,x_0)\to (Y,y_0)$
is homotopic relative~$\{x_0\}$
to a pointed map with collar.\\[2.3mm]
To see this, let
$\phi\colon P\to Q\sub\R^d$ be a chart for~$X$
with $x_0\in P$ such that $\phi(x_0)=0$.
After shrinking~$Q$, we may assume that~$Q$ is convex.
There exists a compact
$0$-neighbourhood $K\sub Q$
and a function $h\in C^\infty_c(Q,\R)$
with image in $[0,1]$
such that $h|_K=1$. Abbreviate $\chi:=1-h$
and $L:=\Supp(h)$.
Then $F\colon [0,1]\times X\to Y$,
\begin{equation}\label{nicehomot}
\;\,
(t,x)\mto
\left\{\begin{array}{cl}
\gamma(x) & \mbox{ if $\, x\in M\setminus \phi^{-1}(L)$;}\\
\gamma(\phi^{-1}((1-t)\phi(x)+t\chi(\phi(x))\cdot\phi(x)))&
\mbox{ if $x\in P$}
\end{array}\right.
\end{equation}
is a homotopy relative $\{x_0\}$
from $\gamma$ to $\gamma^c:=F(1,\cdot)$.
Note that $\gamma^c|_Z$ is constant (with value $x_0$)
on the $x_0$-neighbourhood
$Z:=\phi^{-1}(K)$.
\end{numba}
\begin{example}
Let
$N$ be a smooth manifold modelled on locally convex
spaces, $n_0\in N$,
and $k\in\N_0$. Let $e:=(0,\ldots, 0,1)\in \Sph_k\sub\R^{k+1}$.
Then we have:\\[2.3mm]
(a) \emph{Every homotopy class $[\gamma]\in \pi_k(N,n_0)
=[(\Sph_k,e),(N,n_0)]_*$ contains a smooth representative
$\eta\colon \Sph_k\to N$.}\\[2.3mm]
(b) \emph{If two smooth maps $\alpha\colon \Sph_k\to N$
and $\beta\colon \Sph_k\to N$ with
$\alpha(e)=\beta(e)=n_0$ are homotopic relative~$\{e\}$,
then there exists a smooth base-point preserving homotopy from
$\alpha$ to $\beta$.}\\[2.3mm]
[To prove~(a), let $\gamma\colon\Sph_k\to N$ be a continuous map
such that $\gamma(e)=n_0$.
By (h) and (g) in~\ref{collar},
we may assume that $\gamma$ is smooth on an open neighbourhood~$Y$ of $e$ in $\Sph_k$.
Applying Corollary~\ref{smoothemps}
to $\gamma$
with $r=\infty$, $\ell=0$, $A:=M:=\Sph_k$, $U:=\Sph_k\setminus\{e\}$
and $\Omega:=C(M,N)$,
we find a homotopy $H\colon [0,1]\times \Sph_k\to N$
from~$\gamma$ to a smooth map $\eta\colon\Sph_k\to N$
such that $H(t,e)=\gamma(e)=n_0$ for all $t\in[0,1]$.\\[2.3mm]
(b) Let $\phi\colon P\to Q$ be a chart of $\Sph_k$
such that $Q\sub \R^k$ is convex, $e\in P$, and $\phi(e)=0$.
Let $\chi$, $K$, and $L$ be as in \ref{collar}(h)
(with $X:=\Sph_k$, $Y:=N$).
Suppose that $\alpha$ and $\beta$ are smooth maps $\Sph_k\to N$
with $\alpha(e)=\beta(e)=n_0$
such that their exists a continuous homotopy~$F$ relative $\{e\}$
from~$\alpha$ to~$\beta$.
As the homotopy in (\ref{nicehomot}),
applied with $\alpha$ in place of~$\gamma$,
and the corresponding one for~$\beta$
are smooth, using~\ref{collar}(e)
it suffices to find a smooth homotopy relative~$\{e\}$
between
$\alpha^c$ and $\beta^c$.
Now
$F^c$ (as in \ref{collar}(b))
is a homotopy relative $\{e\}$
with collar from~$\alpha$ to~$\beta$.
Moreover, $F^c$
is smooth on the open neighbourhood
$Z:=([0,\frac{1}{3}[\,\cup\,]\frac{2}{3},1])\times\Sph_k$
of $\{0,1\}\times \Sph_k$ in $[0,1]\times\Sph_k$.
Now
\[
G\colon [0,1]\times \Sph_k\to N,\;
(t,x)\mto
\left\{\begin{array}{cl}
F^c(t,x) & \mbox{ if $\, x\in \Sph_k\setminus \phi^{-1}(L)$;}\\
F^c(t,\phi^{-1}(\chi(\phi(x))\cdot\phi(x)))&
\mbox{ if $x\in P$}
\end{array}\right.
\]
is a homotopy relative $\{e\}$ with colar
from $\alpha^c$ to $\beta^c$
such that $G|_{[0,1]\times \phi^{-1}(K)}=n_0$.
Thus $G$ is smooth on the open neighbourhood
$Z\cup ([0,1]\times \phi^{-1}(K^0))$
of the closed subset
$C:=(\{0,1\}\times \Sph_k)\cup ([0,1]\times \{e\})$
in $A:=M:=[0,1]\times\Sph_k$.
Applying Corollary~\ref{smoothemps}
with $G$ in place of~$\gamma$,
$\Omega:=C(M,N)$ and $U:=([0,1]\times\Sph_k)\setminus C$,
we get a smooth function $\eta\colon [0,1]\times\Sph_k\to N$
such that $\eta|_C=G|_C$, whence $\eta(0,\cdot)=\alpha^c$,
$\eta(1,\cdot)=\beta^c$, and $\eta(t,e)=n_0$ for all $t\in [0,1]$.\,]
\end{example}
\begin{defn}
Let $\ell\in\N_0\cup\{\infty\}$.
We say that a rough $C^\ell$-manifold~$N$
modelled on locally convex spaces is \emph{$RC^\ell$-contractible}
if there exists a homotopy
$F\colon [0,1]\times N\to N$
from $\id_N$ to the constant function
$c_{x_0}\colon N\to N$, $x\mto x_0$
for some $x_0\in N$, such that~$F$ is an $RC^\ell$-map.
If~$N$ is a $C^\ell$-manifold
with rough boundary modelled on locally
convex spaces
and there exists a homotopy
$F\colon [0,1]\times N\to N$
from $\id_N$ to the constant function
$c_{x_0}\colon N\to N$, $x\mto x_0$
for some $x_0\in N$, such that~$F$ is a $C^\ell$-map,
then~$N$ is called \emph{$C^\ell$-contractible}
(or also \emph{smoothly contractible}, if $\ell=\infty$).
\end{defn}
\begin{example}\label{algtopexs}
(a) \emph{If a $\sigma$-compact finite-dimensional
smooth manifold~$N$ is contractible,
then~$N$ is also smoothly contractible.}\\[2.3mm]
[Let $F\colon [0,1]\times N\to N$
be a homotopy from $\id_N$ to the constant function
$c_{x_0}\colon N\to N$, $x\mto x_0$
for some $x_0\in N$.
Then $F^c$ (as in \ref{collar}(b))
is a homotopy with collar from $\id_N$ to $c_{x_0}$
and smooth on $([0,\frac{1}{3}[\,\cup\, ]\frac{2}{3},1])\times M$.
Applying Corollary~\ref{smoothemps}
with $A:=M:=[0,1]\times N$,
$\gamma:=F^c$,
$U:=\,]\frac{1}{4},\frac{3}{4}[\,\times M$
and $\Omega:=C(M,N)$,
we obtain a smooth function $\eta\colon [0,1]\times N\to N$
which coincides with~$F^c$ on $([0,\frac{1}{4}]\cup[\frac{3}{4},1])\times N$
and thus is a homotopy from $\id_N$ to~$c_{x_0}$.\,]\\[2.3mm]
(b) \emph{If a singleton $\{x_0\}$ is a strong deformation retract of~$N$,
then there exists a smooth map $f\colon N\to N$
which is a
strong deformation retraction to~$\{x_0\}$.}\\[2.3mm]
[Let $\phi\colon P\to Q$, $\chi$, $K$, and $L$ be as in \ref{collar}(h),
with $X:=Y:=N$.
For $\gamma:=\id_N$, the homotopy~$F$
from $\id_X$ to $\id_X^c$
defined in (\ref{nicehomot}) is smooth and a homotopy
relative~$\{x_0\}$.
If $G$ is a homotopy relative~$\{x_0\}$
from $\id_N$ to~$c_{x_0}$,
then also $G^c$ is so. Moreover,
$G^c$ is smooth on the open neighbourhood
$Z:=([0,\frac{1}{3}[\,\cup\,]\frac{2}{3},1])\times N$
of $\{0,1\}\times N$ in $[0,1]\times N$.
Now
\[
J\colon [0,1]\times N\to N,\;
(t,x)\mto
\left\{\begin{array}{cl}
G^c(t,x) & \mbox{ if $\, x\in N \setminus \phi^{-1}(L)$;}\\
G^c(t,\phi^{-1}(\chi(\phi(x))\cdot\phi(x)))&
\mbox{ if $x\in P$}
\end{array}\right.
\]
is a homotopy relative $\{x_0\}$ with colar
from $\id^c$ to $c_{x_0}$
such that $J|_{[0,1]\times \phi^{-1}(K)}=x_0$.
Thus $J$ is smooth on the open neighbourhood
$Z\cup ([0,1]\times \phi^{-1}(K^0))$
of the closed subset
$C:=(\{0,1\}\times N)\cup ([0,1]\times \{x_0\})$
in $A:=M:=[0,1]\times N$.
Applying Corollary~\ref{smoothemps}
with~$J$ in place of~$\gamma$,
$\Omega:=C(M,N)$, and $U:=([0,1]\times N)\setminus C$,
we get a smooth function $\eta\colon [0,1]\times N\to N$
such that $\eta|_C=J|_C$, whence $\eta(0,\cdot)=\id_X^c$,
$\eta(1,\cdot)=c_{x_0}$, and  $\eta(t,e)=x_0$ for all $t\in [0,1]$.
Let $H:=\eta$.
Now $F^c*H^c$ is a smooth homotopy relative $\{x_0\}$
from $\id_N$ to $c_{x_0}$.\,]\\[2.3mm]
As before, let $N$ be a $\sigma$-compact, finite-dimensional smooth
manifold.
Let $S\sub N$ be a submanifold which is a closed
subset.\\[2.3mm]
(c)
\emph{If $S$ is a retract of~$N$,
then there exists a smooth retraction $\eta\colon N\to S$.}\\[2.3mm]
[We shall use the fact that~$S$ has a tubular
neighbourhood in~$N$ (see, e.g., \cite{Lan}).
Thus, there exists an open subset $T\sub N$
with $S\sub T$ and a $C^\infty$-diffeomorphism
$\psi\colon T\to B$ onto an open subset $B\sub E$
for some smooth vector bundle $\pi\colon E\to S$
such that $B$ contains $0_x\in E_x$
for all $x\in S$. After shrinking~$B$,
we may assume\footnote{Each $x\in S$
has an open neighbourhood $U_x\sub S$ such that
$V_x:=\theta^{-1}_x(U_x\times B_x)\sub B$
for some local trivialization $\theta_x$ of~$E$
around~$x$ and some balanced, open $0$-neighbourhood~$B_x$
in the typical fibre of the vector bundle at~$x$.
Replace~$B$ with the union of the $V_x$.}
that $tv\in B$ for all $v\in B$ and
$t\in [0,1]$, enabling us to define the smooth map
\[
[0,1]\times T\to T,\quad (t,x)\mto tx\cdot :=\psi^{-1}(t\psi(x)).
\]
Since $N$ is a normal topological
space, there exists a neighbourhood $C$ of~$S$ in~$T$
which is closed in~$M$. There exists $h\in C^\infty(N,\R)$
with $h(N)\sub[0,1]$ and $\Supp(h)\sub C$
such that such that $h|_O=1$ for $x$ for some open neighbourhood~$O$
of~$S$ in~$T$. Set $\chi:=1-h$.
If $r\colon N\to S$ is a continuous retraction, then
\[
\gamma\colon N\to S,\quad
x\mto\left\{\begin{array}{cl}
r(x) &\mbox{ if $x\in N\setminus \Supp(h)$;}\\
r(\chi(x)\cdot x) &\mbox{ if $x\in T$}
\end{array}\right.
\]
is a retraction and smooth on~$O$,
as $\gamma(x)=r(0.x)=\pi(\psi(x))$.
Applying Corollary~\ref{smoothemps}
with $A:=M:=N$, $S$ in place of~$N$,
$U:=N\setminus S$ and $\Omega:=C(N,S)$,
we get a smooth function $\eta\colon N\to S$
such that $\eta|_S=\gamma|_S$
and thus $\eta|_S=\id_S$.
Moreover, there is a homotopy $H\colon [0,1]\times N\to S$
from~$\gamma$ to~$\eta$
such that $H(t,\cdot)|_S=\gamma|_S=\id_S$
for all $t\in[0,1]$ and $H|_{]0,1]\times N}$
is smooth, which will be useful in~(d).\,]\\[2.3mm]
(d)
Let $j\colon S\to N$ be the inclusion map.\\[2.3mm]
\emph{If $S$ is a deformation retract of~$N$,
then there exists a smooth homotopy
$[0,1]\times N\to N$
from $\id_N$ to $j\circ \rho$
for a retraction $\rho\colon N\to S$.}\\[2.3mm]
[Let the tubular neighbourhood and corresponding notation be as
in the proof of~(b).
Then
\[
R\colon [0,1]\times N\to N,\;
x\mto\left\{\begin{array}{cl}
x &\mbox{ if $x\in N\setminus \Supp(h)$;}\\
\psi^{-1}((1-t)\psi(x)+t\chi(x)\psi(x)) &\mbox{ if $x\in T$}
\end{array}\right.
\]
is a smooth homotopy relative~$S$ from~$\id_M$ to $R(1,\cdot)$.
Let $F\colon [0,1]\times N\to N$ be a
homotopy from $\id_N$ to $j\circ r$ for
a retraction $r\colon N\to S$. Then
\[
G\colon N\to N,\quad
(t,x)\mto\left\{\begin{array}{cl}
F(t,x) &\mbox{ if $x\in N\setminus \Supp(h)$;}\\
F(t,\chi(x)\cdot x) &\mbox{ if $x\in T$}
\end{array}\right.
\]
is a homotopy from $R(1,\cdot)$ to $j\circ \gamma$ for
a retraction $\gamma\colon N\to S$
which is smooth on the tubular neighbourhood
$T$ of~$S$ in~$N$.
By the proof of~(c),
there is a homotopy
$H\colon [0,1]\times N\to S$ from $\gamma$ to a retraction
$\rho:=H(1,\cdot)$ such that $H(t,\cdot)|_S=\id_S$
for all $t\in[0,1]$
and $H|_{]0,1]\times N}$ is smooth.
Then
$(R^c*G^c)*(j\circ H)^c\colon [0,1]\times N\to N$
is a homotopy with collar from $\id_N$ to $j\circ H(1,\cdot)$
which is smooth on the open neighbourhood
$([0,\frac{1}{4}[\,\times N)\cup (]\frac{2}{3},1]\times N)$
of the closed subset $C:=\{0,1\}\times N$
of $A:=M:=[0,1]\times N$.
Applying Corollary~\ref{smoothemps}
with $U:=M\setminus C$,
$(R^c*G^c)*H^c$ in place of~$\gamma$ and $\Omega:=C(M,N)$,
we obtain a smooth function $\eta\colon [0,1]\times N\to N$
such that $\eta|_C=(R^c*G^c)*H^c|_C$ and thus $\eta(0,\cdot)=\id_N$
and $\eta(1,\cdot)=j\circ H(1,\cdot)$.\,]\\[2.3mm]
(e) \emph{If $S$ is a strong deformation retract of~$N$,
then there exists a smooth homotopy
$[0,1]\times N\to N$ relative~$S$
from $\id_N$ to $j\circ\rho$ for
a retraction $\rho\colon N\to S$.}\\[2.3mm]
[In fact, we can then choose $F$ as a homotopy relative~$S$
from $\id_N$ to $j\circ r$ in the proof of~(d).
Then also $G$ is a homotopy relative~$S$.
Since also~$R$ and~$H$ are homotopies relative~$S$, so are
$R^c$, $G^c$, $H^c$, and $(R^c*G^c)*H^c$. Moreover,
$(R^c*G^c)*H^c$ is smooth
on the open neighbourhood
$([0,\frac{1}{4}[\,\times N)\cup ([0,1]\times T)
\cup (]\frac{3}{4},1]\times N)$
of the closed subset $C:=(\{0,1\}\times N)\cup ([0,1]\times S)$
in $A:=M:=[0,1]\times N$.
Applying Corollary~\ref{smoothemps}
with $U:=M\setminus C$,
$(R^c*G^c)*H^c$ in place of~$\gamma$
and $\Omega:=C(M,N)$,
we obtain a smooth function $\eta\colon [0,1]\times N\to N$
such that $\eta|_C=(R^c*G^c)*H^c|_C$ and thus $\eta(0,\cdot)=\id_N$,
$\eta(1,\cdot)=j\circ \rho$, and $\eta(t,x)=x$ for all $t\in[0,1]$
and $x\in S$.\,]\\[2.3mm]
(f) Let us give an easy example:
If $E$ is a locally convex space
and $R\sub E$ a regular subset which is star-shaped,
then $R$ is $RC^\infty$-contractible
(in fact there exists a strong deformation
retraction from $R$ onto $\{x_0\}$ which is an $RC^\infty$-map).\\[2.3mm]
[We may assume that $x_0=0$. The map
\[
H\colon R\times [0,1]\to R,\;\,
(x,t)\mto tx
\]
is smooth as a map to~$E$ and takes
$(t,x)$ in the interior $]0,1[\,\times R^0$
of its domain to $tx\in tR^0\sub R^0$;
it therefore is an $RC^\infty$-map.
Moreover, $H(0,x)=0$ for all $x\in R$,
$H(1,x)=x$ and $H(t,0)=0$ for all $t\in[0,1]$.\,]
\end{example}
We close with related results concerning pull-backs
of vector bundles. In particular,
we shall see that all smooth vector bundles
over an $RC^\infty$-contractible, rough $C^\infty$-manifold
are trivial.
Our arguments vary those in~\cite[pp.\ 20--21]{Hat},
where finite-dimensional topological vector bundles
over paracompact topological spaces are considered.
We begin with two lemmas.
\begin{la}\label{hat1}
Let $\ell\in \N_0\cup\{\infty\}$,
$M$ be a rough $C^\ell$-manifold
modelled on locally convex spaces.
Let $a<b$ be real numbers
and $\pi\colon E\to ([a,b]\times M)$
be a $C^\ell$-vector bundle over $[a,b]\times M$
whose typical fibre is a locally convex space~$F$.
If there exist real numbers $\alpha<\beta$ in $\,]a,b[$
such that $E|_{[a,\beta[\,\times M}$
and $E|_{]\alpha,b]\times M}$ are trivializable
$C^\ell$-vector bundles, then~$E$
is $C^\ell$-trivializable.
\end{la}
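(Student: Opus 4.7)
The plan is to modify the second trivialization $\theta_2$ by a bundle automorphism of the trivial bundle so that it becomes identical to $\theta_1$ on a subinterval of the overlap, and then patch the two trivializations using an open cover of $[a,b]\times M$ on which they agree.

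First I would extract the transition function. Write the given trivializations as $\theta_i=(\pi,\theta_{i,2})$ for $i=1,2$. On the overlap $]\alpha,\beta[\times M$, the composition $\theta_2\circ\theta_1^{-1}$ is a $C^\ell$-bundle automorphism of the trivial bundle, whose second component must have the form $(t,x,v)\mapsto g(t,x)v$ for a pointwise $F$-linear map $g(t,x)\colon F\to F$. Crucially, because $\theta_2\circ\theta_1^{-1}$ is a $C^\ell$-diffeomorphism, so is its inverse $\theta_1\circ\theta_2^{-1}$, whose second component similarly takes the form $(t,x,w)\mapsto g(t,x)^{-1}w$. Hence both $(t,x,v)\mapsto g(t,x)v$ and $(t,x,w)\mapsto g(t,x)^{-1}w$ are $C^\ell$-maps on $]\alpha,\beta[\times M\times F$, without any appeal to smoothness of inversion in $\mathrm{GL}(F)$ as an abstract topological group.

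Next I would choose $\alpha<\gamma_0<\gamma_1<\beta$ together with a smooth function $\tau\colon \,]\alpha,b]\to \,]\alpha,\gamma_1]$ such that $\tau(t)=t$ for $t\in \,]\alpha,\gamma_0]$ and $\tau(t)=\gamma_1$ for $t\in [\gamma_1,b]$. Set
\[
\Phi\colon \,]\alpha,b]\times M\times F\to \,]\alpha,b]\times M\times F,\quad (t,x,v)\mapsto (t,x,g(\tau(t),x)^{-1}v).
\]
This is a $C^\ell$-bundle automorphism of the trivial bundle, since it is the composition of the $C^\ell$-map $(t,x,v)\mapsto (\tau(t),x,v)$ with the second component of $\theta_1\circ \theta_2^{-1}$ (tracked in the third slot) and re-labelling of the first coordinate; its pointwise inverse is $(t,x,v)\mapsto (t,x,g(\tau(t),x)v)$, which is $C^\ell$ by the same reasoning. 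Setting $\tilde\theta_2:=\Phi\circ\theta_2$ yields a $C^\ell$-trivialization of $E|_{]\alpha,b]\times M}$, and on $]\alpha,\gamma_0]\times M$, where $\tau(t)=t$, one computes $\tilde\theta_2(e)=(t,x,g(t,x)^{-1}g(t,x)\theta_{1,2}(e))=\theta_1(e)$, so $\tilde\theta_2$ and $\theta_1$ coincide there.

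Finally I would patch. Fix any $\gamma_0'\in \,]\alpha,\gamma_0[$; then $\{[a,\gamma_0'[\,\times M,\,\,]\alpha,b]\times M\}$ is an open cover of $[a,b]\times M$ whose overlap $]\alpha,\gamma_0'[\,\times M$ is contained in $]\alpha,\gamma_0]\times M$, on which the two trivializations agree. Defining $\Theta\colon E\to [a,b]\times M\times F$ to equal $\theta_1$ on $E|_{[a,\gamma_0'[\times M}$ and $\tilde\theta_2$ on $E|_{]\alpha,b]\times M}$ therefore gives a well-defined $C^\ell$-bundle isomorphism, hence a global $C^\ell$-trivialization of $E$. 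The main point requiring care is the $C^\ell$-regularity of the pointwise inverse $g^{-1}$; by deriving it from the diffeomorphism $\theta_1\circ\theta_2^{-1}$ as in the first step, one sidesteps the need for any implicit-function or inversion argument in the model space.
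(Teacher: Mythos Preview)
Your proof is correct and follows essentially the same approach as the paper. The paper writes down the global trivialization directly as $\theta_1$ on $[a,r[\,\times M$ and $(t,x,g_{1,2}(\tau(t),x,\theta_{2,2}(v)))$ on $]\alpha,b]\times M$, which is exactly your $\tilde\theta_2=\Phi\circ\theta_2$; the only differences are cosmetic (your $\gamma_0,\gamma_1$ play the role of the paper's $r,s$, and you phrase the construction as modifying $\theta_2$ by an automorphism rather than writing the formula for $\theta$ in one stroke).
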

\begin{proof}
Let $\theta_1\colon E|_{[a,\beta[\,\times M}\to
[\alpha,\beta[\,\times M\times F$
and $\theta_2\colon E|_{]\alpha,b]\times M}\to
\,]\alpha,b]\times M\times F$ be $C^\ell$-trivializations,
with second components $\theta_{1,2}$ and $\theta_{2,2}$,
respectively.
Then
\begin{equation}\label{trivcha}
]\alpha,\beta[\,\times M\times F\to \, ]\alpha,\beta[\,\times M\times F,
\;(t,x,y)\mto \theta_i(\theta_j^{-1}(t,x,y))
\end{equation}
is a $C^\ell$-diffeomorphism for $(i,j)\in\{(1,2),(2,1)\}$
and linear in the final argument.
We let $g_{i,j}\colon \,]\alpha,\beta[\,\times M\times F\to F$
be the third component of the diffeomorphism in~(\ref{trivcha});
thus
\[
\theta_1(\theta_2^{-1}(t,x,y))=(t,x,g_{1,2}(t,x,y)).
\]
Pick $r<s$ in $]\alpha,\beta[$.
There is a $C^\ell$-map $\tau\colon \,]\alpha,b]\to\R$
such that $\tau$ is monotonically increasing,
$\tau(t)=s$ for all $t\in [s,b]$
and $\tau(t)=t$ for all $t\in \,]\alpha,r]$.
Define $\theta\colon E\to [a,b]\times M\times F$
via
\[
\theta(v):=\left\{
\begin{array}{cl}
\theta_1(v) &\mbox{ if $t\in [a,r[$;}\\
(t,x,g_{1,2}(\tau(t),x,\theta_{2,2}(v)) &\mbox{ if $t\in \,]\alpha,b]$}
\end{array}\right.
\]
for $v\in E$,
with
$(t,x):=\pi(v)$.
Then $\theta$ is a $C^\ell$-diffeomorphism,
as we readily check that the inverse is the map taking
$(t,x,y)\in [a,b]\times M\times F$ to
$\theta_1^{-1}(t,x,y)$
and
$\theta_2^{-1}(t,x,g_{2,1}(\tau(t),x,y))$
if $t\in[a,r[$ and $t\in \,]\alpha,b]$,
respectively. Then $\theta$ is a global
$C^\ell$-trivialization for~$E$.
\end{proof}
\begin{la}\label{hat2}
Let $\ell\in \N_0\cup\{\infty\}$,
$M$ be a rough $C^\ell$-manifold
modelled on locally convex spaces,
$a<b$ be real numbers,
and $\pi\colon E\to [a,b]\times M$
be a $C^\ell$-vector bundle over $[a,b]\times M$
whose typical fibre is a locally convex space~$F$.
Then each $x\in M$ has an open neighbourhood
$U\sub M$ such that $E|_{[a,b]\times U}$
is $C^\ell$-trivializable.
\end{la}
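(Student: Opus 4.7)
My plan is to combine a tube-lemma type argument (using compactness of $[a,b]$) with iterated application of Lemma~\ref{hat1} along a partition of $[a,b]$. The key is to find a single open neighborhood $U$ of $x$ in $M$ and a partition $a=s_0<s_1<\cdots<s_n=b$ such that $E$ is $C^\ell$-trivializable over each slab $[s_{j-1},s_j]\times U$, and then inductively glue these trivializations.

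First, for each $t\in[a,b]$, local triviality yields an open $C^\ell$-trivializing neighborhood of $(t,x)$ in $[a,b]\times M$; shrinking, it contains a product set $V_t\times U_t$ with $V_t\sub[a,b]$ relatively open, $t\in V_t$, and $U_t$ an open neighborhood of~$x$ in~$M$. By compactness of $[a,b]$, finitely many $V_{t_1},\ldots,V_{t_N}$ cover $[a,b]$. Set $U:=U_{t_1}\cap\cdots\cap U_{t_N}$; then each $E|_{V_{t_i}\times U}$ inherits a $C^\ell$-trivialization by restriction. Standard refinement of this open cover of the compact interval $[a,b]$ yields a partition $a=s_0<s_1<\cdots<s_n=b$ and an index $\sigma(j)\in\{1,\ldots,N\}$ for each $j$ with $[s_{j-1},s_j]\sub V_{t_{\sigma(j)}}$, and a constant $\eta>0$ so small that for every~$j$,
\[
]s_{j-1}-\eta,s_j+\eta[\,\cap[a,b]\,\sub\, V_{t_{\sigma(j)}}\quad\text{and}\quad 2\eta<\min_j(s_j-s_{j-1}).
\]

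Next, I induct on~$j\in\{0,\ldots,n\}$ to prove that $E|_{W_j\times U}$ is $C^\ell$-trivializable, where $W_j:=[a,s_j+\eta/2[\,\cap[a,b]$. The base case $j=0$ is clear since $W_0\sub V_{t_{\sigma(1)}}$. For the inductive step, set $b_{j+1}:=\min(s_{j+1}+\eta/2,b)$ and apply Lemma~\ref{hat1} to the bundle $E|_{[a,b_{j+1}]\times U}$ with $\alpha:=s_j-\eta/4$ and $\beta:=s_j+\eta/4$; both lie in $]a,b_{j+1}[$ thanks to the choice of~$\eta$. On the one hand, $[a,\beta[\,\sub W_j$, so $E|_{[a,\beta[\,\times U}$ is trivializable by the inductive hypothesis (as a restriction). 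On the other hand, $]\alpha,b_{j+1}]\sub\,]s_j-\eta/4,s_{j+1}+\eta/2]\sub V_{t_{\sigma(j+1)}}$, hence $E|_{]\alpha,b_{j+1}]\times U}$ is trivializable. Lemma~\ref{hat1} then gives trivializability over $[a,b_{j+1}]\times U$, and a fortiori over $W_{j+1}\sub[a,b_{j+1}]$.

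Taking $j=n$ yields that $E|_{[a,b]\times U}=E|_{W_n\times U}$ is $C^\ell$-trivializable, as required. The main technical point — which is what dictates the shape of the induction — is that Lemma~\ref{hat1} requires trivializability on \emph{open} pieces, whereas the natural inductive statement concerns closed sub-slabs $[a,s_j]\times U$; the fix is to strengthen the induction to the slightly enlarged \emph{open} set $W_j$, which automatically supplies the overlap with the next trivializing tube needed at each step.
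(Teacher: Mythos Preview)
Your approach is essentially the same as the paper's: both use compactness of $[a,b]$ to obtain a single open neighbourhood $U$ of~$x$ and a partition of $[a,b]$ into subintervals on each of which the bundle is trivializable, and then induct along the partition via Lemma~\ref{hat1}. The paper organises this slightly differently, using a Lebesgue number to choose the partition and inducting directly on closed slabs $[a,t_{j+1}]$, but the substance is identical.

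One small slip: in your inductive step from $j$ to $j+1$, the choice $\alpha:=s_j-\eta/4$ fails to lie in $]a,b_{j+1}[$ when $j=0$, since then $\alpha=a-\eta/4<a$. The fix is immediate: take $j=1$ as the base case instead, noting that $W_1=[a,s_1+\eta/2[\,\cap[a,b]\sub\,]s_0-\eta,s_1+\eta[\,\cap[a,b]\sub V_{t_{\sigma(1)}}$ by the same reasoning you used for $W_0$; the inductive step then runs for $j\geq 1$, where $s_j\geq s_1>a+2\eta>a+\eta/4$ ensures $\alpha>a$ as needed.
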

\begin{proof}
For each $s\in [a,b]$,
there exists an open neighbourhood $V_s$ of $(s,x)$ in
$[a,b]\times M$ such that $E|_{V_s}$
is $C^\ell$-trivializable. We may assume
that $V_s=J_s\times U_s$
for open subsets $J_s\sub [a,b]$
and $U_s\sub M$.
Let $\delta>0$
be a Lebesgue number for the open cover $(J_s)_{s\in[a,b]}$
of the compact metric space $[a,b]$.
Pick $a=t_0<t_1<\cdots<t_n=b$ such that
$t_j-t_{j-1}<\delta/3$ for all $j\in\{1,\ldots,n\}$.
Set $t_{-1}:=t_0$ and $t_{n+1}:=t_n$.
For each $j\in\{1,\ldots,n\}$,
we find $s_j\in [a,b]$ such that
\[
[t_{j-2},t_{j+1}]\sub J_{s_j},
\]
as the interval on the left has length $<\delta$.
Then
\[
U:=U_{s_1}\cap\cdots\cap U_{s_n}
\]
is an open neighbourhood of~$x$ in~$M$.
A straightforward induction based
on Lemma~\ref{hat1} shows that
$E|_{[a,t_{j+1}]\times U}$ is $C^\ell$-trivializable
for all $j\in\{1,\ldots, n\}$.
Notably, $E|_{[a,b]\times U}$
is $C^\ell$-trivializable.
\end{proof}
If $M$ is a rough $C^\ell$-manifold
and $\pi_j\colon E_j\to M$
are $C^\ell$-vector bundle
for $j\in\{1,2\}$ with locally convex fibres,
we call a map $f\colon E_1\to E_2$
an \emph{isomorphism of $C^\ell$-vector bundles over $\id_M$}
if $f$ is a $C^\ell$-diffeomorphism,
$\pi_2\circ f=\pi_1$ and the restriction
of $f$ to a map $(E_1)_x\to (E_2)_x$
is linear for all $x\in M$.
\begin{prop}\label{eq-boundary}
Let $\ell\in\N_0\cup\{\infty\}$
and $M$ be a $C^\ell$-paracompact $C^\ell$-manifold
with rough boundary modelled on locally convex spaces.
Let $a<b$ be real numbers and $\pi\colon E\to [a,b]\times M$
be a $C^\ell$-vector bundle
whose fibres are locally convex spaces.
For $t\in [a,b]$, let $\lambda_t\colon M\to [a,b]\times M$
be the map $x\mto (t,x)$.
Then there exists a $C^\ell$-vector bundle
isomorphism $\lambda_a^*(E)\to\lambda_b^*(E)$
over $\id_M$. 
\end{prop}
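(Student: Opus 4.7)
The plan is to prove this by a ``sliding'' argument familiar from vector bundle theory, adapted to the $C^\ell$ setting. First, by Lemma~\ref{hat2}, each $x \in M$ has an open neighbourhood $U$ such that $E|_{[a,b] \times U}$ is $C^\ell$-trivializable. Since $M$ is $C^\ell$-paracompact, I can choose a locally finite open cover $(U_j)_{j \in J}$ refining this cover (so each $E|_{[a,b] \times U_j}$ is $C^\ell$-trivializable, via some $\theta_j = (\pr_{[a,b]}, \pr_M, \theta_{j,3})\colon E|_{[a,b] \times U_j} \to [a,b] \times U_j \times F_j$), together with a subordinate $C^\ell$-partition of unity $(h_j)_{j \in J}$ with $\Supp(h_j) \subseteq U_j$.

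Next, I well-order $J$ and form the cumulative functions $f_j \colon M \to [a,b]$ via $f_j(x) := a + (b-a)\sum_{i \leq j} h_i(x)$; taking $f_{-\infty} \equiv a$ and $f_\infty \equiv b$, these $C^\ell$-functions interpolate between $\lambda_a$ and $\lambda_b$ when paired with $\id_M$. For each $j \in J$, I define a $C^\ell$-vector bundle isomorphism
\[
\Phi_j \colon (f_{j-1}, \id_M)^*(E) \to (f_j, \id_M)^*(E)
\]
over $\id_M$ by the recipe
\[
\Phi_j(x,v) := \bigl(x,\,\theta_j^{-1}(f_j(x), x, \theta_{j,3}(f_{j-1}(x), x, v))\bigr)
\]
for $x \in U_j$, and $\Phi_j := \id$ on $M \setminus \Supp(h_j)$; since $f_{j-1}(x) = f_j(x)$ off $\Supp(h_j)$, these definitions agree on the overlap $U_j \setminus \Supp(h_j)$, so $\Phi_j$ is a well-defined $C^\ell$-bundle isomorphism.

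Finally, I define the desired isomorphism $\Phi \colon \lambda_a^*(E) \to \lambda_b^*(E)$ by composing the $\Phi_j$ in the well-order of $J$. Local finiteness of $(h_j)_{j\in J}$ is the essential point: each $x \in M$ has an open neighbourhood $W$ meeting only finitely many $\Supp(h_j)$, so on $W$ all but finitely many $\Phi_j$ restrict to the identity, and the ``infinite composition'' is locally a genuine finite composition of $C^\ell$-maps and hence $C^\ell$. The fibrewise linearity of each $\Phi_j$ (inherited from the local trivializations) passes to~$\Phi$.

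The main obstacle is conceptual rather than technical: making the ordered composition rigorous when $J$ is uncountable. I would address it by noting that local finiteness ensures that on any sufficiently small open set, $\Phi$ agrees with a finite composition $\Phi_{j_n} \circ \cdots \circ \Phi_{j_1}$ (ordered by the well-order on $J$), which makes $\Phi$ unambiguously defined and $C^\ell$; the inverse is constructed by the same procedure applied in reverse order. No delicate convergence analysis is needed, precisely because the partition of unity is locally finite.
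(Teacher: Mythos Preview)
Your proposal is correct and follows essentially the same strategy as the paper: cover $M$ by opens over which $E|_{[a,b]\times U}$ trivializes (Lemma~\ref{hat2}), choose a subordinate $C^\ell$-partition of unity, totally order the index set, build step isomorphisms from the local trivializations that slide the section $x\mapsto(f(x),x)$ along $[a,b]$, and glue using local finiteness. The only difference is bookkeeping: the paper indexes the intermediate bundles by \emph{finite subsets} $\Phi\subseteq J$, defines $h_\Phi:=a+(b-a)\sum_{j\in\Phi}h_j$, and proves a compatibility condition $\alpha_\Theta|_{W(\Phi)}=\alpha_\Phi|_{W(\Phi)}$ for $\Phi\subseteq\Theta$ to define the global map; this sidesteps your notation ``$f_{j-1}$'', which is ambiguous at limit elements of the well-order (you mean $a+(b-a)\sum_{i<j}h_i$), but both arguments ultimately reduce, via local finiteness, to the same finite composition on small open sets.
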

\begin{proof}
Abbreviate $I:=[a,b]$.
By Lemma~\ref{hat2},
$M$ admits a cover $\cU$ by open sets $U\sub M$
such that $E|_{I\times U}$
is $C^\ell$-trivializable.
By $C^\ell$-paracompactness,
we find a $C^\ell$-partition of
unity $(h_j)_{j\in J}$ on~$M$
such that $S(j):=\Supp(h_j)\sub U(j)$ for some $U(j)\in\cU$,
for each $j\in J$.
Let $\cF$ be the set of finite subsets of~$J$.
For $\Phi\in\cF$, we define
\[
h_\Phi:=a+(b-a)\sum_{j\in\Phi}h_j\colon M\to I
\]
and consider the pullback bundle
$f_\Phi^*(E)$
over~$M$ determined by the $C^\ell$-map $f_\Phi\colon
M\to I \times M$, $x\mto (h_\Phi(x),x)$.
Thus $f_\emptyset=\lambda_a$.
For $\Phi\in\cF$, the interior
\[
W(\Phi):=\{x\in M\colon h_\Phi(x)=b\}^0
\]
is an open subset of~$M$ such that
the open $C^\ell$-vector subbundle $f_\Phi^*(E)|_{W(\Phi)}$
of $f_\Phi^*(E)$ coincides as a $C^\ell$-vector bundle
over $W(\Phi)$ with the open $C^\ell$-vector subbundle
$\lambda_b^*(E)|_{W(\Phi)}$ of $\lambda_b^*(E)$; thus
\[
f_\Phi^*(E)|_{W(\Phi)}=\lambda_b^*(E)|_{W(\Phi)}.
\]
Likewise,
\[
f_\Phi^*(E)|_{M\setminus S(\Phi)}=\lambda_a^*(E)|_{S(\Phi)}
\]
With $S(\Phi):=\bigcup_{j\in\Phi}\Supp(h_j)$.
We now construct a family $(\psi_\Phi)_{\Phi\in\cF}$
of $C^\ell$-vector bundle isomorphisms
\[
\alpha_\Phi\colon \lambda_a^*(E)\to f_\Phi^*(E)
\]
over $\id_M$
such that, for all
$\Phi,\Theta\in\cF$ with
$\Psi\sub\Theta$,
we have
\begin{equation}\label{loc-statio}
\alpha_\Phi(v)=\alpha_\Theta(v)\mbox{ for all $v\in \lambda_a^*(E)|_{W(\Phi)}$.}
\end{equation}
Once this is accomplished, we get a well-defined map
\[
\alpha\colon \lambda_a^*(E)\to \lambda_b^*(E)
\]
if we send $v\in \lambda_a^*(E)$ to
\[
\alpha(v):=\alpha_\Phi(v),
\]
independent of the choice of $\Phi\in \cF$
such that $v\in \lambda_a^*(E)|_{W(\Phi)}$.
By construction, $\alpha$ is an isomorphism
of $C^\ell$-vector bundles over~$\id_M$.\\[2.3mm]
To construct the isomorphisms $\alpha_\Phi$,
we fix a total order $\leq$ on~$J$
(e.g., a well-ordering).
For $j\in J$, we let
$\theta_j\colon E|_{I\times U(j)}\to I \times U(j)\times F(j)$
be a $C^\ell$-trivialization of $E|_{I\times U(j)}$,
with second component $\theta_{j,2}\colon E|_{I\times U(j)}\to F(j)$.
We let $\alpha_\emptyset$ be the identity map $\lambda_a^*(E)\to \lambda_a^*(E)$.
If $\Phi\in\cF$ has $n>1$ elements,
we write $\Phi=\{j_1,\ldots, j_n\}$
with $j_1<\cdots < j_n$ and set $\Psi:=\{j_1,\ldots, j_{n-1}\}$.
To define a $C^\ell$-vector bundle isomorphism
\begin{equation}\label{stepbyst}
\alpha_{\Phi,\Psi}\colon f_\Psi^*(E)\to f_\Phi^*(E)
\end{equation}
over $\id_M$, recall that
the local trivialization $\theta_{j_n}$
of~$E$ yields local trivializations
\[
\theta_{j_n}^\Phi\colon f_\Phi^*(E)|_{U(j_n)}\to U(j_n)\times F(j_n),\;
(x,y)\mto (x,\theta_{j_n,2}(y))
\]
and
\[
\theta_{j_n}^\Psi\colon f_\Psi^*(E)|_{U(j_n)}\to U(j_n)\times F(j_n),\;
(x,y)\mto (x,\theta_{j_n,2}(y))
\]
of the pullback bundles $f_\Phi^*(E)$
and $f_\Psi^*(E)$, respectively
(for $x\in U(j_n)$ and $y\in E_{f_\Phi(x)}$,
resp., $y\in E_{f_\Psi(x)}$).
Note that
\[
(\theta_{j_n}^\Phi)^{-1}(x,z)=(x,\theta_j^{-1}(h_\Phi(x),x,z))
\]
for $(x,z)\in U(j_n)\times F(j_n)$,
and an analogous formula holds with~$\Psi$ in place
of~$\Phi$. Since $h_\Phi(x)=h_\Psi(x)$
if $x\in U(j_n)\setminus S(j_n)$, we see that
\begin{equation}\label{hencewelldf}
(\theta_{j_n}^\Phi)^{-1}|_{(U(j_n)\setminus S(j_n))\times F(j_n)}
=
(\theta_{j_n}^\Psi)^{-1}|_{(U(j_n)\setminus S(j_n))\times F(j_n)}.
\end{equation}
We define a map $\alpha_{\Phi,\Psi}$ as in (\ref{stepbyst})
via $\alpha_{\Phi,\Psi}(v):=v$ if $v\in f_\Psi^*(E)|_{M\setminus S(j_n)}$
and
\[
\alpha_{\Phi,\Psi}(v):=((\theta_{j_n}^\Phi)^{-1}\circ\theta_{j_n}^\Psi)(v)
\]
for $v\in f_\Psi^*(E)|_{U(j_n)}$;
the map is well defined by~(\ref{hencewelldf}).
By construction, $\alpha_{\Phi,\Psi}$ is a $C^\ell$-vector
bundle isomorphism over~$\id_M$.
Moreover,
\begin{equation}\label{thusgood}
Y:=f_\Phi^*(E)|_{M\setminus S(j_n)}=f_\Psi^*(E)|_{M\setminus S(j_n)}
\mbox{ and } \alpha_{\Phi,\psi}|_Y=\id_Y.
\end{equation}
For $k\in\{0,1,\ldots, n\}$,
abbreviate
\[
\Phi(k):=\{j_1,\ldots, j_k\}.
\]
We obtain a $C^\ell$-vector bundle
isomorphism $\alpha_\Phi\colon \lambda_a^*(E)\to f_\Phi^*(E)$ over $\id_M$ via
\[
\alpha_\Phi:=\alpha_{\Phi(n),\Phi(n-1)}\circ \cdots\circ \alpha_{\Phi(1),\Phi(0)}.
\]
Now let $\Phi,\Theta\in\cF$ such that $\Phi\sub\Theta$.
Write $\Theta=\{i_1,\ldots, i_m\}$
with $i_1<\cdots< i_m$
and set $\Theta(k):=\{i_1,\ldots, i_k\}$
for $k\in\{0,\ldots, m\}$.
Note that for each $i_k\in \Theta\setminus \Phi$,
we have $S(i_k)\cap W(\Phi)=\emptyset$ and
thus $\alpha_{\Theta(k),\Theta(k-1)}|_{W(\Phi)}=\id$
(cf.\ (\ref{thusgood})). Hence
\begin{eqnarray*}
\alpha_\Theta|_{W(\Phi)} & = &
\alpha_{\Theta(m),\Theta(m-1)}|_{W(\Phi)}
\circ \cdots\circ \alpha_{\Theta(1),\Theta(0)}|_{W(\Phi)}\\
&=&
\alpha_{\Phi(n),\Phi(n-1)}|_{W(\Phi)}
\circ \cdots\circ \alpha_{\Phi(1),\Phi(0)}|_{W(\Phi)}
\, =\,  \alpha_\Phi|_{W(\Phi)},
\end{eqnarray*}
which completes the proof.
\end{proof}
\begin{rem}\label{dodge}
(a) If $t=a$ or $t=b$,
then $\lambda_t$ does not take $M^0$
inside the interior $]a,b[\,\times M^0$
of the range,
so that $\lambda_t$ is not an $RC^\ell$-map.
Thus, we cannot simply consider
rough $C^\ell$-manifolds
instead of $C^\ell$-manifolds with rough boundary
in
Proposition~\ref{eq-boundary}.
Yet, the author expects the proposition
to remain valid.\\[2.3mm]
(b) If $M$ is a $C^\ell$-paracompact
rough $C^\ell$-manifold
and $\pi\colon E\to I\times M$
is a $C^\ell$-vector bundle
for some interval $I\sub \R$ with $[a,b]\sub I^0$,
then we obtain the conclusion of
Proposition~\ref{eq-boundary}
for $E|_{[a,b]\times M}$.
In fact, we can pick $\alpha<a$ and $\beta>b$
with $[\alpha,\beta]\sub I$
and repeat the proof of Proposition~\ref{eq-boundary}
with $I:=[\alpha,\beta]$ in place of
$I:=[a,b]$.
\end{rem}
The following version of the Chain Rule
will help us to create a situation
as in Remark~\ref{dodge}(b),
and hence to dodge the problem
described in Remark~\ref{dodge}(a).
The lemma also allows $C^\infty$-manifolds
with rough boundary to be replaced with
rough $C^\infty$-manifolds
in parts~(c), (d), and~(e) of~\ref{collar}.
\begin{la}\label{technical-chain}
Let $X$, $E_1$, $E_2$, and $F$ be locally
convex spaces, $R\sub X$ and $S\sub E_2$
be regular subsets, $V\sub E_1$ be a locally convex,
regular subset, and $\ell\in \N_0\cup\{\infty\}$.
Let $g_1\colon R\to V$ be a $C^\ell$-map,
$g_2\colon R\to S$ be an $RC^\ell$-map
and $f\colon V\times S\to F$ be a $C^\ell$-map.
Then $h:=f\circ (g_1,g_2)\colon R\to F$ is~$C^\ell$;
if $\ell\geq 1$, then
\begin{equation}\label{chainhere}
dh(x,y)=df(g_1(x),g_2(x),dg_1(x,y),dg_2(x,y))
\mbox{ for all $(x,y)\in R\times X$.}
\end{equation}
\end{la}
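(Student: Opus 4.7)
The plan is to bootstrap from the Chain Rule (Proposition~\ref{unchained}) by first working on the interior $R^0$. Continuity of $h$ is immediate from the continuity of $f$, $g_1$, and $g_2$. For differentiability, note that since $g_2$ is $RC^\ell$ one has $g_2(R^0)\sub S^0$, whence $(g_1,g_2)(R^0)\sub V\times S^0$. The subset $V\times S^0\sub E_1\times E_2$ is locally convex (a product of the locally convex set~$V$ with the open set~$S^0$), regular (its interior $V^0\times S^0$ is dense, since $V^0$ is dense in~$V$), and relatively open in $V\times S$; so the restriction $f|_{V\times S^0}$ is $C^\ell$. Applying Proposition~\ref{unchained} to $f|_{V\times S^0}\circ(g_1|_{R^0},g_2|_{R^0})$ (the target is locally convex, so the locally convex option of the Chain Rule applies) shows that $h|_{R^0}$ is $C^\ell$, and when $\ell\geq 1$,
\[
d(h|_{R^0})(x,y)\;=\;df\bigl(g_1(x),g_2(x),dg_1(x,y),dg_2(x,y)\bigr)
\]
for $(x,y)\in R^0\times X$.

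The right-hand side of this formula is defined on all of $R\times X$ and is continuous there, as a composition of the continuous mappings $g_1,g_2$ on~$R$, the continuous extensions $dg_1,dg_2$ on $R\times X$, and the continuous~$df$ on $V\times S\times E_1\times E_2$. Hence $d(h|_{R^0})$ extends continuously to $R\times X$; combined with the continuity of~$h$, this proves that $h$ is $C^1$ and that (\ref{chainhere}) holds.

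For $\ell\geq 2$ we induct on~$\ell$. Let $\pi\colon R\times X\to R$ be the projection; since $\pi(R^0\times X)=R^0$, the composition $g_1\circ\pi\colon R\times X\to V$ is $C^\ell$ and $g_2\circ\pi\colon R\times X\to S$ is $RC^\ell$. After reordering coordinates, $df$ becomes a $C^{\ell-1}$-map
\[
\widetilde f\colon (V\times E_1\times E_2)\times S\to F,
\]
whose first factor is the locally convex regular subset $V\times E_1\times E_2\sub E_1\times E_1\times E_2$. Writing
\[
dh \;=\; \widetilde f\circ\bigl((g_1\circ\pi,\,dg_1,\,dg_2),\;g_2\circ\pi\bigr),
\]
the inductive hypothesis applied with $(g_1\circ\pi,dg_1,dg_2)$ in the role of the $C^{\ell-1}$-map into the locally convex regular set and $g_2\circ\pi$ in the role of the $RC^{\ell-1}$-map yields $dh\in C^{\ell-1}$, whence $h\in C^\ell$. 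The case $\ell=\infty$ follows by collecting the finite cases.

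The main obstacle, and the reason one cannot simply invoke Proposition~\ref{unchained} directly on $f\circ(g_1,g_2)$, is that $V\times S$ need not be locally convex and $(g_1,g_2)$ need not satisfy $(g_1,g_2)(R^0)\sub(V\times S)^0=V^0\times S^0$, since $g_1$ is only required to land in~$V$, not in~$V^0$. Restricting to~$R^0$ sidesteps this precisely because $g_2(R^0)\sub S^0$ automatically places the image in the locally convex product $V\times S^0$, where the ordinary Chain Rule applies; continuous extension then handles the passage to the boundary.
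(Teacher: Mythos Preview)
Your proof is correct and follows essentially the same strategy as the paper: establish the $C^1$ case on $R^0$ using that $V\times S^0$ is locally convex (so the standard Chain Rule applies there), extend continuously to get~(\ref{chainhere}) on all of $R\times X$, then induct by rewriting $dh$ as a composition of the same shape. The only cosmetic differences are that the paper computes the directional derivative on $R^0$ via the difference-quotient map $f^{[1]}$ from~\ref{BGN} rather than quoting Proposition~\ref{unchained}, and groups the inductive step as $\phi\circ(Tg_1,Tg_2)$ with $\phi\colon (V\times E_1)\times(S\times E_2)\to F$ instead of your $\widetilde f\circ((g_1\circ\pi,dg_1,dg_2),\,g_2\circ\pi)$; both regroupings place the $S$-factor with the $RC^{\ell-1}$-map and everything else with the $C^{\ell-1}$-map into a locally convex set, so the induction closes identically.
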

\begin{proof}
We may assume that $\ell\in\N_0$
and proceed by induction. For $\ell=0$
the assertion holds as compositions
of continuous maps are continuous.
Let $\ell\in\N$ and assume the assertion holds
for $\ell-1$ in place of~$\ell$.
Abbreviate $g:=(g_1,g_2)\colon R\to V\times S\sub E_1\times E_2$.
For $x\in R^0$, $y\in X$, and $t\in\R\setminus\{0\}$
with $x+ty\in R^0$, we have
\begin{eqnarray*}
\frac{h(x+ty)-h(x)}{t}&=&
(f|_{V\times R^0})^{[1]}\left(g(x),\frac{g(x+ty)-g(x)}{t},t\right)\\
&\to & (f|_{V\times R^0})^{[1]}(g(x),dg(x,y),0)
=df(g(x),dg(x,y))
\end{eqnarray*}
as $t\to 0$, showing that
$h|_{R^0}$ is $C^1$ with $dh(x,y)=df(g(x),dg(x,y))$.
As $h$ is continuous and the right-hand side
of~(\ref{chainhere})
defines a continuous $F$-valued map on $R\times X$
which extends $d(h|_{R^0})$,
we deduce that~$h$ is~$C^1$ and (\ref{chainhere})
holds. Now
\[
\phi\colon (V\times E_1)\times (S\times E_2)\to F,\;\,
((y_1,z_1),(y_2,z_2))\mto df((y_1,y_2),(z_1,z_2))
\]
is $C^{\ell-1}$ and
\[
dh=\phi\circ (Tg_1,T g_2),
\]
where $Tg_1\colon R\times X\to V\times E_1$,
$(x,y)\mto (g_1(x),dg_1(x,y))$ is $C^{\ell-1}$
and $Tg_2\colon R\times X\to S\times E_2$,
$(x,y)\mto (g_2(x),dg_2(x,y))$ is $RC^{\ell-1}$.
Thus $dh$ is $C^{\ell-1}$,
by the inductive hypothesis,
and thus~$h$ is~$C^\ell$.
\end{proof}
We conclude that vector bundles
over a $C^\ell$-contractible
base are trivial, in larger generality
than previously known.
Recall that a
paracompact, finite-dimensional
$C^\ell$-manifold~$M$ (without boundary)
is $C^\ell$-contractible if and
only if it is contractible
as a topological space
(see Example~\ref{algtopexs}(a)
and its $C^\ell$-analogue).\footnote{If
$M$ is contractible,
then $M$ is connected.
Any paracompact, connected $C^\ell$-manifold
is $\sigma$-compact.}
\begin{cor}
Let $\ell\in \N_0\cup\{\infty\}$
and $M$ be a $C^\ell$-paracompact,
$RC^\ell$-contractible rough $C^\ell$-manifold
modelled on locally convex spaces
$($or a $C^\ell$-paracompact, $C^\ell$-contractible
$C^\ell$-manifold with rough boundary
modelled on locally convex spaces$)$.
Then every $C^\ell$-vector bundle
$E\to M$ with typical fibre
a locally convex space~$F$ is $C^\ell$-isomorphic over~$\id_M$
to the trivial bundle $M\times F$.
\end{cor}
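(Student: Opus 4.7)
The plan is to pull back $E$ along an extension of the contraction homotopy to $\R\times M$ and then invoke Proposition~\ref{eq-boundary} (in the manifold-with-rough-boundary case) or Remark~\ref{dodge}(b) (in the rough case) to obtain the desired isomorphism. Let $H\colon [0,1]\times M\to M$ be the given contraction with $H(0,\cdot)=\id_M$ and $H(1,\cdot)=c_{x_0}$, and let $\rho\colon\R\to[0,1]$ be a smooth function with $\rho|_{]-\infty,0]}\equiv 0$, $\rho|_{[1,\infty[}\equiv 1$, $\rho(0)=0$, $\rho(1)=1$. Define $\tilde H\colon\R\times M\to M$ by $\tilde H(t,x):=H(\rho(t),x)$ for $t\in[0,1]$, $\tilde H(t,x):=x$ for $t\leq 0$, and $\tilde H(t,x):=x_0$ for $t\geq 1$; these definitions agree where they overlap and produce a $C^\ell$-map which is stationary in~$t$ on $]-\infty,\varepsilon]$ and $[1-\varepsilon,\infty[$ for some $\varepsilon>0$.

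I would then form the pullback $\hat E:=\tilde H^*(E)$ over $\R\times M$, whose bundle structure is given by the local trivializations $(t,x,e)\mapsto(t,x,\theta_2(e))$ obtained by pulling back local trivializations $\theta\colon E|_U\to U\times F$ of~$E$ over opens $V\sub\R\times M$ with $\tilde H(V)\sub U$. These trivializations act as the identity on the base factor $\R\times M$, and the fibre $F$ carries no formal boundary, so they are $RC^\ell$-diffeomorphisms $\hat E|_V\to V\times F$; hence $\hat E$ is a $C^\ell$-vector bundle over $\R\times M$ in the sense of Definition~\ref{def-vbdl}, regardless of whether $\tilde H$ itself satisfies the $RC^\ell$ interior-to-interior condition (which can fail in the rough case when $x_0\in\partial^\circ M$). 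In the case where $M$ is a $C^\ell$-manifold with rough boundary, Proposition~\ref{eq-boundary} applied to $\hat E|_{[-1,2]\times M}$ with $a=0$ and $b=1$ yields a $C^\ell$-vector bundle isomorphism $\lambda_0^*(\hat E)\to\lambda_1^*(\hat E)$ over~$\id_M$; in the rough case, Remark~\ref{dodge}(b) with $I=\R$ and $[a,b]=[0,1]$ gives the same conclusion.

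Since the fibre of $\hat E$ at $(t,x)$ is $E_{\tilde H(t,x)}$, one identifies $\lambda_0^*(\hat E)\cong E$ (fibre $E_x$ at~$x$, since $\tilde H(0,x)=x$) and $\lambda_1^*(\hat E)\cong M\times E_{x_0}\cong M\times F$, whence $E\cong M\times F$ as $C^\ell$-vector bundles over~$\id_M$. The main subtlety to be handled carefully is the one flagged above: the definition of pullback in Remark~\ref{basic-pb} is formulated for $RC^\ell$-maps, yet in the purely rough case $\tilde H$ need not be $RC^\ell$. The resolution is that the trivializations of~$\hat E$ inherit the base component from $\R\times M$ untouched while acting linearly on the fibre factor~$F$, and since~$F$ is a locally convex space (hence has empty formal boundary), the $RC^\ell$-diffeomorphism requirement is automatic. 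Consequently the bundle framework of Definition~\ref{def-vbdl} accommodates the construction in both cases uniformly, and the two-step argument goes through verbatim.
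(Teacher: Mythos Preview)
Your approach is the paper's: collar/extend the contraction, pull back $E$ along it, and invoke Proposition~\ref{eq-boundary} or Remark~\ref{dodge}(b). The paper works with $H^c$ on $[0,1]\times M$ and chooses $[a,b]=[\tfrac14,\tfrac34]$; you extend to $\R\times M$ and choose $[a,b]=[0,1]$ --- a cosmetic difference. (One small slip: applying Proposition~\ref{eq-boundary} to $\hat E|_{[-1,2]\times M}$ produces $\lambda_{-1}^*\cong\lambda_2^*$, not ``$a=0$, $b=1$'' as you write; the conclusion is unchanged by stationarity at the ends.)

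The subtlety you flag is real, but your resolution is incomplete. That $F$ has empty formal boundary does make the interior-to-interior requirement automatic for the candidate charts of $\hat E$; it does \emph{not} show that those charts have $C^\ell$ transition maps, which is what is needed before one can speak of $RC^\ell$-diffeomorphisms at all. A transition of $\hat E$ has second component $(u,z)\mapsto g\bigl(\tilde H(\psi^{-1}(u))\bigr)(z)$, i.e.\ the bundle cocycle $(p,z)\mapsto g(p)(z)$ of $E$ (a $C^\ell$-map on a regular set $V_\phi\times F$ which, in the rough case, need not be locally convex) composed with a map involving $\tilde H$ that need not carry interior to interior. Proposition~\ref{unchained} therefore does not apply directly, and the ``linearity in the fibre'' observation does not circumvent this. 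What closes the gap is Lemma~\ref{technical-chain}: writing $\tilde H(t,x)=H(\rho(t),x)$ with $\rho$ taking values in the \emph{locally convex} set $[0,1]$ and the original $H$ genuinely $RC^\ell$, the lemma lets one route the non-$RC^\ell$ part through the locally convex first factor and conclude $C^\ell$-ness of the relevant composites. This is precisely why the paper invokes Lemma~\ref{technical-chain} at the corresponding step; your argument should do the same rather than rely on the boundary-freeness of $F$ alone.
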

\begin{proof}
Let $H\colon [0,1]\times M\to M$
be an $RC^\ell$-map (resp., a $C^\ell$-map)
which is a homotopy
from $\id_M$ to the constant map $c_{x_0}\colon M\to M$
for some $x_0\in M$.
Using Lemma~\ref{technical-chain} (resp. the Chain Rule),
we see that the corresponding homotopy with collar $H^c\colon [0,1]\times M\to M$
as in \ref{collar}(b) is~$C^\ell$
(as already observed in a special case in \ref{collar}(c)).
By Proposition~\ref{eq-boundary}
and Remark~\ref{dodge}(b)
(resp., by Proposition~\ref{eq-boundary}),
the $C^\ell$-vector bundles
$E=H(\frac{1}{4},\cdot)^*(E)$
and $H(\frac{3}{4},\cdot)^*(E)\cong M\times F$
are $C^\ell$-isomorphic over $\id_M$.
\end{proof}
Using analogous arguments,
it can be shown that all $C^\ell$-fibre
bundles over $C^\ell$-contractible bases which are $C^\ell$-manifolds
with rough boundary (or $RC^\ell$-contractible bases which are rough
$C^\ell$-manifolds)
are $C^\ell$-trivializable.
\appendix
\section{Details for Sections~\ref{sec-prels}, \ref{sec-smoo},
and \ref{proofs-outlook}}\label{appA}
We now prove Proposition~\ref{eqCk},
Lemma~\ref{sammelsu}(c),
Lemma~\ref{toppartial},
Lemma~\ref{S3new}, and Lemma~\ref{partu-ex}.\\[2.3mm]
{\bf Proof of Proposition~\ref{eqCk}.}
See Proposition~1.3.10 and Lemma~1.4.5 in \cite{GaN} for the equivalence of~(a) and (b)
if $R$ is locally convex (or~\cite[Lemma~1.14]{Res} if~$R$ is open);
the general case can be proved by the same arguments (see \cite{Rou}).\\[2.3mm]
Now assume that $E=\R^n$ for some $n\in\N$.
If (b) holds, then $\partial^\alpha(f|_{R^0})(x)$ exists for all $\alpha\in \N_0^n$
with $k:=|\alpha|\leq \ell$ and $x\in R^0$, and is given by
\[
\partial^\alpha(f|_{R^0})(x)\;=\;
d^{\,(k)}f(x,e_{i_1},\ldots,e_{i_k})
\]
with
\begin{equation}\label{theis}
(i_1,\ldots,i_k)=(\underbrace{n,\ldots, n}_{\alpha_n},\cdots,\underbrace{1,\ldots,1}_{\alpha_1}).
\end{equation}
A continuous extension $\partial^\alpha f\colon R\to F$ is obtained via
\begin{equation}\label{parviader}
\partial^\alpha(f)(x)\;=\;
d^{\,(k)}f(x,e_{i_1},\ldots,e_{i_k}),
\end{equation}
and thus (c) holds.
Now assume that (c) holds; we shall show that $f$ is $C^\ell$ and
\begin{equation}\label{derviapar}
d^{\,(k)}f(x,y_1,\ldots,y_k)\;=\;
\sum_{i_1,\ldots,i_k=1}^n y_{1,i_1}\cdots y_{k,i_k}\,\frac{\partial^k f}{\partial x_{i_k}\cdots
\partial x_{i_1}}(x)
\end{equation}
for all $k\in\N_0$ such that $k\leq\ell$, $x=(x_1,\ldots,x_n)\in R$
and $y_j=(y_{j,1},\ldots, y_{j,n})\in \R^n$ for $j\in\{1,\ldots, k\}$.
Let us assume first that~$R$ is open; we may assume
that $R=I_1\times\cdots\times I_n$ is a product of open intervals.
Fix $\alpha\in\N_0^n$ with $k:=|\alpha|\leq \ell$
Let $i_1,\ldots, i_k$ be as in (\ref{theis}).
As a consequence of~(c), the iterated
directional derivatives
\begin{eqnarray*}
\lefteqn{d^{\,(\alpha)}f(x,y_{1,1},\ldots,y_{1,\alpha_1},\ldots,
y_{n,1},\ldots, y_{n,\alpha_n})}\qquad\qquad\\
&:=
(D_{y_{1,\alpha_1}e_1}\cdots D_{y_{1,1}e_1}\cdots
D_{y_{n,\alpha_n}e_n}\cdots D_{y_{n,1}e_n}f)(x)
\end{eqnarray*}
exist for all $x\in R$ and $y=(y_{1,1},\ldots,y_{1,\alpha_1},\ldots,
y_{n,1},\ldots,y_{n,\alpha_n})\in\R^k$, and are given by
\[
\left(\prod_{j=1}^n\prod_{i=1}^{\alpha_j}y_{j,i}\right)\partial^\alpha f(x),
\]
which is a continuous functon of $(x,y)\in R\times\R^k$.
Hence $f$ is a $C^\alpha$-function in the sense of~\cite{Alz}.
Since $f$ is $C^\alpha$ for all $\alpha\in\N_0^n$ such that $|\alpha|\leq k$,
the map~$f$ is $C^k$, by \cite[Lemma~3.12]{Alz}
and a straightforward induction (see \cite{Ing}, also for generalizations).\\[2.3mm]
If $R\sub\R^n$ is any regular subset, then $f|_{R^0}$ is $C^\ell$
by the preceeding and
\[
d^{\,(k)}(f|_{R^0})(x,y_1,\ldots,y_k)\;=\;
\sum_{i_1,\ldots,i_k=1}^n y_{1,i_1}\cdots y_{k,i_k}\,\frac{\partial^k f|_{R^0}}{\partial x_{i_k}\cdots
\partial x_{i_1}}(x)
\]
for all $k\in\N_0$ such that $k\leq\ell$, $x=(x_1,\ldots,x_n)\in R^0$
and $y_j=(y_{j,1},\ldots, y_{j,n})\in \R^n$ for $j\in\{1,\ldots, k\}$,
using that $d^{\,(k)}(f|_{R^0})(x,\cdot)\colon \R^n\to F$ is $k$-linear.
Since (\ref{derviapar}) defines a continuous $F$-valued function
of $(x,y_1,\ldots,y_k)\in R\times(\R^d)^k$ which extends $d^{\,(k)}(f|_{R^0})$,
we see that~(b) is satisfied by~$f$. $\,\square$\\[2.3mm]
{\bf Proof of Lemma~\ref{sammelsu}(c).}
By definition as an initial topology, the compact-open $C^\ell$-topology on $C^\ell(R,F)$
turns the linear injective map
\[
C^\ell(R,F)\to\prod_{\N_0\ni j\leq\ell}C(R\times E^j,F),\quad\gamma\mto
(d^{\,(j)}\gamma)_{\N_0\ni j\leq\ell}
\]
into a topological embedding.
For $j\in\N_0$ with $j\leq\ell$,
the seminorms $\|\cdot\|_{K\times L^j,q}$
with $q\in\Gamma$, $K\in \cK$ and $L\in\cL$
define the locally convex topology on $C(R\times E^j,F)$
by~\ref{propsco}. The locally convex topology on $C^\ell(R,F)$
is therefore defined by the seminorms $\gamma\mto\|d^{\,(j)}\gamma\|_{K\times L^j,q}$,
and hence also by $\|\cdot\|_{K,q}$ and
the pointwise suprema $\|\cdot\|_{C^k,K,L,q}$
of finitely many such seminorms, as in~(c). $\,\square$\\[2.3mm]
{\bf Proof of Lemma~\ref{toppartial}.}
As a consequence of (\ref{parviader}),
we have
\[
\|\cdot\|_{C^j,K,q}^\partial \leq \|\cdot\|_{C^j,K,L,q}
\]
for all $j\in\N$ with $j\leq \ell$, compact subsets $K\sub R$ and
compact subsets $L\sub \R^n$ and each continuous seminorm~$q$ on~$F$,
if we use a compact subset $L\in\cL$
such that $\{e_1,\ldots, e_n\}\sub L$.
Conversely, given $j$, $K$, and $q$ as before
and $L\in\cL$,
there exists $r\in [1,\infty[$ such that $L\sub [{-r},r]^n$.
Using~(\ref{derviapar}), we see that
\[
\|d^{\,(j)}\gamma\|_{K\times L^j,q}\;\leq\; n^jr^j\|\gamma\|_{C^j,K,q}^\partial.
\]
Given $k\in\N$ with $k\leq\ell$, taking the maximum over $j\in \{1,\ldots,k\}$
and the estimate $\|\cdot\|_{K,q}\leq\|\cdot\|_{C^k,K,q}^\partial$,
we deduce that
\[
\|\cdot\|_{C^k,K,L,q}\leq n^kr^k\|\cdot\|_{C^k,K,q}^\partial.
\]
The locally convex vector topologies on $C^\ell(R,F)$ defined by
the families of seminorms in Lemma~\ref{sammelsu}(c)
and Lemma~\ref{toppartial} therefore concide.
As a consequence, the compact-open $C^\ell$-topology
on $C^\ell(R,F)$ is also initial with respect to the maps $\partial^\alpha$
(arguing as in the proof of Lemma~\ref{sammelsu}(c),
with $\partial^\alpha$ instead of the $d^{\,(j)}$). $\,\square$\\[2.3mm]
{\bf Proof of Lemma~\ref{S3new}.}
Abbreviate $B:=\wb{B}_1(0)\sub\R^d$.
If $K\in \cK$, $q\in\Gamma$, $k\in\N_0$ with $k\leq \ell$
and $j\in \{1,\ldots, k\}$,
then
\begin{eqnarray*}
\sup_{x\in K}\|\delta^j_x\gamma\|_q&=& \sup_{x\in K}\sup_{y\in B}q(\delta^j_x\gamma(y))
=\sup_{x\in K}\sup_{y\in B}q(d^{\,(j)}\gamma(x,y,\ldots,y))\\
&\leq& \|d^{\,(j)}\gamma\|_{K\times B^j,q}
\, \leq \, \|\gamma\|_{C^k,K,B,q}
\end{eqnarray*}
for all $\gamma\in C^\ell(\Omega,F)$,
whence
\[
\|\cdot\|_{C^k,K,q}\leq \|\cdot\|_{C^k,K,B,q}.
\]
Conversely, let $K\in\cK$, $L\sub\R^d$ is compact,
$q\in\Gamma$, and $k\in\N_0$ with $k\leq \ell$.
Then $L\sub [{-r},r]^d=\wb{B}_r(0)$ for some $r\in [1,\infty[$.
For all $i\leq j$ in $\{1,\ldots,k\}$, we have
\[
\underbrace{L+\cdots+L}_{i}\;\sub\; \wb{B}_{ir}(0)\;=\; riB\;\sub\; rj B
\]
for the $j$-fold sum.
Using the Polarization Formula (\ref{polform}),
we deduce that
\[
q(d^{\,(j)}\gamma(x,y_1,\ldots,y_j))\;\leq\;
\frac{(2rj)^j}{j!}\,\|\delta^j_x\gamma\|_q\;\leq\;\frac{(2rk)^k}{k!}\,\|\gamma\|_{C^k,K,q}
\]
for all $x\in K$ and $y_1,\ldots,y_k\in L$, whence
\[
\|\cdot\|_{C^k,K,L,q}\;\leq\; \frac{(2rk)^k}{k!}\,\|\cdot\|_{C^k,K,q}.
\]
The family of seminorms on $C^\ell(\Omega,F)$
described in Lemma~\ref{S3new})
therefore defines the same locally convex topology as the family
of seminorms given in Lemma~\ref{sammelsu}(c). $\,\square$\\[2.3mm]
Locally compact rough $C^\ell$-manfolds
are $C^\ell$-regular in the following sense.
\begin{la}\label{cutoff}
Let $\ell\in\N_0\cup\{\infty\}$,
$M$ be a locally compact, rough $C^\ell$-manifold, $x\in M$ and $U\sub M$
be an $x$-neighbourhood. Then there exists a $C^\ell$-function
$\xi\colon M\to\R$ with compact support $\Supp(\xi)\sub U$
such that $\xi(M)\sub [0,1]$ and $\xi(x)=1$.
\end{la}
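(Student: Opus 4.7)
The plan is a standard chart-and-bump construction, exploiting the fact (noted just before the lemma in the paper) that local compactness of~$M$ forces each modelling space~$E_\phi$ to be finite-dimensional. First I would pick a chart $\phi\colon U_\phi\to V_\phi\sub E_\phi\cong \R^d$ of~$M$ with $x\in U_\phi\sub U$ and $\phi(x)=0$. Using local compactness of~$M$, choose a compact neighbourhood~$K$ of~$x$ in~$M$ with $K\sub U_\phi$; then $\phi(K)$ is a compact neighbourhood of~$0$ in~$V_\phi$. Since $V_\phi$ is a regular subset of $\R^d$, we may fix $r>0$ so small that $\wb{B}_{2r}(0)\cap V_\phi\sub \phi(K)$, and a standard smooth bump $h\colon \R^d\to[0,1]$ with $h(0)=1$ and $\Supp(h)\sub B_{2r}(0)$.

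Now I would define
\[
\xi\colon M\to\R,\quad y\mto
\left\{
\begin{array}{cl}
h(\phi(y)) & \mbox{ if $y\in U_\phi$,}\\
0 & \mbox{ if $y\in M\setminus C$,}
\end{array}
\right.
\]
where $C:=\phi^{-1}(\wb{B}_{2r}(0)\cap V_\phi)\sub K$ is compact (and hence closed in~$M$, since~$M$ is Hausdorff). The two clauses agree on the overlap $U_\phi\setminus C$ because $h\circ\phi$ vanishes there, so $\xi$ is well-defined. By construction $\xi(M)\sub[0,1]$, $\xi(x)=1$, and $\Supp(\xi)\sub C\sub K\sub U$ is compact.

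It remains to verify that $\xi$ is~$C^\ell$, which one checks chart by chart. On~$U_\phi$ the function $\xi=h\circ\phi$ is~$C^\ell$ by the Chain Rule (Proposition~\ref{unchained}), since~$h$ is smooth on~$\R^d$ and~$\phi$ is a chart. On the open set $M\setminus C$, $\xi$ is identically zero, hence trivially~$C^\ell$. As $U_\phi\cup(M\setminus C)=M$, testing in any chart~$\psi$ of~$M$ shows that $\xi\circ\psi^{-1}$ is~$C^\ell$ on the open cover $\{\psi(U_\psi\cap U_\phi),\psi(U_\psi\setminus C)\}$ of~$V_\psi$, and thus~$C^\ell$ on~$V_\psi$. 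The only point requiring care is the transition between the two clauses; this is handled automatically because $C$ is closed in~$M$, so both open sets $U_\phi$ and $M\setminus C$ are open, and on their intersection the two formulas agree. No serious obstacle arises, since the finite-dimensionality of $E_\phi$ is what allows us to import the elementary Euclidean bump function.
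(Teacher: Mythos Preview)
Your proof is correct and follows essentially the same chart-and-bump construction as the paper's own proof; the only cosmetic differences are that you normalize $\phi(x)=0$ and use an explicit ball $B_{2r}(0)$ where the paper uses a general open set $W\sub\R^d$ with $V_\phi\cap W\sub\phi(B)$, and you glue along $M\setminus C$ rather than $M\setminus K$. Your added paragraph verifying $C^\ell$-ness in charts is a welcome elaboration of what the paper leaves implicit.
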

\begin{proof}
There exists a chart $\phi\colon U_\phi\to V_\phi$ of~$M$
with $x\in U_\phi$ and $U_\phi\sub U$,
such that $V_\phi$ a regular subset of $\R^d$
for some $d\in\N_0$. As $M$ is locally compact,
we find a compact $x$-neighbourhood $B\sub U_\phi$;
then $\phi(B)$ is a compact $\phi(x)$-neighbourhood
in~$V_\phi$. As $V_\phi$ carries the topology induced by~$\R^d$,
we find an open $x$-neighbourhood $W\sub\R^d$ such that
\[
V_\phi\cap W\;\sub\; \phi(B).
\]
Let $h\colon \R^d\to\R$ be a smooth function with compact support
$\Supp(h)\sub W$
such that $h(\R^d)\sub[0,1]$ and $h(\phi(x))=1$.
Then
\[
\xi\colon M\to\R,\quad
y\mto\left\{
\begin{array}{cl}
h(\phi(y)) &\mbox{if $\,y\in U_\phi$;}\\
0 & \mbox{if $\,y\in M\setminus B$}
\end{array}\right.
\]
has the desired properties.
\end{proof}
\noindent
{\bf Proof of Lemma~\ref{partu-ex}.}
For each $x\in M$, there exists $j(x)\in J$ such that $x\in U_{j(x)}$.
By \cite[Theorem~5.1.27]{Eng},
there exists a family $(M_a)_{a\in A}$
of $\sigma$-compact, open subsets~$M_a$ of~$M$
such that $M=\bigcup_{a\in A}M_a$ and $M_a\cap M_b=\emptyset$
for all $a\not=b$ in~$M$.
For each $a\in A$, we let
\[
K_{a,1}\sub K_{a,2}\sub\cdots
\]
be a compact exhaustion\footnote{Each $K_{a,n}$ is a compact subset of~$M_a$
with $K_{a,n}\sub K_{a,n-1}^0$,
and $M_a=\bigcup_{n\in\N}K_{a,n}$.} of~$M_a$.
We define $K_{a,0}:=K_{a,-1}:=\emptyset$.
Let $a\in A$ and $n\in\N$.
For each $x\in K_{a,n}\setminus K_{a,n-1}^0$,
Lemma~\ref{cutoff} provides a $C^\ell$-function
$\xi_{a,n,x}\colon M\to\R$ with image in $[0,1]$ such that
\[
L_{a,n,x}\; :=\; \Supp\xi_{a,n,x}\;\sub\; (K_{a,n+1}^0\setminus K_{a,n-2})\cap U_{j(x)}
\]
and $\xi_{a,n,x}(x)=1$.
The sets $\{y\in M\colon \xi_{a,n,x}>0\}$
form an open cover of the compact set $K_{a,n}\setminus K_{a,n-1}^0$
for $x\in K_{a,n}\setminus K_{a,n-1}^0$.
We therefore find a finite subset $\Phi_{a,n}\sub K_{a,n}\setminus K_{a,n-1}^0$
such that
\[
\sum_{x\in \Phi_{a,n}}\xi_{a,n,x}(y)>0\quad\mbox{for all $y\in K_{a,n}\setminus
K_{a,n-1}^0$.}
\]
Let $I:=\{(a,n,x)\colon a\in A,\, n\in\N,\, x\in \Phi_{a,n}\}$.
Then the family $(L_{a,n,x})_{(a,n,x)\in I}$ of
compact subsets of~$M$ is locally finite. In fact,
if $z\in M$, then $z\in K_{a,n}\setminus K_{a,n-1}^0$
for some $a\in A$ and $n\in\N$.
Then $W:=K_{a,n+1}^0\setminus K_{a,n-2}$
is an open neighbourhood of~$z$ in~$M$ such that
\[
W\cap L_{b,m,y}\;\not\;=\emptyset
\]
for $(b,m,y)\in I$ implies $a=b$ and $|n-m|\leq 2$,
whence
$\{(b,m,y)\in I\colon L_{b,m,y}\cap W\not=\emptyset\}$ is a finite set.
Hence
\[
s\colon M\to \,]0,\infty[,\quad z\mto \sum_{(b,m,y)\in I}\xi_{(b,m,y)}(z)
\]
is a $C^\ell$-function and also
\[
g_{a,n,x}\colon M\to \R,\quad z\mto \frac{\xi_{a,n,x}(z)}{s(z)}
\]
is a $C^\ell$-function for all $(a,n,x)\in I$.
By construction, $(g_{a,n,x})_{(a,n,x)\in I}$
is a $C^\ell$-partition of unity on~$M$ such that $\Supp(g_{a,n,x})=L_{a,n,x}
\sub U_{j(x)}$ for all $(a,n,x)\in I$.
Given $j\in J$, let
\[
I_j\; :=\; \{(a,n,x)\in I\colon j(x)=j\}.
\]
Then
\[
h_j\colon M\to\R,\quad h_j(z):=\sum_{(a,n,x)\in I_j}g_{a,n,x}(z)
\]
is a $C^\ell$-function and $(h_j)_{j\in J}$ is a $C^\ell$-partiton of unity on~$M$
such that $\Supp(h_j)=\bigcup_{(a,n,x)\in I_j}L_{a,n,x}\sub U_j$
for all $j\in J$. $\,\square$
\section{Details for Section~\ref{more-densy}}\label{appB}
We now prove Lemmas~\ref{in-parac}, \ref{roughcpsupp},
and \ref{weicompl}.\\[2.3mm]
{\bf Proof of Lemma~\ref{in-parac}.}
Let $\gamma\in C(X,F)$, $K\sub M$ be a compact subset,
$q$ be a continuous seminorm on~$F$, and $\ve>0$.
By hypothesis, there exists a closed, paracompact subset
$L\sub M$ such that $K\sub L^0$.
Each $x\in K$ has an open neighbourhood $V_x\sub L^0$
such that
\[
(\forall y\in V_x)\quad q((\gamma(x)-\gamma(y))\,\leq\,\ve.
\]
We let $(h_x)_{x\in K}$,
together with $g\colon L\to[0,1]$,
be a partition of unity on~$L$
such that $\Supp(h_x)\sub V_x$
for each $x\in K$ and $\Supp(g)\sub L\setminus K$.
By local finiteness,
$A:=\bigcup_{x\in K}\Supp(h_x)$ is closed in~$L$
and hence in~$X$; moreover, $A\sub L^0$.
Thus
\[
\eta\colon X\to F,\quad
y\mto \left\{
\begin{array}{cl}
\sum_{x\in K}h_x(y)\gamma(x) &\mbox{ if $y\in L^0$;}\\
0 & \mbox{ if $y\in X\setminus A$}
\end{array}\right.
\]
is a continuous function. Since $h_x(y)\not=0$
implies $y\in V_x$, we get for $y\in K$
\begin{eqnarray*}
q(\gamma(y)-\eta(y))&=&q\left(\sum_{x\in K}h_x(y)(\gamma(y)-\gamma(x))\right)
\leq \sum_{x\in K}h_x(y)q(\gamma(y)-\gamma(x))\\
&\leq & \sum_{x\in K}h_x(y)\ve=\ve.
\end{eqnarray*}
Hence $\|\gamma-\eta\|_{K,q}\leq\ve$. $\,\square$\\[2.3mm]
{\bf Proof of Lemma~\ref{roughcpsupp}.}
(a) For each $x\in K$,
the linear map $\phi_x:=\id_F$
is a $C^r$-diffeomorphism and thus a chart for~$F$.
Given an open neighbourhood $W\sub C^\ell_L(M,F)$
of $\gamma$ in the compact-open $C^\ell$-topology,
we set $\gamma_0:=\gamma$ and find $\gamma_1,\ldots,\gamma_m$
as in the proof of Proposition~\ref{ctscaseprop},
which we vary as described in the proof of
Proposition~\ref{densetf}.
Then $\gamma_m \in W\cap C^r_L(M,F)$.
We show that the subset
\[
\gamma_i((M\setminus K)\cup P_{x_1}\cup\cdots\cup P_{x_i})
\]
of~$F$ has finite-dimensional span for all $j\in\{0,1,\ldots,m\}$.
If this is true, then $\gamma_m(M)$ has finite-dimensional
span and thus $\gamma_m\in F\otimes C^r_L(M,\R)$
(using that $M=(M\setminus K)\cup P_{x_1}\cup\cdots \cup P_{x_m}$).\\[2.3mm]
The induction starts as $\gamma_0=\gamma$ and
$\gamma(M\setminus K)\sub\{0\}$ has finite-dimensional span.
Let $j\in\{1,\ldots, m\}$ and assume that the assertion holds for $j-1$
in place of~$j$. Since $\gamma_j$ and $\gamma_{j-1}$
coincide on $M\setminus Q_{x_j}^0$,
the image of
\[
((M\setminus K)\cup P_{x_1}\cup\cdots\cup P_{x_{j-1}})\cap (M\setminus
Q_{x_j}^0)
\]
under $\gamma_j$
has finite-dimensional span.
If $y\in P_{x_j}$ or
\[
y\in ((M\setminus K)\cup P_{x_1}\cup
\cdots\cup P_{x_{j-1}})\cap Q_{x_j}^0=:S_j,
\]
then $\gamma_j(y)$ is a linear combination of
elements of $\eta_{j,b_j}
(Q_{x_j}^0)$
and 
$\gamma_{j-1}(S_j)$,
which are subsets of~$F$ with finite-dimensional span.\\[2.3mm]
(b) is immediate from~(a).$\,\square$\\[2.3mm]
{\bf Proof of Lemma~\ref{weicompl}.}
(a) As $\cS$ is a projective system,
$\lambda_{j,j}=\id_{E_j}$ for all $j\in J$
and $\lambda_{i,j}\circ \lambda_{j,k}=\lambda_{i,k}$ for $i\leq j\leq k$.
The limit maps $\lambda_j$ satisfy $\lambda_{i,j}\circ\lambda_j=\lambda_i$
for all $i\leq j$ in~$J$ and $(F,(\lambda_j)_{j\in J})$
has the familiar universal property.~If
\[
P:=\Big\{(y_j)_{j\in J}\in\prod_{j\in J}F_j\colon (\forall i\leq j)\;
\lambda_{i,j}(y_j)=y_i\Big\}\vspace{-1mm}
\]
is the standard projective limit of~$\cS$, then the map
\[
\lambda\colon F\to P,\quad y\mto (\lambda_j(y))_{j\in J}
\]
is an isomorphism of topological vector spaces.
For $i\in J$, let $\pr_i\colon P\to F_i$, $(y_j)_{j\in J}\mto y_i$
be the projection.
As $C^\ell(\Omega,\lambda_{j,j})$ is the identity map on $C^\ell(\Omega,F_j)$
and $C^\ell(\Omega,\lambda_{i,j})\circ C^\ell(\Omega,\lambda_{j,k})
=C^\ell(\Omega,\lambda_{i,k})$ if $i\leq j\leq k$,
indeed (\ref{thefsys})
is a projective system of locally convex spaces and continuous linear maps.
Let
\[
Q:=\Big\{(\gamma_j)_{j\in J}\in\prod_{j\in J}C^\ell(\Omega,F_j)\colon
(\forall i\leq j)\; \lambda_{i,j}\circ \gamma_j=\gamma_i\Big\}\vspace{-1mm}
\]
be the standard projective limit of~(\ref{thefsys}),
endowed with initial topology with respect to the linear maps
$\pi_i\colon P\to C^\ell(\Omega,F_i)$, $(\gamma_j)_{j\in J}\mto \gamma_i$
(the topology induced by $\prod_{j\in J}C^\ell(\Omega,F_j)$).
If $\gamma\in C^\ell(\Omega, F)$,
then $\lambda_j\circ \gamma\in C^\ell(\Omega,F_j)$
for all $j\in J$
and $\lambda_{i,j}\circ (\lambda_j\circ\gamma)=\lambda_i\circ\gamma$
if $i\leq j$.
Hence
\[
\phi(\gamma):=(\lambda_j\circ\gamma)_{j\in J}\in Q
\]
and we obtain a map $\phi\colon C^\ell(\Omega,F)\to Q$
which is continuous and linear as $\pi_i\circ\phi=C^\ell(\Omega,\lambda_i)$
is so for each $i\in I$.
Like the maps $\pr_i$ on~$P$, the $\lambda_i=\pr_i\circ\lambda$
separate points on~$F$.
This entails that the $C^\ell(\Omega,\lambda_i)=\pi_i\circ \phi$
separate points on $C^\ell(\Omega,F)$ and thus $\phi$ is injective.
If $(\gamma_j)_{j\in J}\in Q$,
then for all $x\in\Omega$ we have $\lambda_{i,j}(\gamma_j(x))=\gamma_i(x)$
for all $i\leq j$ in~$J$
whence $(\gamma_j(x))_{j\in J}\in P$
and
\[
\gamma(x):=\lambda^{-1}\big((\gamma_j(x))_{j\in J}\big)\in F.
\]
Consider $\eta\colon \Omega\to P$, $x\mto(\gamma_j(x))_{j\in J}$.
As the components of $\eta$ are $C^\ell$,
the map~$\eta$ is $C^\ell$ as a map to $\prod_{j\in J}F_j$
(see \cite[Lemma~1.3.3]{GaN}); as $\eta$ takes its values in
the closed vector subspace~$P$ of the direct product,
also $\eta$ is $C^\ell$ (see \cite[Lemma~1.3.19]{GaN}).
Hence also $\gamma=\lambda^{-1}\circ \eta$ is~$C^\ell$.
For all $j\in J$ and $x\in \Omega$,
\[
\phi(\gamma)(x)=\lambda_j(\phi(x))=\pr_j(\lambda(\phi(x)))=
\pr_j(\eta(x))=\gamma_j(x).
\]
Thus $\phi(\gamma)=(\gamma_j)_{j\in J}$,
whence~$\phi$ is surjective and an isomorphism
of vector spaces. For $i\in I$ and continuous seminorms
$q$ on~$F_i$, the seminorms $q_i\circ \lambda_i$ on~$F$
are continuous and define its locally convex topology.
As a consequence, the seminorms $\|\cdot\|_{C^k\!,K,q\circ\lambda_i}$
define the locally convex topology on $C^\ell(\Omega,F)$,
for $K\in\cK(\Omega)$ and $k\in\N_0$ with $k\leq \ell$.
Recall that $\phi^{-1}((\gamma_j)_{j\in J})(x)=\lambda^{-1}((\gamma_j(x))_{j\in J})$,
whence
$(\lambda_i\circ \phi^{-1}((\gamma_j)_{j\in J}))(x)=(\pr_i\circ\lambda\circ
\lambda^{-1})((\gamma_j(x))_{j\in J}=\gamma_i(x)$ and thus
\[
(\lambda_i\circ\phi^{-1})((\gamma_j)_{j\in J}=\gamma_i
=\pi_i((\gamma_j)_{j\in J}.
\]
As a consequence,
$\|\phi^{-1}((\gamma_j)_{j\in J})\|_{C^k\!,K,q\circ\lambda_i}
=\|\pi_i((\gamma_j)_{j\in J})\|_{C^k\!,K,q}$
and thus\linebreak
$\|\cdot\|_{C^k\!,K,q\circ\lambda_i}\circ\phi^{-1}\leq
\|\cdot\|_{C^k\!,K,q}\circ\pi_i$,
where the right-hand side is a continuous seminorm on~$Q$.
Hence $\phi^{-1}$ is continuous.\\[2.3mm]
(b) If $S\sub\prod_{q\in\Gamma}C^\ell_\cW(\Omega,\wt{F}_q)^\sbull$
is the standard projective limit,
with projections $\pi_q\colon S\to C^\ell_\cW(\Omega,\wt{F}_q)^\sbull$.
then $\psi(\gamma):=(\wt{\alpha}_q\circ\gamma)_{q\in\Gamma}\in S$
for each $\gamma\in C^\ell_\cW(\Omega,F)^\sbull$.
The map $\psi\colon C^\ell_\cW(\Omega,F)^\sbull\to S$
so obtained is linear, and continuous as $\|\cdot\|_{\|\cdot\|_q,f,k}\circ\pi_q
\circ\psi=\|\cdot\|_{q,f,k}\leq\|\cdot\|_{q,f,k}$.
It is injective as the $\wt{\alpha}_q$ separate points on~$F$.
Write $\psi^{-1}$ as a shorthand for
$(\psi|^{\im(\phi)})^{-1}$.
Since $\|\cdot\|_{q,f,k}\circ\psi^{-1}=\|\cdot\|_{\|\cdot\|_q,f,k}\circ\pi_q\leq
\|\cdot\|_{\|\cdot\|_q,f,k}\circ\pi_q$,
the map $\psi^{-1}$ is continuous.
If $(\gamma_q)_{q\in \Gamma}\in S$,
then $(\gamma_q)_{q\in \Gamma}$ is also
an element of the standard projective limit~$Q$
of $((C^\ell(\Omega,\wt{F}_q)_{q\in\Gamma},(C^\ell(\Omega,\wt{\alpha}_{p,q}))_{p\leq q})$,
as in~(a). By~(a), there exists $\gamma\in C^\ell(\Omega,F)$
such that $\wt{\alpha}_q\circ\gamma=\gamma_q$ for all $q\in\Gamma$.
Then $\alpha_q\circ\gamma\in C^\ell(\Omega,E_q)$
for all $q\in \Gamma$, by the Chain Rule,
entailing that $\gamma_q=\wt{\alpha}_q\circ\gamma\in C^\ell_\cW(\Omega,F_q)^\sbull$
actually. Thus $\gamma\in C^\ell_\cW(\Omega,F)^\sbull$
and $\psi(\gamma)=(\wt{\alpha}_q\circ\gamma)_{q\in\Gamma}=(\gamma_q)_{q\in\Gamma}$,
whence $\psi$ is surjective and hence an isomorphism of topological
vector spaces.\\[2.3mm]
Note that the weighted function spaces and their
topologies are unchanged if we add the functions
\[
|f_1|+\cdots+|f_n|
\]
to the set~$\cW$ for all $n\in\N$ and $f_1,\ldots, f_n\in \cW$.
By \cite[Lemma~3.4.9]{Wal},
$C^\ell_\cW(\Omega,\wt{E}_q)^\sbull$
is a closed vector subspace of a certain
locally convex space\linebreak
$C^\ell_\cW(\Omega,\wt{E}_q)$,
which is complete by \cite[Corollary~3.2.11]{Wal}.
Hence~$S$ is complete, being a closed vector subspace of a direct
product of complete locally convex spaces.
As a consequence, also $C^\ell_\cW(\Omega,F)^\sbull\cong S$ is complete. $\,\square$
\section{Details for Section~\ref{moreops}}\label{appC}
We prove Proposition~\ref{smoo-top} and
Proposition~\ref{onmfdsmoo}.\\[2.3mm]
{\bf Proof of Proposition~\ref{smoo-top}.}
Let $d$ be a metric on~$X$ defining its topology.
For $x\in X$ and $\ve>0$, write $B_\ve(x):=\{y\in X\colon d(x,y)<\ve\}$.
There are positive integers $m_1<m_2<\cdots$
such that
\[
B_{2/m_n}(K_n):=
\bigcup_{x\in K_n}B_{2/m_n}(x)\sub K_{n+1}^0\quad \mbox{for all $n\in\N$.}
\]
For each $n\in\N$, we let $(h_{n,x})_{x\in X}$
be a continuous partition of unity on~$X$
such that $\Supp(h_{n,x})\sub B_{1/m_n}(x)$
for all $x\in X$. Then
\[
\Phi_n:=\{x\in X\colon \Supp(h_{n,x})\cap K_n\not=\emptyset\}
\]
is a finite set. For $\gamma\in C(X,F)$,
we set
\[
S_n(\gamma):=\sum_{x\in \Phi_n}h_{n,x}\gamma(x)\in F\otimes C_c(X,\R).
\]
If $\Supp(\gamma)\cap K_n\not=\emptyset$,
then $\Supp(\gamma)\sub B_{2/m_n}(K_n)\sub K_{n+1}^0$,
by the triangle inequality. Hence
\[
\Supp(S_n(\gamma))\sub K_{n+1}^0.
\]
If $q$ is a continuous seminorm on~$F$, then
\[
q(S_n(\gamma)(y))\leq \sum_{x\in \Phi_n}h_{n,x}(y)q(\gamma(x))
\leq \|\gamma\|_{K_{n+1},q}\sum_{x\in\Phi_n}h_{n,x}(y)
\leq\|\gamma\|_{K_{n+1},q}
\]
for all $y\in X$, whence $\|S_n(\gamma)\|_{K_j,q}\leq \|\gamma\|_{K_{n+1},q}$
for all $j\in\N$. Hence $S_n\colon C(X,F)\to C(X,F)$
is continuous.
Let $B\sub C(X,F)$ be a compact set.
Given $n\in\N$, a continuous seminorm~$q$ on~$F$ and $\ve>0$,
we use that the map
\[
B\times K_{n+1}\to F,\quad (\gamma,y)\mto\gamma(y)
\]
is continuous and hence uniformly continuous,
by compactness of $B\times K_{n+1}$.
As a consequence, there exists $\delta>0$ such that
\[
q(\gamma(y)-\gamma(x))\leq \ve
\]
for all $\gamma\in B$ and $x,y\in K_{n+1}$ such that $d(x,y)\leq\delta$.
Let $j_0\geq n$ be an integer such that $1/{m_{j_0}}\leq \delta$.
Let $j\geq j_0$.
For each $\gamma\in B$, $y\in K_n$,
and $x\in \Phi_j$, if $h_{j,x}(y)\not=0$
then $y\in B_{1/m_j}(x)$ and thus $q(\gamma(x)-\gamma(y))\leq\ve$.
Now $\sum_{x\in \Phi_j}h_{j,x}(y)$ for all $y\in K_n$
(as this actually holds for all $y\in K_j$)
and hence
\begin{eqnarray*}
q(\gamma(y)-S_j(\gamma)(y)) &=&
q\left(
\sum_{x\in\Phi_j}h_{j,x}(y)\gamma(y)-\sum_{x\in\Phi_j}
h_{j,x}(y)\gamma(x)\right)\\
&\leq & \sum_{x\in\Phi_j}\underbrace{h_{j,x}(y)q(\gamma(y)-\gamma(x))}_{\leq
h_{j,x}(y)\ve}\leq\ve.
\end{eqnarray*}
Thus
$\|\gamma-S_j(\gamma)\|_{K_n,q}\leq\ve$ for all $\gamma\in B$ and $j\geq j_0$,
whence $S_j\to\id$ uniformly on compact sets.
Thus (a) and~(b) are established.\\[2.3mm]
To prove~(c), let
$\gamma\in C_{K_n}(X,F)$ and $j\geq n$.
Then $\gamma(x)\not=0$ for $x\in \Phi_j$
implies that $x\in K_n$,
whence $\Supp(h_{j,x})\sub B_{1/m_j}(K_n)\sub B_{2/m_n}(K_n)
\sub K_{n+1}^0$.
Thus $\Supp(S_j(\gamma))\sub K_{n+1}^0$
and thus $S_j(\gamma)\in C_{K_{n+1}}(X,F)\cap (F\otimes C_{K_{j+1}}(X,\R))
=F\otimes C_{K_{n+1}}(X,\R)$. $\,\square$
\begin{numba}
Recall that a topological sum (disjoint union)
$X=\coprod_{j\in J}X_j$ of topological spaces~$X_j$
is metrizable if every $X_j$ is so
(see \cite[Theorem 4.2.1]{Eng}).
\end{numba}
\begin{numba}\label{shrinkcover}
If $X$ is a paracompact topological space and $(U_j)_{j\in J}$
a locally finite open cover of~$X$, then there exists
a locally finite open cover $(V_j)_{j\in J}$ of~$X$ such that
$\wb{V_j}\sub U_j$ holds
for the closure in~$X$, for each $j\in J$.\\[2.3mm]
[Given $x\in X$, there exists $j(x)\in J$ such that $x\in U_{j(x)}$.
Since every paracompact space is normal and hence
regular, there exists a closed
$x$-neighbourhood $A_x$ such that $A_x\sub U_{j(x)}$.
By paracompactness, there exists a locally finite open cover
$(W_x)_{x\in X}$ of~$X$ such that $W_x\sub A_x^0$
for each $x\in X$. Thus
\[
\wb{W_x}\sub A_x\sub U_{j(x)}.
\]
For $j\in J$, let $\Phi_j:=\{x\in X\colon j(x)=j\}$.
Then $V_j:=\bigcup_{x\in\Phi_j}W_x\sub U_j$,
whence the open cover $(V_j)_{j\in J}$
is locally finite; moreover, $\wb{V_j}=\bigcup_{x\in\Phi_j}\wb{W_x}\sub U_j$.
\end{numba}
{\bf Proof of Proposition~\ref{are-metri}.}
Since $X$ is a topological sum of $\sigma$-compact open
subsets, we may assume that~$X$ is $\sigma$-compact.
Since~$X$ is a topological sum of the open subsets admitting
$\R^d$-charts for a fixed~$d$, we may assume that~$X$ is
a pure manifold modelled on~$\R^d$.\\[2.3mm]
We choose a locally finite open cover $(U_j)_{j\in J}$
of~$M$ subordinate to an open cover of relatively compact chart domains.
Thus, for each each $j\in J$ there exists
a $C^\ell$-diffeomorphism $\phi_j\colon U_j\to V_j$
onto a regular subset $V_j\sub\R^d$.
Using~\ref{shrinkcover} twice,
we find locally finite open covers $(Q_j)_{j\in J}$ and $(P_j)_{j\in J}$
of~$X$ such that $\wb{Q_j}\sub U_j$ and $\wb{P_j}\sub Q_j$.
Then $\wb{Q_j}$ and $\wb{P_j}$ are compact.
After replacing $J$ with the set of all $j\in J$ such that
$P_j\not=\emptyset$, we may assume that $P_j\not=\emptyset$
for all $j\in J$. We pick a $C^\ell$-map $g_j\in C^\ell(M,\R)$
such that $\Supp(g_j)\sub Q_j$, $g_j(M)\sub [0,1]$
and $g_j|_{\wb{P_j}}=1$ (cf.\ Lemma~\ref{partu-ex}).
Moreover, we choose a $C^\ell$-map $h_j\in C^\ell(M,\R)$
such that $\Supp(h_j)\sub U_j$, $h_j(M)\sub [0,1]$
and $h_j|_{\wb{Q_j}}=1$.
Then
\[
f_j\colon M\to\R^d\times \R,\quad
x\mto\left\{
\begin{array}{cl}
(h_j(x)\phi_j(x),g_j(x)) & \mbox{ if $x\in U_j$;}\\
(0,0) &\mbox{ if $x\in M\setminus\Supp(h_j)$}
\end{array}\right.
\]
is a $C^\ell$-function. Hence
\[
f:=(f_j)_{j\in J}\colon M\to (\R^d\times\R)^J,\quad
x\mto (f_j(x))_{j\in J}
\]
is a continuous function. Since $M$ is $\sigma$-compact,
the set $J$ is countable and thus $(\R^d\times\R)^J$
is metrizable in the product topology.
The proof will be complete if we can show that~$f$ is a topological
embedding. To see that~$f$ is injective, let $x,y\in M$
such that $f(x)=f(y)$.
There is $j\in J$ such that $x\in P_j$.
Then $f(x)=f(y)$ implies that $1=g_j(x)=g_j(y)$,
whence $y\in\Supp(g_j)\sub Q_j$ and thus $h_j(y)=1=h_j(x)$.
Now $\phi_j(x)=f_j(x)=f_j(y)=\phi_j(y)$ implies that $x=y$.
It remains to show that $f$ is open onto its image,
which will hold if we can show that $f(U)$ is open in $f(M)$
for each $j\in J$ and open subset $U\sub P_j$.
Let $\pr_j\colon (\R^d\times \R)^J\to\R^d\times\R$
be the projection onto the $j$th component.
Then
\[
f(U)=f(M)\cap\pr_j^{-1}(\phi_j(U)\times {\textstyle \,]0,\infty[})
\]
is indeed open in $f(U)$. $\,\square$\\[2.3mm]
We can say more for
manifolds modelled on locally convex sets.
Recall that a continuous map $X\to Y$ between
Hausdorff spaces is called \emph{proper}
if all preimages of compact subsets of~$Y$
are closed in~$X$. If~$Y$ is a $k$-space,
then every proper map is a closed mapping,
viz., each closed subset of~$X$ has a closed image
in~$Y$ (see \cite{Pal}).
\begin{prop}\label{W-embed}
Let $M$ be a $\sigma$--compact, locally
compact $C^\ell$-manifold
with rough boundary,
with $\ell\in\N\cup\{\infty\}$.
If~$M$ is modelled on~$\R^d$,
then there exists a $C^\ell$-map $M\to \R^{2d+2}$
which is a proper map and a topological embedding.
\end{prop}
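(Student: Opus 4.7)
My plan is to write $f=(g,p)\colon M\to\R^{2d+1}\times\R=\R^{2d+2}$ with $g\colon M\to\R^{2d+1}$ a $C^\ell$-embedding (not necessarily proper) and $p\colon M\to[0,\infty)$ a proper $C^\ell$-function. Then $f$ is $C^\ell$ and a topological embedding because $g$ is, and it is proper because for compact $B\sub\R^{2d+2}$ the preimage $f^{-1}(B)$ lies in $p^{-1}([0,N])$ for $N$ an upper bound of the last coordinate on $B$, hence is a closed subset of a compact set. This follows the template of Whitney's ``easy'' embedding theorem, with one extra coordinate reserved for properness.

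For the embedding $g$, I first build a preliminary $C^\ell$-embedding into a finite-dimensional Euclidean space and then generically project down. Since $M$ is metrizable by Proposition~\ref{are-metri} and $\sigma$-compact, it is second countable with covering dimension $d$, so Ostrand's theorem refines any cover by relatively compact chart domains into a locally finite cover $\cV_0\cup\cdots\cup\cV_d$ where each $\cV_j=\{U_{j,k}\colon k\in\N\}$ is a pairwise disjoint family. Using Lemma~\ref{partu-ex}, I pick a subordinate $C^\ell$-partition of unity $(h_{j,k})$ with chart maps $\phi_{j,k}\colon U_{j,k}\to V_{j,k}\sub\R^d$, shrinking so the $V_{j,k}$ are uniformly bounded. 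Disjointness within $\cV_j$ makes
\[
F_j(x):=\begin{cases} (h_{j,k}(x)(\phi_{j,k}(x)+k\,e_1),\,h_{j,k}(x)) & x\in U_{j,k},\\ (0,0) & \text{otherwise}\end{cases}
\]
a well-defined $C^\ell$-map $M\to\R^{d+1}$ which is an injective immersion on each ``core'' $\{h_{j,k}=1\}$; the combined $g_0:=(F_0,\ldots,F_d)\colon M\to\R^{(d+1)^2}$ is then a $C^\ell$-embedding. I then apply iterated generic linear projections $\R^N\to\R^{N-1}$: Sard's theorem applied to the double-point map $(M^\circ\times M^\circ)\setminus\Delta\to\Sph^{N-1}$ and the tangent-direction map $TM^\circ\setminus\{0\}\to\Sph^{N-1}$ (both have domains of dimension $\le 2d$) shows that for $N\ge 2d+2$ the bad directions form a measure-zero subset of $\Sph^{N-1}$; generic projections thus preserve injective immersivity, and iterating reduces the codomain to $\R^{2d+1}$.

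For $p$, I pick a compact exhaustion $(K_n)_{n\in\N}$ of $M$ with $K_n\sub K_{n+1}^0$ and, via Lemma~\ref{partu-ex}, a $C^\ell$-partition of unity $(\eta_n)_{n\in\N}$ with $\Supp(\eta_n)\sub K_{n+1}\setminus K_{n-2}^0$ (setting $K_0=K_{-1}:=\emptyset$); then $p:=\sum_n n\,\eta_n$ is a $C^\ell$-function bounded below by $n-1$ off $K_n$, hence proper.

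\emph{Main obstacle.} The crux is the Whitney dimension-reduction step: I must verify that a projection chosen to be a $C^\ell$-embedding of $M^\circ$ extends to a $C^\ell$-embedding of all of $M$. This holds because the $C^\ell$-structure on $M$ already prescribes continuous extensions of partial derivatives to the rough boundary (so immersivity extends), and injectivity on $M^\circ$ together with continuity on $M$ (a $k$-space by local compactness) propagates to injectivity on $M$ via the compact exhaustion and the closedness of the embedding property on compact subsets.
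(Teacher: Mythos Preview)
Your overall architecture matches the paper's: write $f=(g,\theta)$ with $g\colon M\to\R^{2d+1}$ injective $C^\ell$ and $\theta\colon M\to\R$ proper $C^\ell$, then note $(g,\theta)$ is proper and injective, hence closed, hence a topological embedding. Where you use Ostrand's theorem to get a preliminary injective map into $\R^{(d+1)^2}$, the paper instead uses a three-colour exhaustion trick (combining locally-injective maps $\psi_n$ supported on shells $K_{n+1}\setminus K_{n-2}$ into three slots according to $n\bmod 3$) to get an injective map into $\R^{6d+4}$. Both routes are legitimate; yours is cleaner once covering dimension is granted, the paper's is more self-contained.

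There is, however, a genuine gap in your projection step. You apply Sard to the secant map on $(M^\circ\times M^\circ)\setminus\Delta$, which only guarantees the projected map is injective on $M^\circ$, and then you claim that ``injectivity on $M^\circ$ together with continuity on $M$ propagates to injectivity on $M$ via the compact exhaustion and the closedness of the embedding property on compact subsets.'' This is false: injectivity on a dense open subset does not pass to the closure. Two distinct boundary points $x,y\in\partial M$ can perfectly well have $\pi(g_0(x))=\pi(g_0(y))$ while $\pi\circ g_0$ separates all interior pairs; nothing in your argument rules this out, and there is no general ``closedness of the embedding property'' that would.

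The fix is the one the paper actually carries out: do not restrict to $M^\circ$ at all. The secant map $\kappa\colon (M\times M)\setminus\Delta\to\Sph^{N-1}$ is a $C^\ell$-map from a $2d$-dimensional $C^1$-manifold \emph{with rough boundary}, and the paper's Lemma~\ref{sard} (proved just before Proposition~\ref{W-embed}) shows that such maps have measure-zero image once $N-1>2d$. Hence for $N\ge 2d+2$ one finds $v\in\Sph^{N-1}$ missed by $\kappa$, and projection along $v$ is injective on all of $M$, boundary included. Iterating brings you to $\R^{2d+1}$ with global injectivity intact. (Incidentally, the tangent-direction map is unnecessary: the statement only asks for a topological embedding, not an immersion, and properness plus injectivity already gives that.)
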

The following lemma is useful for the proof
(cf.\ \cite[Lemma after Theorem 1.3.5]{Hir}
for manifolds without boundary). We let $\lambda_n$ be Lebesgue-Borel
measure~on~$\R^n$.
\begin{la}\label{sard}
Let $m\in\N$ and
$M$ be a $\sigma$-compact,
locally compact $C^1$-manifold with rough boundary,
such that $M$ is modelled on~$\R^m$.
\begin{itemize}
\item[\rm(a)]
If $n>m$ and $f\colon M\to \R^n$ is a $C^1$-map,
then $f(M)$ is a Borel set of measure
$\lambda_n(f(M))=0$.
\item[\rm(b)]
If $g\colon M\to N$ is a $C^1$-map to
a $C^1$-manifold~$N$ with rough boundary
modelled on~$\R^n$ with $n>m$,
then $N\setminus f(M)$ is dense in~$N$.
\end{itemize}
\end{la}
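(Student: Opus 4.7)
\textbf{Proof plan for Lemma~\ref{sard}.} The plan is to reduce the assertion to a local, Euclidean-type computation: on compact subsets of a chart, a $C^1$-map is Lipschitz, and Lipschitz maps from $\R^m$ into $\R^n$ with $n>m$ have image of Lebesgue measure zero; the $\sigma$-compactness of $M$ then supplies countability.

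First I would handle~(a). Since $M$ is a $C^1$-manifold with rough boundary, each modelling subset $V_\phi\subseteq\R^m$ is locally convex and regular. Using $\sigma$-compactness together with Lemma~\ref{cutoff}-style covering, I would choose a countable family of compact sets $K_i\subseteq M$ covering $M$, each contained in a chart domain $U_i$ with chart $\phi_i\colon U_i\to V_i\subseteq\R^m$, and such that $\phi_i(K_i)$ is contained in a compact convex subset $B_i\subseteq V_i$ (possible by local convexity). Then $g_i:=f\circ\phi_i^{-1}\colon V_i\to\R^n$ is $C^1$ in the sense of Definition~\ref{defck}, so $dg_i\colon V_i\times\R^m\to\R^n$ is continuous and hence bounded on the compact set $B_i\times\overline{B}_1(0)$; call this bound $L_i$. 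For $x,y\in B_i^{0}$, the segment $[x,y]$ lies in the convex open set $B_i^0\subseteq V_i^0$, and Taylor's Theorem (\ref{tay}) applied to $g_i|_{V_i^0}$ yields $\|g_i(y)-g_i(x)\|\le L_i\|y-x\|$; by continuity of $g_i$ on $V_i$ and density of $V_i^0$ in $V_i$, this Lipschitz estimate extends to all $x,y\in B_i$. Therefore $g_i(B_i)$ has finite $m$-dimensional Hausdorff measure in $\R^n$, which, since $n>m$, forces $\lambda_n(g_i(B_i))=0$. Consequently $\lambda_n(f(K_i))=0$, each $f(K_i)$ is compact and therefore Borel, and $f(M)=\bigcup_i f(K_i)$ is an $F_\sigma$-set of $\lambda_n$-measure zero.

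Next, for~(b) (reading the statement as $N\setminus g(M)$ being dense), I would cover $N$ by countably many chart domains $U_j$ with charts $\psi_j\colon U_j\to W_j\subseteq\R^n$, where each $W_j$ is a regular (locally convex) subset of $\R^n$. Applying~(a) to the $C^1$-map $\psi_j\circ g|_{g^{-1}(U_j)}\colon g^{-1}(U_j)\to\R^n$ (noting that $g^{-1}(U_j)$ inherits a $\sigma$-compact, locally compact $C^1$-manifold structure with rough boundary modelled on $\R^m$), I conclude $\lambda_n(\psi_j(g(M)\cap U_j))=0$. Since $W_j$ is regular, $W_j^0$ is dense in $W_j$, and $W_j^0\setminus\psi_j(g(M)\cap U_j)$ is dense in $W_j^0$ (being the complement of a Lebesgue-null set in an open subset of $\R^n$); hence $U_j\setminus g(M)$ is dense in $U_j$. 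As $N$ is covered by the $U_j$, density of $N\setminus g(M)$ in $N$ follows.

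The only non-routine point is the Lipschitz estimate on $B_i$, which requires the modelling sets to be locally convex so that line segments stay inside the domain; this is why the hypothesis ``$C^1$-manifold with rough boundary'' (as opposed to ``rough $C^1$-manifold'') is exactly what is needed. Everything else—Borel measurability via $\sigma$-compactness, and the passage from~(a) to~(b) via charts—is bookkeeping.
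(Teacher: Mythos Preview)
Your proposal is correct and follows essentially the same approach as the paper: reduce to compact convex pieces in charts, use that a $C^1$-map on such a piece is Lipschitz (the paper cites \cite[Lemma~1.5.6]{GaN} for this, you derive it from the mean value inequality), then invoke the standard fact that Lipschitz images from dimension $m$ into $\R^n$ with $n>m$ have measure zero, and assemble via $\sigma$-compactness.

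One small correction for~(b): you write ``cover $N$ by countably many chart domains $U_j$'', but $N$ is not assumed $\sigma$-compact, so such a countable cover need not exist. This does not actually matter, since density is a local property: the paper simply argues, for an arbitrary chart $\phi\colon U\to V$ of~$N$, that $U\setminus g(M)$ is dense in~$U$, and this suffices. Your argument for a single $U_j$ is the same as the paper's and is all that is needed.
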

\begin{proof}
(a) For each $x\in M$, there exists a chart $\phi_x\colon U_x\to V_x$
such that $x\in U_x$ and $V_x$ is a locally convex subset of~$\R^m$
with dense interior. There exists a compact, convex
neighbourhood~$K_x$ of $\phi_x(x)$ in~$V_x$.
Then $f\circ \phi_x^{-1}|_{K_x}\colon K_x\to\R^n$
is a $C^1$-map on a compact, convex, regular subset
and thus Lipschitz continuous,
(see, e.g., \cite[Lemma~1.5.6]{GaN}). The image $f(\phi_x^{-1}(K_x))$
is compact and hence a Borel set.
Since $m<n$,
a standard covering argument with balls
shows that $\lambda_n(f(\phi_x^{-1}(K_x)))=0$.
As $M$ is $\sigma$-compact, there exists a countable subset
$\Phi\sub M$ such that $M=\bigcup_{x\in\Phi}\phi_x^{-1}(K_x)$.
Then $f(M)=\bigcup_{x\in \Phi}f(\phi_x^{-1}(K_x))$
is a Borel set and $\lambda_n(f(M))=0$.\\[2.3mm]
Let $\phi\colon U\to V\sub\R^n$ be a chart for~$N$
and $W:=g^{-1}(U)$. By~(a),
$\phi(g(W))$ is a Borel set of measure $\lambda_n(g(W))=0$,
whence $\R^n\setminus \phi(g(W))$ is dense
in~$\R^n$. As a consequence,
$V^0\setminus \phi(g(W))$ is dense
in $V^0$ and hence in~$V$,
whence $V\setminus \phi(g(W))$
is dense in~$V$. Thus $U\setminus g(W)$ is dense in~$U$.
The assertion follows.
\end{proof}
{\bf Proof of Proposition~\ref{W-embed}.}
Let $J$, $\phi_j\colon U_j\to V_j$, $Q_j$, $P_j$,
$f_j$, $g_j$, $h_j$, and $f$ be as in the proof of Proposition~\ref{are-metri}.
Let $K_1\sub K_2\sub\cdots$ be a compact exhaustion of~$M$
and set $K_{-1}:=K_0:=\emptyset$.\\[2.3mm]
(a) For all $n\in\N$,
choose a $C^\ell$-map $\theta_n\colon M\to\R$
such that $\theta_n(M)\sub [0,1]$, $\theta_n|_{K_{n-1}}=0$
and $\theta_n|_{M\setminus K_n^0}=1$
(cf.\ Lemma~\ref{partu-ex}).
Then $\theta(x):=\sum_{n=1}^\infty\theta_n(x)$
defines a non-negative $C^\ell$-map $\theta\colon M\to\R$
such that
\[
\theta(x)\in [n-1,n]\quad \mbox{for all $n\in\N$ and $x\in K_n\setminus K_{n-1}$.}
\]
Hence, if $\theta(x)\in [n-1,n[$, then $x\in K_n\setminus K_{n-2}$.
By the preceding, $\theta$ is proper.\\[2.3mm]
(b) \emph{It suffices to find an injective
$C^\ell$-map $g\colon M\to \R^{2d+1}$.}\\[2mm]
In fact, then $(g,\theta)\colon M\to\R^{2d+2}$
will be an injective $C^\ell$-map and proper,
hence a closed map and therefore a topological
embedding.\\[2.3mm]
(c) \emph{If $W\sub M$ is open
and $h\colon M\to\R^m$ a $C^\ell$-map
with $m\geq 2d+2$
such that $h|_W$ is injective,
then there exists a $C^\ell$-map $g\colon M\to\R^{m-1}$
with~$g|_W$~injective.}\\[2mm]
In fact, the map
\[
\kappa\colon (W\times W)\setminus \Delta_W\to \Sph_{m-1},\quad
(x,y)\mto \frac{h(x)-h(y)}{\|h(x)-h(y)\|_2}
\]
is $C^\ell$, where $\Delta_W:=\{(x,x)\colon x\in W\}$.
By Lemma~\ref{sard}, there exists $v\in\Sph_{m-1}$
which is not in the image of~$\kappa$,
nor in the image of $-\kappa$ (as this union of
images can be interpreted as the image of a $C^\ell$-map
on the disjoint union of two copies of $(W\times W)\setminus\Delta_W$).
Hence, if we write $\pi_v$ for the orthogonal projection
\[
\R^n=\R v \oplus (\R v)^\perp\to (\R v)^\perp,
\]
then $\pi_v(h(x)-h(y))\not=0$ for all $x\not=y$ in~$W$
and thus $\pi_v(h(x))\not=\pi_v(h(y))$.
We can therefore take $g:=\pi_v\circ h$,
identifying $(\R v)^\perp$ with $\R^{m-1}$.\\[2.3mm]
(d) For each $n\in\N$, the set
$J_n:=\{j\in J\colon \Supp(h_j)\cap K_{n+1}\not=\emptyset\}$
is finite. Since $f_j|_{K_{n+1}}=0$
for $j\in J\setminus J_n$, we see that
$(f_j)_{j\in J_n}\colon M\to(\R^d\times\R)^{J_n}$
is a $C^\ell$-map whose restriction to~$K_{n+1}$ is injective.
Thus, there exists a $C^\ell$-map $\psi_n\colon M\to \R^{k_n}$
for an integer $k_n\geq 2d+1$ such that $\psi_n|_{K_{n+1}^0}$
is injective. Iterating~(c), we may assume that $k_n=2d+1$.\\[2.3mm]
(e) We claim that there exists an injective $C^\ell$-map
$h\colon M\to \R^{6d+4}$.
If this is true, then~(c) yields
an injective $C^\ell$-map $g\colon M\to \R^{2d+1}$;
by~(b), this finishes the proof.
To prove the claim, let $L_{-3}:=L_{-2}:=L_{-1}:=L_0:=\emptyset$
and recursively find $\chi_n\in C^\ell_c(M,\R)$ for $n\in\N$
with $\chi_n|_{K_n\setminus K_{n-2}^0}=1$,
$\chi_n(M)\sub[0,1]$, and
\[
L_n:=\Supp(\chi_n)\sub K_{n+1}^0\setminus (L_{-2}\cup\cdots \cup L_{n-3})
\sub K_{n+1}^0\setminus K_{n-3}.
\]
Then $L_n\cap L_m\not=\emptyset$ implies $|n-m|\leq 2$
for all $n,m\in\N$.
We define $\alpha,\beta,\gamma\in C^\ell(M,\R^{2d+1})$ via
\[
\alpha(x):=\sum_{k=1}^\infty \chi_{3k-2}(x)\psi_{3k-2}(x),\;\;
\beta(x):=\sum_{k=1}^\infty \chi_{3k-1}(x)\psi_{3k-1}(x),
\]
and $\gamma(x):=\sum_{k=1}^\infty\chi_{3k}(x)\psi_{3k}(x)$
for~$x\in M$. Then
\[
h:=(\alpha,\beta,\gamma,\theta)\colon M\to \R^{2d+1}\times\R^{2d+1}
\times\R^{2d+1}\times\R
\]
is a $C^\ell$-map and injective. In fact, assume that $h(x)=h(y)$,
with $x,y\in M$.
Then $\theta(x)=\theta(y)\in [n-1,n[$ for some $n\in\N$,
whence $x,y\in K_n\setminus K_{n-2}$ (see~(a))
and thus $\chi_n(x)=1=\chi_n(y)$.
Assume that $n\equiv 0$ mod~$3$ (if $n\equiv 1$ or $n\equiv 2$ mod~$3$,
use $\alpha$ and $\beta$, respectively,
in place of $\gamma$).
Thus $n=3k$ for some $k\in\N$.
Then $\psi_n(x)=\psi_{3k}(x)=\gamma(x)=\gamma(y)=\psi_{3k}(y)=\psi_n(y)$
implies $x=y$. $\,\square$\\[2.3mm]
Of course, the preceding discussion varies
familiar arguments from the proof of the
Whitney Embedding Theorem (notably, cf.\ Theorems~3.4 and 3.5 in \cite{Hir}).\\[2.3mm]
{\bf Proof of Proposition~\ref{onmfdsmoo}.}
If $\ell=0$,
we can construct the operators $S_n$ as in the proof
of Proposition~\ref{smoo-top},
replacing all continuous partitions of unity occurring
in the proof with $C^r$-partitions of unity.
Now assume that $\ell\geq 1$.
Excluding a trivial case,
we may assume that $M\not=\emptyset$.
Let
$K_0:=\emptyset$.
Let $m_0:=0$.
Recursively, we find positive
integers $m_1\leq m_2\leq\cdots$
and full-dimensional compact submanifolds
$Q_j$ with corners of
$K_{n+1}^0\setminus K_{n-1}$
for $j\in \{m_{n-1}+1,\ldots, m_n\}$
such that $Q_j$ is diffeomorphic
to $[0,1]^{d_j}$ for some $d_j\in\N_0$,
and regular compact subsets $P_j\sub Q_j^0$
whose interiors $P_j^0$ cover
\[
K_n\setminus (P_1^0\cup\cdots\cup P_{m_{n-1}}^0)
\]
for $j\in\{m_{n-1}+1,\ldots, m_n\}$.\\[2.3mm]
[In fact, if $n\in \N$ and $m_0,\ldots,m_{n-1}$
and $P_j$ as well as $Q_j$ have been found for
$j\in \{1,\ldots, m_{n-1}\}$,
let $\kappa_{n,x}\colon B_{n,x}\to D_{n,x}$ be a chart for~$M$
around
$x\in K_n\setminus (P_1^0\cup\cdots\cup P_{m_{n-1}}^0)$
such that
$D_{n,x}$ is a relatively open subset of $[0,\infty[^{d(n,x)}$
for some $d(n,x)\in\N_0$.
For suitable $0\leq\alpha_i<\beta_i$
(depending on $n$ and $x$),
we have that
\[
C_{n,x}:=[\alpha_1,\beta_1]\times\cdots\times [\alpha_{d(n,x)},\beta_{d(n,x)}]\sub
D_{n,x}.
\]
and $\kappa_{n,x}(x)$ is in the interior $C_{n,x}^0$
of~$C_{n,x}$ relative~$D_{n,x}$. Then $Q_{n,x}:=\kappa_x^{-1}(C_x)$
is compact and
a full-dimensional submanifold with corners in~$M$
which is diffeomorphic to $[0,1]^d$,
and such that $x\in Q_{n,x}^0=\kappa_{n,x}^{-1}(C_{n,x}^0)$.
Let $P_{n,x}\sub Q_{n,x}^0$ be a compact $x$-neighbourhood.
By compactness,
there is a finite subset $\Psi_n$ of
$K_n\setminus (P_1^0\cup\cdots\cup P_{m_{n-1}}^0)$
with
\[
K_n\setminus (P_1^0\cup\cdots\cup P_{m_{n-1}}^0)
\,\sub\, \bigcup_{x\in\Psi_n}P_{n,x}^0.
\]
Write $x_{m_{n-1}+1},\ldots, x_{m_{n+1}}$
for the elements of $\Psi_n$ (without repetitions)
and abbreviate $Q_j:=Q_{n,x_j}$ and $P_j:=P_{n,x_j}$
for $j\in \{m_{n-1}+1,\ldots, m_n\}$.$\,$]\\[2.3mm]
Let $n_0:=\sup\{n\in\N\colon K_n\setminus K_{n-1}\not=\emptyset\}
\in\N\cup\{\infty\}$. Also, define
$m:=\sup\{m_n\colon n\in\N\}\in \N\cup\{\infty\}$,
$\Phi:=\{j\in\N\colon j\leq m\}$, and the set
$\Phi_n:=\{m_{n-1}+1,\ldots,m_n\}$.
Let $(h_j)_{j\in \Phi}$
be a $C^r$-partition of unity on~$M$ such that $L_j:=\Supp(h_j)\sub P_j^0$
for each $j\in \Phi$ (see Lemma~\ref{partu-ex}).
By Lemma~\ref{smoocube}, for each $j\in \Phi$
there exists a sequence $(S_{j,k})_{k\in\N}$
of continuous linear operators $S_{j,k}\colon
C^\ell(Q_j,F)\to C^r(Q_j,F)$
such that $S_{j,k}(\gamma)\in F\otimes C^r(Q_j,\R)$
for all $\gamma\in C^\ell(Q_j,F)$ and $S_{j,k}(\gamma)\to\gamma$
in $C^\ell(Q_j,F)$ as $k\to\infty$, uniformly for $\gamma$
in compact subsets of $C^\ell(Q_j,F)$.
For $n\in\N$ and $j\in\Phi$,
let $H_{n,j}\colon C^\ell(M,F)\to C^r_{L_j}(M,F)$
be the map given by
\[
H_{n,j}(\gamma)(x):=
\left\{
\begin{array}{cl}
0 & \mbox{ if $x\in M\setminus L_j$,}\\
h_j(x)S_{n,j}(\gamma|_{Q_j}) &\mbox{ if $x\in Q_j^0$}
\end{array}\right.
\]
for $\gamma\in C^\ell(M,F)$ and $x\in M$.
Then $H_{n,j}\in F\otimes C^r_{L_j}(M,\R)$
for all $\gamma\in C^\ell(M,F)$, entailing that
\[
S_n(\gamma):=\sum_{j=1}^{m_n}H_{n,j}(\gamma)
\in F\otimes C^r_{K_{n+1}}(M,\R)
\]
(as required in~(b)), using that $L_j\sub K_{n+1}$ for all $j\leq m_n$.
To see that each $H_{j,n}$
(and hence also $S_n\colon C^\ell(M,F)\to C^r_{K_{n+1}}(M,F)$)
is continuous, note that
the restriction operator
\[
\rho_j\colon C^r_{L_j}(M,F)\to C^r_{L_j}(Q_j^0,F)
\]
is an isomorphism of topological vector spaces
and the restriction maps $C^\ell(M,F)\to C^\ell(Q_j,F)$
and $C^r(Q_j,F)\to C^r(Q_j^0,F)$ are continuous and linear.
Moreover, the multiplication
operator
\[
C^r(Q_j^0,F)\to C^r_{L_j}(Q_j^0,F),\quad
\eta\mto h_j|_{Q_j^0}\cdot \eta
\]
is continuous and linear.
Hence
\[
H_{n,j}(\gamma)=\rho_j^{-1}(h_j|_{Q_j^0}\cdot S_{n,j}(\gamma)|_{Q_j^0})
\]
is continuous and linear in~$\gamma$.
If $\gamma\in C^\ell_{K_n}(M,F)$,
then $Q_j\cap K_n=\emptyset$ for all $j>m_n$,
whence
\[
S_k(\gamma)=\sum_{j=1}^{m_n}H_{k,j}(\gamma)
\in F\otimes C^r_{K_{n+1}}(M,\R)
\]
for all $k\geq n$ (establising (c)).
If $Y\sub C^\ell(M,F)$ is a compact subset,
then $Y_j:=\{\gamma|_{Q_j}\colon \gamma\in Y\}$
is a compact subset of $C^\ell(Q_j,F)$,
as the restriction operator $C^\ell(M,F)\to C^\ell(Q_j,F)$
is continuous.
Thus $S_{n,j}(\gamma|_{Q_j})\to \gamma|_{Q_j}$
in $C^\ell(Q_j,F)$ as $n\to\infty$,
uniformly in $\gamma\in Y$.
As the restriction map
$R_j\colon C^\ell(Q_j,F)\to C^\ell(Q_j^0,F)$,
the multiplication operator $\mu_j\colon C^\ell(Q_j^0,F)\to C^\ell_{L_j}(Q_j^0,F)$,
$\eta\mto h_j|_{Q_j^0}\cdot \eta$
and the inverse $r_j^{-1}$
of the restriction map $C^\ell_{L_j}(M,F)\to C^\ell_{L_j}(Q_j^0,F)$
are continuous linear maps and thus uniformly continuous,
we deduce that $H_{n,j}(\gamma)=(r_j^{-1}\circ \mu_j\circ R_j\circ S_{n,j})(\gamma)$
converges to
\[
(r_j^{-1}\circ \mu_j\circ R_j)(\gamma|_{Q_j})=h_j\cdot\gamma
\]
in $C^\ell(M,F)$ as $n\to\infty$, uniformly in $\gamma\in Y$.
Hence
\[
S_n(\gamma)=\sum_{j=1}^{m_n}H_{n,j}(\gamma)
\to \sum_{j=1}^{m_n} h_j\cdot \gamma
\]
in $C^\ell(M,F)$ as $n\to\infty$, uniformly in $\gamma\in Y$.
This establishes (a) and completes the proof. $\,\square$
\section{Differentiability properties of composition}\label{appcompo}
We now prove Proposition~\ref{strange-compo}
and deduce Proposition~\ref{eval-chain}
from it (in full generality).
The following lemma concerning G\^{a}teaux
differentials will be useful.
\begin{la}\label{via-gateaux}
Let $E$ and $F$ be locally convex spaces,
$R\sub E$ be a regular subset, $\ell\in\N_0\cup\{\infty\}$
and $f\colon R\to F$ be a continuous map.
If
\[
\delta^j_xf(y):=\frac{d^jt}{dt^j}\Big|_{t=0}f(x+ty)
\]
exists for all $j\in \N$ such that $j\leq \ell$
and $(x,y)\in R^0\times E$, and the mappings
$U\times E\to F$ so obtained admit continuous extensions
\[
\delta^j f\colon R\times E\to F,
\]
then $f$ is a $C^\ell$-map.
\end{la}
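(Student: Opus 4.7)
The plan is to verify the criterion of Proposition~\ref{eqCk}(b): that $f$ is continuous (given), the iterated directional derivatives $d^{\,(j)}(f|_{R^0})(x,y_1,\ldots,y_j)=(D_{y_j}\cdots D_{y_1}f)(x)$ exist for all $j\leq\ell$ and all $(x,y_1,\ldots,y_j)\in R^0\times E^j$, and each extends continuously to $R\times E^j$. I would therefore split the argument into two tasks: first, showing that $f|_{R^0}$ is $C^\ell$ on the open subset $R^0$, and second, constructing the required continuous extensions of the iterated derivatives $d^{\,(j)}(f|_{R^0})$ from the given extensions $\delta^j f\colon R\times E\to F$ of the Gâteaux differentials.

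For the construction of the extensions, I would use the Polarization Formula~(\ref{polform}). Once $f|_{R^0}$ is $C^j$, the map $(y_1,\ldots,y_j)\mapsto d^{\,(j)}(f|_{R^0})(x,y_1,\ldots,y_j)$ is a continuous symmetric $j$-linear form on $E^j$ whose diagonal values agree with $y\mapsto\delta^j_xf(y)$ (see~\ref{sy-mulin}). Polarization then yields
\[
d^{\,(j)}(f|_{R^0})(x,y_1,\ldots,y_j)\;=\;\frac{1}{j!}\sum_{\ve_1,\ldots,\ve_j=0}^1(-1)^{j-(\ve_1+\cdots+\ve_j)}\,\delta^j_xf(\ve_1y_1+\cdots+\ve_jy_j)
\]
for $x\in R^0$. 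Since $\delta^jf$ is continuous on $R\times E$ by hypothesis, the right-hand side defines a continuous function on $R\times E^j$, and so supplies the required extension $d^{\,(j)}f$.

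For the $C^\ell$-ness of $f|_{R^0}$, I would proceed by induction on $\ell\in\N_0$. The case $\ell=0$ is the continuity hypothesis, and the step from $0$ to $1$ is immediate, since $\delta^1_xf(y)=df(x,y)$ exists by assumption and the restriction of $\delta^1f$ to $R^0\times E$ is continuous, which by Definition~\ref{praeck} means $f|_{R^0}$ is $C^1$. For the step from $j$ to $j+1$, one uses the polarized representation above to exhibit $d^{\,(j)}(f|_{R^0})$ as a fixed linear combination of functions $x\mapsto\delta^j_xf(z)$ with $z\in E$ fixed; the directional derivative of such a function in direction $w$ can be computed as an iterated limit of difference quotients which, by continuity of $\delta^{j+1}f$ and an interchange-of-limits argument using the auxiliary map $f^{[1]}$ of~\ref{BGN}, delivers an entry built from $\delta^{j+1}$. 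Combining this with multilinearity in the $y_i$-slots gives existence and joint continuity of $d^{\,(j+1)}(f|_{R^0})$ on $R^0\times E^{j+1}$, so $f|_{R^0}$ is $C^{j+1}$.

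The main obstacle is precisely this inductive step: converting continuity of the purely diagonal Gâteaux differentials $\delta^k_xf(y)$ for $k\leq\ell$ into joint continuity of the full mixed iterated directional derivatives of $f|_{R^0}$. This is the classical passage from Gâteaux differentiability to Keller-$C^k_c$-differentiability on open sets, and the cleanest justification is via the Mean Value Theorem applied to $t\mapsto\delta^j_{x+tw}f(z)$ in conjunction with continuity of $\delta^{j+1}f$, in the spirit of the $f^{[1]}$-trick of~\ref{BGN}.
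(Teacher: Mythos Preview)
Your overall strategy---verify Proposition~\ref{eqCk}(b), and obtain the continuous extensions $d^{\,(j)}f\colon R\times E^j\to F$ by applying the Polarization Formula to the given extensions $\delta^j f$---is correct and is exactly what the paper does for the extension step. The gap is in your inductive step for showing that $f|_{R^0}$ is $C^\ell$. To pass from $C^j$ to $C^{j+1}$ you need $\frac{d}{ds}\big|_{s=0}\delta^j_{x+sw}f(z)$ to exist; writing $g(s,t):=f(x+sw+tz)$, this is the mixed partial $\partial_s\partial_t^{\,j} g(0,0)$, whereas the hypothesis only hands you the \emph{diagonal} derivatives $\frac{d^k}{dr^k}\big|_{r=0}g(s_0+r\alpha,t_0+r\beta)$, continuous in $(s_0,t_0,\alpha,\beta)$. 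Converting these diagonal data into the required mixed partials is precisely the two-variable instance of the lemma itself, and the $f^{[1]}$-device of~\ref{BGN} does not supply this interchange: it only rewrites first-order difference quotients of a map already known to be~$C^1$.

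The paper closes this gap by first establishing the lemma for finite-dimensional $E$ and open~$R$, citing \cite[Exercise~1.7.10]{GaN} (a Boman-type argument). For general~$E$, given $x\in R^0$ and $y_1,\ldots,y_j\in E$, one works in a finite-dimensional subspace $H$ containing $x,y_1,\ldots,y_j$: the finite-dimensional case applied to $f|_{R^0\cap H}$ shows that $d^{\,(j)}(f|_{R^0})(x,y_1,\ldots,y_j)$ exists and is symmetric $j$-linear in the~$y_i$. Polarization against $\delta^j_x f$ then both identifies $d^{\,(j)}(f|_{R^0})(x,\cdot)$ and, via the continuous $\delta^j f$, produces the extension to $R\times E^j$. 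Your induction can be repaired in the same way: the step you flagged as ``the main obstacle'' is exactly where one must invoke the finite-dimensional result rather than argue directly.
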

\begin{proof}
For finite-dimensional~$E$
and $R$ open, we can follow the detailed
outline
in \cite[Exercise~1.7.10]{GaN}
(which profited from the proof of the implication
(3)$\Rightarrow$(1) in Boman's Theorem,
\cite[p.\,26]{KaM}).\\[2.3mm]
In the general case, given $x\in R^0$, $j\in \N$ with
$j\leq\ell$ and $y_1,\ldots, y_j$
in a finite-dimensional vector subspace~$H$ of~$E$ with $x\in H$,
we deduce from the case of finite-dimensional domains that
\[
d^{\,(j)}(f|_{R^0})(x,y_1,\ldots,y_j)
=d^{\,(j)}(f|_{R^0\cap H})(x,y_1,\ldots,y_j)
\]
exists and is symmetric $j$-linear in $y_1,\ldots, y_j\in H$,
entailing that the map
$
d^{\,(j)}(f|_{R^0})(x,\cdot)\colon E^j\to F$
is symmetric $j$-linear and can be recovered
from $\delta^j_xf$ via the Polarization
Formula. However, applying the Polarization Formula
to $\delta^j_xf$ for arbitrary $x\in R$ we get a continuous map
\[
d^{\,(j)}f\colon R\times E^j\to F
\]
which, by the preceding, extends $d^{\,(j)}(f|_{R^0})$.
Hence $f$ is $C^\ell$.
\end{proof}
{\bf Proof of Proposition~\ref{strange-compo}.}
We may assume that $k<\infty$.
Since $C^\infty(R,F)=\pl\, C^\ell(R,F)$
for finite~$\ell$, we may also assume that $\ell\in\N_0$.
We prove by induction on $\ell\in\N_0$
that the assertion holds for all $k\in\N_0$,
and, moreover,
that there are non-negative integers $m_{k,i,j,a_1,\ldots,a_j}$
for $i\in\{0,1,\ldots,k-1\}$,
$j\in\{1,\ldots,k-1\}$ and $a=(a_1,\ldots, a_j)\in \N^j$
with $a_1+\cdots+a_j=k-i$
(independent of $E$, $F$, $X$, $Z$, $R$, $S$, and~$A$)
such that, for all $C^k$-maps
$\gamma$ and $\eta$ as in the proposition, the map
$\zeta\colon A\to C^\ell(R,F)$, $x\mto\gamma(x)\circ\gamma(x)$
has its $k$-th G\^{a}teaux differential $\delta^k_x\zeta(y)$
given by the following expression~($*$):
\[
\delta^k_x\gamma(y)\circ \eta(x)
+\sum_{i=0}^{k-1}\sum_{j=1}^{k-i}\sum_{a_1+\cdots+a_j=k-i}
\!\!\!\!\! m_{k,i,j,a}
d^{\,(j)}(\delta^i_x\gamma(y))\circ (\eta(x),
\delta_x^{a_1}\eta(y),\ldots,\delta_x^{a_j}\eta(y)).
\]
Let $\ell=0$ and $R$ be a topological space which is
a $k_\R$-space. If $k=0$ and $A$ is a $k_\R$-space,
the assertion holds by Proposition~\ref{compocseq}(d).
Now assume that $k>0$ and assume the assertion
holds for $k-1$ in place of~$k$.
As we assume that $R$ is a $k_\R$-space,
the map
\[
C(R,E)\to\prod_{K\in\cK(R)}C(R,E),\;\,
\zeta\mto (\zeta|_K)_{K\in\cK(R)}
\]
is linear and a topological embedding with closed
image. We therefore only need to show that the mappings
$A\to C(K,F)$, $x\mto\gamma(x)\circ\eta(x)|_K$
are $C^k$. We may therefore assume now that~$R$
is compact.
For $x\in A^0$, $y\in Z$, and $0\not=t\in\R$ with $x+ty\in A^0$, we have
\begin{eqnarray*}
\lefteqn{\frac{\gamma(x+ty)\circ\eta(x+ty)-\gamma(x)\circ\eta(x)}{t}}\qquad\qquad\\
&=&\frac{\gamma(x+ty)\circ \eta(x+ty)-\gamma(x)\circ\eta(x+ty)}{t}\\
& & +\frac{\gamma(x)\circ\eta(x+ty)-\gamma(x)\circ\eta(x)}{t}\\
&=& \gamma|_{A^0}^{[1]}(x,y,t)\circ \eta(x+ty)
+\frac{C(R,\gamma(x))(\eta(x+ty))-C(R,\gamma(x))(\eta(x))}{t}
\\
&\to & d\gamma(x,y)\circ\eta(x)+
d(\gamma(x))\circ (\eta(x),d\eta(x,y))
\end{eqnarray*}
in $C(R,F)$ as $t\to 0$,
using Proposition~\ref{compocseq}(d) (or~(c))
to get the convergence of the first summand,
and standard differentiability properties
of maps of the form $C(R,f)$ (as in \cite{GCX})
for the second.
Then $\zeta\colon A\to C(R,F)$, $x\mto \gamma(x)\circ\eta(x)$
is $C^1$ and
\begin{equation}\label{shapediff}
d\zeta(x,y)=d\gamma(x,y)\circ\eta(x)+
d(\gamma(x))\circ (\eta(x),d\eta(x,y)),
\end{equation}
as the right-hand side of this equation defines
a $C(R,F)$-valued continuous
function
of $(x,y)\in A\times Z$, by Proposition~\ref{compocseq}(d).
It remains to note that~(\ref{shapediff})
can be rewritten as
\[
\delta_x^1\zeta(y)=\delta^1_x\gamma(y)\circ\eta(x)+
d(\gamma(x))\circ (\eta(x),\delta^1_x\eta(y)).
\]
Now assume that $\gamma\colon A\to C^{k-1}(S,F)$ and
$\eta\colon A\to C(R,E)$ are $C^{k+1}$
and assume we already know that~$\zeta$ is a $C^k$
and that the formula~($*$) for $\delta^k_x\zeta(y)$ holds.
Then
\[
\delta^{k+1}_x\zeta(y)=\frac{d}{dt}\Big|_{t=0}\delta^k_{x+ty}\zeta(y)
\]
exists for all $x\in A^0$ and $y\in Z$, as
we can apply the case $k=1$ to each
summand in~($*$). Calculating derivatives
with the rule~(\ref{shapediff}),
we obtain the following derivatives
for the summands in~($*$):
The first summand has derivative
\begin{eqnarray*}
\lefteqn{\delta^{k+1}_x\gamma(y)\circ \eta(x)
+d(\delta^k_x\gamma(y))\circ (\eta(x),d\eta(x,y))}\\
&=&
\delta^{k+1}_x\gamma(y)\circ \eta(x)
+d(\delta^k_x\gamma(y))\circ (\eta(x),\delta^1_x\eta(y)).
\end{eqnarray*}
The summand corresponding to $(i,j,a_1,\ldots, a_j)$
has a derivative which is $m_{k,i,j,a}$ times the sum
\begin{eqnarray*}
\lefteqn{d^{\,(j)}(\delta^{i+1}_x\gamma(y))\circ (\eta(x),
\delta_x^{a_1}\eta(y),\ldots,\delta_x^{a_j}\eta(y))}\\
& & +
d^{\,(j+1)}(\delta^i_x\gamma(y))\circ (\eta(x),
\delta_x^{a_1}\eta(y),\ldots,\delta_x^{a_j}\eta(y),\delta^1_x\eta(y))\\[1mm]
& & +\sum_{\theta=1}^j
d^{\,(j)}(\delta^i_x\gamma(y))\circ (\eta(x),
\delta_x^{a_1}\eta(y),\ldots,\delta_x^{a_{\theta-1}}\eta(y),
\delta_x^{a_\theta+1}\eta(y),\delta_x^{a_{\theta+1}}\eta(y),\ldots,
\delta_x^{a_j}\eta(y)).
\end{eqnarray*}
Combining these summands, we get a formula
for $\delta^{k+1}_x\zeta(y)$ analogous to~($*$),
for $(x,y)\in A^0\times R$.
As the formula defines a continuous
function $\delta^{k+1}\zeta$
of $(x,y)\in R\times Z$,
Lemma~\ref{via-gateaux}
shows that $\zeta$ if $C^{k+1}$, with
G\^{a}teaux differential $\delta^{k+1}\zeta$ as just described.
This completes the induction on~$k$
for $\ell=0$.\\[2.3mm]
To pass to general $\ell\in\N_0$,
we use that the map
\[
C^\ell(R,F)\to\prod_{j\leq \ell}C(T^jR,T^jF),\;\,
\phi\mto T^j\phi
\]
is a linear topological embedding with closed
image. We therefore only need to show that the mappings
\[
T^j\circ \zeta\colon A\to C(T^jR,T^jF),\quad
x\mto T^j(\gamma(x)\circ \eta(x))=T^j(\gamma(x))\circ T^j(\eta(x))
\]
are $C^k$ for all $j\leq \ell$.
But this holds by the case $\ell=0$, as the mappings
$T^j\circ \gamma\colon A\to C^k(T^jS,T^jF)$
and $T^j\circ\eta\colon A\to C(T^jR,T^jE)$
are~$C^k$. $\,\square$\\[2.3mm]
{\bf Proof of Proposition~\ref{eval-chain}.}
Let $R:=X:=\{0\}$.
Then evaluation at~$0$,
\[
\ev_0\colon C(R,E)\to E,\quad\theta\mto\theta(0)
\]
is an isomorphism of topological vector spaces.
Since $g\colon U\to V\sub E$ is $C^k$, also the map
\[
\wt{g}:=(\ev_0)^{-1}\circ g\colon U\to C(R,E)
\]
is $C^k$.
Using Proposition~\ref{strange-compo},
we see that the map
\[
\zeta\colon U\to C(R,F),\;\, x\mto f(x)\circ\wt{g}(x)
\]
is $C^k$. Hence also $h=\ev_0\circ \, \zeta$ is~$C^k$. $\,\square$
Helge Gl\"{o}ckner, Universit\"{a}t Paderborn, Warburger Str.\ 100,
33098 Paderborn, Germany; glockner@math.uni-paderborn.de\vfill

\begin{thebibliography}{99}
%
%
\bibitem{DIS}
Albeverio, S.,
R. H\o{}egh-Krohn, J.\,A. Marion, D.\,H. Testard, and B.\,S. Torr\'{e}sani,
``Noncommutative Distributions,''
Marcel Dekker, New York, 1993.
%
%
\bibitem{Alz}
Alzaareer, A.,
\emph{Differential calculus on multiple products},
Indag.\ Math.\ {\bf 30} (2019), 1036--1060.
%
%
\bibitem{AaS}
Alzaareer, H. and A. Schmeding,
\emph{Differentiable mappings on products with different
degrees of differentiability in the two
factors}, Expo.\ Math.\
{\bf 33} (2015), 184--222.
%
%
\bibitem{AGS}
Amiri, H., H. Gl\"{o}ckner, and A. Schmeding,
\emph{Lie groupoids of mappings taking values in a Lie groupoid},
preprint, arXiv:1811.02888.
%
%
\bibitem{Bas}
Bastiani, A., \emph{Applications diff\'erentiables et vari\'et\'es diff\'erentiables 
de dimension infinie}, J. Anal.\ Math.\ \textbf{13} (1964), 1--114.
%
%
\bibitem{BGN}
Bertram,W., H. Gl\"{o}ckner, and
K.-H. Neeb,
\emph{Differential calculus over general base fields and rings},
Expo.\ Math.\ {\bf 22} (2004), 213--282.
%
%
\bibitem{Bie}
Bierstone, E., \emph{Differentiable functions},
Bol.\ Soc.\ Brasil.\ Math.\
{\bf 11} (1980), 139--189.
%
%
\bibitem{BaG}
Birth, L. and H. Gl\"{o}ckner,
\emph{Continuity of convolution of test functions on Lie groups},
Canadian J. Math.\ {\bf 66} (2014),
102--140.
%
%
\bibitem{BaS}
Bochnak, J. and J. Siciak,
\emph{Polynomials and multilinear mappings in topological vector spaces},
Studia Math.\ {\bf 39} (1971), 59--76.
%
%
\bibitem{BCR}
Boseck, H., G. Czichowski and K.-P. Rudolph,
``Analysis on
Topo\-logical Groups -- General Lie Theory,''
B.G. Teubner, Leipzig, 1981.
%
%
\bibitem{Bou}
Bourbaki, N.,
``Topological Vector Spaces,''
Chapters 1--5, Springer, Berlin, 1987.
%
%
\bibitem{Cer}
Cerf, J.,
\emph{Topologie de certains \'{e}spaces de plongements},
Bull.\ Soc.\ Math.\ France {\bf 89} (1961), 227--380.
%
%
\bibitem{Die}
Dierkes, T., ``Einschr\"{a}nkungsoperatoren zwischen
Mannigfaltigkeiten von Abbildungen,''
Master's thesis, University of Paderborn, October 2019
(advisor: H. Gl\"{o}ckner)
%
%
\bibitem{Dou}
Douady, A.,
\emph{Vari\'{e}t\'{e}s \`{a} bord anguleux et voisinages tubulaires},
S\'{e}minaire Henri Cartan, 1961/62, Exp.\,1, 11 pp.
%
%
\bibitem{Dug}
Dugundji, J.,
\emph{An extension of Tietze’s theorem},
Pacific J.\ Math.\ {\bf 1} (1951), 353--367.
%
%
\bibitem{Eel}
Eells, J. jun.,
\emph{On the geometry of function spaces},
pp.\ 303--308 in:
Sympos.\ internac.\ Topolog\'{\i}a algebr\'{a}ica,
1958.
%
%
\bibitem{Eng}
Engelking, R.,
``General Topology,''
Heldermann Verlag, Berlin, 1989.
%
%
\bibitem{Fre}
Frerick, L.,
\emph{Extension operators for spaces of
infinite differentiable Whitney jets},
J. Reine Angew.\ Math.\ {\bf 602} (2007), 123--154.
%
%
\bibitem{GWS}
Garnier, H.\,G., M. De Wilde and J. Schmets,
``Analyse fonctionelle,'' Volume 3,
Birkh\"{a}user Verlag, Basel, 1973.
%
%
\bibitem{Res}
Gl\"{o}ckner, H.,
\emph{Infinite-dimensional Lie groups without completeness restrictions},
pp.\ 43--59 in: Strasburger, A. et al.\ (eds.),
``Geometry and Analysis on Finite- and Infinite-Dimensional Lie Groups,''
Banach Center Publications {\bf 55}, Warsaw, 2002.
%
%
\bibitem{GCX}
Gl\"{o}ckner, H.,
\emph{Lie group structures on quotient groups and universal
complexifications for infinite-dimensional Lie groups},
J. Funct.\ Anal.\ {\bf 194} (2002), 347--409.
%
%
\bibitem{Mea}
Gl\"{o}ckner, H.,
\emph{Lie groups of measurable mappings},
Canad.\ J. Math.\ {\bf 55} (2003),
969--999.
%
%
\bibitem{HYP}
Gl\"{o}ckner, H.,
\emph{Applications of hypocontinuous bilinear maps in infinite-dimensional differential calculus}, pp.\ 171--186 in: S. Silvestrov, E. Paal, V. Abramov and A. Stolin (eds.), ``Generalized Lie Theory in Mathematics, Physics and Beyond,'' Springer,
Berlin, 2009.
%
%
\bibitem{HDL}
Gl\"{o}ckner, H.,
\emph{Homotopy groups of ascending unions
of infinite-dimensional manifolds},
to appear in
Ann.\ Inst.\ Fourier (Grenoble); cf.\
arXiv:0812.4713.
%
%
\bibitem{Zoo}
Gl\"{o}ckner, H.,
\emph{Lie groups over non-discrete topological fields}, preprint,
arXiv:math/0408008.
%
%
\bibitem{Sub}
Gl\"{o}ckner, H.,
\emph{Fundamentals of submersions and immersions between infinite-dimensional manifolds},
preprint, arXiv:1502.05795.
%
%
\bibitem{Rou}
Gl\"{o}ckner, H., \emph{Manifolds of mappings on rough manifolds
and manifold structures on box products},
in preparation.
%
%
\bibitem{GaM}
Gl\"{o}ckner, H. and N. Masbough,
\emph{Products of regular locally compact spaces are $k_\R$-spaces},
Topology Proc.\ {\bf 55} (2020), 35--38.
%
%
\bibitem{GaN} Gl\"{o}ckner, H. and K.-H. Neeb,
``Infinite-Dimensional Lie Groups,'' book in preparation.
%
%
\bibitem{Gro}
Grothendieck, A.,
\emph{Topological tensor products and nuclear spaces},
Memoirs Amer.\ Math.\ Soc.\ {\bf 16} (1955), 196+140 pp.
%
%
\bibitem{Ham}
Hamilton, R.\,S.,
\emph{The inverse function theorem of Nash and Moser},
Bull.\ Amer.\ Math.\ Soc.\ {\bf 7} (1982), 65--222.
%
%
\bibitem{Han}
Hanusch, M., \emph{A $C^k$-Seeley-extension theorem
for Bastiani's differential calculus},
to appear in Canadian J. Math.\
(cf.\ arXiv:2002.05125v1).
%
%
\bibitem{Hat}
Hatcher, A.,
``Vector Bundles and K-Theory,''
book manuscript, version 2.2,
November 2017,
{\tt https://pi.math.cornell.edu/\hspace*{.5mm}$\wt{\;}$\hspace*{.5mm}hatcher/VBKT/VB.pdf}
%
%
\bibitem{Hir}
Hirsch, M.\,W.,
``Differential Topology,''
Springer, New York, 1976.
%
%
\bibitem{HaS}
Hjelle, E.\,O. and A. Schmeding,
\emph{Strong topologies for spaces of smooth maps with infinite-dimensional target},
Expo.\ Math.\ {\bf 35} (2017), 13--53.
%
%
\bibitem{Ill}
Illman, S.,
\emph{The very-strong $C^\infty$-topology on $C^\infty(M,N)$
and $K$-equivariant maps},
Osaka J. Math.\ {\bf 40} (2003), 409--428.
%
%
\bibitem{Ing}
Ingrisch, S., ``Gemischte Differenzierbarkeit von Fixpunkten und Impliziten Funktionen'',
Master's thesis, University of Paderborn, 2019 (advisor: H. Gl\"{o}ckner).
%
\bibitem{Kel}
Keller, H.~H., ``Differential Calculus
in Locally Convex Spaces'', Springer,
Berlin, 1974.
%
%
\bibitem{Kih}
Kihara, H.,
\emph{Smooth homotopy of infinite-dimensional $C^\infty$-manifolds},
preprint, arXiv:2002.03618.
%
%
\bibitem{KaM} Kriegl, A. and P.\,W. Michor,
``The Convenient Setting of Global Ana\-lysis,''
AMS, Providence, 1997.
%
%
\bibitem{Lan}
Lang, S., ``Fundamentals of Differential Geometry,''
Springer, New York, 1999.
%
%
\bibitem{BAN}
Margalef-Roig, J. and Outerelo Dominguez, E.,
``Differential Topology'',
North-Holland, Amsterdam, 1992. 
%
%
\bibitem{Mic}
Michor, P.\,W., ``Manifolds of Differentiable Mappings'',
Shiva Publ., Orpington, 1980.
%
%
\bibitem{Con}
Michor, P.\,W.
\emph{Manifolds of mappings for continuum mechanics},
preprint, arXiv:1909.00445v2.
%
%
\bibitem{Mil} Milnor, J., \emph{Remarks on infinite-dimensional Lie groups},
pp.\,1007--1057 in: B.\,S. DeWitt and R. Stora (eds.),
``Relativit\'{e}, groupes et topologie II,'' North-Holland,
Amsterdam, 1984.
%
%
\bibitem{MaW}
M\"{u}ller, C. and C. Wockel,
\emph{Equivalences of smooth and continuous principal bundles
with infinite-dimensional structure group},
Adv.\ Geom.\ {\bf 9} (2009), 605--626.
%
%
\bibitem{Cen}
Neeb, K.-H.,
\emph{Central extensions of infinite-dimensional Lie groups},
Ann.\ Inst.\ Fourier {\bf 52} (2002), 1365--1442.
%
%
\bibitem{Cur}
Neeb, K.-H.,
\emph{Current groups for non-compact manifolds and
their central extensions},
pp.\,109--183 in:
Wurzbacher, T. (Ed.),
``Infinite-dimensional groups and manifolds,''
IRMA Lecture Notes in Math.\ and Theor.\ Physics,
de Gruyter, 2004.
%
%
\bibitem{Nee}
Neeb, K.-H.,
\emph{Towards a Lie theory of locally convex groups},
Jpn.\ J. Math.\ {\bf 1} (2006), 291--468. 
%
%
\bibitem{Nob}
Noble, N., \emph{The continuity of functions on cartesian
products}, Trans.\ Amer.\ Math.\ Soc.\ {\bf 149} (1970),
187--198.
%
%
\bibitem{Pal}
Palais, R.\,S., \emph{When proper maps are closed},
Proc.\ Amer.\ Math.\ Soc.\ {\bf 24} (1970),
835--836.
%
%
\bibitem{Pie}
Pietsch, A.,
``Nukleare Lokalkonvexe R\"{a}ume,''
Akademie-Verlag, Berlin, ${}^2$1969.
%
%
\bibitem{RaS}
Roberts, D.\,M. and A. Schmeding,
\emph{Extending Whitney's extension theorem: nonlinear
function spaces}, preprint, arXiv:1801.04126v4.
%
%
\bibitem{Sch}
Schaefer, H.\,H. and M.\,P. Wolff,
``Topological Vector Spaces,''
Springer, New York, ${}^2$1999. 
%
%
\bibitem{Shu}
Schubert, H.,
``Topologie,'' B.\,G. Teubner, Stuttgart, 1964.
%
%
\bibitem{See} Seeley, R.\,T., \emph{Extension of $C^\infty$-functions
defined in a half space},
Proc.\ Amer.\ Math.\ Soc.\ {\bf 15} (1964), 625--626.
%
%
\bibitem{Sei}
Seip, U.,
``Kompakt erzeugte Vektorr\"{a}ume und Analysis,''
Springer, Berlin, 1972.
%
%
\bibitem{Ste}
Steenrod, N.,
``The Topology of Fibre Bundles,''
Princeton University Press, Princeton, 1951.
%
%
\bibitem{EGF}
Thomas, E.\,G.\,F.,
\emph{Calculus in locally convex spaces},
Preprint W-9604,
University of Groningen, 1996.
%
%
\bibitem{Tre}
Tr\'{e}ves, F.,
``Topological Vector Spaces, Distributions and Kernels,''
Academic Press, New York, 1967. 
%
%
\bibitem{Wal}
Walter, B.,
``Weighted Diffeomorphism Groups of Banach Spaces and Weighted Mapping Groups,''
Diss.\ Math.\ {\bf 484} (2012), 126 pp.
%
%
\bibitem{Wei}
Weil, A.,
``Sur les espaces \`{a} structure uniforme et sur la topologie
g\'{e}n\'{e}rale,''
Hermann, Paris,
1938.
%
%
\bibitem{Wit}
Wittmann, J.,
\emph{The Banach manifold $C^k(M,N)$},
Differ.\ Geom.\ Appl.\
{\bf 63} (2019), 166--185.
%
%
\bibitem{Woc}
Wockel, C.,
\emph{Smooth extensions and spaces of smooth and holomorphic mappings},
J. Geom.\ Symmetry Phys.\ {\bf 5} (2006), 118--126.\vspace{1mm}
%
%
\bibitem{Wo2}
Wockel, C.,
\emph{A generalization of Steenrod’s approximation theorem},
Arch.\ Math.\ Brno {\bf 45} (2009), 95--104. 
%
%
\end{thebibliography}
\end{document}